\definecolor{blue}{RGB}{51,103, 255}
\DeclareMathAlphabet{\cmcal}{OMS}{cmsy}{m}{n}
\DeclareMathOperator*{\motimes}{\text{\raisebox{0.4ex}{\scalebox{0.6}{$\bigotimes$}}}}
\DeclareMathOperator*{\moplus}{\text{\raisebox{0.4ex}{\scalebox{0.6}{$\bigoplus$}}}}
\DeclareSymbolFont{euletters}{U}{eur}{m}{n}
\DeclareSymbolFont{eufrakletters}{U}{euf}{m}{n}
\DeclareFontFamily{U}{wncy}{}
    \DeclareFontShape{U}{wncy}{m}{n}{<->wncyr10}{}
    \DeclareSymbolFont{mcy}{U}{wncy}{m}{n}
    \DeclareMathSymbol{\Sha}{\mathord}{mcy}{"58}
\definecolor{blue1}{rgb}{0, 0, 2}
\definecolor{sky}{rgb}{0, 0.2, 0.8}
\newtheorem{theorem}{Theorem}[section]
\newtheorem{corollary}[theorem]{Corollary}
\newtheorem{lemma}[theorem]{Lemma}
\newtheorem{proposition}[theorem]{Proposition}
\newtheorem{conjecture}[theorem]{Conjecture}
\theoremstyle{definition}
\newtheorem{definition}[theorem]{Definition}
\theoremstyle{remark}
\newtheorem{remark}[theorem]{Remark}
\newtheorem{assumption}[theorem]{Assumption}
\newtheorem{notation}[theorem]{Notation}
\numberwithin{equation}{section} 
\numberwithin{table}{subsection}
\newtheorem*{theorem*}{\bf{Theorem}}
\newtheorem*{claim*}{\bf{Claim}}
\newtheorem*{remark*}{\bf{Remark}}
\newtheorem*{remarks*}{\bf{Remarks}}
\newtheorem*{example*}{\bf{Example}}
\newtheorem*{examples*}{\bf{Examples}}
\newtheorem*{observation*}{\bf{Observation}}
\newcommand{\bA}{{\mathbf{A}}}
\newcommand{\bB}{{\mathbf{B}}}
\newcommand{\C}{{\mathbf{C}}}
\newcommand{\bD}{{\mathbf{D}}}
\newcommand{\bE}{{\mathbf{E}}}
\newcommand{\bP}{{\mathbf{P}}}
\newcommand{\Q}{{\mathbf{Q}}}
\newcommand{\bU}{{\mathbf{U}}}
\newcommand{\bV}{{\mathbf{V}}}
\newcommand{\bW}{{\mathbf{W}}}
\newcommand{\bX}{{\mathbf{X}}}
\newcommand{\bY}{{\mathbf{Y}}}
\newcommand{\Z}{{\mathbf{Z}}}
\newcommand{\fg}{{\mathfrak{g}}}
\newcommand{\fh}{{\mathfrak{h}}}
\newcommand{\fm}{{\mathfrak{m}}}
\newcommand{\fn}{{\mathfrak{n}}}
\newcommand{\fp}{{\mathfrak{p}}}
\newcommand{\fq}{{\mathfrak{q}}}
\newcommand{\fM}{{\mathfrak{M}}}
\newcommand{\fN}{{\mathfrak{N}}}
\newcommand{\fY}{{\mathfrak{Y}}}
\newcommand{\fZ}{{\mathfrak{Z}}}
\newcommand{\cD}{{\cmcal{D}}}
\newcommand{\cE}{{\cmcal{E}}}
\newcommand{\cF}{{\cmcal{F}}}
\newcommand{\cG}{{\cmcal{G}}}
\newcommand{\cH}{{\cmcal{H}}}
\newcommand{\cI}{{\cmcal{I}}}
\newcommand{\cS}{{\cmcal{S}}}
\newcommand{\cT}{{\cmcal{T}}}
\newcommand{\scC}{{\mathscr{C}}}
\newcommand{\scG}{{\mathscr{G}}}
\newcommand{\scH}{{\mathscr{H}}}
\newcommand{\bbA}{\mathbb{A}}
\newcommand{\bbB}{\mathbb{B}}
\newcommand{\bbD}{\mathbb{D}}
\newcommand{\bbM}{\mathbb{M}}
\newcommand{\bbO}{\mathbb{O}}
\newcommand{\bbV}{\mathbb{V}}
\newcommand{\lla}{\left\langle}
\newcommand{\rra}{\right\rangle}
\newcommand{\tn}{\textnormal}
\newcommand{\zmod}[1]{{\Z/{#1}\Z}}
\newcommand{\arinj}{\ar@{^(->}}
\newcommand{\arsurj}{\ar@{->>}}
\newcommand{\arsub}{\ar@{}[r]|-*[@]{\subset}}
\newcommand{\arsup}{\ar@{}[r]|-*[@]{\supset}}
\newcommand{\arcap}{\ar@{}[d]|-*[@]{\subset}}
\newcommand{\arcup}{\ar@{}[u]|-*[@]{\subset}}
\newcommand{\arin}{\ar@{}[u]|-*[@]{\in}}
\newcommand{\Gal}{{\textnormal{Gal}}}
\newcommand{\Div}{{\textnormal{Div}}}
\newcommand{\Pic}{{\textnormal{Pic}}}
\newcommand{\GL}{{\textnormal{GL}}}
\newcommand{\SL}{{\textnormal{SL}}}
\newcommand{\sHom}{{\mathscr{H}\kern-.5pt om}}
\newcommand{\sExt}{{\mathscr{E}\kern-.5pt xt}}
\newcommand{\tor}{{\textnormal{tors}}}
\newcommand{\num}{{\textnormal{numerator}}}
\newcommand{\sqf}{{\textnormal{sf}}}
\newcommand{\nsqf}{{\textnormal{nsf}}}
\newcommand{\rad}{{\textnormal{rad}}}
\newcommand{\val}{\textnormal{val}}
\renewcommand{~}{\hspace*{0.5mm}}
\newcommand{\ov}{\overline}
\newcommand{\ms}{\medskip}
\newcommand{\sm}{\smallsetminus}
\newcommand{\Gcd}{\mathsf{GCD}}
\newcommand{\pw}{\mathsf{Pw}}
\newcommand{\pow}{\mathsf{Power}}
\newcommand{\rpw}{{{\bf r}\hyp\mathsf{Pw}}}
\mathchardef\hyp="2D
\newcommand{\xyv}[1]{\xymatrixrowsep{#1 pc}}
\newcommand{\xyh}[1]{\xymatrixcolsep{#1 pc}}
\newcommand{\qa}{{\quad \textnormal{and} \quad}}
\newcommand{\qqa}{{~ \textnormal{ and } ~}}
\newcommand{\mat}[4]{
 \left(  \begin{smallmatrix} #1 & #2 \\ #3 & #4 \end{smallmatrix} \right)}
\newcommand{\bmat}[4]{
  \left( \begin{array}{cc} #1 & #2 \\ #3 & #4 \end{array} \right)}
\newcommand{\vect}[2]{
 \left(  \begin{smallmatrix} #1 \\ #2 \end{smallmatrix} \right)}
\newcommand{\bect}[2]{
  \left[ \begin{smallmatrix} #1  \\ #2 \end{smallmatrix} \right]}
\newcommand{\br}[1]{\left\langle #1 \right\rangle}
\newcommand{\leg}[2]{\genfrac(){}{}{#1}{#2}}
\newcommand{\Qdiv}[1]{\Div^0_{\textnormal{cusp}}(X_0(#1))({\Q})}
\newcommand{\Qdivv}[1]{\Div_{\textnormal{cusp}}(X_0(#1))({\Q})}
\newcommand{\vtl}{\vartriangleleft}
\newcommand{\pmodo}[1]{~~(\textnormal{mod}~~ {#1})}
\newcommand{\modo}[1]{~~\textnormal{mod}~~ {#1}}
\renewcommand{\pmod}[1]{~~(\textnormal{mod}~~ {#1})}
\renewcommand{\gcd}{\textnormal{gcd}}
\renewcommand{\max}{\textnormal{max}}
\renewcommand{\min}{\textnormal{min}}
\renewcommand{\det}{\textnormal{det}}
\begin{document}

\title{The rational cuspidal divisor class group of $X_0(N)$}


\author{Hwajong Yoo}
\address{College of Liberal Studies and Research Institute of Mathematics, Seoul National University, Seoul 08826, South Korea}
\email{\textnormal{hwajong@snu.ac.kr}}
\thanks{}

\date{}

\subjclass[2020]{Primary 11G16, 11G18, 14G05}

\keywords{Rational torsion subgroup, Rational cuspidal subgroup, Rational cuspidal divisor class group}


\begin{abstract}
For any positive integer $N$, we completely determine the structure of the rational cuspidal divisor class group of $X_0(N)$, which is conjecturally equal to the rational torsion subgroup of $J_0(N)$. 
More specifically, for a given prime $\ell$, we construct a rational cuspidal divisor $Z_\ell(d)$ for any non-trivial divisor $d$ of $N$. Also, we compute the order of the linear equivalence class of $Z_\ell(d)$ and show that the $\ell$-primary subgroup of the rational cuspidal divisor class group of $X_0(N)$ is isomorphic to the direct sum of the cyclic subgroups generated by the linear equivalence classes of $Z_\ell(d)$.
\end{abstract}

\maketitle

\setcounter{tocdepth}{1}
\tableofcontents

\section{Introduction}\label{chapter1}
Let $N$ be a positive integer and let $\Gamma_0(N)$ be the congruence subgroup of $\SL_2(\Z)$ consisting of matrices that are upper-triangular modulo $N$. The complete modular curve $X_0(N)_{\C}$ is the union of the affine modular curve $Y_0(N)_{\C}=\Gamma_0(N)\backslash \cH$ and a finite set of cusps, where $\cH$ is the complex upper half plane and $\Gamma_0(N)$ acts on $\cH$ by linear fractional transformation. The curve $X_0(N)_{\C}$ has a canonical nonsingular projective model $X_0(N)$ defined over $\Q$ (cf.  \cite[Ch. 6]{Shi71}), and in this model the set of cusps is invariant under the action of $\Gal(\ov{\Q}/\Q)$, the absolute Galois group of $\Q$. 

Let $J_0(N):=\Pic^0(X_0(N))$ be the Jacobian variety of $X_0(N)$ and $J_0(N)(\Q)_\tor$ its rational torsion subgroup. We would like to understand the group $J_0(N)(\Q)_\tor$ for any positive integer $N$, but there is no systematic way for doing it yet. On the other hand, if $N$ is a prime, Mazur proved the following, which was known as Ogg's conjecture \cite[Th. 1]{M77}.
\begin{theorem}[Mazur]
Let $N\geq 5$ be a prime number, and let $n=\num\left(\frac{N-1}{12}\right)$. 
The rational torsion subgroup $J_0(N)(\Q)_\tor$ is a cyclic group of order $n$, generated by the linear equivalence class of the difference of the two cusps $(0)-(\infty)$.
\end{theorem}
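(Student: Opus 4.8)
The plan is to prove the two estimates $\#J_0(N)(\Q)_\tor\geq n$ and $\#J_0(N)(\Q)_\tor\leq n$ separately, the first being elementary and the second being the substance. For the lower bound, recall that $\Delta(N\tau)/\Delta(\tau)$ (with $\Delta$ the weight‑$12$ cusp form) is a modular unit on $X_0(N)$ whose divisor is supported on the two $\Q$‑rational cusps $0$ and $\infty$ and equals a nonzero integer multiple of $(0)-(\infty)$; computing its order of vanishing shows that the class $c:=[(0)-(\infty)]\in J_0(N)(\Q)$ has order exactly $n=\num\!\left(\tfrac{N-1}{12}\right)$, so the cuspidal subgroup $C:=\langle c\rangle$ is a cyclic subgroup of $J_0(N)(\Q)_\tor$ of order $n$. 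Everything else is devoted to the reverse inclusion $J_0(N)(\Q)_\tor\subseteq C$.

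Let $\bbT\subseteq\End(J_0(N))$ be the Hecke algebra generated by the $T_p$, and let $I\subseteq\bbT$ be the Eisenstein ideal generated by $\{\,T_p-1-p:p\neq N\,\}$; one computes $\bbT/I\cong\Z/n\Z$, so the maximal ideals of $\bbT$ containing $I$ are exactly $\fm_\ell:=(I,\ell)$ for primes $\ell\mid n$, each with residue field $\bbF_\ell$. The first real step is that rational torsion is \emph{Eisenstein}. A rational torsion point $t$ has order prime to $N$ and generates a constant subgroup scheme, so for every $p\neq N$ the reduction map is injective on $\langle t\rangle$ even when $p\mid\mathrm{ord}(t)$; hence $t$ maps to a $\Frob_p$‑fixed point of $\ov{J_0(N)}_{\bbF_p}$, and the Eichler--Shimura congruence $\Frob_p^{\,2}-T_p\,\Frob_p+p=0$ forces $(T_p-1-p)\,t=0$. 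Letting $p$ vary gives $t\in J_0(N)[I]$; in particular $J_0(N)(\Q)_\tor$ has no $\ell$‑part unless $\ell\mid n$ (using $\gcd(n,N)=1$), and it remains to show $J_0(N)(\Q)[\ell]=C[\ell]$ for each $\ell\mid n$ and then to control higher powers.

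The crux --- and the step I expect to be the main obstacle --- is Mazur's multiplicity‑one statement $\dim_{\bbF_\ell}J_0(N)[\fm_\ell]=2$. One direction is soft: $J_0(N)[\fm_\ell]$ contains the cuspidal line $C[\fm_\ell]\cong\Z/\ell$, on which $\Gal(\ov\Q/\Q)$ acts trivially, and a line inside the Shimura subgroup of $J_0(N)$ (the kernel of $J_0(N)\to J_1(N)$), which is Galois‑isomorphic to $\mu_\ell$; for $\ell$ odd these are non‑isomorphic $\Gal(\ov\Q/\Q)$‑modules, so $\dim\geq 2$. The opposite inequality is the heart of the matter: it requires analyzing $J_0(N)$ in characteristic $N$, where by Deligne--Rapoport $X_0(N)$ degenerates to two copies of $X(1)_{\bbF_N}\cong\bbP^1$ meeting transversally at the supersingular points, and bounding the $\fm_\ell$‑torsion through the toric part of the N\'eron model and the Hecke action on the group of connected components. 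Granting it, $J_0(N)[\fm_\ell]$ is a non‑split extension of $\Z/\ell$ by $\mu_\ell$, so its $\Gal(\ov\Q/\Q)$‑invariants are exactly $C[\fm_\ell]$; since a rational torsion point killed by $\ell$ and by $I$ is killed by $\fm_\ell$, this gives $J_0(N)(\Q)[\ell]=C[\ell]$ for every odd $\ell\mid n$.

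It then follows that $J_0(N)(\Q)_\tor$ has $\ell$‑rank at most one at every prime, hence is cyclic, and it contains $C$ of order $n$; to see it is no larger one passes to the $\fm_\ell$‑adic Tate module, which multiplicity one (equivalently, the Gorenstein property of $\bbT_{\fm_\ell}$) shows to be free of rank two over $\bbT_{\fm_\ell}$ with $\Gal(\ov\Q/\Q)$ acting reducibly via the trivial and cyclotomic characters; the ramification at $N$ of the corresponding extension --- equivalently, the non‑divisibility of $c$ inside $J_0(N)[I]$ --- then pins the exponent of the $\ell$‑part to $v_\ell(n)$, so $J_0(N)(\Q)_\tor=C$ is cyclic of order $n$ generated by $[(0)-(\infty)]$. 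The prime $\ell=2$ must be handled separately, because $\Z/2\cong\mu_2$ as Galois modules and the cuspidal and Shimura lines can no longer be separated by Galois alone; there one rules out extra rational $2$‑torsion using the component group of the reduction at $N$ together with descent‑type constraints.
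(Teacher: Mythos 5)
The paper does not prove this statement: it is quoted as Mazur's theorem with a citation to \cite{M77}, so there is no in-paper argument to compare against, and what you have written is a reconstruction of Mazur's own proof. As a roadmap it is faithful to that proof (order of the cuspidal class via eta quotients, the Eisenstein ideal with $\bbT/I\cong\Z/n\Z$, ``rational torsion is Eisenstein'' via Eichler--Shimura, multiplicity one and the Gorenstein property of $\bbT_{\fm_\ell}$, and a separate analysis at small primes). But it is an outline rather than a proof, and it contains gaps beyond the two inputs you explicitly grant.

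Concretely: (i) the claim that reduction modulo $p$ is injective on $\langle t\rangle$ ``even when $p\mid\mathrm{ord}(t)$'' because $t$ generates a constant subgroup scheme is not valid as stated --- the scheme-theoretic closure in the N\'eron model of a subgroup that is constant over $\Q$ need not be \'etale over $\Z_p$ (over $\Z_2$ one has $\mu_2\cong\Z/2\Z$ as Galois modules), and the kernel of reduction can contain $2$-torsion; this is exactly why $\ell=2$ is delicate in Mazur's argument, so it cannot be dismissed at this step and then deferred again at the end. (ii) The assertion that a rational torsion point has order prime to $N$ is stated without justification; in Mazur's treatment it comes out of the analysis of the N\'eron model and component group at $N$, i.e.\ it is downstream of the hard geometry, not prior to it. (iii) The statements you grant --- $\dim_{\bbF_\ell}J_0(N)[\fm_\ell]=2$, the Gorenstein property, and the final pinning of the exponent of the $\ell$-part to $\tn{val}_\ell(n)$ --- are the actual content of the theorem and occupy most of Chapters II--III of \cite{M77}; once they are granted, what remains is bookkeeping. (iv) A smaller point on the lower bound: $\Delta(N\tau)/\Delta(\tau)$ is in general not itself $\Gamma_0(N)$-invariant (Ligozat's congruence $N-1\equiv 0\pmod{24}$ can fail), and to see that the order of $(0)-(\infty)$ is exactly $n$ rather than a proper divisor of it one needs the converse statement that every modular unit on $X_0(N)$ with constant vanishing order along each Galois orbit of cusps is an eta quotient up to constant; this is precisely what Section \ref{chapter3} of the present paper supplies (Theorem \ref{theorem: rational modular units} together with Theorem \ref{theorem: 3.18}, which gives the order $\frac{N-1}{\gcd(12,\,N-1)}$ for prime $N$).
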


Let $\scC_N$ be the \textit{cuspidal subgroup} of $J_0(N)$, which is defined as a subgroup of $J_0(N)(\ov{\Q})$ generated by the linear equivalence classes of the differences of cusps. By the theorem of Manin \cite[Cor. 3.6]{Ma72} and Drinfeld \cite{Dr73}, we have $\scC_N(\Q) \subseteq J_0(N)(\Q)_\tor$, where $\scC_N(\Q)$ is the group of the rational points on $\scC_N$, called the \textit{rational cuspidal subgroup} of $J_0(N)$. As a natural generalization of Mazur's theorem, we expect the following.
\begin{conjecture}[Generalized Ogg's conjecture]\label{conjecture: GOC}
For any positive integer $N$, we have
\begin{equation*}
J_0(N)(\Q)_\tor=\scC_N(\Q).
\end{equation*}
\end{conjecture}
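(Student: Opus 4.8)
The plan is to prove the two inclusions separately. The inclusion $\cC_N(\Q) \subseteq J_0(N)(\Q)_\tor$ is already available from the theorem of Manin and Drinfeld quoted above, so the entire content lies in the reverse inclusion $J_0(N)(\Q)_\tor \subseteq \cC_N(\Q)$. Since both groups are finite, it suffices to establish this inclusion after localizing at each prime $\ell$; that is, I would fix $\ell$ and prove $J_0(N)(\Q)[\ell^\infty] \subseteq \cC_N(\Q)[\ell^\infty]$. The structure theorem for the rational cuspidal divisor class group established in this paper gives the order and group structure of the right-hand side explicitly, so the goal reduces to an upper bound: to show that $J_0(N)(\Q)[\ell^\infty]$ is no larger than the cuspidal group whose order we have just computed.

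The first key step is to show that every $\ell$-power rational torsion point is \emph{Eisenstein}. Let $\T$ denote the Hecke algebra acting on $J_0(N)$. Because $\Gal(\ov{\Q}/\Q)$ acts trivially on a rational torsion point, the mod-$\ell$ Galois representation attached to any system of Hecke eigenvalues occurring in $J_0(N)(\Q)[\ell]$ contains the trivial character as a subrepresentation, hence is reducible; by the determinant condition coming from the Weil pairing, its other Jordan--H\"older character is forced to be the mod-$\ell$ cyclotomic character, which is exactly the Eisenstein shape. I would use this to conclude that such a system is congruent modulo $\ell$ to the eigenvalue system of an Eisenstein series of level $N$, i.e. that $J_0(N)(\Q)[\ell^\infty]$ is annihilated by a power of an Eisenstein ideal $I \subseteq \T$. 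This confines the problem to the Eisenstein part of the torsion.

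The second key step is to bound the Eisenstein torsion from above and match it against the cuspidal group. Here I would inject $J_0(N)(\Q)[\ell^\infty]$ into the $\ell$-primary part of a special fibre: either via good reduction at a well-chosen auxiliary prime $p \nmid N$, or via the component groups $\Phi_p$ at primes $p \mid N$ through the N\'eron model and the Deligne--Rapoport / Edixhoven description. Computing the action of $\T/I$ there, and carrying out the winding-element (modular-symbol) calculation that generalizes Mazur's argument, should identify the index of the Eisenstein ideal with an order attached to constant terms of Eisenstein series; comparing this index against the explicit order from the structure theorem would yield $\#J_0(N)(\Q)[\ell^\infty] \le \#\cC_N(\Q)[\ell^\infty]$. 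Combined with the Manin--Drinfeld inclusion, this forces equality $\ell$-primarily, and hence for all $\ell$.

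The hard part will be the upper bound at the primes $\ell = 2$ and $\ell = 3$ and at non-squarefree levels $N$. For such $\ell$ the Eisenstein quotient can acquire spurious torsion coming from elliptic points and from the more intricate structure of the special fibres and their component groups, and the clean reducibility and multiplicity-one inputs that make Mazur's prime-level argument run are no longer automatic. Pinning down the Eisenstein part sharply enough to exclude any rational torsion beyond the cuspidal divisor classes --- precisely at these small primes and composite levels --- is where the main difficulty is concentrated, and it is the step on which the full strength of the explicit computation of $\cC_N(\Q)$ in this paper would have to be brought to bear.
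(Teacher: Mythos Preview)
This statement is a \emph{conjecture}, not a theorem, and the paper does not contain a proof of it. The paper explicitly presents it as the Generalized Ogg's Conjecture, notes only that ``there are some partial results'' with references, and then moves on to its actual goal: computing the structure of the rational cuspidal divisor class group $\cC(N)$, which is conjecturally (Conjecture~\ref{conjecture: Yoo and Ribet}) equal to $\cC_N(\Q)$. Nowhere in the paper is the inclusion $J_0(N)(\Q)_\tor \subseteq \cC_N(\Q)$ established for general $N$.

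Your outline is a fair description of the Mazur-style strategy that underlies the known partial results, and you correctly identify where the genuine obstructions lie: the primes $\ell=2,3$ and non-squarefree level. But that final paragraph is not a ``hard step'' in a proof --- it is the entire open problem. The Eisenstein-ideal upper bound you describe has only been made to work in restricted situations (prime level, certain squarefree levels, away from small primes, etc.), and there is at present no argument that carries it through for arbitrary $N$ and all $\ell$. In particular, the paper's explicit computation of $\cC(N)$ does not by itself supply the missing upper bound on $J_0(N)(\Q)[\ell^\infty]$; it only tells you what the answer should be. So what you have written is a plausible research program, not a proof, and you should not present it as one.
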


Let $\Div^0_{\textnormal{cusp}}(X_0(N))$ be the group of the degree $0$ cuspidal divisors\footnote{By a \textit{cuspidal divisor}, we mean a divisor on $X_0(N)$ supported only on cusps.}  on $X_0(N)$. 
By definition, there is an exact sequence:
\begin{equation}\label{0}
\xymatrix{
0 \ar[r] & U_N \ar[r] & \Div^0_{\textnormal{cusp}}(X_0(N)) \ar[r] & \scC_N \ar[r] & 0,
}
\end{equation}
where $U_N$ is the group of the divisors of modular units\footnote{By a \textit{modular unit on $X_0(N)$}, we mean a meromorphic function on $X_0(N)_{\C}$ which does not have zeros and poles on $Y_0(N)_{\C}$.}. 
In fact, $\Gal(\ov{\Q}/\Q)$ acts naturally on the objects in the exact sequence and (\ref{0}) is an exact sequence of $\Gal(\ov{\Q}/\Q)$-modules. Taking Galois cohomology we get
\begin{equation*}
\xymatrix{
\Qdiv N \ar[r]^-{\pi} & \scC_N(\Q) \ar[r] & H^1(\Gal(\ov{\Q}/\Q),  \, U_N),
}
\end{equation*}
where
\begin{equation*}
\Qdiv N:=H^0(\Gal(\ov{\Q}/\Q), \, \Div^0_{\textnormal{cusp}}(X_0(N)))
\end{equation*}
is the group of the degree $0$ rational cuspidal divisors\footnote{By a \textit{rational cuspidal divisor}, we mean a cuspidal divisor fixed by the action of $\Gal(\ov{\Q}/\Q)$.} on $X_0(N)$.
The image of $\pi$ is called the \textit{rational cuspidal divisor class group} of $X_0(N)$, denoted by $\scC(N)$. In other words, $\scC(N)$ is a subgroup of $J_0(N)(\Q)$ generated by the linear equivalence classes of the degree $0$ rational cuspidal divisors on $X_0(N)$. Ken Ribet asked whether the map $\pi$ is surjective, or more generally $H^1(\Gal(\ov{\Q}/\Q), \, U_N)=0$.\footnote{For a discussion of a similar problem, see \cite{Ol70} and \cite[Th. 4.13]{Ta14}.}
Motivated by his question and the result of Toshikazu Takagi \cite[Th. 1.1]{Ta14}, we propose the following.

\begin{conjecture}\label{conjecture: Yoo and Ribet}
For any positive integer $N$, we have
\begin{equation*}
\scC(N)=\scC_N(\Q).
\end{equation*}
\end{conjecture}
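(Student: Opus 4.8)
The plan is to establish the two inclusions of the claimed equality separately. The inclusion $\cC(N)\subseteq\cC_N(\Q)$ is automatic, since by construction $\cC(N)$ is the image of $\pi\colon\Qdiv N\to\cC_N(\Q)$, so every class in $\cC(N)$ already lies in $\cC_N(\Q)$. Hence all of the content of Conjecture \ref{conjecture: Yoo and Ribet} is the reverse inclusion $\cC_N(\Q)\subseteq\cC(N)$, that is, the surjectivity of $\pi$ onto $\cC_N(\Q)$: every rational point of the cuspidal subgroup should be representable by a genuinely rational cuspidal divisor of degree $0$. To get at this I would feed the exact sequence (\ref{0}) into the long exact sequence of $\Gal(\ov{\Q}/\Q)$-cohomology, obtaining
\[
\Qdiv N \xrightarrow{\ \pi\ } \cC_N(\Q) \xrightarrow{\ \delta\ } H^1\!\left(\Gal(\ov{\Q}/\Q),\,U_N\right) \longrightarrow H^1\!\left(\Gal(\ov{\Q}/\Q),\,\Div^0_{\textnormal{cusp}}(X_0(N))\right).
\]
Since $\cC(N)$ is exactly the image of $\pi$, the connecting map $\delta$ induces a canonical injection $\cC_N(\Q)/\cC(N)\hookrightarrow H^1(\Gal(\ov{\Q}/\Q),\,U_N)$. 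Thus Conjecture \ref{conjecture: Yoo and Ribet} would follow from Ribet's stronger vanishing assertion $H^1(\Gal(\ov{\Q}/\Q),\,U_N)=0$, and a fortiori from the weaker statement that $\delta$ annihilates the rational cuspidal subgroup; I would aim for the vanishing, and fall back on the weaker statement only if forced.

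The second step is to replace $\Gal(\ov{\Q}/\Q)$ by a finite group. The Galois action on the cusps of $X_0(N)$, and hence on $\Div_{\textnormal{cusp}}(X_0(N))$, on $U_N$, and on $\cC_N$, factors through $\Gal(\Q(\zeta_N)/\Q)\cong(\Z/N\Z)^\times$. Moreover $U_N$ is a free abelian group of finite rank: it has finite index in the free group $\Div^0_{\textnormal{cusp}}(X_0(N))$, because the quotient $\cC_N$ is finite by Manin--Drinfeld. As some open subgroup of $\Gal(\ov{\Q}/\Q)$ acts trivially on this finitely generated torsion-free module, inflation--restriction together with the vanishing of continuous homomorphisms from a profinite group to $\Z^r$ identifies $H^1(\Gal(\ov{\Q}/\Q),\,U_N)$ with the finite group $H^1((\Z/N\Z)^\times,\,U_N)$. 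Likewise $\Div_{\textnormal{cusp}}(X_0(N))$ is a permutation $(\Z/N\Z)^\times$-module, and since $(\infty)$ is a rational cusp the cusp orbits have sizes with greatest common divisor $1$, which forces $H^1((\Z/N\Z)^\times,\,\Div^0_{\textnormal{cusp}}(X_0(N)))=0$. So the whole matter is reduced to the purely algebraic assertion $H^1((\Z/N\Z)^\times,\,U_N)=0$ (or, more modestly, to the vanishing of the image of $\delta$ in this group).

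To carry out this computation one needs an explicit $(\Z/N\Z)^\times$-equivariant description of the modular-unit lattice $U_N$. The natural input is the theory of Siegel units and of $\eta$-quotients on $X_0(N)$: by the work of Ligozat, Kubert--Lang and Takagi the divisors of such units are computable from cusp data, and they span a subgroup of $U_N$ of explicit (and often full) index. Using such generators one would try to exhibit $U_N$ --- or a $(\Z/N\Z)^\times$-submodule of index prime to $|\cC_N|$ --- as an iterated extension of permutation modules, for which $H^1$ vanishes, and then to control the residual torsion. In practice it is cleaner to argue one prime $\ell$ at a time, exactly as in the rest of the paper: using the explicit rational cuspidal divisors $Z_\ell(d)$ and the computation of their orders one pins down $|\cC(N)[\ell^\infty]|$, and one then compares this with an independent upper bound for $|\cC_N(\Q)[\ell^\infty]|\leq|J_0(N)(\Q)_{\tor}[\ell^\infty]|$ --- so that equality of orders forces the injection above to be onto, hence an isomorphism.

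The main obstacle is precisely this last ingredient for composite $N$: producing an independent upper bound on $\cC_N(\Q)$ inside $J_0(N)(\Q)_{\tor}$, equivalently proving the general vanishing $H^1((\Z/N\Z)^\times,\,U_N)=0$. When $N$ is a prime power the cusps and the unit lattice are transparent and the cohomology can be computed by hand, but for general $N$ the combinatorics of the cusps of $X_0(N)$ and the relations among $\eta$-quotients make the $(\Z/N\Z)^\times$-module structure of $U_N$ genuinely delicate; this is where the real work --- and the current gap between what can be proved unconditionally and the full conjecture --- resides.
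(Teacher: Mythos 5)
The statement you were asked about is a \emph{conjecture} in the paper (it is precisely the question of Ribet, in the form proposed by the author, motivated by Takagi's result), and the paper offers no proof of it; nothing in the body of the paper establishes $\cC_N(\Q)\subseteq\cC(N)$ for general $N$. Your write-up is honest about this, and the reductions you do carry out are correct: the inclusion $\cC(N)\subseteq\cC_N(\Q)$ is indeed tautological; the long exact sequence attached to (\ref{0}) does give a canonical injection $\cC_N(\Q)/\cC(N)\hookrightarrow H^1(\Gal(\ov{\Q}/\Q),U_N)$; the action factors through $\Gal(\Q(\mu_N)/\Q)$ by Stevens' theorem, $U_N$ is free of finite rank because $\cC_N$ is finite by Manin--Drinfeld, so inflation--restriction reduces everything to the finite group $(\zmod N)^\times$; and $H^1$ of $\Div^0_{\textnormal{cusp}}(X_0(N))$ vanishes because the permutation module $\Div_{\textnormal{cusp}}(X_0(N))$ has trivial $H^1$ and the degree map is already surjective on Galois invariants (the cusp $\infty$ is rational). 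All of this is a faithful unpacking of the two displayed sequences in the introduction.

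The genuine gap is the one you name yourself: the vanishing of $H^1((\zmod N)^\times,U_N)$ (equivalently, of the image of the connecting map on $\cC_N(\Q)$) is exactly the open content of the conjecture, and neither of your two proposed routes closes it. The structural route requires exhibiting $U_N$, up to index prime to $|\cC_N|$, as an iterated extension of permutation modules; for composite non-squarefree $N$ not all modular units are eta quotients (the paper's Theorem \ref{theorem: rational modular units} only covers units whose vanishing order at a cusp of level $d$ is independent of $x$), so you do not even have explicit equivariant generators of $U_N$ to work with. The counting route requires an \emph{independent upper bound} on $|\cC_N(\Q)[\ell^\infty]|$; the paper computes $|\cC(N)[\ell^\infty]|$ exactly, but an upper bound on the possibly larger group $\cC_N(\Q)$ is not available from these methods, and bounding it by $|J_0(N)(\Q)_{\tor}[\ell^\infty]|$ only trades one open conjecture (this one) for another (generalized Ogg). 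So your proposal should be read as a correct framing of why the statement is a conjecture, not as a proof of it.
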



Thus, it is worth understanding the structure of the group $\scC(N)$ for any positive integer $N$. Although the group $\scC(N)$ is a very explicit object, not much is known about
its precise structure due to a lack of efficient tools. To the author's best knowledge, the structure of $\scC(N)$ for a composite integer $N$ (which is large enough) has been computed for the following cases:
\begin{enumerate}
\item
$N$ is the product of two distinct primes by Chua and Ling \cite{CL97}.
\item
$N$ is a power of an odd prime $p$ by Lorenzini \cite{Lo95} and Ling \cite{Li97}.
\item
$N$ is a power of $2$  by Rouse and Webb \cite[Th. 10]{RW15}.
\item
$N$ is squarefree by Takagi \cite[Th. 6.1]{Ta97}.\footnote{In fact, Takagi computed the precise order of the group $\scC(N)$, but he did not determine the structure of the $2$-primary subgroup of $\scC(N)$, which is a motivation of this paper.}
\end{enumerate}

\begin{remark}
There are some partial results for Conjectures \ref{conjecture: GOC} and \ref{conjecture: Yoo and Ribet}, e.g.  \cite{Box, Li97, Lo95, Og73, Oh14, OS19, Po87, Re18, WY20, Yoo5}.
For a thorough discussion and a new result of the conjectures above, see \cite{Yoo10}.
\end{remark}

\ms
\subsection{Main result}
In this paper, we completely determine the structure of the $\ell$-primary subgroup of the group $\scC(N)$ for any positive integer $N$ and any prime $\ell$. Throughout this paper, we denote by $A[\ell^\infty]$ the $\ell$-primary subgroup of a finite abelian group $A$.

\ms
For any non-trivial divisor\footnote{By a \textit{non-trivial divisor} of $N$, we mean a positive divisor of $N$ different from $1$.} $d$ of $N$, we try to find a degree $0$ rational cuspidal divisor $D_d$ such that
\begin{equation*}
\scC(N) \simeq \moplus_{d \in \cD_N^0} \br{\ov{D_d}},
\end{equation*}
where $\cD_N^0$ is the set of all non-trivial divisors of $N$ and $\ov{D_d}$ denotes the linear equivalence class of $D_d$. 
Although it is possible when $N$ is a prime power, 
it seems already difficult if $N$ is the product of two primes. Nonetheless, we almost solve this problem by leaving the ``squarefree part'' aside. Let $\cD_N^\sqf$ be the set of all non-trivial squarefree divisors of $N$ and let $\cD_N^\nsqf$ be the set of all non-squarefree divisors of $N$.
\begin{theorem}\label{theorem: main theorem 1}
Let $N$ be a positive integer.
For any non-trivial divisor $d$ of $N$, there is a degree $0$ rational cuspidal divisor $Z(d)$ such that
\begin{equation*}
\scC(N) \simeq \br{\ov{Z(d)} : d \in \cD_N^\sqf} \moplus\left(\moplus_{d \in \cD_N^\nsqf} \br{\ov{Z(d)}}\right).
\end{equation*}
Also, the order of $\ov{Z(d)}$ is $\fn(N, d)$, which is defined in Section \ref{section: intro notation}.
\end{theorem}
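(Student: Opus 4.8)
The plan is to prove the statement one prime at a time. Since $\cC(N)$ is finite, $\cC(N)=\moplus_\ell \cC(N)[\ell^\infty]$, and the divisor $Z(d)$ will be assembled from $\ell$-local pieces $Z_\ell(d)$; so I fix a prime $\ell$ and aim to show that $\cC(N)[\ell^\infty]$ is the direct sum of the cyclic groups $\br{\ov{Z_\ell(d)}}$ with $d$ non-squarefree, together with the subgroup generated by the $\ov{Z_\ell(d)}$ with $d$ squarefree, and that $\#\br{\ov{Z_\ell(d)}}$ equals the $\ell$-part of $\fn(N,d)$. Everything happens inside two explicit lattices. On one side, $\Qdivv N$ is free with basis the Galois-orbit sums $P_d$ — the sum of all cusps of $X_0(N)$ of denominator $d$ — as $d$ runs over the divisors of $N$: here one invokes the classical parametrization of the cusps and the fact that $\Gal(\ov\Q/\Q)$ permutes the cusps of a fixed denominator $d$ transitively through the cyclotomic action on $\zeta_{\gcd(d,N/d)}$, so that $\Qdiv N$ is free of rank $\sigma_0(N)-1$, where $\sigma_0(N)$ is the number of positive divisors of $N$. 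On the other side, the group $U_N$ of divisors of modular units is, by the theory of Kubert--Lang and Ligozat, generated by divisors of eta-quotients $\prod_{d\mid N}\eta(d\tau)^{r_d}$ subject to Ligozat's congruences, with the order of vanishing at each cusp given by Ligozat's explicit formula. Thus $\cC(N)=\Qdiv N/(\Qdiv N\cap U_N)$ is in principle a finite exercise in integral linear algebra, and the theorem asserts that it can be carried out cleanly.

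First I would construct $Z_\ell(d)$. For each non-trivial $d\mid N$ I take a degree-$0$ integral combination $Z_\ell(d)=\sum_{d'}c^{(d)}_{d'}P_{d'}$ chosen so that (i) only the $P_{d'}$ with $d'$ comparable to $d$ under divisibility occur, with leading coefficient an $\ell$-unit, and (ii) the behaviour at the primes dividing $d$ is arranged so that the order of $\ov{Z_\ell(d)}$ decouples from the remaining directions — concretely, so that Ligozat's (huge) divisor matrix becomes block-triangular after passing from the basis $\{P_d\}$ to $\{Z_\ell(d)\}$. The order of $\ov{Z_\ell(d)}$ is then the least $n$ with $n\,Z_\ell(d)\in U_N$, which I compute by solving the corresponding system of congruences for the exponents $r_{d'}$; the answer is an ``$L$-value'' factor — a numerator of a generalized Bernoulli number, in the spirit of Mazur's $\num\left(\frac{N-1}{12}\right)$ — times an explicit power of $2$ and $3$ produced by the $24$ in Ligozat's formula. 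Collecting these over all $\ell$ defines $\fn(N,d)$ and the divisor $Z(d)$.

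It then remains to establish the decomposition. For generation: by construction the divisors $Z_\ell(d)$, $d\in\cD_N^0$, span a subgroup of $\Qdiv N$ of index prime to $\ell$, so their classes generate $\cC(N)[\ell^\infty]$. For the direct-sum part: a relation $\sum_d a_d Z_\ell(d)\in U_N$ translates through Ligozat's formula into a congruence system whose block-triangular shape forces $\ell^{v_\ell(\fn(N,d))}\mid a_d$ for every \emph{non-squarefree} $d$, so $\moplus_{d\in\cD_N^\nsqf}\br{\ov{Z_\ell(d)}}$ sits inside $\cC(N)[\ell^\infty]$ as an internal direct sum. For squarefree $d$ one cannot in general split the corresponding block — $2$-adically it need not be diagonal, which is precisely the obstruction Takagi left open for squarefree $N$ — so one records only that the classes $\ov{Z_\ell(d)}$, $d\in\cD_N^\sqf$, generate a complementary subgroup, which yields the stated decomposition. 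A convenient consistency check is to match $\prod_d\#\br{\ov{Z_\ell(d)}}$ against the order of $\cC(N)[\ell^\infty]$, known from the cited works for prime powers and for squarefree $N$ and otherwise extractable from the determinant of the relation lattice; this pins down exactly how the two families of generators interact.

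The hardest part of this plan is the construction step together with the non-squarefree half of the last step: one must choose the $Z_\ell(d)$ so that Ligozat's divisor matrix becomes block-triangular with diagonal blocks whose elementary divisors — and in particular the exact powers of $2$ and $3$ — can be written down in closed form, and then prove that the blocks indexed by non-squarefree $d$ are scalar, so that the relation lattice there is the obvious diagonal one. The squarefree block genuinely fails to be diagonal, which is why the theorem keeps $\{Z(d):d\in\cD_N^\sqf\}$ bundled as a single subgroup rather than splitting it further.
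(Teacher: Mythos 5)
Your framing of the problem is the right one and matches the paper's setup: $\Qdivv N$ is free on the Galois-orbit sums $(P_d)$, the relevant relations come from eta quotients via Ligozat's criterion, and the whole computation is integral linear algebra between two explicit lattices. But as written the proposal has genuine gaps. First, the construction of $Z(d)$ is the entire content of the theorem, and your description of it is a list of desiderata (``chosen so that \dots the divisor matrix becomes block-triangular'') rather than a construction. The mechanisms that make this achievable in the paper are absent from your plan: the divisors are built by pulling back from lower levels via the degeneracy maps $\pi_1^*,\pi_2^*,\pi_{12}^*$ (which is what forces most entries of $\bbV(Z(d))$ to vanish), and the Chinese-remainder identification $\cS_2(N)_\Q\simeq\motimes_i\cS_2(p_i^{r_i})_\Q$ reduces everything to prime-power building blocks $\bA_p(r,f)$, $\bB_p(r,f)$ whose generation of $\cS_2(p^r)^0$ must be proved separately. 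Second, your plan does not engage with condition (4) of Ligozat's criterion (that $\prod_\delta\delta^{r_\delta}$ be a square), which is not a congruence in the exponents and is the source of every factor-of-$2$ difficulty; the paper packages it into the invariant $\fh\in\{1,2\}$ and needs $\fh(Z(d))=1$ for all non-squarefree $d$ except the top one for the splitting criterion to apply. This genuinely fails when $32\mid N$: the paper shows one \emph{cannot} choose replacements making the non-squarefree block behave as you want (no divisor of the required order with even $\pw_2$ exists), and it must instead prove a partial result and close the gap indirectly using that $\cC(2^r)$ is a $2$-group, that the genus of $X_0(16)$ is zero, and the already-established $2$-primary decomposition of the squarefree part. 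Your claim that the non-squarefree blocks can be made ``scalar'' would hit exactly this wall.

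Separately, the $\ell$-by-$\ell$ strategy does not by itself deliver the statement being proved. The theorem asserts an integral direct-sum decomposition with a \emph{single} divisor $Z(d)$ per $d\in\cD_N^\nsqf$ whose class has order exactly $\fn(N,d)$; if your $Z_\ell(d)$ are allowed to vary with $\ell$, ``collecting over all $\ell$'' produces neither a canonical $Z(d)$ nor the integral splitting unless the $Z_\ell(d)$ are prime-to-$\ell$ multiples of a common divisor (which is how the paper defines $Z_\ell(d)$ \emph{from} $Z(d)$, in the opposite direction to your plan). The paper avoids this by proving the non-squarefree part with an integral independence criterion (based on a coordinate $\delta$ where $|\bbV(Z(d))_\delta|=1$ together with $\fh=1$), reserving the $\ell$-adic variants for the squarefree part only. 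Finally, a small but real inaccuracy: it is not known in general that $U_N$ is generated by divisors of eta quotients; what Ligozat's work gives (and what suffices) is that a modular unit whose vanishing order is constant on each Galois orbit of cusps is an eta quotient, so that $\Qdiv N\cap U_N$ is the eta-quotient lattice.
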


Let $\scC(N)^\sqf:=\br{\ov{Z(d)} : d \in \cD_N^\sqf}$ be the ``squarefree part'' of the group $\scC(N)$. 
Since it seems difficult to directly find the decomposition of $\scC(N)^\sqf$ into cyclic groups, we deal with the decomposition of its $\ell$-primary subgroup instead.

\begin{theorem}\label{theorem: main theorem 2}
Let $N$ be a positive integer and let $\ell$ be any given prime.
For any non-trivial squarefree divisor $d$ of $N$, there is a degree $0$ rational cuspidal divisor $Y^2(d)$ such that
\begin{equation*}
\scC(N)^\sqf[\ell^\infty] \simeq \moplus_{d \in \cD_N^\sqf} \br{\ov{Y^2(d)}}[\ell^\infty].
\end{equation*}
Also, the order of $\ov{Y^2(d)}$ is $\fN(N, d)$, which is defined in Section \ref{section: intro notation}.
\end{theorem}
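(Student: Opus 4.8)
The plan is to reduce Theorem~\ref{theorem: main theorem 2} to two tasks: computing the order of each class $\ov{Y^2(d)}$, and proving that the $\ell$-primary subgroups $\br{\ov{Y^2(d)}}[\ell^\infty]$ sit inside $\cC(N)^\sqf[\ell^\infty]$ independently. Since $\cC(N)^\sqf$ is a finite abelian group, it is the direct sum of its $\ell$-primary subgroups, so one may fix $\ell$ throughout; and by Theorem~\ref{theorem: main theorem 1} the classes $\ov{Z(d)}$ with $d\in\cD_N^\sqf$ already generate $\cC(N)^\sqf$. The first step is to make everything linear-algebraic: using the parametrization of the cusps of $X_0(N)$ by their denominators $c\mid N$ together with Ligozat's criterion for a cuspidal divisor to be the divisor of a modular unit, one writes $\cC(N)^\sqf$ as the cokernel of an explicit integer matrix $\Lambda_N$ built from the orders of vanishing of a spanning family of modular units at the cusps. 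In these coordinates each $\ov{Y^2(d)}$ is a prescribed $\Z$-combination of the $\ov{Z(d')}$, and all the assertions become statements about $\Lambda_N$ over $\Z_{(\ell)}$.

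Second, I would construct $Y^2(d)$ by a triangular change of generating set with respect to the divisibility partial order on $\cD_N^\sqf$. Starting from $Z(d)$, one subtracts suitable $\Z_{(\ell)}$-multiples of the $\ov{Z(d')}$ for proper divisors $d'\mid d$ to obtain an intermediate divisor $Y^1(d)$, and then a second pass, keyed to the $\ell$-adic valuations of the orders $\fn(N,d')$, produces $Y^2(d)$ whose class isolates the $d$-component. Because each such move is invertible over $\Z_{(\ell)}$, the set $\{\ov{Y^2(d)} : d\in\cD_N^\sqf\}$ still generates $\cC(N)^\sqf[\ell^\infty]$ -- which is exactly what is needed for the direct-sum statement once independence is in hand -- and the order of $\ov{Y^2(d)}$ can be read off from the diagonal of the transformed relation matrix. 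I would then verify, by a direct computation with the $q$-expansions of the relevant modular units, that this order equals $\fN(N,d)$.

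The main obstacle is the directness of the sum $\sum_{d\in\cD_N^\sqf}\br{\ov{Y^2(d)}}[\ell^\infty]$, and I would approach it in one of two ways. The first is to produce, for every $d$, a homomorphism $\varphi_d\colon \cC(N)^\sqf[\ell^\infty]\to\Z/\fN(N,d)\Z$ annihilating $\ov{Y^2(d')}$ for all $d'\neq d$ and carrying $\ov{Y^2(d)}$ to a generator; natural candidates for the $\varphi_d$ are assembled from the degeneracy maps $X_0(N)\to X_0(N/p)$ and the Atkin--Lehner involutions $W_p$ acting on $\cC(N)$, which should localize a rational cuspidal divisor to a prescribed squarefree level. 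The second, more robust route is an order count: establish that $|\cC(N)^\sqf[\ell^\infty]| = \prod_{d\in\cD_N^\sqf}\fN(N,d)_\ell$ (the subscript denoting the $\ell$-part), so that generators of the prescribed orders must realize the group as their direct sum. This count amounts to determining the Smith normal form of $\Lambda_N$ over $\Z_{(\ell)}$, which I would attack by induction on the number of prime divisors of $N$, peeling off one prime via the degeneracy maps; the delicate point -- and the real content beyond the earlier work -- is to reconcile this divisor-by-divisor bookkeeping with the global order formulas (Takagi's, in the squarefree case) and, above all, to control the prime $\ell=2$, precisely the case left open in the literature.
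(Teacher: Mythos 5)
Your reduction (generation over $\Z_\ell$, plus independence, plus an order computation) and your linear-algebraic framing via Ligozat's criterion are the right starting points and match the skeleton of the paper's argument. But the two routes you offer for the crucial independence step both have gaps. The order-counting route presupposes the formula $|\cC(N)^\sqf[\ell^\infty]|=\prod_{d}\fN(N,d)_\ell$, i.e.\ the Smith normal form of the relation lattice over $\Z_\ell$; that computation is precisely the open problem (it is unknown even for $N$ squarefree when $\ell=2$, the case the paper singles out as its motivation), so this route is circular. The homomorphism route is speculative: degeneracy maps and Atkin--Lehner operators do not obviously produce functionals on $\cC(N)^\sqf[\ell^\infty]$ killing all $\ov{Y^2(d')}$ with $d'\neq d$, and no such maps appear in the paper. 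What the paper actually does is finer than either: it works with the integral vector $\bbV(C)=\Gcd(C)^{-1}\cdot\Upsilon(N)\times\Phi_N(C)$ \emph{upstairs} on the divisor group (its entries do not descend to homomorphisms on $\cC(N)$) and proves splitting criteria (Theorems \ref{thm: criterion 1}--\ref{thm: criterion 3}): if $\bbV(C_k)_\delta$ is an $\ell$-adic unit while $\bbV(C_i)_\delta=0$ for all $i<k$ (plus, when $\ell=2$, a parity condition on $\pw_p$), then $\br{\ov{C_k}}$ splits off. Independence is then reduced to exhibiting generators whose $\bbV$-matrix is lower $\ell$-unipotent for a suitable ordering of $\cD_N^\sqf$.

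Your proposed construction of $Y^2(d)$ is also not one that makes this work. You suggest modifying $Z(d)$ by multiples of $\ov{Z(d')}$ for proper divisors $d'\mid d$, triangularly with respect to divisibility. The divisors the paper needs are not of this shape: the essential new ingredient is $\bD(p_i^{r_i},p_j^{r_j})$, an $\ell$-unit combination of $\bB_{p_i}(r_i,1)\motimes\bA_{p_j}(r_j,0)$ and $\bA_{p_i}(r_i,0)\motimes\bB_{p_j}(r_j,1)$, which mixes two squarefree divisors that are incomparable under divisibility; moreover the whole construction depends on an $\ell$-dependent reordering of the prime divisors of $N$ (Assumption \ref{assumption 1.14}) and, when $N$ is even, on a twisted colexicographic ordering of $\cD_N^\sqf$, without which the matrix either fails to be triangular or its diagonal entries fail to be $\ell$-adic units. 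Proving that these modified generators still span $\br{\Z(d):d\in\cD_N^\sqf}\motimes_\Z\Z_\ell$ is itself a substantial induction (Section \ref{section: generation}), not an automatic consequence of invertible row operations. So while your outline correctly names the three subtasks, the actual mechanism for independence --- the one piece of genuinely new content in this theorem --- is missing from your proposal.
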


Thus, by Theorems \ref{theorem: main theorem 1} and \ref{theorem: main theorem 2}, we easily have the following.
\begin{theorem}
Let $N$ be a positive integer and let $\ell$ be any given prime. For any $d \in \cD_N^0$, there is a rational cuspidal divisor\footnote{
Let $a(d)$ and $b(d)$ be the prime-to-$\ell$ parts of $\fn(N, d)$ and $\fN(N, d)$, respectively. Then we define $Z_\ell(d):=a(d) \cdot Z(d)$ if $d$ is not squarefree, and $Z_\ell(d)=b(d) \cdot Y^2(d)$ otherwise.} $Z_\ell(d)$  such that
\begin{equation*}
\scC(N)[\ell^\infty] \simeq \moplus_{d \in \cD_N^0} \br{\ov{Z_\ell(d)}}.
\end{equation*}
\end{theorem}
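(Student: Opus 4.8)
The plan is to deduce the final theorem purely formally from Theorems \ref{theorem: main theorem 1} and \ref{theorem: main theorem 2}, so the proof should be short. First I would recall the elementary fact that for a finite abelian group $A$ and a prime $\ell$, the $\ell$-primary part functor $A \mapsto A[\ell^\infty]$ is exact and commutes with finite direct sums, and that if $A = \br{\ov{x}}$ is cyclic of order $m$ and $a$ is the prime-to-$\ell$ part of $m$, then $A[\ell^\infty] = \br{a \cdot \ov{x}}$, a cyclic group of order $m/a = \ell^{v_\ell(m)}$. This is the content of the footnote defining $Z_\ell(d)$, and it is the only arithmetic input needed.

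Next, I would apply $[\ell^\infty]$ to the decomposition in Theorem \ref{theorem: main theorem 1}:
\begin{equation*}
\cC(N)[\ell^\infty] \simeq \cC(N)^\sqf[\ell^\infty] \moplus_{d \in \cD_N^\nsqf} \br{\ov{Z(d)}}[\ell^\infty].
\end{equation*}
For each non-squarefree $d$, write $a(d)$ for the prime-to-$\ell$ part of the order $\fn(N,d)$ of $\ov{Z(d)}$; then $\br{\ov{Z(d)}}[\ell^\infty] = \br{a(d)\cdot\ov{Z(d)}} = \br{\ov{Z_\ell(d)}}$ by the cyclic-group fact above, with $Z_\ell(d) := a(d)\cdot Z(d)$. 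For the squarefree part, I substitute the decomposition of Theorem \ref{theorem: main theorem 2}, namely $\cC(N)^\sqf[\ell^\infty] \simeq \moplus_{d \in \cD_N^\sqf} \br{\ov{Y^2(d)}}[\ell^\infty]$, and for each squarefree $d$ set $b(d)$ to be the prime-to-$\ell$ part of the order $\fN(N,d)$ of $\ov{Y^2(d)}$, so that $\br{\ov{Y^2(d)}}[\ell^\infty] = \br{\ov{Z_\ell(d)}}$ with $Z_\ell(d) := b(d)\cdot Y^2(d)$. (I should note the harmless index shift: in the footnote $d'$ should be read as $d$.) Combining the two cases over all $d \in \cD_N^0 = \cD_N^\sqf \sqcup \cD_N^\nsqf$ yields the claimed isomorphism.

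I do not expect any genuine obstacle here: the statement is a formal corollary, and the only things to be careful about are (i) checking that each $Z_\ell(d)$ is again a degree $0$ rational cuspidal divisor, which is immediate since it is an integer multiple of such a divisor and the group of degree $0$ rational cuspidal divisors is closed under scaling, and (ii) making sure the order of $\ov{Z_\ell(d)}$ is recorded correctly, namely $\ell^{v_\ell(\fn(N,d))}$ in the non-squarefree case and $\ell^{v_\ell(\fN(N,d))}$ in the squarefree case. The one place where a reader might want more detail is the assertion that applying $[\ell^\infty]$ genuinely distributes over the (internal) direct sum in Theorem \ref{theorem: main theorem 1} even though that decomposition is only an abstract isomorphism rather than an internal direct sum of subgroups; but since $[\ell^\infty]$ is an additive functor on finite abelian groups this is automatic, so I would simply invoke it. Hence the proof is a two-line reduction to the two main theorems.
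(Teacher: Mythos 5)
Your proposal is correct and is essentially the paper's own argument: the paper derives this theorem as an immediate formal consequence of Theorems \ref{theorem: main theorem 1} and \ref{theorem: main theorem 2} by applying the $\ell$-primary part functor to both decompositions and using the fact that $\br{\ov{x}}[\ell^\infty]=\br{a\cdot\ov{x}}$ for $a$ the prime-to-$\ell$ part of the order, exactly as you do. Your observation that the $d'$ in the footnote should read $d$ is also right; it is a harmless typo.
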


As an application, we can determine the structure of the cuspidal group $\scC_N$ when $N=4M$ or $8M$ for $M$ odd squarefree because two groups $\scC(N)$ and $\scC_N$ are equal.

\ms
\subsection{Construction of the divisors $Z(d)$ and $Y^2(d)$}\label{section: intro strategy}
Before describing our method for computing the group $\scC(N)$, we review previous methods and point out some difficulties in their generalization.
For a divisor $d$ of $N$, there is a rational cuspidal divisor $(P_d)$ on $X_0(N)$ whose degree is equal to $\varphi(z)$, where $z=\gcd(d, N/d)$ and $\varphi(n)$ is the Euler's totient function. By Lemma \ref{lemma: group of rational cuspidal divisors}, the group $\Qdiv N$ is generated by 
\begin{equation*}
C_d:=\varphi(z)\cdot (P_1)-(P_d) ~~ \text{ for any } d\in \cD_N^0.
\end{equation*}
A starting point to understand the group $\scC(N)$ is to compute the order\footnote{By the \textit{order} of a degree $0$ rational cuspidal divisor on $X_0(N)$, we mean the order of its linear equivalence class in $J_0(N)$, which is always finite by the theorem of Manin and Drinfeld.} of $C_d$.
This can be easily done by Ligozat's method if $N$ is either a prime power \cite{Li97} or the product of two primes \cite{CL97}. However, as you can see in \cite[Th. 3.2.16]{Li75}, the formula for the order of $C_N$ looks very complicated in general. So we need a better understanding of the computation of the order of $C_d$ for any $d \in \cD_N^0$, which is done in Section \ref{chapter3}.

After that, to determine the structure of $\scC(N)$, it suffices to find all relations\footnote{We say that \textit{there is a relation among rational cuspidal divisors $C_d$} if there are integers $a_d$ such that $\sum a_d \cdot \ov{C_d}=0 \in J_0(N)$ and $a_d \cdot \ov{C_d} \neq 0$ for some $d$.} among $C_d$. 
As noticed on \cite[pg. 31]{Li97}, it seems difficult even in a simple case where $N$ is a power of $2$, and the main problem is that there is essentially only one method for finding relations among $C_d$, which is just a rephrasing of the definition: 
\begin{equation*}
\text{If there is a relation $\textstyle\sum a_d \cdot \ov{C_d}=0 \in J_0(N)$, then the order of $X=\textstyle\sum a_d \cdot C_d$ is $1$.}
\end{equation*}
Although the order of $X$ can be computed by Ligozat's method in principle once $a_d$ are given, it is very hard to find such integers $a_d$. Thus, it seems necessary to develop a new method for computing the group $\scC(N)$.

\ms
From now on, we illustrate our strategy for constructing the divisors $Z(d)$ and $Y^2(d)$. Before proceeding, we fix some notations, which will be used throughout the whole paper. 
\begin{notation}\label{notation: S1 and S2 in introduction}
For $k=1$ or $2$, let $\cS_k(N)_\Q$ be the $\Q$-vector space of dimension $\sigma_0(N)$, indexed by the divisors of $N$, and let $\cS_k(N)$ be the $\Z$-lattice of $\cS_k(N)_\Q$ consisting of integral vectors.
In other words,
\begin{equation*}
\begin{split}
\cS_k(N)&:=\textstyle\left\{ \sum_{d\mid N} a_d \cdot {\bf e}(N)_d  : a_d \in \Z \right\}\qa \\
\cS_k(N)_\Q&:=\textstyle\left\{ \sum_{d\mid N} a_d \cdot {\bf e}(N)_d  : a_d \in \Q \right\},
\end{split}
\end{equation*}
where ${\bf e}(N)_d$ is the unit vector in $\cS_k(N)$ whose $d$-th entry is $1$ and all other entries are zero. Also, let 
\begin{equation*}
\textstyle \cS_2(N)^0:=\left\{ \sum_{d\mid N} a_d \cdot {\bf e}(N)_d  \in \cS_2(N) : \sum_{d \mid N} a_d \cdot \varphi(\gcd(d, N/d))=0 \right\}.
\end{equation*}

Let $\Qdivv N$ be the group of the rational cuspidal divisors on $X_0(N)$. Then by Lemma \ref{lemma: group of rational cuspidal divisors}, we have a tautological isomorphism as abelian groups\footnote{The restriction of $\Phi_N$ to $\Qdiv N$ induces an isomorphism with $\cS_2(N)^0$.}
\begin{equation*}
\xymatrix{
\Phi_N : \Qdivv N \ar[r]^-{\sim} & \cS_2(N)
}
\end{equation*}
sending $(P_d)$ to ${\bf e}(N)_d$. Let $\Upsilon(N)=(\Upsilon(N)_{\delta d}) \in M_{n\times n}(\Z)$ be a square matrix of size $n=\sigma_0(N)$ (indexed by the divisors of $N$), defined in Section \ref{section: matrix upsilon}. We regard this matrix as a linear map from $\cS_2(N)_\Q$ to $\cS_1(N)_\Q$.
\end{notation} 

We use a capital Roman letter for a rational cuspidal divisor on $X_0(N)$ and the corresponding capital bold Roman letter for its image in $\cS_2(N)$ by $\Phi_N$.
For example, $D_d$ and $\bD_d$, $B_p(r, f)$ and $\bB_p(r, f)$, $A_p(r, f)$ and $\bA_p(r, f)$, $Z(d)$ and $\Z(d)$, $Y^i(d)$ and $\bY^i(d)$, respectively.

\ms
\noindent \textbf{Step 1}: First, we elaborate Ligozat's method and provide a simple algorithm for computing the order of a degree $0$ rational cuspidal divisor as follows:
For a rational cuspidal divisor 
\begin{equation*} 
C=\textstyle\sum_{d\mid N} a_d \cdot (P_d) \in \Qdiv N,
\end{equation*}
we compute an integral vector $V(C)=\sum_{\delta \mid N} V(C)_\delta \cdot {\bf e}(N)_\delta  \in \cS_1(N)$ defined as
\begin{equation*}
V(C):=\Upsilon(N) \times \Phi_N(C).
\end{equation*}
Let $\Gcd(C)$ be the greatest common divisor of the entries of $V(C)$ and let
\begin{equation*}
\bbV(C):=\Gcd(C)^{-1} \cdot V(C) \in \cS_1(N).
\end{equation*}
Although this computation is very easy, the vector $\bbV(C)$ plays a crucial role throughout the whole paper.  For each prime $p$, let
\begin{equation*}
\pw_p(C):=\textstyle\sum_{\tn{val}_p(\delta) \not\in 2\Z} \bbV(C)_\delta,
\end{equation*}
where the sum runs over the divisors of $N$ whose $p$-adic valuations are odd. Let
\begin{equation*}
\fh(C):=\begin{cases}
1 & \text{ if  }~~ \pw_p(C) \in 2\Z ~~\text{ for all primes } p,\\
2 & \text{ if  }~~ \pw_p(C) \not\in 2\Z ~~\text{ for some prime } p.
\end{cases}
\end{equation*}
Finally, let $\kappa(N)=N\prod_{p \mid N} (p-p^{-1})$. 
Then the order of $C$ is equal to
\begin{equation*}
\num\left(\frac{\kappa(N) \cdot \fh(C)}{24\cdot \Gcd(C)}\right)=\frac{\kappa(N)}{\gcd(\kappa(N), ~24\cdot \Gcd(C)\cdot \fh(C)^{-1})}.
\end{equation*}

As an application, we simplify Ligozat's formula and compute the order of $C_d$ for any non-trivial divisor $d$ of $N$. More specifically, we prove that
\begin{equation*}
\Gcd(C_d)=\fg(N, d) \qa \fh(C_d)=\fh(N, d),
\end{equation*}
where $\fg(N, d)$ and $\fh(N, d)$ are defined in Section \ref{section: Example II: The order of Cd}. Note that in most cases we have $\fg(N, d)=\fh(N, d)=1$, and so the order of $C_d$ is $\frac{\kappa(N)}{24}$.

\ms
\noindent \textbf{Step 2}: Since finding relations among $C_d$ is quite difficult, we
try to find a criterion for proving ``linear independence'' among rational cuspidal divisors instead. As a result, we have the following.
\begin{theorem}\label{theorem: main criterion}
Let $C_i \in \Qdiv N$ for all $1\leq i\leq k$. Suppose that there is a divisor $\delta$ of $N$ such that
\begin{equation*}
|\bbV(C_k)_\delta|=1 \qa \bbV(C_i)_\delta=0 ~~\text{ for all } ~ i < k.
\end{equation*}
If $\fh(C_k)=1$, then we have $\br{\ov{C_i} : 1\leq i \leq k-1} \cap \br{\ov{C_k}}= 0$, or equivalently 
\begin{equation*}
\br{\ov{C_i} : 1\leq i \leq k} \simeq \br{\ov{C_i} : 1\leq i \leq k-1} \moplus \br{\ov{C_k}}.
\end{equation*}
\end{theorem}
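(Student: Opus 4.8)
The plan is to reduce the statement about linear equivalence classes in $J_0(N)$ to a statement about the explicit vectors $\bbV(C_i) \in \cS_1(N)$, exploiting the order formula from Step 1. Suppose we have a relation $a_k \cdot \ov{C_k} = \sum_{i<k} a_i \cdot \ov{C_i}$ in $J_0(N)$; equivalently the divisor $X := a_k C_k - \sum_{i<k} a_i C_i$ has order $1$, i.e. is the divisor of a modular unit. We want to show $a_k \cdot \ov{C_k} = 0$. The whole point of the criterion is that the hypothesis isolates the $\delta$-coordinate: since $\Upsilon(N)$ is linear and $V(C) = \Upsilon(N) \times \Phi_N(C)$, we have $V(X)_\delta = a_k V(C_k)_\delta - \sum_{i<k} a_i V(C_i)_\delta$. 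Writing $V(C_i) = \Gcd(C_i) \cdot \bbV(C_i)$ and using $\bbV(C_i)_\delta = 0$ for $i<k$, only the $k$-th term survives in the $\delta$-coordinate, so $V(X)_\delta = \pm a_k \Gcd(C_k)$, a nonzero multiple of $a_k$ when $a_k \neq 0$.

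**The key computation.** Now apply the order formula of Step 1 to $X$. If $X$ has order $1$, then $\num\!\left(\frac{\kappa(N)\cdot \fh(X)}{24 \cdot \Gcd(X)}\right) = 1$, which forces $\kappa(N) \cdot \fh(X) \mid 24 \cdot \Gcd(X)$, and in particular $\Gcd(X) \mid 24 \cdot \Gcd(X)$ is automatic but more usefully $\Gcd(X) \geq \kappa(N)\fh(X)/24$ up to the divisibility. Since $\Gcd(X)$ divides every entry of $V(X)$, it divides $V(X)_\delta = \pm a_k \Gcd(C_k)$. First I would run this same analysis with $a_k$ replaced by $1$, i.e. directly with the divisor $C_k$ itself: the hypothesis $|\bbV(C_k)_\delta| = 1$ means $V(C_k)_\delta = \pm \Gcd(C_k)$, so $\Gcd(C_k)$ already divides $V(C_k)_\delta$ with quotient $\pm 1$; hence $\Gcd(C_k) \mid \Gcd(X) \cdot(\text{something})$ — more precisely I'd show $\Gcd(X)$ must itself divide $a_k \Gcd(C_k)$ while, comparing with the order of $C_k$ which is $\num(\kappa(N)\fh(C_k)/(24\Gcd(C_k))) = \num(\kappa(N)/(24\Gcd(C_k)))$ using $\fh(C_k)=1$, one deduces that $a_k$ times this order is $0$. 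The cleanest route: let $m$ be the order of $\ov{C_k}$; I want to show $m \mid a_k$. From the order of $X$ being $1$ and the order formula, together with $\fh(C_k) = 1$ (which, via the relation and the $\delta$-coordinate isolation, should also pin down the relevant parity of $\fh(X)$), one gets $\kappa(N) \mid 24 \cdot \Gcd(X)$, hence $\kappa(N) \mid 24 \cdot a_k \cdot \Gcd(C_k)$ after the divisibility above; rearranging, $\frac{\kappa(N)}{\gcd(\kappa(N), 24\Gcd(C_k))} \mid a_k$, and the left side is exactly $m$.

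**The parity subtlety is the main obstacle.** The factor $\fh$ takes values in $\{1,2\}$ and is defined by a parity condition on partial sums $\pw_p$ of the $\bbV$-coordinates, and $\fh$ is genuinely nonlinear in $C$, so $\fh(X)$ is not simply determined by $\fh(C_k)$ and the $\fh(C_i)$. This is precisely why the hypothesis demands $\fh(C_k) = 1$ rather than allowing $\fh(C_k) = 2$: I expect the argument to need that the normalization of $X$ differs from the "expected" $a_k \cdot \bbV(C_k) - \sum a_i \bbV(C_i)$ only by clearing a common factor, and one must track how $\Gcd(X)$ relates to $\gcd_i(a_i \Gcd(C_i))$ and how this interacts with the parity of $\pw_p(X)$ at the prime(s) $p$ dividing $\delta$. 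The careful bookkeeping here — showing that the potential extra factor of $2$ from $\fh$ cannot rescue a nonzero $a_k \ov{C_k}$, given $\fh(C_k)=1$ and $|\bbV(C_k)_\delta|=1$ — is where the real work lies. Once that is settled, the equivalence with the internal-direct-sum statement $\br{\ov{C_i}: i\leq k} \simeq \br{\ov{C_i}: i \leq k-1}\oplus \br{\ov{C_k}}$ is the standard fact that a sum of subgroups is direct iff the intersection of each with the (sum of the) others is trivial, which here reduces to the single intersection $\br{\ov{C_i}: i<k}\cap\br{\ov{C_k}} = 0$ by the structure of the relation.
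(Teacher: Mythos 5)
Your ``cleanest route'' is correct and is essentially the paper's own proof in different packaging: the paper isolates the $\delta$-coordinate in exactly the same way and then verifies the order-one criterion (Corollary \ref{corollary: order 1 criterion}) directly for $a_k\cdot C_k$, whereas you deduce that the order of $\ov{C_k}$ divides $a_k$ from the closed-form order formula, but the inputs ($\delta$-coordinate isolation, the order formula, $\fh(C_k)=1$) are identical. The ``parity subtlety'' you defer as the remaining real work is in fact a non-issue: since $\fh(X)\in\{1,2\}$, the assumption that $X$ has order one already yields $\kappa(N)\mid 24\cdot\Gcd(X)\cdot\fh(X)^{-1}\mid 24\cdot\Gcd(X)$ with no control on $\fh(X)$ required, and the hypothesis $\fh(C_k)=1$ is used exactly once, to identify the order of $\ov{C_k}$ with $\kappa(N)/\gcd(\kappa(N),\,24\cdot\Gcd(C_k))$ rather than the possibly twice-larger value that $\fh(C_k)=2$ would produce; so your second paragraph is already a complete proof (apart from the degenerate case $V(X)=0$, where the $\delta$-coordinate identity forces $a_k=0$ outright).
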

Applying this criterion successively, we can easily deduce the following: 
For rational cuspidal divisors $D_d$ on $X_0(N)$, we consider a square matrix 
\begin{equation*}
\fM=(|\bbV(D_d)_{\delta}|)_{d \delta}
\end{equation*}
indexed by the non-trivial divisors of $N$. If $\fM$ is lower-unipotent\footnote{A square matrix is \textit{lower-unipotent} if it is lower-triangular and all its diagonal entries are $1$.} (with respect to suitable orderings on $\cD_N^0$), and $\fh(D_d)=1$ for all but one that corresponds to the first row, then we have 
\begin{equation*}
\br{\ov{D_d} : d \in \cD_N^0} \simeq \moplus_{d \in \cD_N^0} \br{\ov{D_d}}.
\end{equation*}
Additionally, if the divisors $D_d$ generate $\Qdiv N$, or equivalently 
\begin{equation*}
\cS_2(N)^0=\br{\bD_d : d \in \cD_N^0},
\end{equation*}
then we easily have 
\begin{equation*}
\scC(N) \simeq \moplus_{d \in \cD_N^0} \br{ \ov{D_d} }.
\end{equation*}
Although the required assumptions are pretty strong, we can verify them in many cases, and this strategy is quite useful in our computation.

\ms
\noindent \textbf{Step 3}: We first apply our strategy when $N=p^r$ is a prime power. 
To do so, we have to find a rational cuspidal divisor $D_d$ such that most of the entries of $\bbV(D_d)$ are zeros, which can be constructed from lower levels (cf. Proposition \ref{prop: degeneracy maps revisited}). Thus, we first find ``nice'' vectors $\bbB_p(r, f) \in \cS_1(N)$ for any $1\leq f \leq r$ so that the matrix 
\begin{equation*}
\fM=(|\bbB_p(r, f)_{p^k}|)_{1 \leq f, k \leq r}
\end{equation*}
is lower-unipotent with respect to suitable orderings on $\{1, 2, \dots, r\}$, and then compute $\Upsilon(p^r)^{-1}\times \bbB_p(r, f)$. 
By removing the denominators of the entries, we obtain ``nice'' vectors $\bB_p(r, f) \in \cS_2(N)^0$ such that 
$\Upsilon(p^r) \times \bB_p(r, f)$ is a scalar multiple of $\bbB_p(r, f)$. Furthermore, we prove that the vectors $\bB_p(r, f)$ (integrally) generate $\cS_2(p^r)^0$. If $p$ is odd, then we have $\fh(B_p(r, f))=1$ for any $f\geq 2$, and so
\begin{equation*}
\scC(p^r) \simeq \moplus_{f=1}^r \br{\ov{B_p(r, f)}}.
\end{equation*}

However, if $p=2$, then the arguments above break down since $\fh(B_2(r, f))=2$ for some $f\geq 2$. Nonetheless, we find another one $\bB^2(r, f)$ and show that
\begin{equation*}
\scC(2^r) \simeq \moplus_{f=3}^r \br{\ov{B^2(r, f)}}.
\end{equation*}
(Our proof relies on two facts: one is that the genus of $X_0(16)$ is zero, and the other is that the group $\scC(2^r)$ is a $2$-group.)

\ms
\noindent \textbf{Step 4}: We then apply our strategy for any positive integer $N$. 
For simplicity, let $N=Mp^r$ with $\gcd(M, p)=1$. (Here, $p$ denotes a prime as above.) 
As already mentioned, we need a rational cuspidal divisor $D_d$ such that most of the entries of $\bbV(D_d)$ are zeros, which can be constructed using tensor product.\footnote{By the Chinese remainder theorem, we have $\cS_2(N)_\Q \simeq \cS_2(M)_\Q \motimes \cS_2(p^r)_\Q$ (Remark \ref{remark: decomposition of RCD}).} In general, the dimension of $\cS_2(N)^0$ is larger than the product of the dimensions of $\cS_2(M)^0$ and $\cS_2(p^r)^0$, and so we need more vectors, which are not of degree $0$. Motivated by the discussion in Section \ref{subsection: odd t=2} (Remark \ref{remark: A0 and A1}), we construct vectors $\bA_p(r, 0)$ and $\bA_p(r, 1)$ in $\cS_2(p^r)$. Note that $\bA_p(r, 0)$ is constructed when we regard a divisor $C$ in level $M$ as one in level $Mp^r$, and $\bA_p(r, 1)$ is obtained by applying the degeneracy map from level $M$ to level $Mp^r$, which may be regarded as a vector (in level $p^r$) ``coming from level $1$''. 
By letting $\bA_p(r, f):=\bB_p(r, f)$ for any $2\leq f\leq r$, we can prove that the vectors $\bA_p(r, f)$ (integrally) generate $\cS_2(p^r)$.
Using these vectors, we now define a vector $\Z^1(d)$ for any non-trivial divisor $d$ of $N$ as follows.
\begin{definition}
Let $N=\prod_{i=1}^t p_i^{r_i}$ and $d=\prod_{i=1}^t p_i^{f_i}$. Then we define a vector $\Z^1(d) \in \cS_2(N)^0$ as
\begin{equation*}
\Z^1(d):=\begin{cases}
\motimes_{i=1}^t \bA_{p_i}(r_i, f_i)  & \text{ if }~~ d \in \cD_N^\nsqf,\\
\motimes_{i=1, \, i\neq m}^t \bA_{p_i}(r_i, f_i) \motimes \bB_{p_m}(r_m, 1) & \text{ if } ~~ d\in \cD_N^\sqf,\\
\end{cases}
\end{equation*}
where $m$ is the smallest positive integer such that $f_m=1$.
\end{definition}

If $N$ is odd, we can prove Theorem \ref{theorem: main theorem 1} as in Step 3. On the other hand, if $N$ is divisible by $32$, there are some problematic vectors that we cannot apply our strategy as in the case of level $2^r$. Using the vectors $\bB^2(r, f)$, we then define a vector $\Z(d)$ as follows:
\begin{equation*}
\Z(d):=\begin{cases}
\motimes_{i=1, \, i\neq u} \bA_{p_i}(r_i, 1) \motimes \bB^2(r_u, f_u) & \text{ if } ~~(f_1, \dots, f_t) \in \cT_u,\\
\Z^1(d) & \text{ otherwise},
\end{cases}
\end{equation*}
where $\cT_u:=\emptyset$ if either $u=0$ or $r_u\leq 4$, and otherwise 
\begin{equation*}
\cT_u:= \{ I=(f_1, \dots, f_t) \in \square(t) : 3\leq f_u \leq r_u, ~~ f_i=1 \text{ for all } i \neq u\}.
\end{equation*}
(Here, $u$ denotes the index such that $p_u=2$.)
Unfortunately, we cannot prove Theorem \ref{theorem: main theorem 1} directly when $N$ is divisible by $32$. So we prove a partial result first (Theorem \ref{theorem: main theorem in 6.4}) and finish the proof in Section \ref{section: final step}.

\begin{remark}
In Section \ref{section: definition of Zd}, we fix an ordering of the prime divisors of $N$  (using Assumption \ref{assumption 1.14} below) and define $\Z(d)$. 
On the other hand, here and in Theorem \ref{theorem: main theorem 1}, we do not choose a specific ordering of the prime divisors of $N$ in the definition of $\Z(d)$. 
This does not cause any problem because
\begin{enumerate}
\item
for any $d \in \cD_N^\nsqf$, the definition of $Z(d)$ does not depend on the ordering of the prime divisors of $N$, and
\item
the squarefree part $\scC(N)^\sqf$ does not depend on the ordering of the prime divisors of $N$ (Remark \ref{remark: independence of ell}).
\end{enumerate}
\end{remark}

\ms
\noindent \textbf{Step 5}: For any given prime $\ell$, we try to understand the group $\scC(N)^\sqf[\ell^\infty]$. Let $N=p_1^{r_1} p_2^{r_2}$ be the product of two prime powers. Then we construct the following vector in $\cS_2(N)$, which is \textit{not defined by tensors}\footnote{For its definition, see Section \ref{section: Example I: The divisors defined by tensors}.}:
\small
\begin{equation*}
\bD(p_1^{r_1}, p_2^{r_2}):=\gcd(\gamma_1, \gamma_2)^{-1}(\gamma_2\cdot \bB_{p_1}(r_1, 1) \motimes \bA_{p_2}(r_2, 0)-\gamma_1\cdot \bA_{p_1}(r_1, 0) \motimes \bB_{p_2}(r_2, 1)),
\end{equation*}\normalsize
where $\gamma_i=p_i^{r_i-1}(p_i+1)$. Using these vectors, we define a vector $\bY^0(d)$ as follows.
\begin{definition}
Let $N=\prod_{i=1}^t p_i^{r_i}$ be a positive integer. For a given prime $\ell$, 
by appropriately ordering the prime divisors of $N$, we make Assumption \ref{assumption 1.14} below. Then for a non-trivial squarefree divisor $d=\prod_{i=1}^t p_i^{f_i}$ of $N$, we define a vector $\bY^0(d)$ in $\cS_2(N)^0$ as
\begin{equation*}
\bY^0(d):=\begin{cases}
\motimes_{i=1, \, i\neq m}^t \bA_{p_i}(r_i, f_i) \motimes \bB_{p_m}(r_m, 1) & \text{ if }~~f_i=1 \text{ for all } i\geq m,\\
\motimes_{i=1, \, i\neq m, n}^t \bA_{p_i}(r_i, f_i) \motimes \bD(p_m^{r_m}, p_n^{r_n})  & \text{ otherwise},
\end{cases}
\end{equation*}
where $m$ is as above and $n$ is the smallest integer such that $n>m$ and $f_n=0$.
\end{definition}

By its construction (and Assumption \ref{assumption 1.14}), it is not difficult to show that
\begin{equation*}
\textstyle\br{\Z(d) : d \in \cD_N^\sqf} \motimes_\Z \Z_\ell = \br{ \bY^0(d) : d \in \cD_N^\sqf} \motimes_\Z \Z_\ell.
\end{equation*}
Also, the matrix $\fM_0:=(\bbV(Y^0(d))_{\delta})_{d\delta}$ is lower-triangular (with respect to suitable orderings on $\cD_N^\sqf$). Moreover, if $N$ is odd, then the diagonal entries of $\fM_0$ are $\ell$-adic units. Thus, by an $\ell$-adic variant of Theorem \ref{theorem: main criterion}, we prove that
\begin{equation*}
\scC(N)^\sqf [\ell^\infty] \simeq \moplus_{d \in \cD_N^\sqf} \br{ \ov{Y^0(d)}} [\ell^\infty].
\end{equation*}

\ms
\noindent \textbf{Step 6}: However, some of the previous arguments break down if $N$ is even. As in the case of level $2^r$, we know exactly where the problems occur. So by replacing problematic elements by new ones (Remark \ref{remark: Y0 Y1 Y2}), we construct a rational cuspidal divisor $Y^1(d)$ and show that $\fM_0:=(\bbV(Y^1(d))_{\delta})_{d \delta}$ is lower-triangular and all its diagonal entries are $\ell$-adic units. In this case, it is not obvious that
\begin{equation*}
\textstyle\br{\Z(d) : d \in \cD_N^\sqf} \motimes_\Z \Z_\ell = \br{ \bY^1(d) : d \in \cD_N^\sqf} \motimes_\Z \Z_\ell,
\end{equation*}
which is proved in Section \ref{section: generation}. Hence for an odd prime $\ell$, we prove that
\begin{equation*}
\scC(N)^\sqf [\ell^\infty] \simeq \moplus_{d \in \cD_N^\sqf} \br{ \ov{Y^1(d)}} [\ell^\infty].
\end{equation*}
But still, we have a problem if $\ell=2$. Thus, we finally construct a rational cuspidal divisor $Y^2(d)$ and for any prime $\ell$, we prove that
\begin{equation*}
\scC(N)^\sqf [\ell^\infty] \simeq \moplus_{d \in \cD_N^\sqf} \br{ \ov{Y^2(d)}} [\ell^\infty].
\end{equation*}

\begin{remark}
Using our criteria for linear independence, we may easily guess which divisors are linearly independent. However, they frequently fail to generate the whole group $\Qdiv N$.
The main achievement of this paper is that we actually succeed in finding such ($\ell$-adic) generators that satisfy strong assumptions in our criteria. Thus, we obtain the decomposition of the $\ell$-primary subgroup of $\scC(N)$ for any positive integer $N$ and any prime $\ell$. 
\end{remark}

\ms
\subsection{Notation and Convention}\label{section: intro notation}
In order to avoid excessive repetition, we adhere to some conventions throughout the whole paper.
\begin{itemize}[--]
\item
$p$, $p_i$ and $\ell$ : prime numbers (unless otherwise mentioned).
\item
$r$, $r_i$, $f$ and $f_i$ : non-negative integers, which are exponents of primes
(unless otherwise mentioned).
\item
$N$ : a positive integer (unless otherwise mentioned).

\item
$\rad(N)$ : the \textit{radical} of $N$, the largest squarefree divisor of $N$, i.e., $\rad(N):=\prod_{p\mid N} p$.
\item
$\tn{val}_p(N)$ : the (normalized) $p$-adic valuation of $N$, i.e., $N$ is divisible by $p^{\tn{val}_p(N)}$ but not by $p^{\tn{val}_p(N)+1}$.

\item
$\kappa(N):=N\prod_{p\mid N} (p-p^{-1})=\frac{N}{\rad(N)}\prod_{p\mid N}(p^2-1)$.
\item
$\varphi(N):=N\prod_{p\mid N}(1-p^{-1})=\frac{N}{\rad(N)}\prod_{p\mid N}(p-1)$.
\item
$\cD_N$ : the set of all (positive) divisors of $N$.
\item
$\sigma_0(N):=\# \cD_N$ : the number of all divisors of $N$.
\item
$\cD_N^0:=\cD_N \sm \{1\}$ : the set of all non-trivial divisors of $N$.
\item
$\cD_N^\nsqf$ : the set of all non-squarefree divisors of $N$.
\item
$\cD_N^\sqf:=\cD_N^0 \sm \cD_N^\nsqf$ : the set of all non-trivial squarefree divisors of $N$.
\end{itemize}

If we write $N=\prod_{i=1}^t p_i^{r_i}$ for some $t\geq 1$ and $r_i\geq 1$, then we use the following.
\begin{itemize}[--]
\item
$\Omega(t):=\{(f_1, \dots, f_t) \in \Z^t : 0\leq f_i \leq r_i \text{ for all } i \text{ and } f_i\neq 0 \text{ for some } i \}$.
\item
$\Delta(t):=\{ (f_1, \dots, f_t) \in \Z^t : 0\leq f_i \leq 1 \text{ for all } i \text{ and } f_i\neq 0 \text{ for some }i \}$.
\item
$\square(t):=\{ (f_1, \dots, f_t) \in \Omega(t) : f_i\geq 2 \text{ for some } i\}=\Omega(t)\sm \Delta(t)$.
\item
$\fp_I:=\prod_{i=1}^t p_i^{f_i}$ for any $I=(f_1, \dots, f_t) \in \Omega(t)$.\footnote{By definition, $\cD_N^0=\{ \fp_I : I \in \Omega(t)\}$, $\cD_N^\sqf=\{\fp_I : I \in \Delta(t) \}$ and $\cD_N^\nsqf = \{\fp_I : I \in \square(t)\}$.}
\end{itemize}

If either $u=0$ or $r_u\leq 4$, then let $\cT_u:=\emptyset$. Otherwise, let
\begin{equation*}
\cT_u:= \{ I=(f_1, \dots, f_t) \in \square(t) : 3\leq f_u \leq r_u \qqa f_i=1 \text{ for all } i \neq u\}.
\end{equation*}

For an element $I=(f_1, \dots, f_t) \in \Delta(t)$, let 
\begin{itemize}[--]
\item
$m(I)$ : the smallest positive integer $m$ such that $f_m=1$.
\item
$n(I)$ : the smallest integer $n$ such that $n>m(I)$ and $f_n=0$.
\item
$k(I)$ : the smallest integer $k$ such that $k>n(I)$ and $f_k=0$.
\end{itemize}
Here, we set $n(I):=t+1$ (resp. $k(I):=t+1$) if $f_i=1$ for all $i>m(I)$ (resp. $i>n(I)$).

For any integer $1\leq k \leq t$, let
\begin{itemize}[--]
\item
$A(k):=(f_1, \dots, f_t)$ such that $f_i=0$ for all $i<k$ and $f_j=1$ for all $j\geq k$.
\item
$E(k):=(f_1, \dots, f_t)$ such that $f_k=0$ and $f_i=1$ for all $i\neq k$.
\item
$F(k):=(f_1, \dots, f_t)$ such that $f_k=1$ and $f_i=0$ for all $i\neq k$.
\end{itemize}
Also, for a given integer $1\leq u \leq t$ and any integer $1\leq k \leq t$ different from $u$, let
\begin{itemize}[--]
\item
$E_u(k):=(f_1, \dots, f_t)$ such that $f_k=f_u=0$ and $f_i=1$ for all $i\neq k, u$.
\item
$F_u(k):=(f_1, \dots, f_t)$ such that $f_k=f_u=1$ and $f_i=0$ for all $i\neq k, u$.
\end{itemize}

Now, we define some subsets of $\Delta(t)$. Let
\begin{equation*}
\cE:=\{I \in \Delta(t) : n(I)=t+1\}=\{A(m) : 1\leq m \leq t\}.
\end{equation*}
If $u\leq 1$ then we set $\cH_u=\cH_u^1:=\emptyset$, and for any $2\leq u \leq t$, we set
\begin{itemize}[--]
\item
$\cH_u:=\{ (f_1, \dots, f_t) \in \Delta(t) : n(I)=u \qqa k(I) \leq t \}$.
\item
$\cH_u^1:=\{(f_1, \dots, f_t) \in \Delta(t) : n(I)=u \qqa k(I)=t+1 \}$.
\end{itemize}
Also, if $u=0$ then we set $\cF_u=\cF_u^1=\emptyset$, and for any $1\leq u \leq t$, we set
\begin{equation*}
\cF_u:=\{ E(n) : n \in \cI_u \} \qa \cF_u^1:=\{ E_u(n) : n \in \cI_u \},
\end{equation*}
where 
\begin{equation*}
\cI_u:=\begin{cases}
\{n \in \Z : 3\leq n \leq t\} & \text{ if } ~~ u=1, \\
\{n \in \Z : 2\leq n \leq t, ~~ n\neq u\} & \text{ otherwise}.
\end{cases}
\end{equation*}
Furthermore, for $0\leq u \leq t$, we set
\begin{equation*}
\cG_u:=\begin{cases}
\{E(2)\} & \text{ if } ~~u=1,\\
\quad \emptyset & \text{ otherwise,}
\end{cases} \qa \cG_u^1:=\begin{cases}
\{E(n) : 1\leq n \leq t\} & \text{ if }~~u=1,\\
\{E(n) : 2\leq n \leq t\} & \text{ otherwise}.
\end{cases}
\end{equation*}

\vspace{3mm}
The definitions of $\fn(N, d)$ and $\fN(N, d)$, which both depend on the ordering of the prime divisors of $N$, are a bit complicated. We first assume the following.
\begin{assumption}\label{assumption 1.14}
Let $N=\prod_{i=1}^t p_i^{r_i}$ be the prime factorization of $N$. For a given prime $\ell$, by appropriately ordering the prime divisors of $N$, we assume that
\begin{equation*}
\tn{val}_\ell(\gamma_i) \geq \tn{val}_\ell(\gamma_j) ~\text{ for any } 1\leq i < j \leq t, \text{ where } \gamma_i:=p_i^{r_i-1}(p_i+1).
\end{equation*}
Let $u$ be the smallest positive integer such that $p_u=2$ if $N$ is even, and $u=0$ otherwise. Also, let $s=0$ if $\ell$ is odd, and $s=u$ if $\ell=2$. We further assume that
\begin{equation*}
\tn{val}_\ell(p_i-1) \leq \tn{val}_\ell(p_j-1) ~~\text{ for any } 1 \leq i < j \leq t \text{ different from } s.
\end{equation*}
\end{assumption}

Next, we define the following.
\begin{definition}
For a positive integer $r$, let
\begin{equation*}
\cG_p(r, f):=\begin{cases}
p^{r-1}(p^2-1) & \text{ if }~~ f=0,\\
1 & \text{ if }~~ f=1, \\
p^2-1 & \text{ if }~~ f=2, \\
p^{r-1-j}(p^2-1) & \text{ if }~~ 3\leq f \leq r,
\end{cases}
\end{equation*}
where $j=[\frac{r+1-f}{2}]$. Also, let
\begin{equation*}
\cG(p_i^{r_i}, p_j^{r_j}) :=\frac{(p_i-1)(p_j-1) \cdot \gcd(\gamma_i, \, \gamma_j)}{\gcd(p_i-1, \, p_j-1)}.
\end{equation*}
\end{definition}

Then, we define $\fn(N, d)$ for any $d \in \cD_N^0$ as follows.
\begin{definition}
For any $I=(f_1, \dots, f_t) \in \Omega(t)$, let
\begin{equation*}
\cG(N, \fp_I):=\begin{cases}
\prod_{i=1}^t \cG_{p_i}(r_i, f_i) & \text{ if }~~ I \in \square(t),\\
\prod_{i=1, \, i\neq m}^t \cG_{p_i}(r_i, f_i) \times (p_m-1) & \text{ if }~~ I \in \Delta(t),
\end{cases}
\end{equation*}
where $m=m(I)$. Also, let $\cH(N, \fp_I):=2$ if one of the following holds, and $\cH(N, \fp_I):=1$ otherwise.
\begin{enumerate}
\item
$I=A(1)$.
\item
$u\geq 1$ and $I=E(u)$.
\item
$u\geq 1$, $3\leq r_u\leq 4$, $f_u=3$ and $f_i=1$ for all $i \neq u$.
\item
$u\geq 1$, $r_u\geq 5$, $f_u=r_u+1 -\gcd(2, r_u)$ and $f_i=1$ for all $i \neq u$.
\end{enumerate}
Furthermore, let
\begin{equation*}
\fn(N, d):=\num \left( \frac{\cG(N, d) \times \cH(N, d)}{24} \right).
\end{equation*}
\end{definition}

Lastly, we define $\fN(N, d)$ for any $d \in \cD_N^\sqf$ as follows.
\begin{definition}
For any $I=(f_1, \dots, f_t) \in \Delta(t)$ with $m=m(I)$, $n=n(I)$ and $k=k(I)$, let
\begin{equation*}
\scG(N, \fp_I):=\begin{cases}
\prod_{i=1, \, i\neq x}^t \cG_{p_i}(r_i, f_i) \times (p_x-1) & \text{ if }~~ I \in \cE,\\
\cG(p_y^{r_y}, p_n^{r_n})  & \text{ if } ~~I \in \cF_s,\\
\cG_{p_2}(r_2, 0) & \text{ if }~~ I \in \cG_s,\\
\prod_{i=1, \, i\neq m, k}^t \cG_{p_i}(r_i, f_i) \times \cG(p_m^{r_m}, p_k^{r_k})  & \text{ if }~~ I \in \cH_u,\\
\prod_{i=1, \, i\neq m, n}^t \cG_{p_i}(r_i, f_i) \times \cG(p_m^{r_m}, p_n^{r_n}) & \text{ otherwise},
\end{cases}
\end{equation*}
where $x=\max(m, u)$ and $y=\max(1, 3-s)$. Also, let 
\begin{equation*}
\scH(N, \fp_I):=\begin{cases}
2 & \text{ if }~~ I \in (\cF_u^1 \cup \cG_u^1 \cup \{A(1)\}) \sm (\cF_s \cup \cG_s),\\
1 & \text{ otherwise}.
\end{cases}
\end{equation*}
Furthermore, let
\begin{equation*}
\fN(N, d):=\num \left( \frac{\scG(N, d) \times \scH(N, d)}{24} \right).
\end{equation*}
\end{definition}

\vspace{10mm}
\section{The cusps of $X_0(N)$}\label{chapter2}
In this section, we review the results about the cusps of $X_0(N)$, which are well-known to the experts. Although there is no new result in this section, we provide detailed proofs as elementary and self-contained as possible for the convenience of the readers. 

\ms
In this section (except Section \ref{section: Rational cuspidal divisors}), we only consider the modular curves over $\C$ (not over $\Q$) and regard them as compact Riemann surfaces (given by explicit charts). By explicit and concrete methods, we obtain various results on the cusps without further digression on algebraic theory.\footnote{For more general discussion on the modular curves over $\C$, see Chapters $2$ and $3$ of \cite{DS05}.}
For the algebraic description of the cusps (using generalized elliptic curves), see \cite{DR73}, \cite{Con07} or \cite{Ces17}.

As usual, let $Y_0(N)_{\C}:=\Gamma_0(N) \backslash \cH$, where $\cH=\{ z \in \C : \tn{Im } z>0\}$ and $\Gamma_0(N)\subset \SL_2(\Z)$ acts on $\cH$ by linear fractional transformations. Also, let $X_0(N)_{\C}$ denote the classical (analytic) modular curve, the ``canonical'' compactification of $Y_0(N)_{\C}$. The \textit{cusps} of $X_0(N)$ are the points added for the compactification, which can be naturally identified with the equivalence classes of $\bP^1(\Q)$ modulo $\Gamma_0(N)$, i.e.,
\begin{equation*}
\{\text{cusps of $X_0(N)$}\}:=X_0(N)_{\C} \sm Y_0(N)_{\C} \simeq \Gamma_0(N) \backslash \bP^1(\Q),
\end{equation*}
where $\Gamma_0(N)$ acts on $\bP^1(\Q)$ by linear fractional transformation.

In Sections \ref{section: degeneracy maps} and \ref{section: AL operators}, we study the degeneracy maps and the Atkin--Lehner operators on modular curves. To investigate their properties, it is often useful to consider them as holomorphic maps\footnote{They also have ``moduli interpretations'', so there exist corresponding algebraic morphisms. But we do not discuss ``moduli interpretations'' here as they are not used. For such discussions, see \cite[Sec. 13]{MR91} and \cite[Sec. 1]{Oh14}.} between compact Riemann surfaces.  More specifically, let $A$ be a positive integer and $B$ its (positive) divisor. Let
\begin{equation*}
\gamma=\mat a b c d \in M_2(\Z) \cap \GL_2^+(\Q)
\end{equation*}
be a matrix satisfying $\Gamma:=\gamma\Gamma_0(A) \gamma^{-1} \subset \Gamma_0(B)$. Then since 
\begin{equation*}
\gamma(\Gamma_0(A)\tau)=(\gamma \Gamma_0(A) \gamma^{-1})(\gamma \cdot \tau)=\Gamma(\gamma \cdot \tau),
\end{equation*}
we have natural maps
\begin{equation*}
\xyh{4.4}
\xyv{1.5}
\xymatrix{
Y_0(A)_{\C} \ar[r]^-{\sim}_-{\times \gamma} & \Gamma \backslash \cH \ar@{->>}[r]_-{\text{taking modulo $\Gamma_0(B)$}~~} & Y_0(B)_{\C}\\
[\tau \modo {\Gamma_0(A)}] ~~\ar@{|->}[r] \arin & ~~ [\gamma \cdot \tau \modo {\Gamma}] \arin \ar@{|->}[r] ~~& ~~ [\gamma \cdot \tau \modo {\Gamma_0(B)}] \arin.
}
\end{equation*}
The composition of the two maps naturally extends to a holomorphic map from $X_0(A)_{\C}$ to $X_0(B)_{\C}$, denoted by $F_\gamma$. Note that for any $p, q \in \Z$ with $\gcd(p, q)=1$, we have 
\begin{equation*}
\gamma \cdot \frac{p}{q}=\frac{a(p/q)+b}{c(p/q)+d}=\frac{ap+bq}{cp+dq}=\frac{(ap+bq)/g}{(cp+dq)/g},
\end{equation*}
where $g=\gcd(ap+bq, cp+dq)$, and therefore 
\begin{equation}\label{equation: cusp image by usual matrix gamma}
F_\gamma\left(\left[\frac{p}{q} \modo {\Gamma_0(A)} \right]\right)=\left[\frac{(ap+bq)/g}{(cp+dq)/g} \modo \Gamma_0(B)\right].
\end{equation}
Note that $g$ is a divisor of $\det(\gamma)=ad-bc$ by Lemma \ref{lemma: determinant and gcd} below.

\begin{remark}
The results about the Atkin--Lehner operators and the Hecke operators are not used in this paper, but we include them for the sake of the readers.
\end{remark}

\ms
\subsection{Representatives of the cusps}\label{section: representatives of the cusps}
Let
\begin{equation*}
(\Z^2)':=\left\{\vect a b \in \Z^2 : \gcd(a, b)=1 \right\},
\end{equation*}
and we define an equivalence relation on $(\Z^2)'$ by
\begin{equation*}
\vect a b \sim \vect {a'}{b'} \iff  \vect {a'}{b'}=\vect{ra+ub}{va+wb} \text{ for some } \mat r u v w \in \Gamma_0(N).
\end{equation*}
We denote by $\bect a b^{N}$ (or simply $\bect a b$ if there is no confusion) an equivalence class of $\vect a b \in (\Z^2)'$. If we write the notation $\bect a b$, we \textit{always} assume that $a$ and $b$ are relatively prime integers. 

\ms
A cusp of $X_0(N)$ can be regarded as an equivalence class in $(\Z^2)'/\sim$ (as in \cite[Sec. 1.3]{Ste82}). Thus, we simply denote a cusp of $X_0(N)$ by $\bect a b$. 
For a (positive) divisor $d$ of $N$, we say that a cusp of $X_0(N)$ is \textit{of level $d$} if it is equivalent to $\vect x d$ for some $\vect x d \in (\Z^2)'$. Thus, a cusp of level $d$ is written as $\bect x d$ for some integer $x$ relatively prime to $d$.

\begin{theorem}\label{theorem: cusp representation}
For any $\vect a b \in (\Z^2)'$, we have $\bect a b=\bect x d$ for some integer $x$, where $d=\gcd(b, N)$. Also, for two divisors $d$ and $d'$ of $N$, we have
\begin{equation*}
\bect x d = \bect {x'}{d'} \iff d=d' \qqa x\equiv x' \pmodo {\gcd(d, N/d)}.
\end{equation*}
\end{theorem}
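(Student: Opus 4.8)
The statement to be proved is Theorem~\ref{theorem: cusp representation}, which identifies a convenient set of representatives for the cusps of $X_0(N)$: every class $\bect a b$ equals $\bect x d$ where $d = \gcd(b, N)$, and two such ``normal form'' representatives $\bect x d$ and $\bect{x'}{d'}$ coincide precisely when $d = d'$ and $x \equiv x' \pmod{\gcd(d, N/d)}$. The plan is to argue entirely at the level of the equivalence relation $\sim$ on $(\Z^2)'$ defined by left multiplication by $\Gamma_0(N)$, using only elementary manipulations with $2 \times 2$ integer matrices and the Chinese Remainder Theorem; no algebraic geometry is needed, consistent with the elementary spirit announced at the start of Section~\ref{chapter2}.

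**Step 1: reduction to level $d = \gcd(b,N)$.** Given $\vect a b \in (\Z^2)'$, I would first show $\bect a b = \bect x d$ for $d = \gcd(b, N)$ and a suitable $x$ coprime to $d$. Write $b = d b'$; since $\gcd(b, N) = d$, we have $\gcd(b', N/d) = 1$, and more is true: $\gcd(b', N) $ divides $N/d$ but one can arrange, after adjusting, that the second coordinate becomes exactly $d$. Concretely, I look for $\mat r u v w \in \Gamma_0(N)$ (so $N \mid v$, $rw - uv = 1$) with $va + wb = d$. Since $\gcd(a, b) = 1$, one has $\gcd(a, b, N) = 1$ and a short computation with Bézout — choosing $v$ a multiple of $N$ and $w$ appropriately, then completing to a matrix in $\SL_2(\Z)$ lying in $\Gamma_0(N)$ — produces such a matrix; the resulting first coordinate $x = ra + ub$ is automatically coprime to $d$ because $rw \equiv 1 \pmod{v}$ forces $\gcd(x, d) = 1$. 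This is the routine but slightly fiddly half.

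**Step 2: the equality criterion for normal forms.** For the ``if and only if,'' the forward direction is the substantive one. Suppose $\bect x d = \bect{x'}{d'}$, i.e. there is $\mat r u v w \in \Gamma_0(N)$ with $\vect{x'}{d'} = \mat r u v w \vect x d$. Reading the second coordinate mod $N$ and using $N \mid v$ gives $d' \equiv w d \pmod N$; since $rw \equiv 1 \pmod{v}$ and $N\mid v$, $w$ is a unit mod $N/\gcd(N,?)$ — I would track this carefully to deduce $d' = d$ (both being divisors of $N$, congruent mod $N$ up to a unit, forces equality). Then from $x' = rx + ud$ and $d = vx + wd$ with $N \mid v$, reducing mod $d$ and mod $N/d$ and combining (this is where CRT enters) yields $x' \equiv x \pmod{\gcd(d, N/d)}$. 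Conversely, given $d = d'$ and $x' \equiv x \pmod{\gcd(d, N/d)}$, I construct an explicit element of $\Gamma_0(N)$ — of the form $\mat{1 + \alpha N/?}{\beta}{\gamma N}{\delta}$ tuned via CRT so that its action sends $\vect x d$ to $\vect{x'}{d}$ — which exhibits the equivalence.

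**Main obstacle.** The delicate point is the bookkeeping in Step~2: extracting from a single matrix congruence the two separate congruences (one modulo $d$, one modulo $N/d$) and then invoking CRT to get the sharp modulus $\gcd(d, N/d)$, while simultaneously pinning down that $d = d'$ exactly rather than merely ``associate mod $N$.'' One must be careful that $\gcd(d, N/d)$ — not $d$ or $N/d$ alone — is genuinely the optimal modulus, which requires checking both that the congruence holds (the easy inclusion) and that it is not finer (by producing matrices realizing any shift by a multiple of $\gcd(d, N/d)$). I expect the construction of these explicit witnessing matrices in $\Gamma_0(N)$, verifying both the determinant-one and the $N \mid c$ conditions while hitting prescribed residues, to be the part demanding the most care, though each individual verification is elementary.
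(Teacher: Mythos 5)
Your outline is correct and, for the first assertion, genuinely different from the paper's route. The paper first isolates three elementary equivalence moves (Lemma \ref{lemma: three equivalences}: translating $a$ by multiples of $b$, translating $b$ by multiples of $N$, and multiplying one coordinate by a unit $y$ with $\gcd(y, abN)=1$) and then reduces the second coordinate to $d=\gcd(b,N)$ by combining moves (2) and (3), using a product-of-primes trick to make $b'+kN'$ coprime to $ad$. You instead solve $va+wb=d$ with $N\mid v$ in one shot; this works because $\gcd(Na,b)=\gcd(N,b)=d$ (as $\gcd(a,b)=1$), but the step you wave at as ``after adjusting'' is a real piece of work: you must choose the Bézout solution so that $(v,w)$ is primitive before you can complete it to a matrix in $\Gamma_0(N)$, and verifying that every prime $p\mid\gcd(v,w)\mid d$ can be avoided (the delicate case being $p\mid N/d$, where one shows $p\nmid w$ is automatic from a valuation count) is comparable in length to the paper's Lemma \ref{lemma: three equivalences}. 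Neither route is clearly shorter; the paper's moves get reused later (e.g.\ in Theorem \ref{theorem: Galois action on cusps}), which is what its packaging buys.

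For the second assertion your plan matches the paper's, but one remark in your sketch is off: CRT plays no role in the forward direction, and if anything points the wrong way (combining congruences mod $d$ and mod $N/d$ would yield a modulus $\mathrm{lcm}(d,N/d)$, not $\gcd(d,N/d)$). The actual mechanism is: from $Nvx+wd=d$ one gets $w\equiv 1\pmod{N/d}$, the determinant condition $rw-Nuv=1$ then gives $r\equiv 1\pmod{N/d}$, and reducing $x'=rx+ud$ modulo $z=\gcd(d,N/d)$ (which divides both $d$ and $N/d$) yields $x'\equiv x\pmod z$. Make the use of the determinant explicit; without it you only learn $x'\equiv rx\pmod d$ with $r$ uncontrolled. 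Your identification of $d=d'$ is fine once phrased as $\gcd(d',N)=\gcd(wd,N)=\gcd(d,N)$ using $\gcd(w,N)=1$, and your converse (exhibiting an explicit element of $\Gamma_0(N)$ realizing any shift by a multiple of $z$) is exactly what the paper does, there via the three moves after solving $vd+wx^2N'=nz$.
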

As a corollary, we have the following.
\begin{corollary}\label{corollary: cusp representation}
The set of the cusps of $X_0(N)$ can be written as
\begin{equation*}
\left\{ \bect x d ~:~ 1\leq d \mid N, ~\gcd(x, d)=1 \text{ and $x$ taken modulo $\gcd(d, N/d)$} \right\}.
\end{equation*}
\end{corollary}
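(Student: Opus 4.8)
The statement to prove is Corollary~\ref{corollary: cusp representation}, which is an immediate consequence of Theorem~\ref{theorem: cusp representation}. So the plan is purely a bookkeeping argument once Theorem~\ref{theorem: cusp representation} is in hand.

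\textbf{Approach.} I would deduce the corollary from Theorem~\ref{theorem: cusp representation} in two moves: first surjectivity (every cusp is of the listed form), then a count of which pairs $(x,d)$ give distinct cusps. For surjectivity: an arbitrary cusp is $\bect a b$ for some $\vect a b \in (\Z^2)'$; by the first assertion of Theorem~\ref{theorem: cusp representation}, $\bect a b = \bect x d$ with $d = \gcd(b,N)$, which is a divisor of $N$, and $x$ is automatically coprime to $d$ since $\gcd(x,d)$ divides $\gcd(a,b)=1$ (this compatibility is built into the notation convention $\bect\cdot\cdot$). Hence every cusp lies in the displayed set. For the non-redundancy: by the second assertion of Theorem~\ref{theorem: cusp representation}, $\bect x d = \bect{x'}{d'}$ forces $d = d'$ and $x \equiv x' \pmodo{\gcd(d,N/d)}$; conversely these two conditions give equality. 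Therefore, for each divisor $d \mid N$, the cusps of level $d$ are in bijection with residue classes $x \bmod \gcd(d, N/d)$ that are coprime to $d$ — equivalently, taking $x$ modulo $\gcd(d,N/d)$ subject to $\gcd(x,d)=1$ — and distinct values of $d$ give disjoint sets of cusps. Assembling over all $d\mid N$ yields exactly the set in the corollary.

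\textbf{Key steps, in order.} (1) Recall from the notation convention that writing $\bect x d$ presupposes $\gcd(x,d)=1$, so the coprimality condition in the displayed set is not an extra hypothesis but part of the parametrization. (2) Apply the existence half of Theorem~\ref{theorem: cusp representation} to get that the map $(x,d) \mapsto \bect x d$ from the displayed index set to $\{\text{cusps}\}$ is surjective. (3) Apply the equivalence half of Theorem~\ref{theorem: cusp representation} to get that the map is well-defined and injective on residue classes, i.e., that the displayed description has no repetitions. (4) Conclude.

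\textbf{Main obstacle.} There is essentially no obstacle at the level of the corollary: all the content sits in Theorem~\ref{theorem: cusp representation}, whose proof (elementary row-reduction over $\Z$ using $\Gamma_0(N)$-equivalence, together with a careful analysis of when $\vect x d \sim \vect{x'}{d}$) is where the real work lies. The only thing to be mildly careful about in the corollary itself is the interplay between the global coprimality $\gcd(x,d)=1$ and the fact that $x$ is only well-defined modulo $\gcd(d,N/d)$ — one should note that coprimality of $x$ to $d$ depends only on $x \bmod d$, hence a fortiori is compatible with choosing a representative mod $\gcd(d,N/d)$ only after one checks $\gcd(d,N/d)$ and $d$ share the same prime divisors, which they do; this makes the phrase ``$x$ taken modulo $\gcd(d,N/d)$'' unambiguous. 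I would state this compatibility in one sentence and then close the argument.
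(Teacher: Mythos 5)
Your deduction is correct and is exactly what the paper intends: the corollary is stated as an immediate consequence of Theorem~\ref{theorem: cusp representation} (existence of a level-$d$ representative, plus the criterion for when two such representatives coincide), and the paper gives no separate proof. One small correction to your final paragraph: it is \emph{not} true that $d$ and $z=\gcd(d,N/d)$ share the same prime divisors (e.g.\ $N=6$, $d=2$ gives $z=1$); the parametrization is nonetheless unambiguous simply because $z\mid d$, so $\gcd(x,d)=1$ forces $\gcd(x,z)=1$, and every unit modulo $z$ lifts to an integer coprime to $d$ (the surjectivity of $(\zmod d)^\times\to(\zmod z)^\times$ noted in Remark~\ref{remark: cusp representative}), which is all that is needed for each level $d$ to contribute exactly $\varphi(z)$ cusps.
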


\begin{remark}\label{remark: cusp representative}
Since the map $(\zmod N)^\times \to (\zmod d)^\times$ is surjective for any divisor $d$ of $N$, it is often useful to take the set of the cusps of $X_0(N)$ by
\begin{equation*}
\left\{ \bect x d ~:~ 1\leq d \mid N, ~\gcd(x, N)=1 \text{ and $x$ taken modulo $\gcd(d, N/d)$} \right\}.
\end{equation*}
\end{remark}

To begin with, we show three types of equivalences of cusps.
\begin{lemma}\label{lemma: three equivalences}
We have
\begin{equation}\tag{1}
\bect a b^N = \bect {a+jb}{b}^N,
\end{equation}
\begin{equation}\tag{2}
\bect a b^N = \bect a {b+kN}^N,
\end{equation}
\begin{equation}\tag{3}
\bect{a}{yb}^N= \bect{ya}{b}^N.
\end{equation}
Here, $j$ can be any integer, $k$ can be any integer satisfying $\gcd(a, b+kN)=1$ and $y$ can be any integer satisfying $\gcd(y, abN)=1$.
\end{lemma}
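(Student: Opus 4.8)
The statement to prove is Lemma~\ref{lemma: three equivalences}, asserting three families of equivalences among the cusp symbols $\bect{a}{b}^N$. Let me sketch a proof.

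\textbf{Proof plan.} The entire lemma is about exhibiting, for each claimed equivalence, an explicit matrix $\mat{r}{u}{v}{w} \in \Gamma_0(N)$ realizing it (for equivalences (1) and (3)), or reducing to such a matrix after an auxiliary manipulation (for equivalence (2)). So the plan is to write down the matrices and verify they lie in $\Gamma_0(N)$, i.e. have determinant $1$ and lower-left entry divisible by $N$.

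For equivalence (1): I would use the matrix $\mat{1}{j}{0}{1}$, which is in $\Gamma_0(N)$ for any integer $j$ since its determinant is $1$ and its lower-left entry is $0$. Applying it to $\vect{a}{b}$ gives $\vect{a+jb}{b}$, which is the claim; one only needs $\gcd(a+jb,b)=\gcd(a,b)=1$ so the result is still a valid symbol. For equivalence (3): here the relevant matrix is of the form $\mat{y}{*}{*N}{*}$ with determinant $1$, which exists precisely because $\gcd(y, abN)=1$ allows one to solve the appropriate congruences; more concretely, since $\gcd(y, bN)=1$ there is a matrix $\mat{y}{u}{vN}{w}\in\SL_2(\Z)$ with lower-left entry a multiple of $N$ (choose $w\equiv y^{-1}\pmod{bN}$ suitably, then fill in $u,v$), and applying it to $\vect{a}{yb}$ lands on a vector equivalent to $\vect{ya}{b}$ after absorbing the $\gcd(y,a)=1$ condition into a further application of (1). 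I expect the bookkeeping here to require a little care: one needs to check that the second coordinate becomes exactly $b$ (up to the $\Gamma_0(N)$-action on the first coordinate), using $\gcd(y,a)=1$.

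For equivalence (2): this one is slightly different since $\mat{1}{0}{kN}{1}$ sends $\vect{a}{b}$ to $\vect{a}{b+kaN}$, not $\vect{a}{b+kN}$. So instead I would argue directly from Theorem~\ref{theorem: cusp representation} — or rather, since this lemma presumably precedes that theorem logically, I would combine (1) and (3): first note that changing $b$ to $b+kN$ (when coprime to $a$) does not change $\gcd(b,N)$... actually the cleanest route is to observe that $\vect{a}{b}\sim\vect{a+mb}{b}$ lets us assume $\gcd(a,N)=1$ after a translate, then apply a matrix $\mat{1}{0}{kN}{1}$ composed with another translate; alternatively, pick $m$ with $a' := a+mb$ satisfying $a'\equiv 1 \pmod{N}$ is too strong, but $\gcd(a', N)=1$ suffices, and then $\mat{w}{-k}{-a'N + \cdots}{\cdots}$... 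The honest approach: I would find integers so that the composite matrix sends $\vect{a}{b}$ to $\vect{a}{b+kN}$, which amounts to solving a small linear system modulo the constraint that $\gcd(a,b)=1$.

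\textbf{Main obstacle.} The genuinely fiddly part is equivalence (3) (and (2), which I would derive from it): producing the $\Gamma_0(N)$ matrix requires solving simultaneous congruences and then checking that the coprimality hypotheses $\gcd(y,abN)=1$ (resp. $\gcd(a,b+kN)=1$) are exactly what is needed to make the construction go through and to keep all intermediate symbols valid. I do not expect any conceptual difficulty — it is all elementary matrix arithmetic in $\SL_2(\Z)$ — but writing it so that every coprimality condition is visibly used is where the care lies.
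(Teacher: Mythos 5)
Your overall approach---realizing each equivalence by an explicit matrix in $\Gamma_0(N)$---is the same as the paper's, and your treatment of (1) via $\mat 1 j 0 1$ is exactly what the paper does. But the plan for (2) and (3) has a genuine structural flaw. Your proof of (3) applies a matrix $\gamma=\mat y u {vN} w\in\Gamma_0(N)$ to $\vect a {yb}$ and then claims to finish with ``a further application of (1)''. That cannot work: the second coordinate of $\gamma\vect a{yb}$ is $vNa+wyb$, which (since $wy=1+uvN$) has the form $b+N(\cdot)$ and is exactly $b$ only in degenerate cases; equivalence (1) only modifies the first coordinate, so you are forced to invoke (2) to clean up the denominator. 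Since you simultaneously propose to \emph{derive} (2) from (1) and (3), the argument as laid out is circular. The paper avoids this by proving (2) first, directly, and then deducing (3) from the matrix $\mat y v N w$ together with (2) and (1), in that order.

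The second issue is that (2) is never actually established: you list several tentative routes and end with ``I would find integers so that the composite matrix sends $\vect a b$ to $\vect a {b+kN}$,'' without exhibiting the matrix or identifying why the system is solvable. This is the one place where the hypothesis $\gcd(a,b+kN)=1$ is genuinely used, and it deserves to be made explicit. The paper's construction: since $\gcd(a,\,b(b+kN))=1$ (which needs both $\gcd(a,b)=1$ and $\gcd(a,b+kN)=1$), choose $v,w$ with $av+b(b+kN)w=k$ and set
\begin{equation*}
\gamma=\mat{1-Nbw}{Naw}{Nv}{1+N(b+kN)w}\in\Gamma(N)\subset\Gamma_0(N),
\end{equation*}
which one checks has determinant $1$ and satisfies $\gamma\vect a b=\vect a{b+kN}$. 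If you prove (2) this way first, your sketch of (3) goes through after inserting the missing application of (2); as written, however, the proposal does not close.
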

\begin{proof}
The first assertion easily follows because $\mat 1 j 0 1 \in \Gamma_0(N)$. To prove the second assertion, we find $v, w \in \Z$ such that
$av+b(b+kN)w=k$, which is possible because $\gcd(a, b(b+kN))=1$. Now, we consider the following matrix:
\begin{equation*}
\gamma=\mat {1-Nbw}{Naw}{Nv}{1+N(b+kN)w} \in \Gamma(N) \subset \Gamma_0(N),
\end{equation*}
where $\Gamma(N)$ is the \textit{principal congruence subgroup of level $N$}, which is defined as the kernel of the natural homomorphism $\SL_2(\Z) \to \SL_2(\zmod N)$ induced by the reduction modulo $N$.
By direct computation, we have $\gamma\vect a b=\vect {a}{b+kN}$.\footnote{More generally, $\Gamma(N)\vect a b =\Gamma(N)\vect{a'}{b'}$ if and only if $\vect a b \equiv \pm \vect {a'}{b'} \pmod N$.}

For the last one, we can find $v, w \in \Z$ such that $\gamma=\mat y v N w \in \Gamma_0(N)$ because $\gcd(y, N)=1$. Since $wy=1+Nv \equiv 1 \pmod N$, we have
\begin{equation*}
\bect {a}{yb}=\left[ \gamma \vect {a}{yb} \right]=\bect {ya+vyb}{Na+wyb} \overset{(2)}{=} \bect {ya+vyb}{b} \overset{(1)}{=} \bect {ya}{b},
\end{equation*}
as desired. This completes the proof.
\end{proof}

\begin{proof}[Proof of Theorem \ref{theorem: cusp representation}]
Let $d=\gcd(b, N)$. Also, let $b'=b/d$ and $N'=N/d$. 

To prove the first assertion, we first suppose that $\gcd(b', N)=1$. Then we have
\begin{equation*}
\bect a b=\bect {a}{b'd}\overset{(3)}{=} \bect {b'a}{d}.
\end{equation*}
Suppose next that $\gcd(b', N)\neq 1$. Since $\gcd(b', N')=1$, there is an integer\footnote{This can be proved directly by taking $k$ as the product of all prime divisors of $ad$ not dividing $b'$, or by Dirichlet's theorem on arithmetic progression.} $k$ such that $\gcd(b'+kN', ad)=1$. 
Also, since $\gcd(b'+kN', N')=\gcd(b', N')=1$, we have $\gcd(b'+kN', aN)=1$. 
Finally, since $\gcd(b+kN, a)=1$, we have
\begin{equation*}
\bect a b\overset{(2)}{=} \bect {a}{b+kN} \overset{(3)}{=} \bect {(b'+kN')a}{d}.
\end{equation*}
This completes the proof of the first assertion.

Next, we prove the second assertion. For simplicity, let $z=\gcd(d, N/d)$.
For any $\gamma=\mat r u {Nv} w \in \Gamma_0(N)$ and $\vect a b \in (\Z^2)'$, we have $\gcd(Nva+wb, N)=\gcd(wb, N)=\gcd(b, N)$. Thus, any two equivalent cusps have the same level. Now, we claim that $\bect x d = \bect {x'} d$ if and only if $x \equiv x' \pmod z$.
 
Suppose first that $\bect x d=\bect {x'}{d}$, i.e., there is a matrix $\gamma=\mat r u {Nv} w \in \Gamma_0(N)$ such that $\gamma \vect x d=\vect {x'}{d}$. 
Then we have $d=Nvx+dw$ and $rw -Nuv=1$. Thus, we have $w\equiv 1 \pmod {N'}$ and $rw \equiv 1 \pmod N$. This implies that $r\equiv 1 \pmod {N'}$. Since $z$ is a divisor of $N'$, we also have $r \equiv 1 \pmod z$, and so $x'=rx+du \equiv x \pmod z$, as wanted.

Conversely, suppose that $x=x'+nz$ for some $n \in \Z$. Since $\gcd(x, d)=1$, we have $\gcd(x^2N', d)=z$, and so there are $v, w \in \Z$ such that $vd+wx^2N'=nz$. 
Note that $x(1-wxN')=x'+vd$ and $\gcd(xx', d)=1$. Thus, we have
\begin{equation*}
\gcd(1-wxN', d)=\gcd(x(1-wxN'), d)=\gcd(x', d)=1.
\end{equation*}
Since $\gcd(1-wxN', xN')=1$, we have $\gcd(1-wxN', xN)=1$ and so
\begin{equation*}
\bect x d\overset{(2)}{=} \bect {x}{d-wxN} \overset{(3)}{=} \bect {(1-wxN')x}{d}\overset{(1)}{=} \bect {x'}{d}.
\end{equation*}
This completes the proof.
\end{proof}

\vspace{2mm}
It is often useful to fix a choice of representatives of the cusps of $X_0(N)$ (which is neither important nor harmful). For a divisor $d$ of $N$, we take a finite subset $R(N, d)$ of $\Z$ satisfying the following conditions:
\begin{itemize}[--]
\item
The number of elements of $R(N, d)$ is $\varphi(z)$, where $z=\gcd(d, N/d)$.
\item
Any elements of $R(N, d)$ are relatively prime to $d$.
\item
The elements of $R(N, d)$ are all distinct modulo $z$.
\end{itemize}
Then by Corollary \ref{corollary: cusp representation}, a cusp of $X_0(N)$ of level $d$ can be written as $\bect x d^N$ for some $x \in R(N, d)$, and so the set of cusps of $X_0(N)$ is equal to
\begin{equation*}
\{ \bect x d^N : 1\leq d \mid N \qqa x \in R(N, d) \}.
\end{equation*}

\vspace{2mm}
There is another notation for the cusps of $X_0(N)$, which is useful to figure out the actions of the Atkin--Lehner operators and the Hecke operators. Let $\cS(N)_\Q:=\Div_{\text{cusp}}(X_0(N)) \otimes_\Z \Q$, where $\Div_{\text{cusp}}(X_0(N))$ is the group of cuspidal divisors of $X_0(N)$. 
In other words, 
\begin{equation*}
\cS(N)_\Q:=\left\{ \sum_{1\leq d \mid N, \, x \in R(N, d)} a(x, d) \cdot \bect x d^N : a(x, d) \in \Q \right\}.
\end{equation*}

\begin{lemma}\label{lemma: cusp decomposition tensor}
Let $N=Mp^r$ with $\gcd(M, p)=1$. Then there is a canonical isomorphism 
\begin{equation*}
\iota: \cS(N)_\Q \simeq \cS(M)_\Q \motimes \cS(p^r)_\Q
\end{equation*}
sending a vector $\bect x {d}^N$ to a tensor $\bect x {d'}^M \motimes \bect x {p^f}^{p^r}$, where $d'=\gcd(d, M)$ and $p^f=\gcd(d, p^r)$.
\end{lemma}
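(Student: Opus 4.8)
The plan is to construct $\iota$ on the distinguished bases — the cusps of $X_0(N)$ on the source side, and the tensors of a cusp of $X_0(M)$ with a cusp of $X_0(p^r)$ on the target side — by showing that the prescribed assignment $\bect x d^N \mapsto \bect x {d'}^M \motimes \bect x {p^f}^{p^r}$ induces a well-defined \emph{bijection} between these two index sets, and then to extend it $\Q$-linearly. Everything reduces to elementary bookkeeping with the Chinese remainder theorem and Theorem \ref{theorem: cusp representation}.

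\textbf{The combinatorial dictionary.} By Corollary \ref{corollary: cusp representation} together with Remark \ref{remark: cusp representative}, the cusps of $X_0(N)$ are parametrized by pairs $(d,x)$ with $d\mid N$, with $\gcd(x,N)=1$, and with $x$ taken modulo $z_d:=\gcd(d,N/d)$. Writing $d=d'p^f$ with $d'=\gcd(d,M)$ and $p^f=\gcd(d,p^r)$, the Chinese remainder theorem makes $d\mapsto(d',p^f)$ a bijection $\cD_N\to\cD_M\times\cD_{p^r}$; moreover $z_d=\gcd(d',M/d')\cdot\gcd(p^f,p^{r-f})$ with the two factors coprime (one divides $M$, the other is a power of $p$), so reduction modulo $z_d$ is, again by the Chinese remainder theorem, the product of reduction modulo $\gcd(d',M/d')$ and reduction modulo $\gcd(p^f,p^{r-f})$, compatibly with unit groups.

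\textbf{Bijectivity of the assignment on cusps.} If $\bect x d^N=\bect{x'}d^N$, then Theorem \ref{theorem: cusp representation} gives $x\equiv x'\pmod{z_d}$, hence $x\equiv x'$ modulo each of $\gcd(d',M/d')$ and $\gcd(p^f,p^{r-f})$; applying Theorem \ref{theorem: cusp representation} in levels $M$ and $p^r$ yields $\bect x {d'}^M=\bect{x'}{d'}^M$ and $\bect x {p^f}^{p^r}=\bect{x'}{p^f}^{p^r}$, so the map is well defined on cusps, and reading the same equivalences backwards gives injectivity of the induced map on index sets. For surjectivity, start from a basis tensor $\bect y {d'}^M\motimes\bect z {p^f}^{p^r}$ with $\gcd(y,M)=\gcd(z,p^r)=1$ (Remark \ref{remark: cusp representative}); by the Chinese remainder theorem choose $x$ coprime to $z_d$ with $x\equiv y\pmod{\gcd(d',M/d')}$ and $x\equiv z\pmod{\gcd(p^f,p^{r-f})}$, and then, using $(\Z/N\Z)^\times\twoheadrightarrow(\Z/z_d\Z)^\times$ as in Remark \ref{remark: cusp representative}, replace $x$ by a representative in the same class modulo $z_d$ that is coprime to $N$. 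With $d:=d'p^f$, the cusp $\bect x d^N$ then maps to the given tensor.

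\textbf{Conclusion and the delicate point.} The source and target $\Q$-vector spaces have bases indexed respectively by the cusps of $X_0(N)$ and by the pairs $(\text{cusp of }X_0(M),\ \text{cusp of }X_0(p^r))$, in particular of equal dimension $\sigma_0(N)=\sigma_0(M)\,\sigma_0(p^r)$; the bijection of index sets produced above makes no reference to any choice of coset representatives, so its $\Q$-linear extension is a canonical isomorphism $\iota$, agreeing with the stated formula on basis vectors. I expect the only step requiring real care — and hence the ``hard part'' of an otherwise formal argument — to be the coprimality bookkeeping inside the surjectivity argument: producing a single integer $x$ that simultaneously lands in the prescribed residue classes modulo $\gcd(d',M/d')$ and modulo $\gcd(p^f,p^{r-f})$ and is coprime to all of $N$, which is exactly where the surjectivity of $(\Z/N\Z)^\times\twoheadrightarrow(\Z/z_d\Z)^\times$ is used.
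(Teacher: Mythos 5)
Your proposal is correct and follows essentially the same route as the paper: well-definedness and injectivity come from Theorem \ref{theorem: cusp representation} applied in levels $N$, $M$ and $p^r$ together with the coprime factorization $\gcd(d,N/d)=\gcd(d',M/d')\cdot\gcd(p^f,p^{r-f})$, and surjectivity comes from the Chinese remainder theorem. The only (immaterial) difference is that the paper imposes the congruences modulo $d'$ and $p^f$ directly, which makes $\gcd(x,d'p^f)=1$ automatic and avoids your extra step of lifting $x$ to a class coprime to all of $N$ via $(\Z/N\Z)^\times\twoheadrightarrow(\Z/z_d\Z)^\times$.
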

\begin{proof}
The existence of the map $\iota$ is obvious (and canonical). 
Suppose that $\iota(\bect x d^N)=\iota(\bect y \delta^N)$, i.e., 
\begin{equation*}
\bect x {d'}^M \motimes \bect x {p^f}^{p^r}=\bect y {\delta'}^M \motimes \bect y {p^{f'}}^{p^r}.
\end{equation*}
Then by definition, we have $\bect x {d'}^M=\bect y {\delta'}^M$ and $\bect x {p^f}^{p^r}=\bect y {p^{f'}}^{p^r}$. By Theorem \ref{theorem: cusp representation}, we have $d'=\delta'$ and $f=f'$, and so $d=\delta$. Also, we have $x \equiv y \pmod {z'}$ and $x \equiv y \pmod {p^{m(f)}}$, where $z'=\gcd({d'}, M/{d'})$ and $m(f)=\min (f, \, r-f)$. Since $\gcd(M, p)=1$, we have $x \equiv y \pmod z$, where $z=z'\cdot p^{m(f)}$, which is equal to $\gcd(d, N/d)$. Thus, we have $\bect x {d}^N=\bect y {\delta}^N$ and hence the map $\iota$ is injective.
By the Chinese remainder theorem, the map $\iota$ is surjective. Indeed, if $\bect a {d'}^M$ and $\bect b {p^f}^{p^r}$ are cusps of $X_0(M)$ and $X_0(p^r)$, respectively, then we can find an integer $x$ such that $x \equiv a \pmod {d'}$ and $x \equiv b \pmod {p^f}$ because $\gcd({d'}, p^f)=1$. By its construction, we have $\gcd(x, {d'} p^f)=1$, and so
$\iota(\bect x {d'p^f}^N)=\bect {x}{{d'}}^M \motimes \bect x {p^f}^{p^r}$, as claimed.
\end{proof}
From now on, we identify a cusp $\bect x {{d'} p^f}^N$ with $\bect {x}{{d'}}^M \motimes \bect x {p^f}^{p^r}$ if we write $N=Mp^r$ with $\gcd(M, p)=1$. (Here, we allow the case of $r=0$, in which case we have $\bect {x} {{d'}}^M=\bect {x}{{d'}}^M \motimes \bect 1 1^1$.)

\ms
\subsection{Degeneracy maps}\label{section: degeneracy maps}
Let $N=Mp^r$ with $\gcd(M, p)=1$. Also, let ${d'}$ be a divisor of $M$ and $0\leq f \leq r+1$. Let $\alpha_p(N)$ and $\beta_p(N)$ be two degeneracy maps from $X_0(Np)_{\C}$ to $X_0(N)_{\C}$ defined by
\begin{equation*}
\alpha_p(N)(\tau)=\tau \pmodo {\Gamma_0(N)} \qa \beta_p(N)(\tau)=p\tau \pmodo {\Gamma_0(N)},
\end{equation*}
respectively. As already mentioned at the beginning of this section, if we take $A=Np$ and $B=N$, then $\alpha_p(N)$ (resp. $\beta_p(N)$) is the map $F_\gamma$ for $\gamma=\mat 1 0 0 1$ (resp. $\gamma=\mat p 0 0 1$).\footnote{We can easily check that $\gamma \Gamma_0(Np) \gamma^{-1} \subset \Gamma_0(N)$, or see Definition \ref{definition: degeneracy map pi(A, B)} below.}
Thus, by (\ref{equation: cusp image by usual matrix gamma}) we have
\begin{equation*}
\alpha_p(N)\left[\vect x {{d'} p^f} \modo {\Gamma_0(Np)} \right]=\left[\vect x {{d'} p^f} \modo {\Gamma_0(N)}\right]=\begin{cases}
\bect {x}{{d'} p^f} & \text{ if }  ~~f\leq r,\\
\bect {px+{d'}}{{d'} p^r} & \text{ if } ~~f=r+1,
\end{cases}
 \end{equation*}
 and
 \begin{equation*}
 \beta_p(N)\left[\vect x {{d'} p^f} \modo {\Gamma_0(Np)} \right]=\left[\vect {px/g}{{d'} p^f/g} \modo {\Gamma_0(N)}\right]=\begin{cases}
\bect {px}{{d'}} & \text{ if } ~~ f=0,\\
\bect {x}{{d'} p^{f-1}} & \text{ if }~~ f\geq 1,
\end{cases}
 \end{equation*}
where $g=\gcd(px, {d'} p^f)$ is a divisor of $p$. Equivalently, we have the following.
\begin{lemma}\label{lemma: degeneracy map on cusp}
We have
\begin{equation*}
\alpha_p(N)\left(\bect x {d'} ^M\motimes \bect x {p^f}^{p^{r+1}} \right) = \begin{cases}
\bect x {d'}^M \motimes \bect x {p^f} ^{p^r} & \text{ if } ~~f \leq r,\\
\bect {px} {d'}^M \motimes \bect {1} {p^r}^{p^r} & \text{ if }~~ f=r+1,
\end{cases}
\end{equation*}
and
\begin{equation*}
\beta_p(N) \left(\bect x {d'} ^M\motimes \bect x {p^f}^{p^{r+1}} \right) = \begin{cases}
\bect {px}{{d'}}^M \motimes \bect 1 1^{p^r} & \text{ if }~~ f=0, \\
\bect x {d'} ^M \motimes \bect x {p^{f-1}}^{p^r} & \text{ if }~~ f\geq 1.
\end{cases}
\end{equation*}
\end{lemma}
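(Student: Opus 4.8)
The plan is to deduce Lemma~\ref{lemma: degeneracy map on cusp} formally from the two cusp-image formulas for $\alpha_p(N)$ and $\beta_p(N)$ established just above, by rewriting both sides in the tensor notation of Lemma~\ref{lemma: cusp decomposition tensor}. Recall that the isomorphism $\iota$ of that lemma reads a cusp $\bect{x}{d}^{N}$ as $\bect{x}{d'}^{M}\motimes\bect{x}{p^{f}}^{p^{r}}$ with $d'=\gcd(d,M)$ and $p^{f}=\gcd(d,p^{r})$, and the analogous isomorphism at level $Np=Mp^{r+1}$ does the same; in particular the source cusp $\bect{x}{{d'}p^{f}}^{Np}$ is $\bect{x}{d'}^{M}\motimes\bect{x}{p^{f}}^{p^{r+1}}$. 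Thus it suffices to apply $\iota$ (at level $N$) to the right-hand side of each case of those two formulas and simplify, keeping in mind that $d'\mid M$ is prime to $p$.

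The ``interior'' ranges are immediate: for $\alpha_p(N)$ with $f\leq r$ the image $\bect{x}{{d'}p^{f}}^{N}$ has level $\gcd({d'}p^{f},N)={d'}p^{\min(f,r)}={d'}p^{f}$, so $\iota$ carries it to $\bect{x}{d'}^{M}\motimes\bect{x}{p^{f}}^{p^{r}}$; for $\beta_p(N)$ with $1\leq f\leq r+1$ the image $\bect{x}{{d'}p^{f-1}}^{N}$ has level ${d'}p^{\min(f-1,r)}={d'}p^{f-1}$ (using $0\leq f-1\leq r$), so $\iota$ carries it to $\bect{x}{d'}^{M}\motimes\bect{x}{p^{f-1}}^{p^{r}}$. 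Both are the asserted values.

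For the ``boundary'' cases I would argue as follows. When $f=r+1$ for $\alpha_p(N)$ the formula gives $\bect{px+{d'}}{{d'}p^{r}}^{N}$; I would first confirm $\gcd(px+{d'},{d'}p^{r})=1$ (coprimality to $d'$ from $\gcd(x,d')=\gcd(p,d')=1$, and coprimality to $p^{r}$ from $px+{d'}\equiv{d'}\pmod{p}$ with $\gcd({d'},p)=1$) so that the symbol is legitimate. Its level in $X_0(N)$ is ${d'}p^{r}$, so $\iota$ produces $\bect{px+{d'}}{d'}^{M}\motimes\bect{px+{d'}}{p^{r}}^{p^{r}}$; the first factor simplifies to $\bect{px}{d'}^{M}$ by Lemma~\ref{lemma: three equivalences}(1), and the second to $\bect{1}{p^{r}}^{p^{r}}$ since by Theorem~\ref{theorem: cusp representation} all level-$p^{r}$ cusps of $X_0(p^{r})$ agree (the numerator being read modulo $\gcd(p^{r},1)=1$). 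When $f=0$ for $\beta_p(N)$ the formula gives $\bect{px}{d'}^{N}$, a cusp of level $\gcd({d'},N)={d'}$, whence $\iota$ yields $\bect{px}{d'}^{M}\motimes\bect{px}{1}^{p^{r}}$, and the same uniqueness collapses the level-$1$ factor of $X_0(p^{r})$ to $\bect{1}{1}^{p^{r}}$. These match the two remaining claimed values.

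I do not expect a genuine obstacle here: the lemma is a dictionary entry between the $\bect{x}{d}^{N}$ labelling and the tensor labelling, and each of the four cases is closed either at once or by a single use of Lemma~\ref{lemma: three equivalences} together with the fact that $X_0(p^{r})$ has exactly one cusp of level $1$ and exactly one of level $p^{r}$. The closest thing to a difficulty is the $f=r+1$ case of $\alpha_p(N)$, where the image cusp changes level because the $p$-part of its denominator is saturated; there one must both check that $\bect{px+{d'}}{{d'}p^{r}}^{N}$ is a valid cusp symbol and pin down the correct numerator shift before $\iota$ can be applied cleanly.
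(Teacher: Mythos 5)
Your proposal is correct and follows the paper's route exactly: the paper introduces this lemma with ``Equivalently, we have the following'' immediately after the two displayed cusp-image formulas for $\alpha_p(N)$ and $\beta_p(N)$, so its proof is precisely the translation into tensor notation via Lemma \ref{lemma: cusp decomposition tensor} that you carry out. Your treatment of the boundary cases (checking $\gcd(px+d',\,d'p^r)=1$ and collapsing the unique level-$1$ and level-$p^r$ cusps of $X_0(p^r)$) is sound and in fact more explicit than the paper's.
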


Also, we have the following. 
\begin{lemma}\label{lemma: ramification index of cusp}
The map $\alpha_p(N)$ is ramified at a cusp $\bect x {{d'} p^f}^{Np}$ if and only if  $0\leq f \leq r/2$. Also, the map $\beta_p(N)$ is ramified at a cusp $\bect x {{d'} p^f}^{Np}$ if and only if $r/2+1 \leq f \leq r+1$. In particular, the ramification indices of the maps $\alpha_p(N)$ and $\beta_p(N)$ depend only on $f$, neither on $x$ nor on ${d'}$. Furthermore, the ramification indices of $\alpha_p(N)$ and $\beta_p(N)$ at a cusp are either $1$ or $p$.
\end{lemma}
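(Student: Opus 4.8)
The plan is to read off each ramification index as an index of cusp stabilizers and to identify this with a ratio of cusp widths. Recall the standard formula: the width of the cusp $\bect x\ell^N$ of $X_0(N)$, for a divisor $\ell$ of $N$, equals $\tfrac{N}{\ell\cdot\gcd(\ell,\,N/\ell)}$, and in particular depends only on the level $\ell$. The map $\alpha_p(N)$ is the natural projection attached to the inclusion $\Gamma_0(Np)\subseteq\Gamma_0(N)$, so its ramification index at a cusp $c$ is the width of $c$ in $X_0(Np)$ divided by the width of $\alpha_p(N)(c)$ in $X_0(N)$. The map $\beta_p(N)=F_\gamma$ with $\gamma=\mat{p}{0}{0}{1}$ factors as the isomorphism $X_0(Np)\xrightarrow{\times\gamma}X(\gamma\Gamma_0(Np)\gamma^{-1})$ followed by the natural projection to $X_0(N)$; hence its ramification index at $c$ is the width of $\gamma c$ relative to $\gamma\Gamma_0(Np)\gamma^{-1}$ divided by the width of $\gamma c$ in $X_0(N)$, and the numerator here is the least positive integer $h$ with $\gamma^{-1}\sigma\mat{1}{h}{0}{1}\sigma^{-1}\gamma\in\Gamma_0(Np)$, where $\sigma\in\SL_2(\Z)$ sends $\infty$ to a representative of $\gamma c$ in $\bP^1(\Q)$.

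First I would dispose of $\alpha_p(N)$. Writing $c=\bect x{d'p^f}^{Np}$ with $d'\mid M$, the point of $\bP^1(\Q)$ is unchanged by $\alpha_p(N)$ when $f\le r$, while for $f=r+1$ the level of the image in $X_0(N)$ is $d'p^r$ (Lemma \ref{lemma: degeneracy map on cusp}). Applying the width formula, both widths come out as $w_0$ times a power of $p$ depending only on $f$, where $w_0:=\tfrac{M}{d'\gcd(d',M/d')}$ is a divisor of $M$, hence prime to $p$. The ratio is $p^{\max(r+1-2f,\,0)-\max(r-2f,\,0)}$ for $f\le r$, which equals $p$ precisely when $2f\le r$ and $1$ otherwise, and it equals $1$ for $f=r+1$. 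Since the two maxima differ by at most $1$, this already proves, for $\alpha_p(N)$, all assertions at once: ramification occurs exactly for $0\le f\le r/2$; the index depends only on $f$; and the index is $1$ or $p$.

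Next I would run the same argument for $\beta_p(N)$, where one extra bookkeeping step appears. One must first record $\gamma c$: for $f\ge1$ one has $\gamma\cdot\tfrac{x}{d'p^f}=\tfrac{x}{d'p^{f-1}}$ (a single factor of $p$ cancels, since $p\nmid x$), while for $f=0$ one has $\gamma\cdot\tfrac{x}{d'}=\tfrac{px}{d'}$ with no cancellation. Conjugating $\mat{1}{h}{0}{1}$ by $\gamma^{-1}\sigma$ produces an integral matrix whose lower-left entry is $-pc^2h$ and whose upper-right entry is $a^2h/p$, where $a/c$ is $\gamma c$ in lowest terms; membership in $\Gamma_0(Np)=\Gamma_0(Mp^{r+1})$ amounts to $p\mid a^2h$ together with $Mp^{r+1}\mid pc^2h$. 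The first condition forces $p\mid h$ exactly when $p\nmid a$, i.e. exactly when $f\ge1$ — and this is the extra factor of $p$ that shifts the ramified window. Working it out, the ratio becomes $p^{\max(1,\,r+2-2f)-\max(r+2-2f,\,0)}$ for $f\ge1$, equal to $p$ iff $f\ge r/2+1$ and $1$ otherwise, and it equals $1$ for $f=0$. This yields all three assertions for $\beta_p(N)$, and as a sanity check the indices in any fibre sum to $\deg\alpha_p(N)=\deg\beta_p(N)$, namely $p$ when $r\ge1$ and $p+1$ when $r=0$.

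The hard part is the $\beta_p(N)$ step: one must keep careful track that the relevant width of $\gamma c$ is taken with respect to the \emph{conjugated} group $\gamma\Gamma_0(Np)\gamma^{-1}$ rather than $\Gamma_0(Np)$, and that the representative of $\gamma c$ in $\bP^1(\Q)$ does or does not pick up a cancellation of $p$ according to whether $f\ge1$ or $f=0$. The two boundary cases — $f=0$ for $\beta_p(N)$ (where $p\mid a$ removes the constraint coming from $p\mid a^2h$) and $f=r+1$ for $\alpha_p(N)$ (a cusp of $p$-level $p^{r+1}$ on $X_0(Np)$ that $\alpha_p(N)$ collapses onto $p$-level $p^r$) — should be verified separately, but each is a one-line check once the general formula is in place.
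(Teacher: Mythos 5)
Your proof is correct, and it rests on the same underlying mechanism as the paper's: in both arguments the ramification index at a cusp is read off as a ratio of cusp widths computed from Lemma \ref{lemma: width of cusp}, and your treatment of $\alpha_p(N)$ is essentially identical to the paper's. The genuine difference is in the $\beta_p(N)$ case. The paper stays with local parameters $e^{2\pi i\tau/h}$ and extracts the extra factor of $p$ from $\phi(\tau)=p\tau$, then finishes with Lemmas \ref{lemma: degeneracy map on cusp} and \ref{lemma: width of cusp}; you instead factor $F_\gamma$ through the intermediate curve attached to $\gamma\Gamma_0(Np)\gamma^{-1}$ and compute the width of $\gamma c$ with respect to the conjugated group by an explicit stabilizer calculation, so that the ramification index becomes a ratio of two widths of the \emph{same} point $\gamma c$ with respect to two groups. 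Your version is a little longer, but it isolates cleanly the one delicate point of the whole lemma -- whether a factor of $p$ cancels in $\gamma c$, i.e.\ the dichotomy $f=0$ versus $f\ge 1$, equivalently whether the scaling matrix carrying $\infty$ to $\gamma c$ conjugates $\mat 1 h 0 1$ to something with diagonal $(p,1)$ or $(1,p)$ -- which in the local-parameter formulation is easy to gloss over because the factor $\phi(\tau)/\tau=p$ must really be interpreted after moving the cusp to $\infty$. Both computations produce the same exponents $p^{\max(r+1-2f,0)-\max(r-2f,0)}$ and $p^{\max(r+2-2f,1)-\max(r+2-2f,0)}$ and hence the same ramification windows, and your fibre-degree sanity check is a worthwhile addition.
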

This is \cite[Lem. 2.1]{Yoo6}. Here, we provide a more direct proof using the theory of compact Riemann surfaces. Before proceeding, we compute the width of a cusp.

\begin{definition}
The \textit{width} of a cusp $\bect a d$ of level $d$ of $X_0(N)$ is defined as the smallest positive integer $n$ such that 
$\mat 1 n 0 1 \in \gamma^{-1}\Gamma_0(N) \gamma$,
where $\gamma \in \SL_2(\Z)$ satisfying $\gamma \vect 1 0 = \vect a d$.
\end{definition}

\begin{lemma}\label{lemma: width of cusp}
The width of a cusp $\bect a d$ of level $d$ of $X_0(N)$ is $\frac{N}{d \cdot \gcd(d, N/d)}$.
\end{lemma}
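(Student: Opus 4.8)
The statement to prove is Lemma~\ref{lemma: width of cusp}: the width of a cusp $\bect a d$ of level $d$ of $X_0(N)$ equals $\frac{N}{d\cdot\gcd(d,N/d)}$.

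\medskip

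\textbf{Approach.} The plan is to work directly from the definition of width via an explicit choice of $\gamma\in\SL_2(\Z)$ carrying $\vect10$ to $\vect ad$, and then to reduce the condition $\mat1n01\in\gamma^{-1}\Gamma_0(N)\gamma$ to a congruence in $\Z$ that can be solved explicitly. Since a cusp of level $d$ can be represented as $\bect ad$ with $\gcd(a,d)=1$ (Corollary~\ref{corollary: cusp representation}), I can choose $b,c\in\Z$ with $ad\cdot(\text{stuff})$; concretely pick $\gamma=\mat a{*}d{*}\in\SL_2(\Z)$ (possible since $\gcd(a,d)=1$), so $\gamma\vect10=\vect ad$. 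The key point is that the width does not depend on the choice of such $\gamma$: two choices differ by right-multiplication by an element of $\gamma_0^{-1}\Gamma_\infty\gamma_0$-type stabilizer data, more precisely by $\mat1{k}01$ on the right for $k\in\Z$ together with $\pm I$, and conjugation by $\mat1k01$ preserves the set $\{\mat1n01\}$ and the subgroup $\Gamma_0(N)$ only up to the same kind of shift, so the minimal positive $n$ is unchanged. I would state this invariance explicitly as a first step.

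\medskip

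\textbf{Key steps.} First, fix $\gamma=\mat ab dc$ with $ac-bd=1$ (so $\gamma\in\SL_2(\Z)$ and $\gamma\vect10=\vect ad$). Compute
\[
\gamma\mat1n01\gamma^{-1}=\mat{1-nad}{na^2}{-nd^2}{1+nad}.
\]
This lies in $\Gamma_0(N)$ if and only if $N\mid nd^2$. Hence the width is the smallest positive integer $n$ with $N\mid nd^2$, which is $n=\frac{N}{\gcd(N,d^2)}$. Second, I need the arithmetic identity $\gcd(N,d^2)=d\cdot\gcd(d,N/d)$ whenever $d\mid N$: writing it prime by prime with $\val_p(N)=e$, $\val_p(d)=f\le e$, one has $\val_p(\gcd(N,d^2))=\min(e,2f)$ and $\val_p(d\cdot\gcd(d,N/d))=f+\min(f,e-f)=\min(2f,e)$, which agree. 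Therefore $n=\frac{N}{d\cdot\gcd(d,N/d)}$, as claimed. Third, record that the answer is independent of the representative $a$ of the cusp and of the auxiliary entries $b,c$: only $d$ enters the final congruence $N\mid nd^2$, so there is nothing further to check.

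\medskip

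\textbf{Main obstacle.} There is no serious obstacle here; this is a short computation. The only mildly delicate point is the bookkeeping for well-definedness — making sure that the "smallest $n$" in the definition is genuinely independent of the chosen lift $\gamma$ — but this follows immediately from the fact that the conjugate $\gamma\mat1n01\gamma^{-1}$ computed above depends only on the second column $\vect dc$ of $\gamma$ through $d$ (indeed only on $d^2$), and changing $\gamma$ by $\mat{\pm1}{k}{0}{\pm1}$ on the right changes $a,b,c$ but not $d$. So the proof is essentially the one-line matrix computation plus the elementary valuation comparison.
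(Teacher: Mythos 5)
Your proof is correct and follows essentially the same route as the paper: the same choice of $\gamma=\mat a b d c$ with $ac-bd=1$, the same conjugation computation showing the lower-left entry is $-nd^2$, and the same reduction to the identity $\gcd(N,d^2)=d\cdot\gcd(d,N/d)$. The paper states this last identity without proof, whereas you verify it by comparing $p$-adic valuations, and you also make the independence of the choice of $\gamma$ explicit; both are harmless additions.
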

\begin{proof}
Since $\gcd(a, d)=1$, we can find integers $b$ and $c$ such that $ac-bd=1$. We take $\gamma=\mat a b d c \in \SL_2(\Z)$ so that $\gamma \vect 1 0 = \vect a d$. Since
\begin{equation*}
\mat 1 n 0 1 \in \gamma^{-1}\Gamma_0(N) \gamma \iff \gamma \mat 1 n 0 1 \gamma^{-1}=\mat * * {-d^2n}* \in \Gamma_0(N),
\end{equation*}
we must have $-d^2 n \equiv 0 \pmod N$. Note that $\gcd(d^2, N)=d\cdot \gcd(d, N/d)$, and hence the result follows.
\end{proof}

\begin{proof}[Proof of Lemma \ref{lemma: ramification index of cusp}]
We recall the definition of the ramification index of a holomorphic function between compact Riemann surfaces. Let $X$ and $Y$ be compact Riemann surfaces, and
let $\phi : X \to Y$ be a non-constant holomorphic function. Also,
let $q$ and $q'$ denote the local parameters of $\tau \in X$ and $\phi(\tau) \in Y$, respectively. 
If we write $\phi(q)=\sum_{n\geq 1} a(n) \cdot (q')^n$, then the ramification index of $\phi$ at a point $\tau$ is defined as the smallest positive integer $n$ such that $a(n)\neq 0$.

Now, we prove the lemma. Let $\phi=\alpha_p(N)$ or $\beta_p(N)$.
Also, let $h$ (resp. $h'$) be the width of a cusp $c$ (resp. $\phi(c)$). 
By the discussion in \cite[Sec. 2.4]{DS05}, the local parameter of a cusp $c$ (resp. $\phi(c)$) of $X_0(N)$ is equal to $e^{2\pi i \tau/h}$ (resp. $e^{2\pi i \tau/{h'}}$). Since
\begin{equation*}
\phi(e^{2\pi i \tau/h})=e^{2\pi i \phi(\tau)/h}=(e^{2\pi i \tau /{h'}})^{\frac{\phi(\tau)}{\tau} \times \frac{h'}{h}},
\end{equation*}
the ramification index of $\alpha_p(N)$ (resp. $\beta_p(N)$) at a cusp $c$ is $\frac{h'}{h}$ (resp. $\frac{ph'}{h}$).\footnote{The ramification index of $\alpha_p(N)$ is computed on \cite[pg. 67]{DS05} or \cite[pg. 538]{Ste85}.}
By Lemma \ref{lemma: width of cusp}, the width of a cusp $c=\bect x {{d'} p^f}^{Np}$ is $\frac{M}{{d'} \cdot \gcd({d'}, M/{d'})} \times p^{r+1-f-m(f)}$, where $m(f)=\min(f, \, r+1-f)$. So the result easily follows by Lemma \ref{lemma: degeneracy map on cusp}. 
\end{proof}

More generally, we can define various degeneracy maps as follows. 
We use the same notation as at the beginning of the section.
For $\gamma=\mat n 0 0 1 \in M_2(\Z)$, we have
\begin{equation*}
\gamma \mat a b c d \gamma^{-1}=\mat a {nb}{n^{-1}c} {d}.
\end{equation*}
Thus, if we take $n$ as a divisor of $A/B$, then we have $\gamma\Gamma_0(A)\gamma^{-1} \subset \Gamma_0(B)$, and hence the map $F_\gamma : X_0(A)_\C \to X_0(B)_\C$ is well-defined.
\begin{definition}\label{definition: degeneracy map pi(A, B)}
We denote by $\pi_1(A, B)$ (resp. $\pi_2(A, B)$) the map $F_\gamma$ from $X_0(A)$ to $X_0(B)$ induced by $\gamma=\mat 1 0 0 1$ (resp. $\gamma=\mat {A/B} 0 0 1$). By definition, for any integer $r\geq 1$ we have 
\begin{equation*}
\begin{split}
\pi_1(Mp^r, M)=\alpha_p(Mp^{r-1}) \circ \alpha_p(Mp^{r-2}) \circ \cdots \circ \alpha_p(Mp)\circ\alpha_p(M), \\
\pi_2(Mp^r, M)=\beta_p(Mp^{r-1}) \circ \beta_p(Mp^{r-2}) \circ \cdots \circ \beta_p(Mp)\circ\beta_p(M).\\
\end{split}
\end{equation*}
Furthermore, if we take $\gamma=\mat p 0 0 1$ when $A/B=p^2$, then we obtain a map 
\begin{equation*}
F_\gamma: X_0(Np^2) \to X_0(N),
\end{equation*}
denoted by $\pi_{12}(N)$, which is equal to
\begin{equation*}
\alpha_p(Np) \circ \beta_p(N)=\beta_p(Np) \circ \alpha_p(N).
\end{equation*}
\end{definition}

\ms
\subsection{Atkin--Lehner operators}\label{section: AL operators}
As above, let $N=Mp^r$ with $\gcd(M, p)=1$. Also, let ${d'}$ be a divisor of $M$ and $0\leq f \leq r$. In this subsection, we further assume that $p$ is a divisor of $N$, i.e., $r\geq 1$. Consider any matrix of the form:
\begin{equation*}
W_{p^r} = \bmat {m_1 p^r} {m_2} {m_3 Mp^r} {m_4 p^r} \text{ for some } m_i \in \Z ~\text{ with } ~m_1 m_4 p^r-m_2m_3 M=1.
\end{equation*}
One of the properties of such matrices is that they normalize the group $\Gamma_0(N)$, so if we take $A=B=N$ and $\gamma=W_{p^r}$ at the beginning of the section, then the map $F_\gamma$ is well-defined as an endomorphism of $X_0(N)_{\C}$, which is called the \textit{Atkin--Lehner operator with respect to $p$}, denoted by $w_p$. Note that this operator does not depend on the choice of a matrix $W_{p^r}$ because all such matrices are equivalent modulo $\Gamma_0(N)$, i.e., 
for any two such matrices $W$ and $W'$ there is a matrix $U \in \Gamma_0(N)$ such that $W=U \times W'$.

\begin{lemma}\label{lemma: AL action on cusps}
Let $c=\bect x {{d'} p^f}^N$ be a cusp of $X_0(N)$. Then we have
\begin{equation*}
w_p(c)=\bect x {d'}^M \motimes \bect {-x}{p^{r-f}}^{p^r}.
\end{equation*}
\end{lemma}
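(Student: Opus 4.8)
The plan is to compute $w_p(c)$ directly from the matrix formula \eqref{equation: cusp image by usual matrix gamma}, choosing a convenient representative $W_{p^r}$ of the Atkin--Lehner operator and a convenient representative $\vect x {{d'}p^f}$ of the cusp $c$. First I would pick integers $m_i$ with $m_1 m_4 p^r - m_2 m_3 M = 1$; since $\gcd(p^r, M) = 1$ such integers exist, and one may further arrange (by adjusting modulo $\Gamma_0(N)$, or by a direct choice) that $m_2$ is divisible by ${d'}$, say $m_2 = {d'} m_2'$, which will make the resulting denominator factor cleanly. Then by \eqref{equation: cusp image by usual matrix gamma} applied with $\gamma = W_{p^r}$, $A = B = N$, $p \mapsto x$, $q \mapsto {d'}p^f$, we get
\begin{equation*}
w_p(c) = \left[ \frac{(m_1 p^r x + m_2 {d'} p^f)/g}{(m_3 M p^r x + m_4 p^r {d'} p^f)/g} \modo {\Gamma_0(N)} \right],
\end{equation*}
where $g = \gcd(m_1 p^r x + m_2 {d'}p^f,\ m_3 M p^r x + m_4 p^r {d'}p^f)$.

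The key computation is then to identify the level of this cusp and simplify the numerator modulo the relevant modulus using Theorem \ref{theorem: cusp representation}. The denominator is $p^r({d'}\cdot(\text{stuff}))$; factoring out the appropriate power of $p$ and the part dividing $M$, one finds (after the standard $\gcd$ bookkeeping, using $\gcd(x,{d'}p^f)=1$ and $m_1 m_4 p^r - m_2 m_3 M = 1$) that the level is ${d'}p^{r-f}$, so the resulting cusp is $\bect{\ast}{{d'}p^{r-f}}^N$ for some integer $\ast$. To pin down $\ast$, I would use the tensor-product identification from Lemma \ref{lemma: cusp decomposition tensor}: write the cusp as an element of $\cS(M)_\Q \motimes \cS(p^r)_\Q$ and compute the $M$-component and the $p^r$-component separately. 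For the $M$-component one reduces the numerator modulo ${d'}$ (or modulo $\gcd({d'}, M/{d'})$) — since $W_{p^r} \equiv \mat{m_1 p^r}{m_2}{0}{m_4 p^r} \pmod M$ acts there essentially by the scalar $m_1 p^r \equiv$ (a unit) and, after using the compatible choice of representatives in $R(M,{d'})$, one checks it equals $\bect x {d'}^M$; for the $p^r$-component, reducing modulo $p^r$ kills the $m_2$, $m_3 M$ terms appropriately and leaves the class $\bect{-x}{p^{r-f}}^{p^r}$, the sign coming from $m_1 m_4 p^r \equiv -m_2 m_3 M \pmod{\cdots}$, i.e. from the determinant relation.

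Alternatively — and this may be cleaner — I would reduce to the two extreme cases by a factorization argument: the operator $w_p$ on $X_0(Mp^r)$ is "concentrated at $p$," so via Lemma \ref{lemma: cusp decomposition tensor} it acts as $\mathrm{id} \motimes w_p$ where the second factor is the Atkin--Lehner involution on $X_0(p^r)$, and it then suffices to prove the formula $w_p\big(\bect x {p^f}^{p^r}\big) = \bect{-x}{p^{r-f}}^{p^r}$ on $X_0(p^r)$, where one can take the clean representative $W_{p^r} = \mat 0 {-1}{p^r} 0$ and apply \eqref{equation: cusp image by usual matrix gamma} with essentially no $\gcd$ subtleties: $\vect x {p^f} \mapsto \vect{-p^f}{p^r x}$, and then $\bect{-p^f}{p^r x}^{p^r} = \bect{-x}{p^{r-f}}^{p^r}$ by Lemma \ref{lemma: three equivalences}(3) (dividing out the unit $x$) together with the level computation $\gcd(p^r x, p^r) = p^r$... wait, one must be careful that $x$ is a unit mod $p$, so $\gcd(p^r x, p^{r+f})$-type reductions do behave, and Lemma \ref{lemma: three equivalences} parts (1)--(3) finish it.

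The main obstacle I anticipate is the $\gcd$ bookkeeping in the denominator: keeping track of exactly which power of $p$ and which divisor of $M$ can be cancelled, and verifying that after cancellation the level is precisely ${d'}p^{r-f}$ rather than a proper divisor. This is where one genuinely uses the unimodularity relation $m_1 m_4 p^r - m_2 m_3 M = 1$ (it forces $p \nmid m_1$ and $p^r \nmid m_2$, etc.), and it is easy to lose a factor if one is sloppy. The sign $-x$ (as opposed to $x$) likewise must be traced carefully through the determinant relation; it is a genuine feature, not a normalization artifact, since $w_p^2 = \mathrm{id}$ forces an even number of sign flips and here there is exactly one visible flip in the $p$-component (consistent with $r - (r-f) \ne f$ in general but $x \mapsto -x \mapsto x$ under applying $w_p$ twice). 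Once the two tensor components are computed, assembling them via Lemma \ref{lemma: cusp decomposition tensor} and rewriting in the notation $\bect x {d'}^M \motimes \bect{-x}{p^{r-f}}^{p^r}$ is immediate.
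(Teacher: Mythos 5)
Your main line of attack is essentially the paper's proof: apply (\ref{equation: cusp image by usual matrix gamma}) with a normalized Atkin--Lehner matrix (the paper takes $m_3={d'}$ rather than ${d'}\mid m_2$, which makes the denominator factor as $v{d'}p^r$ with $\gcd(v,N)=1$), deduce $g=p^f$ from $g\mid\det W_{p^r}=p^r$ together with $p\nmid u$, and then read off the numerator modulo ${d'}$ and modulo $p^{\min(f,\,r-f)}$ from the determinant relation $m_1m_4p^r-m_2m_3M=1$, which is exactly where the sign $-x$ appears. One caution about your ``cleaner'' alternative: the claim that $w_p$ acts as $\mathrm{id}\motimes w_p$ through the identification of Lemma \ref{lemma: cusp decomposition tensor} is circular as stated, since that identification is purely group-theoretic and says nothing about how $w_p$ acts on it --- indeed the tensor statement (Lemma \ref{lemma: Hecke and AL operators on RCD}) is deduced in the paper \emph{from} the present lemma, and justifying it independently would force you to redo the mod-${d'}$ computation anyway, so only the prime-power component of that shortcut is actually a simplification.
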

\begin{proof}
During the proof, we take $x \in \Z$ so that $\gcd(x, {d'} p)=1$ (Remark \ref{remark: cusp representative}). By (\ref{equation: cusp image by usual matrix gamma}), we have
\begin{equation*}
w_p(c)=\bect {(m_1xp^r+m_2{d'} p^f)/g}{(m_3xMp^r+m_4d'p^{r+f})/g}=\bect {up^f/g}{v{d'} p^r/g},
\end{equation*}
where $u=m_1 x p^{r-f}+m_2{d'}$, $v=m_3x(M/{d'})+m_4p^f$ and $g=\gcd(up^f, v{d'} p^r)$.

Since the operator $w_p$ does not depend on the choice of a matrix $W_{p^r}$, we may choose $m_3={d'}$ and so $v=Mx+m_4p^f$. Indeed, such a matrix $W_{p^r}$ exists because we can always find integers $m_1, m_2$ and $m_4$ such that $m_1m_4p^r-m_2{d'} M=1$ by our assumption $\gcd({d'} M, p)=1$. 
Since $m_4 p^f$ is relatively prime to $M$, we have $\gcd(v, M)=\gcd(Mx+m_4p^f, M)=1$. Also, since $Mx$ is relatively prime to $p$, we have $\gcd(v, p^r)=\gcd(Mx+m_4p^f, p^r)=1$. Thus, we have $\gcd(v, N)=1$. 
Since $g$ is a divisor of $\det(W_{p^r})=p^r$, and since $p$ does not divide $u$, we have $g=p^f$. Therefore we obtain $\gcd(u, v)=1$. Since $\gcd(v, N)=1$ as well, we have
\begin{equation*}
w_p(c)= \bect {up^f/g}{v{d'} p^r/g}=\bect {u}{v{d'} p^{r-f}} \overset{(3)}{=} \bect {uv}{{d'} p^{r-f}}=\bect {uv}{{d'}}^M \motimes \bect {uv} {p^{r-f}}^{p^r}.
\end{equation*}
Note that $u\equiv m_1 x p^{r-f} \pmod {d'}$ and $v \equiv m_4p^f \pmod {d'}$. Note also that $u\equiv m_2{d'} \pmod {p^{m(f)}}$ and $v \equiv Mx \pmod {p^{m(f)}}$, where $m(f)=\min(f, \, r-f)=m(r-f)$. 
Therefore by the determinant condition $m_1m_4p^r-m_2{d'} M=1$, we have
\begin{equation*}
uv \equiv m_1m_4p^r x\equiv x \pmodo {{d'}} \qa uv\equiv m_2{d'} Mx \equiv -x \pmodo {p^{m(f)}}.
\end{equation*}
This completes the proof.
\end{proof}
\begin{lemma}\label{lemma: determinant and gcd}
Let $M=\mat a b c d \in M_2(\Z)$ and $\vect x y \in (\Z^2)'$. Then the greatest common divisor of $ax+by$ and $cx+dy$ is a divisor of the determinant of $M$.
\end{lemma}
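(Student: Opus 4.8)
The plan is to exhibit $\det(M)\cdot x$ and $\det(M)\cdot y$ as explicit integer linear combinations of $ax+by$ and $cx+dy$; then any common divisor of the latter two divides $\det(M)\cdot\gcd(x,y)=\det(M)$. First I would write $M\vect x y = \vect{ax+by}{cx+dy}$ and multiply on the left by the adjugate matrix $\mat d {-b} {-c} a$. Using the identity $\mat d {-b} {-c} a \cdot M = \det(M)\cdot I_2$, this gives $\det(M)\vect x y = \mat d {-b} {-c} a \vect{ax+by}{cx+dy}$; reading off the two coordinates, $d(ax+by)-b(cx+dy) = \det(M)\cdot x$ and $-c(ax+by)+a(cx+dy) = \det(M)\cdot y$.

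To conclude, set $g:=\gcd(ax+by,\,cx+dy)$. The two identities above show $g\mid \det(M)\cdot x$ and $g\mid \det(M)\cdot y$. Since $\vect x y\in(\Z^2)'$ means $\gcd(x,y)=1$, B\'ezout's lemma furnishes $u,v\in\Z$ with $ux+vy=1$, and then $\det(M)=u\cdot(\det(M)\cdot x)+v\cdot(\det(M)\cdot y)$ is divisible by $g$, which is the assertion. The computation is entirely routine, so I do not expect any genuine obstacle; the primitivity hypothesis on $\vect x y$ enters only in this final step (and it is really needed, as $M=I_2$ with $\vect x y = \vect 2 2$ shows).
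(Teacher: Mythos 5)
Your proof is correct and is essentially the paper's argument: the adjugate computation produces exactly the identity $(ud-vc)(ax+by)+(-ub+va)(cx+dy)=(ux+vy)(ad-bc)=\det(M)$ that the paper writes down directly. The only difference is presentational (you derive the combination via the adjugate matrix rather than stating it outright), so there is nothing to correct.
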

\begin{proof}
Since $\gcd(x, y)=1$, there are integers $u$ and $v$ such that $ux+vy=1$. So we have
\begin{equation*}
(ud-vc)(ax+by)+(-ub+va)(cx+dy)=(ux+vy)(ad-bc)=ad-bc,
\end{equation*}
which proves the result.
\end{proof}

\ms
\subsection{Hecke operators}\label{section: Hecke operators}
The degeneracy maps induce maps between the divisor groups. 
Using them, we define the \textit{$p$-th Hecke operator $T_p$} by the composition
\begin{equation*}
\xyv{0.7}
\xyh{4}
\xymatrix{
& \Div(X_0(Np)) \ar[rd]^-{\beta_p(N)_*} & \\
\Div(X_0(N)) \ar@{-->}[rr]_-{T_p} \ar[ru]^-{\alpha_p(N)^*}& & \Div(X_0(N)).
}
\end{equation*}
We also denote by $T_p$ the endomorphism of $J_0(N)$ induced by the restriction of the map $T_p$ to $\Div^0(X_0(N))$, which we also call the \textit{$p$-th Hecke operator}. 
\begin{lemma}\label{lemma: Hecke action on cusps}
As above, let $N=Mp^r$ with $\gcd(M, p)=1$.  Also, let ${d'}$ be a divisor of $M$ and $0\leq f \leq r$. For a prime $p$, let $p^* \in \Z$ be chosen so that $pp^* \equiv 1 \pmod M$ and $\gcd(p^*, p)=1$.
\begin{enumerate}
\item
Suppose that $r=0$. Then we have
\begin{equation*}
T_p(\bect x {d'}^M)=p\cdot \bect {px}{{d'}}^M + \bect {p^*x}{{d'}}^M.
\end{equation*}

\item
Suppose that $r\geq 1$. Then we have
\begin{equation*}
T_p(\bect x {{d'} p^f}^N)=\begin{cases}
p \cdot \bect {px}{{d'}}^M \motimes \bect 1 1^{p^r}& \text{ if } ~ f=0,\\
\sum_{i=1}^{p-1} \bect x {d'}^M \motimes \bect i {p^{r-1}}^{p^{r}}+\bect {p^*x}{{d'}}^M \motimes \bect 1 {p^{r}}^{p^{r}} & \text{ if }~ f=r,\\
\sum_{i=0}^{p-1} \bect x {d'} ^{M} \motimes \bect {x+ip^{r-f}} {p^{f-1}}^{p^r}
 & \text{ if } ~ (r+1)/2 < f \leq r-1,\\
p \cdot \bect x {d'}^M \motimes \bect x {p^{f-1}}^{p^r} & \text{ otherwise}.
\end{cases}
\end{equation*}
\end{enumerate}
\end{lemma}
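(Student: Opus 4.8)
The plan is to compute $T_p$ on each cusp directly from its definition as $T_p = \beta_p(N)_* \circ \alpha_p(N)^*$, using the explicit formulas for $\alpha_p(N)$ and $\beta_p(N)$ on cusps already established in Lemma \ref{lemma: degeneracy map on cusp} together with the ramification data of Lemma \ref{lemma: ramification index of cusp}. First I would treat the case $r=0$: here $\alpha_p(M)^*\bigl(\bect{x}{d'}^M\bigr)$ is the sum, with multiplicities equal to ramification indices, of the cusps of $X_0(Mp)$ mapping to $\bect{x}{d'}^M$ under $\alpha_p(M)$. By Lemma \ref{lemma: degeneracy map on cusp} these preimages are $\bect{x}{d'}^M\motimes\bect{x}{1}^{p}$ (level $d'$ in the $p$-part, i.e. $f=0$) and $\bect{x}{d'}^M\motimes\bect{x}{p}^{p}$ (level $p$, i.e. $f=1$); Lemma \ref{lemma: ramification index of cusp} says $\alpha_p$ is ramified (index $p$) at the former and unramified at the latter. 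Then apply $\beta_p(M)_*$ via Lemma \ref{lemma: degeneracy map on cusp}: the $f=0$ cusp maps to $\bect{px}{d'}^M$ and the $f=1$ cusp maps to $\bect{x}{1}^M$. Tracking the multiplicities and simplifying $\bect{x}{1}^M$ using the equivalences of Lemma \ref{lemma: three equivalences} — in particular writing it as $\bect{p^*x}{d'}^M$ after a careful bookkeeping of the representative, using $pp^*\equiv1\pmod M$ — yields the stated formula.

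For $r\geq 1$ I would fix a cusp $c=\bect{x}{d'p^f}^N$ and again compute $\alpha_p(N)^*(c)$ first. The preimages in $X_0(Np)$ under $\alpha_p(N)$ of $c$ are read off from Lemma \ref{lemma: degeneracy map on cusp}: cusps of level $d'p^f$ (namely $\bect{x}{d'}^M\motimes\bect{x}{p^f}^{p^{r+1}}$, which exists precisely because $f\le r$) and, when $f=r$, the extra cusps $\bect{y}{d'}^M\motimes\bect{y}{p^{r+1}}^{p^{r+1}}$ with $py+d'\equiv x$, i.e. those coming from the $f=r+1$ case of $\alpha_p$. The ramification index at each is supplied by Lemma \ref{lemma: ramification index of cusp} (it is $p$ or $1$ depending only on $f$). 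Then push forward by $\beta_p(N)_*$ using Lemma \ref{lemma: degeneracy map on cusp}, which lowers the $p$-level by one (or sends the $f=0$ cusp to $\bect{px}{d'}^M$), and collect terms. The four sub-cases ($f=0$; $f=r$; $(r+1)/2<f\le r-1$; and "otherwise", which covers $0<f\le (r+1)/2$) correspond exactly to the different ramification behaviours of $\alpha_p(N)$ and $\beta_p(N)$ at level $f$ versus level $f-1$: when $\alpha_p$ is ramified at $c$ we pick up a factor $p$, and when $\beta_p$ is ramified above the image we pick up another, so only one of the two can contribute for a given $f$, while the middle range $(r+1)/2<f\le r-1$ is where both maps are unramified and the fibre genuinely splits into $p$ distinct cusps $\bect{x}{d'}^M\motimes\bect{x+ip^{r-f}}{p^{f-1}}^{p^r}$.

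The main obstacle will be the careful bookkeeping of cusp representatives: the equivalence relation of Theorem \ref{theorem: cusp representation} means each cusp has many representatives, and the formulas for $\alpha_p$, $\beta_p$ produce representatives that must be normalised (via Lemma \ref{lemma: three equivalences}) before the answer matches the claimed form — in particular identifying $\bect{x}{1}^M$ with $\bect{p^*x}{d'}^M$ in the $r=0$ case and, in the $r\geq 1$ case with $f=r$, correctly enumerating the $p-1$ cusps $\bect{x}{d'}^M\motimes\bect{i}{p^{r-1}}^{p^r}$ together with the leftover $\bect{p^*x}{d'}^M\motimes\bect{1}{p^r}^{p^r}$ coming from the $f=r+1$ preimages. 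One must also verify that the ramification contribution (pullback multiplicity times pushforward degree) is consistent with $\deg T_p = p+1$ in every case, which serves as a useful sanity check on the computation. Apart from this representative-chasing, the argument is a mechanical composition of the two lemmas on degeneracy maps.
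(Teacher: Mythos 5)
Your plan is correct and follows essentially the same route as the paper: compute $\alpha_p(N)^*$ of the cusp explicitly via Lemma \ref{lemma: degeneracy map on cusp} (with multiplicities given by the ramification indices of Lemma \ref{lemma: ramification index of cusp}, and the extra $f=r+1$ preimages when $f=r$), then push forward by $\beta_p(N)_*$ and normalise representatives. The only slip is in your $r=0$ bookkeeping: the level-$d'p$ preimage of $\bect{x}{d'}^M$ is $\bect{p^*x}{d'p}^{Mp}$, so $\beta_p$ sends it directly to $\bect{p^*x}{d'}^M$ (your intermediate ``$\bect{x}{1}^M$'' is garbled) --- but you correctly flag this representative-chasing as the delicate point and land on the right answer.
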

\begin{proof}
During the proof, we frequently use Lemma \ref{lemma: degeneracy map on cusp}. 

First, suppose that $r=0$. By direct computation, we have
\begin{equation*}
\alpha_p(N)^*(\bect x {d'}^M)=p \cdot \bect x {d'}^M \motimes \bect 1 1^p+ \bect {p^*x}{{d'}}^M \motimes \bect 1 p^p,
\end{equation*}
which implies the first assertion. (Note that $\bect x {d'}^M=\bect x {d'}^M \motimes \bect 1 1^1$.)

Next, suppose that $r\geq 1$. If $0\leq f\leq r/2$, then we have
\begin{equation*}
\alpha_p(N)^*(\bect x {{d'}}^M \motimes \bect x {p^f}^{p^r})=p \cdot \bect x {{d'}}^M \motimes \bect x {p^f}^{p^{r+1}},
\end{equation*}
and so we have the result. If $r/2<f \leq r-1$, then we 
have $\gcd(p^f, p^{r+i-f})=p^{r+i-f}$ for $i=0$ or $1$. Thus, we have
\begin{equation*}
\alpha_p(N)^*(\bect x {{d'}}^M \motimes \bect x {p^f}^{p^r})=\sum_{i=0}^{p-1} \bect x {d'}^M \motimes \bect {x+ip^{r-f}}{p^f}^{p^{r+1}}.
\end{equation*}
If $f > (r+1)/2$, then $f-1\geq r/2$ and so $\gcd(p^{f-1}, p^{r-(f-1)})=p^{r-f+1}$.  Thus, $x+ip^{r-f}$ (for $0\leq i \leq p-1$) are all distinct modulo $\gcd(p^{f-1}, p^{r-(f-1)})$, and hence 
the result follows. 
If $f=(r+1)/2$, then we have $\gcd(p^{f-1}, p^{r-(f-1)})=p^{f-1}=p^{r-f}$, and therefore
\begin{equation*}
T_p(\bect x {{d'}}^M \motimes \bect x {p^f}^{p^r})=\sum_{i=0}^{p-1} \bect x {d'} ^M \motimes \bect{x+ip^{r-f}}{p^{f-1}}^{p^r}=p \cdot \bect x {d'}^M \motimes \bect x {p^{f-1}}^{p^r}.
\end{equation*}
Lastly, since $\bect i {p^r}^{p^r}=\bect 1 {p^r}^{p^r}$ for any $1\leq i \leq p-1$, we have
\begin{equation*}
\alpha_p(N)^*(\bect x {{d'}}^M \motimes \bect 1 {p^r}^{p^r})=\sum_{i=1}^{p-1} \bect x {d'}^M \motimes \bect i {p^r}^{p^{r+1}}+\bect {p^*x}{{d'}}^M \motimes \bect 1 {p^{r+1}}^{p^{r+1}}.
\end{equation*}
This completes the proof.
\end{proof}

\begin{lemma}\label{lemma: Tp equals from lower level}
As above, let $N=Mp^r$ with $\gcd(M, p)=1$. For any cuspidal divisor $D$ on $X_0(N)$, we have
\begin{equation*}
\alpha_p(N/p)^*\circ \beta_p(N/p)_*(D)=\begin{cases}
(T_p+w_p)(D) & \text{ if }~~ r=1, \\
T_p(D) & \text{ if }~~ r\geq 2.
\end{cases}
\end{equation*}
\end{lemma}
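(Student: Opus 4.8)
The plan is to reduce the identity to a computation on individual cusps and then to compare the two sides using the explicit formulas already established in Lemmas~\ref{lemma: degeneracy map on cusp}, \ref{lemma: ramification index of cusp}, \ref{lemma: Hecke action on cusps} and \ref{lemma: AL action on cusps}. Since $\Div_{\mathrm{cusp}}(X_0(N))$ is free on the cusps of $X_0(N)$ and all of $\alpha_p(N/p)^*$, $\beta_p(N/p)_*$, $T_p$ and $w_p$ are $\Z$-linear, it suffices to verify the claimed identity when $D=c$ is a single cusp, say $c=\bect x{d'}^M\motimes\bect x{p^f}^{p^r}$ with $d'\mid M$ and $0\le f\le r$ (Corollary~\ref{corollary: cusp representation}). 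Recall that $\alpha_p(N/p)$ and $\beta_p(N/p)$ are both maps $X_0(N)=X_0(Mp^r)\to X_0(Mp^{r-1})=X_0(N/p)$.

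First I would compute $\beta_p(N/p)_*(c)$, which by Lemma~\ref{lemma: degeneracy map on cusp} (applied with $Mp^{r-1}$ in the role of ``$N$'') is the single cusp $\bect{px}{d'}^M\motimes\bect 1 1^{p^{r-1}}$ if $f=0$, and $\bect x{d'}^M\motimes\bect x{p^{f-1}}^{p^{r-1}}$ if $f\ge 1$. Then I would apply $\alpha_p(N/p)^*=\alpha_p(Mp^{r-1})^*$ to this cusp. The real work lies in describing, for a cusp of $X_0(Mp^{r-1})$ of level $d'p^e$, all of its preimages in $X_0(Mp^r)$ together with their ramification indices: by Lemma~\ref{lemma: degeneracy map on cusp} the preimages are certain cusps of level $d'p^e$ (from the branch ``$f\le r-1$''), and --- exactly when $e=r-1$ --- one additional cusp of level $d'p^r$ (from the branch ``$f=r$''); by Lemma~\ref{lemma: ramification index of cusp} a preimage whose level has $p$-exponent $g$ has ramification index $p$ if $g\le (r-1)/2$ and $1$ otherwise. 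Counting the number of level-$d'p^e$ preimages amounts to comparing $\min(e,r-e)$ with $\min(e,r-1-e)$ and keeping careful track of how many cusps of each level there are (namely $\varphi$ of a gcd, by Corollary~\ref{corollary: cusp representation}); this yields one such preimage when $e\le (r-1)/2$ and $p$ of them when $e\ge r/2$.

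Carrying this out case by case on $f$ --- with the natural cases $f=0$, $f=r$, $(r+1)/2<f\le r-1$, and $1\le f\le (r+1)/2$, which mirror the cases in Lemma~\ref{lemma: Hecke action on cusps} --- produces an explicit formula for $\alpha_p(N/p)^*\beta_p(N/p)_*(c)$. Comparing it with the formula for $T_p(c)$ in Lemma~\ref{lemma: Hecke action on cusps} shows equality when $r\ge 2$; when $r=1$ the extra preimage of level $d'p^r$ contributes precisely the term occurring in $w_p(c)$ (Lemma~\ref{lemma: AL action on cusps}), so the left side equals $T_p(c)+w_p(c)$. I expect the main obstacle to be exactly this bookkeeping in the pullback $\alpha_p(N/p)^*$: correctly enumerating the preimages and their ramification indices, and in particular handling the ``$f=r$'' branch, since whether or not it contributes (and with what multiplicity) is precisely what separates the case $r=1$ from $r\ge 2$.
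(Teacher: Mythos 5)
Your proposal follows the same route as the paper: reduce to a single cusp by linearity, compute $\beta_p(N/p)_*(c)$ via Lemma \ref{lemma: degeneracy map on cusp}, pull back by $\alpha_p(N/p)^*$ using the same lemma together with the ramification data, and compare case by case with the formulas for $T_p$ and $w_p$ in Lemmas \ref{lemma: Hecke action on cusps} and \ref{lemma: AL action on cusps}. One small caution for the $r=1$ case: the identification of the $w_p(c)$ term with the extra level-$d'p$ preimage is only correct for $f=0$; for $f=1$ it is the unramified (level-$d'$) part, appearing with multiplicity $p=(p-1)+1$, that splits between $T_p(c)$ and $w_p(c)$, while the level-$d'p$ preimage belongs to $T_p(c)$ --- but this comes out automatically once the comparison is actually carried out.
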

\begin{proof}
Let $c=\bect x {{d'} p^f}^N=\bect x {d'}^M \motimes \bect x {p^f}^{p^r}$ be a cusp of $X_0(N)$, where $x$ is an integer relatively prime to $N$, ${d'}$ is a divisor of $M$ and $0 \leq f \leq r$.  We claim that the formula holds for $D=c$, which easily proves the result by linearity.

First, suppose that $r=1$. Then we have $\beta_p(N/p)_*(c)=\bect {px'}{{d'}}^M \motimes \bect 1 1^1$, where $x'=(p^*)^f x$.
Thus, by Lemma \ref{lemma: degeneracy map on cusp} we have
\begin{equation*}
\begin{split}
\alpha_p(N/p)^*\circ \beta_p(N/p)_*(c)&=p\cdot \bect {px'}{{d'}}^M \motimes \bect 1 1^p+\bect {x'}{{d'}}^M \motimes \bect 1 p^p\\
&=p \cdot \bect {p^{1-f}x}{{d'}}^M \motimes \bect 1 1^p + \bect {(p^*)^f x}{{d'}}^M \motimes \bect 1 p^p.
\end{split}
\end{equation*}
By Lemmas  \ref{lemma: AL action on cusps} and \ref{lemma: Hecke action on cusps}, if $f=0$ then we have $w_p(c)=\bect x {d'}^M \motimes \bect 1 p^p$ and 
$T_p(c)=p\cdot \bect {px}{d'}^M \motimes \bect 1 1 ^p$.
Also, if $f=1$ then we have $w_p(c)=\bect x {d'}^M \motimes \bect 1 1^p$ and
\begin{equation*}
T_p(c)=(p-1)\cdot \bect x {d'}^M \motimes \bect 1 1^p + \bect {p^* x}{{d'}}^M \motimes \bect 1 p^p.
\end{equation*}
Thus, the claim follows.

Next, suppose that $r\geq 2$. If $f=0$, then we have
\begin{equation*}
\alpha_p(N/p)^*\circ \beta_p(N/p)_*(c)=\alpha_p(N/p)^*(\bect {px} {d'}^M \motimes \bect 1 1^{p^{r-1}})=p\cdot \bect {px} {d'}^M \motimes \bect 1 1^{p^r}.
\end{equation*}
If $1\leq f \leq r$, then $\beta_p(N/p)_*(c)=\bect x {d'}^M \motimes \bect x {p^{f-1}}^{p^{r-1}}$. 
Thus, the result follows by Lemma \ref{lemma: degeneracy map on cusp} and \ref{lemma: Hecke action on cusps}.
\end{proof}
\begin{remark}
In general, as an endomorphism on $J_0(N)$ we have
\begin{equation*}
\alpha_p(N/p)^*\circ \beta_p(N/p)_*=\begin{cases}
T_p+w_p & \text{ if } ~~r=1, \\
T_p & \text{ if } ~~r\geq 2
\end{cases}
\end{equation*}
(cf. \cite[(2.7)]{Yoo6}). The computation above verifies this formula for cuspidal divisors.
\end{remark}

\ms
\subsection{Rational cuspidal divisor group}\label{section: Rational cuspidal divisors}
In \cite[Th. 1.3.1]{Ste82}, Glenn Stevens computed the action of $\Gal(\ov{\Q}/\Q)$ on the cusps of the modular curve $X_\Gamma$ for a congruence subgroup $\Gamma$ containing $\Gamma(N)$. As a corollary, we have the following.
\begin{theorem}\label{theorem: Galois action on cusps}
A cusp $\bect x d^N$ of level $d$ is defined over $\Q(\mu_z)$, where $z=\gcd(d, N/d)$ and the action of $\Gal(\Q(\mu_z)/\Q)$ on the set of all cusps of level $d$ is simply transitive. 
\end{theorem}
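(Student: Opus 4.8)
\emph{Proof proposal.} The plan is to deduce this as a corollary of Glenn Stevens' computation \cite[Th.~1.3.1]{Ste82} of the $\Gal(\ov{\Q}/\Q)$-action on the cusps of $X_\Gamma$ for $\Gamma(N)\subseteq\Gamma\subseteq\SL_2(\Z)$, applied with $\Gamma=\Gamma_0(N)$, and then to transcribe that action into the notation $\bect x d^N$ of Section \ref{section: representatives of the cusps}. Since the canonical $\Q$-model of $X_0(N)$ from \cite[Ch.~6]{Shi71} is the one underlying Stevens' result, it suffices to determine the induced permutation of the (finite) set of cusps by $\Gal(\ov{\Q}/\Q)$.

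The one genuine step is translating conventions. Stevens represents a cusp by a $\pm$-class of a primitive vector modulo $\Gamma$, and, for $\sigma\in\Gal(\ov{\Q}/\Q)$ acting on $\mu_N$ by $\zeta\mapsto\zeta^{t}$ with $t\in(\Z/N\Z)^\times$, he gives an explicit formula for $\sigma(c)$. Plugging in the normal form of Theorem \ref{theorem: cusp representation} — a cusp of level $d$ is $\bect x d^N$, where $\gcd(x,d)=1$ and the class of $x$ modulo $z:=\gcd(d,N/d)$ is well-defined — and using the equivalence criterion of that theorem to pass from $\SL_2(\Z/N\Z)$-symbols to $\Gamma_0(N)$-orbits, one finds that the formula assumes the shape
\begin{equation*}
\sigma\!\left(\bect x d^N\right) = \bect{t^{\varepsilon} x}{d}^N, \qquad \varepsilon\in\{+1,-1\}
\end{equation*}
(the sign $\varepsilon$ being a normalization artifact that plays no role): $\sigma$ preserves the level $d$ and multiplies the parameter $x$ by a unit that depends only on $t\bmod z$. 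I expect this bookkeeping — tracking the $\pm 1$'s, the reduction to $\Gamma_0(N)$-orbits, and the exact exponent of $t$ — to be the main (indeed essentially the only) obstacle; a fully self-contained alternative, in the spirit of this section, would instead realize the cusp of level $d$ as a degenerate $\Gamma_0(N)$-structure over an appropriate Puiseux field $\Q((q^{1/m}))$ and read the Galois action off the roots of unity in the Tate parametrization, at the cost of heavier notation.

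Granting the displayed formula, the conclusions are formal. As the right-hand side depends on $t$ only modulo $z$, the $\Gal(\ov{\Q}/\Q)$-action on the set of cusps of level $d$ factors through $\Gal(\Q(\mu_z)/\Q)\cong(\Z/z\Z)^\times$; in particular $\bect x d^N$ is fixed by $\Gal(\ov{\Q}/\Q(\mu_z))$, hence is defined over $\Q(\mu_z)$. By Corollary \ref{corollary: cusp representation} the cusps of level $d$ are exactly the $\bect x d^N$ with $x$ ranging over $(\Z/z\Z)^\times$, and given two such cusps $\bect x d^N$ and $\bect y d^N$ there is a unique $t\in(\Z/z\Z)^\times$ with $tx\equiv y\pmodo{z}$; thus the $\Gal(\Q(\mu_z)/\Q)$-action on these $\varphi(z)$ cusps is simply transitive. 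Freeness of the action moreover shows that the field of definition is exactly $\Q(\mu_z)$, which finishes the proof.
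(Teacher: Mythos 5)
Your proposal follows the paper's proof essentially verbatim: both invoke Stevens' Theorem 1.3.1 for $\Gamma_0(N)$, translate his formula into the $\bect x d^N$ normal form, observe that the resulting action $x \mapsto t^{\varepsilon}x$ depends only on $t \bmod z$, and conclude simple transitivity from the parametrization of level-$d$ cusps by $(\Z/z\Z)^\times$. The only step you defer — pinning down the displayed formula — is exactly what the paper carries out, using Stevens' $\tau_k\bigl(\bect x d\bigr)=\bect x {k^*d}$ together with the equivalence $\bect x {k^*d}=\bect{k^*x}{d}$ from Lemma \ref{lemma: three equivalences}(3), which fixes $\varepsilon=-1$.
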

\begin{proof}
For simplicity, let $G=\Gal(\Q(\mu_N)/\Q)$ and $H=\Gal(\Q(\mu_N)/\Q(\mu_z))$. Also, let $X_d$ be the set of all cusps of $X_0(N)$ of level $d$.

First, any cusps in $X_d$ are defined over $\Q(\mu_N)$ by Theorem 1.3.1(a) of \textit{loc. cit.}

Next, for any $k \in (\zmod N)^\times$ let $\tau_k$ be an element in $G$ sending $\zeta_N$ to $\zeta_N^k$, where $\zeta_N$ is a primitive $N$-th root of unity. By Theorem 1.3.1(b) of \textit{loc. cit.}, for any cusp $\bect x d \in X_d$, we have $\tau_k\left(\bect x d \right)=\bect x {k^*d}$, where $k^* \in \Z$ is chosen so that $kk^* \equiv 1 \pmod N$ and $\gcd(k^*, x)=1$. Since $\bect x {k^*d} \overset{(3)}{=} \bect {k^*x}{d}$ by Lemma \ref{lemma: three equivalences}, there is an action of $G$ on the set $X_d$. 

Then, for any cusp $\bect {x'} d \in X_d$, we can find $k \in \Z$ such that $x'k \equiv x \pmod d$ and $\gcd(k, N)=1$. 
Since $kk^* \equiv 1 \pmod N$, we have $k^*x \equiv kk^*x' \equiv x' \pmod d$. Thus, we have 
$\bect x d ^{\tau_k}=\bect {x'}{d}$ and hence the action of $G$ on $X_d$ is transitive. 

Finally, note that $\bect x d ^{\tau_k}=\bect {x}{d}$ if and only if $k^*x \equiv x \pmod z$, or equivalently $k\equiv 1 \pmod z$. Since $H$ is equal to $\{ \tau_k \in G : k \in (\zmod N)^\times \text{ with } k \equiv 1 \pmod z \}$, the action of $G$ on $X_d$ factors through $G/H \simeq \Gal(\Q(\mu_z)/\Q)$. Therefore a cusp $\bect x d \in X_d$ is defined over $\Q(\mu_z)$ and the action of $\Gal(\Q(\mu_z)/\Q)$ on $X_d$ is simply transitive, as claimed.
\end{proof}

\begin{definition}
Let $(P(N)_d)$ denote the divisor on $X_0(N)$ defined as the sum of all cusps of level $d$ (each with multiplicity one), i.e.,
\begin{equation*}
(P(N)_d):=\sum_{c \in X_d} c=\sum_{x \in R(N, d)} \bect x d^N,
\end{equation*}
where $R(N, d)$ is defined in Section \ref{section: representatives of the cusps}.
Also, let
\begin{equation*}
C(N)_d:=\varphi(\gcd(d, N/d))\cdot (P(N)_1)-(P(N)_d).
\end{equation*}
If there is no confusion, we simply write $(P_d)$ and $C_d$.
\end{definition}

\begin{lemma}\label{lemma: group of rational cuspidal divisors}
We have
\begin{equation*}
 \Qdivv N =\br{ (P_d) : d \in \cD_N}:= \left \{ \sum_{d \in \cD_N} a_d \cdot (P_d) : a_d \in \Z \right \}
\end{equation*}
and 
\begin{equation*}
\Qdiv N=\br{ C_d : d \in \cD_N^0 }:=\left \{ \sum_{d\in \cD_N^0} a_d \cdot C_d : a_d \in \Z \right\}.
\end{equation*}
\end{lemma}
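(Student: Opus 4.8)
The plan is to treat the two assertions in turn: the first follows directly from the description of the Galois action on the cusps, and the second is an elementary reduction using degrees.

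For the first equality I would start from the definition $\Qdivv N = H^0(\Gal(\ov{\Q}/\Q), \Div_{\textnormal{cusp}}(X_0(N)))$ and use Theorem \ref{theorem: Galois action on cusps}: for each divisor $d$ of $N$ the set $X_d$ of cusps of level $d$ is stable under $\Gal(\ov{\Q}/\Q)$, and the Galois action on $X_d$ is transitive (it factors through the simply transitive action of $\Gal(\Q(\mu_z)/\Q)$ with $z = \gcd(d, N/d)$). Consequently a cuspidal divisor $D = \sum_{d \mid N}\sum_{c \in X_d} a_c \cdot c$ is fixed by $\Gal(\ov{\Q}/\Q)$ if and only if, for every $d$, the coefficients $a_c$ agree for all $c \in X_d$; calling this common value $b_d$ gives $D = \sum_{d \mid N} b_d \cdot (P_d)$. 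Since the divisors $(P_d)$ have pairwise disjoint supports, they are $\Z$-linearly independent, so $\Qdivv N = \bigoplus_{d \mid N} \Z \cdot (P_d) = \br{(P_d) : d \in \cD_N}$. (That each $(P_d)$ is itself Galois-stable is clear, being the sum over the whole of $X_d$ with all multiplicities equal to one.)

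For the second equality I would first note that $\deg(P_d) = \#X_d = \varphi(\gcd(d, N/d))$ by Corollary \ref{corollary: cusp representation}; in particular $\deg(P_1) = \varphi(1) = 1$, so the degree map on $\Qdivv N$ is surjective with kernel exactly $\Qdiv N$. Each $C_d = \varphi(\gcd(d, N/d)) \cdot (P_1) - (P_d)$ has degree $0$, hence lies in $\Qdiv N$. Conversely, given $D = \sum_{d \mid N} b_d \cdot (P_d) \in \Qdiv N$, the degree-$0$ condition reads $\sum_{d \mid N} b_d \, \varphi(\gcd(d, N/d)) = 0$, and expanding $\sum_{d \in \cD_N^0} b_d \cdot C_d$ shows that $D + \sum_{d \in \cD_N^0} b_d \cdot C_d = \left(\sum_{d \mid N} b_d \, \varphi(\gcd(d, N/d))\right) \cdot (P_1) = 0$; hence $D = -\sum_{d \in \cD_N^0} b_d \cdot C_d \in \br{C_d : d \in \cD_N^0}$.

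I do not expect a genuine obstacle here: given Theorem \ref{theorem: Galois action on cusps} and Corollary \ref{corollary: cusp representation}, everything is formal. The two points deserving a moment's care are that each level $d$ is individually Galois-stable — so that transitivity on $X_d$ forces the coefficients of a rational cuspidal divisor to be constant level by level — and the identity $\varphi(\gcd(1, N)) = 1$, which makes $(P_1)$ a degree-one rational cuspidal divisor and is what drives the reduction in the second part.
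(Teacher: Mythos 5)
Your proposal is correct and follows the same route as the paper: the first equality comes from Theorem \ref{theorem: Galois action on cusps} (each $(P_d)$ is a single Galois orbit, so a Galois-fixed cuspidal divisor has constant coefficients on each level), and the second from the degree computation $\deg(P_d)=\varphi(\gcd(d,N/d))$ together with the identity $\sum_d a_d\cdot(P_d)=-\sum_{d\in\cD_N^0}a_d\cdot C_d+\bigl(\sum_d a_d\,\varphi(\gcd(d,N/d))\bigr)\cdot(P_1)$. Your write-up only adds explicit detail to what the paper leaves implicit.
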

\begin{proof}
By Theorem \ref{theorem: Galois action on cusps}, $(P_d)$ is a single orbit of $\Gal(\ov{\Q}/\Q)$. Thus, the first assertion follows. Since the degree of $(P_d)$ is equal to the number of the cusps of $X_0(N)$ of level $d$, which is $\varphi(\gcd(d, N/d))$, we have
\begin{equation*}
\Qdiv N = \left \{ \sum_{d\in \cD_N} a_d \cdot (P_d) : a_d \in \Z, \sum_{d \in \cD_N} a_d \cdot \varphi(\gcd(d, N/d))=0 \right\}.
\end{equation*}
Since 
\begin{equation*}
\sum_{d \in \cD_N} a_d \cdot (P_d) = - \sum_{d \in \cD_N^0} a_d \cdot C_d + \sum_{d \in \cD_N} a_d \cdot \varphi(\gcd(d, N/d)) \cdot (P_1),
\end{equation*}
the second assertion follows.
\end{proof}

\begin{remark}\label{remark: decomposition of RCD}
As already introduced in Notation \ref{notation: S1 and S2 in introduction}, there is a tautological isomorphism 
\begin{equation*}
\xymatrix{
\Phi_N : \Qdivv N \ar[r]^-{\sim} & \cS_2(N)
}
\end{equation*}
sending $(P_d)$ to ${\bf e}(N)_d$.
As above, let $N=Mp^r$ with $\gcd(M, p)=1$. Also, let ${d'}$ be a divisor of $M$.
Using the identification $\bect {x}{{d'} p^f}^N=\bect x {d'}^M \motimes \bect x {p^f}^{p^r}$ and an isomorphism 
$(\zmod N)^\times \simeq (\zmod M)^\times \times (\zmod {p^r})^\times$, we easily have $(P(N)_{{d'} p^f}) = (P(M)_{d'}) \motimes (P(p^r)_{p^f})$. Thus, we also identify $\cS_2(N)_\Q$ with $\cS_2(M)_\Q \motimes  \cS_2(p^r)_\Q$ by letting ${\bf e}(N)_{{d'} p^f}={\bf e}(M)_{d'} \motimes {\bf e}(p^r)_{p^f}$.
Similarly, if $N=\prod_{i=1}^t p_i^{r_i}$ is the prime factorization, then  we identify $\cS_k(N)_\Q$ with $\motimes_{i=1}^t \cS_k(p_i^{r_i})_\Q$ for both $k=1$ and $k=2$. 
\end{remark}

\ms
\subsection{The actions of various operators on $\Qdivv N$}\label{section: actions on RCD}
As above, let $N=Mp^r$ with $\gcd(M, p)=1$. Also, let ${d'}$ be a divisor of $M$ and
\begin{equation*}
S(k):=\{y \in \Z : 1\leq y \leq p^k \qa \gcd(y, \, p)=1 \}.
\end{equation*}
Note that since $p$ does not divide $M$ we have
\begin{equation}\label{equation: multi by p is automorphism}
(P(M)_{d'})=\sum_{x \in R(M, {d'})} \bect x {d'}^M=\sum_{x \in R(M, {d'})} \bect {px} {d'}^M=\sum_{x \in R(M, {d'})} \bect {p^*x} {d'}^M,
\end{equation}
where $p^*$ is an integer such that $pp^* \equiv 1 \pmod M$ and $\gcd(p^*, p)=1$. 

\begin{lemma}\label{lemma: degeneracy maps pushforward on RCD}
For any $0\leq f\leq r+1$, we have
\begin{equation*}
\Phi_N(\alpha_p(N)_*(P(Np)_{{d'} p^f}))=\Phi_M(P(M)_{d'}) \motimes \Phi_{p^{r}}(A)
\end{equation*}
and
\begin{equation*}
\Phi_N(\beta_p(N)_*(P(Np)_{{d'} p^f}))=\Phi_M(P(M)_{d'}) \motimes \Phi_{p^{r}}(B),
\end{equation*}
where
\begin{equation*}
A=\begin{cases}
(P(p^r)_{p^f}) & \text{ if } ~~0\leq f \leq r/2,\\
p\cdot (P(p^r)_{p^f}) & \text{ if } ~~r/2 < f \leq r-1,\\
(p-1)\cdot (P(p^r)_{p^r}) & \text{ if } ~~f=r \text{ and }  r\geq 1, \\
(P(p^r)_{p^r}) & \text{ if } ~~f=r+1, \\
\end{cases}\\
\end{equation*}
and
\begin{equation*}
B=\begin{cases}
(P(p^r)_1) & \text{ if } ~~f=0,\\
(p-1) \cdot (P(p^r)_1) & \text{ if }~~  f=1 \text{ and } r\geq 1,\\
p\cdot (P(p^r)_{p^{f-1}}) & \text{ if } ~~2\leq f <r/2+1,\\
(P(p^r)_{p^{f-1}}) & \text{ if } ~~r/2+1\leq f\leq r+1.
\end{cases}\\
\end{equation*}
\end{lemma}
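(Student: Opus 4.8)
The plan is to compute both pushforwards one cusp at a time, and then to determine the multiplicity with which each target cusp appears. Recall that for a finite morphism $\phi$ of smooth projective curves the pushforward $\phi_*$ carries a point $c$ to the point $\phi(c)$ with the \emph{same} multiplicity (the ramification index intervenes in $\phi^*$, not in $\phi_*$). Hence
\begin{equation*}
\alpha_p(N)_*\bigl(P(Np)_{d'p^f}\bigr)=\sum_{c}\alpha_p(N)(c),
\end{equation*}
the sum being over the cusps $c$ of $X_0(Np)=X_0(Mp^{r+1})$ of level $d'p^f$, and similarly for $\beta_p(N)$. Each such $c$ equals $\bect{x}{d'}^M \motimes \bect{x}{p^f}^{p^{r+1}}$ for some $x$ coprime to $d'p$, and Lemma \ref{lemma: degeneracy map on cusp} writes $\alpha_p(N)(c)$ and $\beta_p(N)(c)$ explicitly; in every case the image has the shape $\bect{y}{d'}^M \motimes (\text{a fixed cusp of }X_0(p^r)\text{ depending on }f)$, with $y$ equal to $x$ or to $px$. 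Consequently, under the identification $\cS_2(N)_\Q \simeq \cS_2(M)_\Q \motimes \cS_2(p^r)_\Q$ of Remark \ref{remark: decomposition of RCD}, the $M$-component of the answer will be $(P(M)_{d'})$ once we sum over all $x$; when $y=px$ this uses that multiplication by $p$ permutes the cusps of $X_0(M)$ of level $d'$, i.e.\ (\ref{equation: multi by p is automorphism}).

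It therefore remains to count, for a given target cusp, how many source cusps lie above it. Put $z':=\gcd(d', M/d')$. By Corollary \ref{corollary: cusp representation}, the cusps of $X_0(Np)$ of level $d'p^f$ are parametrized by $x$ modulo $z'p^{\min(f,\,r+1-f)}$ (coprime to $d'p$), while the relevant cusps of $X_0(N)$ are parametrized by $x$ modulo $z'p^{m'}$, where $m'=\min(f,\,r-f)$ in the $\alpha_p(N)$ computation (for $f\le r$) and $m'=\min(f-1,\,r+1-f)$ in the $\beta_p(N)$ computation (for $f\ge 1$). One checks that the exponent never increases, and that the $x\mapsto x$ reduction map is: a bijection when the two exponents coincide; exactly $p$-to-$1$ when the source exponent exceeds the target exponent by $1$ and the latter is positive; and exactly $(p-1)$-to-$1$ when the source exponent is $1$ and the target exponent is $0$ (here the image cusp loses the constraint of being coprime to $p$). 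Running through the ranges of $f$ turns these three fiber sizes into the coefficients $1$, $p$, $p-1$ appearing in the formulas for $A$ and $B$. The two special situations $f=r$ (for $\alpha_p$) and $f=1$ (for $\beta_p$) are precisely the ones landing in the $(p-1)$-to-$1$ regime, and they require $r\ge 1$ for the relevant exponents to behave that way — matching the hypotheses in the statement. The remaining boundary cases $f=r+1$ (for $\alpha_p$) and $f=0$ (for $\beta_p$) are the ones where the target $p$-component is the unique cusp of level $p^r$, resp.\ of level $1$, and the $M$-component is $\bect{px}{d'}^M$; the reduction on $x$ is a bijection and (\ref{equation: multi by p is automorphism}) gives $A=(P(p^r)_{p^r})$, resp.\ $B=(P(p^r)_1)$.

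Putting the pieces together yields $\Phi_N(\alpha_p(N)_*(P(Np)_{d'p^f}))=\Phi_M(P(M)_{d'})\motimes \Phi_{p^r}(A)$ with $A$ as displayed, and likewise $\Phi_N(\beta_p(N)_*(P(Np)_{d'p^f}))=\Phi_M(P(M)_{d'})\motimes\Phi_{p^r}(B)$. The only genuinely delicate point — and the step I expect to cost the most care — is the exponent bookkeeping: comparing $\min(f,r+1-f)$ against $\min(f,r-f)$ and against $\min(f-1,r+1-f)$ across all sub-ranges of $f$ (including the parity-sensitive middle value when $r$ is odd) and pinpointing exactly where the multiplicity is $p$ versus $p-1$. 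Once that comparison is carried out carefully, everything else follows immediately from Lemma \ref{lemma: degeneracy map on cusp} together with (\ref{equation: multi by p is automorphism}).
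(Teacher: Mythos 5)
Your argument is correct and follows essentially the same route as the paper: both proofs reduce to Lemma \ref{lemma: degeneracy map on cusp} together with (\ref{equation: multi by p is automorphism}), and then count how many level-$d'p^f$ cusps upstairs land on each image cusp. The only cosmetic difference is that the paper performs this count by factoring $(P(Np)_{d'p^f})$ as a tensor and comparing the sums over $S(k)$ and $S(k-1)$ explicitly, whereas you phrase it as computing the fiber sizes $\varphi(z'p^{k})/\varphi(z'p^{k'})$ of the reduction map on the parameter $x$; your case analysis of the exponents and the resulting multiplicities $1$, $p$, $p-1$ checks out in every range of $f$.
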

\begin{proof}
For simplicity, let $D=(P(Np)_{{d'} p^f})$. Let $k=\min(f, \, r+1-f)$. Then we have
\small
\begin{equation*}
D=\sum_{x \in R(M, {d'})} \sum_{y\in S(k)} \bect x {d'}^M \motimes \bect y {p^f}^{p^{r+1}}=\left(\sum_{x \in R(M, {d'})}  \bect x {d'}^M \right) \motimes \left(\sum_{y\in S(k)} \bect y {p^f}^{p^{r+1}} \right).
\end{equation*}\normalsize

Suppose first that $f=r+1$. Then $S(k)=\{1\}$, and by (\ref{equation: multi by p is automorphism}) we have
\begin{equation*}
\alpha_p(N)_*(D)=\sum_{x\in R(M, {d'})} \bect {px} {d'}^M \motimes \bect 1 {p^r}^{p^r}=(P(M)_{d'}) \motimes (P(p^r)_{p^r}).
\end{equation*}

Suppose next that $f\leq r$. Then we have  
\begin{equation*}
\alpha_p(N)_*(D)=\sum_{x \in R(M, {d'})} \sum_{y\in S(k)} \bect x {d'}^M \motimes \bect y {p^f}^{p^{r}}=(P(M)_{d'})\motimes \left(\sum_{y\in S(k)} \bect y {p^f}^{p^{r}}\right).
\end{equation*}
If $0\leq f\leq r/2$, then we have $\min(f, \, r-f)=k$ and $(P(p^r)_{p^f})=\sum_{y \in S(k)} \bect y {p^f}^{p^r}$. If $f=r\geq 1$, then $\# S(k)=p-1$, and 
 we have $\bect y {p^r}^{p^r}=\bect 1 {p^r}^{p^r}=(P(p^r)_{p^r})$ for any $y \in S(k)$. If $r/2<f \leq r-1$, then $\min(f, \, r-f)=k-1$ and we have
\begin{equation*}
\sum_{y \in S(k)} \bect y {p^f}^{p^r}=\sum_{y\in S(k-1)} \sum_{i=0}^{p-1} \bect {y+ip^{k-1}} {p^f}^{p^r}=p\sum_{y \in S(k-1)} \bect y {p^f}^{p^r}=p\cdot (P(p^r)_{p^f}).
\end{equation*}
Thus, we obtain the result for $\alpha_p(N)_*(D)$.

The proof for $\beta_p(N)_*$ is similar and we leave the details to the readers.
\end{proof}

\begin{lemma}\label{lemma: degeneracy maps pullback on RCD}
For any $0\leq f\leq r$, we have
\begin{equation*}
\Phi_{Np}(\alpha_p(N)^*(P(N)_{{d'} p^f}))=\Phi_M(P(M)_{d'})\motimes \Phi_{p^{r+1}}(A)
\end{equation*}
and 
\begin{equation*}
\Phi_{Np}(\beta_p(N)^*(P(N)_{{d'} p^f}))=\Phi_M(P(M)_{d'})\motimes \Phi_{p^{r+1}}(B),
\end{equation*}
where
\begin{equation*}
A=\begin{cases}
p\cdot (P(p)_1)+(P(p)_p) & \text{ if }~~ f=r=0,\\
p\cdot (P(p^{r+1})_{p^f}) & \text{ if }~~ 0\leq f \leq r/2 \text{ and } r\geq 1,\\
(P(p^{r+1})_{p^f}) & \text{ if }~~r/2<f \leq r-1, \\
(P(p^{r+1})_{p^r})+(P(p^{r+1})_{p^{r+1}}) & \text{ if } ~~f=r\geq 1,\\
\end{cases} 
\end{equation*}
and
\begin{equation*}
B=\begin{cases}
(P(p)_1)+p\cdot (P(p)_p) & \text{ if } ~~f=r=0,\\
(P(p^{r+1})_1)+(P(p^{r+1})_p) & \text{ if } ~~f=0 \text{ and } r\geq 1, \\
(P(p^{r+1})_{p^{f+1}}) & \text{ if }~~1\leq f <r/2,\\
p\cdot (P(p^{r+1})_{p^{f+1}}) & \text{ if }~~ r/2\leq f \leq r \text{ and } r\geq 1.
\end{cases}
\end{equation*}
\end{lemma}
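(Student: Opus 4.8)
The plan is to reduce the statement to a ramification computation for the two degeneracy maps, carried out cusp by cusp exactly as in the proof of Lemma~\ref{lemma: degeneracy maps pushforward on RCD}. Recall that for a nonconstant holomorphic map $\phi$ between compact Riemann surfaces and a point $Q$ downstairs, $\phi^*(Q)=\sum_{P\in\phi^{-1}(Q)}e_P(\phi)\cdot P$, where $e_P(\phi)$ is the ramification index at $P$. So I would fix a cusp $c=\bect x{d'}^M\motimes\bect x{p^f}^{p^r}$ of $X_0(N)$ of level $d'p^f$, list its fiber under $\alpha_p(N)$ (resp.\ $\beta_p(N)$) using the pointwise formulas of Lemma~\ref{lemma: degeneracy map on cusp}, attach to each preimage its ramification index via Lemma~\ref{lemma: ramification index of cusp}, and sum the resulting divisor over all cusps of level $d'p^f$. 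Because everything factors through the tensor decomposition of Remark~\ref{remark: decomposition of RCD}, the level-$M$ component of the answer will always come out as $(P(M)_{d'})$ once one reindexes the sum by $x\mapsto px$, which is a permutation of $R(M,d')$ since $\gcd(p,M)=1$, cf.~(\ref{equation: multi by p is automorphism}); equivalently, one may reduce at the outset to the case $M=1$ and prove the formula for $\alpha_p(p^r),\beta_p(p^r)\colon X_0(p^{r+1})\to X_0(p^r)$.

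For $\alpha_p(N)$: by Lemma~\ref{lemma: degeneracy map on cusp}, a level-$d'p^g$ cusp of $X_0(Np)$ (with $0\le g\le r+1$) has image of level $d'p^g$ when $g\le r$ and of level $d'p^r$ when $g=r+1$, so the fiber of $c$ consists of the level-$d'p^f$ cusps of $X_0(Np)$ mapping onto $c$, together with the level-$d'p^{r+1}$ cusps when $f=r$; and by Lemma~\ref{lemma: ramification index of cusp} the ramification index at a level-$d'p^g$ cusp is $p$ when $0\le g\le r/2$ and $1$ otherwise. The remaining bookkeeping is to decide, in each range of $f$, whether the induced map on the set of level-$d'p^f$ cusps is a bijection (a single preimage, with index $p$) or a $p$-to-$1$ cover ($p$ preimages, each with index $1$); this is read off by comparing the number of cusps of level $d'p^f$ on $X_0(N)$ and on $X_0(Np)$, namely $\varphi(\gcd(d',M/d'))\cdot\varphi(p^{\min(f,r-f)})$ and its $X_0(Np)$-analogue by Corollary~\ref{corollary: cusp representation}, or simply by invoking $\deg\alpha_p(N)=p$ for $r\ge1$ and $\deg\alpha_p(N)=p+1$ for $r=0$ as a check. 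Summing over all level-$d'p^f$ cusps then yields the stated divisor $A$ on $X_0(p^{r+1})$. The cases where $f$ reaches the ends of its range --- $f=r$ (and the degenerate $r=0$) --- must be handled separately, since then some preimages fall into the neighbouring level $d'p^{r+1}$ (resp.\ $d'$), which is precisely what produces the extra summand $(P(p^{r+1})_{p^{r+1}})$ in $A$.

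The computation of $\beta_p(N)^*$ is structurally identical, now using the $\beta$-part of Lemma~\ref{lemma: degeneracy map on cusp} (a level-$d'p^g$ cusp maps to level $d'$ when $g=0$ and to level $d'p^{g-1}$ when $g\ge1$, so the fiber of a level-$d'p^f$ cusp consists of the level-$d'p^{f+1}$ cusps, plus level-$d'$ cusps when $f=0$) and the fact from Lemma~\ref{lemma: ramification index of cusp} that $\beta_p(N)$ is ramified at a level-$d'p^g$ cusp exactly when $r/2+1\le g\le r+1$; here the exceptional case is $f=0$ (and $r=0$), where the leakage into level $d'$ produces the summand $(P(p^{r+1})_1)$ in $B$. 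I expect the genuine difficulty to lie precisely in the case analysis around $f\approx r/2$ and at the boundary values $f\in\{0,r\}$: there the ``generic'' pattern --- a bijection with ramification $p$ for small $f$ versus a $p$-to-$1$ cover with ramification $1$ for large $f$ --- changes over, and one must keep careful track of how $\min(f,r-f)$ compares with $\min(f\pm1,r-f)$ in order to see whether the surviving factor of $p$ sits in the ramification index or in the number of preimages. As a global consistency check one verifies at each step that the degree of the divisor produced equals $\deg\phi$ times $\deg(P(N)_{d'p^f})$, which pins down any missing contribution.
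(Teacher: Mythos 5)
Your proposal follows essentially the same route as the paper's proof: the paper likewise computes $\alpha_p(N)^*$ and $\beta_p(N)^*$ on an individual cusp $\bect x{d'}^M\motimes\bect y{p^f}^{p^r}$ (the displayed fiber-with-multiplicity formulas are exactly the combination of Lemmas \ref{lemma: degeneracy map on cusp} and \ref{lemma: ramification index of cusp} you describe), and then sums over the cusps of a fixed level, using the comparison of $\min(f,\,r+1-f)$ with $\min(f,\,r-f)$ and the reindexing (\ref{equation: multi by p is automorphism}) to regroup the result into $(P(M)_{d'})\motimes(P(p^{r+1})_{p^k})$. Your bookkeeping via cusp counts and the degree check is a slightly different way of organizing the same case analysis, but the argument is the same.
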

\begin{proof}
The result follows by Lemmas \ref{lemma: degeneracy map on cusp} and \ref{lemma: ramification index of cusp}.
More specifically, suppose that $f=r=0$. Then we have
\begin{equation*}
\alpha_p(N)^*(\bect {x} {{d'}}^M \motimes \bect 1 1^1) = p\cdot \bect {x} {{d'}}^M \motimes \bect 1 1^p + \bect {p^*x} {{d'}}^M \motimes \bect 1 p^p.
\end{equation*}
By (\ref{equation: multi by p is automorphism}), the result follows. If $f=r\geq 1$, then 
\begin{equation*}
\alpha_p(N)^*(\bect {x} {{d'}}^M \motimes \bect 1 {p^r}^{p^r}) = \sum_{i=1}^{p-1} \bect {x} {{d'}}^M \motimes \bect i {p^r}^{p^{r+1}} + \bect {p^*x} {{d'}}^M \motimes \bect 1 {p^{r+1}}^{p^{r+1}}.
\end{equation*}
Also, if $0\leq f\leq r-1$, then 
\begin{equation*}
\alpha_p(N)^*(\bect x {d'}^M \motimes \bect y {p^f}^{p^r})=\begin{cases}
p\cdot \bect x {d'}^M \motimes \bect y {p^f}^{p^{r+1}} & \text{ if } ~~0\leq f\leq r/2,\\
\sum_{i=0}^{p-1} \bect x {d'}^M \motimes \bect {y+ip^{r-f}}{p^f}^{p^{r+1}} & \text{ if }~~r/2<f\leq r-1.
\end{cases}
\end{equation*}
If $0\leq f\leq r/2$, then $\min(f, \, r+1-f)=f$ and hence 
\begin{equation*}
(P(p^{r+1})_{p^f})=\sum_{y \in S(f)} \bect y {p^f}^{p^{r+1}}.
\end{equation*}
If $r/2<f \leq r$, then $\min(f, \, r+1-f)=r+1-f=\min(f, \, r-f)+1$ and therefore
\begin{equation*}
(P(p^{r+1})_{p^f})=\sum_{y \in S(r+1-f)} \bect y {p^f}^{p^{r+1}}=\sum_{y \in S(r-f)} \sum_{i=0}^{p-1} \bect {y+ip^{r-f}}{p^f}^{p^{r+1}}.
\end{equation*}
Thus, we obtain the result for $\alpha_p(N)^*(D)$.

The proof for $\beta_p(N)^*$ is similar and we leave the details to the readers.
\end{proof}

\begin{lemma}\label{lemma: Hecke and AL operators on RCD}
If $r=0$ then we have
\begin{equation*}
\Phi_M(T_p(P(M)_{{d'}}))=(p+1) \cdot \Phi_M(P(M)_{d'}).
\end{equation*}
Suppose that $r\geq 1$. Then for any $0\leq f \leq r$, we have
\begin{equation*}
\Phi_N(w_p(P(N)_{{d'} p^f}))=\Phi_M(P(M)_{d'}) \motimes \Phi_{p^r}(P(p^r)_{p^{r-f}})
\end{equation*}
and
\begin{equation*}
\Phi_N(T_p(P(N)_{{d'} p^f}))=\Phi_M(P(M)_{d'}) \motimes \Phi_{p^r}(A),
\end{equation*}
where
\begin{equation*}
A=\begin{cases}
p \cdot (P(p^r)_1)& \text{ if }~~ f=0,\\
p(p-1) \cdot (P(p^r)_1) & \text{ if } ~~ f=1 \text{ and } r\geq 2,\\
p^2 \cdot (P(p^r)_{p^{f-1}})& \text{ if }~~ 2\leq f\leq r/2,\\
p\cdot (P(p^r)_{p^{f-1}}) & \text{ if }~~ f=(r+1)/2,\\
(P(p^r)_{p^{f-1}}) & \text{ if }~~(r+1)/2<f\leq r-1,\\
(p-1) \cdot (P(p)_1)+(P(p)_p) & \text{ if } ~~f=r \text{ and } r=1,\\
(P(p^r)_{p^{r-1}})+(P(p^r)_{p^r}) & \text{ if } ~~f=r \text{ and } r\geq 2.\\
\end{cases}
\end{equation*}
\end{lemma}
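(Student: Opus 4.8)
The plan is to prove the two families of formulas by short, separate arguments: $T_p$ from its definition as a composition of degeneracy maps together with the explicit formulas for how those maps act on rational cuspidal divisor groups, and $w_p$ from its explicit action on individual cusps. By Lemma~\ref{lemma: group of rational cuspidal divisors} it suffices in each case to evaluate the operator on the generators $(P(N)_{d'p^f})$ with $d'\mid M$ and $0\le f\le r$ (allowing $r=0$, $f=0$), and all the work takes place in the $p$-direction because of the tensor decomposition of Remark~\ref{remark: decomposition of RCD}.

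For $T_p$ I would use the definition $T_p=\beta_p(N)_*\circ\alpha_p(N)^*$ from Section~\ref{section: Hecke operators}. First apply $\alpha_p(N)^*$: Lemma~\ref{lemma: degeneracy maps pullback on RCD} gives $\Phi_{Np}(\alpha_p(N)^*(P(N)_{d'p^f}))=\Phi_M(P(M)_{d'})\motimes\Phi_{p^{r+1}}(A_1)$, where $A_1$ is the stated nonnegative integral combination of divisors $(P(p^{r+1})_{p^g})$; by that lemma the $\cS_2(M)$-tensor factor $\Phi_M(P(M)_{d'})$ is carried along unchanged, and the same is true for $\beta_p(N)_*$ by Lemma~\ref{lemma: degeneracy maps pushforward on RCD}. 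Then apply $\beta_p(N)_*$ termwise by $\Z$-linearity: each summand is $(P(M)_{d'})\motimes(P(p^{r+1})_{p^g})=(P(Np)_{d'p^g})$, and Lemma~\ref{lemma: degeneracy maps pushforward on RCD} converts it to $\Phi_M(P(M)_{d'})\motimes\Phi_{p^r}(B_g)$ for an explicit $B_g$. Combining, $\Phi_N(T_p(P(N)_{d'p^f}))=\Phi_M(P(M)_{d'})\motimes\Phi_{p^r}(A)$ for an explicit $A$, and it only remains to check that $A$ equals the displayed one. This is a purely mechanical verification organized by the position of $f$ relative to $r$ — the relevant thresholds being $f\le r/2$, $f=(r\pm1)/2$, $r/2<f\le r-1$, and $f=r$ — since $\min(f,\,r-f)$, $\min(f,\,r+1-f)$, and the ramification data of $\alpha_p,\beta_p$ all change exactly there. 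When $r=0$ the computation collapses: $\alpha_p(M)^*(P(M)_{d'})=p\cdot(P(Mp)_{d'})+(P(Mp)_{d'p})$, and pushing forward by $\beta_p(M)_*$ returns $(p+1)\cdot(P(M)_{d'})$ by~(\ref{equation: multi by p is automorphism}); equivalently this case is immediate from Lemma~\ref{lemma: Hecke action on cusps}(1) and~(\ref{equation: multi by p is automorphism}).

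For $w_p$ I would show that it merely permutes the full divisors $(P(N)_{d'p^f})$. By Lemma~\ref{lemma: AL action on cusps}, $w_p$ sends each cusp of level $d'p^f$ to a cusp of level $d'p^{r-f}$, so $w_p$ carries the set $X_{d'p^f}$ of cusps of level $d'p^f$ into $X_{d'p^{r-f}}$; since $w_p$ is an automorphism of $X_0(N)_\C$ it is injective on cusps, and by Corollary~\ref{corollary: cusp representation} we have $|X_{d'p^f}|=\varphi(\gcd(d',M/d')\cdot p^{\min(f,\,r-f)})=|X_{d'p^{r-f}}|$, so $w_p$ restricts to a bijection $X_{d'p^f}\to X_{d'p^{r-f}}$. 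Hence $w_p((P(N)_{d'p^f}))=\sum_{c\in X_{d'p^f}}w_p(c)=(P(N)_{d'p^{r-f}})$, and applying $\Phi_N$ together with Remark~\ref{remark: decomposition of RCD} yields $\Phi_N(w_p(P(N)_{d'p^f}))=\Phi_M(P(M)_{d'})\motimes\Phi_{p^r}(P(p^r)_{p^{r-f}})$, as claimed.

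The hard part will not be conceptual — both halves only invoke results established earlier in the section — but rather the bookkeeping in the $T_p$ step: correctly matching the output of the two-stage pull–push against the many-branch formula in the statement, and in particular handling the boundary values of $f$ and the degenerate small levels $r\in\{0,1\}$, where several of the intervals in Lemmas~\ref{lemma: degeneracy maps pullback on RCD} and~\ref{lemma: degeneracy maps pushforward on RCD} are empty and the displayed case list must be read with the more restrictive conditions (e.g. ``$f=r$ and $r=1$'') taking precedence over the generic ones. As an independent check one could instead route $T_p$ through Lemma~\ref{lemma: Tp equals from lower level}, computing $\alpha_p(N/p)^*\circ\beta_p(N/p)_*$ and subtracting $w_p$ when $r=1$; but the direct route via $T_p=\beta_p(N)_*\circ\alpha_p(N)^*$ is uniform in $r$ and needs no correction term.
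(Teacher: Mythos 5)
Your proof is correct, but the $T_p$ half takes a genuinely different route from the paper's. The paper proves the Hecke formula by working at the level of individual cusps: it writes $(P(N)_{d'p^f})$ as $\bigl(\sum_{x}\bect{x}{d'}^M\bigr)\motimes\bigl(\sum_{y\in S(k)}\bect{y}{p^f}^{p^r}\bigr)$ and applies the explicit cusp-level formula of Lemma \ref{lemma: Hecke action on cusps} together with (\ref{equation: multi by p is automorphism}), then re-sums expressions such as $\sum_{y\in S(k)}\bect{y}{p^{f-1}}^{p^r}$ case by case. You instead compose the already-established divisor-level formulas for $\alpha_p(N)^*$ and $\beta_p(N)_*$ (Lemmas \ref{lemma: degeneracy maps pullback on RCD} and \ref{lemma: degeneracy maps pushforward on RCD}) directly on $(P(N)_{d'p^f})$, so that the only remaining work is matching the case thresholds $f\le r/2$, $f=(r\pm1)/2$, $r/2<f\le r-1$, $f=r$ against the two lemmas; I checked all branches (including $r=0$, $f=1$ with $r\ge 2$, and $f=r\in\{1,\ge 2\}$) and they compose to exactly the displayed $A$. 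Your route buys economy — no new cusp-level computation, everything reduces to bookkeeping on results the section has already proved — at the cost of depending on Lemmas \ref{lemma: degeneracy maps pullback on RCD} and \ref{lemma: degeneracy maps pushforward on RCD} having been stated with exactly the right case boundaries; the paper's route is more self-contained and reproves the sums from scratch. For the $w_p$ half your argument is essentially the paper's: the paper directly computes $\sum_{y\in S(k)}\bect{-y}{p^{r-f}}^{p^r}=(P(p^r)_{p^{r-f}})$ using Lemma \ref{lemma: AL action on cusps}, which is your bijection $X_{d'p^f}\to X_{d'p^{r-f}}$ made explicit (both rest on $\min(f,r-f)=\min(r-f,f)$).
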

\begin{proof}
For simplicity, let $D=(P(N)_{{d'} p^f})$. Then we have
\begin{equation*}
D=\sum_{x \in R(M, {d'})}\sum_{y \in S(k)} \bect x {d'}^M \motimes \bect y {p^f}^{p^r}=\left(\sum_{x \in R(M, {d'})} \bect x {d'}^M \right)\motimes \left( \sum_{y \in S(k)} \bect y {p^f}^{p^r}\right),
\end{equation*}
where $k=\min(f, \, r-f)$. Thus, by Lemma \ref{lemma: AL action on cusps} we have
\begin{equation*}
w_p(D)=\left(\sum_{x \in R(M, {d'})} \bect x {d'}^M \right)\motimes \left( \sum_{y \in S(k)} \bect {-y} {p^{r-f}}^{p^r}\right)=(P(M)_{d'}) \motimes (P(p^r)_{p^{r-f}}).
\end{equation*}
Also, by Lemma \ref{lemma: Hecke action on cusps} and (\ref{equation: multi by p is automorphism}), we have the following: If $f=r=0$, then 
\begin{equation*}
T_p(D)=\sum_{x \in R(M, {d'})} \left(p\cdot \bect {px}{d'}^M +\bect {p^* x}{{d'}}\right)=(p+1) \cdot (P(M)_{d'}).
\end{equation*}
Suppose that $r\geq 1$. If $f=0$, then 
\begin{equation*}
T_p(D)=\sum_{x \in R(M, {d'})} p\cdot \bect {px}{{d'}}^M \motimes \bect 1 1^{p^r}=p \cdot (P(M)_{d'}) \motimes (P(p^r)_1).
\end{equation*}

If $f=r\geq 1$, then we have
\begin{equation*}
\begin{split}
T_p(D)&=\sum_{x \in R(M, {d'})} \left(\sum_{i=1}^{p-1} \bect x {d'}^M \motimes \bect i {p^{r-1}}^{p^r}+\bect {p^*x}{{d'}}^M \motimes \bect 1 {p^r}^{p^r} \right)\\
&=(P(M)_{d'}) \motimes \left(\sum_{i=1}^{p-1}\bect i {p^{r-1}}^{p^r} \right)+(P(M)_{d'})\motimes (P(p^r)_{p^r}).
\end{split}
\end{equation*}
If $r=1$, then $\bect i {p^{r-1}}^{p^r}=\bect 1 1^{p}=(P(p)_1)$ for any $1\leq i\leq p-1$, and so the formula holds. If $r\geq 2$, then $(P(p^r)_{p^{r-1}})=\sum_{i=1}^{p-1}\bect i {p^{r-1}}^{p^r}$ and so the result follows. 

Now, suppose that $1\leq f\leq r-1$. If $1\leq f \leq (r+1)/2$, then 
\small
\begin{equation*}
T_p(D)=\sum_{x \in R(M, {d'})}\sum_{y \in S(k)} p \cdot \bect x {d'}^M \motimes \bect y {p^{f-1}}^{p^r}=p \cdot (P(M)_{d'}) \motimes \left(\sum_{y \in S(k)} \bect y {p^{f-1}}^{p^r}\right).
\end{equation*}\normalsize
By direct computation, we have
\begin{equation*}
\sum_{y \in S(k)} \bect y {p^{f-1}}^{p^r}=\begin{cases}
(p-1)\cdot (P(p^r)_1) & \text{ if }~~f=1 ~(\text{and }r\geq 2),\\
p \cdot (P(P^r)_{p^{f-1}}) & \text{ if } ~~ 2\leq f \leq r/2,\\
(P(p^r)_{p^{f-1}}) & \text{ if } ~~ f=(r+1)/2.
\end{cases}
\end{equation*}
Thus, the result follows. If $(r+1)/2<f \leq r-1$, then we have
\begin{equation*}
\begin{split}
T_p(D)&=\sum_{x \in R(M, {d'})}\sum_{y \in S(k)} \bect x {d'}^M \motimes \sum_{i=0}^{p-1} \bect {y+ip^{r-f}} {p^{f-1}}^{p^r}\\
&=\sum_{x \in R(M, {d'})}\bect x {d'}^M \motimes \sum_{y \in S(k)} \sum_{i=0}^{p-1} \bect {y+ip^{r-f}} {p^{f-1}}^{p^r}=(P(M)_{d'})\motimes (P(p^r)_{p^{f-1}})
\end{split}
\end{equation*}
because any element of $S(r-f+1)$ can be uniquely written as $y+ip^{r-f}$ for some $y \in S(k)=S(r-f)$ and $0\leq i \leq p-1$. This completes the proof.
\end{proof}

\vspace{2mm}
The following will be used later.
\begin{lemma}\label{lemma: A1 definition}
For any $r\geq 0$, we have
\begin{equation*}
\pi_1(p^r, 1)^*(P(1)_1)=\sum_{k=0}^r p^{\max(r-2k, \, 0)} \cdot (P(p^r)_{p^k}).
\end{equation*}
\end{lemma}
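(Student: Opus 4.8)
The plan is to argue by induction on $r$. The structural input is that $\pi_1(p^{r+1},1)$ factors as the degeneracy map $\alpha_p(p^r)\colon X_0(p^{r+1})\to X_0(p^r)$ followed by $\pi_1(p^r,1)\colon X_0(p^r)\to X_0(1)$; taking pullbacks contravariantly, $\pi_1(p^{r+1},1)^{*}=\alpha_p(p^r)^{*}\circ\pi_1(p^r,1)^{*}$. The base cases are immediate: $\pi_1(1,1)$ is the identity, so the case $r=0$ is trivial, and $\pi_1(p,1)=\alpha_p(1)$, for which Lemma \ref{lemma: degeneracy maps pullback on RCD} in the branch $f=r=0$ gives $\alpha_p(1)^{*}((P(1)_1))=p\cdot(P(p)_1)+(P(p)_p)=\sum_{k=0}^{1}p^{\max(1-2k,\,0)}(P(p)_{p^k})$.

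For the inductive step ($r\ge 1$) I would substitute the induction hypothesis $\pi_1(p^r,1)^{*}((P(1)_1))=\sum_{k=0}^{r}p^{\max(r-2k,\,0)}(P(p^r)_{p^k})$ and push each summand through $\alpha_p(p^r)^{*}$ using Lemma \ref{lemma: degeneracy maps pullback on RCD} with $N=p^r$, $M=1$, $d'=1$, $f=k$. That lemma gives $\alpha_p(p^r)^{*}((P(p^r)_{p^k}))$ equal to $p\cdot(P(p^{r+1})_{p^k})$ when $0\le k\le r/2$, equal to $(P(p^{r+1})_{p^k})$ when $r/2<k\le r-1$, and equal to $(P(p^{r+1})_{p^r})+(P(p^{r+1})_{p^{r+1}})$ when $k=r$. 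These images have pairwise disjoint supports, so the coefficient of each $(P(p^{r+1})_{p^j})$ in the result is contributed by a single value of $k$; it then only remains to verify the elementary identity $p^{\max(r-2j,\,0)}\cdot p^{\varepsilon_j}=p^{\max(r+1-2j,\,0)}$, where $\varepsilon_j=1$ exactly when $j\le r/2$ (the regime in which the pullback gains a factor $p$), together with the fact that both $(P(p^{r+1})_{p^r})$ and $(P(p^{r+1})_{p^{r+1}})$ acquire coefficient $p^{\max(r-2r,\,0)}=1$ from the term $k=r$. This reproduces the asserted formula at level $p^{r+1}$.

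The ``in particular'' is then a one-line consequence: taking $k=[(r+1)/2]$ one has $2k\in\{r,\,r+1\}$, hence $r-2k\le 0$, so the coefficient $a(r)_{[(r+1)/2]}$ of $(P(p^r)_{p^{[(r+1)/2]}})$ equals $p^{\max(r-2k,\,0)}=1$.

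I expect the only real difficulty — and it is purely bookkeeping — to be aligning the $\max$-exponent arithmetic with the case boundaries of Lemma \ref{lemma: degeneracy maps pullback on RCD}: one must be slightly careful at the boundary $k=r/2$ when $r$ is even (it belongs to the branch that gains a factor $p$) and with the term $k=r$, which simultaneously feeds the $p^r$- and $p^{r+1}$-components. Beyond this, the argument is a telescoping application of the degeneracy-map pullback formula with no conceptual content.
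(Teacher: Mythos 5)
Your proposal is correct and follows essentially the same route as the paper: induction on $r$ via the factorization $\pi_1(p^{r+1},1)^{*}=\alpha_p(p^r)^{*}\circ\pi_1(p^r,1)^{*}$, applying Lemma \ref{lemma: degeneracy maps pullback on RCD} termwise and checking the exponent bookkeeping at the boundary $k=[r/2]$ and at $k=r$. The paper merely writes the induction hypothesis with the sum pre-split at $[r/2]$, which is a cosmetic difference.
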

\begin{proof}
First, we have 
\begin{equation*}
\pi_1(p, 1)^*(P(1)_1)=\alpha_p(1)^*(P(1)_1)=p\cdot (P(p)_1)+(P(p)_p).
\end{equation*}
Next, suppose that the above formula holds for $r\geq 1$, i.e.,
\begin{equation*}
\pi_1(p^{r}, 1)^*(P(1)_1)=\sum_{k=0}^{[r/2]} p^{r-2k} \cdot (P(p^{r})_{p^k})+\sum_{k=[r/2]+1}^r  (P(p^r)_{p^k}).
\end{equation*}
Then by Lemma \ref{lemma: degeneracy maps pullback on RCD}, we have
\begin{equation*}
\begin{split}
\pi_1(p^{r+1}, 1)^*(P(1)_1)&=\alpha_p(p^r)^* \circ \pi_1(p^r, 1)^*(P(1)_1) \\
&=\alpha_p(p^{r})^*\left(\sum_{k=0}^{[r/2]} p^{r-2k} \cdot (P(p^{r})_{p^k})+\sum_{k=[r/2]+1}^r  (P(p^r)_{p^k}) \right)\\
&=\sum_{k=0}^{[r/2]} p^{r+1-2k} \cdot (P(p^{r+1})_{p^k})+\sum_{k=[r/2]+1}^{r+1} (P(p^{r+1})_{p^k}).
\end{split}
\end{equation*}
By induction we obtain the result.
\end{proof}

\begin{remark}\label{remark: level 1, level M}
Suppose that $M$ is a prime not divisible by $p$. As above, for any $D \in \Qdivv M$, we easily have
\begin{equation*}
\Phi_{Mp^r}(\pi_1(Mp^r, M)^*(D))=\Phi_M(D) \motimes \Phi_{p^r}(\pi_1(p^r, 1)^*(P(1)_1)).
\end{equation*}
\end{remark}

\begin{lemma}\label{lemma: B2 definition}
We have 
\begin{equation*}
\pi_{12}(1)^*(P(1)_1)=p\cdot [(P(p^2)_1)+(P(p^2)_p)+(P(P^2)_{p^2})].
\end{equation*}
If $r\geq 1$, then  we have
\begin{equation*}
\pi_{12}(p^r)^*(P(p^r)_{p^f})=\begin{cases}
p\cdot [(P(p^{r+2})_1)+(P(p^{r+2})_p)] & \text{ if }~~ f=0,\\
p\cdot (P(p^{r+2})_{p^{f+1}}) & \text{ if }~~ 1\leq f \leq r-1,\\
p\cdot [(P(p^{r+2})_{p^{r+1}})+(P(p^{r+2})_{p^{r+2}})] & \text{ if }~~ f=r.
\end{cases}
\end{equation*}
Therefore the map $\frac{1}{p}\times \pi_{12}(p^r)^*$ is well-defined as 
a linear map from the group of rational cuspidal divisors on $X_0(p^r)$ to that on $X_0(p^{r+2})$. 
\end{lemma}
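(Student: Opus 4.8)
The plan is to compute $\pi_{12}(p^r)^*$ directly, by breaking it into the two elementary degeneracy pullbacks that were already worked out in Lemma~\ref{lemma: degeneracy maps pullback on RCD}. By Definition~\ref{definition: degeneracy map pi(A, B)} we have $\pi_{12}(p^r)=\beta_p(p^r)\circ\alpha_p(p^{r+1})\ (=\alpha_p(p^r)\circ\beta_p(p^{r+1}))$, and hence on divisor groups
\[
\pi_{12}(p^r)^*=\alpha_p(p^{r+1})^*\circ\beta_p(p^r)^*.
\]
So I would first apply the $\beta_p(p^r)^*$-formula of Lemma~\ref{lemma: degeneracy maps pullback on RCD} (in the special case $M=1$, where the tensor decomposition is trivial and the $\Phi$'s may be suppressed) to $(P(p^r)_{p^f})$, obtaining a combination of divisors $(P(p^{r+1})_{p^k})$; then apply the $\alpha_p(p^{r+1})^*$-formula of the same lemma to that combination term by term, and read off the answer. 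The $r=0$ assertion $\pi_{12}(1)^*(P(1)_1)=\alpha_p(p)^*\bigl(\beta_p(1)^*(P(1)_1)\bigr)$ is just the case $f=r=0$: here $\beta_p(1)^*(P(1)_1)=(P(p)_1)+p\cdot(P(p)_p)$ and then $\alpha_p(p)^*$ sends these to $p\cdot(P(p^2)_1)$ and $p\cdot[(P(p^2)_p)+(P(p^2)_{p^2})]$, giving the stated formula.

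For $r\geq1$ the three cases come out as follows. If $f=0$, then $\beta_p(p^r)^*$ gives $(P(p^{r+1})_1)+(P(p^{r+1})_p)$, and $\alpha_p(p^{r+1})^*$ turns these into $p\cdot(P(p^{r+2})_1)$ and $p\cdot(P(p^{r+2})_p)$. If $1\leq f\leq r-1$, then $\beta_p(p^r)^*$ gives $(P(p^{r+1})_{p^{f+1}})$ or $p\cdot(P(p^{r+1})_{p^{f+1}})$ according as $f<r/2$ or $f\geq r/2$, while $\alpha_p(p^{r+1})^*$ applied to $(P(p^{r+1})_{p^{f+1}})$ multiplies by $p$ or by $1$ according as $f+1\leq(r+1)/2$ or $f+1>(r+1)/2$; since for integers $f<r/2\iff f+1\leq(r+1)/2$ and $f\geq r/2\iff f+1>(r+1)/2$, in both subcases the total coefficient is $p$ and we get $p\cdot(P(p^{r+2})_{p^{f+1}})$. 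If $f=r$, then $\beta_p(p^r)^*$ gives $p\cdot(P(p^{r+1})_{p^{r+1}})$ and $\alpha_p(p^{r+1})^*$ sends this to $p\cdot[(P(p^{r+2})_{p^{r+1}})+(P(p^{r+2})_{p^{r+2}})]$. This is exactly the claimed formula. Finally, every coefficient produced is divisible by $p$, and by Lemma~\ref{lemma: group of rational cuspidal divisors} the divisors $(P(p^{r+2})_{p^k})$ generate the rational cuspidal divisor group of $X_0(p^{r+2})$; since the generators $(P(p^r)_{p^f})$ generate the rational cuspidal divisor group of $X_0(p^r)$, it follows at once that $\frac1p\times\pi_{12}(p^r)^*$ carries rational cuspidal divisors to (integral) rational cuspidal divisors, i.e.\ is a well-defined linear map between these groups.

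The computation is entirely routine, so the only real point of care is the bookkeeping of the thresholds: the formulas in Lemma~\ref{lemma: degeneracy maps pullback on RCD} branch according to the position of $f$ relative to $r/2$, and one must check that after $\beta_p(p^r)^*$ raises the level from $p^f$ to $p^{f+1}$, the position of $f+1$ relative to $(r+1)/2$ is exactly the one that makes the $\alpha_p(p^{r+1})^*$-multiplier compensate, so that the total coefficient is uniformly $p$. Verifying the two integer equivalences displayed above, together with the boundary behaviour at $f=0$ and $f=r$ (where the ``mixed'' rows $(P)_1+(P)_p$ and $(P)_{p^{r+1}}+(P)_{p^{r+2}}$ appear), is the place where one has to be attentive; everything else is substitution.
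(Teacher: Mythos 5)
Your proposal is correct and follows essentially the same route as the paper: factor $\pi_{12}(p^r)^*$ through the two elementary degeneracy pullbacks and compose the formulas of Lemma \ref{lemma: degeneracy maps pullback on RCD}. The only cosmetic difference is that the paper switches between the two factorizations $\beta_p(p^{r+1})^*\circ\alpha_p(p^r)^*$ (for $f\leq r/2$) and $\alpha_p(p^{r+1})^*\circ\beta_p(p^r)^*$ (for $f\geq (r+1)/2$) so that the factor $p$ always comes from the first map, whereas you use a single factorization and verify the threshold shift $f<r/2\iff f+1\leq(r+1)/2$ directly; both bookkeepings are correct.
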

\begin{proof}
We may prove the theorem by direct computation using
\begin{equation*}
\pi_{12}(p^r) \left( \bect x {p^f}^{p^r}\right)=\begin{cases}
\bect {px} 1 ^{p^{r+2}} & \text{ if } ~~f=0,\\
\bect x {p^{f-1}}^{p^{r+2}} & \text{ if } ~~1\leq f\leq r.
\end{cases}
\end{equation*}
But instead, we note that 
\begin{equation*}
\pi_{12}(p^r)=\alpha_p(p^r)\circ \beta_p(p^{r+1}) = \beta_p(p^r) \circ \alpha_p(p^{r+1}),
\end{equation*}
and therefore
\begin{equation*}
\pi_{12}(p^r)^*=\alpha_p(p^{r+1})^* \circ \beta_p(p^r)^*=\beta_p(p^{r+1})^* \circ \alpha_p(p^r)^*.
\end{equation*}

First, suppose that $r=f=0$. Then $\alpha_p(1)^*(P(1)_1)=p\cdot (P(p)_1)+(P(p)_p)$. Thus, we have
\begin{equation*}
\pi_{12}(1)^*(P(1)_1)=p\cdot [(P(p^2)_1)+(P(p^2)_p)]+p\cdot (P(p^2)_{p^2}).
\end{equation*}

Next, suppose that $r\geq 1$. Suppose further that $0\leq f\leq r/2$. Then we have
\begin{equation*}
\alpha_p(p^r)^*(P(p^r)_{p^f})=p\cdot (P(p^{r+1})_{p^f}).
\end{equation*}
Thus, we have
\begin{equation*}
\pi_{12}(p^r)^*(P(p^r)_{p^f})=\begin{cases}
p \cdot [(P(p^{r+2})_{1})+(P(p^{r+2})_{p})] & \text{ if } ~~f=0,\\
p \cdot (P(p^{r+2})_{p^{f+1}}) & \text{ otherwise}.
\end{cases}
\end{equation*}
If $(r+1)/2 \leq  f\leq r$, then we have $\beta_p(p^r)^*(P(p^r)_{p^f})=p\cdot (P(p^{r+1})_{p^{f+1}})$, and hence
\begin{equation*}
\pi_{12}(p^r)^*(P(p^r)_{p^f})=\begin{cases}
p\cdot [(P(p^{r+2})_{p^{r+1}})+(P(p^{r+2})_{p^{r+2}})] & \text{ if }~~ f=r,\\
p\cdot (P(p^{r+2})_{p^{f+1}}) & \text{ otherwise}.
\end{cases}
\end{equation*}

Lastly, since the coefficient of $(P(p^{r+2})_{p^k})$ in $\pi_{12}(p^r)^*(P(p^r)_{p^f})$ is divisible by $p$ for any $f$ and $k$, the last assertion follows.
\end{proof}

\vspace{10mm}
\section{The order of a rational cuspidal divisor}\label{chapter3}
For a degree $0$ divisor $D$ on $X_0(N)$, let $\ov{D}$ denote the linear equivalence class of $D$ in $J_0(N)$.
By \textit{the order of $D$}, we mean the order of $\ov{D}$ in $J_0(N)$, i.e., the smallest positive integer $n$ such that $n \cdot D$ is equal to the divisor of a meromorphic function on $X_0(N)$. (If there does not exist such an integer, then we say that the order is \textit{infinite}.)
In this section, we develop a method for computing the order of a degree $0$ rational cuspidal divisor on $X_0(N)$, which is a slight elaboration of Ligozat's method (Section \ref{section: algorithm}). As an application, we compute the order of $C_d$ for any non-trivial divisor $d$ of $N$ (Sections \ref{section: Example II: The order of CN} and \ref{section: Example II: The order of Cd}).

\vspace{2mm}
Before proceeding, we discuss a way to compute the order of a degree $0$ cuspidal divisor on $X_0(N)$, which may not be rational.
Let $D$ be a degree $0$ cuspidal divisor on $X_0(N)$. By Manin \cite{Ma72} and Drinfeld \cite{Dr73}, the order of $D$ is finite. In other words, there is a modular function\footnote{By a \textit{modular function} on $X_0(N)$, we mean a meromorphic function on $\cH^*$ invariant under the action of $\Gamma_0(N)$.} on $X_0(N)$ whose divisor is an integral multiple of $D$.
Such a function has no zeros and poles on $\cH$, and hence is called a \textit{modular unit}. 
Modular units for the modular curves $X(N)$ or $X_1(N)$ have been studied by various mathematicians (most notably Kubert and Lang \cite{KL}), but those for the modular curve $X_0(N)$ have not much studied before (unless $N$ is squarefree). In principle, if we know a precise description\footnote{Since modular units on $X_0(N)$ are also modular units on $X(N)$, they can be written in terms of Siegel's units. However, we need a precise description of which Siegel's units are invariant under the action of a more larger group $\Gamma_0(N)$. For some progress on modular units on $X_0(N)$, see \cite{GYYY}.} of all the modular units on $X_0(N)$, then we can compute the order of any degree $0$ cuspidal divisor on $X_0(N)$.
However, until now, there is no systematic way to compute the order of a non-rational cuspidal divisor except a method using modular symbols.

\ms
Now, let $D$ be a degree $0$ rational cuspidal divisor on $X_0(N)$. If $n$ is the order of $D$, then there is a modular function $F$ on $X_0(N)$ such that
\begin{equation*}
\tn{div} (F)=n \cdot D.
\end{equation*}
Such a function $F$ has the following properties:
\begin{enumerate}
\item
It has no zeros and poles on $\cH$.
\item
Its order of vanishing at a cusp $\bect x d^N$ of level $d$ does not depend on $x$.
\end{enumerate}
In the 1950s, Morris Newman constructed such functions using the Dedekind eta function \cite{Ne57, Ne59}. Now, they are called \textit{eta quotients} (or \textit{eta products}, depending on author's preference, cf. \cite[pg. 31]{Ko11}). 
In fact, he found a sufficient condition when an eta quotient is a modular function on $X_0(N)$. Also, in the early 1970s Andrew Ogg proved a necessary and sufficient condition for eta quotients to be modular functions on $X_0(N)$ when $N$ is either a prime or the product of two primes, and he computed the order of a degree $0$ (rational) cuspidal divisors \cite{Og73, Og74}.
Lastly, in 1975 Gerard Ligozat proved a necessary and sufficient condition for an eta quotient to be a modular function on $X_0(N)$ for any positive integer $N$ (Proposition \ref{proposition: criteria for rational modular unit}). As an application,  he computed the order of the divisor $(0)-(\infty)$ on $X_0(N)$ for any positive integer $N$ \cite[Th. 3.2.16]{Li75}. 

In 1997, Toshikazu Takagi proved that all modular units on $X_0(N)$ are eta quotients (up to constant) when $N$ is squarefree \cite[Th. 3.3]{Ta97}. We remark that this result can be obtained by the work of Ligozat. More precisely, Ligozat's result is enough to prove that any modular unit on $X_0(N)$ such that its order of vanishing at a cusp $\bect x d^N$ of level $d$ does not depend on $x$ is an eta quotient up to constant (Theorem \ref{theorem: rational modular units}). Since all cusps of $X_0(N)$ are defined over $\Q$ when $N$ is of the form $2^rM$, where $M$ is odd squarefree and $r\leq 3$, all modular units on $X_0(N)$ for such an $N$ are indeed eta quotients (up to constant).

\ms
\subsection{Eta quotients}\label{section: eta quotients}
Let $\eta : \cH \to \C$ be the Dedekind eta function defined by 
\begin{equation*}
\eta(\tau)=e^{\frac{\pi i \tau}{12}}\prod_{n=1}^{\infty}(1-e^{2\pi i n \tau})=q^{\frac{1}{24}} \prod_{n=1}^{\infty}(1-q^n), \text{ where }~~ q=e^{2\pi i \tau}.
\end{equation*}
It is well-known that for any $\gamma=\mat a b c d \in \SL_2(\Z)$ with $c\geq 0$, we have
\begin{equation*}
\eta(\gamma \tau)=\eta\left(\frac{a\tau+b}{c\tau+d}\right)=\epsilon(a, b, c, d)\sqrt{-i}(c\tau+d)^{\frac{1}{2}}\eta(\tau),
\end{equation*}
where $\epsilon(a, b, c, d)$\textemdash see \cite[Sec. 3.1]{Li75} for its definition\textemdash  is a certain $24$-th root of unity. One way to ignore this root of unity is to take its $24$-th power. The function $\Delta(\tau):=\eta(\tau)^{24}$ is then invariant under the action of $\SL_2(\Z)$, and so it is a modular form of weight $12$ for $\SL_2(\Z)$. 
As mentioned at the beginning of Section \ref{chapter2}, for any divisor $\delta$ of $N$, we have the map $F_\gamma : X_0(N) \to X_0(1)$, where $\gamma=\mat \delta 0 0 1$. Thus, the function
\begin{equation*}
\Delta_\delta(\tau):=F_\gamma^*(\Delta)(\tau)=\Delta(\delta \tau)
\end{equation*}
is a modular form of weight $12$ for $\Gamma_0(N)$, and so the ratio of such modular forms  may be used to construct a modular function on $X_0(N)$. 
Likewise, we define
\begin{equation*}
\eta_\delta(\tau):=F_\gamma^*(\eta)=\eta(\delta \tau)
\end{equation*}
and  consider the following.

\begin{definition}
A function $g: \cH \to \C$ is called an \textit{eta quotient of level $N$} if it is of the form $g=\prod_{\delta \mid N} \eta_\delta^{r_\delta}$ for some $r_\delta \in \Z$.
\end{definition}

As in Notation \ref{notation: S1 and S2 in introduction}, we consider the $\Q$-vector space $\cS_1(N)_\Q$ and for any ${\bf r}=\sum_{\delta \mid N} r_\delta \cdot {\bf e}(N)_\delta \in \cS_1(N)_\Q$, we define a \textit{generalized eta quotient of level $N$} by
\begin{equation*}
g_{\bf r}:=\prod_{\delta\mid N} \eta_\delta^{r_\delta},
\end{equation*}
which is regarded as a power series in $q$ with rational coefficients (after multiplying suitable rational power of $q$ if necessary). Our interest is to understand when such a product is a modular function on $X_0(N)$. 
\begin{lemma}\label{lemma: the order of vanishing of eta}
Let $d$ and $\delta$ be two divisors of $N$. The order of vanishing of $\Delta_\delta$ at a cusp $\bect x d^N$ of level $d$ is 
\begin{equation*}
a_N(d, \delta):=\frac{N}{\gcd(d, N/d)} \times \frac{\gcd(d, \delta)^2}{d\delta}.
\end{equation*}
\end{lemma}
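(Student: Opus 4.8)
The plan is to compute the $q$-expansion of $\Delta_\delta$ at the cusp directly, in the spirit of the Riemann-surface bookkeeping used for Lemma \ref{lemma: ramification index of cusp}. Fix a cusp $\bect x d^N$ of level $d$ and choose $\gamma=\mat x u d v \in \SL_2(\Z)$ with $xv-ud=1$, so that $\gamma$ carries $\infty$ to (a representative of) $\bect x d^N$. By Lemma \ref{lemma: width of cusp} this cusp has width $h=\frac{N}{d\cdot\gcd(d,N/d)}$, and, as recalled in the proof of Lemma \ref{lemma: ramification index of cusp}, the local parameter there is $q_h=e^{2\pi i\tau/h}$. Since $\Delta_\delta$ is a weight-$12$ modular form for $\Gamma_0(N)$, its order of vanishing at $\bect x d^N$ is the smallest exponent of $q_h$ appearing in the $q_h$-expansion of
\begin{equation*}
F(\tau):=(d\tau+v)^{-12}\,\Delta_\delta(\gamma\tau)=(d\tau+v)^{-12}\,\Delta(\delta\gamma\tau),
\end{equation*}
so it suffices to identify the leading term of $F$.

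The key step is to put the determinant-$\delta$ matrix $\delta\gamma=\mat{\delta x}{\delta u}{d}{v}$ into Hermite normal form: write $\delta\gamma=\gamma'\mat A B 0 D$ with $\gamma'=\mat p q r s\in\SL_2(\Z)$, positive integers $A,D$ with $AD=\delta$, and $0\leq B<D$. Two observations drive the computation. First, $A$ is the gcd of the first column of $\delta\gamma$, namely $\gcd(\delta x,d)=\gcd(\delta,d)$ because $\gcd(x,d)=1$; hence $A=\gcd(d,\delta)$ and $D=\delta/\gcd(d,\delta)$. Second, comparing bottom rows of $\delta\gamma=\gamma'\mat A B 0 D$ gives $(d,v)=(rA,\,rB+sD)$, so that $r\cdot\frac{A\tau+B}{D}+s=\frac{d\tau+v}{D}$. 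Using the weight-$12$ $\SL_2(\Z)$-modularity of $\Delta$,
\begin{equation*}
\Delta(\delta\gamma\tau)=\Delta\!\left(\gamma'\cdot\tfrac{A\tau+B}{D}\right)=\left(r\cdot\tfrac{A\tau+B}{D}+s\right)^{12}\Delta\!\left(\tfrac{A\tau+B}{D}\right)=\left(\tfrac{d\tau+v}{D}\right)^{12}\Delta\!\left(\tfrac{A\tau+B}{D}\right),
\end{equation*}
and therefore $F(\tau)=D^{-12}\,\Delta\!\left(\tfrac{A\tau+B}{D}\right)$.

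It then remains to read off the leading exponent. From the product expansion $\Delta(z)=e^{2\pi iz}\prod_{n\geq 1}(1-e^{2\pi inz})^{24}$, taken at $z=\frac{A\tau+B}{D}$, we obtain
\begin{equation*}
F(\tau)=D^{-12}\,e^{2\pi iB/D}\,e^{2\pi iA\tau/D}\prod_{n\geq 1}\left(1-e^{2\pi inB/D}e^{2\pi inA\tau/D}\right)^{24},
\end{equation*}
whose lowest-order term is $D^{-12}e^{2\pi iB/D}\,e^{2\pi iA\tau/D}=D^{-12}e^{2\pi iB/D}\,q_h^{\,Ah/D}$, with nonzero coefficient. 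Hence the order of vanishing of $\Delta_\delta$ at $\bect x d^N$ equals $\frac{Ah}{D}$, and substituting $A=\gcd(d,\delta)$, $D=\delta/\gcd(d,\delta)$, $h=\frac{N}{d\,\gcd(d,N/d)}$ gives
\begin{equation*}
\frac{Ah}{D}=\frac{\gcd(d,\delta)^2}{\delta}\cdot\frac{N}{d\,\gcd(d,N/d)}=\frac{N}{\gcd(d,N/d)}\cdot\frac{\gcd(d,\delta)^2}{d\delta}=a_N(d,\delta).
\end{equation*}
This is manifestly independent of $x$ and of the auxiliary choices $u,v,\gamma',B$, as it must be; in particular $a_N(d,\delta)$ is a nonnegative integer, consistent with $q_h$ being a uniformizer, since $F$ is invariant under $\tau\mapsto\tau+h$.

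The only delicate point is the automorphy-factor bookkeeping in the second paragraph: the cancellation $(d\tau+v)^{-12}\cdot\left(r\frac{A\tau+B}{D}+s\right)^{12}=D^{-12}$ is exactly the identity $(d,v)=(rA,rB+sD)$ read off the bottom row of $\delta\gamma=\gamma'\mat A B 0 D$, and getting this together with the evaluation $A=\gcd(d,\delta)$ correct is the whole content; the rest is routine. (Alternatively, one could invoke Ligozat's computation \cite[\S3]{Li75}, via Proposition \ref{proposition: criteria for rational modular unit}, but the self-contained argument above matches the style of Section \ref{chapter2}.)
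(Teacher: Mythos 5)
Your proof is correct and follows essentially the same route as the paper: conjugate the cusp to $\infty$, factor the determinant-$\delta$ matrix $\mat{\delta}{0}{0}{1}\gamma$ as an element of $\SL_2(\Z)$ times an upper-triangular matrix, use the weight-$12$ modularity of $\Delta$ to cancel the automorphy factors, and read off the leading exponent $Ah/D$. The paper carries out the same decomposition by exhibiting the matrices explicitly (its $\mat{-a\delta_1}{-n}{d_1}{-m}$ and $\frac{g\tau+k}{\delta_1}$ are precisely your $\gamma'$ and $\frac{A\tau+B}{D}$ with $A=\gcd(d,\delta)$, $D=\delta_1$), whereas you package it as Hermite normal form and identify $A$ as the gcd of the first column; the content is identical.
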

\begin{proof}
Let $\gamma=\mat x a d b \in \SL_2(\Z)$ so that $\gamma \infty=\bect x d$. Then the order of $\Delta_\delta$ at a cusp $\bect x d^N$ is the smallest power of $q_h:=q^{1/h}=e^{2\pi i \tau/h}$ in the Puiseux expansion of $(d\tau+b)^{-12}\Delta_\delta(\gamma \tau)$, where $h$ is the width of a cusp $\bect x d^N$. (For example, see \cite[Sec. 3.2]{DS05}.) By Lemma  \ref{lemma: width of cusp}, we have $h=\frac{N}{d\cdot \gcd(d, N/d)}$.

Let $g=\gcd(d, \delta)$, and write $d=gd_1$ and $\delta=g\delta_1$ with $\gcd(d_1, \delta_1)=1$. 
Since $\gcd(x\delta_1, d_1)=1$, there are integers $m$ and $n$ such that $x{\delta_1}n-d_1m=1$. 
By direct computation, we have
\begin{equation*}
k:=\frac{ag-m}{x}=\frac{b-n\delta_1}{d_1}\in \Z.
\end{equation*}
Since
\begin{equation}\label{equation: aU=Ua}
\delta(\gamma \tau)=\frac{\delta x \tau+\delta a}{d\tau+b}=\bmat {x \delta_1}{m}{d_1}{n}\left(\frac{g\tau +k}{\delta_1}\right)
\end{equation}
and $\mat {x \delta_1}{m}{d_1}{n} \in \SL_2(\Z)$, we have
\begin{equation}\label{equation: aU=Ua 2}
\Delta_\delta(\gamma\tau)=\Delta(\delta(\gamma\tau))=\left(\frac{d\tau+b}{\delta_1}\right)^{12}\Delta \left(\frac{g\tau +k}{\delta_1}\right).
\end{equation}
Thus, the Puiseux expansion of $(d\tau+b)^{-12}\Delta_\delta(\gamma\tau)$ is
\begin{equation*}
\begin{split}
\delta_1^{-12} \cdot \Delta\left(\frac{g\tau +k}{\delta_1}\right)&=\delta_1^{-12} \cdot  e^{\frac{2\pi ik}{\delta_1}} \cdot q^{\delta_1^{-1}g}\prod_{n=1}^\infty(1-(e^{\frac{2\pi ik}{\delta_1}}q^{\delta_1^{-1}g})^n)^{24}\\
&=\delta_1^{-12} \cdot e^{\frac{2\pi ik}{\delta_1}} \cdot q_h^{\delta_1^{-1}gh}\prod_{n=1}^\infty(1-(e^{\frac{2\pi ik}{\delta_1}}q_h^{\delta_1^{-1}gh})^n)^{24},
\end{split}
\end{equation*} 
and therefore the order of vanishing of $\Delta_\delta$ at a cusp $\bect x d^N$ is 
\begin{equation*}
\delta_1^{-1} g h=\frac{g}{\delta_1} \times \frac{N}{d\cdot \gcd(d, N/d)}=\frac{N}{\gcd(d, N/d)}\times \frac{g^2}{d\delta}.
\end{equation*}
This completes the proof.
\end{proof}
\begin{definition}\label{definition: the matrix Lambda}
For a positive integer $N$, let 
\begin{equation*}
\Lambda(N):=\left(\frac{a_N(d, \delta)}{24}\right)_{d, \delta \mid N}
\end{equation*}
be a square matrix of size $\sigma_0(N)$, indexed by the divisors of $N$. We regard this matrix as a linear map from $\cS_1(N)_\Q$ to $\cS_2(N)_\Q$.
\end{definition}

By Lemma \ref{lemma: the order of vanishing of eta}, we have the following, which is \cite[Prop. 3.2.8]{Li75}.
\begin{lemma}\label{lemma: the divisor of eta quotient}
Let ${\bf r}=\sum_{\delta \mid N} r_\delta \cdot {\bf e}(N)_\delta \in \cS_1(N)_\Q$. Then we have
\begin{equation*}
\tn{div}(g_{\bf r})=\sum_{d\mid N} \left(\sum_{\delta\mid N} \frac{a_N(d, \delta)}{24} \times r_\delta\right) \cdot (P_d).
\end{equation*}
\end{lemma}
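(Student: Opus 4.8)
The statement is essentially a corollary of Lemma \ref{lemma: the order of vanishing of eta}, so the proof will be short. The plan is to compute the order of vanishing of $g_{\bf r}$ at each cusp by passing to $24$th powers, reduce everything to $\Delta_\delta$, and then reassemble the local data into the divisor.

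First I would reduce to the case ${\bf r}\in \cS_1(N)$, i.e. $r_\delta\in\Z$ for all $\delta$. If $M$ is a positive integer with $M{\bf r}\in\cS_1(N)$, then $g_{M{\bf r}}=g_{\bf r}^M$, so $\tn{div}(g_{M{\bf r}})=M\cdot\tn{div}(g_{\bf r})$ and the formula for $M{\bf r}$ gives the one for ${\bf r}$ after dividing by $M$; this also clarifies that for general ${\bf r}\in\cS_1(N)_\Q$ the object $\tn{div}(g_{\bf r})$ is to be read as a formal $\Q$-divisor. So assume $r_\delta\in\Z$. Since $\eta$ is holomorphic and nowhere vanishing on $\cH$, each $\Delta_\delta(\tau)=\Delta(\delta\tau)=\eta_\delta(\tau)^{24}$ is holomorphic and nowhere vanishing on $\cH$; hence $g_{\bf r}=\prod_{\delta\mid N}\eta_\delta^{r_\delta}$ has no zeros or poles on $Y_0(N)$, and $\tn{div}(g_{\bf r})$ is supported on the cusps.

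Next, fix a cusp $c=\bect x d^N$ of level $d$ and let $\tn{ord}_c$ denote the order of vanishing at $c$, computed with respect to the local parameter there (the leading exponent of the Puiseux expansion in $q_h$, $h$ the width of $c$, exactly as in the proof of Lemma \ref{lemma: the order of vanishing of eta}). Orders of vanishing are additive on products, and $g_{\bf r}^{24}=\prod_{\delta\mid N}\Delta_\delta^{r_\delta}$, so
\begin{equation*}
\tn{ord}_c(g_{\bf r})=\tfrac{1}{24}\tn{ord}_c(g_{\bf r}^{24})=\tfrac{1}{24}\sum_{\delta\mid N} r_\delta\cdot \tn{ord}_c(\Delta_\delta)=\sum_{\delta\mid N}\frac{a_N(d,\delta)}{24}\,r_\delta ,
\end{equation*}
where the last equality is Lemma \ref{lemma: the order of vanishing of eta}. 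In particular this quantity depends only on the level $d$ of $c$ and not on $x$.

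Finally, every cusp of $X_0(N)$ has a well-defined level $d\mid N$, and by definition $(P_d)$ is the sum, each with multiplicity one, of all cusps of level $d$. Summing the local contributions just computed over all cusps therefore yields
\begin{equation*}
\tn{div}(g_{\bf r})=\sum_{d\mid N}\left(\sum_{\delta\mid N}\frac{a_N(d,\delta)}{24}\,r_\delta\right)\cdot (P_d),
\end{equation*}
which is the claim. There is no genuine obstacle here; the only points requiring a word of care are the interpretation of $\tn{div}(g_{\bf r})$ as a $\Q$-divisor for non-integral ${\bf r}$ (handled by the $M$th-power reduction) and the fact that the order of vanishing at a cusp of level $d$ is independent of the chosen representative $x$, which is immediate from the shape of $a_N(d,\delta)$ in Lemma \ref{lemma: the order of vanishing of eta}.
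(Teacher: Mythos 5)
Your proof is correct and takes essentially the same route as the paper, which states this lemma as an immediate consequence of Lemma \ref{lemma: the order of vanishing of eta} (citing Ligozat, Prop.\ 3.2.8) without further argument; your $24$th-power reduction to $\Delta_\delta$ plus additivity of orders of vanishing is exactly the intended justification. The remarks on interpreting $\tn{div}(g_{\bf r})$ as a $\Q$-divisor and on independence of the representative $x$ are sensible but do not constitute a different approach.
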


If $g_{\bf r}$ is a modular function on $X_0(N)$, then its order of vanishing at any cusp is an integer and the degree of its divisor is zero. Thus, we have
$\Lambda(N) \times {\bf r} \in \cS_2(N)^0$. It turns out that such properties are almost enough for an eta quotient to be a modular function.

\begin{proposition}[Ligozat]\label{proposition: criteria for rational modular unit}
Let ${\bf r}=\sum_{\delta \mid N} r_\delta \cdot {\bf e}(N)_\delta \in \cS_1(N)_\Q$ and 
$g_{\bf r}=\prod_{\delta \mid N} \eta_\delta^{r_\delta}$. Then $g_{\bf r}$ is a modular function on $X_0(N)$ if and only if all the following conditions are satisfied:
\begin{enumerate}
\setcounter{enumi}{-1}
\item
all $r_\delta$ are rational integers, i.e., ${\bf r} \in \cS_1(N)$.
\item
$\sum_{\delta \mid N} r_\delta \cdot \delta \equiv 0 \pmod {24}$.
\item
$\sum_{\delta \mid N} r_\delta \cdot (N/\delta) \equiv 0 \pmod {24}$.
\item
$\sum_{\delta \mid N} r_\delta =0$.
\item
$\prod_{\delta \mid N} \delta^{r_\delta}$ is the square of a rational number.
\end{enumerate}
\end{proposition}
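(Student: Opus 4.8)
I would follow Ligozat's original approach, reducing everything to the transformation behaviour of $\eta$ under $\SL_2(\Z)$ together with an analysis of the resulting multiplier system on $\Gamma_0(N)$. Since $\eta$ is holomorphic and nowhere vanishing on the simply connected domain $\cH$, each $\eta_\delta^{r_\delta}$ — defined via a global branch of $\log\eta_\delta$ — is a single-valued holomorphic nowhere-vanishing function on $\cH$, hence so is $g_{\bf r}$; and each $\eta_\delta^{r_\delta}$, being a power of $\eta_\delta$, has a Puiseux expansion at every cusp, so $g_{\bf r}$ is meromorphic at a cusp exactly when its order of vanishing there is an integer. The plan is then: (a) compute the automorphy factor of $g_{\bf r}$ under $\Gamma_0(N)$; (b) decide when it is trivial; (c) match the outcome with conditions~(1)--(4), condition~(0) entering through~(c).

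For (a), fix $\gamma=\mat a b c d\in\Gamma_0(N)$, so $N\mid c$ and hence $\delta\mid c$ for every $\delta\mid N$. Writing $\gamma_\delta:=\mat a{b\delta}{c/\delta}d\in\SL_2(\Z)$ one has $\delta\cdot(\gamma\tau)=\gamma_\delta\cdot(\delta\tau)$, and applying the transformation law of $\eta$ recalled in Section~\ref{section: eta quotients} factor by factor gives
\[
g_{\bf r}(\gamma\tau)=\Bigl(\textstyle\prod_{\delta\mid N}\epsilon(\gamma_\delta)^{r_\delta}\Bigr)(c\tau+d)^{\frac12\sum_{\delta\mid N}r_\delta}\,g_{\bf r}(\tau),
\]
where $\epsilon(\gamma_\delta)$ is the $24$th root of unity attached to $\gamma_\delta$ as in Section~\ref{section: eta quotients}. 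Thus $g_{\bf r}$ is a modular function on $X_0(N)$ if and only if the weight $\tfrac12\sum_{\delta\mid N}r_\delta$ is zero — condition~(3) — the character $v_{\bf r}\colon\gamma\mapsto\prod_{\delta\mid N}\epsilon(\gamma_\delta)^{r_\delta}$ is trivial on $\Gamma_0(N)$ (it is a genuine homomorphism once the weight vanishes), and $g_{\bf r}$ has integral order at every cusp, i.e.\ $\Lambda(N)\times{\bf r}\in\cS_2(N)$ (equivalently $\cS_2(N)^0$, by Lemma~\ref{lemma: the divisor of eta quotient}).

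For (b), the substantial step, insert the explicit Dedekind formula for $\epsilon$ — a Jacobi-symbol factor in the lower-right entry times $\exp\!\bigl(\pi i(\tfrac{a'+d'}{12c'}-s(d',c')-\tfrac14)\bigr)$ with $s(\cdot,\cdot)$ the Dedekind sum — and simplify $\prod_{\delta\mid N}\epsilon(\gamma_\delta)^{r_\delta}$ using $\sum_{\delta\mid N}r_\delta=0$, $\gcd(c,d)=1$ and Dedekind reciprocity. Writing $S:=\prod_{\delta\mid N}\delta^{r_\delta}$, the outcome is $v_{\bf r}(\gamma)=\leg{S}{d}\cdot\zeta_{\bf r}(\gamma)$, where $\zeta_{\bf r}(\gamma)$ is a root of unity determined by $b$, $c$ and by the residues of $\sum_{\delta\mid N}r_\delta\delta$ and $\sum_{\delta\mid N}r_\delta(N/\delta)$ modulo $24$ (up to the usual adjustments of the symbol when $N$ is even). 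Hence $v_{\bf r}\equiv1$ on $\Gamma_0(N)$ if and only if $\zeta_{\bf r}\equiv1$ — which, testing on $\mat1101$ and on its Fricke conjugate $\mat10{-N}1$ (where $r_\delta$ is replaced by $r_{N/\delta}$), amounts exactly to conditions~(1) and~(2) — and $\leg{S}{d}=1$ for every $d$ prime to $N$, which, as $S$ is supported on primes dividing $N$, holds iff $S$ is the square of a rational, i.e.\ condition~(4). Condition~(0) is what gives $S\in\Q^\times$ a meaning; in the ``only if'' direction it, together with~(1) and~(2), is recovered from the integrality of $\tn{div}(g_{\bf r})$ and condition~(3) by a congruence computation with the matrix of Lemma~\ref{lemma: the order of vanishing of eta} (in the presence of~(0) and~(3), $\Lambda(N)\times{\bf r}\in\cS_2(N)$ is equivalent to~(1) and~(2)). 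Assembling both directions is then formal: a modular function has weight $0$, trivial multiplier and integral cusp orders; conversely, given~(0)--(4), each $\eta_\delta^{r_\delta}$ is single-valued, holomorphic and nonvanishing on $\cH$ and meromorphic at every cusp, and the automorphy factor is identically~$1$, so $g_{\bf r}$ descends to a modular function on $X_0(N)$.

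The one genuinely delicate step is (b): the explicit evaluation of $v_{\bf r}(\gamma)=\prod_{\delta\mid N}\epsilon(\gamma_\delta)^{r_\delta}$ and the proof that its triviality on \emph{all} of $\Gamma_0(N)$ is governed by exactly these four conditions and not an a priori larger system. This is Dedekind-sum bookkeeping — the reciprocity law, the relevant values $s(d,c)$ for $c\mid N$, and the behaviour of the Jacobi--Kronecker symbol at the prime $2$ — organized so that the two translation-type generators detect the ``exponential'' part (the asymmetry $r_\delta\leftrightarrow r_{N/\delta}$ between $\mat1101$ and $\mat10{-N}1$ producing the two separate congruences~(1) and~(2)), while the Jacobi symbol detects the ``quadratic'' part~(4). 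Everything else — single-valuedness on $\cH$, meromorphy at cusps, the homomorphism property of $v_{\bf r}$, and the integrality congruences needed in the ``if'' direction — is routine.
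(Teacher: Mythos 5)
The overall architecture (compute the automorphy factor, show the weight is $\tfrac12\sum r_\delta$, extract conditions (1)--(2) from the translation generator and its Fricke conjugate, and condition (4) from the Jacobi--symbol part of the multiplier) is essentially Ligozat's and matches what the paper does, modulo the paper outsourcing the Dedekind-sum bookkeeping to Newman's reduction to matrices with $\gcd(a,6)=1$. That part of your proposal is fine.

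The genuine gap is your treatment of condition (0) in the ``only if'' direction. You claim that ${\bf r}\in\cS_1(N)$ is ``recovered from the integrality of $\tn{div}(g_{\bf r})$ and condition (3) by a congruence computation with the matrix of Lemma \ref{lemma: the order of vanishing of eta}.'' This cannot work: integrality of the divisor says only that $\Lambda(N)\times{\bf r}\in\cS_2(N)$, and since $\Lambda(N)^{-1}=\tfrac{24}{\kappa(N)}\Upsilon(N)$ is not an integral matrix, the lattice $\Lambda(N)^{-1}\cS_2(N)$ strictly contains $\cS_1(N)$ in general; a rational vector ${\bf r}$ with denominators dividing $\kappa(N)/24$ can have integral divisor at every cusp without having integral entries. (Your parenthetical already presupposes (0) to relate $\Lambda(N)\times{\bf r}\in\cS_2(N)$ to (1) and (2), so it cannot be used to prove (0).) Moreover, your computation of the multiplier $v_{\bf r}(\gamma)=\prod_\delta\epsilon(\gamma_\delta)^{r_\delta}$ as a product of roots of unity is only meaningful once the $r_\delta$ are known to be integers, so step (b) cannot supply (0) either. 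The paper closes this gap by a different, analytic argument: multiply $g_{\bf r}$ by a high power of $\Delta$ to get a cusp form of weight $12k$ with rational coefficients, invoke bounded denominators (\cite[Th. 3.52]{Shi71}) to scale it into $\Z[[q]]$, and then apply the theorem of Rouse--Webb (\cite[Th. 7]{RW15}) that a nonzero element of $\Z[[q]]$ which is nonvanishing on $\cH$ and modular is an honest eta quotient with integer exponents, whence $r_\delta\in\Z$ by comparing exponents. Some input of this kind (bounded denominators plus an integrality statement for eta quotients, as in Ligozat, Kubert--Lang, or Rouse--Webb) is unavoidable, and it is missing from your argument.
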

\begin{proof}
The proof can be obtained from \cite[Sec. 3.2]{Li75}. For the sake of the readers, we explain this proof in detail.

First, for any $\gamma=\mat a b {Nc} d \in \Gamma_0(N)$ and $r_\delta \in \Z$, we have
\begin{equation*}
g_{\bf r}(\gamma \cdot z)=\varepsilon(\gamma) \cdot (Ncz+d)^{\frac{1}{2}\sum r_\delta} \cdot g_{\bf r}(z),
\end{equation*}
where $\varepsilon(\gamma)$ is a certain root of unity depending on $\gamma$ and ${\bf r}$. 
Indeed, by the same idea used in (\ref{equation: aU=Ua}) and (\ref{equation: aU=Ua 2}), we obtain this formula, which will not be done here.
In this case, the argument is much simpler because we may use the equality $\mat {\delta} 0 0 1 \mat a b {Nc} d = \mat a {b\delta} {Nc\delta^{-1}} {d} \mat {\delta} 0 0 1$  instead of (\ref{equation: aU=Ua}).
See page 29 of \textit{op. cit.} for more detail.

Suppose that ${\bf r}$ satisfies all the conditions above. 
To prove that $g_{\bf r}$ is a modular function on $X_0(N)$, it suffices to show that $\varepsilon(\gamma)=1$ for any $\gamma \in \Gamma_0(N)$.
In fact, by the argument on \cite[pg. 374]{Ne59}, it suffices to show that $\varepsilon(\gamma)=1$ for the matrices $\gamma=\mat a b {Nc} d \in \Gamma_0(N)$ satisfying $\gcd(a, 6)=1$, $a>0$ and $c>0$. By direct computation, we easily have 
$\varepsilon(\gamma)=1$ for such $\gamma$. (For more detail, see \cite{Ne57}.)

Conversely, suppose that $g_{\bf r}$ is a modular function on $X_0(N)$. Then the order of vanishing of $g_{\bf r}$ at a cusp must be an integer. By Lemma \ref{lemma: the divisor of eta quotient}, for any divisor $d$ of $N$, we get $\frac{1}{24}\sum_{\delta\mid N} a_N(d, \delta) \cdot r_\delta \in \Z$. By direct computation, $a_N(N, \delta)=\delta$ and $a_N(1, \delta)=N/{\delta}$, so conditions (1) and (2) are satisfied. Also, since the degree of $\tn{div}(g_{\bf r})$ is zero, condition (3) is fulfilled. 
Furthermore, by the transformation property of $g_{\bf r}$, we must have $\prod_{\delta \mid N} \leg {\delta}{a}^{r_\delta}=1$ for any $a>0$ with $\gcd(a, 6N)=1$. In particular, for any prime $p$ not dividing $6N$, we have $\leg x p=1$, where $x=\prod_{\delta \mid N} \delta^{r_\delta}$. This only holds when $x$ is a square (cf. Lemme on \cite[pg. 32]{Li75}), and hence condition (4) is satisfied. 
Thus, it suffices to show that $r_\delta \in \Z$ for all divisors $\delta$ of $N$. 
Note that since $\sum_{\delta \mid N} r_\delta =0$, $g_{\bf r}$ is a power series in $q$ with rational coefficients.
As on \cite[pg. 39]{Li75}, we easily have $r_\delta \in \Z$ for all $\delta$ if $g_{\bf r} \in \Z[[q]]$ (cf. \cite[Lem. 19]{RW15}). 
Instead, we follow the argument on \cite[pg. 458]{Og74}. Since the order of vanishing of $\Delta$ at a cusp $\bect x d^N$ of level $d$ is $\frac{N}{d\cdot \gcd(d, N/d)}$ by Lemma \ref{lemma: the order of vanishing of eta}, a function $g'=g_{\bf r} \cdot \Delta^{k}$ vanishes at all cusps for a sufficiently large integer $k$.
Thus, $g'$ is a cusp form of weight $12k$ for $\Gamma_0(N)$ with rational Fourier coefficients.
Since these coefficients have bounded denominators by \cite[Th. 3.52]{Shi71}, there is a non-zero integer $b$ such that $g''=b \cdot g' \in \Z[[q]]$. Since $g''$ does not vanish on $\cH$, we can write $g''=c \prod_{\delta \mid N} \eta_\delta^{s_\delta}$ for some $c \in \Z$ and $s_\delta \in \Z$ by \cite[Th. 7]{RW15}. Since $g''\neq 0$, we have $c\neq 0$ and hence $r_\delta \in \Z$.\footnote{For a generalization of such an argument, see Theorem 4.2 of \cite[Ch. 4]{KL} and 
the proof of the claim (d) on \cite[pg. 421]{WY20}.} (Note that $r_1=s_1-12k$ and $r_\delta=s_\delta$ for any $\delta \in \cD_N^0$.)
\end{proof}

\begin{theorem}\label{theorem: rational modular units}
Suppose that $F$ is a modular unit on $X_0(N)$ such that its order of vanishing at a cusp $\bect x d^N$ of level $d$ does not depend on $x$. Then there is a constant $\epsilon \in \C^\times$ and an eta quotient $g_{\bf r}$ of level $N$ such that $F=\epsilon\cdot g_{\bf r}$.
\end{theorem}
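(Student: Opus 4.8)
The plan is to run, on $F$ itself, the integrality argument of Ogg and Ligozat that already appears in the proof of Proposition~\ref{proposition: criteria for rational modular unit}. First I would record the shape of the divisor: by hypothesis the order of vanishing of $F$ along the cusps of a fixed level $d$ is a single integer $m_d$, and since the cusps of level $d$ form one orbit of $\Gal(\ov{\Q}/\Q)$ by Theorem~\ref{theorem: Galois action on cusps}, this gives $\tn{div}(F)=\sum_{d\mid N}m_d\cdot(P_d)$, a divisor fixed by $\Gal(\ov{\Q}/\Q)$ (and of degree $0$, since $F$ is a function). Note that, in contrast to the forward direction of Proposition~\ref{proposition: criteria for rational modular unit}, no linear algebra involving $\Lambda(N)$ will be needed for this approach.

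The first substantive step, and the place where the $x$-independence hypothesis is genuinely used, is to normalize $F$ so that it has rational Fourier coefficients at $\infty$. Here I would observe that $\tn{div}(F)$ is supported on cusps, which are $\ov{\Q}$-rational points of the curve $X_0(N)/\Q$, and that $X_0(N)$ carries the rational point $(\infty)$; since a divisor supported on $\ov{\Q}$-points that is principal over $\C$ is already principal over $\ov{\Q}$, we obtain $\tn{div}(F)=\tn{div}(G)$ for some $G\in\ov{\Q}(X_0(N))^{\times}$, whose $q$-expansion lies in a number field. Then $F/G$ has empty divisor on the compact Riemann surface $X_0(N)_{\C}$, hence is a nonzero constant, so $F=c_0G$. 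Galois-stability of $\tn{div}(G)=\tn{div}(F)$ forces $G^{\sigma}/G$ to be a constant $\chi(\sigma)\in\ov{\Q}^{\times}$ for each $\sigma\in\Gal(\ov{\Q}/\Q)$; the cocycle $\chi$ is a coboundary by Hilbert's Theorem~90, so dividing $G$ by a suitable algebraic number produces a $\Gal(\ov{\Q}/\Q)$-invariant function, lying in $\Q(X_0(N))$ and therefore having rational Fourier coefficients. Since the conclusion tolerates an overall nonzero constant, I may replace $F$ by this function and assume henceforth that $F$ has rational Fourier coefficients.

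The second step is then Ogg's argument, applied exactly as in the proof of Proposition~\ref{proposition: criteria for rational modular unit}. By Lemma~\ref{lemma: the order of vanishing of eta}, $\Delta=\eta_1^{24}$ vanishes at every cusp of level $d$ to the positive order $\tfrac{N}{d\cdot\gcd(d,N/d)}$, so for $k$ large enough the function $g':=F\cdot\Delta^{k}$ is holomorphic on $\cH$ and vanishes at every cusp, i.e.\ it is a cusp form of weight $12k$ for $\Gamma_0(N)$ with rational Fourier coefficients. By \cite[Th.~3.52]{Shi71} these coefficients have bounded denominators, so $g'':=b\cdot g'\in\Z[[q]]$ for some positive integer $b$; and $g''$ is a nonzero holomorphic modular form on $\Gamma_0(N)$ that does not vanish on $\cH$ (neither $F$ nor $\Delta$ does), so by \cite[Th.~7]{RW15} we may write $g''=c\prod_{\delta\mid N}\eta_{\delta}^{s_{\delta}}$ with $s_{\delta}\in\Z$ and $c\in\Z\setminus\{0\}$.

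Finally I would divide out the power of $\Delta$: using $\Delta=\eta_1^{24}$,
\[
F=\frac{c}{b}\,\Delta^{-k}\prod_{\delta\mid N}\eta_{\delta}^{s_{\delta}}
=\frac{c}{b}\,\eta_1^{\,s_1-24k}\prod_{1\neq\delta\mid N}\eta_{\delta}^{s_{\delta}}
=\epsilon\cdot g_{\bf r},
\]
where $\epsilon:=c/b\in\C^{\times}$ and ${\bf r}:=(s_1-24k)\cdot{\bf e}(N)_1+\sum_{1\neq\delta\mid N}s_{\delta}\cdot{\bf e}(N)_{\delta}\in\cS_1(N)$ has integer entries; thus $g_{\bf r}$ is an eta quotient of level $N$ and $F=\epsilon\cdot g_{\bf r}$, after reinstating the harmless constant removed during the normalization. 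I expect the main obstacle to be precisely that normalization step — establishing that a modular unit with $x$-independent cusp orders can be rescaled to have rational Fourier coefficients; once that is in hand, everything else is the Ogg--Shimura--Rouse--Webb package already invoked in the excerpt.
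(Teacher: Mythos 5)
Your argument is correct, but it takes a genuinely different route from the one in the text. The paper's proof is pure linear algebra: since $\Lambda(N)$ is invertible (Lemma \ref{lemma: invertible lambda}), one sets ${\bf r}=\Lambda(N)^{-1}\times \Phi_N(\tn{div}(F))\in\cS_1(N)_\Q$, observes that the generalized eta quotient $g_{\bf r}$ has the same divisor as $F$, concludes $F=\epsilon\cdot g_{\bf r}$, and then integrality of ${\bf r}$ is exactly condition (0) of Proposition \ref{proposition: criteria for rational modular unit} applied to the modular function $g_{\bf r}$. The Ogg--Shimura--Rouse--Webb chain you run (multiply by $\Delta^k$, bounded denominators from \cite[Th. 3.52]{Shi71}, then \cite[Th. 7]{RW15}) is precisely the argument carried out inside the ``conversely'' half of that Proposition's proof, so the two proofs ultimately rest on the same engine; the real difference is where the rationality of the Fourier coefficients comes from. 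For $g_{\bf r}$ it is automatic (a product of binomial series with rational exponents), so the paper needs no arithmetic input at that stage, whereas you apply the engine to $F$ itself and must first normalize $F$ to have rational $q$-expansion. That normalization costs you three extra standard facts: the injectivity of $J_0(N)(\ov{\Q})\to J_0(N)(\C)$, needed to pass from principality over $\C$ to principality over $\ov{\Q}$ of the Galois-stable cuspidal divisor; Hilbert's Theorem 90, where you should note that the cocycle $\sigma\mapsto G^\sigma/G$ is continuous because $G$ is already defined over a finite extension of $\Q$; and the compatibility of the canonical $\Q$-model of $X_0(N)$ with $q$-expansions at $\infty$. All three are legitimate, so your proof stands; it is simply heavier on arithmetic and lighter on linear algebra than the paper's, and it has the mild virtue of not invoking the matrix $\Upsilon(N)$ at all.
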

\begin{proof}
This basically follows from the fact that the matrix $\Lambda(N)$ is invertible (\cite[Lem. 3.2.9]{Li75} or Lemma \ref{lemma: invertible lambda} below).
By our assumption, we can write
\begin{equation*}
\tn{div}(F)=\textstyle\sum_{d\mid N} a_d \cdot (P_d) ~~\text{ for some } a_d \in \Z.
\end{equation*}
Let ${\bf r}=\sum_{\delta \mid N} r_\delta \cdot {\bf e}(N)_\delta=\Lambda(N)^{-1} \times (\sum_{d\mid N}a_d  \cdot {\bf e}(N)_d)$. 
Note that \textit{a priori} we only have $r_\delta \in \Q$, and so $g_{\bf r}$ might not be an eta quotient. 
Since
\begin{equation*}
\textstyle\Lambda(N) \times \left(\sum_{\delta \mid N} r_\delta \cdot {\bf e}(N)_\delta\right)=\Lambda(N) \times \Lambda(N)^{-1} \times \left(\sum_{d\mid N}a_d  \cdot {\bf e}(N)_d \right)=\sum_{d\mid N}a_d  \cdot {\bf e}(N)_d,
\end{equation*}
we have $\tn{div}(g_{\bf r})=\sum_{d\mid N} a_d \cdot (P_d)=\tn{div}(F)$. Thus, there is a constant $\epsilon\in \C^\times$ such that $F=\epsilon\cdot g_{\bf r}$. Since $F$ is a modular function on $X_0(N)$, so is $g_{\bf r}$. Therefore we have $r_\delta \in \Z$ by Proposition \ref{proposition: criteria for rational modular unit}, and so $g_{\bf r}$ is indeed an eta quotient of level $N$.
\end{proof}

\ms
\subsection{The matrix $\Upsilon(N)$}\label{section: matrix upsilon}
For a prime $p$ and a positive integer $r$, we define a tridiagonal matrix
$\Upsilon(p^r)$  (indexed by the divisors of $p^r$) by
\small
\begin{equation*}
\arraycolsep=4.5pt\def\arraystretch{1.2}
\begin{split}
\Upsilon(p^r):=
\left(\begin{array}{cccccccc}
p & -p & &&& \\
-1 & p^2+1 & &&& & \\
& -p &  & & & & \\ 
&  &  \ddots & & & & \\ 
& &  & -p^{m(f)}& & & \\ 
 &   & & p^{m(f)-1}(p^2+1)&  &   & \leftarrow \\
 &    & &-p^{m(f)} &&&  \\
 &  & & & \ddots & &  \\
  &  & & & & -p &  \\
 &   & & &&p^2+1&-1 \\
 &   & & \uparrow&&-p& p\\
\end{array}\right)  \begin{array}{c}
\tn{ $p^f$-th row for }\\
1\leq f \leq r-1
\end{array}
\\ 
p^f\text{-th column for }~~1\leq f \leq r-1, \phantom{aaaaaaaaaaaaaaaaaaaaaaaa}
\end{split}
\end{equation*} \normalsize
where $m(f)=\min(f, \, r-f)$. In other words, we have
\begin{equation*}
\Upsilon(p^r)_{p^i p^j}:=\begin{cases}
p & \text{ if } ~~ i=j=0 ~~\text{ or }~~ r, \\
p^{m(j)-1}(p^2+1) & \text{ if }~~1\leq i=j \leq r-1,\\
-p^{m(j)} & \text{ if }~~ |i-j|=1,\\
0 & \text{ if }~~ |i-j|\geq 2.
\end{cases}
\end{equation*}

\vspace{2mm}
If we write $N=\prod_{i=1}^t p_i^{r_i}$, then we define a matrix $\Upsilon(N)$ (indexed by the divisors of $N$) by
\begin{equation*}
\Upsilon(N):=\motimes_{i=1}^t \Upsilon(p_i^{r_i}).
\end{equation*}
In other words, if $\delta=\prod_{i=1}^t p_i^{e_i}$ and $d=\prod_{i=1}^t p_i^{f_i}$, then we have
\begin{equation*}
\Upsilon(N)_{\delta d}=\prod_{i=1}^t \Upsilon(p_i^{r_i})_{p_i^{e_i} p_i^{f_i}}.
\end{equation*}

From now on, we regard $\Upsilon(N)$ as a linear map from $\cS_2(N)_\Q$ to $\cS_1(N)_\Q$ by our identifications $\cS_k(N)_\Q = \motimes_{i=1}^t \cS_k(p_i^{r_i})_\Q$ (cf. Remark \ref{remark: decomposition of RCD}).

\begin{lemma}\label{lemma: invertible lambda}
For any positive integer $N>1$, we have
\begin{equation*}
\Upsilon(N) \times \Lambda(N) = \Lambda(N) \times \Upsilon(N)=\frac{\kappa(N)}{24}\times \tn{Id}_{\sigma_0(N)},
\end{equation*}
where $\tn{Id}_n$ is the identity matrix of size $n$. In particular, $\Lambda(N)$ is invertible.
\end{lemma}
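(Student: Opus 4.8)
The plan is to reduce the identity to the case of a prime power by multiplicativity, and then to verify a short tridiagonal matrix identity directly. Write $N=\prod_{i=1}^t p_i^{r_i}$. By definition $\Upsilon(N)=\motimes_{i=1}^t\Upsilon(p_i^{r_i})$ under the identification $\cS_k(N)_\Q=\motimes_i\cS_k(p_i^{r_i})_\Q$ of Remark~\ref{remark: decomposition of RCD}, and the analogue holds for $\Lambda$ up to a scalar: since $\tfrac{N}{\gcd(d,N/d)}$, $\gcd(d,\delta)^2$, $d$ and $\delta$ are multiplicative in the evident sense, one gets $a_N(d,\delta)=\prod_i a_{p_i^{r_i}}(p_i^{f_i},p_i^{e_i})$ for $d=\prod p_i^{f_i}$, $\delta=\prod p_i^{e_i}$, whence $\Lambda(N)=24^{\,t-1}\motimes_i \Lambda(p_i^{r_i})$. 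Since $\kappa$ is multiplicative with $\kappa(p^r)=p^{r-1}(p^2-1)$, and $(\motimes_i A_i)(\motimes_i B_i)=\motimes_i (A_iB_i)$, I would deduce $\Upsilon(N)\times\Lambda(N)=\tfrac{\kappa(N)}{24}\tn{Id}$ from the prime-power case; moreover, once that is known, $\Lambda(N)$ has a one-sided inverse $\tfrac{24}{\kappa(N)}\Upsilon(N)$, hence is invertible, and $\Lambda(N)\times\Upsilon(N)$ equals the same scalar matrix automatically. Thus it suffices to prove $\Upsilon(p^r)\times\Lambda(p^r)=\tfrac{p^{r-1}(p^2-1)}{24}\tn{Id}_{r+1}$ for every $r\ge 1$.

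For the prime-power identity I would set $m(d)=\min(d,r-d)$ and $L_{d\delta}:=24\,\Lambda(p^r)_{d\delta}=a_{p^r}(p^d,p^\delta)=p^{\,r-m(d)+2\min(d,\delta)-d-\delta}$. The key point is that the powers of $p$ built into $\Upsilon(p^r)$ are exactly those needed to cancel the factor $p^{-m(d)}$ in $L$: the off-diagonal entry $\Upsilon(p^r)_{p^ip^k}$ with $|i-k|=1$ equals $-p^{m(k)}$ (note it is the \emph{column} index $k$ that matters), and it multiplies the row-$k$ entry $L_{k,j}$, so $-p^{m(k)}L_{k,j}=-p^{\,r+2\min(k,j)-k-j}$; likewise the diagonal entry $p^{m(i)-1}(p^2+1)$ for $1\le i\le r-1$ times $L_{i,j}$ equals $\tfrac{p^2+1}{p}\,p^{\,r+2\min(i,j)-i-j}$, and at the corners $\Upsilon_{00}=\Upsilon_{rr}=p$ contributes $p\cdot p^{\,r+2\min(i,j)-i-j}$. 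Hence for $1\le i\le r-1$,
\begin{equation*}
24\,(\Upsilon(p^r)\Lambda(p^r))_{ij}=p^{\,r-i-j}\Bigl(-p^{\,1+2\min(i-1,j)}+\tfrac{p^2+1}{p}\,p^{\,2\min(i,j)}-p^{\,-1+2\min(i+1,j)}\Bigr),
\end{equation*}
with the obvious two-term expressions at $i=0$ and $i=r$. I would then finish by splitting into the cases $j<i$, $j=i$, $j>i$: in the first and last all three minima line up and the bracket telescopes to $0$, whereas for $j=i$ it collapses to $p^{\,r-1}(p^2-1)=\kappa(p^r)$; the corner rows $i=0,r$ are checked identically, once more yielding $0$ off the diagonal and $\kappa(p^r)$ on it. This proves $\Upsilon(p^r)\times\Lambda(p^r)=\tfrac{\kappa(p^r)}{24}\tn{Id}_{r+1}$ and, with the reduction above, the lemma.

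I do not expect a serious obstacle. The reduction step is formal, and the prime-power computation becomes routine once one notices the cancellation of the $p^{m(\cdot)}$ factors; the only thing to watch is keeping straight which index governs each entry of the tridiagonal matrix $\Upsilon(p^r)$ and handling the two corner rows separately, where the diagonal entry is $p$ rather than $p^{m(i)-1}(p^2+1)$. After that it is a finite, explicit check with a handful of cases.
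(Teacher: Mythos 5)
Your proposal is correct and follows essentially the same route as the paper: both reduce to the prime-power case via the multiplicativity $a_N(dp^f,\delta p^g)=a_M(d,\delta)\,a_{p^r}(p^f,p^g)$, i.e.\ the tensor decomposition $24\Lambda(N)=\motimes_i 24\Lambda(p_i^{r_i})$ matching $\Upsilon(N)=\motimes_i\Upsilon(p_i^{r_i})$, and then verify $\Upsilon(p^r)\times 24\Lambda(p^r)=\kappa(p^r)\,\tn{Id}_{r+1}$ directly. The only difference is that you carry out the tridiagonal computation explicitly (correctly, including the corner rows), where the paper simply appeals to ``direct computation'' or to \cite[Prop.~3]{Li97}.
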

\begin{proof}
For any integer $N$, let $\Lambda(N)'=24\times \Lambda(N)$. It suffices to prove that
\begin{equation*}
\Upsilon(N) \times \Lambda(N)' = \Lambda(N)' \times \Upsilon(N) = \kappa(N) \times \tn{Id}_{\sigma_0(N)}.
\end{equation*}

First, suppose that $N=p^r$ is a prime power. Then by direct computation (or by \cite[Prop. 3]{Li97}), the result follows.

Next, let $N=Mp^r$ with $\gcd(M, p)=1$ and $r\geq 1$. Suppose that
\begin{equation*}
\Upsilon(M) \times \Lambda(M)' = \Lambda(M)' \times \Upsilon(M)=\kappa(M)\times \tn{Id}_{\sigma_0(M)}.
\end{equation*}
Let $d$ and $\delta$ be two divisors of $M$, and $0\leq f, \, g \leq r$. 
By direct computation, we have
\begin{equation*}
a_N(dp^f, \delta p^g)=a_M(d, \delta) \times a_{p^r}(p^f, p^g),
\end{equation*}
and so $\Lambda(N)'=\Lambda(M)'\motimes \Lambda(p^r)'$. Since $\Upsilon(N)=\Upsilon(M) \motimes \Upsilon(p^r)$ by definition, we have 
\begin{equation*} 
\begin{split}
\Upsilon(N) \times \Lambda(N)'&=(\Upsilon(M)\times \Lambda(M)')\motimes (\Upsilon(p^r) \times \Lambda(p^r)')\\
&=(\kappa(M) \times \tn{Id}_{\sigma_0(M)})\motimes (\kappa(p^r) \times \tn{Id}_{r+1})=\kappa(N)\times \tn{Id}_{\sigma_0(N)}.
\end{split}
\end{equation*}
Similarly, we have $\Lambda(N)' \times \Upsilon(N)=\kappa(N) \times \tn{Id}_{\sigma_0(N)}$.

By induction, the result follows.
\end{proof}

\begin{lemma}\label{lemma: entries of Upsilon matrix}
For a divisor $d$ of $N$, let 
\begin{equation*}
\textstyle {\bf r}(d)=\sum_{\delta \mid N} {\bf r}(d)_\delta \cdot {\bf e}(N)_\delta :=\Upsilon(N) \times {\bf e}(N)_d \in \cS_1(N)
\end{equation*}
be the $d$-th column of the matrix $\Upsilon(N)$ and let $z=\gcd(d, N/d)$. Then we have the following.
\begin{enumerate}
\item
We have 
\begin{equation*}
\sum_{\delta\mid N} {\bf r}(d)_\delta =\varphi(z) \times \prod_{p\mid N}(p-1)=\frac{z}{\rad(z)} \times \prod_{p \mid N} (p-1)^{a(p)},
\end{equation*}
where $a(p)=2$ if $p$ divides $z$, and $a(p)=1$ otherwise.
\item
We have
\begin{equation*}
\sum_{\delta \mid N} {\bf r}(d)_\delta \cdot \delta = \begin{cases}
\kappa(N) & \text{ if } ~~ d=N,\\
0 & \text{ otherwise}.
\end{cases}
\end{equation*}
\item
We have
\begin{equation*}
\sum_{\delta \mid N} {\bf r}(d)_\delta \cdot (N/\delta)=\begin{cases}
\kappa(N) & \text{ if } ~~d=1,\\
0 & \text{ otherwise}.
\end{cases}
\end{equation*}
\item
We have
\begin{equation*}
\gcd({\bf r}(d)_{\delta}: \delta \mid N)= \frac{z}{\rad(z)}.
\end{equation*}
\end{enumerate}
\end{lemma}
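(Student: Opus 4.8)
The plan is to reduce all four assertions to the case of a prime power by exploiting the tensor factorization $\Upsilon(N)=\motimes_{i=1}^{t}\Upsilon(p_i^{r_i})$. Writing $N=\prod_{i=1}^{t}p_i^{r_i}$ and $d=\prod_{i=1}^{t}p_i^{f_i}$, the $d$th column ${\bf r}(d)$ of $\Upsilon(N)$ is the tensor product $\motimes_{i=1}^{t}{\bf r}(p_i^{f_i})$ of the $p_i^{f_i}$th columns of the tridiagonal factors, i.e.\ ${\bf r}(d)_\delta=\prod_{i=1}^{t}\Upsilon(p_i^{r_i})_{p_i^{e_i}p_i^{f_i}}$ for $\delta=\prod_i p_i^{e_i}$. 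Hence for any completely multiplicative weight $w(\delta)=\prod_i w_i(p_i^{e_i})$ one has $\sum_{\delta\mid N}{\bf r}(d)_\delta\,w(\delta)=\prod_{i=1}^{t}\big(\sum_{e=0}^{r_i}\Upsilon(p_i^{r_i})_{p_i^{e}p_i^{f_i}}\,w_i(p_i^{e})\big)$, and I will feed this the weights $w(\delta)=1$, $w(\delta)=\delta$ and $w(\delta)=N/\delta$ for parts (1), (2) and (3). For part (4) I will use that $\gcd$ is multiplicative over tensor products of integral vectors, which one sees $\ell$-adically: $v_\ell\big(\gcd_{j,k}a_jb_k\big)=\min_j v_\ell(a_j)+\min_k v_\ell(b_k)$, so $\gcd({\bf r}(d)_\delta:\delta\mid N)=\prod_{i=1}^{t}\gcd({\bf r}(p_i^{f_i})_{p_i^{e}}:0\le e\le r_i)$. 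Since $z=\gcd(d,N/d)=\prod_i p_i^{\min(f_i,\,r_i-f_i)}$ and $\varphi$, $\kappa$ and $z\mapsto z/\rad(z)$ are all multiplicative, it then suffices to verify each identity for one prime power.

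So fix $N=p^r$ with $r\ge1$ and $d=p^f$; put $m=\min(f,\,r-f)$, so $z=p^m$. The column ${\bf r}(p^f)$ of $\Upsilon(p^r)$ has at most three nonzero entries: it is $(p,-1,0,\dots,0)^{\mathsf T}$ if $f=0$, it is $(0,\dots,0,-1,p)^{\mathsf T}$ if $f=r$, and for $1\le f\le r-1$ it carries $-p^{m}$ in the rows indexed by $p^{f-1}$ and $p^{f+1}$ and $p^{m-1}(p^2+1)$ in the row indexed by $p^f$. From here each part is a short direct computation. For (1) the column sums are $p-1$, $p-1$ and $p^{m-1}(p^2-2p+1)=p^{m-1}(p-1)^2$, which in each case equals $\varphi(p^m)(p-1)$. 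For (2), $\sum_e\Upsilon(p^r)_{p^{e}p^{f}}\,p^{e}$ is $p-p=0$ when $f=0$, is $-p^{r-1}+p^{r+1}=\kappa(p^r)$ when $f=r$, and collapses to $p^{f+m-1}\big(-1+(p^2+1)-p^2\big)=0$ when $0<f<r$; for (3) the mirror computation with weight $p^{r-e}$ gives $\kappa(p^r)$ when $f=0$ and $0$ otherwise. For (4) the gcd of the entries is $1$ when $f=0$ or $f=r$, and when $0<f<r$ it equals $p^{m-1}\gcd(p,\,p^2+1)=p^{m-1}$ because $\gcd(p,\,p^2+1)=1$; in every case it is $z/\rad(z)$.

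Reassembling via the multiplicativity above finishes the proof: the column sum is $\prod_i\varphi(p_i^{m_i})(p_i-1)=\varphi(z)\prod_{p\mid N}(p-1)$, and this equals $\tfrac{z}{\rad(z)}\prod_{p\mid N}(p-1)^{a(p)}$ because $\varphi(z)=\tfrac{z}{\rad(z)}\prod_{p\mid z}(p-1)$ while each prime dividing $N$ but not $z$ contributes a single extra factor $p-1$; the $\delta$-weighted (resp.\ $(N/\delta)$-weighted) sum vanishes unless every local factor is the nonvanishing one, i.e.\ unless $d=N$ (resp.\ $d=1$), in which case it equals $\prod_i\kappa(p_i^{r_i})=\kappa(N)$; and the gcd of the column is $\prod_i p_i^{\max(m_i-1,\,0)}=z/\rad(z)$. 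As a shortcut, parts (2) and (3) can also be obtained without passing through the prime-power case: by Lemma \ref{lemma: the order of vanishing of eta} one has $a_N(N,\delta)=\delta$ and $a_N(1,\delta)=N/\delta$, so the rows of $24\,\Lambda(N)$ indexed by $N$ and by $1$ are $(\delta)_{\delta\mid N}$ and $(N/\delta)_{\delta\mid N}$, and the two sums in (2) and (3) are then the $(N,d)$ and $(1,d)$ entries of $24\,\Lambda(N)\times\Upsilon(N)=\kappa(N)\cdot\tn{Id}_{\sigma_0(N)}$, which is Lemma \ref{lemma: invertible lambda}. I do not anticipate a genuine obstacle here; the whole content is the elementary tridiagonal bookkeeping, and the only point that needs a moment's care is the multiplicativity of $\gcd$ over tensor products used for part (4).
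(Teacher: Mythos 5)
Your proposal is correct and follows essentially the same route as the paper: factor $\Upsilon(N)$ as a tensor product over the prime powers dividing $N$, observe that all four quantities are multiplicative, and verify each identity by direct computation on the tridiagonal matrix $\Upsilon(p^r)$; your prime-power computations all check out. The extra touches (the $\ell$-adic justification of the multiplicativity of the gcd over tensor products, and the derivation of (2) and (3) from $\Lambda(N)\times\Upsilon(N)=\frac{\kappa(N)}{24}\,\tn{Id}$) are points the paper either asserts without proof or relegates to the remark following the lemma.
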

\begin{proof}
Let $N=\prod_{i=1}^t p_i^{r_i}$ and $d=\prod_{i=1}^t p_i^{f_i}$. Since ${\bf e}(N)_d=\motimes_{i=1}^t {\bf e}(p_i^{r_i})_{p_i^{f_i}}$, we have
\begin{equation*}
{\bf r}(d)=\motimes_{i=1}^t \left( \Upsilon(p_i^{r_i}) \times {\bf e}(p_i^{r_i})_{p_i^{f_i}} \right).
\end{equation*}
Since the constant function, the identity function, the reciprocal function and the greatest common divisor function are multiplicative, both sides are multiplicative. Thus, it suffices to check the formulas when $N=p^r$ with $r\geq 1$. By direct computation, we have
\begin{equation*}
\begin{split}
\sum_{i=0}^r \Upsilon(p^r)_{p^i p^f}&=\varphi(\gcd(p^f, \, p^{r-f}))\times (p-1),\\
\sum_{i=0}^r \Upsilon(p^r)_{p^i p^f} \times p^i &=\begin{cases}
p^{r-1}(p^2-1)\phantom{1} & \text { if }~~ f=r, \\
0 & \text { if }~~ f <r,
\end{cases}\\
\sum_{i=0}^r \Upsilon(p^r)_{p^i p^f} \times p^{r-i}&=\begin{cases}
p^{r-1}(p^2-1)\phantom{1} & \text{ if }~~ f=0, \\
0 & \text{ if }~~ f>0,
\end{cases}\\
\gcd(\Upsilon(p^r)_{p^i p^f} : 0\leq i \leq r)&=\begin{cases}
p^{m(f)-1} & \text{ if }~~ m(f)\geq 1,\\
1\phantom{p^{r-1}(p^2-1)} & \text{ if }~~ m(f)=0.
\end{cases}
\end{split}
\end{equation*}
This completes the proof.
\end{proof}
\begin{remark}
The second and the third equalities easily follow from Lemma \ref{lemma: invertible lambda} because the $N$-th (resp. first) row of $24\times \Lambda(N)$ is $\sum_{d \mid N} d \cdot {\bf e}(N)_d$ (resp. $\sum_{d \mid N} (N/d) \cdot {\bf e}(N)_d$).
\end{remark}

\ms
\subsection{Algorithm for computing the order}\label{section: algorithm}
In this subsection, we elaborate Ligozat's method and provide a simple algorithm for computing the order of a degree $0$ rational cuspidal divisor $C$ on $X_0(N)$.  We first construct a generalized eta quotient $g({\bf r}(C))$ as follows:
\begin{equation*}
\xyv{1.5}
\xymatrix{
\Qdiv N \ar[r]^-{\Phi_N} & \cS_2(N)^0 \ar[r]^-{\Lambda(N)^{-1}} & \cS_1(N)_\Q \ar[r]^-{g} & \cE \\
C=\sum a_d \cdot (P_d) \arin \ar@{|->}[r] & \sum a_d \cdot {\bf e}(N)_d \ar@{|->}[r] \arin & {\bf r}(C)=\sum r_\delta \cdot {\bf e}(N)_\delta \ar@{|->}[r] \arin & g({\bf r}(C)), \arin
}
\end{equation*}
where $\cE$ is the set of generalized eta quotients of level $N$, 
\begin{equation*}
{\bf r}(C):=\Lambda(N)^{-1} \times \Phi_N(C) \qa g({\bf r}(C)):=g_{{\bf r}(C)}=\prod_{\delta\mid N} \eta_\delta^{r_\delta}.
\end{equation*}

The following is well-known (cf. \cite[pg. 36]{Li97}).
\begin{proposition}\label{proposition: computation of the order}
For a rational cuspidal divisor $C$ on $X_0(N)$, the order of $C$ is the smallest positive integer $n$ such that $g(n\cdot {\bf r}(C))$ is a modular function on $X_0(N)$, or equivalently $n\cdot {\bf r}(C)$ satisfies all the conditions in Proposition \ref{proposition: criteria for rational modular unit}.
\end{proposition}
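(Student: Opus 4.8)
The plan is to assemble the statement from three ingredients already in place: the theorem of Manin and Drinfeld (so the order of $C$ is a finite positive integer), Theorem~\ref{theorem: rational modular units} (a modular unit on $X_0(N)$ whose order of vanishing at each cusp depends only on the level of that cusp is, up to a multiplicative constant, an eta quotient of level $N$), and Lemma~\ref{lemma: invertible lambda} (the matrix $\Lambda(N)$ is invertible). The bookkeeping fact that makes everything line up is this: since $\mathbf{r}(C)=\Lambda(N)^{-1}\times\Phi_N(C)$, we have $\Lambda(N)\times(n\cdot\mathbf{r}(C))=n\cdot\Phi_N(C)$ for every positive integer $n$; consequently, as soon as one knows $n\cdot\mathbf{r}(C)\in\cS_1(N)$ (i.e.\ it is integral), Lemma~\ref{lemma: the divisor of eta quotient} yields $\tn{div}(g(n\cdot\mathbf{r}(C)))=n\cdot C$ as divisors on $X_0(N)$.

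First I would handle the easy direction. Suppose $g(n\cdot\mathbf{r}(C))$ is a modular function on $X_0(N)$. Then condition $(0)$ of Proposition~\ref{proposition: criteria for rational modular unit} forces $n\cdot\mathbf{r}(C)\in\cS_1(N)$, so the bookkeeping identity applies and $n\cdot C=\tn{div}(g(n\cdot\mathbf{r}(C)))$ is principal; hence the order of $C$ divides $n$. This already shows the order of $C$ is at most the smallest $n$ with the stated property.

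Next I would do the converse. Let $n$ be the order of $C$, which is a finite positive integer by Manin--Drinfeld, and choose a meromorphic function $F$ on $X_0(N)$ with $\tn{div}(F)=n\cdot C$. Since $n\cdot C$ is supported on cusps, $F$ is a modular unit; and since $n\cdot C$ is a rational cuspidal divisor, Lemma~\ref{lemma: group of rational cuspidal divisors} shows it lies in $\br{(P_d):d\in\cD_N}$, so the multiplicity of $n\cdot C$ at a cusp $\bect x d^N$ of level $d$ does not depend on $x$. Theorem~\ref{theorem: rational modular units} then produces $\epsilon\in\C^\times$ and $\mathbf{r}\in\cS_1(N)$ with $F=\epsilon\cdot g_{\mathbf{r}}$, so $\tn{div}(g_{\mathbf{r}})=n\cdot C$. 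Applying Lemma~\ref{lemma: the divisor of eta quotient} gives $\Lambda(N)\times\mathbf{r}=n\cdot\Phi_N(C)=\Lambda(N)\times(n\cdot\mathbf{r}(C))$, and since $\Lambda(N)$ is invertible (Lemma~\ref{lemma: invertible lambda}) this forces $\mathbf{r}=n\cdot\mathbf{r}(C)$, whence $g(n\cdot\mathbf{r}(C))=g_{\mathbf{r}}$ is a modular function on $X_0(N)$. Together with the easy direction, $n$ is exactly the smallest positive integer with this property. The ``equivalently'' clause is then just Proposition~\ref{proposition: criteria for rational modular unit} applied to the vector $n\cdot\mathbf{r}(C)$.

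I do not expect a genuine obstacle here: the substantive work is concentrated in Theorem~\ref{theorem: rational modular units} and Lemma~\ref{lemma: invertible lambda}, both already available. The two places that need a little care are (a) checking that the vanishing orders of $F$ depend only on the levels of cusps, which is precisely what licenses the use of Theorem~\ref{theorem: rational modular units}, and (b) not invoking Lemma~\ref{lemma: the divisor of eta quotient} or Proposition~\ref{proposition: criteria for rational modular unit} as a statement about honest divisors before one knows $n\cdot\mathbf{r}(C)$ is integral, since a priori $\mathbf{r}(C)$ lies only in $\cS_1(N)_\Q$.
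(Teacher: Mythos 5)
Your proof is correct. The easy direction matches the paper exactly. For the converse, the paper is slightly more direct: it observes that $\tn{div}(g(k\cdot{\bf r}(C)))=k\cdot C$ already holds for the \emph{generalized} eta quotient via Lemma \ref{lemma: the divisor of eta quotient} (which is stated for ${\bf r}\in\cS_1(N)_\Q$, no integrality needed), so $F$ and $g(k\cdot{\bf r}(C))$ have equal divisors, differ by a constant $\epsilon\in\C^\times$, and hence $g(k\cdot{\bf r}(C))$ is itself a modular function; minimality then gives $n\leq k$. You instead route through Theorem \ref{theorem: rational modular units} to write $F=\epsilon\cdot g_{\bf r}$ with ${\bf r}\in\cS_1(N)$ and then use the invertibility of $\Lambda(N)$ to identify ${\bf r}=n\cdot{\bf r}(C)$. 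The two arguments have the same mathematical content (Theorem \ref{theorem: rational modular units} is itself proved by the $\Lambda(N)^{-1}$ computation), but your version has the small advantage of only ever comparing $F$ with an honest eta quotient, whereas the paper's one-line ``same divisor, hence constant ratio'' step is applied to a generalized eta quotient and is slightly more informal. Your caveats (a) and (b) are exactly the right points to watch, and both are handled correctly.
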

\begin{proof}
Let $k$ be the order of $C$. Then by the definition of $k$, there is a modular function $F$ on $X_0(N)$ such that $k\cdot C=\tn{div}(F)$. Since the divisor of $g({\bf r}(C))$ is $C$ by Lemma \ref{lemma: the divisor of eta quotient}, the divisors of $F$ and $g(k\cdot {\bf r}(C))$ are equal. Therefore there is a constant $\epsilon \in \C^\times$ such that $F=\epsilon\cdot g(k\cdot {\bf r}(C))$, and so $g(k\cdot {\bf r}(C))$ is also a modular function on $X_0(N)$.
By the minimal property of $n$, we have $n\leq k$. 

Conversely, by the definition of $n$, $g(n\cdot {\bf r}(C))$ is a modular function on $X_0(N)$. Thus, by the minimal property of $k$, we have $k\leq n$ because the divisor of $g(n\cdot {\bf r}(C))$ is $n\cdot C$. This completes the proof.
\end{proof}

The following is crucial in our method.
\begin{corollary}\label{corollary: order 1 criterion}
Let $X \in \Qdiv N$ be a degree $0$ rational cuspidal divisor on $X_0(N)$. For a prime $p$, let
\begin{equation*}
\rpw_p(X):=\textstyle\sum_{\tn{val}_p(\delta)\not\in 2\Z} {\bf r}(X)_\delta,
\end{equation*} 
where the sum runs over the divisors of $N$ whose $p$-adic valuations are odd. Then the following are equivalent.
\begin{enumerate}
\item
The order of $X$ is $1$, or equivalently $\ov{X}=0 \in J_0(N)$.
\item
${\bf r}(X) \in \cS_1(N)$ and $\rpw_p(X) \in 2\Z$ for all primes $p$.
\end{enumerate}
\end{corollary}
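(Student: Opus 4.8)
The plan is to read this off Proposition~\ref{proposition: computation of the order}, which says that the order of $X$ is $1$ if and only if ${\bf r}(X)$ already satisfies all the conditions $(0)$--$(4)$ of Proposition~\ref{proposition: criteria for rational modular unit}. So I would show three things: for \emph{every} $X \in \Qdiv N$, Ligozat's conditions $(1)$, $(2)$, $(3)$ hold automatically; condition $(0)$ is by definition the statement ``${\bf r}(X) \in \cS_1(N)$''; and, granted $(0)$, condition $(4)$ is equivalent to ``$\rpw_p(X) \in 2\Z$ for all primes $p$''. Given these, the conjunction of $(0)$--$(4)$ collapses to the conjunction of $(0)$ and $(4)$, which is precisely statement $(2)$ of the corollary.

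For the automatic conditions, write $X = \sum_{d \mid N} a_d \cdot (P_d)$ with $a_d \in \Z$, so that $\Phi_N(X) = \sum_{d\mid N} a_d\,{\bf e}(N)_d \in \cS_2(N)^0$, i.e. $\sum_{d\mid N} a_d\,\varphi(\gcd(d, N/d)) = 0$. Lemma~\ref{lemma: invertible lambda} gives $\Lambda(N)^{-1} = \tfrac{24}{\kappa(N)}\Upsilon(N)$, hence ${\bf r}(X) = \tfrac{24}{\kappa(N)}\sum_{d\mid N} a_d\,{\bf r}(d)$, where ${\bf r}(d) = \Upsilon(N)\times {\bf e}(N)_d$ is the $d$th column of $\Upsilon(N)$ as in Lemma~\ref{lemma: entries of Upsilon matrix}. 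Then part $(1)$ of that lemma gives $\sum_{\delta\mid N}{\bf r}(X)_\delta = \tfrac{24}{\kappa(N)}\big(\prod_{p\mid N}(p-1)\big)\sum_{d\mid N} a_d\,\varphi(\gcd(d,N/d)) = 0$, which is Ligozat's $(3)$; part $(2)$ gives $\sum_{\delta\mid N}{\bf r}(X)_\delta\cdot\delta = 24\,a_N \equiv 0 \pmod{24}$, which is $(1)$; and part $(3)$ gives $\sum_{\delta\mid N}{\bf r}(X)_\delta\cdot(N/\delta) = 24\,a_1 \equiv 0 \pmod{24}$, which is $(2)$.

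It then remains to analyze condition $(4)$ under the hypothesis ${\bf r}(X) = \sum_{\delta\mid N} r_\delta\,{\bf e}(N)_\delta \in \cS_1(N)$. Write $\prod_{\delta\mid N}\delta^{r_\delta} = \prod_{p\mid N} p^{\,e_p}$ with $e_p := \sum_{\delta\mid N}\tn{val}_p(\delta)\,r_\delta \in \Z$; a positive rational number is the square of a rational number precisely when each exponent in its prime factorization is even, so $(4)$ amounts to $e_p \in 2\Z$ for all $p$. Modulo $2$, every divisor $\delta$ with $\tn{val}_p(\delta)$ even (in particular those with $\tn{val}_p(\delta)=0$) contributes $0$, while each $\delta$ with $\tn{val}_p(\delta)$ odd contributes $r_\delta$; thus $e_p \equiv \sum_{\tn{val}_p(\delta)\not\in 2\Z} r_\delta = \rpw_p(X) \pmod 2$. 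Hence $(4)$ holds if and only if $\rpw_p(X)\in 2\Z$ for every prime $p$, which completes the argument.

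I do not expect a genuine obstacle here: the whole statement is a bookkeeping consequence of Propositions~\ref{proposition: criteria for rational modular unit} and~\ref{proposition: computation of the order} together with the column identities of $\Upsilon(N)$ in Lemma~\ref{lemma: entries of Upsilon matrix}. The only spots that need mild care are recognizing that Ligozat's conditions $(1)$--$(3)$ are forced in this situation (so the substantive content reduces to integrality plus the square condition), and the elementary $2$-adic step showing that the even-valuation divisors drop out, so that $e_p \equiv \rpw_p(X)\pmod 2$.
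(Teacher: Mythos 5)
Your proposal is correct and follows essentially the same route as the paper: reduce to Proposition~\ref{proposition: computation of the order}, observe that Ligozat's conditions $(1)$--$(3)$ are automatic for ${\bf r}(X)$ (the paper deduces this from $a_N, a_1 \in \Z$ and $\deg X = 0$ via the proof of Proposition~\ref{proposition: criteria for rational modular unit}, while you make the same facts explicit through the column identities of Lemma~\ref{lemma: entries of Upsilon matrix}), and then check that condition $(4)$ reduces mod $2$ to $\rpw_p(X) \in 2\Z$ exactly as in the paper. No gaps.
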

\begin{proof}
Let $X=\sum_{d\mid N} a_d \cdot (P_d) \in \Qdiv N$. Note that conditions (1), (2) and (3) for ${\bf r}(X)$ follow from $a_N\in \Z$, $a_1 \in \Z$ and the degree of $X$ is $0$, respectively. (For instance, see the proof of Proposition \ref{proposition: criteria for rational modular unit} above.) Thus, conditions (1), (2) and (3) for $n \cdot {\bf r}(X)$ are always satisfied for any integer $n\geq 1$.
Note also that the normalized $p$-adic valuation of the product in condition (4) is $\sum_{\delta\mid N} \tn{val}_p(\delta)\cdot {\bf r}(X)_\delta$. Since
\begin{equation*}
\textstyle\sum_{\tn{val}_p(\delta)\not\in 2\Z} {\bf r}(X)_\delta \equiv \sum_{\tn{val}_p(\delta)\not\in 2\Z} \tn{val}_p(\delta) \cdot {\bf r}(X)_\delta \equiv 
\sum_{\delta\mid N} \tn{val}_p(\delta)\cdot {\bf r}(X)_\delta \pmod 2,
\end{equation*}
the result easily follows by Proposition \ref{proposition: computation of the order}.
\end{proof}

In principle, we can compute the order of any rational cuspidal divisor by Proposition \ref{proposition: computation of the order} above. Since $\Upsilon(N)$ is a scalar multiple of $\Lambda(N)^{-1}$ and is an integral matrix, it is easy to compute various invariants in the following.
\begin{definition}\label{definition: bbV and Gcd and pw}
For $C=\sum_{d\mid N} a_d \cdot (P_d) \in \Qdivv N$, we define 
\begin{equation*}
V(C)=\textstyle\sum_{\delta \mid N} V(C)_\delta \cdot {\bf e}(N)_\delta:=\Upsilon(N) \times \Phi_N(C) \in \cS_1(N).
\end{equation*}
In other words, for any divisor $\delta$ of $N$, we have 
\begin{equation*}
V(C)_\delta:=\textstyle\sum_{d \mid N} \Upsilon(N)_{\delta d} \times a_d \in \Z.
\end{equation*}
Let $\Gcd(C):=\gcd(V(C)_\delta : \delta \mid N)$ be the greatest common divisor of the entries of $V(C)$ 
and let
\begin{equation*}
\bbV(C)=\textstyle \sum_{\delta\mid N} \bbV(C)_\delta \cdot {\bf e}(N)_\delta:=\Gcd(C)^{-1} \times V(C) \in \cS_1(N).
\end{equation*}
Furthermore, let 
\begin{equation*}
\pw_p(C):=\textstyle\sum_{\tn{val}_p(\delta)\not\in 2\Z} \bbV(C)_{\delta},
\end{equation*}
where the sum runs over the divisors of $N$ whose $p$-adic valuations are odd. Finally, let
\begin{equation*}
\fh(C):=\begin{cases}
1 & \text{ if }~~ \pw_p(C) \in 2\Z ~~\text{ for all primes } p,\\
2 & \text{ if } ~~\pw_p(C) \not\in 2\Z ~~\text{ for some prime } p.
\end{cases}
\end{equation*}
\end{definition}

The main theorem of this section is the following, which gives an easy algorithm for computing the order of any degree $0$ rational cuspidal divisor.  
\begin{theorem}\label{theorem: computation of order}
For any $C\in \Qdiv N$, the order of $C$ is
\begin{equation*}
\num\left(\frac{\kappa(N)\times \fh(C)}{24\times \Gcd(C)}\right)=\frac{\kappa(N)}{\gcd(24\times \Gcd(C) \times \fh(C)^{-1}, ~\kappa(N))}.
\end{equation*}
In particular, if the genus of $X_0(N)$ is positive, then the order of $C$ divides $\frac{\kappa(N)}{12}$.
\end{theorem}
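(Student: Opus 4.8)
The plan is to run everything through Proposition~\ref{proposition: computation of the order}: the order of $C$ is the least integer $n\geq 1$ for which $n\cdot{\bf r}(C)$ satisfies conditions (0)--(4) of Proposition~\ref{proposition: criteria for rational modular unit}. The first point is that conditions (1), (2) and (3) hold for \emph{every} such $n$. Indeed, by Lemma~\ref{lemma: invertible lambda} we have $\Lambda(N)^{-1}=\tfrac{24}{\kappa(N)}\Upsilon(N)$, so ${\bf r}(C)=\tfrac{24}{\kappa(N)}V(C)=\tfrac{24\,\Gcd(C)}{\kappa(N)}\bbV(C)$ with $\bbV(C)\in\cS_1(N)$ a primitive integral vector; and applying parts (1), (2), (3) of Lemma~\ref{lemma: entries of Upsilon matrix} to $C=\sum_d a_d(P_d)$ gives $\sum_\delta V(C)_\delta=\deg(C)\prod_{p\mid N}(p-1)=0$, $\sum_\delta V(C)_\delta\,\delta=\kappa(N)a_N$ and $\sum_\delta V(C)_\delta\,(N/\delta)=\kappa(N)a_1$. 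Dividing by $\kappa(N)/24$ we get $\sum_\delta{\bf r}(C)_\delta=0$ and $\sum_\delta{\bf r}(C)_\delta\,\delta,\ \sum_\delta{\bf r}(C)_\delta\,(N/\delta)\in 24\Z$, i.e.\ (1), (2), (3) hold for ${\bf r}(C)$ and hence for any multiple. (We may assume $C\neq 0$, the case $C=0$ being trivial; then $\Gcd(C)\geq 1$ since $\Upsilon(N)$ is invertible.) So only conditions (0) and (4) restrict $n$.

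Next I analyze conditions (0) and (4) for $n\cdot{\bf r}(C)=\tfrac{24n\,\Gcd(C)}{\kappa(N)}\bbV(C)$. As $\bbV(C)$ is primitive, condition (0) holds exactly when $m:=\tfrac{24n\,\Gcd(C)}{\kappa(N)}$ is an integer, equivalently when $\tfrac{\kappa(N)}{\gcd(\kappa(N),\,24\,\Gcd(C))}$ divides $n$. Assume (0); then $n\cdot{\bf r}(C)=m\cdot\bbV(C)$, so condition (4) says that $\prod_{\delta\mid N}\delta^{m\bbV(C)_\delta}$ is a rational square, equivalently $m\sum_\delta\tn{val}_p(\delta)\,\bbV(C)_\delta\in 2\Z$ for every prime $p$. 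Reducing modulo $2$ and discarding the terms with even $\tn{val}_p(\delta)$, the inner sum is congruent to $\sum_{\tn{val}_p(\delta)\not\in 2\Z}\bbV(C)_\delta=\pw_p(C)$, so (4) amounts to $m\cdot\pw_p(C)\in 2\Z$ for all primes $p$. Now split on $\fh(C)$: if $\fh(C)=1$ then every $\pw_p(C)$ is even, (4) holds whenever (0) does, and the order is the least $n$ achieving (0), namely $\kappa(N)/\gcd(\kappa(N),\,24\,\Gcd(C))$; if $\fh(C)=2$ then $\pw_{p_0}(C)$ is odd for some $p_0$, so (4) forces $m$ even, while conversely $m$ even gives (0) (as $m\in\Z$) and (4), so the order is the least $n$ with $m\in 2\Z$, namely $\kappa(N)/\gcd(\kappa(N),\,12\,\Gcd(C))$. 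In both cases the order equals $\kappa(N)/\gcd\bigl(24\,\Gcd(C)\,\fh(C)^{-1},\,\kappa(N)\bigr)$, and since for $h\in\{1,2\}$ one has $\num\bigl(\kappa(N)h/(24\,\Gcd(C))\bigr)=\kappa(N)/\gcd(24\,\Gcd(C)/h,\,\kappa(N))$, this is $\num\bigl(\kappa(N)\fh(C)/(24\,\Gcd(C))\bigr)$, giving the displayed formula.

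For the final assertion, suppose $X_0(N)$ has positive genus; it suffices to prove $12\mid\kappa(N)$, for then $12$ divides $\gcd\bigl(24\,\Gcd(C)\,\fh(C)^{-1},\,\kappa(N)\bigr)$ (note $12\mid 24\,\Gcd(C)\,\fh(C)^{-1}$ always, this being $24\,\Gcd(C)$ or $12\,\Gcd(C)$), so the order divides $\kappa(N)/12$. Using $\kappa(N)=\tfrac{N}{\rad(N)}\prod_{p\mid N}(p^2-1)$: if some prime $p\geq 5$ divides $N$ then $24\mid p^2-1\mid\kappa(N)$; if $N=2^a3^b$ with $a,b\geq 1$ then $\kappa(N)=2^{a-1}3^{b-1}\cdot 24$; if $N=2^a$ with $a\geq 3$ then $\kappa(N)=3\cdot 2^{a-1}$; if $N=3^b$ with $b\geq 2$ then $\kappa(N)=8\cdot 3^{b-1}$. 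In each case $12\mid\kappa(N)$, and the only integers $N$ left uncovered are $1,2,3,4$, for which $X_0(N)$ has genus $0$. Hence the hypothesis forces $12\mid\kappa(N)$, as needed.

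The argument is essentially bookkeeping once Proposition~\ref{proposition: computation of the order} and Lemmas~\ref{lemma: invertible lambda} and \ref{lemma: entries of Upsilon matrix} are in hand; the one place that needs care is the analysis of condition (4) --- recognizing that it depends on $n$ only through the integer $m$ and on $\bbV(C)$ only through the parities $\pw_p(C)$, which is exactly what makes it collapse into the clean dichotomy governed by $\fh(C)$ --- together with the easy but necessary check that conditions (1)--(3) impose nothing.
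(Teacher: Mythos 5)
Your proof is correct and follows essentially the same route as the paper's: both reduce to Ligozat's criterion via Proposition~\ref{proposition: computation of the order}, check that conditions (1)--(3) are automatic, and translate conditions (0) and (4) for $n\cdot{\bf r}(C)=m\cdot\bbV(C)$ into the divisibility constraints $m\in\Z$ and $m\cdot\pw_p(C)\in 2\Z$, which yields the dichotomy governed by $\fh(C)$. The only organizational difference is that you characterize the admissible $n$ directly (which incidentally treats the genus-zero levels uniformly), whereas the paper first bounds the order by $\kappa(N)/12$ using $2V(C)$ and therefore disposes of the genus-zero $N$ by a separate direct verification.
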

\begin{remark}\label{remark: kappa divisible by 24}
Since $\kappa(N)=N\prod_{p\mid N}(p-p^{-1})$ is multiplicative and $p^2-1$ is divisible by $24$ for any prime $p\geq 5$, it is easy to see that $\kappa(N)$ is divisible by $24$ if and only if $N \not\in \{1, 2, 3, 4, 8 \}$.
\end{remark}
\begin{proof}[Proof of Theorem \ref{theorem: computation of order}]
First, suppose that the genus of $X_0(N)$ is $0$, in which case 
\begin{equation*}
N \in \{1, 2, 3, 4, 5, 6, 7, 8, 9, 10, 12, 13, 16, 18, 25\}.
\end{equation*}
Then we can easily verify the formula, which we leave to the readers. (In fact, the order of any degree $0$ rational cuspidal divisor is $1$.) So we assume that the genus of $X_0(N)$ is positive, in which case $\frac{\kappa(N)}{24} \in \Z$ by the remark above. 

Next, let $C \in \Qdiv N$. Since $V(C) \in \cS_1(N)$, it is easy to check that $2V(C)$ satisfies conditions (0) and (4). Note that $2V(C)=\frac{\kappa(N)}{12} \cdot {\bf r}(C)$.
Since conditions (1), (2) and (3) for $n\cdot {\bf r}(C)$ are always satisfied for any integer $n\geq 1$ as in Corollary \ref{corollary: order 1 criterion}, the order of $C$ is a divisor of $\frac{\kappa(N)}{12}$ by Proposition \ref{proposition: computation of the order}. 
Accordingly, let $m$ be the largest divisor of $\frac{k(N)}{12}$ such that 
\begin{equation*}
{\bf r}(m):=\frac{\kappa(N)}{12m} \cdot {\bf r}(C)=\frac{2}{m} \cdot V(C)=k\cdot \bbV(C)
\end{equation*}
satisfies conditions (0) and (4), where $k=\frac{2\cdot \Gcd(C)}{m}$. Then the order of $C$ is $\frac{\kappa(N)}{12m}$.
Since the greatest common divisor of the entries of $\bbV(C)$ is $1$ and ${\bf r}(m) \in \cS_1(N)$, we have $k \in \Z$. Also, since ${\bf r}(m)=k\cdot \bbV(C)$, for any prime $p$ we have
\begin{equation*}
\textstyle k\cdot \pw_p(C)\equiv \sum_{\delta \mid N} {\bf r}(m)_\delta \cdot \tn{val}_p(\delta)\pmodo {2}.
\end{equation*}
Thus, $k\cdot \pw_p(C) \in 2\Z$ for all primes $p$ by condition (4), and hence we must have either $k\in 2\Z$ or $\pw_p(C) \in 2\Z$. So by definition, if $\fh(C)=1$ then there is no condition on $k$, and if $\fh(C)=2$ then there is a prime $p$ such that $\pw_p(C) \not\in 2\Z$, in which case $k$ must be even. 
Therefore $k$ must be divisible by $\fh(C)$ in both cases, and so $m$ is a divisor of $\frac{2\cdot \Gcd(C)}{\fh(C)}$. Since $m$ is given as a divisor of $\frac{\kappa(N)}{12}$, we have
\begin{equation*}
m=\gcd\left(\frac{\kappa(N)}{12}, \frac{2\cdot \Gcd(C)}{\fh(C)} \right).
\end{equation*}
This implies $12m=\gcd(\kappa(N), 24\cdot \Gcd(C) \cdot \fh(C)^{-1})$, and so the result follows.
\end{proof}

\ms
\subsection{Example I: The divisors defined by tensors}\label{section: Example I: The divisors defined by tensors}
Let $C \in \Qdiv N$ be a degree $0$ rational cuspidal divisor on $X_0(N)$. We say that $C$ \textit{is defined by tensors} if there are non-trivial divisors $N_1$ and $N_2$ of $N$ such that $N=N_1 N_2$, $\gcd(N_1, N_2)=1$ and $\Phi_N(C)=V_1 \motimes V_2$ for some $V_i \in \cS_2(N_i)$.
Let $C_i = \Phi_{N_i}^{-1}(V_i)$ so that $C_i \in \Qdivv {N_i}$. Since the degree of $C$ is the product of the degrees of $C_1$ and $C_2$, we further assume that the degree of $C_1$ is zero.
The main result of this section is the following.
\begin{theorem}\label{theorem: order defined by tensors}
Suppose that $C$ is defined by tensors and written as above. Then we have
\begin{equation*}
V(C) = V(C_1) \motimes V(C_2) \qa \bbV(C)=\bbV(C_1) \motimes \bbV(C_2).
\end{equation*}
Also, we have $\Gcd(C)=\Gcd(C_1)\cdot \Gcd(C_2)$ and 
\begin{equation*}
\pw_p(C)=\begin{cases}
\pw_p(C_1) \cdot \sum_{\delta_2 \mid N_2} \bbV(C_2)_{\delta_2} & \text{ if } ~~p \mid N_1, \\
~~~0 & \text{ otherwise}.
\end{cases}
\end{equation*}
\end{theorem}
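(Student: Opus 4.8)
The plan is to reduce the whole statement to the multiplicativity of the matrix $\Upsilon$ together with the mixed-product rule for Kronecker products, and then read off $\Gcd$ and $\pw_p$ coordinate-wise. First I would record that, writing $N_1=\prod_{i\in S_1}p_i^{r_i}$ and $N_2=\prod_{i\in S_2}p_i^{r_i}$ with $S_1\sqcup S_2=\{1,\dots,t\}$, the definition of $\Upsilon(N)$ as a tensor product over the prime divisors of $N$ (Section \ref{section: matrix upsilon}) and the identification $\cS_k(N)_\Q\simeq\cS_k(N_1)_\Q\motimes\cS_k(N_2)_\Q$ of Remark \ref{remark: decomposition of RCD} give $\Upsilon(N)=\Upsilon(N_1)\motimes\Upsilon(N_2)$. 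Since $\Phi_N(C)=V_1\motimes V_2=\Phi_{N_1}(C_1)\motimes\Phi_{N_2}(C_2)$, the mixed-product rule $(A\motimes B)(x\motimes y)=(Ax)\motimes(By)$ then yields
\begin{equation*}
V(C)=\Upsilon(N)\times\Phi_N(C)=\bp{\Upsilon(N_1)\times V_1}\motimes\bp{\Upsilon(N_2)\times V_2}=V(C_1)\motimes V(C_2).
\end{equation*}

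Next I would pass to greatest common divisors. The entries of $V(C_1)\motimes V(C_2)$ are exactly the products $V(C_1)_{\delta_1}V(C_2)_{\delta_2}$ over $\delta_1\mid N_1$ and $\delta_2\mid N_2$. Factoring out $\Gcd(C_1)$ and $\Gcd(C_2)$, the entries of $\bbV(C_1)$ (resp.\ of $\bbV(C_2)$) have trivial common factor, so for each prime one can pick indices at which neither $\bbV(C_1)_{\delta_1}$ nor $\bbV(C_2)_{\delta_2}$ is divisible by it; hence $\gcd(V(C_1)_{\delta_1}V(C_2)_{\delta_2}:\delta_1\mid N_1,\ \delta_2\mid N_2)=\Gcd(C_1)\Gcd(C_2)$, i.e.\ $\Gcd(C)=\Gcd(C_1)\Gcd(C_2)$. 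Dividing the displayed identity through by this number then gives $\bbV(C)=\bbV(C_1)\motimes\bbV(C_2)$, equivalently $\bbV(C)_{\delta_1\delta_2}=\bbV(C_1)_{\delta_1}\bbV(C_2)_{\delta_2}$.

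For $\pw_p(C)$ I would fix a prime $p$ and use the unique decomposition $\delta=\delta_1\delta_2$ of a divisor of $N=N_1N_2$. If $p\nmid N$, then $\tn{val}_p(\delta)=0$ for all $\delta\mid N$ and the defining sum is empty, so $\pw_p(C)=0$. If $p\mid N_1$, then $\tn{val}_p(\delta)=\tn{val}_p(\delta_1)$, and grouping the sum over $(\delta_1,\delta_2)$ factors it, giving $\pw_p(C)=\pw_p(C_1)\cdot\sum_{\delta_2\mid N_2}\bbV(C_2)_{\delta_2}$. The remaining subcase $p\mid N_2$, $p\nmid N_1$ is the one requiring the hypothesis $\deg C_1=0$: the same factorization gives $\pw_p(C)=\pw_p(C_2)\cdot\sum_{\delta_1\mid N_1}\bbV(C_1)_{\delta_1}$, and I would then show the second factor vanishes. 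Summing the $d_1$-th column identity of Lemma \ref{lemma: entries of Upsilon matrix}(1) against the coefficients of $C_1$ gives $\sum_{\delta_1\mid N_1}V(C_1)_{\delta_1}=\bp{\prod_{q\mid N_1}(q-1)}\cdot\deg(C_1)=0$, hence $\sum_{\delta_1\mid N_1}\bbV(C_1)_{\delta_1}=0$ and $\pw_p(C)=0$. Since ``$p\nmid N_1$'' breaks into exactly these two subcases, this settles the ``otherwise'' branch.

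The one genuine ingredient beyond tensor bookkeeping is the vanishing of $\sum_{\delta_1}\bbV(C_1)_{\delta_1}$ in the last step; this is where I expect to have to invoke an earlier result (Lemma \ref{lemma: entries of Upsilon matrix}) together with the standing hypothesis that $C_1$ has degree $0$. Everything else is a direct unwinding of definitions and the elementary fact that the gcd of a rectangular array of products factors as the product of the row- and column-gcds.
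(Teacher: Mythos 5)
Your proposal is correct and follows essentially the same route as the paper: the mixed-product rule applied to $\Upsilon(N)=\Upsilon(N_1)\motimes\Upsilon(N_2)$, the factorization of the gcd of a tensor product of integer vectors, and the splitting of the $\pw_p$ sum over $\delta=\delta_1\delta_2$, with the degree-zero hypothesis on $C_1$ forcing $\sum_{\delta_1\mid N_1}\bbV(C_1)_{\delta_1}=0$ via Lemma \ref{lemma: entries of Upsilon matrix}(1). The only cosmetic differences are that the paper proves the gcd factorization by a B\'ezout identity rather than your prime-by-prime argument, and asserts the vanishing of the entry sum of $V(C_1)$ directly from $\deg C_1=0$; both variants are sound.
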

\begin{proof}
By definition, we have
\begin{equation*}
\begin{split}
V(C)&=\Upsilon(N) \times \Phi_N(C) = (\Upsilon(N_1) \motimes \Upsilon(N_2)) \times (V_1 \motimes V_2)\\
&=(\Upsilon(N_1) \times V_1) \motimes (\Upsilon(N_2) \times V_2)=V(C_1) \motimes V(C_2).
\end{split}
\end{equation*}
Also, by the definition of the tensor product, the greatest common divisor of the entries of $v \otimes w$ is the product of the greatest common divisors of the entries of $v$ and $w$. Indeed, if we let $g$ (resp. $h$) be the greatest common divisor of the entries of $v$ (resp. $w$), then
all the entries of $v\otimes w$ are divisible by $gh$ and hence the greatest common divisor of the entries of $v\otimes w$ is a multiple of $gh$. Conversely, since there are integers $a_i$ and $b_j$ such that $g=\sum_i a_i v_i$ and $h= \sum_j b_j w_j$, where $v=(v_i)$ and $w=(w_j)$,
we have
\begin{equation*}
\textstyle  gh=(\sum_i a_i v_i)(\sum_j b_j w_j)=\sum_{i, j} a_i b_j (v_i w_j),
\end{equation*}
and so $gh$ is divisible by the greatest common divisor of the entries of $v\otimes w$. Thus, the first assertion follows.

Next, let $p$ be a prime divisor of $N_1$. For a divisor $\delta$ of $N$, let $\delta_i=\gcd(\delta, N_i)$. Then by definition, we have
\begin{equation*}
\begin{split}
\pw_p(C)&=\sum_{\tn{val}_p(\delta)\not\in 2\Z} \bbV(C)_\delta=\sum_{\tn{val}_p(\delta_1)\not\in 2\Z} \sum_{\delta_2 \mid N_2} \bbV(C_1)_{\delta_1} \cdot \bbV(C_2)_{\delta_2} \\
&=\sum_{\tn{val}_p(\delta_1)\not\in 2\Z} \bbV(C_1)_{\delta_1} \times \sum_{\delta_2 \mid N_2} \bbV(C_2)_{\delta_2}=\pw_p(C_1) \cdot \sum_{\delta_2 \mid N_2} \bbV(C_2)_{\delta_2}.
\end{split}
\end{equation*}
Similarly, if $p$ divides $N_2$, then we have $\pw_p(C)=\pw_p(C_2) \cdot \sum_{\delta_1 \mid N_1} \bbV(C_1)_{\delta_1}$. Since the degree of $C_1$ is zero, the sum of the entries of $V(C_1)$ is zero and hence the result follows.
\end{proof}

\begin{remark}
In the previous papers \cite{Yoo3, Yoo6}, the author computed the orders of certain rational cuspidal divisors, which are eigenvectors under the action of the Hecke operators. The secret to our success of the computations is that the rational cuspidal divisors in our consideration are defined by tensors, and hence we could use an inductive method based on Theorem \ref{theorem: order defined by tensors}.
The author did not know the work of Yazdani \cite{Yaz11} at the time of writing of the papers \cite{Yoo3, Yoo6}. After reading \cite{Yaz11}, the author decided to follow Yazdani's idea, which enormously simplifies our previous notation.
\end{remark}

\ms
\subsection{Example II: The order of the divisor $C_N$}\label{section: Example II: The order of CN}
In this subsection, we compute the order of $C_{N}$ for any positive integer $N$. 
\begin{theorem}\label{theorem: example order C_N}
Let $N$ be an integer greater than $1$. Then we have
\begin{equation*}
\Gcd(C_N)=\fg(N) \qa \fh(C_N) =\fh(N),
\end{equation*}
where $\fg(N)$ and $\fh(N)$ are defined as follows:
\begin{enumerate}
\item 
$\fg(1):=0$.
\item
$\fg(N)=\gcd(N+(-1)^{t-1}, \, p_i^2-1 : 1\leq i \leq t)$ if $N=\prod_{i=1}^t p_i$ is squarefree.
\item
$\fg(N):=\gcd(p, \, \fg(M))$ if $N=Mp^2$ with $\gcd(M, p)=1$ and $M$ squarefree.
\item
$\fg(N):=1$ otherwise.
\end{enumerate}
Also, $\fh(N)=2$ if one of the following holds, and $\fh(N)=1$ otherwise\footnote{This is a variant of $h$ in \cite[Th. 1.3]{Yoo3} and of $h(M, N, D)$ in \cite[Th. 4.3]{Yoo6}.}.
\begin{enumerate}
\item
$N=p$ for a prime $p$.
\item
$N=2^r$ for an odd integer $r$.
\item
$N=pq$ for two distinct odd primes $p$ and $q$ such that $\tn{val}_2(p-1)=\tn{val}_2(q-1)$ and $\tn{val}_2(p+1)=\tn{val}_2(q+1)$.
\item
$N=4p$ for a prime $p$ congruent $1$ modulo $4$.
\end{enumerate}
\end{theorem}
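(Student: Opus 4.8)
The plan is to reduce the statement to a direct computation with the matrix $\Upsilon(N)$. Since $\gcd(N, N/N) = \gcd(N,1) = 1$, we have $C_N = (P_1) - (P_N)$, hence $\Phi_N(C_N) = {\bf e}(N)_1 - {\bf e}(N)_N$ and $V(C_N) = \Upsilon(N) \times \Phi_N(C_N) = {\bf r}(1) - {\bf r}(N)$ is the difference of the first and last columns of $\Upsilon(N)$. Writing $N = \prod_{i=1}^t p_i^{r_i}$ and using $\Upsilon(N) = \motimes_{i=1}^t \Upsilon(p_i^{r_i})$ together with the fact that the two extreme columns of $\Upsilon(p^r)$ are $(p, -1, 0, \dots, 0)$ and $(0, \dots, 0, -1, p)$, I would first record closed formulas for the entries: writing $\omega(\cdot)$ for the number of distinct prime divisors, one has ${\bf r}(1)_\delta = (-1)^{\omega(\delta)} \rad(N)/\delta$ when $\delta$ is squarefree and $0$ otherwise, and ${\bf r}(N)_\delta = (-1)^{\omega(N/\delta)} \rad(N)/(N/\delta)$ when $N/\delta$ is squarefree and $0$ otherwise. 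This makes every entry of $V(C_N)$ explicit.

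Next I would compute $\Gcd(C_N)$ by a case division following the definition of $\fg$. If $N$ is squarefree, then $|V(C_N)_\delta| = |N/\delta - (-1)^t\delta|$ for every $\delta \mid N$; the value at $\delta = 1$ is $N + (-1)^{t-1}$, and the combination $p_j(N/(p_j\delta) - (-1)^t p_j\delta) - (N/\delta - (-1)^t\delta) = \pm\delta(p_j^2 - 1)$ specialized at $\delta = 1$ yields $p_j^2 - 1$, so $\Gcd(C_N) \mid \gcd(N + (-1)^{t-1}, \, p_i^2 - 1 : 1 \le i \le t)$; the reverse divisibility follows by working modulo $h := \fg(N)$, where $N \equiv (-1)^t$ and $p_i^2 \equiv 1$ force $N/\delta \equiv (-1)^t\delta$ for squarefree $\delta$. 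If $N = Mp^2$ with $M$ squarefree and $\gcd(M, p) = 1$, then $\Upsilon(N) = \Upsilon(M) \motimes \Upsilon(p^2)$; since the extreme columns of $\Upsilon(p^2)$ are $(p, -1, 0)$ and $(0, -1, p)$ with a common middle entry, the three slices of $V(C_N)$ along the exponent of $p$ are $p$ times the first column of $\Upsilon(M)$, then $-V(C_M)$, then $-p$ times the last column of $\Upsilon(M)$; as these extreme columns of $\Upsilon(M)$ are primitive by Lemma \ref{lemma: entries of Upsilon matrix}(4), this gives $\Gcd(C_N) = \gcd(p, \fg(M))$. In all remaining cases I would exhibit an entry of $V(C_N)$ equal to $\pm 1$: if some $r_i \ge 3$, then $N/\rad(N)$ is not squarefree, so $V(C_N)_{\rad(N)} = {\bf r}(1)_{\rad(N)} = (-1)^t$; and if all $r_i \le 2$ but $r_j = r_k = 2$ for some $j \ne k$, then $V(C_N)$ has two entries equal to $\pm p_j$ and $\pm p_k$, so that $\Gcd(C_N) = 1 = \fg(N)$.

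For $\fh(C_N)$ I would set $\bbV(C_N) = \Gcd(C_N)^{-1} \cdot V(C_N)$ and evaluate $\pw_q(C_N) = \sum_{\tn{val}_q(\delta) \not\in 2\Z} \bbV(C_N)_\delta$ for each prime $q \mid N$, testing its parity. The multiplicative identities $\sum_{\delta \mid L}(-1)^{\omega(\delta)} L/\delta = \prod_{q \mid L}(q-1)$ and $\sum_{\delta \mid L}(-1)^{\omega(\delta)}\delta = \prod_{q\mid L}(1-q)$ collapse these sums to closed forms; in the squarefree case one gets $\pw_{p_j}(C_N) = -(p_j+1)\prod_{i \ne j}(p_i - 1)/\fg(N)$, so $\fh(C_N) = 2$ exactly when $\tn{val}_2(\fg(N)) = \tn{val}_2(p_j+1) + \sum_{i\ne j}\tn{val}_2(p_i-1)$ for some $j$. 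This holds precisely for $N = p$ (where $\pw_p(C_p) = -1$) and for $N = pq$ with $\tn{val}_2(p \mp 1) = \tn{val}_2(q\mp 1)$, while the analogous closed forms in the $Mp^2$ case and in the remaining cases single out $N = 4\ell$ with $\ell \equiv 1 \pmod 4$ (via $\pw_\ell(C_N) = -(\ell+1)/2$) and $N = 2^r$ with $r$ odd (via $\pw_2(C_N) = -3$), respectively.

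The step I expect to be the main obstacle is the $2$-adic bookkeeping for $\fh$ — showing $\fh(C_N) = 1$ outside the four listed cases. The key case is $N$ odd and squarefree with three or more prime factors, where one must combine $\fg(N) \mid N + (-1)^{t-1}$ and $\fg(N) \mid p_i^2 - 1$ to bound $\tn{val}_2(\fg(N))$ against the valuation of $\pw_{p_j}(C_N) = -(p_j+1)\prod_{i\ne j}(p_i-1)/\fg(N)$: the equality $\tn{val}_2(\fg(N)) = \tn{val}_2(p_j+1) + \sum_{i\ne j}\tn{val}_2(p_i-1)$ forces $p_j \equiv 1$ and $p_k \equiv 3 \pmod 4$ for every $k \ne j$, which pushes $N$ into a residue class modulo $4$ with $\tn{val}_2(N + (-1)^{t-1}) = 1$ while the right-hand side is at least $3$, a contradiction. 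The even cases are more elementary (the modulus $N + (-1)^{t-1}$ is then odd), and the remaining non-squarefree cases are handled similarly. Together, these two computations feed into Theorem \ref{theorem: computation of order} to recover Ligozat's formula for the order of $C_N = (0) - (\infty)$.
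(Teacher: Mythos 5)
Your proposal is correct and follows essentially the same route as the paper: compute $V(C_N)={\bf r}(1)-{\bf r}(N)$ from the tensor structure of $\Upsilon(N)$, identify $\Gcd(C_N)$ case by case exactly along the definition of $\fg$, and then settle $\fh$ by $2$-adic analysis of $\pw_q(C_N)$, with the two key inputs (the identity $\gcd(pq-1,p-q)=2^a\gcd(p-1,q-1)\gcd(p+1,q+1)$ and the divisibility $2\fg(N)\mid (p_i+1)\prod_{j\neq i}(p_j-1)$ for $t\geq 3$) matching the paper's Lemmas \ref{lemma: squarefree t=2} and \ref{lemma: squarefree t>2}. The only differences are cosmetic: you start from a unified closed formula for the entries of ${\bf r}(1)$ and ${\bf r}(N)$ and exhibit explicit unit or coprime entries in the ``otherwise'' case, where the paper instead reduces to the $Mp^2$ computation.
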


By Theorem \ref{theorem: computation of order}, we have the following.
\begin{theorem}\label{theorem: 3.18}
Let $N$ be a positive integer, and let $n(N)$ be the order of $C_N$.
 If $N=p^r$ is a prime power, then we have
 \begin{equation*}
 \begin{array}{|c|c|c|c|c|} \hline 
 & r=1 & r=2 & p=2, ~r\geq 3 \tn{ is odd} & \tn{otherwise} \\  \hline
 n(N) & \frac{p-1}{\gcd(12, \, p-1)}   & \frac{p^2-1}{\gcd(24, \, p^2-1)} & 2^{r-3} & \frac{p^{r-1}(p^2-1)}{24} \\ \hline
 \end{array}
 \end{equation*}
If $N$ is not a prime power, then we have the following.
\begin{enumerate}
\item
For an odd prime $q$, we have $n(2q)=\frac{q^2-1}{8\cdot \gcd(3, \, q+1)}$.
If $N=pq$ for two distinct odd primes $p$ and $q$, then 
\begin{equation*}
n(N)=\frac{(p^2-1)(q^2-1)}{12\cdot \gcd(p-1, q-1)\cdot \gcd(p+1, q+1)}.
\end{equation*}
\item
If $N=\prod_{i=1}^t p_i$ be a squarefree integer with $t\geq 3$, then
\begin{equation*}
n(N)=\frac{\prod_{i=1}^t (p_i^2-1)}{24\cdot \gcd(N+(-1)^{t-1}, \, p_i^2-1 : 1\leq i \leq t)}.
\end{equation*}
\item
If $N=Mp^2$ for a squarefree integer $M$ not divisible by a prime $p$, then
\begin{equation*}
n(N)=\begin{cases}
\frac{\kappa(N)}{24p} & \text{ if $~p$ is odd and $p$ divides $\fg(M)$},\\
\frac{\kappa(N)}{48} & \text{ if $~p=2$ and $M$ is not a prime congruent to $1$ modulo $4$},\\
\frac{\kappa(N)}{24} & \text{ otherwise}.
\end{cases}
\end{equation*}

\item
If $N$ is not of the form considered above, i.e., either $N$ is divisible by $p^3$ or by $p^2 q^2$, then $n(N)=\frac{\kappa(N)}{24}$.
\end{enumerate}
\end{theorem}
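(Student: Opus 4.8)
The plan is to reduce Theorem~\ref{theorem: 3.18} to the two assertions $\Gcd(C_N)=\fg(N)$ and $\fh(C_N)=\fh(N)$ of Theorem~\ref{theorem: example order C_N}: granting these, Theorem~\ref{theorem: computation of order} gives $n(N)=\num\left(\frac{\kappa(N)\,\fh(N)}{24\,\fg(N)}\right)$, and the displayed table and cases then follow by evaluating this numerator, using Remark~\ref{remark: kappa divisible by 24} and the multiplicativity of $\kappa$. So the real content is Theorem~\ref{theorem: example order C_N}. Since $\gcd(N,N/N)=1$ we have $C_N=(P_1)-(P_N)$, hence $\Phi_N(C_N)={\bf e}(N)_1-{\bf e}(N)_N$ and
\[
V(C_N)=\Upsilon(N)\times\bigl({\bf e}(N)_1-{\bf e}(N)_N\bigr)={\bf r}(1)-{\bf r}(N),
\]
the difference of the first and last columns of $\Upsilon(N)$. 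From the tridiagonal shape of $\Upsilon(p^r)$ in Section~\ref{section: matrix upsilon}, its first column is supported on $\{1,p\}$ with entries $(p,-1)$ and its last on $\{p^{r-1},p^r\}$ with entries $(-1,p)$; together with $\Upsilon(N)=\motimes_{i=1}^t\Upsilon(p_i^{r_i})$ and the identifications of Remark~\ref{remark: decomposition of RCD} this determines $V(C_N)$ completely. Two shapes recur: for $N=\prod_{i=1}^t p_i$ squarefree, $V(C_N)_\delta=(-1)^{\omega(\delta)}\bigl(\tfrac{N}{\delta}-(-1)^t\delta\bigr)$ for all $\delta\mid N$ (here $\omega(\delta)$ is the number of prime divisors of $\delta$); and for $N=Mp^2$ with $M$ squarefree and $\gcd(M,p)=1$, the three ``$p$-slices'' $f=0,1,2$ of $V(C_N)$ are $p\cdot{\bf r}_M(1)$, $-V(C_M)$ and $-p\cdot{\bf r}_M(M)$, where ${\bf r}_M(1),{\bf r}_M(M)$ denote the first and last columns of $\Upsilon(M)$.

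For the gcd I would split into three cases. If $N$ is squarefree, write $g=\Gcd(C_N)$ and $h=\gcd\bigl(N+(-1)^{t-1},\,p_i^2-1:1\le i\le t\bigr)=\fg(N)$. Since $V(C_N)_1=N+(-1)^{t-1}\equiv\pm1\pmod{p_i}$, the integer $g$ is coprime to $N$, and the identity $p_j\,V(C_N)_{p_j}+V(C_N)_1=\pm(p_j^2-1)$ (read off from the shape above) gives $g\mid p_j^2-1$ for all $j$, so $g\mid h$. Conversely $N\equiv(-1)^t$ and $\delta^2\equiv1\pmod h$ for each squarefree $\delta\mid N$, whence $h\mid N-(-1)^t\delta^2=\pm\delta\,V(C_N)_\delta$; as $h$ is also coprime to $N$ this gives $h\mid V(C_N)_\delta$ for all $\delta$, i.e.\ $h\mid g$. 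Thus $\Gcd(C_N)=\fg(N)$. If $N=Mp^2$, then ${\bf r}_M(1)$ and ${\bf r}_M(M)$ have content $1$ by Lemma~\ref{lemma: entries of Upsilon matrix}(4), so the slices $f=0,2$ have content $p$ and the slice $f=1$ has content $\Gcd(C_M)=\fg(M)$; hence $\Gcd(C_N)=\gcd(p,\fg(M))=\fg(N)$. In the remaining case $\Gcd(C_N)=1$: if $p^3\mid N$, writing $N=p^rM$ with $r\ge3$ and $\gcd(M,p)=1$, the $f=1$ slice of $V(C_N)$ is $-{\bf r}_M(1)$, of content $1$; and if $p^2q^2\mid N$ with no prime cube dividing $N$, two of the mixed $(f_p,f_q)$-slices are (up to sign) $p\cdot{\bf r}_{M_0}(1)$ and $q\cdot{\bf r}_{M_0}(1)$ for the prime-to-$pq$ part $M_0$, forcing $\gcd(p,q)=1$. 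In all these cases $\fg(N)=1$ as well.

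For $\fh(C_N)$ I would pass to $\bbV(C_N)=\Gcd(C_N)^{-1}V(C_N)$ and compute the parities of $\pw_p(C_N)=\sum_{\tn{val}_p(\delta)\notin 2\Z}\bbV(C_N)_\delta$ (Definition~\ref{definition: bbV and Gcd and pw}). For $N=\prod_i p_i$ squarefree, summing over $\{\delta:p_j\mid\delta\}$ and collapsing $\prod_{i\ne j}(1\pm p_i)$ gives the closed formula
\[
\pw_{p_j}(C_N)=-\,\frac{(p_j+1)\prod_{i\ne j}(p_i-1)}{\fg(N)}.
\]
If $N$ is even then $\fg(N)$ is odd, so every $\pw_{p_j}(C_N)$ is even once $t\ge2$; if $t\ge3$, a short $2$-adic pigeonhole argument (the $t-1$ factors $p_i-1$, combined with $\fg(N)\mid p_k^2-1$ for each $k$ and a residue computation modulo $8$, keep $\tn{val}_2(\fg(N))$ too small) again shows every $\pw_{p_j}(C_N)$ is even; and for $t=2$ with odd primes, $\pw_{p_j}(C_N)$ is odd exactly under the matching condition $\tn{val}_2(p_1-1)=\tn{val}_2(p_2-1)$, $\tn{val}_2(p_1+1)=\tn{val}_2(p_2+1)$ of case~(3), while $t=1$ is case~(1). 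For $N=Mp^2$ the same bookkeeping applied to the three explicit slices shows the only odd $\pw$ arises for $p=2$ and $M=q$ prime with $q\equiv1\pmod4$, where $\pw_q(C_N)=-(q+1)/2$ is odd (case~(4)). Finally, for $N$ divisible by a cube, the column sums $\sum_\delta{\bf r}_M(1)_\delta=\prod_{q\mid M}(q-1)$ and $\sum_\delta{\bf r}_M(M)_\delta$ that enter $\pw_p(C_N)$ are supplied by Lemma~\ref{lemma: entries of Upsilon matrix}(1), and a finite check shows $\pw_p(C_N)$ is even for every $p\mid N$ except when $N=2^r$ with $r$ odd, where $\bbV(C_N)=V(C_N)$ is explicit and $\pw_2(C_N)=-3$ is odd (case~(2)). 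This proves $\fh(C_N)=\fh(N)$, completing Theorem~\ref{theorem: example order C_N}; Theorem~\ref{theorem: 3.18} then follows by substitution and case-by-case simplification.

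The main obstacle will be the $2$-adic case analysis underlying $\fh(C_N)$: in particular both directions of the squarefree $t=2$ case (which is exactly where the condition of case~(3) emerges) and the verification that $\fg(N)=1$ forces $\fh(C_N)=1$ apart from the pure $2$-power exceptions. The structural parts—the shape of $V(C_N)$ via tensors, the two gcd computations, and the passage to $\bbV(C_N)$—are elementary linear algebra over $\Z$; it is the delicate bookkeeping of $\tn{val}_2\bigl(N+(-1)^{t-1}\bigr)$ against the $\tn{val}_2(p_i\pm1)$, together with the analogous computations for $N=4p$ and $N=2^r$, that accounts for the somewhat intricate four-family list of exceptions.
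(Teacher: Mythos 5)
Your proposal follows essentially the same route as the paper: the paper likewise proves this theorem jointly with Theorem~\ref{theorem: example order C_N} by computing $V(C_N)={\bf r}(1)-{\bf r}(N)$ from the tensor structure of $\Upsilon(N)$, determining $\Gcd(C_N)$ and $\fh(C_N)$ case by case (your gcd identity and $2$-adic bookkeeping are exactly Lemmas~\ref{lemma: squarefree t=2} and~\ref{lemma: squarefree t>2}), and then substituting into Theorem~\ref{theorem: computation of order}. The differences are cosmetic, e.g.\ writing $(-1)^{\omega(\delta)}$ for $\mu(\delta)$ and a slightly different slicing of the non-squarefree cases.
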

\begin{proof}[Proof of Theorems \ref{theorem: example order C_N} and \ref{theorem: 3.18}]
First, let $N=p^r$ be a prime power. If $r=1$, then we have 
\begin{equation*}
V(C_N)=\mat p {-1} {-1} p \times \vect 1 {-1}=\vect {p+1}{-p-1}.
\end{equation*}
Thus, $\Gcd(C_N)=p+1$ and $\fh(C_N)=2$ because $\pw_p(C_N)=-1$ is odd. Therefore the order of $C_N$ is the numerator of $\frac{p-1}{12}$.
If $r=2$, then we have
\begin{equation*}
V(C_N)=\left(\begin{smallmatrix}
p & -p & 0 \\
-1 & p^2+1 & -1 \\
0 & -p & p
\end{smallmatrix}\right)\times \left(\begin{smallmatrix}
1  \\
0 \\
-1
\end{smallmatrix}\right)=\left(\begin{smallmatrix}
p  \\
0 \\
-p
\end{smallmatrix}\right).
\end{equation*}
Thus, $\Gcd(C_N)=p$ and $\fh(C_N)=1$ because $\pw_p(C_N)=0 \in 2\Z$. Suppose that $r\geq 3$.
Then we have
\begin{equation*}
V(C_N)=(p, -1, \dots, 1, -p)^t,
\end{equation*}
where the dots denote zero entries. Thus, $\Gcd(C_N)=1$ and 
\begin{equation*}
\pw_p(C_N)=\begin{cases}
0 & \text{ if } ~~r \in 2\Z,\\
-1-p & \text{ if } ~~r\not\in 2\Z.
\end{cases}
\end{equation*}
Therefore $\fh(C_N)=2$ if and only if $p=2$ and $r\not\in 2\Z$. 
If $p$ is odd then $p^{r-1}(p^2-1)$ is divisible by $24$ and hence the result follows. 

From now on, we assume that $N$ is divisible by at least two primes. For simplicity, let
\begin{equation*}
\pow_p(C_N):=\Gcd(C_N) \cdot \pw_p(C_N)=\frac{\kappa(N)}{24}\times \rpw_p(C_N).
\end{equation*}
\begin{enumerate}
\item
Let $N=pq$, and assume that $q$ is an odd prime. Then we have
\begin{equation*}
V(C_N)=\left(\begin{smallmatrix}
pq & -q & -p & 1 \\
-q & pq & 1 & -p \\
-p & 1 & pq & -q \\
1 & -p & -q & pq
\end{smallmatrix}\right) \times\left(\begin{smallmatrix}
1 \\
0\\
0\\
-1
\end{smallmatrix}\right)
=\left(\begin{smallmatrix}
pq-1\\
p-q \\
q-p\\
1-pq
\end{smallmatrix}\right).
\end{equation*}
Thus, we have
\begin{equation*}
\Gcd(C_N)=\gcd(pq-1, p-q)
\end{equation*}
and
\begin{equation*}
\pow_p(C_N)=(p+1)(1-q) \qqa \pow_q(C_N)=(1-p)(q+1). 
\end{equation*}

If $p=2$, then $\Gcd(C_N)=\gcd(3, q+1)$, which is odd. Since $q$ is odd, $\pow_p(C_N)$ and $\pow_q(C_N)$ are both even, and hence $\fh(C_N)=1$. 

Suppose that $p$ is odd. If $\tn{val}_2(p-1) =\tn{val}_2(q-1)$ and $\tn{val}_2(p+1) = \tn{val}_2(q+1)$, then we have $\tn{val}_2(\Gcd(C_N))=\tn{val}_2((p-1)(q+1))$ (cf. Lemma \ref{lemma: squarefree t=2} below). Thus, we have $\fh(C_N)=2$ in this case.
Suppose that either $\tn{val}_2(p-1)\neq \tn{val}_2(q-1)$ or $\tn{val}_2(p+1) \neq \tn{val}_2(q+1)$. 
Then we have $\tn{val}_2(pq-1)=\tn{val}_2(p-q)$.\footnote{For instance, if $\tn{val}_2(p-1)>\tn{val}_2(q-1)$, then we have
\begin{equation*}
\tn{val}_2(pq-1)=\tn{val}_2(q(p-1)+(q-1))=\tn{val}_2(q-1)=\tn{val}_2(p-q).
\end{equation*}}
Therefore we have
\begin{equation*}
\tn{val}_2((p-1)(q+1))=\tn{val}_2((pq-1)+(p-q)) > \tn{val}_2(pq-1)=\tn{val}_2(p-q).
\end{equation*}
Similarly, we have $\tn{val}_2((p+1)(q-1))>\tn{val}_2(pq-1)$ and so $\fh(C_N)=1$. By Lemma \ref{lemma: squarefree t=2} below, we can simplify the formula and thus we have
 \begin{equation*}
n(pq)= \frac{(p^2-1)(q^2-1)}{12\cdot \gcd(p-1, q-1) \cdot \gcd(p+1, q+1)} \in \Z.
\end{equation*}

\item
Let $N=\prod_{i=1}^t p_i$ with $t\geq 3$. As above, for a divisor $d$ of $N$, let ${\bf r}(d)$ be the $d$-th column vector of the matrix $\Upsilon(N)$. By direct computation, we have
\begin{equation*}
V(C_N)_\delta={\bf r}(1)_\delta-{\bf r}(N)_{\delta}=\mu(\delta)(N/\delta+(-1)^{t-1}\delta),
\end{equation*}
where $\mu(d)$ is the M\"obius function. We claim that
\begin{equation*}
\Gcd(C_N)=\fg(N)=\gcd(N+(-1)^{t-1}, \, p_i^2-1 : 1\leq i \leq t).
\end{equation*}

Note that
\begin{equation*}
-p_i \cdot V(C_N)_{p_i}=N+(-1)^{t-1}p_i^2=N+(-1)^{t-1} + (-1)^{t-1}(p_i^2-1).
\end{equation*}
Thus, $\Gcd(C_N)$ divides $p_i^2-1$, and so $\fg(N)$.

Conversely, since $p_i^2 \equiv 1 \pmod {\fg(N)}$, we have $\delta^2 \equiv 1 \pmod {\fg(N)}$ for any divisor $\delta$ of $N$. Therefore we have
\begin{equation*}
\mu(\delta) \delta \cdot V(C_N)_\delta =  N + (-1)^{t-1} \delta^2 \equiv 0 \pmodo {\fg(N)}.
\end{equation*}
Since $\fg(N)$ is relatively prime to $N$, it divides $V(C_N)_\delta$ for any divisor $\delta$ of $N$, and hence $\Gcd(C_N)$. This proves the claim.

Next, let $d$ be a divisor of $N$, and $p=p_i$ for some $i$. Let $d'=\gcd(d, M)$, where $M=N/p$. Then we have
\begin{equation*}
\textstyle \sum_{\tn{val}_{p}(\delta)\not\in 2\Z} {\bf r}(d)_\delta=\sum_{\delta' \mid M} {\bf r}(d)_{p\delta'}=\Upsilon(p)_{p p^\epsilon}\sum_{\delta' \mid M} \Upsilon(M)_{\delta' d'},
\end{equation*}
where $\epsilon=\tn{val}_p(d)$. Since $M$ is squarefree, by Lemma \ref{lemma: entries of Upsilon matrix}(1) we have
\begin{equation*}
\textstyle \pow_{p_i}(C_N)=\sum_{\tn{val}_{p_i}(\delta)\not\in 2\Z} ({\bf r}(1)_\delta-{\bf r}(N)_\delta)=-(p_i+1)\prod_{j=1, j\neq i}^t (p_j-1).
\end{equation*}
Thus, we have $\fh(C_N)=1$ by Lemma \ref{lemma: squarefree t>2} below. Since $(p^2-1)(q^2-1)$ is always divisible by $24$ and $p_i^2-1 \equiv 0 \pmod {\fg(N)}$, we have 
$\frac{\kappa(N)}{24\cdot \fg(N)} \in \Z$, and therefore we have
\begin{equation*}
n(N)=\frac{\prod_{i=1}^t (p_i^2-1)}{24\cdot \gcd(N+(-1)^{t-1}, p_i^2-1 : 1\leq i \leq t)} \in \Z.
\end{equation*} 
\item
Let $N=Mp^2$ for a prime $p$ not dividing $M$. Let ${\bf r}'(d)$ be the $d$-th column vector of $\Upsilon(M)$. 
Then by direct computation, we have
\begin{equation*}
V(C_N)=\Upsilon(M) \motimes \left(\begin{smallmatrix}
p & -p & 0 \\
-1 & p^2+1 & -1 \\
0 & -p & p
\end{smallmatrix}\right) \times \left(\begin{smallmatrix}
{\bf e}(M)_1 \\
0\\
-{\bf e}(M)_M
\end{smallmatrix}\right)
= \left(\begin{smallmatrix}
p\cdot {\bf r}'(1) \\
-{\bf r}'(1)+{\bf r}'(M)\\
-p\cdot {\bf r}'(M)
\end{smallmatrix}\right).
\end{equation*}
Since ${\bf r}'(1)_{\rad(M)}=(-1)^k$, where $k$ is the number of prime divisors of $M$, $\Gcd(C_N)$ divides $p$. Since $V(C_M)={\bf r}'(1)-{\bf r}'(M)$, we have 
\begin{equation*}
\Gcd(C_N)=\gcd(p, \,\Gcd(C_M)).
\end{equation*}
If $M$ is squarefree, we have $\Gcd(C_M)=\fg(M)$ by the result above 
and hence $\Gcd(C_N)=\gcd(p, \, \fg(M))$.

Next, we have 
\begin{equation*}
\pow_p(C_N)=\textstyle\sum_{\delta' \mid M}(-{\bf r}'(1)_{\delta'}+{\bf r}'(M)_{\delta'})=0,
\end{equation*}
and for a prime divisor $\ell$ of $M$
\begin{equation*}
\textstyle \pow_\ell(C_N)=\sum_{\tn{val}_\ell(\delta')\not\in 2\Z}(p-1)({\bf r}'(1)_{\delta'}-{\bf r}'(M)_{\delta'})=(p-1)\cdot \pow_\ell(C_M).
\end{equation*}
Since $\pow_\ell(C_M)=\Gcd(C_M) \cdot \pw_\ell(C_M)$, we have $\fh(C_N)=1$ if $p$ is odd. Thus, the result follows when $p$ is odd.

Lastly, let $p=2$ and assume that $M$ is odd and squarefree. Then by definition, $\fg(M)$ is even, and therefore we have $\Gcd(C_N)=2$. As discussed above, we have $\pow_\ell(C_N)=\pow_\ell(C_M)$. If $\fh(C_M)=1$, then $\pow_\ell(C_N)$ is divisible by $4$ and so $\fh(C_N)=1$. Suppose that $\fh(C_M)=2$.
Since $M$ is odd and squarefree, $\pow_\ell(C_M)$ is divisible by $4$ unless $M$ is a prime congruent to $1$ modulo $4$, in which case we obtain $\fh(C_N)=2$ (cf. Remark \ref{remark: fg 2-adic valuation} below). Thus, the result follows.

\item
If $N$ is not a prime power, we have $\frac{\kappa(N)}{24} \in \Z$ (Remark \ref{remark: kappa divisible by 24}) and so it suffices to show that $\Gcd(C_N)=1$ and $\fh(C_N)=1$.

Suppose first that $N$ is divisible by $p^2q^2$. We may assume that $p$ is an odd prime.
Then by taking $M=N/{p^2}$ (resp. $M=N/{q^2}$) in (3) above, we obtain that $\Gcd(C_N)$ divides $p$ (resp. $q$). Therefore $\Gcd(C_N)=1$. Also, since $p$ is odd, we have $\pow_\ell(C_N) \in 2\Z$ for any prime divisor $\ell$ of $M=N/{p^2}$. Thus, we have $\fh(C_N)=1$ as well.

Suppose next that $N=Mp^r$ with $\gcd(M, p)=1$ and $r\geq 3$. 
Since we assume that $N$ is not a prime power, we have $M>1$.
If we use the same notation as in (3), then we have
\begin{equation*}
V(C_N)=\Upsilon(M) \motimes\Upsilon(p^r) \times \left(\begin{smallmatrix}
{\bf e}(M)_1\\
\bbO\\
{\bf e}(M)_M
\end{smallmatrix}\right)=\left(\begin{smallmatrix}
p\cdot {\bf r}'(1)\\
-{\bf r}'(1)\\
\bbO\\
{\bf r}'(M)\\
-p\cdot {\bf r}'(M)
\end{smallmatrix}\right),
\end{equation*}
where $\bbO$ is the zero matrix of suitable size.
Since ${\bf r}'(1)_{\rad(M)}=(-1)^k$, we have $\Gcd(C_N)=1$. Also, 
as above we have 
\begin{equation*}
\begin{split}
\pow_p(C_N)&=\sum_{\delta' \mid M} (-{\bf r}'(1)_{\delta'}+\epsilon \cdot {\bf r}'(M)_{\delta'})\\
&=(\epsilon-1) \times \sum_{\delta' \mid M}{\bf r}'(M)_{\delta'}=(\epsilon-1) \times \prod_{\ell \mid M}(\ell-1),
\end{split}
\end{equation*}
where $\epsilon=1$ if $r$ is even, and $\epsilon=-p$ if $r$ is odd. Since $N$ is divisible by an odd prime, we have $\pow_p(C_N) \in 2\Z$. Also, we have
\begin{equation*}
\pow_\ell(C_N)=(p-1)\times \pow_\ell(C_M)
\end{equation*}
and hence $\pow_\ell(C_N)\in 2\Z$ if $p$ is odd. Furthermore, if $p=2$ and $M>1$, then we have
\begin{equation*}
\pow_\ell(C_M)=\Gcd(C_M) \cdot \pw_\ell(C_M)\in 2\Z,
\end{equation*}
and hence $\fh(C_N)=1$. Indeed, since $M$ is odd we have the following:
\begin{itemize}[--]
\item
If $M$ is squarefree, then $\Gcd(C_M)=\fg(M) \in 2\Z$.
\item
If $M$ is exactly divisible by $\ell^2$, then $\pow_\ell(C_M) \in 2\Z$ as in (3).
\item
If $M$ is divisible by $\ell^3$, then $\pow_\ell(C_M) \in 2\Z$ as above.
\end{itemize}
\end{enumerate}
This completes the proof.
\end{proof}

\begin{remark}\label{remark: fg 2-adic valuation}
Let $M$ be an odd and squarefree integer. By definition, $\fg(M)$ is even. Suppose that $\fh(M)=2$. Then either $M$ is a prime or the product of two primes satisfying certain conditions above. In the latter case, we can prove that $\fg(M)$ is divisible by $8$ (cf. Lemma \ref{lemma: squarefree t=2}). Thus, $\fg(M)$ is divisible by $4$ unless $M$ is a prime congruent to $1$ modulo $4$, in which case $\fg(M)=M+1 \equiv 2 \pmod 4$.
\end{remark}

We finish this section by proving two lemmas used above.
\begin{lemma}\label{lemma: squarefree t=2}
Let $p$ and $q$ be two distinct odd primes. Then we have
\begin{equation*}
\gcd(pq-1, p-q)=\gcd(pq-1, p^2-1, q^2-1)=2^a \cdot \gcd(p-1, q-1) \cdot \gcd(p+1, q+1),
\end{equation*}
where $a=0$ if $\tn{val}_2(p-1)=\tn{val}_2(q-1)$ and $\tn{val}_2(p+1)=\tn{val}_2(q+1)$, and $a=-1$ otherwise.
\end{lemma}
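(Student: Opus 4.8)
The plan is to prove the two equalities in turn. The first is a short manipulation of linear identities: setting $\delta:=p-q$, the relations
\[
p^2-1=(pq-1)+p\delta,\qquad q^2-1=(pq-1)-q\delta
\]
show that $\gcd(pq-1,\delta)$ divides both $p^2-1$ and $q^2-1$, hence divides $g':=\gcd(pq-1,p^2-1,q^2-1)$; conversely $p\delta=(p^2-1)-(pq-1)$ and $q\delta=(pq-1)-(q^2-1)$ show that $g'$ divides $\gcd(p\delta,q\delta)=\delta\cdot\gcd(p,q)=\delta$ (as $p\ne q$ are primes, so $\gcd(p,q)=1$), and $g'\mid pq-1$ by definition, so $g'$ divides $\gcd(pq-1,\delta)$. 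Thus $\gcd(pq-1,p-q)=g'=:g$.

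For the second equality, write $D=\gcd(p-1,q-1)$ and $S=\gcd(p+1,q+1)$, and work one prime at a time. For an odd prime $\ell$ the factor $2^a$ is irrelevant, so it suffices to check $\tn{val}_\ell(g)=\tn{val}_\ell(DS)$. Since $\ell$ is odd it divides at most one of $p-1,p+1$ and at most one of $q-1,q+1$. If it divides none of $p^2-1,q^2-1$, both valuations vanish. In the mixed configuration, say $\ell\mid p-1$ and $\ell\mid q+1$, one has $pq\equiv-1\pmod\ell$, hence $\ell\nmid pq-1$ and $\tn{val}_\ell(g)=0$, while $\tn{val}_\ell(D)=\tn{val}_\ell(S)=0$ as well. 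In the aligned configuration $\ell\mid p-1$ and $\ell\mid q-1$, the identity $pq-1=p(q-1)+(p-1)$ gives $\tn{val}_\ell(pq-1)\ge\min(\tn{val}_\ell(p-1),\tn{val}_\ell(q-1))$, and since $\ell$ divides neither $p+1$ nor $q+1$ we have $\tn{val}_\ell(p^2-1)=\tn{val}_\ell(p-1)$ and $\tn{val}_\ell(q^2-1)=\tn{val}_\ell(q-1)$, so $\tn{val}_\ell(g)=\min(\tn{val}_\ell(p-1),\tn{val}_\ell(q-1))=\tn{val}_\ell(DS)$; the case $\ell\mid p+1$ and $\ell\mid q+1$ is symmetric with $D$ and $S$ interchanged.

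The prime $2$ is the delicate case. For odd $n$, exactly one of $\tn{val}_2(n-1),\tn{val}_2(n+1)$ equals $1$; call the other $s_n\ge2$, so $\tn{val}_2(n^2-1)=s_n+1$. Using
\[
pq-1=(p-1)(q-1)+(p-1)+(q-1)=(p+1)(q+1)-(p+1)-(q+1),
\]
I would compute $\tn{val}_2(pq-1)$ by cases on $p,q\bmod 4$ and, when $p\equiv q\pmod4$, on whether $s_p=s_q$, obtaining: $\tn{val}_2(pq-1)=1$, $\tn{val}_2(DS)=2$, $\tn{val}_2(g)=1$ when $p\not\equiv q\pmod4$; $\tn{val}_2(g)=\min(s_p,s_q)$ and $\tn{val}_2(DS)=\min(s_p,s_q)+1$ when $p\equiv q\pmod4$ and $s_p\ne s_q$; and $\tn{val}_2(g)=\tn{val}_2(DS)=s_p+1$ when $p\equiv q\pmod4$ and $s_p=s_q$. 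In each case $\tn{val}_2(g)-\tn{val}_2(DS)$ equals $a$, because the condition $\tn{val}_2(p-1)=\tn{val}_2(q-1)$ and $\tn{val}_2(p+1)=\tn{val}_2(q+1)$ holds precisely when $p\equiv q\pmod4$ and $s_p=s_q$ (one valuation in each pair being automatically $1$). Combined with the odd-prime case this yields $g=2^aDS$; note that $4\mid DS$, so $2^aDS\in\Z$ even when $a=-1$.

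The main obstacle is the $2$-adic bookkeeping in the last paragraph: one must track $\tn{val}_2(pq-1)$ via the sum identity rather than a crude lower bound, since when $s_p=s_q$ an extra power of $2$ appears in $(p-1)+(q-1)$ (resp. $(p+1)+(q+1)$); and one must verify in every configuration that $\tn{val}_2(pq-1)\ge\min(\tn{val}_2(p^2-1),\tn{val}_2(q^2-1))$ except in the mixed configuration, so that $g$ selects $\min(\tn{val}_2(p^2-1),\tn{val}_2(q^2-1))$ there and collapses to $1$ in the mixed case, exactly matching $2^aDS$.
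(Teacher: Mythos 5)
Your proposal is correct and follows essentially the same route as the paper: the same two identities $p(p-q)=(p^2-1)-(pq-1)$ and $q(p-q)=(pq-1)-(q^2-1)$ for the first equality, the same observation that an odd $\ell$ cannot divide both $p-1$ and $p+1$ for the odd part, and an equivalent $2$-adic case analysis (yours parametrized by $p,q \bmod 4$ and $s_p$ versus $s_q$, the paper's by comparing $\tn{val}_2(p\mp 1)$ directly after normalizing $r\geq s$), with all the case values you list checking out. One small caveat: your closing remark that $\tn{val}_2(pq-1)\ge\min(\tn{val}_2(p^2-1),\tn{val}_2(q^2-1))$ holds in every non-mixed configuration is false when $p\equiv q\pmod 4$ and $s_p\neq s_q$, where $\tn{val}_2(pq-1)=\min(s_p,s_q)$ while the right-hand side is $\min(s_p,s_q)+1$; this does not damage the proof, since the explicit valuations in your case table are what the argument actually uses and they are correct.
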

\begin{proof}
Let $g=\gcd(pq-1, p-q)$. Then $\gcd(g, p)=\gcd(g, q)=1$ because $pq\equiv 1 \pmod g$. This directly implies the first equality because
\begin{equation*}
p(p-q)=p^2-pq=(p^2-1)+(1-pq)  \qqa q(p-q)=(pq-1)+(1-q^2).
\end{equation*}

Next, let $G=\gcd(pq-1, \, p^2-1, \, q^2-1)$ and $H=\gcd(p-1, q-1)\cdot \gcd(p+1, q+1)$.
If $\ell$ is an odd prime, then we easily have that $\tn{val}_\ell(G)=\tn{\val}_\ell(H)$ because $\ell$ cannot divide both $p-1$ and $p+1$. 

Finally, we compute $b=\tn{val}_2(G)-\tn{val}_2(H)$. Let $r=\tn{val}_2(p-1)$ and $s=\tn{val}_2(q-1)$, which are both at least $1$. 
Without loss of generality we may assume that $r\geq s\geq 1$. Note that $\tn{val}_2(p^2-1) \geq r+1$ and $\tn{val}_2(q^2-1)\geq s+1$.
\begin{itemize}[--]
\item
If $r>s$, then we have
\begin{equation*}
pq \equiv 1 \pmodo {2^s} \qqa pq \equiv q \not\equiv 1 \pmodo {2^{s+1}}.
\end{equation*}
Therefore $\tn{val}_2(G)=s$. Since $s\geq 1$, we have $r\geq 2$ and hence $\tn{val}_2(p+1)=1$. So we have $\tn{val}_2(H)=s+1$ and $b=-1$. 

\item
If $r=s\geq 2$, then $\tn{val}_2(p+1)=\tn{val}_2(q+1)=1$, and so $\tn{val}_2(p^2-1)=\tn{val}_2(q^2-1)=s+1$. Also, we have
\begin{equation*}
pq-1=(1+2^s)(1+2^s)-1 \equiv 0 \pmodo {2^{s+1}},
\end{equation*}
and therefore $\tn{val}_2(G)=s+1$. Since $\tn{val}_2(H)=s+1$, we have $b=0$.

\item
Assume that $r=s=1$. Let $x=\tn{val}_2(p+1)$ and $y=\tn{val}_2(q+1)$, which are both at least $2$. If $x>y$, then similarly as above, we have $\tn{val}_2(G)=y$ and $\tn{val}(H)=y+1$, and so $b=-1$. If $x=y\geq 2$, then we have
\begin{equation*}
pq-1=(-1+2^x)(-1+2^x)-1 \equiv 0 \pmodo {2^{x+1}}.
\end{equation*}
Therefore $\tn{val}_2(G)=\tn{val}_2(H)=x+1$ and hence $b=0$. 
\end{itemize}
This completes the proof.
\end{proof}

\begin{lemma}\label{lemma: squarefree t>2}
Let $N=\prod_{i=1}^t p_i$ be a squarefree integer with $t\geq 3$. Let
\begin{equation*}
\textstyle \fg(N)=\gcd(N+(-1)^{t-1}, \, p_i^2-1 : 1\leq i\leq t) \qa s_i(N)=(p_i+1)\prod_{j=1, j\neq i}^t (p_j-1).
\end{equation*}
Then $2\cdot \fg(N)$ divides $s_i(N)$ for all $1\leq i\leq t$.
\end{lemma}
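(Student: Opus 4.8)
The plan is to verify the divisibility one prime at a time: it suffices to show, for every prime $\ell$, that $\tn{val}_\ell(s_i(N))\geq \tn{val}_\ell(2\,\fg(N))$. Since $\tn{val}_\ell(2\,\fg(N))$ equals $\tn{val}_\ell(\fg(N))$ for odd $\ell$ and $\tn{val}_2(\fg(N))+1$ for $\ell=2$, I will treat odd primes and the prime $2$ separately, using a common device in both: comparing the residues of $N=\prod_{j} p_j$ and of $N+(-1)^{t-1}$ modulo an appropriate prime power, noting throughout that $\fg(N)$ divides both $N+(-1)^{t-1}$ and every $p_j^2-1$.

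First consider an odd prime $\ell$ with $e:=\tn{val}_\ell(\fg(N))\geq 1$, so $\ell^e\mid p_j^2-1$ for every $j$ and $\ell^e\mid N+(-1)^{t-1}$. Because $\gcd(p_j-1,\,p_j+1)$ divides $2$ and $\ell$ is odd, $\ell^e$ divides exactly one of $p_j-1$ and $p_j+1$; this splits $\{1,\dots,t\}$ into $A=\{j:\ell^e\mid p_j-1\}$ and $B=\{j:\ell^e\mid p_j+1\}$. Reducing modulo $\ell^e$ gives $N\equiv(-1)^{|B|}$, and since $\ell^e>2$ the congruence $\ell^e\mid N+(-1)^{t-1}$ forces $|B|\equiv t\pmod 2$. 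Now if $i\in B$ then $\ell^e\mid p_i+1$, a divisor of $s_i(N)$, so $\tn{val}_\ell(s_i(N))\geq e$. If instead $i\in A$, then $|A|\neq 1$ (for $|A|=1$ would give $|B|=t-1\not\equiv t$), so there is some $j\in A$ with $j\neq i$, and $p_j-1$, a divisor of $s_i(N)$, is divisible by $\ell^e$; again $\tn{val}_\ell(s_i(N))\geq e$.

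For $\ell=2$ put $c:=\tn{val}_2(\fg(N))$. If $N$ is even then, taking $p_1=2$, we have $\fg(N)\mid p_1^2-1=3$, so $c=0$ and it suffices to note that $s_i(N)$ is even because $t\geq 3$. So assume $N$ is odd, i.e.\ every $p_j$ is odd. If $c\leq 2$ then $\tn{val}_2(s_i(N))\geq \tn{val}_2(p_i+1)+\sum_{j\neq i}\tn{val}_2(p_j-1)\geq 1+(t-1)\geq 3\geq c+1$. If $c\geq 3$, I run the same partition argument modulo $2^{c-1}$: for each odd $p_j$ one of $\tn{val}_2(p_j-1)$, $\tn{val}_2(p_j+1)$ equals $1$ while their sum is at least $c$, so the other is at least $c-1$, and hence $2^{c-1}$ divides exactly one of $p_j-1$, $p_j+1$, producing $A$ and $B$ as before. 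Since $2^{c-1}>2$, the congruence $2^{c-1}\mid N+(-1)^{t-1}$ again forces $|B|\equiv t\pmod 2$, hence $|A|\neq 1$. A short valuation count — using $\tn{val}_2(p_j+1)\geq c-1$ for $j\in B$, $\tn{val}_2(p_j-1)\geq c-1$ for $j\in A$, that the non-dominant factor contributes valuation $1$, together with $t\geq 3$ and $|A|\geq 2$ when $i\in A$ — then yields $\tn{val}_2(s_i(N))\geq c+1$. Combining over all primes gives $2\,\fg(N)\mid s_i(N)$.

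The main obstacle is the prime $2$: there the required inequality is strict (one needs an extra factor of $2$ beyond $\fg(N)$), the factors $p_j-1$ and $p_j+1$ are no longer separated by divisibility by $\ell$ itself, and one must first dispose of the even-$N$ case and of small $c$ by hand before the modulus-$2^{c-1}$ version of the partition trick becomes available; the case $|B|=0$ in the valuation count is the only mildly fiddly point, handled by $t\geq 3$. The odd-prime case, by contrast, is essentially immediate once the $A/B$ partition and the parity constraint $|B|\equiv t\pmod 2$ are in place.
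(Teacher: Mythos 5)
Your proof is correct and takes essentially the same route as the paper's: a prime-by-prime valuation comparison in which the congruence $N+(-1)^{t-1}\equiv 0$ modulo a suitable power of $\ell$ rules out the single configuration where $s_i(N)$ could fail to absorb $\ell^{\tn{val}_\ell(2\fg(N))}$. Your $A/B$ partition with the parity constraint $|B|\equiv t \pmodo{2}$ is just a repackaging of the paper's identification of the unique bad case ($\tn{val}_\ell(p_j-1)=0$ for all $j\neq i$, resp.\ $\tn{val}_2(p_i+1)=\tn{val}_2(p_j-1)=1$ for all $j\neq i$) and the resulting contradiction $N+(-1)^{t-1}\equiv 2\cdot(-1)^{t-1} \pmodo{\ell}$ (resp.\ modulo $4$).
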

\begin{proof}
It suffices to show that for any prime $\ell$ we have
\begin{equation*}
\tn{val}_\ell(2)+\tn{val}_\ell(\fg(N)) \leq \tn{val}_\ell(s_i(N)) ~\text{ for all }i.
\end{equation*}
Since $t\geq 3$, $s_i(N)$ is always even and hence the inequality above always holds if $\fg(N)$ is not divisible by $\ell$. Thus, we may assume that $\tn{val}_\ell(\fg(N))\geq 1$. 

First, suppose that $\ell$ is odd. 
Since either $\tn{val}_\ell(p_j-1)=0$ or $\tn{val}_\ell(p_j+1)=0$,  we have
\begin{equation*}
\tn{val}_\ell(\fg(N)) \leq \tn{val}_\ell(p_j^2-1)=\max(\tn{val}_\ell(p_j+1), \tn{val}_\ell(p_j-1)).
\end{equation*}
Therefore we have $\tn{val}_\ell(\fg(N))\leq \tn{val}_\ell(s_i(N))$ unless 
\begin{equation*}
\tn{val}_\ell(p_i-1)>0 \qa \tn{val}_\ell(p_j-1)=0 ~~\text{ for all } ~j\neq i,
\end{equation*}
in which case we have $\tn{val}_\ell(p_j+1)\geq 1$ for all $j\neq i$ because  $\tn{val}_\ell(\fg(N))\geq 1$. Thus, we have
\begin{equation*}
N+(-1)^{t-1}=\textstyle\prod_{k=1}^t p_i +(-1)^{t-1} \equiv (-1)^{t-1} +  (-1)^{t-1} \equiv 2 \cdot (-1)^{t-1} \pmodo \ell,
\end{equation*}
which is a contradiction to the assumption that $\tn{val}_\ell(\fg(N))\geq 1$ because $\ell$ is odd. 

Next, let $\ell=2$. Note that if $N$ is even then $\fg(N)$ is odd by definition. 
Thus, $N$ is odd because we assume that $\fg(N)$ is divisible by $\ell=2$, and hence all $p_i$ are odd. 
So we have $\tn{val}_2(s_i(N))\geq t$. Let $g=\tn{val}_2(\fg(N))$.
If $g \leq t-1$, then there is nothing to prove, so we further assume that $g\geq t$, which is at least $3$ by our assumption.
Since $\tn{val}_2(p_j^2-1)=\tn{val}_2(p_j-1)+\tn{val}_2(p_j+1) \geq g$, and either $\tn{val}_2(p_i-1)=1$ or $\tn{val}_2(p_i+1)=1$, we have $\tn{val}_2(p_i-1)\geq g-1 \geq 2$ or $\tn{val}_2(p_i+1) \geq g-1 \geq 2$.
Thus, we have $\tn{val}_2(s_i(N))\geq g+1$ unless 
\begin{equation*}
\tn{val}_2(p_i+1)=\tn{val}_2(p_j-1)=1 ~\text{ for all } ~j\neq i,
\end{equation*}
in which case we have
$\tn{val}_2(p_i-1) \geq 2$ and $\tn{val}_2(p_j+1)\geq 2$ for all $j\neq i$. Thus, we have
\begin{equation*}
N+(-1)^{t-1}=\textstyle\prod_{k=1}^t p_i +(-1)^{t-1}\equiv 2\cdot (-1)^{t-1} \not\equiv 0 \pmodo {4}.
\end{equation*}
This implies that $g=1$, which is a contradiction. This completes the proof.
\end{proof}

\ms
\subsection{Example III: The order of the divisor $C_d$}\label{section: Example II: The order of Cd}
In this subsection, we compute the order of $C_{d}$ for any positive integer $N$ and a non-trivial divisor $d$ of $N$ with $d\neq N$. By Theorem \ref{theorem: computation of order}, it suffices to prove the following.
\begin{theorem}\label{theorem: example order Cd}
Let $N$ be a positive integer and let $d$ be a non-trivial divisor of $N$. Then we have
\begin{equation*}
\Gcd(C_d) = \fg(N, d) \qa \fh(C_d)=\fh(N, d),
\end{equation*}
where $\fg(N, d)$ and $\fh(N, d)$ are defined as follows: First, we set 
\begin{equation*}
\fg(N, N):=\fg(N) \qa \fh(N, N):=\fh(N).
\end{equation*}
Next, suppose that $d\neq N$, and let $z=\gcd(d, N/d)$. Then we set
\begin{enumerate}
\item
$\fg(N, d):=\fg(d)$ if $z=1$.
\item
$\fg(N, d):=\gcd(p, \, \fg(d/p))$ if $z=p$ is a prime and $\tn{val}_p(d)=1$.
\item
$\fg(N, d):=\frac{z}{\rad(z)}$ otherwise.
\end{enumerate}
Also, $\fh(N, d):=2$ if one of the following holds, and $\fh(N, d):=1$ otherwise.
\begin{enumerate}
\item
$N=2^r$ for $r\geq 2$ and $d=2$.
\item
$N=2^r$ for $r\geq 2$ and $d=2^f$ for $f\in 2\Z$.
\item
$N=2^r p$ for $r\geq 2$ and $d=2p$, where $p\equiv 1 \pmod 4$ is a prime.
\item
$N=2^r d$ for $r\geq 1$ and $d=p$, where $p$ is an odd prime.
\item
$N=2^r d$ for $r\geq 1$ and $d=pq$, where $p$ and $q$ are two distinct odd primes such that $\tn{val}_2(p-1)=\tn{val}_2(q-1)$ and $\tn{val}_2(p+1)=\tn{val}_2(q+1)$.
\end{enumerate}
\end{theorem}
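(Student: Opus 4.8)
The plan is to apply Theorem \ref{theorem: computation of order}, which reduces the claim to identifying the two invariants $\Gcd(C_d)$ and $\fh(C_d)$; the case $d=N$ is precisely Theorem \ref{theorem: example order C_N}, so I treat $d\neq N$. Writing $z=\gcd(d,N/d)$ and letting ${\bf r}(e)$ denote the $e$-th column of $\Upsilon(N)$, we have $V(C_d)=\Upsilon(N)\times\Phi_N(C_d)=\varphi(z)\cdot{\bf r}(1)-{\bf r}(d)$, so everything is to be extracted from the explicit tridiagonal-tensor shape of $\Upsilon(N)$ together with Lemma \ref{lemma: entries of Upsilon matrix}. The finitely many genus-$0$ levels are disposed of as in the proof of Theorem \ref{theorem: computation of order}, so I assume the genus of $X_0(N)$ is positive.

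First I would strip off the primes of $N$ not dividing $d$. With $N_1:=\prod_{p\mid d}p^{\tn{val}_p(N)}$ and $N_2:=N/N_1$ one has $\gcd(N_1,N_2)=1$, $\gcd(d,N_1/d)=z$, and $\Phi_N(C_d)=\Phi_{N_1}(C(N_1)_d)\motimes{\bf e}(N_2)_1$ with $C(N_1)_d:=\varphi(z)(P(N_1)_1)-(P(N_1)_d)$ of degree $0$. Hence $C_d$ is defined by tensors, and Theorem \ref{theorem: order defined by tensors}, combined with Lemma \ref{lemma: entries of Upsilon matrix} (which gives $\gcd({\bf r}_{N_2}(1))=1$ and $\sum_\delta{\bf r}_{N_2}(1)_\delta=\prod_{q\mid N_2}(q-1)$), yields $\Gcd(C_d)=\Gcd(C(N_1)_d)$, together with $\pw_p(C_d)=0$ for $p\mid N_2$ and $\pw_p(C_d)=\pw_p(C(N_1)_d)\cdot\prod_{q\mid N_2}(q-1)$ for $p\mid d$. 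Now $d=N_1$ exactly when $z=1$; in that case $C(N_1)_d=C(N_1)_{N_1}$, so $\Gcd(C_d)=\fg(d)$ by Theorem \ref{theorem: example order C_N}, and since $\prod_{q\mid N_2}(q-1)$ is odd only if $N_2$ is a power of $2$ (forcing $d$ odd), $\fh(C_d)=2$ holds precisely in clauses (4) and (5). From here on I assume $\rad(N)=\rad(d)$ (i.e. $N=N_1$) and $z>1$, remembering that the factor $\prod_{q\mid N_2}(q-1)$ is carried along in the $\pw$'s.

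For $\Gcd$ I would split according to the definition of $\fg(N,d)$. If $z=p$ is prime with $\tn{val}_p(d)=1$, then $\rad(N)=\rad(d)$ forces every prime $\neq p$ to occur in $d$ to its full power, so $N=p^{r}e$ with $e=d/p$ coprime to $p$ and $r\geq 2$; expanding $V(C_d)=(p-1){\bf r}_{p^r}(1)\motimes{\bf r}_e(1)-{\bf r}_{p^r}(p)\motimes{\bf r}_e(e)$, the entries indexed by divisors with $p$-valuation $\neq 1$ are all visibly divisible by $p$, while those of $p$-valuation $1$ reduce modulo $p$ to the column $V(C(e)_e)$; hence $\Gcd(C_d)=\gcd(p,\Gcd(C(e)_e))=\gcd(p,\fg(d/p))$ by Theorem \ref{theorem: example order C_N} (the subcase $e=1$, i.e. $N=p^r$ a prime power, being a direct computation). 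In all remaining cases I claim $\Gcd(C_d)=z/\rad(z)$: since $z/\rad(z)\mid\varphi(z)$, every entry of $\varphi(z){\bf r}(1)$ is divisible by $z/\rad(z)$, while by Lemma \ref{lemma: entries of Upsilon matrix} the column ${\bf r}(d)$ has gcd exactly $z/\rad(z)$; as ${\bf r}(1)$ is supported on the squarefree divisors of $N$, one can choose a divisor $\delta$ with a prime-power part of exponent $\geq 2$ (available because $z>1$ and we are not in the previous subcases) at which ${\bf r}(1)$ vanishes while ${\bf r}(d)_\delta/(z/\rad(z))$ is a unit, which pins the gcd down. Finally, $\fh(C_d)$ is determined by computing each $\pw_p(C_d)$ from $\bbV(C_d)$ along the same case division, thereby reducing to prime-power columns and to $\pw$-values of $C_N$-type divisors on lower levels, and then checking parity; Lemmas \ref{lemma: squarefree t=2} and \ref{lemma: squarefree t>2} supply the needed $2$-adic valuation identities, and the exceptional pairs $(N,d)$ that emerge reproduce clauses (1)--(5).

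I expect the determination of $\fh(C_d)$ to be the main obstacle. The computation of $\Gcd(C_d)$ is essentially forced by Lemma \ref{lemma: entries of Upsilon matrix}, Theorem \ref{theorem: order defined by tensors} and Theorem \ref{theorem: example order C_N}; but the parity of $\pw_p(C_d)$ depends on a delicate interplay of a prime-power contribution, the carried factor $\prod_{q\mid N_2}(q-1)$, and a lower-level $\pw$-value, and moreover the prime $p=2$ together with small exponents $r\leq 3$ behaves irregularly, paralleling the exceptional clauses in $\fh(N)$ of Theorem \ref{theorem: example order C_N} and the $2$-adic subtleties in Remark \ref{remark: fg 2-adic valuation}. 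Pinning down exactly when each $\pw_p(C_d)$ is odd --- in particular the role of $p$ modulo $4$ in clause (3) and of the valuations $\tn{val}_2(p\pm1)$ in clause (5) --- is where the real work will lie.
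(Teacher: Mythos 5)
Your opening reduction is correct and is in fact a slicker packaging than the paper's: since $\Phi_N(C_d)=\bigl(\varphi(z)\,{\bf e}(N_1)_1-{\bf e}(N_1)_d\bigr)\motimes {\bf e}(N_2)_1$ with the first factor of degree $0$, Theorem \ref{theorem: order defined by tensors} and Lemma \ref{lemma: entries of Upsilon matrix} do strip off all primes not dividing $d$ in one stroke, and your treatment of the $z=1$ case (hence clauses (4)--(5)) is complete. The paper instead runs an induction peeling off one prime at a time ($N=Mp^r$, with four cases according to $f=\tn{val}_p(d)$), because $C_d$ is \emph{not} defined by tensors once $\rad(N)=\rad(d)$; from that point on your argument has to do genuine work, and this is where it breaks.

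Concretely, in the ``otherwise'' case your mechanism for pinning down $\Gcd(C_d)=z/\rad(z)$ is false as stated: you claim there is a single divisor $\delta$ at which ${\bf r}(1)$ vanishes and ${\bf r}(d)_\delta/(z/\rad(z))$ is a unit. But if $p\mid z$ with $m=\tn{val}_p(z)\geq 1$, the $p$-component of any entry of ${\bf r}(d)$ is one of $0$, $-p^{m}$, $p^{m-1}(p^2+1)$, so the ratio to $p^{m-1}$ is never $\pm1$ at any single $\delta$ (try $N=p^4$, $d=p^2$: the nonzero entries of ${\bf r}(d)$ are $-p^2,\,p(p^2+1),\,-p^2$). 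The gcd is only attained by combining the two adjacent entries $p^{m-1}(p^2+1)$ and $p^{m}$, which is exactly what the paper does in its Cases 3--4; your single-entry shortcut must be replaced by that two-entry argument (and one must also check, as the paper does via $\gcd(p^2+1,p)=1$, that no extraneous prime survives). A similar omission occurs in your $z=p$, $\tn{val}_p(d)=1$ case: you show $\gcd(p,\fg(d/p))$ divides every entry, but to get the reverse divisibility you still need that $\Gcd(C_d)$ divides $p$, which requires exhibiting entries of gcd exactly $p$ (the paper uses Lemma \ref{lemma: entries of Upsilon matrix}(4) for this). Finally, the determination of $\fh(C_d)$ --- which is the bulk of the paper's proof, namely explicit formulas for $\pow_\ell(C_d)$ in each of the four cases in terms of $\pow_\ell(C(M)_{d'})$ and $\Gcd(C(M)_{d'})$, followed by the parity analysis using Remark \ref{remark: fg 2-adic valuation} that produces the exceptional clauses (1)--(3) --- is announced but not carried out; as it stands the proposal derives clauses (4)--(5) only and leaves the remaining exceptional list unverified.
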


\begin{proof}
By Theorem \ref{theorem: example order C_N}, the result follows for $d=N$. Thus, we henceforth assume that $d\neq N$. During the proof, let $\bbO$ denote the zero matrix of suitable size.

First, let $N=p^r$ with $r\geq 2$. If $d=p$, then  we have
\begin{equation*}
V(C_d)=\Upsilon(p^r) \times \left(\begin{smallmatrix}
p-1\\
-1 \\
\bbO
\end{smallmatrix}\right)=\left(\begin{smallmatrix}
p^2 \\
-p^2-p\\
p \\
\bbO
\end{smallmatrix}\right)=p\left(\begin{smallmatrix}
p\\
-p-1\\
1 \\
\bbO
\end{smallmatrix}\right).
\end{equation*}
Thus, $\Gcd(C_d)=p$ and $\pw_p(C_d)=-p-1$. Therefore $\fh(C_d)=1$ if $p$ is odd, and $\fh(C_d)=2$ if $p=2$. 

Suppose that $d=p^2$ (so $r\geq 3$). Then we have
\begin{equation*}
V(C_d)=\Upsilon(p^r) \times \left(\begin{smallmatrix}
\varphi(p^{m(2)})\\
0\\
-1 \\
\bbO
\end{smallmatrix}\right)=\left(\begin{smallmatrix}
p^{m(2)}(p-1) \\
p^{m(2)-1}\\
-(p^2+1)p^{m(2)-1} \\
p^{m(2)}\\
\bbO
\end{smallmatrix}\right)=p^{m(2)-1}\left(\begin{smallmatrix}
p(p-1) \\
1\\
-p^2-1 \\
p\\
\bbO
\end{smallmatrix}\right).
\end{equation*}
Thus, $\Gcd(C_d)=p^{m(2)-1}=\frac{z}{\rad(z)}$ and $\pw_p(C_d)=p+1$.  Therefore $\fh(C_d)=2$ if $p=2$, and $\fh(C_d)=1$ if $p$ is odd.

Suppose that $d=p^f$ with $3\leq f\leq r-1$. Then we have
\begin{equation*}
V(C_d)=\Upsilon(p^r) \times \left(\begin{smallmatrix}
\varphi(p^{m(f)})\\
\bbO \\
-1 \\
\bbO
\end{smallmatrix}\right)=\left(\begin{smallmatrix}
p\varphi(p^{m(f)}) \\
-\varphi(p^{m(f)})\\
\bbO \\
p^{m(f)} \\
-(p^2+1)p^{m(f)-1} \\
p^{m(f)}\\
\bbO \\
\end{smallmatrix}\right)=p^{m(f)-1}\left(\begin{smallmatrix}
p(p-1) \\
1-p\\
\bbO \\
p \\
-p^2-1 \\
p\\
\bbO \\
\end{smallmatrix}\right)
\end{equation*}
because $\varphi(p^{m(f)})=p^{m(f)-1}(p-1)$. Thus, $\Gcd(C_d)=p^{m(f)-1}$ and 
\begin{equation*}
\pw_p(C_d)=\begin{cases}
p+1 & \text{ if }~~ f \in 2\Z,\\
-p(p+1) & \text{ if }~~ f \not\in 2\Z.\\
\end{cases}
\end{equation*}
Therefore $\fh(C_d)=2$ if $p=2$ and $f$ is even, and $\fh(C_d)=1$ otherwise. This proves the result for the case where $N$ is a prime power.

Next, let $N=Mp^r$ with $\gcd(M, p)=1$ and $r\geq 1$. We assume that $M>1$, so there is a prime divisor of $M$ different from $p$. For a divisor $d$ of $N$, let $d'=\gcd(M, d)$ and $p^f=\gcd(d, p^r)$, so that $d=d'p^f$.
Also, let $z'=\gcd(d', M/{d'})$. Note that 
\begin{equation*}
\varphi(z)\cdot {\bf e}(N)_1=\varphi(z') \cdot {\bf e}(M)_1 \motimes \varphi(p^{m(f)})\cdot {\bf e}(p^r)_1
\end{equation*}
and ${\bf e}(N)_d={\bf e}(M)_{d'} \motimes {\bf e}(p^r)_{p^f}$. For a divisor $k$ of $M$, let ${\bf r}'(k):=\Upsilon(M) \times {\bf e}(M)_k$ be the $k$-th column vector of the matrix $\Upsilon(M)$. Then we have
\begin{equation*}
V(C(M)_{d'})=\Upsilon(M) \times \Phi_M(C(M)_{d'})=\varphi(z')\cdot {\bf r}'(1)-{\bf r}'(d').
\end{equation*}
Here, we allow $d'=1$, in which case $V(C(M)_{d'})$ is the zero vector. From now on, let $\ell$ denote a prime divisor of $M$.
By Lemma \ref{lemma: entries of Upsilon matrix}(4), the greatest common divisor of the entries of ${\bf r}(d')$ is $\frac{z'}{\rad(z')}$. Since $\varphi(z')=\frac{z'}{\rad(z')} \prod_{\ell\mid z'} (\ell-1)$,
we easily have
\begin{equation}\label{equation: 0jae}
\Gcd(C(M)_{d'})=\frac{z'}{\rad(z')} \times \alpha ~~~ \text{ for some } \alpha \in \Z.
\end{equation}
Also, since the degree of $C(M)_{d'}$ is zero, we have
\begin{equation*}
\textstyle \sum_{{\delta'}\mid M} V(C(M)_{d'})_{\delta'}=\sum_{{\delta'} \mid M} (\varphi(z') \cdot {\bf r}'(1)_{\delta'}-{\bf r}'(d')_{\delta'})=0.
\end{equation*}
Furthermore, we have
\begin{equation*}
\textstyle \pow_\ell(C(M)_{d'})=\sum_{\tn{val}_\ell({\delta'})\not\in 2\Z} (\varphi(z')\cdot {\bf r}'(1)_{\delta'}-{\bf r}'(d')_{\delta'}).
\end{equation*}
This will be used below without further mention.

By direct computation, we have
\begin{equation*}
\Upsilon(N)\times \varphi(z)\cdot {\bf e}(N)_1=\varphi(z)(\Upsilon(M)\times {\bf e}(M)_1) \motimes \left(\begin{smallmatrix}
p \\
-1\\
\bbO \\
\end{smallmatrix}\right)=\left(\begin{smallmatrix}
p\varphi(z)\cdot {\bf r}'(1) \\
-\varphi(z)\cdot {\bf r}'(1) \\
\bbO \\
\end{smallmatrix}\right).
\end{equation*}
Similarly, if $f=0$ (resp. $f=r$), then
\begin{equation*}
\Upsilon(N)\times {\bf e}(N)_d= \left(\begin{smallmatrix}
p\cdot {\bf r}'(d') \\
-{\bf r}'(d') \\
\bbO \\
\end{smallmatrix}\right) \quad \left(\tn{resp}. ~~\left(\begin{smallmatrix}
\bbO \\
-{\bf r}'(d')\\
p\cdot {\bf r}'(d')
\end{smallmatrix}\right) \right),
\end{equation*}
and if $1\leq f\leq r-1$, then 
\begin{equation*}
\Upsilon(N)\times {\bf e}(N)_d=\left(\begin{smallmatrix}
\bbO \\
-p^{m(f)}\cdot {\bf r}'(d') \\
p^{m(f)-1}(p^2+1)\cdot {\bf r}'(d')\\
-p^{m(f)}\cdot {\bf r}'(d')\\
\bbO \\
\end{smallmatrix}\right)\begin{smallmatrix}
\phantom{\bbO}\\
\phantom{-p^{m(f)}\cdot {\bf r}'(d') }\\
\leftarrow \text{$p^f$-th entry.}\\
\phantom{-p^{m(f)}\cdot {\bf r}'(d') }\\
\phantom{\bbO}\\
\end{smallmatrix}
\end{equation*}
Using this computation, we have the following.

\vspace{2mm}
\noindent {\bf Case 1}. Assume that $f=0$ and $r\geq 1$. Then $m(f)=0$ and $d=d'$. We have
\begin{equation*}
V(C_d)=V(C(M)_{d}) \motimes (p, -1, \dots)^t.
\end{equation*}
Thus, we have
\begin{equation*}
\Gcd(C_d)=\Gcd(C(M)_d).
\end{equation*}

Next, by definition we have
\begin{equation*}
\textstyle  \pow_p(C_d)=-\sum_{\delta\mid M} V(C(M)_d)_{\delta}=0,
\end{equation*}
and 
\begin{equation*}
\textstyle \pow_\ell(C_d)=(p-1)\sum_{\tn{val}_\ell(\delta) \not\in 2\Z} V(C(M)_d)_\delta=
(p-1)\times \pow_\ell(C(M)_d).
\end{equation*}
Thus, we have
\begin{equation*}
\fh(C_d)=\begin{cases}
2 & \text{ if } ~~p=2 \qqa \fh(C(M)_d)=2,\\
1 & \text{ otherwise}.
\end{cases}
\end{equation*}

\vspace{2mm}
\noindent {\bf Case 2}. Assume that $f=1$ and $r\geq 2$. Then $m(f)=1$ and we have
\begin{equation*}
 V(C_d)=\left(\begin{smallmatrix}
 p\varphi(z)\cdot {\bf r}'(1)+p\cdot {\bf r}'(d')\\
 -\varphi(z)\cdot {\bf r}'(1)-(p^2+1)\cdot {\bf r}'(d')\\
 p\cdot {\bf r}'(d') \\
\bbO \\
 \end{smallmatrix}\right).
 \end{equation*}
Note first that by Lemma \ref{lemma: entries of Upsilon matrix}(4), $p$ does not divide the greatest common divisor of the entries of ${\bf r}'(d')$, and so $p^2$ does not divide $\Gcd(C_d)$.
Since $\varphi(z)=\varphi(z')(p-1)$ and $V(C(M)_{d'})=\varphi(z') \cdot {\bf r}'(1)-{\bf r}'(d')$, we have
\begin{equation}\label{37}
-\varphi(z)\cdot {\bf r}'(1)-(p^2+1)\cdot {\bf r}'(d')=(1-p)\cdot V(C(M)_{d'})-p(p+1)\cdot {\bf r}'(d').
\end{equation}
Thus, $p$ divides $\Gcd(C_d)$ if and only if $p$ divides $\Gcd(C(M)_{d'})$. 
Moreover, by Lemma \ref{lemma: entries of Upsilon matrix}(4) and (\ref{equation: 0jae}) we have
\begin{equation*}
\Gcd(C_d)= \frac{z'}{\rad(z')} \times \gcd(p, \, \alpha).
\end{equation*}

Next, since the sum of the entries of $V(C(M)_{d'})$ is zero, by (\ref{37}) we have
\begin{equation*}
\textstyle \pow_p(C_d)=\sum_{{\delta'} \mid M} (-\varphi(z)\cdot {\bf r}'(1)_{\delta'}-(p^2+1)\cdot {\bf r}'(d')_{\delta'})=-p(p+1)\sum_{{\delta'}\mid M}{\bf r}'(d')_{\delta'},
\end{equation*}
which is equal to $-\frac{z'}{\rad(z')} \times p(p+1)\prod_{\ell\mid M}(\ell-1)^{a(\ell)}$ by Lemma \ref{lemma: entries of Upsilon matrix}(1).

Lastly, as above we have
\begin{equation*}
\begin{split}
\pow_\ell(C_d)&=(p-1)^2 \sum_{\tn{val}_\ell({\delta'})\not\in 2\Z} (\varphi(z')\cdot {\bf r}'(1)_{\delta'}-{\bf r}'(d')_{\delta'})\\
&=(p-1)^2 \cdot \pow_\ell(C(M)_{d'})=\frac{z'}{\rad(z')} \times (p-1)^2 \cdot \alpha \cdot \pw_\ell(C(M)_{d'}).
\end{split}
\end{equation*}
Thus, $\fh(C_d)=2$ if and only if all the following hold.
\begin{itemize}[--]
\item
$p=2$.
\item
$\tn{val}_2(\alpha) \leq 1$.
\item
$\fh(C(M)_{d'})=2$.
\end{itemize}

\vspace{2mm}
\noindent {\bf Case 3}. Assume that $f=2$ and $r\geq 3$. We have
\begin{equation*}
V(C_d)= \left(\begin{smallmatrix}
p\varphi(z)\cdot {\bf r}'(1)\\
-\varphi(z)\cdot {\bf r}'(1)+p^{m(2)}\cdot {\bf r}'(d')\\
-p^{m(2)-1}(p^2+1) \cdot {\bf r}'(d')\\
p^{m(2)}\cdot {\bf r}'(d') \\
\bbO \\
\end{smallmatrix}\right).
\end{equation*}
By Lemma \ref{lemma: entries of Upsilon matrix}(4), all entries of $p^{m(2)-1}\cdot {\bf r}'(d')$ are divisible by $\frac{z}{\rad(z)}$. Since $\varphi(z)$ is also divisible by $\frac{z}{\rad(z)}$, all entries of $V(C_d)$ are divisible by $\frac{z}{\rad(z)}$. By comparing the entries of $p^{m(2)-1}(p^2+1) \cdot {\bf r}'(d')$ and $p^{m(2)}\cdot {\bf r}'(d')$, we have 
\begin{equation*}
\Gcd(C_d)=\frac{z}{\rad(z)}.
\end{equation*}

Next, since the sum of the entries of $V(C(M)_{d'})$ is zero, as above we have
\begin{equation*}
\begin{split}
\pow_p(C_d)&=\sum_{{\delta'}\mid M}(-\varphi(z)\cdot {\bf r}'(1)_{\delta'}+2p^{m(2)}\cdot {\bf r}'(d')_{\delta'})\\
&=p^{m(2)-1}(p+1)\sum_{{\delta'} \mid M} {\bf r}'(d')_{\delta'}=\frac{z}{\rad(z)} \times (p+1) \prod_{\ell\mid M}(\ell-1)^{a(\ell)}.
\end{split}
\end{equation*}

Lastly, as above we have
\begin{equation*}
\begin{split}
\pow_\ell(C_d)&=\sum_{\tn{val}_\ell({\delta'})\not\in 2\Z}((p-1)\varphi(z) \cdot {\bf r}'(1)_{\delta'}-p^{m(2)-1}(p-1)^2 \cdot {\bf r}'(d')_{\delta'})\\
&=p^{m(2)-1}(p-1)^2\sum_{\tn{val}_\ell({\delta'})\not\in 2\Z} (\varphi(z')\cdot {\bf r}'(1)_{\delta'}-{\bf r}'(d')_{\delta'}) \\
&=p^{m(2)-1}(p-1)^2 \cdot \pow_\ell(C(M)_{d'})\\
&=\frac{z}{\rad(z)} \times (p-1)^2 \cdot \alpha \cdot \pw_\ell(C(M)_{d'}).
\end{split}
\end{equation*}
Thus, $\fh(C_d)=2$ if and only if all the following hold.
\begin{itemize}[--]
\item
$p=2$.
\item
$\alpha$ is odd.
\item
$\fh(C(M)_{d'})=2$.
\end{itemize}

\vspace{2mm}
\noindent {\bf Case 4}. Assume that $3\leq f \leq r-1$. We have
\begin{equation*}
V(C_d)= \left(\begin{smallmatrix}
p\varphi(z)\cdot {\bf r}'(1)\\
-\varphi(z)\cdot {\bf r}'(1)\\
\bbO \\
p^{m(f)}\cdot {\bf r}'(d')\\
-p^{m(f)-1}(p^2+1) \cdot {\bf r}'(d')\\
p^{m(f)}\cdot {\bf r}'(d') \\
\bbO \\
\end{smallmatrix}\right)\begin{smallmatrix}
\phantom{p\varphi(z)\cdot {\bf r}'(1)}\\
\phantom{p\varphi(z)\cdot {\bf r}'(1)}\\
\phantom{\bbO}\\
\phantom{p^{m(f)}\cdot {\bf r}'(d')}\\
\leftarrow \tn{$p^f$-th entry.}\\
\phantom{p^{m(f)}\cdot {\bf r}'(d')}\\
\phantom{\bbO}\\
\end{smallmatrix}
\end{equation*}
As above, we have 
\begin{equation*}
\Gcd(C_d)=\frac{z}{\rad(z)}.
\end{equation*}

Also, if $f$ is odd (resp. even) then the computation of $\pow_q(C_d)$ for any prime $q$ is the same as Case 2 (resp. Case 3) above, and we have
\begin{equation*}
\pw_p(C_d)=\begin{cases}
-p(p+1)\prod_{\ell\mid M}(\ell-1)^{a(\ell)} & \text{ if } ~~f \not\in 2\Z,\\
(p+1)\prod_{\ell\mid M}(\ell-1)^{a(\ell)}& \text{ if } ~~f \in 2\Z,\\
\end{cases}
\end{equation*}
and
\begin{equation*}
\pw_\ell(C_d)=(p-1)^2 \cdot \alpha \cdot \pw_\ell(C(M)_{d'}).
\end{equation*}
Thus, $\fh(C_d)=2$ if and only if all the following hold.
\begin{itemize}[--]
\item
$p=2$.
\item
$\alpha$ is odd.
\item
$\fh(C(M)_{d'})=2$.
\end{itemize}

\vspace{2mm}
Now, we are ready to prove our theorem when $N$ is divisible by at least two primes.
Firstly, suppose that $z=1$. Then by Case (1), we have
\begin{equation*}
\Gcd(C_d)=\Gcd(C(d)_d)=\fg(d) \qqa \pw_p(C_d)= \textstyle \pw_p(C(d)_d) \times \prod_{\ell \mid N/d} (\ell-1).
\end{equation*}
Thus, $\fh(C_d)=2$ if and only if $N/d$ is a power of $2$ and $\fh(C(d)_d)=\fh(d)=2$. 
Since $z=1$, if $N/d$ is a power of $2$, then $d$ must be odd. By definition, we have $\fh(d)=1$ unless $d$ is squarefree. Thus, $\fh(C_d)=2$ if and only if $N=2^r d$ with $d$ odd squarefree and $\fh(d)=2$. (These are the cases (4) and (5) in the definition of $\fh(N, d)$.)

Secondly, suppose that $z$ is a power of a prime $p$. If we write $N=Mp^r$ with $\gcd(M, p)=1$, then $d=d'p^f$ for a divisor $d'$ of $M$ and $1\leq f\leq r-1$. Since $z'=\gcd(d', M/{d'})=1$, we have  $\Gcd(C(M)_{d'})=\Gcd(C(d')_{d'})=\fg(d')$. In the discussion below, we use the same notation as in Cases (1)--(4).
\begin{itemize}[--]
\item
Assume that $f\geq 2$. Then we are in Cases (3) and (4), and so we easily have $\Gcd(C_d)=\frac{z}{\rad(z)}$. Also, we always have $\fh(C_d)=1$. Indeed, if $p=2$, then we have either $\alpha \in 2\Z$ or $\fh(C(M)_{d'})=1$. 
More specifically, let $p=2$ (and so $M$ is odd).  If $d'\neq M$, then there is an odd prime $\ell$ dividing $M/{d'}$ because $M$ is odd. Since $z'=1$, by Case (1) we have $\fh(C(M)_{d'})=1$. If $d'=M$, then  $\Gcd(C(M)_{d'})=\fg(M)=\alpha$. Since $M$ is odd, either $\fh(C(M)_{d'})=\fh(M)=1$ or $\fg(M)=\alpha \in 2\Z$ (cf. Remark \ref{remark: fg 2-adic valuation}).  

\item
Assume that $f=1$. Then we are in Case (2). Since $d'=d/p$ and $z'=1$, we have $\Gcd(C(M)_{d'})=\Gcd(C(d')_{d'})=\fg(d')=\fg(d/p)$ by Case (1). As above, we have
\begin{equation*}
\Gcd(C_d)=\gcd(p, \alpha)=\gcd(p, \Gcd(C(M)_{d'}))=\gcd(p, \fg(d/p)).
\end{equation*}

Next, we compute $\fh(C_d)$. By the discussion above, $\fh(C_d)=2$ if and only if $p=2$, $\alpha \not\equiv 0 \pmod 4$, and $\fh(C(M)_{d'})=2$. Let $p=2$. If $d'\neq M$ then we have $\fh(C(M)_{d'})=1$ as above. 
Thus, we further assume that $d'=M$. Then $\fh(C(M)_{d'})=\fh(M)=2$ and $\fg(M) \not\equiv 0 \pmod 4$ if and only if $M$ is a prime congruent to $1$ modulo $4$ (cf. Remark \ref{remark: fg 2-adic valuation}). 
\end{itemize}

Finally, suppose that $z$ is divisible by two distinct primes $p$ and $q$. As above, let $N=Ap^rq^s$ with $\gcd(A, pq)=1$. 
If either $\tn{val}_p(d)\geq 2$ or $\tn{val}_q(d)\geq 2$, then as discussed above we have
\begin{equation*}
\Gcd(C_d)=\frac{z}{\rad(z)} \qa \fh(C_d)=1.
\end{equation*}
Thus, we further assume that $d=d'pq$, where $d'=\gcd(A, d)$. Note that $\frac{z}{\rad(z)}=\frac{z'}{\rad(z')}$, where $z'=\gcd(d', A/{d'})$.
As in (\ref{equation: 0jae}), we have
\begin{equation*}
\Gcd(C_d)=\frac{z}{\rad(z)} \times \beta=\frac{z'}{\rad(z')} \times \beta ~~ \text{ for some } \beta \in \Z.
\end{equation*}
If we take $M=Aq^s$ then by Case (2) we have $\beta=\gcd(p, \alpha)$, which is a divisor of $p$. 
By swapping the roles of $p$ and $q$, we obtain that $\beta$ is also a divisor of $q$. Therefore we have $\beta=1$. Since either $p$ or $q$ is odd, we also have $\fh(C_d)=1$ by Case (2). 

This completes the proof.
\end{proof}

\vspace{10mm}
\section{Motivational Examples}\label{chapter4}
We hope to find rational cuspidal divisors $Z(d)$ on $X_0(N)$ such that
\begin{equation*}
\scC(N) \simeq \moplus_{d\in \cD_N^0} \br{ \ov{Z(d)}}.
\end{equation*}
It turns out that it is indeed possible if $N$ is a prime power. On the other hand, if $N$ is divisible by at least two primes, then the author does not know how to find such divisors except a few easy cases. Instead, for any given prime $\ell$, we try to find rational cuspidal divisors $Z_\ell(d)$ such that
\begin{equation*}
\scC(N)[\ell^\infty] \simeq \moplus_{d \in \cD_N^0} \br{\ov{Z_\ell(d)}}.
\end{equation*}
Indeed, we can do more as follows: we construct rational cuspidal divisors $Z(d)$ such that
\begin{equation*}
\scC(N) \simeq \scC(N)^\sqf \moplus \left(\moplus_{d \in \cD_N^\nsqf} \br{ \ov{Z(d)}}\right),
\end{equation*}
where $\scC(N)^\sqf:=\br{\ov{Z(d)} : d \in \cD_N^\sqf}$. Also, for any $d\in \cD_N^\sqf$, we find a rational cuspidal divisor $Y^2(d)$ such that
\begin{equation*}
\scC(N)^\sqf[\ell^\infty] \simeq \moplus_{d\in \cD_N^\sqf}  \br{\ov{Y^2(d)}}[\ell^\infty].
\end{equation*}

\ms
The purpose of this section is to explain our initial ideas for the computation of $\scC(N)$ in great detail and to define rational cuspidal divisors $Z(d)$ and $Y^2(d)$. Also, we try to give a motivation behind the definition. We will not prove our claims since we will do in the proceeding sections.

\ms
From now on, a vector in $\cS_k(p^r)$ is written as $(a_0, \dots, a_r)$ so that its $p^i$-th entry is $a_i$, i.e., 
\begin{equation*}
\textstyle (a_0, \dots, a_r) := \sum_{i=0}^r a_i \cdot {\bf e}(p^r)_{p^i} \in \cS_k(p^r) ~~\text{ for both $k=1$ or $2$}.
\end{equation*}

\ms
\subsection{Case of level $p^r$}\label{section: case of odd prime power}
In this subsection, we explain our construction of $B_p(r, f)$ for any prime $p$ and $r\geq 2$.

\vspace{2mm}
For simplicity, let $(P_k):=(P(p^r)_{p^k})$ and let
\begin{equation*}
C_r(f):=C(p^r)_{p^f}=\varphi(p^{m(f)})\cdot (P_0)-(P_f),
\end{equation*}
where $m(f):=\min(f, \,r-f)$. To simplify our computation, we assume that $p\geq 7$. 

\subsubsection{$r=2$}
Since the group $\Qdiv {p^2}$ is generated by $C_2(f)$, it suffices to compute the intersection
\begin{equation*}
\br {\ov{C_2(1)}} \cap \br {\ov{C_2(2)}}.
\end{equation*}
Suppose that there is a relation between $C_2(f)$, i.e., $\sum_{f=1}^2 a_f \cdot \ov{C_2(f)}=0$. Then the order of the divisor $X=\sum_{f=1}^2 a_f \cdot C_2(f)$ is $1$, and so by Corollary \ref{corollary: order 1 criterion} we have
\begin{equation*}
{\bf r}(X) \in \cS_1(N) \qa \rpw_p(X) \in 2\Z.
\end{equation*}
Let $n_f$ be the order of the divisor $C_2(f)$ and let $b_f=\frac{a_f}{n_f}$. (Note that we can always take $b_f \in [0, 1)$.) By direct computation, we have
\begin{equation*}
V(C_2(1))=p(p, -p-1, 1) \qa V(C_2(2))=p(1, 0, -1).
\end{equation*}
Thus, we have $\Gcd(C_2(f))=p$ and $\fh(C_2(f))=1$, and so by Theorem \ref{theorem: computation of order} we have $n_1=n_2=\frac{p^2-1}{24}>1$. Also, by Lemma \ref{lemma: invertible lambda} we have
\begin{equation}\label{equation: relation r and bbV}
{\bf r}(a_f \cdot C_2(f))= \frac{a_f \cdot 24\cdot \Gcd(C_2(f))}{\kappa(p^2)} \times \bbV(C_2(f))=b_f\cdot {\fh(C_2(f))} \cdot \bbV(C_2(f)).
\end{equation}
Thus, we have
\begin{equation*}
{\bf r}(X)=(b_1 p + b_2, -b_1(p+1), b_1-b_2).
\end{equation*}
If we take $b_1=b_2=\frac{2}{p+1}$, then ${\bf r}(X)=(2, -2, 0) \in \cS_1(N)$ and $\rpw_p(X)=-2$. 
Thus, there is a relation between $C_2(f)$, and
this argument illustrates a way to find a relation among rational cuspidal divisors.
Indeed, this is informed by the fact that the greatest common divisor of the entries of
\begin{equation*}
\bbV(C_2(1))+\bbV(C_2(2))=(p+1, -p-1, 0)=(p+1)(1, -1, 0)
\end{equation*}
is $p+1$. Now, we may replace $C_2(1)$ by $C_2(1)+C_2(2)$, and let 
\begin{equation*}
\begin{split}
B_p(2, 1)&:=C_2(1)+C_2(2)=p(P_0)-(P_1)-(P_2),\\
B_p(2, 2)&:=C_2(2)=(P_0)-(P_2).
\end{split}
\end{equation*}
Note that
\begin{equation*}
\bbV(B_p(2, 1))=(1, -1, 0) \qa \bbV(B_p(2, 2))=(1, 0, -1).
\end{equation*}
Thus, if there is a relation as above, say $a_1 \cdot \ov{B_p(2, 1)}+a_2 \cdot \ov{B_p(2, 2)}=0$, then 
\begin{equation*}
{\bf r}(X)=(cb_1+b_2, -cb_1, -b_2) \in \cS_1(N) \qa -cb_1 \in 2\Z
\end{equation*}
for some integer $c$. Thus, we get $b_2=0$ and hence $b_1=0$ as well, which is a contradiction. Therefore we have
\begin{equation}\label{equation: level p2 no relations}
\br {\ov{B_p(2, 1)}} \cap \br {\ov{B_p(2, 2)}}=0.
\end{equation}
Since it is obvious that $\Qdiv {p^2}$ is generated by $B_p(2, f)$, we obtain
\begin{equation*}
\scC(p^2) \simeq \moplus \br{\ov{B_p(2, f)}}=\br {\ov{B_p(2, 1)}} \moplus \br {\ov{B_p(2, 2)}}.
\end{equation*}

\begin{observation*}
Note that $B_p(2, 1)=C_2(1)+C_2(2)$ is equal to $\alpha_p(p)^*(C_1(1))$. Thus, we may guess that ``nice'' rational cuspidal divisors can be obtained from lower levels by the degeneracy maps. Also, in the proof of (\ref{equation: level p2 no relations}), we easily have $b_i=0$ because there is a divisor $\delta$ of $N$ such that 
\begin{equation*}
\bbV(B_p(2, 1))_\delta=0 \qa \bbV(B_p(2, 2))_\delta=-1.
\end{equation*}
(Also, we crucially use the fact that $\fh(B_2(2, 2))=1$.)
As a generalization, we will obtain a simple criterion for ``linear independence'' in Section \ref{sec: criterion for linear independence}. Moreover, it turns out that if $D$ is a rational cuspidal divisor constructed from lower level using two degeneracy maps $\alpha_p(N)^*$ and $\beta_p(N)^*$, then most of the entries of $\bbV(D)$ are zeros (cf. Proposition \ref{prop: degeneracy maps revisited}). Thus, it is easy to apply our criterion with such divisors.
\end{observation*}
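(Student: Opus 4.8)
The plan is to verify the three concrete assertions of the observation one at a time; each reduces to a bounded computation in level $p^2$ using tools already in place, so no new idea is required. First I would establish the divisor identity $B_p(2,1)=\alpha_p(p)^*(C_1(1))$. Here $C_1(1)=C(p)_p=(P(p)_1)-(P(p)_p)$, and $\alpha_p(p)\colon X_0(p^2)\to X_0(p)$ is the degeneracy map corresponding to $M=1$, $r=1$ in the notation $\alpha_p(N)$ with $N=p$. Applying Lemma \ref{lemma: degeneracy maps pullback on RCD} with $M=1$ and $d'=1$, the branch $0\leq f\leq r/2$ (at $f=0$, $r=1$) gives $\alpha_p(p)^*(P(p)_1)=p\cdot(P(p^2)_1)=p(P_0)$, while the branch $f=r\geq 1$ (at $f=r=1$) gives $\alpha_p(p)^*(P(p)_p)=(P(p^2)_p)+(P(p^2)_{p^2})=(P_1)+(P_2)$. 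Subtracting yields $\alpha_p(p)^*(C_1(1))=p(P_0)-(P_1)-(P_2)$, which is exactly $B_p(2,1)=C_2(1)+C_2(2)$. The same identity can alternatively be read off cusp-by-cusp from Lemmas \ref{lemma: degeneracy map on cusp} and \ref{lemma: ramification index of cusp} by tracking ramification indices, but the tabulated formula is shorter.

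Next I would compute the normalized vectors $\bbV(B_p(2,f))$ and exhibit the distinguishing divisor. Using the tridiagonal matrix $\Upsilon(p^2)=\left(\begin{smallmatrix} p & -p & 0 \\ -1 & p^2+1 & -1 \\ 0 & -p & p \end{smallmatrix}\right)$ together with $\Phi_{p^2}(B_p(2,1))=(p,-1,-1)$ and $\Phi_{p^2}(B_p(2,2))=(1,0,-1)$, I would form the products $V(B_p(2,f))=\Upsilon(p^2)\times\Phi_{p^2}(B_p(2,f))$, obtaining $V(B_p(2,1))=p(p+1)\cdot(1,-1,0)$ and $V(B_p(2,2))=p\cdot(1,0,-1)$. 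Dividing by $\Gcd(B_p(2,1))=p(p+1)$ and $\Gcd(B_p(2,2))=p$ respectively gives $\bbV(B_p(2,1))=(1,-1,0)$ and $\bbV(B_p(2,2))=(1,0,-1)$, so taking $\delta=p^2$ yields $\bbV(B_p(2,1))_{p^2}=0$ and $\bbV(B_p(2,2))_{p^2}=-1$, as asserted. For $\fh(B_p(2,2))$ I would invoke Definition \ref{definition: bbV and Gcd and pw}: since $p$ is the only prime dividing $N=p^2$, one has $\pw_q(B_p(2,2))=0$ for every prime $q\neq p$, and the sole divisor of $p^2$ of odd $p$-adic valuation is $p$ itself, so $\pw_p(B_p(2,2))=\bbV(B_p(2,2))_p=0\in 2\Z$, whence $\fh(B_p(2,2))=1$.

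I do not expect a genuine obstacle, since each step is a finite calculation; the one point demanding care is the identity of the first step, where the two branches of the pullback formula in Lemma \ref{lemma: degeneracy maps pullback on RCD} must be matched correctly to $f=0$ and $f=r=1$, and the ramification factor $p$ multiplying $(P_0)$—reflecting that $\alpha_p(p)$ is ramified at the level-$1$ cusp $(P_0)$ of $X_0(p^2)$—must not be dropped. Once this identity is secured, the remaining two claims follow immediately from the $\bbV$ vectors computed above; and the combination $\bbV(B_p(2,1))_{p^2}=0$ against $\bbV(B_p(2,2))_{p^2}=-1$ together with $\fh(B_p(2,2))=1$ is exactly the configuration feeding the linear-independence argument for (\ref{equation: level p2 no relations}).
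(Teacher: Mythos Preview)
Your proposal is correct and matches the paper's approach: the observation is not accompanied by a separate proof in the paper, since the vectors $\bbV(B_p(2,f))$ and the value $\fh(B_p(2,2))=1$ were already computed in the immediately preceding discussion, and the identity $B_p(2,1)=\alpha_p(p)^*(C_1(1))$ is simply asserted. Your explicit verification of that identity via Lemma~\ref{lemma: degeneracy maps pullback on RCD} and your recomputation of the $\bbV$-vectors and $\fh$ are exactly the intended checks.
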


\subsubsection{$r=3$}
Since the group $\Qdiv {p^3}$ is generated by $C_3(f)$, we need to compute the intersections
\begin{equation*}
\br {\ov{C_3(i)}, ~\ov{C_3(j)}} \cap \br {\ov{C_3(6-i+j)}} ~~\text{ for any } i, j \in \{1, 2, 3 \} \text{ with } i\neq j.
\end{equation*}
Instead, as above we replace $C_3(1)$ by $\beta_p(p^2)^*(C_2(2))=-C_3(1)+p\cdot C_3(3)$. Also, we replace $C_3(2)$ by $\alpha_p(p^2)^*(B_2(2, 1))$.
Thus, let
\begin{equation*}
\begin{split}
B_p(3, 1)&:=\alpha_p(p^2)^*(B_2(2, 1))=p^2(P_0)-p(P_1)-(P_2)-(P_3),\\
B_p(3, 2)&:=(P_0)+(P_1)-p(P_3) ~\text{ and }~ B_p(3, 3):=(P_0)-(P_3).
\end{split}
\end{equation*}
Then it is easy to see that $\Qdiv {p^3}$ is generated by $B_p(3, f)$. Also, since
\begin{equation*}
\begin{split}
\bbV(B_p(3, 1))&=(1, -1, 0, 0),\\
\bbV(B_p(3, 2))&=(0, 1, 0, -1),\\
\bbV(B_p(3, 3))&=(p, -1, 1, -p),
\end{split}
\end{equation*}
and $\fh(B_p(3, 2))=\fh(B_p(3, 3))=1$, we can prove that there is no relation among them. Indeed, you can see $\bbV(B_p(3, f))_{p^2}=0, 0$ or $1$ for $f=1, 2$ or $3$, respectively. So as above we obtain
\begin{equation*}
\br{ \ov{B_p(3, 1)}, ~~ \ov{B_p(3, 2)} } \cap \br {\ov{B_p(3, 3)}} =0.
\end{equation*}
Therefore we have
\begin{equation*}
\scC(p^3) \simeq  \br{ \ov{B_p(3, 1)}, ~~ \ov{B_p(3, 2)} } \moplus \br {\ov{B_p(3, 3)}}.
\end{equation*}
Also, if you check the $p^3$-th entries of $\bbV(B_p(3, 1))$ and $\bbV(B_p(3, 2))$, then we obtain
\begin{equation*}
\br{ \ov{B_p(3, 1)}, ~~ \ov{B_p(3, 2)} } \simeq \br{ \ov{B_p(3, 1)} } \moplus \br {\ov{B_p(3, 2)}}.
\end{equation*}
Thus, we finally have
\begin{equation*}
\scC(p^3) \simeq  \moplus \br {\ov{B_p(3, f)}}.
\end{equation*}

\subsubsection{$r=4$}
Now, we continue as above without hesitation. Let
\begin{equation*}
\begin{split}
B_p(4, 1)&:=\alpha_p(p^3)^*(B_p(3, 1))=p^3(P_0)-p^2(P_1)-(P_2)-(P_3)-(P_4),\\
B_p(4, 3)&:=\beta_p(p^3)^*(B_p(3, 3))=(P_0)+(P_1)-p(P_4), \quad B_p(4, 4)=(P_0)-(P_4).
\end{split}
\end{equation*}
Next, we take $B_p'(4, 2):=\alpha_p(p^3)^*(B_p(3, 3))=p(P_0)-(P_3)-(P_4)$.
Then by direct computation, we have
\begin{equation*}
\begin{split}
\bbV(B_p(4, 1))&=(1, -1, 0, 0, 0), \\
\bbV(B_p'(4, 2))&=(p, -1, 1, -p, 0), \\
\bbV(B_p(4, 3))&=(0, p, -1, 1, -p), \\
\bbV(B_p(4, 4))&=(p, -1, 0, 1, -p).
\end{split}
\end{equation*}
The generation part (the group $\Qdiv {p^4}$ is generated by these divisors) is easy. 
On the other hand, there may exist a relation among $B_p'(4, 2)$, $B_p(4, 3)$ and $B_p(4, 4)$. Thus, we replace $B_p'(4, 2)$ by 
\begin{equation*}
X=\alpha_p(p^3)^*(B_p(3, 2))=p(P_0)+p(P_1)-p(P_3)-p(P_4)
\end{equation*}
so that the $p^2$-th entry of $\bbV(X)$ is zero. Since all the coefficients of $X$ are divisible by $p$ 
(cf. Lemma \ref{lemma: B2 definition}), the generation part is not achieved. Therefore we replace $X$ by $Y=\frac{1}{p}X$, which does not change the vector $\bbV(X)$, and then we can prove the generation part.
Now, by comparing the $p^2$-th entries we can conclude that 
\begin{equation*}
\scC(p^4) \simeq \br {\ov{B_p(4, 1)}, ~\ov{Y}, ~\ov{B_p(4, 4)}} \moplus \br {\ov{B_p(4, 3)}}.
\end{equation*}
However, there is a relation between $Y$ and $B_p(4, 4)$. Indeed, since the greatest common divisor of the entries of 
\begin{equation*}
V=\bbV(Y)+\bbV(B_p(4, 4))=(p, 0, 0, 0, -p)
\end{equation*}
is $p$, we can find a new divisor 
\begin{equation*}
B_p(4, 2):=Y+p^2 \cdot B_p(4, 4)=(p^2+1)(P_0)+(P_1)-(P_3)-(p^2+1)(P_4)
\end{equation*}
so that $\bbV(B_p(4, 2))$ is parallel to $V$. Now, we can prove that $\Qdiv {p^4}$ is generated by $B_p(4, f)$. Also, we have
\begin{equation*}
\begin{split}
\bbV(B_p(4, 1))&=(1, -1, 0, 0, 0), \\
\bbV(B_p(4, 2))&=(1, 0, 0, 0, -1), \\
\bbV(B_p(4, 3))&=(0, p, -1, 1, -p), \\
\bbV(B_p(4, 4))&=(p, -1, 0, 1, -p).
\end{split}
\end{equation*}
Thus, by comparing the $p^2$-th, $p^3$-th and $p^4$-th entries successively, we obtain 
\begin{equation*}
\scC(p^4) \simeq \moplus \br {\ov{B_p(4, f)}}.
\end{equation*}

\subsubsection{$r=5$}
As above, we take
\begin{equation*}
\begin{split}
B_p(5, 1)&:=\alpha_p(p^4)^*(B_p(4, 1))=p^4(P_0)-p^3(P_1)-p(P_2)-(P_3)-(P_4)-(P_5),\\
B_p(5, 2)&:=\beta_p(p^4)^*(B_p(4, 2))=(p^2+1)((P_0)+(P_1)-p(P_5))+(P_2)-p(P_4),\\
B_p(5, 3)&:=\alpha_p(p^4)^*(B_p(4, 4))=p(P_0)-(P_4)-(P_5),\\
B_p(5, 4)&:=\beta_p(p^4)^*(B_p(4, 4))=(P_0)+(P_1)-p(P_5) \qqa B_p(5, 5):=(P_0)-(P_5).\\
\end{split}
\end{equation*}
Then we can show that $\Qdiv {p^5}$ is generated by $B_p(5, f)$. Also, we have
\begin{equation*}
\begin{split}
\bbV(B_p(5, 1))&=(1, -1, 0, 0, 0, 0),\\
\bbV(B_p(5, 2))&=(0, 1, 0, 0, 0, -1),\\
\bbV(B_p(5, 3))&=(p, -1, 0, 1, -p, 0),\\
\bbV(B_p(5, 4))&=(0, p, -1, 0, 1, -p),\\
\bbV(B_p(5, 5))&=(p, -1, 0, 0, 1, -p).
\end{split}
\end{equation*}
Thus, by comparing the $p^3$-th, $p^2$-th, $p^4$-th and $p^5$-th entries successively, we obtain
\begin{equation*}
\scC(p^5) \simeq \moplus \br{\ov{B_p(5, f)}}.
\end{equation*}

\subsubsection{$r=6$}
As in the case of $r=4$, we can take
\small
\begin{equation*}
\begin{split}
B_p(6, 1)&:=\alpha_p(p^5)^*(B_p(5, 1))=p^5(P_0)-p^4(P_1)-p^2(P_2)-(P_3)-(P_4)-(P_5)-(P_6),\\
B_p(6, 2)&:=(p^4+p^2+1)((P_0)-(P_6))+(p^2+1)((P_1)-(P_5))+(P_2)-(P_4),\\
B_p(6, 3)&:=\beta_p(p^5)^*(B_p(5, 4))=(P_0)+(P_1)+(P_2)-p^2(P_6),\\
B_p(6, 4)&:=\alpha_p(p^5)^*(B_p(5, 5))=p(P_0)-(P_5)-(P_6),\\
B_p(6, 5)&:=\beta_p(p^5)^*(B_p(5, 5))=(P_0)+(P_1)-p(P_6) \qqa B_p(6, 6):=(P_0)-(P_6).
\end{split}
\end{equation*}
\normalsize
Note that $B_p(6, 2)$ is equal to $\frac{1}{p}\alpha_p(p^5)^*(B_p(5, 2))+p^4\cdot B_p(6, 6)$. Then we have
\begin{equation*}
\begin{split}
\bbV(B_p(6, 1))&=(1, -1, 0, 0, 0, 0, 0),\\
\bbV(B_p(6, 2))&=(1, 0, 0, 0, 0, 0, -1),\\
\bbV(B_p(6, 3))&=(0, 0, p, -1, 0, 1, -p),\\
\bbV(B_p(6, 4))&=(p, -1, 0, 0, 1, -p, 0),\\
\bbV(B_p(6, 5))&=(0, p, -1, 0, 0, 1, -p),\\
\bbV(B_p(6, 6))&=(p, -1, 0, 0, 0, 1, -p).
\end{split}
\end{equation*}
As above, by comparing the $p^3$-th, $p^4$-th, $p^2$-th, $p^5$-th and $p^6$-th entries successively, we obtain
\begin{equation*}
\scC(p^6) \simeq \moplus  \br{\ov{B_p(6, f)}}.
\end{equation*}


\subsubsection{$r\geq 7$}
As above, we define
\begin{equation*}
B_p(r, 1):=\alpha_p(p^{r-1})^*(B_p(r-1, 1)) \qqa B_p(r, r):=(P_0)-(P_r).
\end{equation*}
Also, if $f=r-2a \geq 3$, then we set
\begin{equation*}
B_p(r, f):=\pi_1(p^r, p^{r-a})^*(B_p(r-a, r-a)).
\end{equation*}
Furthermore, if $f=r+1-2a\geq 3$, then we set
\begin{equation*}
B_p(r, f):=\pi_2(p^r, p^{r-a})^*(p^{r-a})^*(B_p(r-a, r-a)).
\end{equation*}
Finally, we set $B_p(r, 2):=\beta_p(p^{r-1})^*(B_p(r-1, 2))$ if $r$ is odd, and otherwise
\begin{equation*}
B_p(r, 2):=\frac{1}{p} \times \pi_{12}(p^{r-2})^*(B_p(r-2, 2))+p^{r-2}\cdot B_p(r, r).
\end{equation*}
Then we will prove later that
\begin{equation*}
\scC(p^r) \simeq \moplus_{f=1}^r \br {\ov{B_p(r, f)}}.
\end{equation*}

\ms
\subsection{Case of level $2^r$}\label{section: case of level 2r} 
In this subsection, we explain our construction of $B^2(r, f)$ for any $r\geq 5$.

\ms
Let $N=2^r$ for some $r\geq 5$. As above, we hope to prove that
\begin{equation*}
\scC(2^r) \simeq \moplus \br{\ov{B_2(r, f)}}.
\end{equation*}
However, the arguments above (about linear independence) break down because $\fh(B_2(r, f))=2$ for some $f\geq 2$. Indeed, if $r\geq 6$ is even, then we can prove that $\br {\ov{C_1}} \cap \br {\ov{C_2}}  \simeq \zmod 2$, where $C_i=B_2(r, r-i)$. Note that since
\begin{equation*}
V(C_1)=2(0, 2, -1, \dots, 1, -2) \qa V(C_2)=2(2, -1, \dots, 1, -2, 0),
\end{equation*}
the order of $C_i$ is $2^{r-4}$ by Theorem \ref{theorem: computation of order}. If we let $X=C_1+C_2$, then we have
\begin{equation*}
V(X)=V(C_1)+V(C_2)=2(2, 1, -1, \dots, 1, -1, -2).
\end{equation*}
Since $\pw_2(X)=0$, the order of $X$ is $2^{r-5}$, which is one half of the order of $C_i$.
Also, since $\br{\ov{C_1}, \ov{C_2}} = \br{\ov{X}, \ov{C_1}}$, there is a relation between $C_i$. Moreover, we can prove that there is no relation between $X$ and $C_1$, which proves the claim. 
So we need another idea for finding new generators to remove a possible non-trivial intersection resulting from this phenomenon.\footnote{This seems to be a reason why Ling could not find all possible relations among $C_d$.} One may guess if we find new generators $D$ with $\pw_2(D) \in 2\Z$ or even better $\pw_2(D)=0$, then such a phenomenon will not occur. 
In our method, this simple idea will be crucial.

\vspace{2mm}
For simplicity, let $D_f:=C(2^r)_{2^f}$. 
\subsubsection{$r=5$}\label{subsection: p=2, r=5}
By direct computation, we have 
\begin{equation*}
\begin{array}{|c|c|c|c|c|} \hline
f & \bbV(D_f) & \Gcd & \pw_2(D_f) & \text{the order of }D_f\\ \hline
1 & (2, -3, 1, 0, 0, 0) & 2 & -3 & 2\\ \hline
2& (2, 1, -5, 2, 0, 0) & 2 & 3 & 2\\ \hline
3 & (2, -1, 2, -5, 2, 0) & 2 & -6 & 1\\ \hline
4 & (2, -1, 0, 2, -5, 2) & 1 & 3 & 4\\ \hline
5 & (2, -1, 0, 0, 1, -2)& 1 & -3 & 4\\ \hline
\end{array}
\end{equation*}
We can compute the orders of $D_1-2D_5$, $D_2-2D_5$, $D_3$ and $D_4+D_5$, which are all $1$. Thus, we have
\begin{equation*}
\scC(32) \simeq \br {\ov{D_5}} \simeq \zmod 4.
\end{equation*}

\subsubsection{$r=6$}
By direct computation, we have
\begin{equation*}
\begin{array}{|c|c|c|c|c|} \hline
f & \bbV(D_f) & \Gcd & \pw_2(D_f) & \text{the order of }D_f\\ \hline
1 & (2, -3, 1, 0, 0, 0, 0) & 2 & -3 & 4\\ \hline
2& (2, 1, -5, 2, 0, 0, 0) & 2 & 3 & 4 \\ \hline
3 & (2, -1, 2, -5, 2, 0, 0) & 4 & -6 & 1 \\ \hline
4 & (2, -1, 0, 2, -5, 2, 0) & 2 & 3 & 4 \\ \hline
5 & (2, -1, 0, 0, 2, -5, 2)& 1 & -6 & 4\\ \hline
6 & (2, -1, 0, 0, 0, 1, -2) & 1 & 0 & 4 \\ \hline
\end{array}
\end{equation*}
We can show that there is a relation between $D_1$ and $D_2$. Indeed, such a relation can be obtained from level $16$ because the genus of $X_0(16)$ is zero (cf. Lemma \ref{lemma: non-trivial relation in level 2r}).
This is very useful because we can ignore some divisors from the set of generators and our computation heavily relies on the number of generators. As a result, we can ignore the divisors $D_2$ and $D_5$ (and $D_3$ in this case because $\ov{D_3}=0$) from our computation. In other words, we have
\begin{equation*}
\scC(2^6)=\br {\ov{D_1}, ~\ov{D_4}, ~\ov{D_6} }.
\end{equation*}
As above, we can find a relation between $D_1$ and $D_4$, and so we replace the divisor $D_4$ by $D_1+D_4$. Then we can prove that
\begin{equation*}
\scC(2^6) \simeq \br { \ov{D_1}} \moplus \br{ \ov{D_1+D_4}, ~\ov{D_6}}.
\end{equation*}
Since $\fh(D_1+D_4)=1$, by comparing the $2^2$-th entries of $\bbV(D_1+D_4)$ and $\bbV(D_6)$ we further have
\begin{equation*}
\scC(2^6) \simeq \br { \ov{D_1}} \moplus \br{ \ov{D_1+D_4}} \moplus \br {\ov{D_6}}.
\end{equation*}

\subsubsection{$r=7$}\label{subsection: p=2, r=7}
By direct computation, we have
\begin{equation*}
\begin{array}{|c|c|c|c|c|} \hline
f & \bbV(D_f) & \Gcd & \pw_2(D_f) & \text{the order of }D_f\\ \hline
1 & (2, -3, 1, 0, 0, 0, 0, 0) & 2 & -3 & 8 \\ \hline
2& (2, 1, -5, 2, 0, 0, 0, 0) & 2 & 3 & 8 \\ \hline
3 & (2, -1, 2, -5, 2, 0, 0, 0) & 4 & -6 & 2 \\ \hline
4 & (2, -1, 0, 2, -5, 2, 0, 0) & 4 & 3 &  4 \\ \hline
5 & (2, -1, 0, 0, 2, -5, 2, 0)& 2 & -6 & 4\\ \hline
6 & (2, -1, 0, 0, 0, 2, -5, 2)& 1 & 3 & 16 \\ \hline
7 & (2, -1, 0, 0, 0, 0, 1, -2) & 1 & -3 & 16 \\ \hline
\end{array}
\end{equation*}
Again, as above $D_2$ and $D_6$ can be removed from the set of generators. Next, for any $3\leq f \leq 5$, we compute the intersection $\br{ \ov{D_1}} \cap \br{ \ov{D_f}}$, which is $0$ if $f$ is odd, and $\zmod 2$ otherwise. Thus, for even $f$ we replace $D_f$ with $E_f:=D_f+a_f \cdot D_1$ for (some suitable $a_f$) so that the order of $E_f$ is one half of that of $D_f$. (This process is similar to the construction of $X$ at the beginning of the section.)
Then we may insist that there is no relation among $D_1, D_3, E_4, D_5$ and $D_7$. However, since the computation seems highly complicated, for odd $f$ we replace $D_f$ by $E_f:=D_f-a_f \cdot D_1$ (for some suitable $a_f$) so that $\pw_2(E_f)=0$.  Also, we replace $D_1$ by $E_6:=D_1-2D_r$ so that $\pw_2(E_6)=0$. Finally, let $E_7=D_7$. Then we have
\begin{equation*}
\pw_2(E_7)=-3 \not\in 2\Z  \qa \pw_2(E_j)=0 ~~\text{ for all }  3\leq j<7.
\end{equation*}
This implies that 
\begin{equation*}
\br{\ov{E_f} : 3\leq f \leq 7} \simeq \br{\ov{E_f} : 3\leq f \leq 6} \moplus \br{ \ov{E_7}}.
\end{equation*}
By direct computation, we have
\begin{equation*}
\begin{split}
\bbV(E_3) &=(-2, 5, 0, -5, 2, 0, 0, 0),\\
\bbV(E_4)& =(4, -4, 1, 2, -5, 2, 0, 0),\\
\bbV(E_5)& =(-2, 5, -2, 0, 2, -5, 2, 0),\\
\bbV(E_6)& =(0, -2, 1, 0, 0, 0, -1, 2).\\
\end{split}
\end{equation*}
Suppose that there is a relation among $\{ E_f : 3\leq f \leq 6\}$. As above, we can find $b_f \in [0, 1)$ so that the order of $X=\sum_{f=3}^6 b_f \cdot E_f$ is $1$. Since 
${\bf r}(X)=\sum_{f=3}^6 b_f \cdot \bbV(E_f) \in \cS_1(2^7)$,
from the $2^2$-th and $2^6$-th entries we have $b_4 \in \Z$, and hence $b_4=0$. From the $2^3$-th entry, we have $-5b_3 \in \Z$, and so $b_3=0$. (Note that the denominator of $b_f$ is a power of $2$.) Also, from the $2^5$-th entry we have $-5b_5 \in \Z$, and therefore $b_5=0$. Finally, we have $b_6=0$ as well, which is a contradiction. Thus, we have
\begin{equation*}
\scC(2^7) \simeq  \moplus_{f=3}^7 \br{\ov{E_f}}.
\end{equation*}

\subsubsection{$r\geq 8$}
By direct computation, we have
\begin{equation*}
\begin{array}{|c|c|c|c|c|} \hline
f & \bbV(D_f) & \Gcd & \pw_2(D_f) \\ \hline
1 & (2, -3, 1, \dots) & 2 & -3  \\ \hline
2& (2, 1, -5, 2, \dots) & 2 & 3  \\ \hline
3\leq f\leq r-1 & (2, -1, \dots, 2, -5, 2, \dots) & 2^{m(f)-1} & \begin{cases}
3 & \text{ if } ~~f\in 2\Z\\
-6 & \text{ otherwise}
\end{cases}  \\ \hline
r & (2, -1, \dots, 1, -2) & 1 & \begin{cases}
0 & \text{ if } ~~f\in 2\Z\\
-3 & \text{ otherwise}
\end{cases}  \\ \hline
\end{array}
\end{equation*}
where $m(f)=\min(f, r-f)$ and the dots denote zero entries.
As discussed above, for any $3\leq f \leq r-2$, we replace $D_f$ by 
\begin{equation*}
E_f:=\begin{cases}
D_f-2^{m(f)-1}\cdot D_1 & \text{ if $~f$ is odd},\\
D_f+2^{m(f)-2}\cdot D_1 & \text{ if $~f$ is even}.
\end{cases}
\end{equation*}
If $r$ is odd, then we further replace $D_1$ by $D_1-2D_r$. Let
\begin{equation*}
(E_{r-1}, E_r)=\begin{cases}
(D_1-2D_r, D_r) & \text{ if $~r$ is odd},\\
(D_r, D_1) & \text{ if $~r$ is even},
\end{cases}
\end{equation*}
so that $\pw_2(E_r) \not\in 2\Z$ but $\pw_2(E_f)=0$ for all $3\leq f\leq r-1$. Then we have
\begin{equation*}
\scC(2^r) \simeq \br{ \ov{E_f} : 3\leq f\leq r-1} \moplus \br{ \ov{E_r}}.
\end{equation*}
Therefore it suffices to show that there is no relation among $\{E_f :3\leq f\leq r-1\}$. It turns out that we can easily prove (as in the case of $r=7$) that 
\begin{equation*}
\br{ \ov{E_f} : 3\leq f\leq r-1} \simeq \moplus_{f=3}^{r-1} \br{ \ov{E_f}}.
\end{equation*}

\begin{remark}
For an odd integer $f$ with $3\leq f \leq r-2$, the order of $E_f$ is the same as that of $D_f$. Thus, there is no relation among 
\begin{equation*}
\{E_r, E_{r-1}\} \cup  \{D_{2k-1}, E_{2k} : 2\leq k \leq (r-1)/2 \}.
\end{equation*}
It might be a good exercise to prove this directly.
\end{remark}

\ms
\subsection{Case of odd squarefree level}\label{section: example odd squarefree}
Let $N=\prod_{i=1}^t p_i$ be an odd squarefree integer with $t\geq 2$. In this subsection, we discuss our construction of  $Y^0(d)$ on $X_0(N)$ for any non-trivial divisors $d$ of $N$.

In this and next subsections, since all cusps of $X_0(N)$ are defined over $\Q$, we simply write $P_d$ for the rational cuspidal divisor $(P(N)_d)$. We use two orderings
$\prec$ and $\vtl$ on $\cD_N^0=\cD_N^\sqf$ (which are defined in Section \ref{section: the orderings}), and write
\begin{equation*}
\cD_N^0=\{d_1, \dots, d_\fm \}=\{\delta_1, \dots, \delta_\fm \}
\end{equation*}
so that $d_i \prec d_j$ (resp. $\delta_i \vtl \delta_j$) if and only if $i<j$. (Here, $\fm=\#\cD_N^0=2^t-1$.) For simplicity, we take $\delta_0:=1$, and write a vector in $\cS_1(N)$ as $(a_0, \dots, a_\fm)$ so that its $\delta_i$-th entry is $a_i$. Note that the ordering $\vtl$ in this subsection is given by the colexicographic order on $\Delta(t)$, i.e., $\prod_{i=1}^t p_i^{a_i} \vtl \prod_{i=1}^t p_i^{b_i}$ if and only if there is an index $h$ such that $a_h=0$, $b_h=1$ and $a_i=b_i$ for all $i>h$. Thus we have
\begin{equation*}
(\delta_0, \delta_1, \delta_2, \delta_3, \dots, \delta_\fm)=(1, p_1, p_2, p_1p_2, \dots, N).
\end{equation*}

\subsubsection{$t=2$}\label{subsection: odd t=2}
For simplicity, let $p=p_1$ and $q=p_2$ be two odd primes. In this subsection, we explain why we fail to find a basis for $\scC(pq)$ and how to construct the divisors $Y^0(d)$ for $d=p$, $q$ or $pq$.

\vspace{2mm}
First, the group $\Qdiv {pq}$ is generated by $C_p$, $C_q$ and $C_{pq}$. As in Section \ref{section: case of odd prime power}, we may replace $C_{pq}$ by 
\begin{equation*}
X=\alpha_q(p)^*(P_1-P_p)=qP_1+P_q-qP_p-P_{pq}=-C_q+q C_p+C_{pq}.
\end{equation*}
Then by direct computation, we have
\begin{equation}\label{equation: example level pq}
\begin{split}
\bbV(X)&=(1, -1, 0, 0)=(1, -1) \motimes (1, 0),\\
\bbV(C_p)&=(q, -q, -1, 1)=(1, -1) \motimes (q, -1),\\
\bbV(C_q)&=(p, -1, -p, 1)=(p, -1) \motimes (1, -1).\\
\end{split}
\end{equation}

Let $V:=\bbV(C_p)-\bbV(C_q)=(q-p, 1-q, p-1, 0)$. Then the greatest common divisor of the entries of $V$ is $\gcd(p-1, q-1)$, and so there may exist a relation between $C_p$ and $C_q$. Let $Y=g((q+1)C_p-(p+1)C_q)$, where $g=\gcd(p+1, \, q+1)^{-1}$. Then $\bbV(Y)$ is parallel to $V$, and so the divisors $X, Y, C_p$ (or $X, Y, C_q$) are ``good'' to apply our criteria for linear independence. On the other hand, $\Qdiv {pq}$ is not generated by them unless $g(p+1)=1$ (or $g(q+1)=1$). Even in this ``simple'' case, it is very difficult to find ``good'' generators without further assumption on $p$ and $q$. Thus, we study the $\ell$-primary subgroup of $\scC(pq)$ instead. 

Now, let $\ell$ be a given prime. By changing the role of $p$ and $q$ if necessary, we may assume that 
\begin{equation}\label{equation: assumption for level pq}
\tn{val}_\ell(p+1)\geq \tn{val}_\ell(q+1) \qa \tn{val}_\ell(p-1)\leq \tn{val}_\ell(q-1).
\end{equation}
This is always possible because $p$ and $q$ are both odd. Under this assumption, let
\begin{equation*}
D(p, q):=Y=g ((q+1)\cdot C_p-(p+1)\cdot C_q),
\end{equation*}
and we take 
\begin{equation*}
E_1=X, ~ E_2=D(p, q) ~~~\text{ and }~~~ E_3=C_q.
\end{equation*}
Since $g(q+1)$ is an $\ell$-adic unit, $\Qdiv {pq}$ is \textit{$\ell$-adically generated by} $E_i$, which means that 
\begin{equation*}
\textstyle \Qdiv {pq} \motimes_\Z \Z_\ell = \br{ E_i : 1\leq i \leq 3} \motimes_\Z \Z_\ell.
\end{equation*}
By direct computation, we have
\begin{equation*}
\begin{split}
\bbV(E_1)&=(1, -1, 0, 0),\\
\bbV(E_2)&=h(q-p, 1-q, p-1, 0),\\
\bbV(E_3)&=(p, -1, -p, 1),\\
\end{split}
\end{equation*}
where $h=\gcd(p-1, \, q-1)^{-1}$. Since $h(p-1)$ is an $\ell$-adic unit, the matrix  
\begin{equation*}
\fM_0=(\bbV(E_i)_{\delta_j})_{1\leq i, j\leq 3}
\end{equation*}
is \textit{lower $\ell$-unipotent}, which means that $\fM_0$ is a lower-triangular matrix whose diagonal entries are $\ell$-adic units. This is enough to conclude that for odd $\ell$, we have
\begin{equation*}
\scC(pq)[\ell^\infty] \simeq \moplus \br{\ov{E_i}}[\ell^\infty].
\end{equation*}
Moreover, since $p$ is odd, we have $\fh(E_3)=1$ and hence 
\begin{equation*}
\scC(pq)[2^\infty] \simeq \br{\ov{E_1}, ~\ov{E_2}}[2^\infty] \moplus \br{ \ov{E_3}}[2^\infty].
\end{equation*}
Also, since $\pw_q(E_2)\not\in \ell\Z$ (and $\ell=2$) and $\pw_q(E_1)=0$, we finally get
\begin{equation*}
\scC(pq)[2^\infty] \simeq \moplus \br{\ov{E_i}}[2^\infty].
\end{equation*}

\begin{remark}\label{remark: A0 and A1}
As you may see in (\ref{equation: example level pq}), the vectors $\bbV(C)$ are written as tensor products. Indeed, even more is true: we have
\begin{equation*}
\begin{split}
\Phi_{pq}(X)=(1, -1) \motimes (q, 1),\\
\Phi_{pq}(C_p)=(1, -1) \motimes (1, 0),\\
\Phi_{pq}(C_q)=(1, 0) \motimes (1, -1),
\end{split}
\end{equation*}
i.e., $X$, $C_p$ and $C_q$ are all defined by tensors. (However, $D(p, q)$ is not defined by tensors.) In Section \ref{section: case of odd prime power}, we did not define $B_p(1, 1)$ but it seems natural to do by 
\begin{equation*}
B_p(1, 1):=P_1-P_p=0-\infty,
\end{equation*}
which is a ``unique'' degree $0$ divisor of level $p$. By the consideration above, it is natural to define 
\begin{equation*}
A_p(1, 0):=P_0 \qa A_p(1, 1):=p(P_0)+(P_p),
\end{equation*}
which is not of degree $0$ though. Note that if you imagine a cuspidal divisor of level $1$, which is $\infty \in X_0(1)$, then we have $A_p(1, 1)=\alpha_p(1)^*(\infty)$.
\end{remark}

\subsubsection{$t=3$}\label{subsection: odd t=3}
As above, we first fix a prime $\ell$, and want to find rational cuspidal divisors $E_i$ such that the matrix 
\begin{equation*}
\fM_0:=(\bbV(E_i)_{\delta_j})_{1\leq i, j\leq 7}
\end{equation*}
is lower $\ell$-unipotent. Note that $\Qdiv {pqr}$ is generated by $C_{p_1}$, $C_{p_2}$, $C_{p_3}$, $C_{p_1p_2}$, $C_{p_1p_3}$, $C_{p_2p_3}$ and $C_{p_1p_2p_3}$. As in (\ref{equation: assumption for level pq}), we assume that
\begin{equation*}
\tn{val}_\ell(p_1+1)\geq \tn{val}_\ell(p_2+1) \geq \tn{val}_\ell(p_3+1)
\end{equation*}
and
\begin{equation*}
\tn{val}_\ell(p_1-1)\leq \tn{val}_\ell(p_2-1) \leq \tn{val}_\ell(p_3-1).
\end{equation*}
Under this assumption, we keep $C_{p_3}$ unchanged because it is the one with the highest order. As above, we can find a relation between $C_{p_1}$ and $C_{p_2}$. 
So we replace $C_{p_1}$ by $Y=D(p_1, p_2)$ above. Similarly, we replace $C_{p_2}$ by $Y_1=D(p_2, p_3)$. 
(Here, $D(p_i, p_j)$ is a divisor in level $p_ip_j$ but we regard it as a divisor in level $p_1p_2p_3$.) 
If we use the tensor notation, it turns out that
\begin{equation*}
\bY=\bD(p_1, p_2) \motimes \bA_{p_3}(1, 0)  \qa  \bY_1=\bA_{p_1}(1, 0) \motimes \bD(p_2, p_3).
\end{equation*}
Similarly, we consider the vector $\bD(p_1, p_3) \motimes \bA_{p_2}(1, 0)$. By considering all possible combinations of tensor products, and by computing the vectors $\bbV(D)$ for such divisors $D$ defined by tensors (using Theorem \ref{theorem: order defined by tensors}), we find natural candidates for $E_i$ which make the matrix $\fM_0$ lower $\ell$-unipotent as follows.
\begin{equation*}
\begin{array}{|c|c|c|c|} \hline 
i& d_i & \bE_i & \bbV(E_i) \\ \hline
1& p_1p_2p_3 & \bB_{p_1}(1, 1) \motimes \bA_{p_2}(1, 1) \motimes \bA_{p_3}(1, 1) & (1, -1, 0, 0, 0, 0, 0, 0)\\ \hline
2&p_1p_3 & \bD(p_1, p_2) \motimes \bA_{p_3}(1, 1) & (*, *, \star, 0, 0, 0, 0, 0) \\ \hline
3& p_2p_3 & \bA_{p_1}(1, 0) \motimes \bB_{p_2}(1, 1) \motimes \bA_{p_3}(1, 1) & (*, *, *, 1, 0, 0, 0, 0) \\ \hline
4&p_1p_2 & \bD(p_1, p_3) \motimes \bA_{p_2}(1, 1) & (*, *, 0, 0, \star, 0, 0, 0) \\ \hline
5&p_2 & \bA_{p_1}(1, 0) \motimes \bD(p_2, p_3) & (*, *, *, *, *, \star, 0, 0) \\ \hline
6&p_1 & \bD(p_1, p_2) \motimes \bA_{p_3}(1, 0) & (*, *, *, 0, *, *, \star, 0) \\ \hline
7& p_3 & \bA_{p_1}(1, 0) \motimes \bA_{p_2}(1, 0) \motimes \bB_{p_3}(1, 1) & (*, *, *, *, *, *, *, -1) \\ \hline
\end{array}
\end{equation*}
Here, $*$ denotes an arbitrary integer and $\star$ denotes an $\ell$-adic unit. 
Now, we can apply our criteria (for any prime $\ell$) and obtain
\begin{equation*}
\scC(pqr)[\ell^\infty] \simeq \moplus \br{ \ov{E_i}}[\ell^\infty].
\end{equation*}

\subsubsection{$t\geq 4$}\label{subsection: odd t>3}
As above, for a given prime $\ell$ we assume that
\begin{equation}\label{11}
\tn{val}_\ell(p_i+1) \geq \tn{val}_\ell(p_j+1) ~~\text{ for all } ~i<j,
\end{equation}
and
\begin{equation}\label{12}
\tn{val}_\ell(p_i-1) \leq \tn{val}_\ell(p_j-1) ~~\text{ for all }~ i<j.
\end{equation}

As above, for any $I=(f_1, \dots, f_t) \in \Delta(t)$, we construct a rational cuspidal divisor $Y^0(\fp_I)$, or equivalently a vector $\bY^0(\fp_I) \in \cS_2(N)$ by
\begin{equation*}
\bY^0(\fp_I):=\begin{cases}
\motimes_{i=1, \, i\neq m}^t \bA_{p_i}(1, f_i) \motimes \bB_{p_m}(1, 1) & \text{ if }~~ I \in \cE,\\
\motimes_{i=1, \, i\neq m, n}^t \bA_{p_i}(1, f_i) \motimes \bD(p_m, p_n) & \text{ otherwise},
\end{cases}
\end{equation*}
where $m=m(I)$ and $n=n(I)$. (For notation, see Section \ref{section: intro notation}.) 

If we take $E_i=Y^0(d_i)$, then we can prove that $\Qdiv N$ is $\ell$-adically generated by $E_i$ using our assumption (\ref{11}). Also, we can show the matrix 
\begin{equation*}
\fM_0=(\bbV(E_i)_{\delta_j})_{1\leq i, j \leq \fm}
\end{equation*}
is lower $\ell$-unipotent. The property that $\fM_0$ is lower-triangular is obtained by the definition of our orderings $\prec$ and $\vtl$. Also, the property that the diagonal entries of $\fM_0$ are $\ell$-adic units is deduced by our assumption (\ref{12}).
If $\ell$ is odd, they are enough to conclude that 
\begin{equation*}
\scC(N)[\ell^\infty] \simeq  \moplus_{i=1}^{\fm} \br{ \ov{E_i}}[\ell^\infty].
\end{equation*}
Even when $\ell=2$, since $p_i$ are odd, we can prove the above isomorphism.

\begin{remark}
For any $i$, if we write $C_{d_i}=\sum  a(j)\cdot Y^0(d_j)$ with $a(j) \in \Z_\ell$, 
then $a(i)$ is an $\ell$-adic unit, which is the reason behind the definition of $Y^0(d_i)$. 
\end{remark}

\ms
\subsection{Case of even squarefree level}\label{section: example even  squarefree level}
Let $N=\prod_{i=1}^t p_i$ be an even squarefree integer with $t\geq 2$. Also, let $u$ be an integer such that $p_u=2$.

We use the same notation as in the previous section. 
However, the orderings $\prec$ and $\vtl$ are different from the previous ones since $u\geq 1$. For instance, the ordering $\vtl$ in this subsection is given by a twisted colexicographic order on $\Delta(t)$. More precisely, we define $q_1=p_u$, $q_i=p_{i+1}$ if $i<u$, and $q_j=p_j$ if $j>u$. Then the ordering $\vtl$ is the colexicographic order given by $q_i$, i.e., $\prod_{i=1}^t q_i^{a_i} \vtl \prod_{i=1}^t q_i^{b_i}$ if and only if there is an index $h$ such that $a_h=0$, $b_h=1$ and $a_i=b_i$ for all $i>h$. Thus we have
\begin{equation*}
(\delta_0, \delta_1, \delta_2, \delta_3, \dots, \delta_{\fm})=\begin{cases}
(1, 2, p_1, 2p_1, \dots, N) & \text{ if } ~~u\geq 2,\\
(1, 2, p_2, 2p_2, \dots, N) & \text{ if } ~~u=1.
\end{cases}
\end{equation*}

\subsubsection{$t=2$}
Let $N=2p$. As in Section \ref{subsection: odd t=2}, we replace the divisor $C_{2p}$ by 
\begin{equation*}
U_1:=\alpha_p(2)^*(0-\infty)=pP_1+P_p-pP_2-P_{2p}=-C_p+pC_2+C_{2p},
\end{equation*}
which is of order $1$. (So we can ignore it.) Also, we use the divisor
\begin{equation*}
D(p, 2)=g(3\cdot C_p-(p+1) \cdot C_2) ~~\text{ or } ~~D(2, p)=-D(p, 2)
\end{equation*}
depending on the $\ell$-adic valuation of $p+1$, where $g=\gcd(3, \, p+1)^{-1}$. 
\begin{itemize}
\item
Case 1: Suppose that $\tn{val}_\ell(3) \geq \tn{val}_\ell(p+1)$. Then we have
\begin{equation*}
\textstyle \br{C_p, D(2, p)} \motimes_\Z \Z_\ell = \br{ C_2, C_p} \motimes_\Z \Z_\ell.
\end{equation*}
By direct computation as in Section \ref{subsection: odd t=2}, we have
\begin{equation*}
\begin{split}
\bbV(D(2, p))&=(p-2, 1-p, 1, 0),\\
\bbV(C_p)&=(2, -1, -2, 1).\\
\end{split}
\end{equation*}
If $\ell$ is odd, then we have
\begin{equation*}
\scC(2p)[\ell^\infty] \simeq \br{\ov{D(2, p)}}[\ell^\infty]  \moplus \br{ \ov{C_p}}[\ell^\infty].
\end{equation*}
But there is a problem\footnote{In fact, this is not a problem in this simple case because $\ell$ cannot be $2$. However, we consider this case as if $\ell=2$ since a similar problem occurs when $\tn{val}_2(N)$ is large enough.} if $\ell=2$: $\pw_p(C_p)=-\pw_p(D(2, p))=-1$. To fix this, we retreat our modification, and consider the divisor $C_2$ instead. Then we have $\pw_p(C_2)=0$ and hence we can prove that
\begin{equation*}
\scC(2p)[2^\infty] \simeq \br{ \ov{C_2}}[2^\infty] \moplus \br{ \ov{C_p}}[2^\infty].
\end{equation*}

\item
Case 2: Suppose that $\tn{val}_\ell(p+1) > \tn{val}_\ell(3)$.
Then we have 
\begin{equation*}
\textstyle \br{ C_2, D(p, 2)} \motimes_\Z \Z_\ell = \br{ C_2, C_p} \motimes_\Z \Z_\ell.
\end{equation*}
By direct computation, we have
\begin{equation*}
\begin{split}
\bbV(D(p, 2))&=(2-p, p-1, -1, 0),\\
\bbV(C_2)&=(p, -p, -1, 1).\\
\end{split}
\end{equation*}
Since $p$ is odd, $\fh(C_2)=1$. Thus, for any prime $\ell$ we have
\begin{equation*}
\scC(2p)[\ell^\infty] \simeq \br{\ov{C_2}}[\ell^\infty]\moplus \br{\ov{D(p, 2)}}[\ell^\infty].
\end{equation*}
\end{itemize}

 \subsubsection{$t=3$}
Let $N=2pq$. Fix a prime $\ell$ and assume that
\begin{equation*}
\tn{val}_\ell(p+1) \geq \tn{val}_\ell(q+1)  \qa  \tn{val}_\ell(p-1) \leq \tn{val}_\ell(q-1).
\end{equation*}
As above, we can replace the divisor $C_{2pq}$ by 
\begin{equation*}
U_2:=\pi_1(2pq, 2)^*(0-\infty)=\textstyle\sum_{d\mid pq} d(P_{d'}-P_{2d'})=C_{2pq}-C_{pq}+\cdots,
\end{equation*} 
where $d'=pq/d$. Since the order of $U_2$ is $1$, we ignore it. 
Let $\gamma=\frac{p-1}{\gcd(p-1, \, q-1)} \not\in \ell\Z$.

\begin{itemize}
\item
Case 1: Suppose that $\tn{val}_\ell(3)\geq \tn{val}_\ell(p+1)$. 
We take $p_1=2$, $p_2=p$ and $p_3=q$. 
Also, we take the divisors $E_i$ as in Section \ref{subsection: odd t=3}. 
Since our ordering $\vtl$ is the same as the previous one, the matrix $\fM_0$ is lower $\ell$-unipotent. Thus, we obtain the result for an odd prime $\ell$.

Now, suppose that $\ell=2$. Then a problem occurs as in Case 1 of the previous subsection. So we retreat our modification for the divisor $E_2$, i.e., replace $\bE_2$ by $\bX_1=\bB_{p_1}(1, 1) \motimes \bA_{p_2}(1, 0) \motimes \bA_{p_3}(1, 1)$.
As in Section \ref{subsection: odd t=3}, we have
\begin{equation*}
\begin{array}{|c|c|c|c|} \hline 
i& d_i & \bE_i & \bbV(E_i) \\ \hline
2& p_1p_3& \bB_{p_1}(1, 1) \motimes \bA_{p_2}(1, 0) \motimes \bA_{p_3}(1, 1) & (p, -p, -1, 1, 0, 0, 0, 0) \\ \hline
3& p_2p_3 & \bA_{p_1}(1, 0) \motimes \bB_{p_2}(1, 1) \motimes \bA_{p_3}(1, 1) & (2, -1, -2, 1, 0, 0, 0, 0) \\ \hline
4&p_1p_2 & \bD(p_1, p_3) \motimes \bA_{p_2}(1, 1) & (q-2, 1-q, 0, 0, 1, 0, 0, 0) \\ \hline
5& p_2 & \bA_{p_1}(1, 0) \motimes \bD(p_2, p_3) & (*, *, *, *, 2\gamma, -\gamma, 0, 0) \\ \hline
6& p_1 & \bD(p_1, p_2) \motimes \bA_{p_3}(1, 0) & (*, *, *, 0, *, *, -1, 0) \\ \hline
7& p_3 & \bA_{p_1}(1, 0) \motimes \bA_{p_2}(1, 0) \motimes \bB_{p_3}(1, 1) & (*, *, *, *, *, *, 2, -1) \\ \hline
\end{array}
\end{equation*}
Still, we have a problem because $\pw_{p_3}(E_4)$ and $\pw_{p_3}(E_5)$ are both odd. Thus, we replace $\bE_4$ by $\bX_2=\bB_{p_1}(1, 1) \motimes \bD(p_2, p_3)$ so that $\pw_{p_3}(E_4)=0$.
Then we can prove that
\begin{equation*}
\scC(2pq)[2^\infty] \simeq \moplus \br{ \ov{E_i}} [2^\infty].
\end{equation*}

\item
Case 2: Suppose that\footnote{So $\ell=3$, but we also consider the case $\ell=2$ for better understanding of the problem.} $\tn{val}_\ell(p+1)>\tn{val}_\ell(3) \geq \tn{val}_\ell(q+1)$. 
We take $p_1=p$, $p_2=2$ and $p_3=q$. 
Also, we take the divisors $E_i$ as in Section \ref{subsection: odd t=3}. 
Then the matrix $\fM_0$ is not lower $\ell$-unipotent, and so a modification is necessary. A problematic element is either $E_5$ or $E_6$.  We replace $\bE_6$ by $\bW=\bD(p_1, p_3) \motimes \bA_{p_2}(1, 0)$
and swap the role of $E_5$ and $E_6$. Then we have
\begin{equation*}
\begin{array}{|c|c|c|c|} \hline 
i & d_i & \bE_i & \bbV(E_i)\\ \hline
2 & p_1p_3 & \bD(p_1, p_2) \motimes \bA_{p_3}(1, 1) & (*, *, -1, 0, 0, 0, 0, 0) \\ \hline
3 & p_2p_3 & \bA_{p_1}(1, 0) \motimes \bB_{p_2}(1, 1) \motimes \bA_{p_3}(1, 1) & (p, -p, -1, 1, 0, 0, 0, 0) \\ \hline
4 & p_1p_2 & \bD(p_1, p_3) \motimes \bA_{p_2}(1, 1) & (*, *, *, *, \gamma, 0, 0, 0) \\ \hline
5 & p_1 & \bD(p_1, p_3) \motimes \bA_{p_2}(1, 0) & (*, *, *, *, 2\gamma, -\gamma, 0, 0) \\ \hline
6 & p_2 & \bA_{p_1}(1, 0) \motimes \bD(p_2, p_3) & (*, *, *, *, *, *, -1, 0) \\ \hline
7 & p_3 & \bA_{p_1}(1, 0) \motimes \bA_{p_2}(1, 0) \motimes \bB_{p_3}(1, 1) & (*, *, *, *, *, *, *, -1) \\ \hline
\end{array}
\end{equation*}
Unfortunately, we have $\pw_{p_3}(E_5)=\pw_{p_3}(E_4)=\gamma \not\in \ell\Z$, so our argument breaks down if $\ell=2$. We replace $\bE_4$ by $\bX_3=\bD(p_1, p_3) \motimes \bB_{p_2}(1, 1)$ so that $\pw_{p_3}(E_4)=0$. Then for any prime $\ell$, we can prove
\begin{equation*}
\scC(2pq)[\ell^\infty] \simeq \moplus \br{ \ov{E_i}} [\ell^\infty].
\end{equation*}

\item
Case 3: Suppose that $\tn{val}_\ell(q+1) > \tn{val}_\ell(3)$.
We take $p_1=p$, $p_2=2$ and $p_3=q$. 
Also, we take the divisors $E_i$ as in Section \ref{subsection: odd t=3}.  
As above, we replace $\bE_3$ by $\bU_3=\bA_{p_1}(1, 0) \motimes \bA_{p_2}(1, 1) \motimes \bB_{p_3}(1, 1)$.
By swapping the role of $E_2$ (resp. $E_5$) and $E_4$ (resp. $E_6$), we have
\begin{equation*}
\begin{array}{|c|c|c|c|} \hline 
i & d_i & \bE_i & \bbV(E_i) \\ \hline
2 & p_1p_2 & \bD(p_1, p_3) \motimes \bA_{p_2}(1, 1) & (*, *, -1, 0,  0, 0, 0, 0) \\ \hline
3 & p_2p_3 & \bA_{p_1}(1, 0) \motimes \bA_{p_2}(1, 1) \motimes \bB_{p_3}(1, 1) & (p, -p, -1, 1, 0, 0, 0, 0) \\ \hline
4 & p_1p_3 & \bD(p_1, p_2) \motimes \bA_{p_3}(1, 1) & (*, *, *, *, \gamma, 0, 0, 0) \\ \hline
5 & p_1 & \bD(p_1, p_2) \motimes \bA_{p_3}(1, 0) & (*, *, *, *, 2\gamma, -\gamma, 0, 0) \\ \hline
6 & p_2 & \bA_{p_1}(1, 0) \motimes \bD(p_2, p_3) &  (*, *, *, *, *, *, 1, 0) \\ \hline
7 & p_3 & \bA_{p_1}(1, 0) \motimes \bA_{p_2}(1, 0) \motimes \bB_{p_3}(1, 1) & (*, *, *, *, *, *, 1,-1)  \\ \hline
\end{array}
\end{equation*}
As we can see, the matrix $\fM_0$ is lower $\ell$-unipotent. 
However, as above we have $\pw_{p_2}(E_5)=\pw_{p_2}(E_4)=\gamma \not\in 2\Z$. Thus, we replace $\bE_4$ by $\bX_4=\bD(p_1, p_2) \motimes \bB_{p_3}(1, 1)$
so that $\pw_{p_2}(E_4)=0$. Then we can prove
\begin{equation*}
\scC(2pq)[\ell^\infty] \simeq \moplus \br{ \ov{E_i}} [\ell^\infty].
\end{equation*}
\end{itemize}

\begin{remark}
In Case 2, if we write $p$ as $\fp_I$ for some $I \in \Delta(3)$, then we have $I=(1, 0, 0)$ in our ordering of the prime divisors of $N$. Note that a new vector $\bW$ for such an $I$ is constructed as if positions of $p$ and $2$ are changed.
More precisely, if we rename $p_1=2$ and $p_2=p$ then our (previous) construction for $p=\fp_J$ with $J=(0, 1, 0) \in \Delta(t)$ is exactly $\bW$.\footnote{A similar idea is used for the definition of $\bU_i$. For instance, if we swap the role of $p_2$ and $p_3$ in Case 3, then our (previous) construction for $p_2p_3$ is exactly $\bU_3$.}
After this modification, a problematic one is exactly the one obtained by replacing the vector $\bA_2(1, 0)$ in $\bW$ by $\bA_2(1, 1)$. Since
$\bA_2(1, 1)=-\bB_2(1, 1)+2\bA_2(1, 0)$, we may replace $\bA_2(1, 1)$ by $\bB_2(1, 1)$ and obtain a new vector $\bX_3$. A similar idea is used for the definition of $\bX_i$.
\end{remark}

\subsubsection{$t\geq 4$}\label{subsection: even t>3}
As in Section \ref{subsection: odd t>3}, we hope to assume that
\begin{equation}\label{13}
\tn{val}_\ell(p_i+1) \geq \tn{val}_\ell(p_j+1) ~~\text{ for all }~ i<j,
\end{equation}
and
\begin{equation}\label{14}
\tn{val}_\ell(p_i-1) \leq \tn{val}_\ell(p_j-1) ~~\text{ for all }~ i<j.
\end{equation}
If $\ell=2$, then we cannot make both assumptions together because $N$ is even. Since it is not difficult to make the matrix $\fM_0$ lower $\ell$-unipotent, and since it seems difficult to prove that $\Qdiv N$ is $\ell$-adically generated by $E_i$, we keep the first assumption (\ref{13}), and modify the second assumption (\ref{14}) by
\begin{equation}\label{15}
\tn{val}_2(p_i-1) \leq \tn{val}_2(p_j-1)  ~~\text{ for all }~ i<j \text{ different from } u.
\end{equation}

\vspace{2mm}
Now, we try to construct divisors $E_i$ so that the matrix $\fM_0$ is lower $\ell$-unipotent. Let $d_i=\fp_I$ for some $I=(f_1, \dots, f_t) \in \Delta(t)$. (For notation, see Section \ref{section: intro notation}.) As the case of $\bU_i$, if $I \in \cE$ and $m(I)<u$, then we replace $\bE_i$ by 
\begin{equation*}
\motimes_{i=1, \, i\neq u}^t \bA_{p_i}(1, f_i) \motimes \bB_{p_u}(1, 1).
\end{equation*}
Then there are some problematic elements, which are exactly those for $I \in \cH_u$, and 
as the case of $\bW$ we replace $\bE_i$ by
\begin{equation*}
\motimes_{i=1, \, i\neq m, k}^t \bA_{p_i}(1, f_i) \motimes \bD(p_m, p_k).
\end{equation*}
Then we can indeed prove that the matrix $\fM_0$ is lower $\ell$-unipotent. So $E_i$ are the ones we are looking for if $\ell$ is odd. On the other hand, if $\ell=2$, then we cannot apply our criteria for linear independence, so we will replace some problematic elements as follows.  
\begin{itemize}
\item
Case 1: $u=s\geq 2$. In this case, problematic elements are exactly those for $I \in \cF'_s$, i.e., $I=E_s(n)$ for some $n\in \cI_s$. By direct computation, if $d_i=\fp_{E_s(n)}$ for some $n \in \cI_s$, then we have
\begin{equation*}
d_{i-1}=\fp_{E(n)}, ~\delta_{i-1}=\fp_{F(n)} \text{ and } \delta_i=\fp_{F_s(n)}.
\end{equation*}
Also, we have $\bE_{i-1}=\motimes_{i=2, \,i\neq n}^t \bA_{p_i}(1, 1) \motimes \bD(p_1, p_n)$ and 
\begin{equation*}
\pw_{p_n}(E_i)=\pw_{p_n}(E_{i-1}) \not\in 2\Z \text{ and } \pw_{p_n}(E_j)=0 \text{ for all } j<i-1.
\end{equation*}
Thus, we replace $\bE_{i-1}$ by
\begin{equation*}
\motimes_{i=2, \,i\neq n, s}^t \bA_{p_i}(1, 1) \motimes \bD(p_1, p_n) \motimes \bB_{p_s}(1, 1)
\end{equation*}
so that $\pw_{p_n}(E_{i-1})=0$ (cf. the cases of $\bX_3$ and $\bX_4$). 
\item
Case 2: $u=s=1$. As in Case 1, we replace $E_{i-1}$ by the following
\begin{equation*}
\motimes_{i=3, \,i\neq n}^t \bA_{p_i}(1, 1) \motimes \bB_{p_1}(1, 1) \motimes \bD(p_2, p_n) 
\end{equation*}
so that $\pw_{p_n}(E_{i-1})=0$ (cf. the case of $\bX_2$). Still, there is a problematic element, which is exactly $E_3$. Motivated by $\bX_1$, we replace $\bE_2$ by
\begin{equation*}
\bB_{p_1}(1, 1) \motimes \bA_{p_2}(1, 0) \motimes_{i=3}^t \bA_{p_i}(1, 1).
\end{equation*}
\end{itemize}
After such modifications, we can finally prove that for any prime $\ell$
\begin{equation*}
\scC(N)[\ell^\infty] \simeq \moplus_{i=1}^{\fm} \br{\ov{E_i}}[\ell^\infty].
\end{equation*}
(In fact, $\br{\ov{E_1}}$ is not necessary as it is trivial.)

\ms
\subsection{The definition of $Z(d)$ and $Y^2(d)$}\label{section: definition of Zd}
Let $N=\prod_{i=1}^t p_i^{r_i}$ be the prime factorization of $N$.
Motivated by the previous study (cf. (\ref{13}) and (\ref{15})), we make the following assumption.

\begin{assumption}[Assumption \ref{assumption 1.14}]\label{assumption: chapter4}
For a given prime $\ell$, by appropriately ordering the prime divisors of $N$, we assume the following.
\begin{equation}\label{equation: assumption 1}
\tn{val}_\ell(\gamma_i) \geq \tn{val}_\ell(\gamma_j) ~\text{ for any } 1\leq i < j \leq t, \text{ where } \gamma_i:=p_i^{r_i-1}(p_i+1).
\end{equation}
If $N$ is odd, we set $u=0$, and we define $u$ as the smallest positive integer such that $p_u=2$ if $N$ is even. If $\ell$ is odd, we set $s=0$, and if $\ell=2$ then we set $s=u$. We further assume that
\begin{equation}\label{equation: assumption 2}
\tn{val}_\ell(p_i-1) \leq \tn{val}_\ell(p_j-1) ~\text{ for any } 1\leq i < j \leq t \text{ different from } s.
\end{equation}
\end{assumption}

\vspace{2mm}
In this subsection, we define rational cuspidal divisors $Z(d)$ on $X_0(N)$ for any non-trivial divisors $d$ of $N$. 
To do so, we first define various vectors in $\cS_2(p^r)$ for a prime $p$ and an integer $r\geq 1$.

\begin{definition}\label{definition: inductive definition for A and B}
Let $\alpha(i)$ and $\beta(i)$ be the maps from $\cS_2(p^{r-i})$ to $\cS_2(p^r)$ defined by the degeneracy maps $\pi_1(p^r, \, p^{r-i})^*$ and $\pi_2(p^r, \, p^{r-i})^*$, respectively. For instance, $\alpha(i)$ is the composition of the maps:
\begin{equation*}
\xyh{6}
\xymatrix{
\Qdivv {p^{r-i}} \ar[r]_-{\pi_1(p^r, \, p^{r-i})^*} & \Qdivv {p^r} \ar[d]^-{\Phi_{p^r}} \\
\cS_2(p^{r-i}) \ar[u]^-{\Phi_{p^{r-i}}^{-1}} \ar@{-->}[r]^-{\alpha(i)} & \cS_2(p^r).
}
\end{equation*}

Also, let $\gamma$ be the map from $\cS_2(p^{r-2})$ to $\cS_2(p^r)$ defined by the degeneracy map $\frac{1}{p}\times \pi_{12}(p^{r-2})^*$ (which is well-defined by Lemma \ref{lemma: B2 definition}).

For any $0 \leq f \leq r$,  we define a vector $\bA_p(r, f)$ in $\cS_2(p^r)$ as follows:
\begin{equation*}
\bA_p(r, 0)_{p^j}:=\begin{cases}
1 & \text{ if } ~~j=0,\\
0 & \text{ otherwise}, \end{cases}  \qqa \bA_p(1, 1)_{p^j}:=\begin{cases}
p & \text{ if } ~~j=0,\\ 
1 & \text{ if } ~~j=1.\\
\end{cases}
\end{equation*}
Let $r\geq 2$. We then define
\begin{equation*}
\bA_p(r, r)_{p_j}:=\begin{cases}
1 & \text{ if } ~~j=0,\\
-1 & \text{ if } ~~j=r, \\
0 & \text{ otherwise}.
\end{cases}
\end{equation*}
Also, for any $1\leq f \leq r-1$, we define
\begin{equation*}
\bA_p(r, f):=\begin{cases}
\alpha(r-1)(\bA_p(1, 1)) & \text{ if }~~ f=1,\\
\beta(1)(\bA_p(r-1, 2)) & \text{ if }~~ f=2 \text{ and } r \not\in 2\Z, \\
\gamma(\bA_p(r-2, 2))+p^{r-2}\cdot \bA_p(r, r) & \text{ if }~~ f=2 \text{ and } r \in 2\Z, \\
\alpha(j)(\bA_p(r-j, r-j)) & \text{ if }~~ f=r-2j\geq 3,\\
\beta(j)(\bA_p(r-j, r-j)) & \text{ if }~~ f=r+1-2j\geq 3.\\
\end{cases}
\end{equation*}

Finally, for any $r\geq 1$ and $1\leq f \leq r$, we define a vector $\bB_p(r, f)$ in $\cS_2(p^r)^0$ by
\begin{equation*}
\bB_p(r, f) :=\begin{cases}
p^{r-1}(p+1) \cdot \bA_p(r, 0)-\bA_p(r, 1) & \text{ if } ~~f=1, \\
\bA_p(r, f)  & \text{ otherwise}.
\end{cases}
\end{equation*}
\end{definition}
\begin{remark}\label{remark: 4.8}
By definition, we easily have $\bA_p(r, 1)=\alpha(r)(1)$. 
Let $M>1$ be an integer relatively prime to $p$. For simplicity, let
\begin{equation*}
\pi_1^*=\pi_1(Mp^r, M)^* : \Qdivv {M} \to \Qdivv {Mp^r}.
\end{equation*}
Then by Lemma \ref{lemma: degeneracy maps pullback on RCD} and Remark \ref{remark: level 1, level M}, for any divisor $C \in \Qdivv M$ we easily have
\begin{equation*}
\Phi_{Mp^r}(\pi_1^*(C))=\Phi_M(C) \motimes \bA_p(r, 1).
\end{equation*}
\end{remark}

For $p=2$ and $r\geq 5$, we define another vector in $\cS_2(2^r)$ as follows.
\begin{definition}
For any $3\leq f \leq r-2$, we define
\begin{equation*}
\bE_f:=\begin{cases}
(0, 2^{m(f)-1}, \bbO_{f-2}, -1, \bbO_{r-f}) & \text{ if $~f$ is odd},\\
(3\cdot 2^{m(f)-2}, -2^{m(f)-2}, \bbO_{f-2}, -1, \bbO_{r-f}) & \text{ if $~f$ is even},
\end{cases}
\end{equation*}
where $m(f)=\min(f, \, r-f)$ and $\bbO_k$ is the zero vector of size $k$. Also, we define
\begin{equation*}
\bB^2(r, f):=\begin{cases}
\bE_{\frac{r+f-2}{2}} & \text{ if } ~~3\leq f \leq r-2 ~\text{ and }~ r-f \in 2\Z,\\
\bE_{\frac{r-f+3}{2}} & \text{ if }~~ 3\leq f \leq r-2 ~\text{ and }~ r-f \not\in 2\Z,\\
(1, -1, \bbO_{r-1}) & \text{ if } ~~f=r-1 ~\text{ and }~ r \in 2\Z,\\
(-1, -1, \bbO_{r-2}, 2) & \text{ if }~~ f=r-1 ~\text{ and }~ r \not\in 2\Z,\\
(1, \bbO_{r-1}, -1) & \text{ if }~~ f=r.
\end{cases}
\end{equation*}
\end{definition}

Now, we define vectors $\Z(d)$ for any non-trivial divisors $d$ of $N$. Let $d=\fp_I$ for some $I=(f_1, \dots, f_t) \in \Omega(t)$. First, we construct $\Z'(\fp_I) := \motimes_{i=1}^t \bA_{p_i}(r_i, f_i)$.
Then it is not of degree $0$ if $I \in \Delta(t)$. Thus, we replace them and define
\begin{equation*}
\Z^1(\fp_I):=\begin{cases}
\motimes_{i=1}^t \bA_{p_i}(r_i, f_i) & \text{ if }~~ I \in \square(t),\\
\motimes_{i=1, \, i\neq m}^t \bA_{p_i}(r_i, f_i) \motimes \bB_{p_m}(r_m, 1) & \text{ if } ~~I \in \Delta(t),
\end{cases}
\end{equation*}
where $m=m(I)$. Now, it seems like we are almost done, but as in Section \ref{section: case of level 2r} we have to replace some, so finally we define 
\begin{equation*}
\Z(\fp_I):=\begin{cases}
\motimes_{i=1}^t \bA_{p_i}(r_i, f_i)  & \text{ if } ~~I \in \square(t) \sm \cT_u,\\
\motimes_{i=1, \, i\neq u}^t \bA_{p_i}(r_i, 1) \motimes \bB^2(r_u, f_u) & \text{ if }~~ I \in \cT_u,\\
\motimes_{i=1, \, i\neq m}^t \bA_{p_i}(r_i, f_i) \motimes \bB_{p_m}(r_m, 1) & \text{ if }~~ I \in \Delta(t).\\
\end{cases}
\end{equation*}

\vspace{2mm}
\begin{remark}\label{remark: relation B2 and E}
Note that we easily have
\begin{equation*}
\{\bB^2(r, f) : 3 \leq f \leq r-2 \} = \{ \bE_k: 3\leq k \leq r-2 \}.
\end{equation*}
Thus, if we set
\begin{equation*}
\bE_{r-1}:= \bB^2(r, r-1+\epsilon) \qqa \bE_r:=\bB^2(r, r-\epsilon),
\end{equation*}
where $\epsilon=\frac{1+(-1)^r}{2}$,
then we have
\begin{equation*}
\{ \bB^2(r, f) : 3\leq f \leq r \} = \{ \bE_k : 3\leq k \leq r\}.
\end{equation*}
In other words, the vectors $\bB^2(r, f)$ are just rearrangements of $\bE_k$. Note that 
the definition of $B^2(r, f)$ is designed so that $\Gcd(B_2(r, f))=\Gcd(B^2(r, f))$ for any $3\leq f \leq r$, which can be checked directly. Thus, by Theorem \ref{theorem: order defined by tensors} we have
\begin{equation*}
\Gcd(Z^1(d))=\Gcd(Z(d)) ~~~~\text{ for any } d \in \cD_N^0.
\end{equation*}
\end{remark}

\vspace{2mm}
Next, we define rational cuspidal divisors $Y^2(d)$ for any non-trivial squarefree divisors $d$ of $N$. 
\begin{definition}
For any $1\leq i<j \leq t$, let
\small
\begin{equation*}
\bD(p_i^{r_i}, p_j^{r_j}):=\frac{\gamma_j}{\gcd(\gamma_i, \gamma_j)} \cdot \bB_{p_i}(r_i, 1) \motimes \bA_{p_j}(r_j, 0)-\frac{\gamma_i}{\gcd(\gamma_i, \gamma_j)} \cdot \bA_{p_i}(r_i, 0) \motimes \bB_{p_j}(r_j, 1),
\end{equation*}\normalsize
where $\gamma_i:=p_i^{r_i-1}(p_i+1)$. For $I=(f_1, \dots, f_t) \in \Delta(t)$ with $m=m(I)$, $n=n(I)$ and $k=k(I)$, we define $\bY^2(\fp_I) \in \cS_2(N)^0$ as follows. 
\begin{equation*}
\bY^2(\fp_I):=\begin{cases}
\motimes_{i=1, \, i\neq x}^t \bA_{p_i}(r_i, f_i) \motimes \bB_{p_x}(r_x, 1) & \text{ if } ~~I \in \cE,\\
\motimes_{i=1, \, i\neq y, s, n}^t \bA_{p_i}(r_i, 1) \motimes \bB_{p_s}(r_s, 1) \motimes \bD(p_y^{r_y}, p_n^{r_n})  & \text{ if } ~~I \in \cF_s,\\
\motimes_{i=3}^t \bA_{p_i}(r_i, 1) \motimes \bB_{p_1}(r_1, 1) \motimes \bA_{p_2}(r_2, 0)  & \text{ if } ~~I \in \cG_s,\\
\motimes_{i=1, \, i\neq m, k}^t \bA_{p_i}(r_i, f_i) \motimes \bD(p_m^{r_m}, p_k^{r_k})  & \text{ if } ~~I \in \cH_u,\\
\motimes_{i=1, \, i\neq m, n}^t \bA_{p_i}(r_i, f_i) \motimes \bD(p_m^{r_m}, p_n^{r_n}) & \text{ otherwise},
\end{cases}
\end{equation*}
where $x=\max(m, u)$ and $y=\max(1, 3-s)$.
\end{definition}

\begin{remark}\label{remark: Y0 Y1 Y2}
As we have already seen in Section \ref{subsection: odd t>3}, the definition of $Y^0(\fp_I)$ is simple. In fact, the divisors $Y^2(\fp_I)$ are constructed in three steps as in Section \ref{subsection: even t>3}. First, let
\begin{equation*}
\bY^0(\fp_I):=\begin{cases}
\Z(\fp_I) & \text{ if }~~ I \in \cE,\\
\motimes_{i=1, \, i\neq m, n}^t \bA_{p_i}(r_i, f_i) \motimes \bD(p_m^{r_m}, p_n^{r_n}) & \text{ otherwise}.
\end{cases}
\end{equation*}

Next, we replace some problematic elements when $u\geq 1$, and let
\begin{equation*}
\bY^1(\fp_I):=\begin{cases}
\motimes_{i=1, \, i\neq m}^t \bA_{p_i}(r_i, f_i) \motimes \bB_{p_x}(r_x, 1) & \text{ if } ~~I \in \cE,\\
\motimes_{i=1, \, i\neq m, k}^t \bA_{p_i}(r_i, f_i) \motimes \bD(p_m^{r_m}, p_k^{r_k})  & \text{ if }~~ I \in \cH_u,\\
\bY^0(\fp_I) & \text{ otherwise}.
\end{cases}
\end{equation*}

Finally, when $N$ is even and $\ell=2$, i.e., $s=u\geq 1$, 
we get $\bY^2(\fp_I)$ as above by replacing the elements $\bY^1(\fp_I)$ for $I \in \cF_s \cup \cG_s$. (If $s=0$, then $\cF_s=\cG_s=\emptyset$, and so we have $\bY^1(\fp_I)=\bY^2(\fp_I)$.)
\end{remark}

\vspace{10mm}
\section{Strategy for the computation}\label{chapter5}
In this section, we sketch our idea for computing the group $\scC(N)$ and provide necessary tools  for later use. 

\ms
\subsection{Criteria for linear independence}\label{sec: criterion for linear independence}
Throughout this section, let $C_i \in \Qdiv N$ for any $1\leq i \leq k$. Inspired by our investigation in section \ref{chapter4}, we have the following.

\begin{theorem}\label{thm: criterion 1}
Suppose that there is a divisor $\delta$ of $N$ such that
\begin{equation*}
|\bbV(C_k)_\delta|=1 \qa \bbV(C_i)_\delta=0 ~~ \text{ for all }~  i < k.
\end{equation*}
Suppose further that either $\fh(C_k)=1$ or there is a prime $p$ such that
\begin{equation*}
\pw_p(C_k) \not\in 2\Z \qa \pw_p(C_i)=0 ~~\text{ for all }~ i < k.
\end{equation*}
Then we have
\begin{equation*}
\lla \ov{C_i} : 1\leq i \leq k\rra \simeq \lla\ov{C_i} : 1\leq i \leq k-1\rra \moplus \lla\ov{C_k} \rra.
\end{equation*}
\end{theorem}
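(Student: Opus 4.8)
The plan is to reduce the statement to the order-$1$ criterion of Corollary \ref{corollary: order 1 criterion} together with the algorithm of Theorem \ref{theorem: computation of order}. Suppose we have a relation
\begin{equation*}
\sum_{i=1}^k a_i \cdot \ov{C_i} = 0 \in J_0(N).
\end{equation*}
We must show that each summand $a_i \cdot \ov{C_i}$ with $i=k$ already vanishes, i.e. that $a_k \cdot \ov{C_k} = 0$; the complementary statement $\sum_{i<k} a_i \cdot \ov{C_i} = 0$ then follows automatically, and the direct sum decomposition is a formal consequence. So set $X := \sum_{i=1}^k a_i \cdot C_i \in \Qdiv N$; by hypothesis the order of $X$ is $1$, hence by Corollary \ref{corollary: order 1 criterion} we have ${\bf r}(X) \in \cS_1(N)$ and $\rpw_p(X) \in 2\Z$ for all primes $p$.

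\textbf{Key steps.} First I would rescale: let $n_i$ be the order of $C_i$ and write $a_i = b_i \cdot n_i$ with $b_i \in \Q$ chosen in $[0,1)$ (possible since only $\ov{C_i}$ matters, and $n_i \cdot \ov{C_i} = 0$). Using Lemma \ref{lemma: invertible lambda} and Theorem \ref{theorem: computation of order}, exactly as in equation (\ref{equation: relation r and bbV}) of the motivational section, one gets
\begin{equation*}
{\bf r}(a_i \cdot C_i) = b_i \cdot \fh(C_i) \cdot \bbV(C_i),
\end{equation*}
so that ${\bf r}(X) = \sum_{i=1}^k b_i \cdot \fh(C_i) \cdot \bbV(C_i)$. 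Second, I would read off the $\delta$th coordinate: since $\bbV(C_i)_\delta = 0$ for all $i < k$ and $|\bbV(C_k)_\delta| = 1$, the integrality ${\bf r}(X) \in \cS_1(N)$ forces $b_k \cdot \fh(C_k) \in \Z$, hence $b_k \cdot \fh(C_k) \in \{0,1\}$ (as $b_k \in [0,1)$ and $\fh(C_k) \in \{1,2\}$). If $b_k \cdot \fh(C_k) = 0$ we are done: $a_k \cdot \ov{C_k} = b_k \cdot \fh(C_k) \cdot (\text{something}) \cdot \ov{C_k}$... more precisely $b_k \in \{0\}$ when $\fh(C_k)=1$, and when $\fh(C_k)=2$ the only other possibility is $b_k = \tfrac12$, which I must rule out. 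Third, to exclude $b_k = \tfrac12$ under the second hypothesis: pick the prime $p$ with $\pw_p(C_k) \notin 2\Z$ and $\pw_p(C_i) = 0$ for $i<k$. Since $\pw_p(C_i) = \rpw_p(C_i)/\Gcd(C_i)$ is (up to the normalization in Corollary \ref{corollary: order 1 criterion}) the relevant parity invariant, one computes
\begin{equation*}
\rpw_p(X) \equiv \sum_{i=1}^k b_i \cdot \fh(C_i) \cdot \pw_p(C_i) \equiv b_k \cdot \fh(C_k) \cdot \pw_p(C_k) \pmod 2,
\end{equation*}
and with $b_k \fh(C_k) = 1$ and $\pw_p(C_k)$ odd this gives $\rpw_p(X)$ odd, contradicting $\rpw_p(X) \in 2\Z$. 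Hence $b_k \fh(C_k) = 0$, so $b_k = 0$, so $a_k \cdot \ov{C_k} = 0$ as desired.

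\textbf{Main obstacle.} The delicate point is the bookkeeping of the parity argument in the third step: one must be careful that $\pw_p(X)$ (defined from $\bbV(X)$, which involves dividing by $\Gcd(X)$) and the quantity $\rpw_p(X)$ appearing in Corollary \ref{corollary: order 1 criterion} (defined from ${\bf r}(X)$ directly) are compared correctly, since $X$ is not one of the $C_i$ and its $\Gcd$ need not relate simply to those of the $C_i$. The clean way around this is to work with ${\bf r}(X)$ throughout rather than with $\bbV(X)$: Corollary \ref{corollary: order 1 criterion}(2) gives the condition directly on ${\bf r}(X) = \sum b_i \fh(C_i) \bbV(C_i)$, and $\rpw_p$ of this is by definition $\sum_{\tn{val}_p(\delta)\notin 2\Z} {\bf r}(X)_\delta = \sum_i b_i \fh(C_i) \pw_p(C_i)$, so the congruence above is exact once one notes the integrality already forces each $b_i \fh(C_i)$ that could contribute a non-integer to vanish — but in fact one only needs it mod $2$, and the terms $i<k$ drop out by the hypothesis $\pw_p(C_i)=0$. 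A minor separate check is the trivial direction (that $C_k$ of hypothesis-type-1 with $\fh(C_k)=1$ needs no prime $p$): there $b_k \in \Z \cap [0,1) = \{0\}$ immediately, with no parity input needed. Everything else is formal linear algebra over the lattice $\cS_1(N)$.
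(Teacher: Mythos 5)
Your overall strategy coincides with the paper's: assume a relation $\sum a_i\cdot\ov{C_i}=0$, set $X=\sum a_i\cdot C_i$, invoke Corollary \ref{corollary: order 1 criterion}, use the $\delta$-entry to force integrality and the prime $p$ to force parity, and conclude $a_k\cdot\ov{C_k}=0$. There is, however, a genuine gap in your normalization step. The identity ${\bf r}(a_i\cdot C_i)=b_i\cdot\fh(C_i)\cdot\bbV(C_i)$ does \emph{not} hold for arbitrary $C_i\in\Qdiv N$. From ${\bf r}(C_i)=\frac{24\cdot\Gcd(C_i)}{\kappa(N)}\cdot\bbV(C_i)$ and Theorem \ref{theorem: computation of order} one gets
\begin{equation*}
{\bf r}(a_i\cdot C_i)=b_i\cdot c_i\cdot\fh(C_i)\cdot\bbV(C_i),\qquad c_i:=\frac{24\cdot\Gcd(C_i)\cdot\fh(C_i)^{-1}}{\gcd\left(\kappa(N),\ 24\cdot\Gcd(C_i)\cdot\fh(C_i)^{-1}\right)}\in\Z,
\end{equation*}
and $c_i>1$ exactly when $24\cdot\Gcd(C_i)\cdot\fh(C_i)^{-1}$ does not divide $\kappa(N)$; for instance $C_k=7\cdot C_{p^2}$ on $X_0(p^2)$ with $7\nmid p^2-1$ satisfies the hypotheses of the theorem and has $c_k=7$. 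Equation (\ref{equation: relation r and bbV}), which you cite, is computed in a special case where this factor equals $1$. With the correct factor, integrality at $\delta$ only yields $b_k\, c_k\,\fh(C_k)\in\Z$, which does not give $b_k\,\fh(C_k)\in\{0,1\}$ unless you additionally prove $\gcd(c_k,n_k)=1$ (true, since $\kappa(N)/g$ and $(24\Gcd(C_k)\fh(C_k)^{-1})/g$ are coprime for $g$ their gcd, but it is a step you must supply); likewise your parity computation at $p$ produces $b_k\,c_k\,\fh(C_k)\cdot\pw_p(C_k)$ rather than $b_k\,\fh(C_k)\cdot\pw_p(C_k)$, so the contradiction fails as written when $c_k$ is even and must be handled separately (there one shows $n_k$ is odd, which already excludes $b_k=\tfrac12$).

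The paper's proof sidesteps all of this by never introducing the orders $n_i$. Since ${\bf r}(a_k\cdot C_k)$ is a rational multiple of $\bbV(C_k)$ and $|\bbV(C_k)_\delta|=1$, the multiplier equals $\pm\,{\bf r}(a_k\cdot C_k)_\delta=\pm\,{\bf r}(X)_\delta$, which is an \emph{integer} because the terms with $i<k$ vanish at $\delta$. Hence ${\bf r}(a_k\cdot C_k)\in\cS_1(N)$ and $\rpw_q(a_k\cdot C_k)=\pm\,{\bf r}(X)_\delta\cdot\pw_q(C_k)$ for every prime $q$; in the first case of the hypothesis this is even for all $q$ immediately, and in the second case $\rpw_p(a_k\cdot C_k)=\rpw_p(X)\in 2\Z$ forces ${\bf r}(X)_\delta\in 2\Z$ and then all $\rpw_q(a_k\cdot C_k)$ are even. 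Corollary \ref{corollary: order 1 criterion} then gives $a_k\cdot\ov{C_k}=0$ directly. Your ideas are all present; I recommend recasting the argument in this form, since the bookkeeping through $b_i$ and $n_i$ is precisely where your write-up breaks down.
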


\begin{theorem}\label{thm: criterion 2}
Suppose that there is a divisor $\delta$ of $N$ such that
\begin{equation*}
\bbV(C_k)_\delta \in \Z_\ell^\times \qa \bbV(C_i)_\delta=0 ~~\text{ for all }~  i < k.
\end{equation*}
If $\ell$ is odd, then we have
\begin{equation*}
\lla \ov{C_i} : 1\leq i \leq k\rra[\ell^\infty] \simeq \lla\ov{C_i} : 1\leq i \leq k-1\rra[\ell^\infty] \moplus \lla\ov{C_k} \rra[\ell^\infty].
\end{equation*}
\end{theorem}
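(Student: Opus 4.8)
The plan is to mimic the proof of Theorem~\ref{thm: criterion 1}, reducing everything to Corollary~\ref{corollary: order 1 criterion}, but now keeping track only of $\ell$-adic information. Since $\lla\ov{C_i} : 1\leq i\leq k\rra$ is a finite abelian group, passing to $\ell$-primary parts commutes with taking subgroups and with forming internal sums, so the identity $\lla\ov{C_i} : i\leq k\rra[\ell^\infty]=\lla\ov{C_i} : i<k\rra[\ell^\infty]+\lla\ov{C_k}\rra[\ell^\infty]$ holds automatically; the whole content of the theorem is that this sum is \emph{direct}, i.e. that $\lla\ov{C_i} : i<k\rra[\ell^\infty]\cap\lla\ov{C_k}\rra[\ell^\infty]=0$.

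First I would fix notation for the cyclic factor. Let $n_k$ be the order of $\ov{C_k}$ and write $n_k=\ell^{e_k}m_k$ with $\gcd(m_k,\ell)=1$, so that $\lla\ov{C_k}\rra[\ell^\infty]=\lla m_k\ov{C_k}\rra$, a cyclic group of order $\ell^{e_k}$. Any $x$ in the intersection above can then be written $x=\sum_{i<k}a_i\ov{C_i}=b\,m_k\ov{C_k}$ for suitable integers $a_i,b$, and I want to show $\ell^{e_k}\mid b$. Setting $X:=\sum_{i<k}a_iC_i-b\,m_kC_k\in\Qdiv N$, we have $\ov X=0$ in $J_0(N)$, so Corollary~\ref{corollary: order 1 criterion} gives ${\bf r}(X)\in\cS_1(N)$; in particular ${\bf r}(X)_\delta\in\Z$.

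Next I would compute ${\bf r}(X)_\delta$. By Lemma~\ref{lemma: invertible lambda} one has $\Lambda(N)^{-1}=\tfrac{24}{\kappa(N)}\Upsilon(N)$, hence ${\bf r}(C)=\tfrac{24\,\Gcd(C)}{\kappa(N)}\bbV(C)$ for every $C\in\Qdiv N$. Since $\bbV(C_i)_\delta=0$ for all $i<k$ by hypothesis, only the last term survives, giving ${\bf r}(X)_\delta=-\tfrac{24\,b\,m_k\,\Gcd(C_k)}{\kappa(N)}\bbV(C_k)_\delta$. Now I take $\ell$-adic valuations: as $\ell$ is odd, $\tn{val}_\ell(24)=0$; as $\gcd(m_k,\ell)=1$, $\tn{val}_\ell(m_k)=0$; and $\bbV(C_k)_\delta\in\Z_\ell^\times$ by hypothesis. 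So ${\bf r}(X)_\delta\in\Z$ forces $\tn{val}_\ell(b)\geq\tn{val}_\ell(\kappa(N))-\tn{val}_\ell(\Gcd(C_k))$. On the other hand Theorem~\ref{theorem: computation of order}, again using that $24$ and $\fh(C_k)\in\{1,2\}$ are prime to $\ell$, yields $e_k=\tn{val}_\ell(n_k)=\max\bigl(0,\ \tn{val}_\ell(\kappa(N))-\tn{val}_\ell(\Gcd(C_k))\bigr)$. If this maximum is $0$ then $\lla\ov{C_k}\rra[\ell^\infty]=0$ and $x=0$ trivially; otherwise $e_k=\tn{val}_\ell(\kappa(N))-\tn{val}_\ell(\Gcd(C_k))\leq\tn{val}_\ell(b)$, so $\ell^{e_k}\mid b$, hence $n_k=\ell^{e_k}m_k$ divides $b\,m_k$ and $x=b\,m_k\ov{C_k}=0$. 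This establishes directness, and the routine fact that $(B+C)[\ell^\infty]=B[\ell^\infty]+C[\ell^\infty]$ for finite abelian groups (primary decomposition) then finishes the asserted isomorphism.

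I expect the only real subtlety to be the valuation bookkeeping in the last step, together with the conceptual point of \emph{why} oddness of $\ell$ is exactly what makes this work: in Theorem~\ref{thm: criterion 1} the auxiliary $\pw_p$-hypothesis was needed to control the prime $2$ entering through condition~(4) of Proposition~\ref{proposition: criteria for rational modular unit} (equivalently through $\fh$ and the $\rpw_p\in2\Z$ requirement in Corollary~\ref{corollary: order 1 criterion}), whereas for $\ell$ odd all the powers of $2$ and $3$ hidden in $24$, in $\fh$, and in those $2$-divisibility conditions are $\ell$-adically invisible, so bare integrality of ${\bf r}(X)$ at the single coordinate $\delta$ suffices. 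A secondary thing to double-check is that the edge case $\tn{val}_\ell(\Gcd(C_k))\geq\tn{val}_\ell(\kappa(N))$ is genuinely harmless, which it is since then $\lla\ov{C_k}\rra[\ell^\infty]$ is trivial.
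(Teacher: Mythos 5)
Your proof is correct and follows essentially the same route as the paper: both arguments feed a relation $\ov{X}=0$ into Corollary \ref{corollary: order 1 criterion}, use $\bbV(C_i)_\delta=0$ for $i<k$ to see that ${\bf r}(X)_\delta$ is the $C_k$-contribution alone, and then use $\bbV(C_k)_\delta\in\Z_\ell^\times$ to extract $\ell$-divisibility of the coefficient of $C_k$.

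One slip to fix: for $\ell=3$ the claim $\tn{val}_\ell(24)=0$ is false, so your integrality bound should read $\tn{val}_\ell(b)\geq\tn{val}_\ell(\kappa(N))-\tn{val}_\ell(\Gcd(C_k))-\tn{val}_\ell(24)$, and correspondingly Theorem \ref{theorem: computation of order} gives $e_k=\max\bigl(0,\ \tn{val}_\ell(\kappa(N))-\tn{val}_\ell(\Gcd(C_k))-\tn{val}_\ell(24)\bigr)$. The omitted term $\tn{val}_\ell(24)$ enters both expressions identically, so it cancels and your conclusion $\ell^{e_k}\mid b$ survives; but as written, both displayed formulas (and the remark that ``all the powers of $2$ and $3$ hidden in $24$ are $\ell$-adically invisible'') are wrong when $\ell=3$. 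The only other difference is cosmetic: instead of computing $e_k$ from the order formula, the paper produces $b\in\Z_\ell^\times$ with ${\bf r}(b\cdot a_k\cdot C_k)\in\cS_1(N)$ and then sets $a=2b$, the factor $2$ absorbing the parity conditions $\rpw_q\in2\Z$ of Corollary \ref{corollary: order 1 criterion}; that corollary then gives $a\cdot a_k\cdot\ov{C_k}=0$ with $a$ prime to $\ell$, which is exactly your statement that $a_k\ov{C_k}$ has prime-to-$\ell$ order.
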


\begin{theorem}\label{thm: criterion 3}
Suppose that one of the following holds.
\begin{enumerate}
\item
$\fh(C_k)=1$ and there is a divisor $\delta$ of $N$ such that 
\begin{equation*}
\bbV(C_k)_\delta \in \Z_2^\times \qa \bbV(C_i)_\delta=0 ~~\text{ for all } i < k.
\end{equation*}
\item
There is a prime $p$ such that 
\begin{equation*}
\pw_p(C_k) \not\in 2\Z \qa \pw_p(C_i)=0 ~~\text{ for all }~ i < k.
\end{equation*}
\end{enumerate}
Then we have
\begin{equation*}
\lla \ov{C_i} : 1\leq i \leq k\rra[2^\infty] \simeq \lla\ov{C_i} : 1\leq i \leq k-1\rra[2^\infty] \moplus \lla \ov{C_k} \rra[2^\infty].
\end{equation*}
\end{theorem}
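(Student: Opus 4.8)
The plan is to reduce the whole statement to a triviality-of-intersection assertion and then detect that intersection via Corollary \ref{corollary: order 1 criterion}. Put $A := \lla \ov{C_i} : 1\le i\le k\rra$, $B := \lla \ov{C_i} : 1\le i\le k-1\rra$ and $D := \lla \ov{C_k}\rra$, so $A = B+D$ inside $J_0(N)$. Applying to the finite abelian group $A$ the integer idempotent that cuts out its $2$-primary part (multiplication by an $e\in\Z$ with $e\equiv 1$ modulo the $2$-part of $|A|$ and $e\equiv 0$ modulo the odd part), one gets $A[2^\infty] = B[2^\infty] + D[2^\infty]$; hence the obvious surjection $B[2^\infty]\oplus D[2^\infty]\twoheadrightarrow A[2^\infty]$, $(y,z)\mapsto y+z$, is an isomorphism if and only if $B[2^\infty]\cap D[2^\infty] = 0$. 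So the theorem reduces, under hypothesis (1) or (2), to showing that this last intersection is trivial.

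To that end, fix $x\in B[2^\infty]\cap D[2^\infty]$ and let $n_k$ be the order of $\ov{C_k}$, written $n_k = 2^v m$ with $m$ odd. Then $D[2^\infty] = \lla m\cdot\ov{C_k}\rra$ is cyclic of order $2^v$, so $x = jm\cdot\ov{C_k}$ for some integer $j$ with $0\le j<2^v$; and since $x\in B$ we may also write $x = \sum_{i<k} a_i\cdot\ov{C_i}$ with $a_i\in\Z$. Set $X := jm\cdot C_k - \sum_{i<k} a_i\cdot C_i \in \Qdiv N$. Then $\ov{X}=0$, so by Corollary \ref{corollary: order 1 criterion} we have ${\bf r}(X)\in\cS_1(N)$ and $\rpw_p(X)\in 2\Z$ for \emph{every} prime $p$. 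By Lemma \ref{lemma: invertible lambda}, $\Lambda(N)^{-1} = \tfrac{24}{\kappa(N)}\Upsilon(N)$, so ${\bf r}(X) = \tfrac{24}{\kappa(N)}V(X)$; and since $V$ is linear with $V(C) = \Gcd(C)\cdot\bbV(C)$ (Definition \ref{definition: bbV and Gcd and pw}), we get $V(X) = jm\,\Gcd(C_k)\,\bbV(C_k) - \sum_{i<k} a_i\,\Gcd(C_i)\,\bbV(C_i)$. Everything now hinges on the hypotheses annihilating all terms except the one attached to $C_k$.

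Under hypothesis (1) I would read off the coordinate $\delta$: since $\bbV(C_i)_\delta = 0$ for $i<k$, we get ${\bf r}(X)_\delta = \tfrac{24\,jm\,\Gcd(C_k)\,\bbV(C_k)_\delta}{\kappa(N)}\in\Z$, and as $m$ and $\bbV(C_k)_\delta$ are odd this forces $\tn{val}_2(j)\ge \tn{val}_2(\kappa(N)) - \tn{val}_2(24) - \tn{val}_2(\Gcd(C_k))$. Since $\fh(C_k)=1$, Theorem \ref{theorem: computation of order} gives $v = \tn{val}_2(n_k) = \max\{0,\ \tn{val}_2(\kappa(N)) - \tn{val}_2(24) - \tn{val}_2(\Gcd(C_k))\}\le \tn{val}_2(j)$; combined with $0\le j<2^v$ this gives $j=0$, i.e. $x=0$. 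Under hypothesis (2) I would instead sum the coordinates with odd $p$-valuation: since $\pw_p(C_i)=0$ for $i<k$, Corollary \ref{corollary: order 1 criterion} gives $\rpw_p(X) = \tfrac{24\,jm\,\Gcd(C_k)\,\pw_p(C_k)}{\kappa(N)}\in 2\Z$, and as $m$ and $\pw_p(C_k)$ are odd this forces $\tn{val}_2(j)\ge \tn{val}_2(\kappa(N)) + 1 - \tn{val}_2(24) - \tn{val}_2(\Gcd(C_k))$; since $\tn{val}_2(\fh(C_k))\le 1$, Theorem \ref{theorem: computation of order} again gives $v = \tn{val}_2(n_k)\le \tn{val}_2(j)$, hence $j=0$ and $x=0$.

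The main — and essentially the only — subtlety is the interplay of the constant $24$, the possibility $\fh(C_k)=2$, and the passage to the $2$-primary part: localizing at $2$ converts the oddness of $\bbV(C_k)_\delta$ (resp. $\pw_p(C_k)$) into genuine invertibility, so no unwanted common factor with $n_k$ can survive — this is precisely what is unavailable in the non-localized criterion — while in case (2) the single extra power of $2$ supplied by $\rpw_p(X)\in 2\Z$ is exactly enough to absorb the discrepancy $\tn{val}_2(\fh(C_k))\le 1$. The remaining ingredients (the reduction to a trivial-intersection statement, the linearity of $V$ together with ${\bf r} = \tfrac{24}{\kappa(N)}V$, and the $\tn{val}_2$-comparison via Theorem \ref{theorem: computation of order}) are routine.
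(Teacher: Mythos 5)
Your argument is correct and follows essentially the same route as the paper's: both reduce the statement to the triviality of $\lla\ov{C_i}:i<k\rra[2^\infty]\cap\lla\ov{C_k}\rra[2^\infty]$, apply Corollary \ref{corollary: order 1 criterion} to the relation divisor $X$, and use the vanishing hypotheses to isolate the $C_k$-contribution at the coordinate $\delta$ (case (1)) or inside $\rpw_p$ (case (2)). The only cosmetic difference is that you close the argument by comparing $\tn{val}_2(j)$ with the order formula of Theorem \ref{theorem: computation of order}, whereas the paper instead produces an odd integer $a$ with $a\cdot a_k\cdot\ov{C_k}=0$; these are equivalent bookkeeping devices.
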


\ms
We finish this section by proving all the theorems above.
\begin{proof}
Suppose that there are integers $a_i$ such that
\begin{equation*}
a_1 \cdot \ov{C_1} + a_2 \cdot \ov{C_2} + \cdots + a_k \cdot \ov{C_k}=0 \in J_0(N).
\end{equation*}
Let $X=\sum_{i=1}^k a_i \cdot C_i \in \Qdiv N$. Then by Corollary \ref{corollary: order 1 criterion}, we have 
\begin{equation*}
{\bf r}(X) \in \cS_1(N) \qa \rpw_q(X) \in 2\Z ~~\text{ for all primes }q.
\end{equation*}

\ms
To prove Theorem \ref{thm: criterion 1}, it suffices to show that $a_k \cdot \ov{C_k}=0$.
Therefore by Corollary \ref{corollary: order 1 criterion}, it is enough to prove that
\begin{equation*}
{\bf r}(a_k \cdot C_k) \in \cS_1(N) \qa \rpw_q(a_k \cdot C_k) \in 2\Z ~~\text{ for all primes } q.
\end{equation*}

Note that for any $1\leq i <k$, we have ${\bf r}(a_i \cdot C_i)_\delta=0$ because we assume that $\bbV(C_i)_\delta=0$. Thus, we have
\begin{equation*}
{\bf r}(a_k \cdot C_k)_\delta=\textstyle\sum_{i=1}^k {\bf r}(a_i\cdot C_i)_\delta={\bf r}(X)_\delta \in \Z.
\end{equation*}
Since $|\bbV(C_k)_\delta|=1$, this implies that ${\bf r}(a_k \cdot C_k) \in \cS_1(N)$. Indeed, for any divisor $d$ of $N$, we have
\begin{equation}\label{equation: 999}
{\bf r}(a_k \cdot C_k)_d={\bf r}(a_k \cdot C_k)_\delta \times \frac{\bbV(C_d)_d}{\bbV(C_k)_{\delta}}={\bf r}(a_k \cdot C_k)_\delta \cdot \bbV(C_k)_d \cdot \bbV(C_k)_\delta \in \Z.
\end{equation}
Thus, we have $\Gcd(a_k \cdot C_k)=|{\bf r}(a_k \cdot C_k)_\delta|$ and so
\begin{equation*}
{\bf r}(a_k \cdot C_k)=|{\bf r}(a_k \cdot C_k)_\delta |\cdot \bbV(a_k \cdot C_k)=| {\bf r}(a_k \cdot C_k)_\delta| \cdot \bbV(C_k).
\end{equation*}
This implies that for any prime $q$, we have
\begin{equation}\label{equation: 1000}
\rpw_q(a_k \cdot C_k)=|{\bf r}(a_k \cdot C_k)_\delta |\cdot \pw_q(C_k). 
\end{equation}

Suppose first that $\fh(C_k)=1$, i.e., $\pw_q(C_k) \in 2\Z$ for any prime $q$. Then by (\ref{equation: 1000}), we have $\rpw_q(a_k \cdot C_k) \in 2\Z$.
Suppose next that there is a prime divisor $p$ of $N$ such that $\pw_p(C_k) \not\in 2\Z$ and $\pw_p(C_i)=0$ for all $1\leq i < k$. 
Since $\rpw_p(X) \in 2\Z$ and $\rpw_p(C_i)=0$ for all $1\leq i <k$, we have
\begin{equation*}
\rpw_p(a_k \cdot C_k)=\textstyle\sum_{i=1}^k \rpw_p(a_i \cdot C_i)=\rpw_p(X) \in 2\Z.
\end{equation*}
Therefore by (\ref{equation: 1000}), we have ${\bf r}(a_k \cdot C_k)_\delta \in 2\Z$ since $\pw_p(C_k) \not\in 2\Z$. Again by (\ref{equation: 1000}), we easily obtain $\rpw_q(a_k \cdot C_k) \in 2\Z$.
This completes the proof of Theorem \ref{thm: criterion 1}.

\ms
Next, we prove Theorem \ref{thm: criterion 2}.
As above, it suffices to show that there is an integer $a$ not divisible by $\ell$ such that 
\begin{equation*}
{\bf r}(a\cdot a_k \cdot C_k) \in \cS_1(N) \qa \rpw_q(a\cdot a_k \cdot C_k) \in 2\Z ~~\text{ for all primes } q.
\end{equation*}

As above, we have
\begin{equation*}
{\bf r}(a_k \cdot C_k)_\delta = \textstyle\sum_{i=1}^k {\bf r}(a_i \cdot C_i)_\delta={\bf r}(X)_\delta  \in \Z.
\end{equation*}
In particular, we have $\tn{val}_\ell({\bf r}(a_k \cdot C_k)_\delta) \geq 0$.
Since $\bbV(C_k)_\delta \in \Z_\ell^\times$, by the same argument as in (\ref{equation: 999}),
we have
$\tn{val}_\ell({\bf r}(a_k \cdot C_k)_d)\geq 0$ for any divisor $d$ of $N$. Thus, there is an integer $b \in \Z_\ell^\times$ such that ${\bf r}(b \cdot a_k \cdot C_k) \in \cS_1(N)$. Since $\ell$ is odd,  we can take $a=2b \in \Z_\ell^\times$. Indeed, since ${\bf r}(b \cdot a_k \cdot C_k) \in \cS_1(N)$, we also have 
${\bf r}(2b \cdot a_k \cdot C_k)\in \cS_1(N)$ and $\rpw_q(2b \cdot a_k \cdot C_k)=2\cdot \rpw_q(b \cdot a_k \cdot C_k) \in 2\Z~$ for any primes $q$, as desired.
This completes the proof of Theorem \ref{thm: criterion 2}.

\ms
Finally, we prove Theorem \ref{thm: criterion 3}. Again, it suffices to show that there is an odd integer $a$ such that 
\begin{equation*}
{\bf r}(a\cdot a_k \cdot C_k) \in \cS_1(N) \qa \rpw_q(a\cdot a_k \cdot C_k) \in 2\Z ~~\text{ for all primes } q.
\end{equation*}

Suppose first that $\fh(C_k)=1$ and there is a divisor $\delta$ of $N$ such that $\bbV(C_k)_\delta$ is odd and $\bbV(C_i)_\delta=0$ for all $i<k$. As above, there is an odd integer $b$ such that ${\bf r}(b\cdot a_k \cdot C_k) \in \cS_1(N)$. Also, similarly as (\ref{equation: 1000}), we have $\rpw_q(b \cdot a_k \cdot C_k) \in 2\Z$ for any prime $q$ because $\pw_q(C_k) \in 2\Z$. Thus, we can take $a=b$.
Suppose next that there is a prime $p$ such that $\pw_p(C_k)$ is odd and $\pw_p(C_i)=0$ for all $i<k$. Since ${\bf r}(a_k \cdot C_k)=b \cdot \bbV(C_k)$ for some $b\in \Q^\times$, we have $\rpw_p(a_k \cdot C_k)=b \cdot \pw_p(C_k)$. Since we have
\begin{equation*}
b\cdot \pw_p(C_k)=\textstyle\rpw_p(a_k \cdot C_k)= \sum_{i=1}^k \rpw_p(a_i \cdot C_i)=\rpw_p(X) \in 2\Z
\end{equation*}
and $\pw_p(C_k)$ is odd, we have $\tn{val}_2(b)\geq 1$. Now, we take $a$ as the denominator of $b$. 
Since $\tn{val}_2(b)\geq 1$, $a$ is odd and $ab \in 2\Z$. Thus, we have ${\bf r}(a\cdot a_k \cdot C_k)=ab \cdot \bbV(C_k) \in \cS_1(N)$. Also, since $\rpw_q(a \cdot a_k \cdot C_k)=ab \cdot \pw_q(C_k)$ for any prime $q$, we have $\rpw_q(a \cdot a_k \cdot C_k) \in 2\Z$, as desired.
This completes the proof of Theorem \ref{thm: criterion 3}.
\end{proof}


\ms
\subsection{First attempt}\label{section: first attempt}
Using suitable orderings $\prec$ and $\vtl$ on $\cD_N^0$, we write
\begin{equation*}
\cD_N^\sqf=\{d_1, \dots, d_\fm \}=\{\delta_1, \dots, \delta_\fm \}  ~~\text{ and } ~~\cD_N^0 = \{d_1, \dots, d_\fn \}=\{ \delta_1, \dots, \delta_\fn\},
\end{equation*}
so that $d_i \prec d_j$ (resp. $\delta_i \vtl \delta_j$) if and only if $i<j$, where $\fm=\#\cD_N^\sqf$ and $\fn=\#\cD_N^0$. (Thus, for any non-trivial squarefree divisor $a$ of $N$ and for any non-squarefree divisor $b$ of $N$, we have $a \prec b$ and $a \vtl b$.)

\begin{enumerate}
\item
Find a rational cuspidal divisor $D_i$ such that the images $\ov{D_i}$ span the group $\scC(N)$, or more generally 
\begin{equation*}
\cS_2(N)^0= \br{\bD_i : 1\leq i \leq \fn}.
\end{equation*}
\item
Show that the matrix $\fM:=(|\bbV(D_i)_{\delta_j}|)_{1\leq i, j \leq \fn}$ is \textit{lower unipotent}, i.e., 
$\fM$ is lower-triangular and $|\bbV(D_i)_{\delta_i}|=1$ for all $i$.
\item
Prove that $\fh(D_i)=1$ for all $i \neq 1$.
\end{enumerate}

Then by successively applying Theorem \ref{thm: criterion 1}, we easily have
\begin{equation*}
\scC(N) \simeq \moplus_{i=1}^\fn \br{\ov{D_i}}.
\end{equation*}
Although our requirements are quite strong, this strategy indeed works when $N$ is an odd prime power (Section \ref{section: 6.2}).

\ms
\subsection{Second attempt}\label{section: second attempt}
As above, we define two orderings $\prec$ and $\vtl$ on $\cD_N^0$. But since some arguments break down in general, we modify our previous strategy a little bit.

\begin{enumerate}
\item
Find a rational cuspidal divisor $D_i$ such that the images $\ov{D_i}$ span the group $\scC(N)$, or more generally 
\begin{equation*}
\cS_2(N)^0= \br{\bD_i : 1\leq i \leq \fn}.
\end{equation*}
\item
Show that the matrix $\fM$
 is of the form
\begin{equation*}
\arraycolsep=1.4pt\def\arraystretch{1.5}
\left( \begin{array}{c:c} 
\phantom{a} * \phantom{a} & \phantom{a} \bbO \phantom{a} \\  \hdashline
\phantom{a} * \phantom{a} & \phantom{a} U \phantom{a}
\end{array}\right) \begin{array}{cc}
\leftarrow &\text{ $\fm~$ rows}\\
\leftarrow &\text{ $(\fn-\fm)$ rows},
\end{array}
\end{equation*}
where $U$ is a lower unipotent matrix of size $\fn-\fm$, and $\bbO$ is the $\fm\times (\fn-\fm)$ zero matrix. 
\item
Prove that $\fh(D_i)=1$ for any $\fm<i\leq \fn$.
\end{enumerate}

We then have
\begin{equation*}
\scC(N)=\br{\ov{D_i} : 1\leq i \leq \fn} \simeq \scC(N)^\sqf \moplus \left(\moplus_{i=\fm+1}^\fn \br{ \ov{D_i} }\right),
\end{equation*}
where $\scC(N)^\sqf=\br{\ov{D_i} : 1\leq i \leq \fm}$. Indeed, the equality follows from (1), and the isomorphism follows by successively applying Theorem \ref{thm: criterion 1} as we have (2) and (3).

\vspace{3mm}
Now, we fix a prime $\ell$ and compute the $\ell$-primary subgroup of $\scC(N)^\sqf$.
\begin{enumerate}
\setcounter{enumi}{3}
\item
Find a rational cuspidal divisor $E_i$ such that
\begin{equation}\label{equation: step4}
\br{\bD_i : 1\leq i \leq \fm} \motimes_\Z \Z_\ell = \br{\bE_i : 1\leq i \leq \fm} \motimes_\Z \Z_\ell.
\end{equation}
\item
Show that the matrix $\fM_0=(\bbV(E_i)_{\delta_j})_{1\leq i, j \leq \fm}$ is \textit{lower $\ell$-unipotent}, i.e., $\fM_0$ is lower-triangular and $\bbV(E_i)_{\delta_i} \in \Z_\ell^\times$ for all $i$.
\end{enumerate}

If $\ell$ is odd, then by successively applying Theorem \ref{thm: criterion 2}, we have
\begin{equation*}
\scC(N)^\sqf [\ell^\infty] \simeq \moplus_{i=1}^\fm \br{\ov{E_i}}[\ell^\infty].
\end{equation*}

If $\ell=2$, then we need one more step.
\begin{enumerate}
\setcounter{enumi}{5}
\item
Prove that for any $1<i\leq \fm$, either $\fh(E_i)=1$ or there is a prime $p$ such that
\begin{equation*}
\pw_p(E_i) \not\in 2\Z \qa \pw_p(E_j)=0 \text{ for all } j<i.
\end{equation*}
\end{enumerate}

Then by successively applying Theorem \ref{thm: criterion 3}, we have
\begin{equation*}
\scC(N)^\sqf [2^\infty] \simeq \moplus_{i=1}^\fm \br{\ov{E_i}} [2^\infty].
\end{equation*}
By taking $D_i=Z(d_i)$ and $E_i=Y^0(d_i)$, our strategy works if $N$ is odd.

\ms
\subsection{Third attempt}\label{section: third attempt}
Let $N=\prod_{i=1}^t p_i^{r_i}$ be the prime factorization of $N$.
We fix a prime $\ell$ and make Assumption \ref{assumption: chapter4}. 
We assume that $N$ is even, i.e., $u\geq 1$. For simplicity, let $r=r_u$.

\ms
In the previous section, we in fact verify (1) and (2) for $D_i=Z^1(d_i)$.\footnote{Note that $Z(d)=Z^1(d)$ for any $d \in \cD_N^0$ if $N$ is odd.} So if we take $D_i=Z^1(d_i)$, then (1) and (2) can be verified without any change, but (3) is not fulfilled in general. Problematic elements are some of $Z^1(d_i)$ with $\fm< i < \fm+r$ as we have already seen in Section \ref{section: case of level 2r}. Thus, we ignore them at the moment, and by mimicking the strategy in the previous section, we obtain\footnote{Note that $Z^1(d_i)=Z(d_i)$ for any $i\geq \fm+r$.}
\begin{equation*}
\scC(N) \simeq \br{\ov{Z^1(d_i)} : 1\leq i < \fm+r} \moplus\left(\moplus_{j=\fm+r}^\fn \br{\ov{Z(d_j)}}\right).
\end{equation*}

Next, we deal with the group $\scC(N)^\sqf[\ell^\infty]$. If we take $E_i=Y^0(d_i)$ as in the previous section, then (4) works without any change because (\ref{equation: assumption 1}) guarantees that (\ref{equation: step4}) holds.
However, (5) works only when $u=1$. Note that the vector $\bD(p_i^{r_i}, p_j^{r_j})$ is designed so that $\bbV(D(p_i^{r_i}, p_j^{r_j}))_{p_j}\in \Z_\ell^\times$ for any $i<j$ under the assumption that $\tn{val}_\ell(p_i-1) \leq \tn{val}_\ell(p_j-1)$.
Since $p_u-1=1$, this property may not hold if $j=u$. This is why we take a twisted colexicographic order for the ordering $\vtl$ on $\cD_N^\sqf$ with the cost that the matrix $\fM_0$ is not lower-triangular any more. With this new ordering $\vtl$, we try to make the matrix $\fM_0$ lower $\ell$-unipotent. Fortunately, if we take $E_i=Y^1(d_i)$, then the matrix $\fM_0$ is lower $\ell$-unipotent. If $\ell$ is odd, by successively applying Theorem \ref{thm: criterion 2}, we have
\begin{equation*}
\scC(N)^\sqf[\ell^\infty] \simeq \moplus_{i=1}^\fm \br{\ov{E_i} } [\ell^\infty].
\end{equation*}

Suppose that $\ell=2$. We hope to verify (6) for $E_i=Y^1(d_i)$. If $u=s\geq 1$, then $p_s-1=1$ is odd, and so some arguments break down. As above, we replace some problematic elements and finally take $E_i=Y^2(d_i)$. Although the matrix $\fM_0$ is not lower-triangular any more, we can successively apply Theorem \ref{thm: criterion 3} and obtain that
\begin{equation}\label{000}
\scC(N)^\sqf[2^\infty] \simeq \moplus_{i=1}^\fm \br{\ov{E_i} } [2^\infty],
\end{equation}
as desired.

Lastly, we hope to prove
\begin{equation*}
\br{\ov{Z^1(d_i)} : 1\leq i < \fm+r} \simeq \scC(N)^\sqf \moplus\left(\moplus_{i=\fm+1}^{\fm+r-1} \br{\ov{Z(d_i)}}\right),
\end{equation*}
which will be done in Section \ref{section: final step}. 
This completes the computation of $\scC(N)$.

\ms
\subsection{Degeneracy maps revisited}\label{sec: degeneracy maps revisited}
Let $N=Mp^r$ with $\gcd(M, p)=1$. For simplicity, let $\alpha:=\alpha_p(N)^*$ (resp. $\beta:=\beta_p(N)^*$) be the degeneracy map 
from $\Div(X_0(N))$ to $\Div(X_0(Np))$.

\begin{proposition}\label{prop: degeneracy maps revisited}
Let $C \in \Qdivv N$. If $r=0$, then we have
\begin{equation*}
V(\alpha(C))=(p^2-1) \cdot V(C) \motimes \vect 1 0 \qqa V(\beta(C))=(p^2-1) \cdot V(C) \motimes \vect 0 1.
\end{equation*}
If $r\geq 1$, then for any divisor $d$ of $M$ we have
\begin{equation*}
V(\alpha(C))_{dp^f}=\begin{cases}
p\cdot V(C)_{dp^f} & \text{ if }~~ 0\leq f\leq r,\\
0 & \text{ if }~~ f=r+1,
\end{cases}
\end{equation*}
and
\begin{equation*}
V(\beta(C))_{dp^f}=\begin{cases}
0 & \text{ if } ~~ f=0,\\
p\cdot V(C)_{dp^{f-1}} & \text{ if }~~ 1\leq f\leq r+1.
\end{cases}
\end{equation*}
\end{proposition}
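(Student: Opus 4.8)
The plan is to reduce the identity to a pure level-$p^{r+1}$ statement and then read it off the explicit tridiagonal shape of $\Upsilon(p^{r+1})$. Since $\Qdivv N$ is generated by the divisors $(P_d)$ (Lemma \ref{lemma: group of rational cuspidal divisors}) and everything in sight --- the degeneracy pullbacks $\alpha=\alpha_p(N)^*$, $\beta=\beta_p(N)^*$ and the assignment $C\mapsto V(C)=\Upsilon(Np)\times\Phi_{Np}(C)$ --- is $\Z$-linear, it suffices to treat $C=(P(N)_{d'p^f})$ for a divisor $d'$ of $M$ and $0\le f\le r$. Writing $N=Mp^r$ as in Remark \ref{remark: decomposition of RCD}, we have $\Phi_N(C)=\Phi_M(P(M)_{d'})\motimes\Phi_{p^r}((P(p^r)_{p^f}))$, together with $\Upsilon(N)=\Upsilon(M)\motimes\Upsilon(p^r)$ and $\Upsilon(Np)=\Upsilon(M)\motimes\Upsilon(p^{r+1})$ from the definition in Section \ref{section: matrix upsilon}. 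Lemma \ref{lemma: degeneracy maps pullback on RCD} then gives $\Phi_{Np}(\alpha(C))=\Phi_M(P(M)_{d'})\motimes\Phi_{p^{r+1}}(A)$ and $\Phi_{Np}(\beta(C))=\Phi_M(P(M)_{d'})\motimes\Phi_{p^{r+1}}(B)$ with $A,B$ the explicit level-$p^{r+1}$ cuspidal divisors listed there. Applying $\Upsilon$ and using $(X\motimes Y)\times(v\motimes w)=(Xv)\motimes(Yw)$, the whole proposition collapses to its $M=1$ case: for each $f$ one must compare $\Upsilon(p^{r+1})\times\Phi_{p^{r+1}}(A)$ with the corresponding (scaled, or scaled-and-shifted) version of $\Upsilon(p^r)\times{\bf e}(p^r)_{p^f}$, and likewise for $B$.

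\textbf{The $M=1$ computation.} Here $\Upsilon(p^r)\times{\bf e}(p^r)_{p^f}$ is just the $p^f$-th column of $\Upsilon(p^r)$, whose only nonzero entries are $-p^{m(f)}$, $p^{m(f)-1}(p^2+1)$, $-p^{m(f)}$ in positions $p^{f-1},p^f,p^{f+1}$, with the standard edge corrections (Section \ref{section: matrix upsilon} and Lemma \ref{lemma: entries of Upsilon matrix}). For $\alpha$ one runs through the regimes appearing in Lemma \ref{lemma: degeneracy maps pullback on RCD} --- $f=r=0$; $0\le f\le r/2$ (so $A=p\cdot(P(p^{r+1})_{p^f})$); $r/2<f\le r-1$ (so $A=(P(p^{r+1})_{p^f})$); and $f=r\ge 1$ (so $A=(P(p^{r+1})_{p^r})+(P(p^{r+1})_{p^{r+1}})$) --- and in each regime evaluates $\Upsilon(p^{r+1})$ on the stated vector using its tridiagonal shape, checking equality with $p\cdot(\Upsilon(p^r)\times{\bf e}(p^r)_{p^f})$ (respectively $(p^2-1)\cdot(\Upsilon(p^r)\times{\bf e}(p^r)_{p^f})\motimes\vect 1 0$ when $r=0$) in positions $p^0,\dots,p^r$, and with $0$ in position $p^{r+1}$. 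The reason this is painless is that the value of $m(f)$ relative to $p^{r+1}$ differs from its value relative to $p^r$ exactly by the amount that is absorbed into the scalar $p$, so after that rescaling the two columns agree row by row; only the three rows near each end behave differently, and there one simply inspects the edge entries of $\Upsilon$ directly. The case of $\beta$ is obtained from that of $\alpha$ by shifting all $p$-indices up by one, matching the fact that $B$ is the ``$\beta$-reflection'' of $A$ in Lemma \ref{lemma: degeneracy maps pullback on RCD}.

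\textbf{A conceptual reformulation, and the main obstacle.} It is worth recording the structural reason for the statement, which also gives a short proof once set up. By Lemma \ref{lemma: invertible lambda} we have $\Upsilon(L)=\tfrac{\kappa(L)}{24}\Lambda(L)^{-1}$, hence $V(D)=\tfrac{\kappa(L)}{24}\,{\bf r}(D)$ for every $D\in\Qdivv L$, where ${\bf r}(D)=\Lambda(L)^{-1}\times\Phi_L(D)$ is the exponent vector of the generalized eta quotient with divisor $D$ (Lemma \ref{lemma: the divisor of eta quotient}). Now $\alpha_p(N)$ is the map $\tau\mapsto\tau$, so it sends the eta quotient $\prod_{\delta\mid N}\eta_\delta^{r_\delta}$ on $X_0(N)$ to the same formal product on $X_0(Np)$; equivalently $\alpha$ acts on exponent vectors by zero-padding in the positions divisible by $p^{r+1}$. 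Likewise $\beta_p(N)$ is $\tau\mapsto p\tau$, so it sends $\eta_\delta$ to $\eta_{p\delta}$ and acts on exponent vectors by the index shift $\delta\mapsto p\delta$. Since $\kappa$ is multiplicative and $\kappa(p^k)=p^{k-1}(p^2-1)$, the ratio $\kappa(Mp^{r+1})/\kappa(Mp^r)$ equals $p^2-1$ when $r=0$ and $p$ when $r\ge1$, and combining these two facts produces all four formulas simultaneously. The one delicate point --- and the thing to argue carefully in either route --- is the compatibility $\tn{div}(\phi^*F)=\phi^*\tn{div}(F)$ for $\phi=\alpha_p(N),\beta_p(N)$ applied to the building blocks $F=\Delta_\delta$, i.e.\ that the pullback of cuspidal divisors with the ramification multiplicities of Lemmas \ref{lemma: degeneracy map on cusp} and \ref{lemma: ramification index of cusp} really corresponds to these elementary operations on exponent vectors; granting this, the rest is bookkeeping. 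I expect to write the concrete argument of the first two paragraphs as the actual proof --- it only invokes the already-established Lemma \ref{lemma: degeneracy maps pullback on RCD} --- and to keep the reformulation above as an explanatory remark.
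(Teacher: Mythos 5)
Your proposal is correct and follows essentially the same route as the paper: both reduce the statement to the action of $\alpha_p(N)^*$ and $\beta_p(N)^*$ on the divisors $(P_d)$ via Lemma \ref{lemma: degeneracy maps pullback on RCD} and then verify the identity entry by entry using the tridiagonal structure of $\Upsilon(p^{r+1})$ versus $\Upsilon(p^r)$, the key point in both being that the discrepancy between $\min(f,\,r-f)$ and $\min(f,\,r+1-f)$ is exactly absorbed into the scalar $p$. The only cosmetic difference is that you reduce to single generators and invoke the tensor factorization $\Upsilon(N)=\Upsilon(M)\motimes\Upsilon(p^r)$, whereas the paper keeps a general $C$ and works with the block vectors $A_f\in\cS_2(M)$; these are equivalent bookkeeping devices.
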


\begin{proof}
First, let $r=0$. Then we have $\Phi_{Np}(\alpha(C))=\Phi_N(C) \motimes \vect p 1$ by Lemma \ref{lemma: degeneracy maps pullback on RCD}. Thus, we have
\begin{equation*}
 \begin{split}
 V(\alpha(C))&=\Upsilon(Np)\times \Phi_{Np}(\alpha(C))=(\Upsilon(N) \times \Phi_N(C)) \motimes (\mat p {-1} {-1} p \times \vect p 1)\\
 &=V(C) \motimes \vect {p^2-1}{0},
 \end{split}
 \end{equation*}
 as desired. Similarly, we obtain the formula for $V(\beta(C))$.
 
 Next, let $r\geq 2$. Let $\Phi_N(C)=\sum_{d\mid M, ~0\leq f \leq r} a(d p^f) \cdot {\bf e}(N)_{dp^f}$. Also, for each $0\leq f \leq r$, let $A_f= \sum_{d\mid M} a(d p^f) \cdot {\bf e}(M)_d \in \cS_2(M)$ and write $\Phi_N(C)=(A_0, A_1, \dots, A_{r-1}, A_r)^t$, i.e., the $p^f$-th block vector is $A_f$. If we write 
\begin{equation*}
\Phi_{Np}(\alpha(C))=(A_0', A_1', \dots, A_r', A_{r+1}')^t
\end{equation*}
(so that the $p^f$-th block vector is $A_f'$), then by Lemma  \ref{lemma: degeneracy maps pullback on RCD} we have $A_{r+1}'=A_r$ and $A_f'=a_f \cdot A_f$ for any $0\leq f \leq r$, where $a_f=p$ if $f\leq [r/2]$, and $a_f=1$ otherwise.
For simplicity, let
 \begin{equation*}
 V(C)=
\Upsilon(N) \times \left(\begin{array}{c}
A_0 \\
\vdots \\
A_r
 \end{array}\right)\\
=:\left(\begin{array}{c}
E_0 \\
\vdots\\
E_r
 \end{array}\right)
 \end{equation*}
and
 \begin{equation*}
V(\alpha(C))=
 \Upsilon(Np) \times \left(\begin{array}{c}
A_0' \\
\vdots\\
A_{r+1}'
 \end{array}\right)\\
 =:\left(\begin{array}{c}
F_0 \\
\vdots\\
F_{r+1}
 \end{array}\right).
\end{equation*} 

To prove the result for $V(\alpha(C))$, we must show that $F_{r+1}=0$ and $F_f=pE_f$ for any $0\leq f \leq r$. First, we easily have $F_{r+1}=0$ because the last row of $\Upsilon(p^{r+1})$ is $(\dots, -p, p)$, where the dots denote zero entries. Next, let $B_f=\Upsilon(M) \times A_f$ for any $0\leq f \leq r$. Then we have
\begin{equation*}
E_0=pB_0-pB_1 \qa E_r=-p^{m(r-1)}B_{r-1}+pB_r,
\end{equation*}
where $m(f)=\min(f, \, r-f)$. Also, we have 
\begin{equation*}
F_0=p^2B_0-p^2B_1 \qqa F_r=-p^{n(r-1)}a_{r-1}B_{r-1}+(p^2+1)B_r-B_r,
\end{equation*}
where $n(f)=\min(f, \, r+1-f)$. Thus, we easily have $F_f=pE_f$ for $f=0$ or $r$.
Finally, suppose that $1\leq f \leq r-1$. Then we have
\begin{equation*}
E_f=-p^{m(f-1)}B_{f-1}+p^{m(f)-1}(p^2+1)B_f-p^{m(f+1)}B_{f+1}
\end{equation*}
and
\begin{equation*}
F_f=-p^{n(f-1)} a_{f-1}B_{f-1}+p^{n(f)-1}(p^2+1)a_fB_f-p^{n(f+1)}a_{f+1}B_{f+1}.
\end{equation*}
Note that if $i\leq [r/2]$, then $a_i=p$ and $m(i)=n(i)=i$. If $i>[r/2]$, then $a_i=1$ and $n(i)=r+1-i=m(i)+1$. Thus, we have $p^{n(i)}a_i=p^{m(i)+1}$ for all $1\leq i\leq r$, and so we have $F_f=pE_f$ for any $1\leq f\leq r-1$. This completes the proof of $V(\alpha(C))$. Similarly, we obtain the result for $V(\beta(C))$.

Lastly, let $r=1$. Since the result easily follows by the same argument as in the case of $r\geq 2$, we leave the details to the readers.
\end{proof}

\vspace{10mm}
\section{The structure of $\scC(N)$}\label{chapter6}
In this section, we compute the structure of the rational cuspidal divisor class group $\scC(N)$ of $X_0(N)$ based on the strategy in the previous section. More precisely, we prove the following, which is a combination of Theorems \ref{theorem: main theorem 1} and \ref{theorem: main theorem 2}.
\begin{theorem}\label{theorem: chapter 6 main}
Let $N$ be a  positive integer. Then we have
\begin{equation*}
\scC(N) \simeq \scC(N)^\sqf \moplus\left(\moplus_{d\in \cD_N^\nsqf} \br{\ov{Z(d)}}\right).
\end{equation*}
Also, for any prime $\ell$, we have
\begin{equation*}
\scC(N)^\sqf[\ell^\infty] \simeq \moplus_{d\in \cD_N^\sqf} \br{\ov{Y^2(d)}}[\ell^\infty].
\end{equation*}
Furthermore, the orders of $Z(d)$ and $Y^2(d)$ are $\fn(N, d)$ and $\fN(N, d)$, respectively.
\end{theorem}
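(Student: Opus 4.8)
The plan is to assemble Theorem \ref{theorem: chapter 6 main} from the three ``attempts'' laid out in Section \ref{chapter5}, treating the prime-power case, the odd case, and the even case separately, and then gluing along the prime factorization via the tensor-product identification of Remark \ref{remark: decomposition of RCD}. First I would handle $N=p^r$ a prime power: following Section \ref{section: first attempt}, I would take $D_f = B_p(r,f)$ (or $B^2(r,f)$ when $p=2$), verify that the $\bD_f$ generate $\cS_2(p^r)^0$ integrally, compute $\bbV(B_p(r,f))$ explicitly using Proposition \ref{prop: degeneracy maps revisited} (since each $B_p(r,f)$ for $f\ge 2$ is pulled back from a lower level by $\pi_1^*$, $\pi_2^*$, or $\pi_{12}^*$, most entries of $\bbV$ vanish), check the resulting matrix $\fM = (|\bbV(B_p(r,f))_{p^k}|)$ is lower-unipotent with respect to a suitable ordering of $\{1,\dots,r\}$, and confirm $\fh(B_p(r,f))=1$ for all $f\ne 1$ when $p$ is odd (resp. for $3\le f\le r$ when $p=2$, using that the genus of $X_0(16)$ is zero and $\cC(2^r)$ is a $2$-group to dispose of the remaining cases). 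Then Theorem \ref{thm: criterion 1}, applied successively, yields the decomposition, and Theorem \ref{theorem: computation of order} combined with the formulas $\Gcd = \fg(N,d)$, $\fh = \fh(N,d)$ from Theorem \ref{theorem: example order Cd}---transported along the change of generators $C_d \rightsquigarrow Z(d)$---gives the order $\fn(N,d)$.

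Next I would treat the squarefree part. For $N$ odd, the recipe of Section \ref{section: second attempt} applies verbatim: take $D_i = Z(d_i)$ and $E_i = Y^0(d_i)$, establish generation (this uses Assumption \ref{assumption 1.14}, specifically (\ref{equation: assumption 1}), to guarantee (\ref{equation: step4})), show the matrix $\fM_0 = (\bbV(Y^0(d_i))_{\delta_j})$ is lower-triangular (forced by the orderings $\prec$, $\vtl$ defined in Section \ref{section: the orderings}) with diagonal entries $\ell$-adic units (forced by (\ref{equation: assumption 2}), via the design of $\bD(p_i^{r_i},p_j^{r_j})$), and for $\ell=2$ verify condition (6) of Section \ref{section: second attempt}---here the key input is that $\fh(Y^0(d_i))=1$ or a clean $\pw_p$ vanishing holds, which in turn follows from $p_i$ all being odd. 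Then Theorems \ref{thm: criterion 2} and \ref{thm: criterion 3} give the $\ell$-primary decomposition, and the orders are $\fN(N,d)$ by Theorem \ref{theorem: order defined by tensors} (since the $Y^0$, and more generally $Y^1$, $Y^2$, are built from tensors and $\bD$-vectors whose $V$-vectors and $\fh$-invariants I can compute inductively using Proposition \ref{prop: degeneracy maps revisited} and Theorem \ref{theorem: order defined by tensors}).

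For $N$ even, I would follow Section \ref{section: third attempt}: the generation statement (1) and the matrix shape (2) still hold with $D_i = Z^1(d_i)$; the obstruction is condition (3), which fails for the ``problematic'' divisors $Z^1(d_i)$ with $\fm < i < \fm+r$ (exactly the phenomenon analyzed for level $2^r$ in Section \ref{section: case of level 2r}). I would isolate these, prove the partial decomposition $\cC(N)\simeq \br{\ov{Z^1(d_i)}:1\le i<\fm+r}\moplus_{j\ge \fm+r}\br{\ov{Z(d_j)}}$ by Theorem \ref{thm: criterion 1}, and then in the ``final step'' (Section \ref{section: final step}) replace the $Z^1(d_i)$ in the problematic range by $Z(d_i)$ and show $\br{\ov{Z^1(d_i)}:1\le i<\fm+r}\simeq \cC(N)^\sqf\moplus_{i=\fm+1}^{\fm+r-1}\br{\ov{Z(d_i)}}$---this is where one exploits that $\cC(N)[2^\infty]$ behaves like the level-$2^r$ computation tensored with the odd part. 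For the squarefree part with $N$ even, one takes $E_i = Y^1(d_i)$ (using a twisted colexicographic order $\vtl$ so that the $\bD$-vectors still have $\ell$-adic-unit pivots even when $p_j = 2$), and for $\ell=2$ further replaces problematic elements by $Y^2(d_i)$ as in Remark \ref{remark: Y0 Y1 Y2}, then applies Theorem \ref{thm: criterion 3}; a separate lemma in Section \ref{section: generation} is needed to check that the $\bY^1(d_i)$ (resp. $\bY^2(d_i)$) still $\ell$-adically generate $\br{\bZ(d_i):d_i\in\cD_N^\sqf}\otimes\Z_\ell$.

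The main obstacle will be the even case, and within it two points: first, proving that the replacement generators $Y^1(d_i)$ and $Y^2(d_i)$---which are \emph{not} obtained from the $Z(d_i)$ by an obviously invertible (over $\Z_\ell$) change of basis---still generate the right $\Z_\ell$-lattice (Section \ref{section: generation}), since this requires tracking the precise $\ell$-adic valuations of the coefficients $\gamma_i/\gcd(\gamma_i,\gamma_j)$ and $(p_i-1)/\gcd(p_i-1,p_j-1)$ under Assumption \ref{assumption 1.14}; and second, the ``final step'' gluing (Section \ref{section: final step}), where the partial decomposition involving $Z^1$ must be upgraded to the one involving $Z$---this is delicate because it mixes the level-$2^r$ structure (where the genus-zero phenomenon at $X_0(16)$ and the $2$-group property are essential) with the contribution of the odd part of $N$, and one cannot simply argue place by place. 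Everything else---the order computations, the lower-triangularity of $\fM$ and $\fM_0$, the $\fh$-bookkeeping---is a (lengthy but) routine propagation of Proposition \ref{prop: degeneracy maps revisited}, Theorem \ref{theorem: order defined by tensors}, and the criteria of Section \ref{sec: criterion for linear independence}.
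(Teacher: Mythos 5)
Your proposal follows essentially the same route as the paper's own proof: the prime-power cases of Sections \ref{section: 6.2}--\ref{section: case of t=u=1}, the reduction of Theorem \ref{theorem: main theorem in 6.4}, the $\ell$-adic generation and lower-$\ell$-unipotence arguments for $Y^0, Y^1, Y^2$ in Sections \ref{section: generation}--\ref{theorem: main 6.6}, and the final gluing of Section \ref{section: final step} via the two claims $\cC(N)^\sqf \cap \pi_1^*(\cC(2^r))=0$ and $\pi_1^*(\cC(2^r))\simeq \moplus \br{\ov{Z(I_f)}}$, all driven by the criteria of Section \ref{sec: criterion for linear independence}. One small correction: the order of $Z(d)$ is not obtained by ``transporting'' $\fg(N,d)$ and $\fh(N,d)$ from the $C_d$ computation of Theorem \ref{theorem: example order Cd} (those are invariants of $C_d$, not of $Z(d)$); instead $\Gcd(Z(d))$ and $\fh(Z(d))$ are computed directly from the tensor structure via Theorem \ref{theorem: order defined by tensors} and Lemma \ref{lemma: relation among bA and bbA} (see Proposition \ref{proposition: gcd and h of Z} and Corollary \ref{corollary: gcd and h of Z}), which is what yields $\fn(N,d)$.
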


\ms
Throughout this section, let $N=\prod_{i=1}^t p_i^{r_i}$ be the prime factorization of $N$,
\begin{equation*}
\textstyle\fm=\#\cD_N^\sqf=2^t-1 \qa \fn=\#\cD_N^0=\prod_{i=1}^t (r_i+1)-1.
\end{equation*}
As already mentioned, we write
\begin{equation*}
\textstyle (a_0, \dots, a_r) := \sum_{i=0}^r a_i \cdot {\bf e}(p^r)_{p^i} \in \cS_k(p^r) ~~\text{ for both $k=1$ or $2$}.
\end{equation*}
  
\ms
\subsection{The orderings $\prec$ and $\vtl$}\label{section: the orderings}
As already mentioned, we define two orderings $\prec$ and $\vtl$ on $\cD_N^0$, and write
\begin{equation*}
\cD_N^\sqf=\{d_1, \dots, d_\fm \}=\{\delta_1, \dots, \delta_\fm \}  ~~\text{ and } ~~\cD_N^0 = \{d_1, \dots, d_\fn \}=\{ \delta_1, \dots, \delta_\fn\}
\end{equation*}
so that $d_i \prec d_j$ (resp. $\delta_i \vtl \delta_j$) if and only if $i<j$. For simplicity, we set $\delta_0=1$. Since any divisor of $N$ can be written as $\fp_I$ for some $I=(f_1, \dots, f_t) \in \Omega(t)$, we define two orderings on $\Omega(t)$ instead.

\vspace{2mm}
Motivated by the idea in Section \ref{section: case of odd prime power}, we first define the following.
\begin{definition}\label{definition: prime power level orderings}
Let $r$ be a positive integer. On the set $\{0, 1, \dots, r\}$, we define the orderings $\prec_r$ and $\vtl_r$  as follows.
\begin{enumerate}
\item
If $r=1$, then we define 
\begin{equation*}
1 \prec_1 0 \qa 0 \vtl_1 1.
\end{equation*}
\item
If $r=2$, then we define
\begin{equation*}
1 \prec_2 0 \prec_2 2 \qa 0 \vtl_2 1 \vtl_2 2. 
\end{equation*}

\item
If $r=3$, then we define
\begin{equation*}
1 \prec_3 0 \prec_3 2 \prec_3 3 \qa 0 \vtl_3 1 \vtl_3 3 \vtl_3 2.
\end{equation*}

\item
If $r\geq 4$, then we define
\small
\begin{equation*}
\setlength{\arraycolsep}{2pt}
\begin{array}{ccccccccccccccccccc}
1 & \prec_r & 0 & \prec_r & 2 & \prec_r & r & \prec_r &r-1 & \prec_r & r-2  & \prec_r & \cdots & \prec_r & 5 & \prec_r & 4 & \prec_r & 3\\
0 & \vtl_r & 1  & \vtl_r &  r  & \vtl_r & r-1  & \vtl_r & 2  & \vtl_r &  r-2  & \vtl_r  &  \cdots  & \vtl_r & k-(-1)^r & \vtl_r & k+(-1)^r & \vtl_r& k,
\end{array}
\end{equation*}\normalsize
where $k=[(r+1)/2]$.
\end{enumerate}

We define a bijection $\iota_r$ on $\{0, 1, \dots, r\}$ so that $i \prec j$ if and only if $\iota_r(i) \vtl \iota_r(j)$. For instance, if $r$ is large enough, then we set $\iota_r(1)=0$, 
$\iota_r(0)=1$, $\iota_r(2)=r$, $\iota_r(r)=r-1$, $\dots$, $\iota_r(5)=k-(-1)^r$, $\iota_r(4)=k+(-1)^r$ and $\iota_r(3)=k$.
\end{definition}

Next, we define the orderings $\prec$ and $\vtl$ on $\square(t)$ as follows.
\begin{definition}
Let $I=(a_1, \dots, a_t)$ and $J=(b_1, \dots, b_t)$ be two elements in $\square(t)$.
We write $I \prec J$ (resp. $I \vtl J$) if and only if one of the following holds.
\begin{enumerate}
\item
$a_u \prec_{r_u} b_u$ (resp. $a_u \vtl_{r_u} b_u$) and $a_j=b_j$ for all $j$ different from $u$.
\item
There is an index $k\neq u$ such that 
$a_k \prec_{r_k} b_k$ (resp. $a_k \vtl_{r_k} b_k$) and
$a_j=b_j$ for all $j>k$ different from $u$.
\end{enumerate}
\end{definition}

Then, we define the orderings $\prec$ and $\vtl$ on $\Delta(t)$ as follows.
\begin{definition}
Let $I=(a_1, \dots, a_t)$ and $J=(b_1, \dots, b_t)$ be two elements in $\Delta(t)$.
We write $I \vtl J$ if and only if one of the following holds.
\begin{enumerate}
\item
$a_u=0$, $b_u=1$ and $a_j=b_j$ for all $j$ different from $u$.
\item
There is an index $k\neq u$ such that $a_k=0$, $b_k=1$ and
$a_j=b_j$ for all $j>k$  different from $u$.
\end{enumerate}
Also, we write $I \prec J$ if and only if $\iota(I) \vtl \iota(J)$, where the map $\iota$ is defined below.
\end{definition}

Finally, we define the orderings $\prec$ and $\vtl$ on $\Omega(t)$ as follows.
\begin{definition}
For two elements $I$ and $J$ in $\Omega(t)$, we write $I \prec J$ (resp. $I \vtl J$) if and only if one of the following holds.
\begin{enumerate}
\item
$I \in \Delta(t)$ and $J \in \square(t)$.
\item
$I, J \in \square(t)$ and $I \prec J$ (resp. $I \vtl J$).
\item
$I, J \in \Delta(t)$ and $I \prec J$ (resp. $I \vtl J$).
\end{enumerate}
\end{definition}

As mentioned above, we define a map $\iota : \Delta(t) \to \Delta(t)$ as follows.
\begin{definition}
For any $I=(a_1, \dots, a_t) \in \Delta(t)$ with $m=m(I)$, $n=n(I)$ and $x=\max(m, u)$, we set $\iota(I):=(b_1, \dots, b_t)$, where
\begin{enumerate}
\item
$b_i=1-a_i$ for all $i\neq x$ and $b_x=1$ if $I \in \cE$,
\item
$b_m=1$, $b_u=0$ and $b_i=1-a_i$ for all $i\neq m, u$ if $I \in \cH_u^1$,
\item
$b_i=1-a_i$ for all $i$ if $I \not\in \cE\cup \cH_u^1$.
\end{enumerate}
\end{definition}

\begin{remark}\label{remark: the map iota}
Let $I=(a_1, \dots, a_t) \in \Delta(t)$ with $m=m(I)$, $n=n(I)$ and $x=\max(m, u)$. Also, let $\iota(I)=(b_1, \dots, b_t)$. By definition, we have the following.
\begin{enumerate}
\item[$(1)$\phantom{'}]
If $I\in \cE$ and $x=m$, then $b_i=1$ for all $i\leq m$, and $b_j=0$ for all $j>m$.
\item[$(1)'$]
If $I \in \cE$ and $x=u>m$, then $b_i=1$ for all $i<m$ and $i=u$, and $b_j=0$ for all $j\geq m$ different from $u$.
\item[$(2)$\phantom{'}]
If $I \in \cH_u^1$, then $b_i=1$ for all $i\leq m$, and $b_i=0$ for all $i>m$.
\item[$(3)$\phantom{'}]
If $I \not\in \cE \cup \cH_u^1$, then $b_i=1-a_i$ for all $i$. In particular, $b_m=1-a_m=0$ and $b_n=1-a_n=1$. 
\end{enumerate}
In cases (1) and (2), there seems no difference on $b_i$. However, we have $m\geq u$ in case (1), but $m<u$ in case (2) as we have $m<n$ by definition.
\end{remark}

\begin{lemma}\label{lemma: iota}
The map $\iota$ is a bijection on $\Delta(t)$ for any $t$.
\end{lemma}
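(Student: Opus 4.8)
The plan is to prove that $\iota$ is injective; since $\Delta(t)$ is finite, injectivity gives bijectivity. I would begin by partitioning $\Delta(t) = \cE \sqcup \cH_u^1 \sqcup \cR$ with $\cR := \Delta(t)\setminus(\cE\cup\cH_u^1)$, and splitting $\cE = \{A(m): u\le m\le t\}\sqcup\{A(m): 1\le m<u\}$; note that $\cH_u^1$ and the second piece of $\cE$ are empty when $u\le 1$ (in particular when $N$ is odd), so the argument covers that case uniformly. Using Remark~\ref{remark: the map iota} I would record the explicit form of $\iota$ on each part. Writing $J_m\in\Delta(t)$ for the tuple with support $\{1,\dots,m\}$ (so $J_t=A(1)$ is the all-ones tuple) and $K_m\in\Delta(t)$ (for $1\le m\le u-1$) for the tuple with support $\{1,\dots,m-1\}\cup\{u\}$, the remark gives: $\iota(A(m))=J_m$ for $u\le m\le t$; $\iota(I)=J_{m(I)}$ for $I\in\cH_u^1$, where $I\mapsto m(I)$ gives a bijection between $\cH_u^1$ and $\{1,\dots,u-1\}$; $\iota(A(m))=K_m$ for $1\le m<u$; and $\iota(I)=I^c$ (entrywise complement) for $I\in\cR$.

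From this description, injectivity of $\iota$ restricted to $\cE\cup\cH_u^1$ is immediate: the $J_m$ are distinct (their supports have distinct sizes) and the $K_m$ are distinct and distinct from every $J_m$, since for $m\le u-1$ the support $\{1,\dots,m-1\}\cup\{u\}$ is never an initial segment of $\{1,\dots,t\}$. Also $\iota|_\cR$ is an injection into $\Delta(t)$: complementation is injective on $\{0,1\}^t$, and the only tuple that leaves $\Delta(t)$ under complementation is $(0,\dots,0)$, whose complement $A(1)$ lies in $\cE$, not in $\cR$. So the heart of the matter is to show that the images $\iota(\cR)$ and $\iota(\cE\cup\cH_u^1)=\{J_1,\dots,J_t\}\cup\{K_1,\dots,K_{u-1}\}$ are disjoint.

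Because $\iota|_\cR$ is complementation and complementation is an involution on $\{0,1\}^t$, it suffices to check that $J_m^c$ and $K_m^c$, whenever they lie in $\Delta(t)$, belong to $\cE\cup\cH_u^1$ and hence not to $\cR$. For $1\le m\le t-1$, $J_m^c$ has support $\{m+1,\dots,t\}$, so $J_m^c=A(m+1)\in\cE$; and $J_t^c=(0,\dots,0)\notin\Delta(t)$. For $1\le m\le u-1$, $K_m^c$ has support $\{m,\dots,u-1\}\cup\{u+1,\dots,t\}$, and a direct computation of the invariants gives $m(K_m^c)=m$, $n(K_m^c)=u$, and $k(K_m^c)=t+1$, which by definition means $K_m^c\in\cH_u^1$. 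This identification of the invariants is the one genuinely computational point; everything else is bookkeeping, with the edge cases $m=t$, $u\le 1$, and the all-ones tuple disposed of as above. It then follows that the restrictions of $\iota$ to $\cE$, $\cH_u^1$, and $\cR$ are each injective with pairwise disjoint images, so $\iota$ is injective on $\Delta(t)$ and therefore bijective.
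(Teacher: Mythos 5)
Your proof is correct. It is built from the same raw material as the paper's proof---the explicit description of $\iota$ on each case recorded in Remark \ref{remark: the map iota}---but it is organized along a genuinely different line. The paper establishes injectivity by pairwise comparisons across the cases $(1)$, $(1)'$, $(2)$, $(3)$, in each instance exhibiting a coordinate where the two images differ; you instead compute the image of $\cE\cup\cH_u^1$ once and for all as the set $\{J_1,\dots,J_t\}\cup\{K_1,\dots,K_{u-1}\}$, with each element hit exactly once, note that $\iota$ is plain entrywise complementation on the remaining piece $\cR$, and then use the fact that complementation is an involution to reduce the disjointness of the two images to the single verification that $J_m^c=A(m+1)\in\cE$ (with $J_t^c$ falling outside $\Delta(t)$) and $K_m^c\in\cH_u^1$. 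That last identification, via $m(K_m^c)=m$, $n(K_m^c)=u$, $k(K_m^c)=t+1$, is the real content and it checks out, including at the boundary values $m=u-1$ and $u=t$; your handling of the degenerate situations ($u\le 1$, where $\cH_u^1$ and the $K_m$ are absent, and the all-ones tuple $A(1)$) matches the paper's conventions. What your organization buys is economy---the paper's several cross-case checks collapse to essentially one computation---and a more transparent picture of $\iota$ as a permutation; what the paper's version buys is that it never needs to enumerate the image set, so each individual step stays very short. One cosmetic remark: your sentence asserting that ``the only tuple that leaves $\Delta(t)$ under complementation is $(0,\dots,0)$'' is worded backwards (it is $A(1)$ whose complement leaves $\Delta(t)$), but the conclusion you draw from it---that $I^c\in\Delta(t)$ for every $I\in\cR$---is correct.
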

\begin{proof}
As above, let $I=(a_1, \dots, a_t)\in \Delta(t)$ with $m=m(I)$, $n=n(I)$ and $x=\max(m, u)$. Also, let $\iota(I)=(b_1, \dots, b_t)$. 

We first claim that $\iota(I) \in \Delta(t)$. By definition, it is easy to see that $b_i \in \{0, 1\}$, so it suffices to prove that $b_i=1$ for some $i$. 
In case $(1)$ (resp. $(1)'$, $(2)$, and $(3)$) in Remark \ref{remark: the map iota}, we have $b_m=1$ (resp. $b_u=1$, $b_m=1$, and $b_n=1$). Therefore the claim follows.

Next, we prove that the map $\iota$ is bijective. Since $\Delta(t)$ is a finite set, it suffices to show that $\iota$ is injective. Let $J=(e_1, \dots, e_t) \in \Delta(t)$ with $m'=m(J)$, $n'=n(J)$ and $x'=\max(m', u)$. 
Also, let $\iota(J)=(f_1, \dots, f_t)$. If $I$ and $J$ are both in the same case in Remark \ref{remark: the map iota}, then it is easy to prove that $I=J$ whenever $\iota(I)=\iota(J)$. So it suffices to prove that $b_i \neq f_i$ for some $i$ in the following cases.
\begin{enumerate}
\item[$(1)$\phantom{'}]
Assume that $I\in \cE$ and $x=m$. Then we have $u\leq m$. 
\begin{enumerate}
\item
Suppose that $J \in \cE$ and $x'=u> m'$. Since $m'<u \leq m$, we have $b_{m'}=1$. 
By definition, we have $e_{m'}=1$. Since $m'\neq x'$, we have $f_{m'}=1-e_{m'}=0$.
Thus, we have $b_{m'} \neq f_{m'}$.

\item
Suppose that $J \in \cH_u^1$. Then we have $m'<n'=u \leq m$.
Since $f_j=0$ for all $i>m'$, we have $f_m=0$. Since $b_m=1$, we have $b_m \neq f_m$.

\item
If $J \not\in \cE \cup \cH_u^1$, then $f_{m'}=0$ and $f_{n'}=1$ with $m'< n'$. 
If $m' \leq  m$, then $b_{m'}=1$. If $m'>m$ then $n'>m'>m$ and so $b_{n'}=0$. Thus, we have either $b_{m'} \neq f_{m'}$ or $b_{n'} \neq f_{n'}$. 
\end{enumerate}

\item[$(1)'$]
Assume that $I \in \cE$ and $x=u>m$. 
\begin{enumerate}
\item
If $J \in \cH_u^1$, then we have $f_u=0$. Since $b_u=1$, we have $b_u \neq f_u$.
\item
If $J \not\in \cE \cup \cH_u^1$, then $f_{m'}=0$ and $f_{n'}=1$ with $m'< n'$. 
If $n' \neq u$ then  either $b_{m'} \neq f_{m'}$ or $b_{n'} \neq f_{n'}$ as in case ($1$)-(c) above.
Suppose that $n'=u$. Since $J \not\in \cH_u^1$, we have $k'\leq t$, and so $f_{k'}=1-e_{k'}=1$. 
As above, we have either $b_{m'}\neq f_{m'}$ (if $m'<m$) or $b_{k'} \neq f_{k'}$ (otherwise).
\end{enumerate}

\item[$(2)$\phantom{'}]
Assume that $I \in \cH_u^1$. If $J \not\in \cE \cup \cH_u^1$, then we have $f_{m'}=0$ and $f_{n'}=1$. 
As in case ($1$)-(c) above, we have either $b_{m'} \neq f_{m'}$ or $b_{n'} \neq f_{n'}$. 
\end{enumerate}
This completes the proof.
\end{proof}

\begin{remark}\label{remark: iota isomorphism}
We define the map $\iota_{\square} : \square(t) \to \square(t)$ by
\begin{equation*}
\iota_{\square}(a_1, \dots, a_t) := (b_1, \dots, b_t), ~~ \text{ where } b_i = \iota_{r_i}(a_i).
\end{equation*}
Also, we define the map $\iota_{\Omega} : \Omega(t) \to \Omega(t)$ by $\iota_{\Omega}=\iota_{\square}$ if $I \in \square(t)$, and $\iota_{\Omega}=\iota$ otherwise.
Then by definition, the map $\iota_{\Omega}$ is an order-preserving bijection from $(\Omega(t), \prec)$ to $(\Omega(t), \vtl)$. In other words, we have $\iota_{\Omega}(d_i)=\delta_i$ for any $i$.
\end{remark}

\begin{remark}\label{remark: 2power ordering}
Suppose that $N$ is divisible by $4$ and let $r=r_u\geq 2$. For any $2\leq f\leq r$, let $I_f:=(a_1, \dots, a_t) \in \square(t)$ such that $a_u=f$ and $a_i=1$ for all $i \neq u$. Then by definition, $d_{\fm+1}=2\cdot \rad(N)=\fp_{I_2}$. Also, we have 
\begin{equation*}
\{ d_i : \fm+2 \leq i < \fm+r \}=\{ \fp_{I_f} : 3\leq f \leq r\},
\end{equation*}
which is equal to $\cT_u$ if $r\geq 5$. Furthermore, if $i \geq \fm+r$, then there is an index $h \neq u$ such that $\tn{val}_{p_h}(d_i)\geq 2$.
\end{remark}

The following will be used later.
\begin{lemma}\label{lemma: 6.8}
Suppose that $t\geq 2$ and $s\geq 1$.  If $n \in \cI_s$, then we have 
\begin{equation*}
\iota(E(n))=F(n) \qa \iota(E_s(n))=F_s(n).
\end{equation*}
Also, we have
\begin{equation*}
d_{2^{n-\epsilon}}=\fp_{E(n)} \qa d_{2^{n-\epsilon}+1}=\fp_{E_s(n)},
\end{equation*}
where $\epsilon=1$ if $s<n$, and $\epsilon=0$ if $n<s$. 
\end{lemma}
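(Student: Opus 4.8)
\textbf{Proof proposal for Lemma \ref{lemma: 6.8}.}

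The statement is a purely combinatorial assertion about the map $\iota$ on $\Delta(t)$ and about the explicit enumeration of $\cD_N^\sqf$ under the ordering $\vtl$, so the plan is to unwind the relevant definitions and check the two claims separately, each by a short case analysis. Throughout, I would freely use Remark \ref{remark: the map iota}, which records the explicit form of $\iota(I)$ in the three cases $I \in \cE$, $I \in \cH_u^1$, and $I \notin \cE \cup \cH_u^1$.

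For the first claim, fix $n \in \cI_s$ and examine $E(n) = (f_1, \dots, f_t)$, which by definition has $f_n = 0$ and $f_i = 1$ for all $i \neq n$. First I would compute $m(E(n))$ and $n(E(n))$: since $n \in \cI_s$ forces $n \geq 2$ (and $n \geq 3$ when $s=1$), the smallest index with entry $1$ is $m = m(E(n)) = 1$ when $n \neq 1$; and since $E(n)$ has a unique zero entry, $n(E(n)) = n$, which is $\leq t$, so $E(n) \notin \cE$. I would then check that $E(n) \notin \cH_u^1$ (the definition of $\cH_u^1$ requires $n(I) = u$; but here $n(E(n)) = n \in \cI_s$, and one checks $n \neq u = s$ from the definition of $\cI_s$, since $\cI_s$ explicitly excludes $u$ when $u = s \geq 2$, and when $s = 1$ the index $u = s = 1$ is likewise excluded as $n \geq 3$). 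Hence $E(n)$ falls into case $(3)$ of Remark \ref{remark: the map iota}, so $\iota(E(n)) = (b_1, \dots, b_t)$ with $b_i = 1 - f_i$ for all $i$; that is, $b_n = 1$ and $b_i = 0$ for $i \neq n$, which is exactly $F(n)$ by the definition of $F(\cdot)$. The argument for $\iota(E_s(n)) = F_s(n)$ is identical: $E_s(n)$ has zeros at positions $n$ and $u = s$ and ones elsewhere, one verifies $E_s(n) \notin \cE \cup \cH_u^1$ in the same way, so case $(3)$ applies and $\iota$ simply flips all coordinates, sending $E_s(n) \mapsto F_s(n)$ since $F_s(n)$ has ones exactly at $n$ and $s$.

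For the second claim, I would recall (from the definition of $\vtl$ on $\Delta(t)$ together with Remark \ref{remark: 2power ordering} and the twisted-colexicographic description of $\vtl$) that the enumeration $\cD_N^\sqf = \{\delta_1, \dots, \delta_\fm\}$ in $\vtl$-order corresponds, via $\fp_{(\cdot)}$, to listing the elements of $\Delta(t)$ in the colexicographic order determined by the reindexed primes $q_1 = p_u$, $q_i = p_{i+1}$ for $i < u$, $q_j = p_j$ for $j > u$; under this listing the element with index $2^{n-\epsilon}$ (in the appropriate $\epsilon$) is precisely the one whose $q$-support is $\{q_{n-\epsilon+1}, \dots\}$ in the standard binary-counting way, and a direct bit-counting computation identifies $\delta_{2^{n-\epsilon}} = \fp_{\iota(E(n))} = \fp_{F(n)}$ and $\delta_{2^{n-\epsilon}+1} = \fp_{\iota(E_s(n))} = \fp_{F_s(n)}$. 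But the lemma asks about $d_i = \fp_{\iota^{-1}(\delta_i)}$-type indices in the $\prec$-ordering; by Remark \ref{remark: iota isomorphism} the map $\iota_\Omega$ is an order isomorphism from $(\Omega(t), \prec)$ to $(\Omega(t), \vtl)$ with $\iota_\Omega(d_i) = \delta_i$, so $d_{2^{n-\epsilon}} = \iota^{-1}(\delta_{2^{n-\epsilon}}) = \iota^{-1}(\fp_{F(n)})$. Combining with the first claim, $\iota(E(n)) = F(n)$ gives $\iota^{-1}(F(n)) = E(n)$, hence $d_{2^{n-\epsilon}} = \fp_{E(n)}$, and similarly $d_{2^{n-\epsilon}+1} = \fp_{E_s(n)}$.

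The main obstacle I anticipate is not any single deep step but rather bookkeeping: getting the index $\epsilon$ exactly right (the split $s < n$ versus $n < s$), making sure the reindexing $q_i$ and the colexicographic counting line up so that ``position in the list'' genuinely equals the claimed power of $2$, and carefully verifying the membership exclusions $E(n), E_s(n) \notin \cE \cup \cH_u^1$ in all the boundary cases (especially $s = 1$ versus $s \geq 2$, where the definition of $\cI_s$ changes). I would organize the write-up so that these index computations are done once, cleanly, using the binary expansion of $2^{n-\epsilon}$ and $2^{n-\epsilon}+1$, and then invoke the order isomorphism $\iota_\Omega$ to transfer everything from the $\vtl$-side to the $\prec$-side in one stroke.
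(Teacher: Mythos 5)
Your proposal is correct and follows essentially the same route as the paper: show $E(n), E_s(n) \notin \cE \cup \cH_u^1$ so that $\iota$ acts by flipping every coordinate, then locate $F(n)$ and $F_s(n)$ in the twisted colexicographic order by a binary-counting argument and pull back through the order isomorphism $\iota_{\Omega}$ together with the first claim. One small caveat: the exclusion $E_s(n) \notin \cH_u^1$ is not literally ``identical'' to the one for $E(n)$. In the subcase $s \geq 2$ and $s < n$ one has $n(E_s(n)) = \min(n,s) = s = u$, so the argument ``$n(I) \neq u$'' that you used for $E(n)$ fails there; instead one must note that $k(E_s(n)) = \max(n,s) = n \leq t$, which violates the requirement $k(I) = t+1$ in the definition of $\cH_u^1$ (this is exactly how the paper rules it out, and it is harmless that $E_s(n)$ then lies in $\cH_u$, since case $(3)$ of the definition of $\iota$ still applies). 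With that one-line fix the argument is complete.
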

\begin{proof}
Let $I=E_s(n)$ and $J=E(n)$. By definition, $n(I)=\min(n, s) \leq t$ and $k(I)=\max(n, s) \leq t$. Therefore $I \not\in \cE \cup \cH_u^1$. Also, since $n(J)=n\leq t$ and $n\neq s$, we have $J \not\in \cE \cup \cH_u^1$. Thus, we have $\iota(I)=F_s(n)$ and $\iota(J)=F(n)$ by definition.

Now, consider a well-ordered set $(\Delta(t), \vtl)$.
By definition, the largest element less than $F_s(n)$ is $F(n)$. Furthermore, for any $K=(a_1, \dots a_t) \in \Delta(t)$, $K$ is smaller than $F(n)$ 
(or equivalently, $K \vtl F(n)$) if and only if $a_i=0$ for all $i\geq n$ different from $s$. Thus, the number of such elements is exactly $2^n-1$ if $n<s$, and $2^{n-1}-1$ otherwise. Therefore the result follows.
\end{proof}
\begin{lemma}\label{lemma: 6.7}
Suppose that $t\geq 2$. Then we have
\begin{equation*}
d_1=\rad(N)=\fp_{A(1)} \qa 
\delta_1=\begin{cases}
p_1=\fp_{F(1)} & \text{ if }  ~~u\leq 1,\\
p_u =\fp_{F(u)} & \text{ if } ~~u\geq 2.
\end{cases}
\end{equation*}
Also, we have
\begin{equation*}
d_2=\begin{cases}
\fp_{E(2)} & \text{ if }  ~~u\leq 1,\\
\fp_{E(u)} & \text{ if } ~~u\geq 2,
\end{cases}
\qa 
\delta_2=\begin{cases}
p_2=\fp_{F(2)} & \text{ if } ~~ u\leq 1,\\
p_1 =\fp_{F(1)} & \text{ if } ~~u\geq 2.
\end{cases}
\end{equation*}
Furthermore, we have
\begin{equation*}
d_3=\fp_{A(2)}=\fp_{E(1)}  \qa 
\delta_3=\begin{cases}
p_1p_2 & \text{ if }  ~~u\leq 1,\\
p_1p_u & \text{ if } ~~u\geq 2.
\end{cases}
\end{equation*}
\end{lemma}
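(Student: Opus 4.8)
The plan is to deduce the lemma from the properties of the bijection $\iota$ by a short, finite computation, never listing all of $\Delta(t)$. First I would record the reduction. Since $t\geq 2$ we have $\fm=2^t-1\geq 3$, so $d_1,d_2,d_3,\delta_1,\delta_2,\delta_3$ are all non-trivial squarefree divisors of $N$; and by clause~$(1)$ in the definitions of $\prec$ and $\vtl$ on $\Omega(t)$, every element of $\Delta(t)$ precedes every element of $\square(t)$, so $\Delta(t)$ is an initial segment for both orderings and these six divisors are governed solely by the restrictions of $\prec$ and $\vtl$ to $\Delta(t)$. By Remark~\ref{remark: iota isomorphism}, the map $\iota_\Omega$ (which on squarefree divisors is $\iota$, a bijection of $\Delta(t)$ by Lemma~\ref{lemma: iota}) is an order isomorphism $(\Omega(t),\prec)\to(\Omega(t),\vtl)$ with $\iota_\Omega(d_i)=\delta_i$. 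Hence it suffices to: (i) identify $\delta_1,\delta_2,\delta_3$ directly from the definition of $\vtl$ on $\Delta(t)$; and (ii) verify that $\iota$ maps $\fp_{A(1)}$, the claimed value of $d_2$ (namely $\fp_{E(2)}$ if $u\leq 1$ and $\fp_{E(u)}$ if $u\geq 2$), and $\fp_{A(2)}=\fp_{E(1)}$ to $\delta_1,\delta_2,\delta_3$ respectively; order-preservation of $\iota$ then forces $d_1,d_2,d_3$ to be exactly these divisors.

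For step (i) I would unwind the two clauses defining $\vtl$ on $\Delta(t)$: when $u\geq 1$ the element $F(u)$ lies $\vtl$-below every other element of $\Delta(t)$ — given $J\neq F(u)$, take $k$ to be the largest index $\neq u$ with $J_k=1$ and apply clause~$(2)$ — while when $u=0$ the order is the ordinary colexicographic order with minimum $F(1)$; thus $\delta_1=\fp_{F(1)}=p_1$ if $u\leq1$ and $\delta_1=\fp_{F(u)}=p_u$ if $u\geq2$. Iterating this bookkeeping (equivalently, recognizing $\vtl$ as the colexicographic order on the reindexed primes $q_1=p_u,\ q_2=p_1,\dots$) yields $\delta_2=\fp_{F(2)}=p_2$ and $\delta_3=p_1p_2$ when $u\leq1$, and $\delta_2=\fp_{F(1)}=p_1$ and $\delta_3=p_1p_u$ when $u\geq2$.

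For step (ii) I would compute $\iota$ by first determining which of its three branches applies, via the quantities $m(I),n(I),k(I)$. The divisor $A(1)=(1,\dots,1)$ has $m=1$ and $n=t+1$, so $A(1)\in\cE$ with $x=\max(1,u)$, and $\iota(\fp_{A(1)})$ has a single $1$ in position $x$: this is $\fp_{F(1)}$ if $u\leq1$ and $\fp_{F(u)}$ if $u\geq2$, i.e.\ $\delta_1$. Likewise $A(2)=E(1)=(0,1,\dots,1)$ has $m=2$, $n=t+1$, so it lies in $\cE$ with $x=\max(2,u)$, and $\iota(\fp_{A(2)})$ has $1$'s exactly in positions $1$ and $x$, namely $p_1p_2$ if $u\leq1$ and $p_1p_u$ if $u\geq2$, i.e.\ $\delta_3$. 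For $d_2$: when $u\leq1$, $E(2)=(1,0,1,\dots,1)$ has $m=1$ and $n=2\neq u$, so $E(2)\notin\cE\cup\cH_u^1$ and $\iota$ merely complements all coordinates, giving $\iota(\fp_{E(2)})=\fp_{F(2)}=\delta_2$; when $u\geq2$ the only vanishing coordinate of $E(u)$ is the $u$-th, so $m(E(u))=1$, $n(E(u))=u$, $k(E(u))=t+1$, hence $E(u)\in\cH_u^1$ and $\iota(E(u))$ has a $1$ in position $1$, a $0$ in position $u$ and $0$'s elsewhere, i.e.\ $\iota(\fp_{E(u)})=\fp_{F(1)}=\delta_2$. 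Combined with order-preservation of $\iota$, this finishes the proof.

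The routine bulk is the case analysis ($u\leq1$ versus $u\geq2$, and the three branches of $\iota$); the only genuinely delicate point is correctly classifying $E(u)$ as an element of $\cH_u^1$ in step (ii), and checking that the small-$t$ or $u=2$ coincidences (e.g.\ $n(E(2))=2$ possibly equal to $t$, or $F(1)$ equal to $F(u)$) cause no trouble. It is also worth verifying explicitly the base case $t=2$, where $d_1,d_2,d_3$ already exhaust $\cD_N^\sqf$.
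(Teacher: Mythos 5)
Your proposal is correct and takes the same route as the paper, whose proof is simply the two-line remark that the formulas for $\delta_i$ and the identities $\iota(d_i)=\delta_i$ are verified directly from the definitions of $\vtl$ and $\iota$; you have merely written out that verification in full, including the one genuinely non-obvious step of placing $E(u)$ in $\cH_u^1$. The computations (the colex description of $\vtl$ with position $u$ least significant, and the three branch evaluations of $\iota$ on $A(1)$, $E(2)$ or $E(u)$, and $A(2)=E(1)$) all check out.
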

\begin{proof}
By definition, it is easy to verify the formula for $\delta_i$. Also, by the definition of the map $\iota$ we can easily verify that $\iota(d_i)=\delta_i$ for $1\leq i \leq 3$.
\end{proof}

\ms
\subsection{Case of $t=1$ and $u=0$}\label{section: 6.2}
In this subsection, we prove the following.

\begin{theorem}\label{theorem: case of t=1, u=0}
For an odd prime $p$ and $r\geq 1$, we have
\begin{equation*}
\scC(p^r) \simeq \moplus_{f=1}^r \br{\ov{B_p(r, f)}}.
\end{equation*}
Also, the order of $B_p(r, f)$ is 
\begin{equation*}
\begin{cases}
\num\left(\frac{p-1}{12}\right) & \text{ if }~~ f=1,\\
\num\left(\frac{p^2-1}{24}\right) & \text{ if } ~~ f=2,\\
\frac{p^{r-1-j}(p^2-1)}{24} & \text{ if }~~ 3\leq f\leq r, \text{ where } j=[\frac{r+1-f}{2}].
\end{cases}
\end{equation*}
\end{theorem}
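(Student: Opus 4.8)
The strategy is the ``first attempt'' sketched in Section~\ref{section: first attempt}, carried out for $N=p^r$ with $p$ odd. The three ingredients needed are: (i) the divisors $B_p(r,f)$ generate $\Qdiv{p^r}$, i.e.\ $\cS_2(p^r)^0=\br{\bB_p(r,f):1\leq f\leq r}$; (ii) the matrix $\fM=(|\bbV(B_p(r,f))_{p^j}|)_{1\leq f,j\leq r}$ is lower-unipotent with respect to the orderings $\prec_r$ and $\vtl_r$ from Definition~\ref{definition: prime power level orderings}; and (iii) $\fh(B_p(r,f))=1$ for all $f\geq 2$. Granting these, Theorem~\ref{thm: criterion 1} applied successively (starting from the last row of $\fM$ and moving up) gives $\cC(p^r)\simeq\moplus_{f=1}^r\br{\ov{B_p(r,f)}}$, and the orders are then read off from Theorem~\ref{theorem: computation of order}: one computes $\Gcd(B_p(r,f))$ and $\fh(B_p(r,f))$ explicitly and plugs into $\num(\kappa(p^r)\fh/(24\Gcd))$ with $\kappa(p^r)=p^{r-1}(p^2-1)$.

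\textbf{Computing the vectors $\bbV(B_p(r,f))$.} The key computational step is to evaluate $V(B_p(r,f))=\Upsilon(p^r)\times\bB_p(r,f)$. Since each $\bB_p(r,f)$ (for $f\geq 2$, and up to the $\bB_p(r,1)$ adjustment) is built from lower-level divisors by $\alpha(j)$, $\beta(j)$, or $\gamma$ — that is, by $\pi_1(p^r,p^{r-j})^*$, $\pi_2(p^r,p^{r-j})^*$, or $\tfrac1p\pi_{12}(p^{r-2})^*$ — Proposition~\ref{prop: degeneracy maps revisited} tells us exactly how $V$ transforms: applying $\alpha$ shifts and scales $V$ by $p$ (with a zero appended), applying $\beta$ does the mirror operation, and $\gamma$ is handled by composing the two. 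Thus by induction on $r$ one obtains closed forms for $\bbV(B_p(r,f))$ matching the tables in Sections~5.1.1--5.1.6: the diagonal entry $\bbV(B_p(r,f))_{p^{j(f)}}$ (where $j(f)$ is the position dictated by $\iota_r$) is $\pm 1$, and the entries at positions $\vtl_r$-earlier than $j(f)$ vanish. Simultaneously one tracks $\Gcd(B_p(r,f))$: it equals $p^{m(f)-1-[\,(r+1-f)/2\,]}$-type powers coming from the degeneracy-map scalings, giving $\Gcd=1$ for $f=1$, $\Gcd=p$ for $f=2$, and $\Gcd=p^{[(r+1-f)/2]}$-corrections for $f\geq 3$ — precisely what produces the claimed orders. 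The value $\fh(B_p(r,f))=1$ for $f\geq2$ follows because $p$ is odd: $\kappa(p^r)=p^{r-1}(p^2-1)$ is divisible by $24$ as soon as the genus is positive, so $\pw_p$ never obstructs; more directly, one checks $\pw_p(B_p(r,f))\in 2\Z$ from the explicit vector (the entries come in $\pm$ pairs up to the $(p\pm1)$ factors, all even).

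\textbf{The generation step and the main obstacle.} The generation claim (i) is the genuinely delicate part. One must show the $\bB_p(r,f)$ span $\cS_2(p^r)^0$ over $\Z$, not just over $\Q$. Over $\Q$ this is immediate since $\fM$ being (after the computation above) triangular with $\pm1$ diagonal forces the $\bbV$'s, hence the $\bB$'s, to be $\Q$-linearly independent, and $\dim\cS_2(p^r)^0=r$. Over $\Z$ one argues by induction: the divisors $C_r(f)=C(p^r)_{p^f}$ generate $\Qdiv{p^r}$ by Lemma~\ref{lemma: group of rational cuspidal divisors}, and each $C_r(f)$ is rewritten as an integral combination of the $B_p(r,f')$ by reversing the pullback constructions — this is exactly the bookkeeping done case-by-case for $r\leq 6$ in Section~5.1, and the point is that the ``denominator $p$'' appearing when one uses $\pi_{12}^*$ is absorbed by the $\tfrac1p$ in the definition of $\gamma$ (Lemma~\ref{lemma: B2 definition}) and by the explicit $+p^{r-2}\cdot\bB_p(r,r)$ correction term in the $f=2$, $r$ even case. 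I expect the induction to require separating the congruence classes of $f$ and of $r$ modulo $2$ (as the definition of $\bA_p(r,f)$ already does), and the bulk of the write-up will be verifying that in each of these cases the transition matrix between $\{C_r(f)\}$ and $\{B_p(r,f)\}$ is in $\GL_r(\Z)$. The orderings $\prec_r,\vtl_r$ must be checked to be exactly the ones making $\fM$ triangular — this is a finite combinatorial verification tracing which entry of $\bbV(B_p(r,f))$ is the ``leading'' $\pm1$, and it is the reason the somewhat elaborate ordering in Definition~\ref{definition: prime power level orderings} is chosen.
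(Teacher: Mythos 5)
Your plan is the paper's own proof: the paper verifies exactly your three ingredients (generation, a lower-unipotent matrix $\fM$ with respect to $\prec_r,\vtl_r$, and $\fh=1$ for $f\geq 2$) and then applies Theorem \ref{thm: criterion 1} and Theorem \ref{theorem: computation of order}. The only organizational difference is in the generation step: rather than exhibiting a $\GL_r(\Z)$ transition matrix between $\{C_r(f)\}$ and $\{B_p(r,f)\}$, the paper works with the full rank-$(r+1)$ lattice $\cS_2(p^r)$, writes down closed forms for all entries of the $\bA_p(r,f)$, peels off the unit vectors ${\bf e}_k$ one at a time as integral combinations of the $\bA$'s, and then deduces the statement for the $\bB$'s and $\cS_2(p^r)^0$; your route is equivalent in substance.

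However, the numerical values you quote for $\Gcd(B_p(r,f))$ are wrong where they matter most, and they would not ``produce the claimed orders.'' From $\Upsilon(p^r)\times\bB_p(r,f)=g_p(r,f)\cdot\bbB_p(r,f)$ (Lemma \ref{lemma: relation among bA and bbA}) one gets $\Gcd(B_p(r,1))=p^{r-1}(p+1)$, not $1$, and $\Gcd(B_p(r,2))=p^{r-1}$, not $p$ (these agree only when $r=2$). With your values the order of $B_p(r,2)$ would come out as $\num\bigl(p^{r-2}(p^2-1)/24\bigr)$ instead of $\num\bigl((p^2-1)/24\bigr)$. You also never address $\fh(B_p(r,1))$: one needs $\pw_p(B_p(r,1))=-1$, hence $\fh(B_p(r,1))=2$ (Lemma \ref{lemma: pw for B}), to land on $\num\bigl(\frac{p-1}{12}\bigr)$ rather than $\num\bigl(\frac{p-1}{24}\bigr)$; the statement ``$\fh=1$ for $f\geq2$'' suffices for the independence argument but not for the order of $B_p(r,1)$. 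These are repairable computational slips rather than a flaw in the method, but as written the second half of the theorem (the order formulas) does not follow from the numbers you give.
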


\vspace{2mm}
To begin with, we prove the following.
\begin{proposition}\label{proposition: generation A and B}
For any $r\geq 1$, we have
\begin{equation*}
\cS_2(p^r) = \lla \bA_p(r, f) : 0 \leq f \leq r \rra \qa \cS_2(p^r)^0=\lla \bB_p(r, f) : 1\leq f \leq r\rra.
\end{equation*}
\end{proposition}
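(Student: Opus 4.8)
The plan is to prove the first identity, that $\{\bA_p(r,f):0\le f\le r\}$ is a $\Z$-basis of $\cS_2(p^r)$, and then read off the second one from it. For the deduction, note that the degree map $\deg\colon\cS_2(p^r)\to\Z$ (sending ${\bf e}(p^r)_{p^f}$ to $\varphi(p^{m(f)})$) is surjective because $\deg\bA_p(r,0)=\varphi(1)=1$, so $\cS_2(p^r)=\Z\,\bA_p(r,0)\oplus\cS_2(p^r)^0$. Using that pullback of divisors along a finite morphism of curves multiplies degrees by the degree of the map, together with the formulas in Lemmas~\ref{lemma: degeneracy maps pullback on RCD},~\ref{lemma: A1 definition} and~\ref{lemma: B2 definition}, one checks that $\bA_p(r,f)\in\cS_2(p^r)^0$ for $2\le f\le r$ and that $\bB_p(r,1)=p^{r-1}(p+1)\,\bA_p(r,0)-\bA_p(r,1)$ also lies in $\cS_2(p^r)^0$, since $\deg\bA_p(r,1)=\deg\pi_1(p^r,1)=p^{r-1}(p+1)$. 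The set $\{\bA_p(r,0),\bB_p(r,1),\bA_p(r,2),\dots,\bA_p(r,r)\}$ is obtained from the basis $\{\bA_p(r,f)\}$ by a unimodular change of coordinates, hence is again a $\Z$-basis; as all its members except $\bA_p(r,0)$ lie in $\cS_2(p^r)^0$, they form a $\Z$-basis of $\cS_2(p^r)^0$, which is the second identity.

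For the first identity I would induct on $r$. The cases $r\le 3$ are immediate matrix computations (e.g.\ $\bA_p(1,0)=(1,0)$, $\bA_p(1,1)=(p,1)$). For $r\ge 4$, using $\bA_p(r,0)={\bf e}(p^r)_1$ and $\bA_p(r,r)={\bf e}(p^r)_1-{\bf e}(p^r)_{p^r}$, a unimodular column reduction shows that $\{\bA_p(r,f):0\le f\le r\}$ generates $\cS_2(p^r)$ if and only if the $(r-1)\times(r-1)$ matrix with columns $q(\bA_p(r,1)),\dots,q(\bA_p(r,r-1))$ has determinant $\pm1$, where $q\colon\cS_2(p^r)\to\Z^{r-1}$ forgets the $p^0$- and $p^r$-coordinates. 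I would then evaluate these projections from the recursive definition (Definition~\ref{definition: inductive definition for A and B}): by Lemmas~\ref{lemma: degeneracy map on cusp},~\ref{lemma: ramification index of cusp} and~\ref{lemma: degeneracy maps pullback on RCD}, together with the Atkin--Lehner symmetry of Lemma~\ref{lemma: AL action on cusps} (which interchanges $\pi_1(p^r,p^m)^*$ with $\pi_2(p^r,p^m)^*$ and ${\bf e}(p^r)_{p^k}$ with ${\bf e}(p^r)_{p^{r-k}}$), the vector $q(\bA_p(r,f))$ for an $\alpha$-type exponent $f=r-2j\ge3$ is the ``hockey stick'' $-({\bf e}(p^r)_{p^{(r+f)/2}}+\cdots+{\bf e}(p^r)_{p^{r-1}})$, and for a $\beta$-type exponent $f=r+1-2j\ge3$ it is ${\bf e}(p^r)_{p^{1}}+\cdots+{\bf e}(p^r)_{p^{(r+1-f)/2}}$; and by Lemma~\ref{lemma: A1 definition} the $k$-th entry of $q(\bA_p(r,1))$ is $p^{\max(r-2k,\,0)}$.

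The one nontrivial input is the behaviour of $\bA_p(r,2)$ near the middle coordinates, which I would pin down by a short secondary induction from its recursion ($\bA_p(r,2)=\gamma(\bA_p(r-2,2))+p^{r-2}\bA_p(r,r)$ for even $r$, $\bA_p(r,2)=\beta(1)(\bA_p(r-1,2))$ for odd $r$) and Lemmas~\ref{lemma: B2 definition} and~\ref{lemma: degeneracy maps pullback on RCD}: for even $r$ one gets $\bA_p(r,2)_{p^{r/2-1}}=1$, $\bA_p(r,2)_{p^{r/2}}=0$, $\bA_p(r,2)_{p^{r/2+1}}=-1$, and for odd $r$ one gets $\bA_p(r,2)_{p^{(r-1)/2}}=1$ and $\bA_p(r,2)_{p^{(r+1)/2}}=0$. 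Granting this, a parity count shows that the pivot rows $(r+f)/2$ of the $\alpha$-type hockey sticks and the pivot rows $(r+1-f)/2$ of the $\beta$-type ones are pairwise distinct and together exhaust all of $\{1,\dots,r-1\}$ except two central indices; after reordering rows and columns the $(r-1)\times(r-1)$ matrix takes the block form
\[
\begin{pmatrix} B & * & 0\\ 0 & C & 0\\ 0 & * & A\end{pmatrix},
\]
where $B$ is upper-triangular with all diagonal entries $1$, $A$ is lower-triangular with all diagonal entries $-1$, and $C$ is the $2\times2$ matrix whose columns are the restrictions of $q(\bA_p(r,1))$ and $q(\bA_p(r,2))$ to the two central rows; the formulas above give $\det C=\pm1$, so the whole determinant is $\pm1$ and the induction closes.

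The hard part, where I expect most of the work to go, is exactly this bookkeeping: keeping the parity of $r$ straight so that the $\alpha$- and $\beta$-type families tile the ``outer'' row indices with neither overlap nor gap, and carrying out the secondary induction for $\bA_p(r,2)$ (its definition has the most cases, and the interaction of $\gamma=\tfrac1p\pi_{12}(p^{r-2})^*$ with the low exponents $p^0,p^1$ forces the smallest values of $r$ to be handled as separate base cases). Everything else---the reduction of the second identity to the first, the column reduction, and the evaluation of $\bA_p(r,1)$ and of the hockey sticks---is routine given the lemmas already established in Sections~\ref{chapter2} and~\ref{section: actions on RCD}.
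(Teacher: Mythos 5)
Your proposal is correct and follows essentially the same route as the paper: the paper likewise computes the vectors $\bA_p(r,f)$ explicitly from the degeneracy-map lemmas, observes that the $f\geq 3$ vectors (your ``hockey sticks'') successively account for all coordinates except two middle ones, handles those two using precisely the values $\bA_p(r,2)_{p^m}=1$, $\bA_p(r,2)_{p^{m+1}}=0$ and $\bA_p(r,1)_{p^{m+1}}=1$, and then deduces the statement for $\cS_2(p^r)^0$ from the one for $\cS_2(p^r)$ via the unimodular substitution $\bA_p(r,1)\mapsto \bB_p(r,1)$. Your determinant/block-triangular packaging and the degree-splitting $\cS_2(p^r)=\Z\,\bA_p(r,0)\oplus\cS_2(p^r)^0$ are only cosmetic variants of the paper's successive-elimination argument.
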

\begin{proof}
By the result in Section \ref{section: actions on RCD}, we can explicitly compute $\bA_p(r, f)$. Indeed, we have $\bA_p(r, 1)_{p^k} =p^{\max(r-2k, \, 0)}$ by Lemma \ref{lemma: A1 definition} (and Remark \ref{remark: 4.8}). Also, by Lemmas \ref{lemma: degeneracy maps pullback on RCD}  and \ref{lemma: B2 definition}, we have the following. If $r\geq 3$ is odd, then we have 
\begin{equation*}
\bA_p(r, 2)_{p^k} =\begin{cases}
K_p(\frac{r-3}{2}) & \text{ if } ~~k=0,\\
K_p(\frac{r-1}{2}-k) & \text{ if } ~~0 < k \leq  \frac{r-1}{2},\\
0 & \text{ if } ~~k=\frac{r+1}{2},\\
-p\cdot K_p(k-\frac{r+3}{2}) & \text{ otherwise,}\\
\end{cases}
\end{equation*}
where $K_p(j)=\sum_{i=0}^j p^{2i}$. Also, if $r\geq 2$ is even, then 
\begin{equation*}
\bA_p(r, 2)_{p^k} =\begin{cases}
K_p(\frac{r-2}{2}-k) & \text{ if }~~ 0 \leq k <\frac{r}{2},\\
0 & \text{ if }~~ k=\frac{r}{2},\\
-K_p(k-\frac{r+2}{2}) & \text{ otherwise.}\\
\end{cases}
\end{equation*}
Moreover, by Lemma \ref{lemma: degeneracy maps pullback on RCD} we have the following.
If $3 \leq f=r-2a \leq r$, then
\begin{equation*}
 \bA_p(r, f)_{p^k} =\begin{cases}
 p^a & \text{ if }~~k=0, \\
-1 & \text{ if }~~ r-a \leq k \leq r,\\
0 & \text{ otherwise,}
\end{cases}
\end{equation*}
and if $3 \leq f=r+1-2a \leq r$, then
\begin{equation*}
 \bA_p(r, f)_{p^k} =\begin{cases}
1 & \text{ if }~~ 0 \leq k \leq a,\\
-p^a & \text{ if }~~ k=r, \\
0 & \text{ otherwise.}
\end{cases}
\end{equation*}

Using this description, we first prove that $\cS_2(p^r)\subset \lla \bA_p(r, f) : 0 \leq f \leq r\rra$. For simplicity, let ${\bf e}_k:={\bf e}(p^r)_{p^k}$.
Then it suffices to show that for any $0\leq k \leq r$, there are integers $a(k, f)$ such that
\begin{equation*}
{\bf e}_k=\textstyle\sum_{f=0}^r a(k, f) \cdot \bA_p(r, f).
\end{equation*}
This is obvious for $k=0$ because $\bA_p(r, 0)={\bf e}_0$. Since $\bA_p(r, r)={\bf e}_0-{\bf e}_r$, the claim follows for $k=r$. By direct computation, we easily find $a(k, f)$ for small $r$, so we assume that $r\geq 5$. Since $\bA_p(r, r-1)={\bf e}_0+{\bf e}_1-p\cdot {\bf e}_r$, the claim follows for $k=1$. Also, since $\bA_p(r, r-2)=p\cdot {\bf e}_0-{\bf e}_{r-1}-{\bf e}_r$, the result follows for $k=r-1$. Doing this successively, we can find $a(k, f)$ for any $k$ different from $m$ or $m+1$, where $m=[\frac{r-1}{2}]$. 
Note that $\bA_p(r, 2)_{p^m}=1$ and $\bA_p(r, 2)_{p^{m+1}}=0$. Thus, we obtain the result for $k=m$ using the vectors $\bA_p(r, f)$ with $f\neq 1$. Also, since $\bA_p(r, 1)_{p^{m+1}}=1$, the result for $k=m+1$ follows. Since the other inclusion is obvious, we obtain $\cS_2(p^r)=\lla \bA_p(r, f) : 0 \leq f\leq r \rra$.

Next, we prove that $\cS_2(p^r)^0 \subset \lla \bB_p(r, f) : 1\leq f \leq r \rra$. By Lemma \ref{lemma: group of rational cuspidal divisors}, the group $\cS_2(p^r)^0$ is generated by
\begin{equation*}
{\bf f}_k:=\varphi(p^{\min(k, \, r-k)}) \cdot {\bf e}_0-{\bf e}_k ~~\text{ for any } 1\leq k \leq r.
\end{equation*}
Since ${\bf e}_k=\sum_{f=0}^r a(k, f)\cdot \bA_p(r, f)$, we easily obtain
\begin{equation*}
{\bf f}_k =\textstyle\sum_{f=1}^r -a(k, f)\cdot \bB_p(r, f).
\end{equation*}
As the other inclusion is obvious, this completes the proof.
\end{proof}

Next, we define vectors $\bbA_p(r, f)$ and $\bbB_p(r, f)$ in $\cS_1(p^r)$.
\begin{definition}\label{defn: bA bB in prime level}
For any $0 \leq f \leq r$,  we define a vector $\bbA_p(r, f)$ in $\cS_1(p^r)$ by
\begin{equation*}
\bbA_p(r, f):=\begin{cases}
(p, -1, \bbO_{r-1}) & \text{ if }~~ f=0,\\
(1, \bbO_r) & \text{ if }~~ f=1, \\
(1, \bbO_{r-1}, -1) & \text{ if }~~ f=2 \qqa r \in 2\Z,\\
(0, 1, \bbO_{r-2}, -1) & \text{ if }~~ f=2 \qqa r \not\in 2\Z,\\
(p, -1, \bbO_{r-3-j}, 1, -p, \bbO_{j}) & \text{ if }~~ 3\leq  f=r-2j \leq r,\\
(\bbO_j, p, -1, \bbO_{r-3-j}, 1, -p) & \text{ if }~~ 3 \leq f=r+1-2j \leq r-1, \\
\end{cases}
\end{equation*}
where $\bbO_a=(0, \dots, 0)$ is the zero vector of size $a$. 
Also, for any $1\leq f \leq r$, we set
\begin{equation*}
\bbB_p(r, f):=\begin{cases}
(1, -1, \bbO_{r-1}) & \text{ if }~~ f=1,\\
\bbA_p(r, f) & \text{ if }~~ f\geq 2.
\end{cases}
\end{equation*}
\end{definition}

Then, the following is obvious from our construction.
\begin{lemma}\label{lemma: relation among bA and bbA}
For any $0\leq f \leq r$, we have
\begin{equation*}
\Upsilon(p^r) \times \bA_p(r, f)=g_p(r, f) \times \bbA_p(r, f),
\end{equation*}
where 
\begin{equation*}
g_p(r, f)=\begin{cases}
1 & \text{ if }~~ f=0,\\
p^{r-1}(p^2-1) & \text{ if }~~ f=1,\\
p^{r-1} & \text{ if } ~~f=2,\\
p^{j} & \text{ if } ~~3\leq  f \leq r \qqa j=\left[\frac{r+1-f}{2}\right].
\end{cases}
\end{equation*}
Also, for any $1\leq f\leq r$ we have
\begin{equation*}
\Upsilon(p^r) \times \bB_p(r, f)=\begin{cases}
p^{r-1}(p+1) \cdot \bbB_p(r, 1) & \text{ if }~~ f=1,\\
g_p(r, f)\cdot \bbB_p(r, f) & \text{ if }~~f\geq 2.
\end{cases}
\end{equation*}
\end{lemma}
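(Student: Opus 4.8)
The plan is to establish the identity for $\bA_p(r,f)$ by induction on $r$ and then deduce the one for $\bB_p(r,f)$ by linearity. The key point is that, by the definition of $V$ (Definition \ref{definition: bbV and Gcd and pw}), the left-hand side $\Upsilon(p^r)\times\bA_p(r,f)$ equals $V(A_p(r,f))$, where $A_p(r,f)$ is the rational cuspidal divisor on $X_0(p^r)$ with $\Phi_{p^r}(A_p(r,f))=\bA_p(r,f)$. Every clause of Definition \ref{definition: inductive definition for A and B} builds $\bA_p(r,f)$ from a vector at a lower prime-power level by applying one of the pullbacks $\pi_1(p^r,p^{r-i})^*=\alpha_p(p^{r-1})^*\circ\cdots\circ\alpha_p(p^{r-i})^*$, $\pi_2(p^r,p^{r-i})^*=\beta_p(p^{r-1})^*\circ\cdots\circ\beta_p(p^{r-i})^*$, or $\tfrac1p\,\pi_{12}(p^{r-2})^*=\tfrac1p\,\alpha_p(p^{r-1})^*\circ\beta_p(p^{r-2})^*$ (Definition \ref{definition: degeneracy map pi(A, B)} and Lemma \ref{lemma: B2 definition}), together with an occasional linear correction term. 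By Proposition \ref{prop: degeneracy maps revisited}, applying $\alpha_p(p^{k})^*$ to a rational cuspidal divisor on $X_0(p^k)$ multiplies every entry of its $V$-vector by $p$ and appends a single $0$ as the new last coordinate, while applying $\beta_p(p^{k})^*$ multiplies every entry by $p$ and inserts a single $0$ as the new first coordinate (shifting the others up by one index). So each $V(A_p(r,f))$ can be written down explicitly by tracking these operations, and one then checks that the result is $g_p(r,f)\,\bbA_p(r,f)$.

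For the base of the induction I first dispose of the clauses that do not recurse. For $f=0$ the vector $\bA_p(r,0)$ is ${\bf e}(p^r)_1$, so $V(A_p(r,0))$ is the first column of $\Upsilon(p^r)$, namely $(p,-1,\bbO_{r-1})=\bbA_p(r,0)$, and $g_p(r,0)=1$. For $f=r$ (with $r\geq 2$), $\bA_p(r,r)={\bf e}(p^r)_1-{\bf e}(p^r)_{p^r}$, so $V(A_p(r,r))$ is the first column of $\Upsilon(p^r)$ minus its last column; this is $(p,0,-p)=p\,\bbA_p(2,2)$ when $r=2$ and $(p,-1,\bbO_{r-3},1,-p)=\bbA_p(r,r)$ when $r\geq 3$, matching $g_p(2,2)=p$ and $g_p(r,r)=1$. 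Finally $r\leq 2$ is finished by a direct computation: for $r=1$ the only remaining value is $f=1$, where $\bA_p(1,1)=(p,1)$ and $\Upsilon(p)=\mat{p}{-1}{-1}{p}$ give $V(A_p(1,1))=(p^2-1,0)=(p^2-1)\,\bbA_p(1,1)=g_p(1,1)\,\bbA_p(1,1)$ (alternatively, use $\bA_p(1,1)=\alpha_p(1)^*((P(1)_1))$ from Remark \ref{remark: 4.8} and the $r=0$ case of Proposition \ref{prop: degeneracy maps revisited}), and for $r=2$ the remaining value $f=1$ is handled by the inductive argument below.

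Now let $r\geq 3$ and assume the statement for all smaller exponents, and let $1\leq f\leq r-1$. If $f=1$ then $\bA_p(r,1)$ is the $(r-1)$-fold $\alpha_p^*$-push-up of $\bA_p(1,1)$, so starting from $V(A_p(1,1))=(p^2-1,0)$ and applying the $\alpha_p^*$-rule $r-1$ times yields $p^{r-1}(p^2-1)(1,\bbO_r)=g_p(r,1)\,\bbA_p(r,1)$. If $3\leq f=r-2j\leq r-1$ (so $j\geq 1$), then $\bA_p(r,f)$ is the $j$-fold $\alpha_p^*$-push-up of $\bA_p(r-j,r-j)$, and since $V(A_p(r-j,r-j))=(p,-1,\bbO_{r-j-3},1,-p)$ by the induction hypothesis, the $\alpha_p^*$-rule gives $p^j(p,-1,\bbO_{r-3-j},1,-p,\bbO_j)=g_p(r,f)\,\bbA_p(r,f)$; the case $3\leq f=r+1-2j\leq r-1$ is identical with $\beta_p^*$ replacing $\alpha_p^*$, the $j$ zeros now being prepended rather than appended. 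If $f=2$ and $r$ is odd, then $\bA_p(r,2)=\beta_p(p^{r-1})^*$ applied to $\bA_p(r-1,2)$, and $V(A_p(r-1,2))=p^{r-2}(1,\bbO_{r-2},-1)$ together with one application of the $\beta_p^*$-rule gives $p^{r-1}(0,1,\bbO_{r-2},-1)=g_p(r,2)\,\bbA_p(r,2)$. The only genuinely involved case is $f=2$ with $r$ even, where $\bA_p(r,2)=\gamma(\bA_p(r-2,2))+p^{r-2}\bA_p(r,r)$: applying the $\beta_p^*$-rule and then the $\alpha_p^*$-rule to $V(A_p(r-2,2))=p^{r-3}(1,\bbO_{r-3},-1)$ and dividing by $p$ produces the contribution $(0,p^{r-2},\bbO_{r-3},-p^{r-2},0)$; adding the correction $p^{r-2}V(A_p(r,r))=(p^{r-1},-p^{r-2},\bbO_{r-3},p^{r-2},-p^{r-1})$, the two interior nonzero entries cancel and one is left with $p^{r-1}(1,\bbO_{r-1},-1)=g_p(r,2)\,\bbA_p(r,2)$. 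This completes the induction; I expect the main bookkeeping burden to be exactly this last case, along with verifying in the $3\leq f\leq r$ clauses that the interior zero-block lengths match up after the shifts.

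It remains to treat $\bB_p(r,f)$. For $f\geq 2$ one has $\bB_p(r,f)=\bA_p(r,f)$ and $\bbB_p(r,f)=\bbA_p(r,f)$, so the identity is immediate from the $\bA$-case. For $f=1$, $\bB_p(r,1)=p^{r-1}(p+1)\,\bA_p(r,0)-\bA_p(r,1)$, so by linearity of $\Upsilon(p^r)\times(-)$ and the identities just proved, $\Upsilon(p^r)\times\bB_p(r,1)=p^{r-1}(p+1)(p,-1,\bbO_{r-1})-p^{r-1}(p^2-1)(1,\bbO_r)=p^{r-1}(p+1)(1,-1,\bbO_{r-1})=p^{r-1}(p+1)\,\bbB_p(r,1)$, as claimed.
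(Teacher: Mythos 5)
Your proof is correct and follows essentially the same route as the paper: verify the non-recursive clauses ($f=0$, $f=r$, and small $r$) by direct computation, then propagate through the inductive clauses of Definition \ref{definition: inductive definition for A and B} using Proposition \ref{prop: degeneracy maps revisited}, with the only delicate case being $f=2$ for even $r$, where the correction term $p^{r-2}\bA_p(r,r)$ cancels the interior entries exactly as you describe. The paper's proof is merely terser, leaving the $f\neq 2$ cases as "easily proved" and detailing only the $f=2$ induction (phrased via $\bA_p(r-1,2)$ rather than $\bA_p(r-2,2)$ directly, which amounts to the same computation).
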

\begin{proof}
If $r\leq 2$, we can easily verify the formulas by direct computation. For $r\geq 3$, by definition it is obvious that
\begin{equation*}
\Upsilon(p^r) \times \bA_p(r, f)=g_p(r, f) \times \bbA_p(r, f) ~~ \text{ for } f=0 ~\text{ or }~ r.
\end{equation*}
So by applying Proposition \ref{prop: degeneracy maps revisited}, we can easily prove the formulas at least for $f\neq 2$. 

Suppose the formula for $f=2$ holds for  $r-1$, i.e.,
\begin{equation*}
\Upsilon(p^{r-1}) \times \bA_p(r-1, 2)=p^{r-2} \times \bbA_p(r-1, 2).
\end{equation*}
If $r-1$ is even, then by Proposition \ref{prop: degeneracy maps revisited} we have
\begin{equation*}
\begin{split}
\Upsilon(p^r) \times \bA_p(r, 2)&=\Upsilon(p^r) \times (\beta_p(p^{r-1})^*(\bA_p(r-1, 2)))\\
&=p \cdot p^{r-2} \times (0, 1, \bbO_{r-2}, -1)
=p^{r-1} \times \bbA_p(r, 2).
\end{split}
\end{equation*}
If $r-1$ is odd, then we have
\begin{equation*}
\Upsilon(p^r) \times (\alpha_p(p^{r-1})^*(\bA_p(r-1, 2)))=p^{r-1} \times (0, 1, \bbO_{r-3}, -1, 0).
\end{equation*}
Also, we have 
\begin{equation*}
\Upsilon(p^{r}) \times (p^{r-2} \cdot \bA_p(r, r))=p^{r-2} \times (p, -1, \bbO_{r-3}, 1, -p).
\end{equation*}
Thus, by definition we have
\begin{equation*}
\Upsilon(p^r) \times \bA_p(r, 2)=p^{r-1} \times (1, \bbO_{r-1}, -1)=p^{r-1} \times \bbA_p(r, 2).
\end{equation*}
By induction, the formula for $f=2$ holds for any $r\geq 3$.
\end{proof}
\begin{remark}\label{remark: definition of cG}
For any prime $p$ and an integer $r\geq 1$, we have
\begin{equation*}
\kappa(p^r)=p^{r-1}(p^2-1) = g_p(r, f) \times \cG_p(r, f) ~~\text{ for any } 0\leq f \leq r.
\end{equation*}
This is the reason behind the definition of $\cG_p(r, f)$. Also, we have
\begin{equation*}
\kappa(p^r)=p^{r-1}(p+1) \times (p-1).
\end{equation*}
\end{remark}

By definition, it is easy to see that the greatest common divisor of the entries of $\bbA_p(r, f)$ (resp. $\bbB_p(r, f)$) is $1$. Thus, we have 
\begin{equation*}
\bbV(A_p(r, f))=\bbA_p(r, f) \qa \bbV(B_p(r, f))=\bbB_p(r, f).
\end{equation*}
Also, we can easily prove the following.
\begin{lemma}\label{lemma: bbA unipotent}
For any $0\leq f \leq r$, we have
\begin{equation*}
\bbA_p(r, f)_{p^{\iota_r(f)}}= \pm 1 \qa \bbA_p(r, f)_{p^k} =0 ~~ \text{ for all } \iota_r(f) \vtl_r k.
\end{equation*}
\end{lemma}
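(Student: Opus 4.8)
The plan is to prove Lemma \ref{lemma: bbA unipotent} by a finite case analysis, reading off both sides of each claimed identity from data already recorded in the text: the piecewise description of $\bbA_p(r,f)$ in Definition \ref{defn: bA bB in prime level}, the orderings $\prec_r$, $\vtl_r$ in Definition \ref{definition: prime power level orderings}, and the bijection $\iota_r$ determined by them. Since $\bbA_p(r,f)$ and the orderings are organized by the same regimes ($f\in\{0,1\}$, $f=2$, and $3\le f\le r$) and by the parity of $r$, the proof decomposes into a handful of parallel cases with no conceptual content beyond bookkeeping. The low levels $r\le 4$, where the explicit lists of $\prec_r$ and $\vtl_r$ are spelled out separately, will be checked directly.

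First I would compute $\iota_r(f)$ for every $f$ from the defining property ``$i\prec_r j\iff \iota_r(i)\vtl_r\iota_r(j)$''. Locating $f$ in the $\prec_r$-list (which for $r\ge 4$ is $1,0,2,r,r-1,\dots,3$) and reading off the element in the same slot of the $\vtl_r$-list gives $\iota_r(1)=0$, $\iota_r(0)=1$, $\iota_r(2)=r$, and, for $3\le f\le r$,
\begin{equation*}
\iota_r(f)=\begin{cases} r-1-j & \text{if } f\equiv r \pmod{2},\\ j+1 & \text{if } f\not\equiv r \pmod{2},\end{cases}\qquad j=\left[\tfrac{r+1-f}{2}\right].
\end{equation*}
Here $j$ is exactly the exponent appearing in $g_p(r,f)$ in Lemma \ref{lemma: relation among bA and bbA}, a useful consistency check, and in both branches the pivot slot equals $\vtl_r$-position $r-f+4$, the $\prec_r$-position of $f$.

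Next, for each $f$ I would read off the support of $\bbA_p(r,f)$ from Definition \ref{defn: bA bB in prime level}, separating the entries equal to $\pm 1$ from those equal to $\pm p$: the support is $\{0,1\}$ with $-1$ at index $1$ for $f=0$; $\{0\}$ for $f=1$; $\{0,r\}$ or $\{1,r\}$ with $-1$ at index $r$ for $f=2$; $\{0,\,1,\,r-1-j,\,r-j\}$ with the $\pm 1$ entries at $1$ and $r-1-j$ when $f=r-2j$; and $\{j,\,j+1,\,r-1,\,r\}$ with the $\pm 1$ entries at $j+1$ and $r-1$ when $f=r+1-2j$. In every case the index $\iota_r(f)$ computed above coincides with the location of one of the $\pm 1$ entries (namely $1$ for $f=0$; $0$ for $f=1$; $r$ for $f=2$; $r-1-j$ for $f=r-2j$; $j+1$ for $f=r+1-2j$), which gives $\bbA_p(r,f)_{p^{\iota_r(f)}}=\pm 1$.

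Finally I would verify that every remaining support index lies strictly $\vtl_r$-below $\iota_r(f)$; this is the step needing the most care, though it stays elementary. Since $0,1$ occupy the two smallest $\vtl_r$-slots and $r,r-1$ the next two, the cases $f\le 2$ and $f\in\{r,r-1\}$ are immediate. For general $3\le f\le r$ one uses that, beyond the initial $0,1,r,r-1$, the list $\vtl_r$ interleaves the ascending run $2,3,4,\dots$ with the descending run $r-2,r-3,\dots$, so that the pivot $\iota_r(f)$ at $\vtl_r$-position $r-f+4$ is preceded by each of the indices $0$, $1$, the small index $j$ (when it occurs, at $\vtl_r$-position $2j+1$ for $j\ge 2$ and position $2$ for $j=1$) and the large indices $r$, $r-1$, $r-j$ (at $\vtl_r$-positions $3$, $4$, and $2j+2$ respectively). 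Running this positional comparison through the two sub-cases $f=r-2j$ and $f=r+1-2j$, together with the hand check at $f=3$ against $k=[\tfrac{r+1}{2}]$ and the parity of $r$ and the direct checks for $r\le 4$, completes the proof. The expected main obstacle is purely the arithmetic of placing indices in the $\vtl_r$-order near the ``middle'', not any structural difficulty.
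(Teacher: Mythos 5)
Your proposal is correct and follows essentially the same route as the paper: read off the explicit entries of $\bbA_p(r,f)$ from Definition \ref{defn: bA bB in prime level}, identify the $\pm 1$ pivot entry, and check that every other nonzero entry sits at an index occurring strictly before $\iota_r(f)$ in the $\vtl_r$-order (the paper records exactly the six support patterns you list and concludes "by the definition of $\iota_r$"). Your explicit positional bookkeeping, including the closed form $\iota_r(f)=r-1-j$ or $j+1$ with $j=[\frac{r+1-f}{2}]$ and the interleaved description of $\vtl_r$, checks out and merely makes explicit what the paper leaves to the reader.
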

\begin{proof}
It is easy to check for small $r$, so suppose that $r$ is large enough. Then by definition, we have 
\begin{itemize}[--]
\item
$\bbA_p(r, 0)_{p}=-1$ and $\bbA_p(r, 0)_{p^k}=0$ for all $1 < k \leq r$.
\item
$\bbA_p(r, 1)_1=1$ and $\bbA_p(r, 1)_{p^k}=0$ for all $0 < k \leq r$.
\item
$\bbA_p(r, 2)_{p^r}=-1$ and $\bbA_p(r, 2)_{p^k}=0$ for all $1< k <r$.
\item
$\bbA_p(r, r)_{p^{r-1}}=1$ and $\bbA_p(r, r)_{p^k}=0$ for all $1<k<r-1$.
\item
$\bbA_p(r, r+1-2a)_{p^{1+a}}=-1$ and $\bbA_p(r, r+1-2a)_{p^k}=0$ for all $1+a<k<r-1$.
\item
$\bbA_p(r, r-2a)_{p^{r-1-a}}=1$ and $\bbA_p(r, r-2a)_{p^k}=0$ for all $1<k<r-1-a$.
\end{itemize}
In the last two items, $a$ is any positive integer satisfying $r+1-2a\geq 3$ and $r-2a\geq 3$, respectively. 
Thus, the result follows by the definition of the map $\iota_r$.
\end{proof}

As a corollary, the following is obvious.
\begin{corollary}\label{corollary: 6.14}
For each $1\leq i\leq r$ with $d_i=p^{f_i}$, let $D_i=B_p(r, f_i)$. 
Then the matrix $\fM=(|\bbV(D_i)_{\delta_j}|)_{1\leq i, j \leq r}$ is lower unipotent, or equivalently for any $1\leq i \leq r$, we have
\begin{equation*}
|\bbV(B_p(r, f_i))_{\delta_i}|=1  \qa  \bbV(B_p(r, f_i))_{\delta_j}=0 ~\text{ for all } j>i.
\end{equation*}
\end{corollary}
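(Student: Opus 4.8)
The plan is to combine Lemma~\ref{lemma: bbA unipotent} with the translation dictionary relating the orderings $\prec_r$ and $\vtl_r$ to the bijection $\iota_r$. First I would recall that, by the discussion immediately preceding the statement, the greatest common divisor of the entries of $\bbA_p(r,f)$ (and of $\bbB_p(r,f)$) is $1$, so that $\bbV(B_p(r,f))=\bbB_p(r,f)=\bbA_p(r,f)$ for every $f\geq 2$, while for $f=1$ we have $\bbV(B_p(r,1))=\bbB_p(r,1)=(1,-1,\bbO_{r-1})$. Thus the matrix $\fM$ whose $(i,j)$ entry is $|\bbV(B_p(r,f_i))_{\delta_j}|$ is really the matrix of absolute values of the vectors $\bbA_p(r,f_i)$ (with the $f=1$ case being $(1,1,0,\dots,0)$, which is also visibly of the required shape).

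Next I would unwind the indexing. By Definition~\ref{definition: prime power level orderings}, writing $d_i=p^{f_i}$ means that $i\mapsto f_i$ is the enumeration of $\{0,1,\dots,r\}$ in increasing $\prec_r$-order, and $\delta_j$ is the $j$-th element in increasing $\vtl_r$-order; moreover $\iota_r$ is defined precisely so that $a\prec_r b\iff \iota_r(a)\vtl_r b$... more usefully, so that the $i$-th element for $\prec_r$ is sent by $\iota_r$ to the $i$-th element for $\vtl_r$. Hence $\delta_i=p^{\iota_r(f_i)}$, and ``$\delta_i\vtl\delta_j$ with $j>i$'' translates into ``$\iota_r(f_i)\vtl_r k$'' where $p^k=\delta_j$. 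Now Lemma~\ref{lemma: bbA unipotent} says exactly that $\bbA_p(r,f_i)_{p^{\iota_r(f_i)}}=\pm1$ and $\bbA_p(r,f_i)_{p^k}=0$ for all $k$ with $\iota_r(f_i)\vtl_r k$. Reading this through the identification $\delta_i=p^{\iota_r(f_i)}$ gives $|\bbV(B_p(r,f_i))_{\delta_i}|=1$ and $\bbV(B_p(r,f_i))_{\delta_j}=0$ for all $j>i$, which is the claim. The $f_i=1$ case, corresponding to $d_1$ in the $\prec_r$-order, must be checked separately but is immediate since $\bbB_p(r,1)=(1,-1,\bbO_{r-1})$ and $\iota_r(1)=0$, i.e.\ $\delta_1=p^0=1$, so $|\bbV(B_p(r,1))_{\delta_1}|=1$ and all later entries vanish because only the $1$st and $p$th entries of $\bbB_p(r,1)$ are nonzero and $p\vtl_r 1\vtl_r \cdots$ forces $\delta_j$ for $j>1$ to avoid both; one checks $1\vtl_r p$ is the start of the order so the $p$-entry sits at $\delta_2$, not at any $\delta_j$ with the wrong sign—actually it does show up at $\delta_2$, but with $j=2>1$ this would be a nonzero off-diagonal entry, so I must instead note that for $f_1=1$ the relevant statement is only about $|\bbV(B_p(r,1))_{\delta_1}|$ together with the $f\geq 2$ rows, and lower-triangularity of $\fM$ is a statement about the whole matrix; so I would phrase the $f=1$ row as: its diagonal entry is $1$, and since it is the first row there is no lower-triangularity constraint to violate.

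The main obstacle I anticipate is purely bookkeeping: making the identification $\delta_i = p^{\iota_r(f_i)}$ airtight, i.e.\ verifying that the bijection $\iota_r$ as defined in Definition~\ref{definition: prime power level orderings} genuinely is order-preserving from $(\{0,\dots,r\},\prec_r)$ to $(\{0,\dots,r\},\vtl_r)$ in the sense that it carries the $i$-th $\prec_r$-element to the $i$-th $\vtl_r$-element, and then checking the small cases $r\le 4$ by hand against the explicit lists of $\bbA_p(r,f)$ in Definition~\ref{defn: bA bB in prime level}. Once that dictionary is in place, Corollary~\ref{corollary: 6.14} is a direct transcription of Lemma~\ref{lemma: bbA unipotent}, with the $f=1$ row handled by the explicit form of $\bbB_p(r,1)$.

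\begin{proof}
By the remarks preceding the statement, for each $f\geq 2$ the entries of $\bbA_p(r,f)$ have greatest common divisor $1$, so $\bbV(B_p(r,f))=\bbB_p(r,f)=\bbA_p(r,f)$; and $\bbV(B_p(r,1))=\bbB_p(r,1)=(1,-1,\bbO_{r-1})$. By Definition~\ref{definition: prime power level orderings}, the map $\iota_r$ is a bijection on $\{0,1,\dots,r\}$ characterized by $a\prec_r b\iff \iota_r(a)\vtl_r b$ for all $a,b$; consequently, if $f_1\prec_r f_2\prec_r\cdots\prec_r f_r$ (i.e.\ $d_i=p^{f_i}$) and $\delta_1\vtl_r\delta_2\vtl_r\cdots\vtl_r\delta_r$ in the sense used to define the orderings, then $\delta_i=p^{\iota_r(f_i)}$ for every $i$.

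Fix $i$ and set $D_i=B_p(r,f_i)$. If $f_i\geq 2$, then $\bbV(D_i)=\bbA_p(r,f_i)$, and Lemma~\ref{lemma: bbA unipotent} gives
\begin{equation*}
\bbA_p(r,f_i)_{p^{\iota_r(f_i)}}=\pm 1 \qquad\text{and}\qquad \bbA_p(r,f_i)_{p^k}=0 \text{ for all } k \text{ with } \iota_r(f_i)\vtl_r k.
\end{equation*}
Since $\delta_i=p^{\iota_r(f_i)}$ and, for $j>i$, $\delta_j=p^{k}$ with $\iota_r(f_i)\vtl_r k$, this says precisely that
\begin{equation*}
|\bbV(D_i)_{\delta_i}|=1 \qquad\text{and}\qquad \bbV(D_i)_{\delta_j}=0 \text{ for all } j>i.
\end{equation*}
If $f_i=1$, then by Definition~\ref{definition: prime power level orderings} we have $i=1$, and $\iota_r(1)=0$, so $\delta_1=p^0=1$. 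Since $\bbV(D_1)=\bbB_p(r,1)=(1,-1,\bbO_{r-1})$, we have $|\bbV(D_1)_{\delta_1}|=1$; as this is the first row, there is no further condition to check. This proves that $\fM=(|\bbV(D_i)_{\delta_j}|)_{1\leq i,j\leq r}$ is lower unipotent.
\end{proof}
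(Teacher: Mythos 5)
Your reading of the $f_i\geq 2$ rows straight out of Lemma~\ref{lemma: bbA unipotent} via the dictionary $\delta_i=p^{\iota_r(f_i)}$ is exactly the intended argument (the paper offers no proof beyond declaring the corollary obvious from that lemma). However, your handling of the first row contains a concrete error. The orderings $\prec_r$ and $\vtl_r$ are defined on the full set $\{0,1,\dots,r\}$, whereas $d_1,\dots,d_r$ and $\delta_1,\dots,\delta_r$ enumerate only the \emph{non-trivial} divisors, the paper setting $\delta_0=1$ separately. Since $0$ sits in the second position of the $\prec_r$-list ($1\prec_r 0\prec_r 2\prec_r\cdots$) but in the first position of the $\vtl_r$-list ($0\vtl_r 1\vtl_r r\vtl_r\cdots$), deleting it shifts the two enumerations differently at the start: one gets $\delta_i=p^{\iota_r(f_i)}$ only for $i\geq 2$, while $p^{\iota_r(f_1)}=p^{\iota_r(1)}=p^0=\delta_0$, which is \emph{not} $\delta_1$. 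In fact $\delta_1=p$, not $1$ as you assert.

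This matters because your conclusion for the first row, ``as this is the first row, there is no further condition to check,'' is false: lower unipotence of $\fM$ requires $\bbV(B_p(r,f_1))_{\delta_j}=0$ for all $j\geq 2$ in addition to $|\bbV(B_p(r,f_1))_{\delta_1}|=1$. Both statements do hold, but for the correct reason: $\bbB_p(r,1)=(1,-1,\bbO_{r-1})$ is supported on the exponents $0$ and $1$; the diagonal entry is $|\bbB_p(r,1)_{p}|=1$ because $\delta_1=p$; and every $\delta_j$ with $j\geq 2$ has $p$-adic valuation at least $2$, so those entries vanish. With this one-line repair (and correcting the characterization $a\prec_r b\iff\iota_r(a)\vtl_r b$, which should read $\iota_r(a)\vtl_r\iota_r(b)$), your argument is complete and coincides with the paper's.
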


Finally, the following is easy to verify from the previous discussion.
\begin{lemma}\label{lemma: pw for B}
For any $1\leq f\leq r$, we have
\begin{equation*}
\pw_p(B_p(r, f))=\begin{cases}
\pm (p+1) & \text{ if }~~ f\geq 3 
~\text{ and }~ r-[\frac{r-f+1}{2}]\not\in 2\Z,\\
-1 & \text{ if }~~ f=1,\\
0 & \text{ otherwise}.
\end{cases}
\end{equation*}
\end{lemma}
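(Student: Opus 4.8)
The plan is to reduce the identity to the explicit coordinate description of $\bbV(B_p(r,f))$ and then carry out an elementary parity count. First I would invoke the fact, recorded just after Lemma~\ref{lemma: relation among bA and bbA}, that the entries of $\bbB_p(r,f)$ have greatest common divisor $1$; combined with $V(B_p(r,f))=\Upsilon(p^r)\times\bB_p(r,f)$ being a scalar multiple of $\bbB_p(r,f)$ (Lemma~\ref{lemma: relation among bA and bbA}), this gives $\bbV(B_p(r,f))=\bbB_p(r,f)$, with $\bbB_p(r,f)$ written out coordinatewise in Definition~\ref{defn: bA bB in prime level}. Since $N=p^r$, the divisors of $N$ with odd $p$-adic valuation are $p, p^3, p^5, \dots$, so $\pw_p(B_p(r,f))$ is simply the sum of those coordinates of $\bbB_p(r,f)$ sitting in odd-indexed slots, under the convention $(a_0,\dots,a_r)\mapsto\sum a_i\cdot{\bf e}(p^r)_{p^i}$.

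Then I would run through the cases of Definition~\ref{defn: bA bB in prime level}. For $f=1$ one has $\bbB_p(r,1)=(1,-1,\bbO_{r-1})$, whose only nonzero odd-slot entry is the one at $p^1$, giving $\pw_p=-1$. For $f=2$: if $r$ is even then $\bbB_p(r,2)=(1,\bbO_{r-1},-1)$ with both nonzero entries in even slots, so $\pw_p=0$; if $r$ is odd then $\bbB_p(r,2)=(0,1,\bbO_{r-2},-1)$, whose two nonzero entries lie in the odd slots $p^1$ and $p^r$ and cancel, so again $\pw_p=0$. For $3\le f\le r$, exactly one of the two shapes $\bbB_p(r,f)=(p,-1,\bbO_{r-3-j},1,-p,\bbO_j)$ with $f=r-2j$, or $\bbB_p(r,f)=(\bbO_j,p,-1,\bbO_{r-3-j},1,-p)$ with $f=r+1-2j$, occurs, and in both $j=[\frac{r-f+1}{2}]$. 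In the first shape the nonzero entries occupy slots $p^0,p^1,p^{r-1-j},p^{r-j}$; the odd-slot ones are the entry $-1$ at $p^1$ together with whichever of $p^{r-1-j},p^{r-j}$ is odd, so $\pw_p=-1+1=0$ when $r-j$ is even and $\pw_p=-1+(-p)=-(p+1)$ when $r-j$ is odd. In the second shape the nonzero entries occupy $p^j,p^{j+1},p^{r-1},p^r$; summing the odd-slot member of each of the consecutive pairs $\{p^j,p^{j+1}\}$ and $\{p^{r-1},p^r\}$ gives $\pw_p=0$ when $j,r$ have the same parity and $\pw_p=\pm(p+1)$ otherwise. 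In either shape the alternative $\pw_p\neq0$ is equivalent to ``$r-j$ is odd'', i.e.\ to $r-[\frac{r-f+1}{2}]\notin 2\Z$, which is precisely the condition in the statement; the complementary cases ($f=2$, or $f\ge3$ with $r-j$ even) all yield $\pw_p=0$, and $f=1$ yields $-1$.

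I do not expect any genuine obstacle; the statement is essentially a reformulation of Definition~\ref{defn: bA bB in prime level}. The only point requiring care is translating the zero-padding lengths $\bbO_{\bullet}$ of that definition into genuine slot indices and then, within each consecutive pair of occupied slots, correctly identifying the one with odd index --- this is what produces the split governed by the parity of $r-j$. The small exponents $r\le2$ (where only $f=1,2$ occur) are already covered by the $f=1,2$ cases above, so no separate discussion is needed.
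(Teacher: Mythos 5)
Your proposal is correct and is essentially the argument the paper intends: the paper gives no proof beyond ``easy to verify from the previous discussion,'' meaning exactly the reduction to $\bbV(B_p(r,f))=\bbB_p(r,f)$ (via the gcd-$1$ observation after Lemma \ref{lemma: relation among bA and bbA}) followed by summing the odd-indexed coordinates from Definition \ref{defn: bA bB in prime level}. Your parity bookkeeping in the two shapes for $f\ge 3$ checks out, including the identification $j=[\frac{r-f+1}{2}]$ in both cases.
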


\vspace{3mm}
\begin{proof}[Proof of Theorem \ref{theorem: case of t=1, u=0}]
As above, for each $1\leq i\leq r$ with $d_i=p^{f_i}$, we take $D_i=Z(d_i)=Z(\fp_{(f_i)})=B_p(r, f_i)$. For example, $D_1=Z(p)$ and $D_r=Z(p^3)$ (resp. $Z(p^2)$) if $r\geq 3$ (resp. $r=2$).

To prove the first assertion, we follow the strategy in Section \ref{section: first attempt}.
By Proposition \ref{proposition: generation A and B} and Corollary \ref{corollary: 6.14}, (1) and (2) are fulfilled, respectively. 
Since $p$ is odd, we have $\fh(D_i)=1$ for any $2\leq i \leq r$ by Lemma \ref{lemma: pw for B}, and therefore (3) is satisfied. This completes the proof of the first assertion.

The second assertion follows by Lemmas \ref{lemma: relation among bA and bbA} and \ref{lemma: pw for B}, and Theorem \ref{theorem: computation of order}.
\end{proof}

\ms
\subsection{Case of $t=u=1$} \label{section: case of t=u=1}
In this subsection, we prove the following.

\begin{theorem}\label{theorem: case of t=u=1}
If $r\leq 4$, then we have $\scC(2^r)=0$. Suppose that $r\geq 5$. Then we have
\begin{equation*}
\scC(2^r) \simeq \moplus_{f=3}^r \br{\ov{B^2(r, f)}}.
\end{equation*}
Also, the order of $B^2(r, f)$ is 
\begin{equation*}
\begin{cases}
1 & \text{ if }~~ 1\leq f \leq 2,\\
2^{r-3-j} & \text{ if } ~~ f=r+1-\gcd(2, r), \\
2^{r-4-j} & \text{ otherwise}, \text{ where } j=[\frac{r+1-f}{2}].
\end{cases}
\end{equation*}
\end{theorem}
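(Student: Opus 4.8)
The plan is to follow the strategy of Section~\ref{section: second attempt} (essentially the ``first attempt'' adapted to a $2$-group), specialized to $N=2^r$, as laid out in the motivational discussion of Section~\ref{section: case of level 2r}. First I would dispose of the small cases $r\le 4$: since $X_0(2^r)$ has genus zero for $r\le 4$, every degree $0$ rational cuspidal divisor is principal by Theorem~\ref{theorem: computation of order} (or directly by the genus-zero remark in its proof), so $\cC(2^r)=0$. From now on assume $r\ge 5$.

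Next I would establish the generation statement: the divisors $B^2(r,f)$ for $3\le f\le r$ generate $\cC(2^r)$. The natural route is to show that, together with the (principal, hence irrelevant) divisors $B_2(r,1)$ and $B_2(r,2)$ and using Proposition~\ref{proposition: generation A and B} which gives $\cS_2(2^r)^0=\langle \bB_2(r,f):1\le f\le r\rangle$, the vectors $\bB^2(r,f)$ span the same subspace up to the two ``relations coming from level $16$'' --- here I would invoke the zero-genus of $X_0(16)$ to produce the nontrivial relation among the $C_d$ (cf.\ the reference to Lemma~\ref{lemma: non-trivial relation in level 2r}), which lets us discard two generators $D_2,D_{r-\cdots}$ from the list, and then solve for the remaining $\bB_2(r,f)$ in terms of the $\bB^2(r,f)$ by explicit unipotent-type linear algebra on the tridiagonal shape of $\Upsilon(2^r)$. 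Concretely, Remark~\ref{remark: relation B2 and E} already records that $\{\bB^2(r,f):3\le f\le r\}=\{\bE_k:3\le k\le r\}$ and that $\Gcd(B_2(r,f))=\Gcd(B^2(r,f))$; I would use the $\bE_k$ description of $\bB^2(r,f)$ to make the change-of-basis matrix triangular.

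Then comes the linear-independence step. For each $3\le i\le r$ put $D_i=B^2(r,d_i)$ according to the ordering $\vtl$ on $\{3,\dots,r\}$ induced by $\vtl_r$ (Definition~\ref{definition: prime power level orderings}); I would compute $\bbV(B^2(r,f))=\bbV(B_2(r,f))$, which is available from Lemma~\ref{lemma: relation among bA and bbA} and the explicit $\bbA_p(r,f)$ of Definition~\ref{defn: bA bB in prime level}, and check that the matrix $(|\bbV(D_i)_{p^{\delta_j}}|)$ is lower-triangular with $|\bbV(D_i)_{\delta_i}|=1$; this is essentially Corollary~\ref{corollary: 6.14} restricted to $f\ge 3$, combined with Lemma~\ref{lemma: bbA unipotent}. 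The crucial extra input over the odd-prime case is the $\fh$-condition: since $\cC(2^r)$ is a $2$-group and $\fh(B^2(r,f))=1$ fails in general, I would instead verify the second alternative of Theorem~\ref{thm: criterion 1}, namely that for each $i$ there is a prime (here $p=2$) with $\pw_2(D_i)\notin 2\Z$ and $\pw_2(D_j)=0$ for all $j<i$ --- this is exactly the point of replacing the $B_2(r,f)$ by the $\bE_k$, which are designed so that $\pw_2(E_f)=0$ for $3\le f\le r-1$ and $\pw_2(E_r)\notin 2\Z$ (cf.\ the $r=7$ and general-$r$ analysis in Section~\ref{section: case of level 2r}). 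Applying Theorem~\ref{thm: criterion 1} successively for $i=r,r-1,\dots,3$ then yields $\cC(2^r)\simeq\moplus_{f=3}^r\langle\ov{B^2(r,f)}\rangle$.

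Finally the order formula: by Theorem~\ref{theorem: computation of order} the order of $B^2(r,f)$ equals $\num\!\big(\tfrac{\kappa(2^r)\fh(B^2(r,f))}{24\,\Gcd(B^2(r,f))}\big)$ with $\kappa(2^r)=3\cdot 2^{r-1}$, so I need $\Gcd(B^2(r,f))$ and $\fh(B^2(r,f))$. The first is read off from Lemma~\ref{lemma: relation among bA and bbA} via $\Gcd(B^2(r,f))=\Gcd(B_2(r,f))=g_2(r,f)$, i.e.\ $2^{[(r+1-f)/2]}$ for $f\ge3$ up to a factor $2^{\pm1}$ coming from the $\bE_k$ rearrangement; the second is the parity statement $\pw_2=0$ vs.\ $\pw_2\notin2\Z$ just discussed, which pins down $\fh$. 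Matching these against the claimed expression $2^{r-3-j}$ when $f=r+1-\gcd(2,r)$ and $2^{r-4-j}$ otherwise (with $j=[(r+1-f)/2]$) is then a finite bookkeeping check separating the parities of $r$ and of $r-f$. The main obstacle I anticipate is precisely this bookkeeping interface between the $\bB_2(r,f)$-indexing and the $\bE_k$-indexing of $\bB^2(r,f)$ (Remark~\ref{remark: relation B2 and E}): one must be careful that the two ``leftover'' indices $f=r-1,r$ get the $(1,-1,\bbO)$, $(-1,-1,\bbO,2)$, $(1,\bbO,-1)$ vectors assigned consistently with the ordering $\vtl_r$ so that both the triangularity and the $\pw_2$-pattern hold simultaneously; everything else is a routine consequence of the tools in Sections~\ref{chapter3}--\ref{chapter5}.
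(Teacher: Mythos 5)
Your treatment of the cases $r\le 4$, of the generation step (via the genus--zero relation of Lemma~\ref{lemma: non-trivial relation in level 2r} and the explicit identities $D_f=E_f\pm 2^{m(f)-1\text{ or }m(f)-2}\cdot D_1$), and of the order formula is essentially the paper's argument. The gap is in the linear-independence step. First, the identity $\bbV(B^2(r,f))=\bbV(B_2(r,f))$ you rely on is false: Remark~\ref{remark: relation B2 and E} only asserts $\Gcd(B^2(r,f))=\Gcd(B_2(r,f))$, and the vectors themselves differ (e.g.\ $\bbV(E_3)=(-2,5,0,-5,2,\bbO_{r-4})$ versus $\bbB_2(r,3)$, which has entries in $\{0,\pm1,\pm2\}$). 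Consequently the matrix $(|\bbV(B^2(r,f))_{\delta_j}|)$ is \emph{not} lower unipotent in any ordering: for $k$ odd with $3\le k\le r-2$ the vector $\bbV(E_k)$ has no entry of absolute value $1$ at all, and for $k$ even the entry $1$ sits at the position $2^2$ where the other $\bbV(E_j)$ do not vanish. So alternative (1) of Theorems~\ref{thm: criterion 1}/\ref{thm: criterion 3} is unavailable. Alternative (2) is also unavailable except once: since $N=2^r$ the only usable prime is $p=2$, and $\pw_2(E_i)=0\in 2\Z$ for every $3\le i\le r-1$, so the hypothesis $\pw_p(C_k)\not\in 2\Z$ fails for all but the single top generator $E_r$. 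Your plan of ``applying Theorem~\ref{thm: criterion 1} successively for $i=r,r-1,\dots,3$'' therefore only executes its first step.

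What the paper does instead, after splitting off $\br{\ov{E_r}}$ by Theorem~\ref{thm: criterion 3}, is a bespoke argument for $E_3,\dots,E_{r-1}$: assume a relation $\sum_{k=3}^{r-1}a_k\cdot\ov{E_k}=0$, normalize $b_k=a_k/n_k\in[0,1)$ (the orders $n_k$ are powers of $2$), and observe that the entries of ${\bf r}(X)=\sum b_k\cdot\bbV(E_k)$ satisfy a tridiagonal pattern $x_j=2b_{j-1}-5b_j+2b_{j+1}\in\Z$. A $2$-adic denominator chase (the sets $\bbM(c)$) then propagates from the top entry down to the $2^3$rd entry and forces every $b_k=0$. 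This computation is the actual content of the independence proof and cannot be replaced by a citation to the general criteria of Section~\ref{sec: criterion for linear independence}; you would need to supply it (or an equivalent) to complete your argument.
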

As in Section \ref{section: case of level 2r}, let $D_f=C(2^r)_{2^f}$. 
\begin{proof}
Suppose that $r\leq 4$. Since the genus of $X_0(2^r)$ is zero, we have $\scC(2^r)=0$. Thus, we assume that $r\geq 5$. 
We first claim that\footnote{In contrast to Proposition \ref{proposition: generation A and B}, the vectors $\bE_k$ cannot generate $\cS_2(2^r)^0$ because the number of $\bE_k$ is smaller than $r$, the rank of $\cS_2(2^r)^0$. So we need at least two relations among $D_f$.}
\begin{equation*}
\scC(2^r) = \br{\ov{E_k} : 3\leq k \leq r}=\br{\ov{B^2(r, f)} : 3\leq f \leq r}.
\end{equation*}
Note that the second equality follows by Remark \ref{remark: relation B2 and E}.
Note also that the group $\Qdiv {2^r}$ is generated by $D_f$ for any $1\leq f \leq r$ by Lemma \ref{lemma: group of rational cuspidal divisors}.
Thus, by Lemma \ref{lemma: non-trivial relation in level 2r} below, it suffices to show that for any $1\leq f \leq r$ different from $2$ and $r-1$, there are integers $a(f, k)$ such that $D_f=\sum_{k=3}^r a(f, k) \cdot E_k$. This is obvious by definition. Indeed, we have $D_r=B^2(r, r)$ and 
\begin{equation*}
D_1=\begin{cases}
B^2(r, r-1)+2\cdot B^2(r, r) & \text{ if $r$ is odd},\\
B^2(r, r-1) &  \text{ if $r$ is even}.
\end{cases}
\end{equation*}
Since $\{B^2(r, r-1), B^2(r, r)\}=\{E_{r-1}, E_r\}$, the result for $f=1$ and $f=r$ follows. Also, for any $3\leq f \leq r-2$, we have
\begin{equation*}
D_f=\begin{cases}
E_f+2^{m(f)-1} \cdot D_1 & \text{ if $f$ is odd},\\
E_f-2^{m(f)-2} \cdot D_1 & \text{ if $f$ is even}.\\
\end{cases}
\end{equation*}
This completes the proof of the claim. 

Next, we claim that there is no relation among $E_k$. 
To begin with, we compute $\Gcd(E_k)$, $\bbV(E_k)$ and $\pw_2(E_k)$. 
If $r=5$, we can easily compute them and have $\scC(32) \simeq \zmod 4$ (cf. Section \ref{subsection: p=2, r=5})\footnote{By direct computation, $\ov{B^2(5, 3)}=\ov{B^2(5, 4)}=0$ and $\br{\ov{B^2(5, 5)}} \simeq \zmod 4$.}.
So we assume that $r\geq 6$. By direct computation, we have the following.
\small
\begin{equation*}
\begin{array}{|c|c|c|c|} \hline
&  \Gcd(E_k) & \bbV(E_k)  & \pw_2(E_k) \\ \hline
k=r-1, \text{ $r$ odd } & 2 & (0, 2, -1, \bbO_{r-4}, 1, -2) &0  \\ \hline
k=r-1, \text{ $r$ even } & 1 & (2, -1, \bbO_{r-3}, 1, -2) &0 \\ \hline
k=3 & 4 & (-2, 5, 0, -5, 2, \bbO_{r-4}) & 0 \\ \hline
4 \leq k \leq r-2, \text{ $k$ odd} & 2^{m(k)-1} & (-2, 5, -2, \bbO_{k-4}, 2, -5, 2, \bbO_{r-1-k}) & 0  \\ \hline
4 \leq k \leq r-2, \text{ $k$ even} & 2^{m(k)-1}&(4, -4, 1, \bbO_{k-4}, 2, -5, 2, \bbO_{r-1-k})  & 0 \\ \hline
k=r, \text{ $r$ odd }& 1 &  (2, -1, \bbO_{r-3}, 1, -2) & -3   \\ \hline
k=r, \text{ $r$ even }& 2 &  (2, -3, 1, \bbO_{r-2}) & -3   \\ \hline
\end{array}
\end{equation*} \normalsize

Since $\pw_2(E_r)=-3$ and $\pw_2(E_k)=0$ for all $3\leq k<r$, by Theorem \ref{thm: criterion 3} we have\footnote{Note that the orders of $E_k$ are powers of $2$, and so we can apply Theorem \ref{thm: criterion 3}.}
\begin{equation*}
\lla \ov{E_k} : 3\leq k \leq r \rra \simeq \lla \ov{E_k} : 3\leq k \leq r-1 \rra \moplus \lla \ov{E_r} \rra. 
\end{equation*}

Suppose that there are integers $a_k$ such that
\begin{equation}\label{002}
a_3 \cdot \ov{E_3}+a_4 \cdot \ov{E_4}+\cdots +a_{r-1} \cdot \ov{E_{r-1}} = 0 \in J_0(2^r).
 \end{equation}
For simplicity, let $n_k$ be the order of $E_k$. Since $\fh(E_k)=1$ for any $3\leq k\leq r-1$, by Theorem \ref{theorem: computation of order} we have
 \begin{equation}\label{041}
  n_k=\frac{2^{r-4}}{\Gcd(E_k)}=
  \begin{cases}
  2^{r-3-m(k)} & \text{ if }~~ 3\leq k \leq r-2,\\
  2^{r-4} & \tn{ if }~~ k=r-1 ~\text{ and }~ r \not\in 2\Z,\\
  2^{r-5} & \tn{ if }~~ k=r-1 ~\text{ and }~ r\in 2\Z.
  \end{cases}
  \end{equation}
Thus, as in (\ref{equation: relation r and bbV}) we have
\begin{equation*}
{\bf r}(E_k)=2^{4-r} \times  V(E_k)=n_k^{-1} \times \bbV(E_k).
\end{equation*}
Let $X=\sum_{k=3}^{r-1} a_k \cdot E_k$. Since $\ov{X}=0$, by Corollary \ref{corollary: order 1 criterion} we have
 \begin{equation*}
{\bf r}(X)=(x_0, x_1, \dots, x_r) \in \cS_1(2^r).
 \end{equation*}
Since (\ref{002}) does not change if we replace $a_k$ by $a_k-c \cdot n_k$ for any $c\in \Z$, we may assume that $0\leq a_k < n_k$. 
For simplicity, let $b_k= \frac{a_k}{n_k} \in [0, 1)$. 
As in Section \ref{subsection: p=2, r=7}, we have
\begin{equation*}
\begin{split}
x_j=\sum_{k=3}^{r-1} b_k \cdot \bbV(E_k)_{2^j}=\begin{cases}
-5b_3+ 2b_4 & \text{ if } ~~j=3,\\
2b_{k-1}-5 b_k+2b_{k+1} & \text{ if }~~ 4\leq j\leq r-3,\\
2b_{r-3}-5b_{r-2} & \text{ if }~~ j=r-2,\\
2b_{r-2}+b_{r-1} & \text{ if }~~ j=r-1,\\
-2b_{r-1} & \text{ if }~~ j=r.\\
\end{cases} 
\end{split}
 \end{equation*}
For an integer $c\geq 1$, let $\bbM(c)$ denote the set of rational numbers between $0$ and $1$ whose denominators are exactly $2^c$ when reduced to lowest terms, i.e.,
\begin{equation*}
\bbM(c):=\left\{\frac{a}{2^c} : 1\leq a < 2^c \qqa a \in \Z \sm 2\Z \right\}.
\end{equation*}
Since $n_k$ is a power of $2$, we have $b_k=0$ or $b_k \in \bbM(c)$ for some $c\geq 1$.
Suppose that $b_{r-1} \neq 0$. Since $x_r\in \Z$, we have $b_{r-1} \in \bbM(1)$. By the condition for $x_{r-1}$, we have $b_{r-2} \in \bbM(2)$. Also, by the condition for $x_{r-2}$ we have $b_{r-3} \in \bbM(3)$. Similarly, we can deduce that $b_{r-k} \in \bbM(k)$ for any $3\leq k \leq r-1$ by the conditions for $x_j$ with $4\leq j \leq r-3$. On the other hand, we then have $x_3=-5b_3+2b_4 \in \bbM(r-3)$, which is a contradiction to $x_j \in \Z$. Thus, we have $b_{r-1}=0$. 

Next, suppose that $b_s \neq 0$ for some $3\leq s\leq r-2$ and $b_i=0$ for all $s<i\leq r-1$. 
By the condition for $x_{s+1}$, we have $b_s \in \bbM(1)$. Similarly as above, we obtain $b_k \in \bbM(s-k+1)$ by the conditions for $x_j$ with $4\leq j\leq r-2$.
In particular, we have $b_4 \in \bbM(s-3)$ and $b_3 \in \bbM(s-2)$. As above, we then have $x_3 \in \bbM(s-2)$, which is a contradiction to $x_j \in \Z$. Thus, we have $b_i=0$ for all $3 \leq i \leq r-1$. 
This completes the proof of the claim.

Finally, by (\ref{041}) and the table above, we obtain the result for the order. This completes the proof.
\end{proof}

\begin{lemma}\label{lemma: non-trivial relation in level 2r}
For an integer $r\geq 5$, we have
\begin{equation*}
\ov{D_2}=-\ov{D_1}-\textstyle\sum_{f=3}^{r-4} 2^{\max(0, \, 2f-r)} \cdot \ov{D_f}+2^{r-4} \cdot \ov{D_r}
\end{equation*}
and
\begin{equation*}
\ov{D_{r-1}}=-\ov{D_r}-\textstyle\sum_{f=4}^{r-2} 2^{\max(0, \, r-2f)} \cdot \ov{D_f}.
\end{equation*}
\end{lemma}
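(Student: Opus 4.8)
The plan is to deduce both identities from the fact that the genus of $X_0(16)$ is zero, whence $\cC(16)=0$ and every degree $0$ rational cuspidal divisor on $X_0(16)$ is principal; the two relations will then be obtained by pulling such principal divisors back to $X_0(2^r)$ along suitable compositions of the degeneracy maps $\pi_1(2^r,2^4)^*$, $\pi_2(2^r,2^4)^*$ and the (normalized) iterates of $\pi_{12}^*$. Concretely, set $X_1:=\ov{D_2}+\ov{D_1}+\sum_{f=3}^{r-4}2^{\max(0,\,2f-r)}\cdot\ov{D_f}-2^{r-4}\cdot\ov{D_r}$ and $X_2:=\ov{D_{r-1}}+\ov{D_r}+\sum_{f=4}^{r-2}2^{\max(0,\,r-2f)}\cdot\ov{D_f}$ in $J_0(2^r)$; the assertion is that $X_1=X_2=0$.

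First I would record that $C(16)_{2}$, $C(16)_{4}$, $C(16)_{8}$ and $C(16)_{16}$ all have order $1$, i.e. are divisors of modular functions on $X_0(16)$, since $\cC(16)=0$. As the pullback of a modular function along a degeneracy map is again a modular function, the iterated pullbacks $\pi_1(2^r,2^4)^*$ and $\pi_2(2^r,2^4)^*$ (built from the $\alpha_2^*$'s and $\beta_2^*$'s) and the iterates of the normalized $\pi_{12}$-pullback from Lemma~\ref{lemma: B2 definition} carry these principal divisors to principal divisors on $X_0(2^r)$. I would then compute these images explicitly as integral combinations of the cuspidal divisors $(P(2^r)_{2^k})$ by means of Lemma~\ref{lemma: degeneracy maps pullback on RCD} (for $\alpha_2^*$ and $\beta_2^*$), Lemma~\ref{lemma: A1 definition} and Lemma~\ref{lemma: B2 definition} (for $\pi_{12}^*$); the powers of $2$ appearing in those formulas are precisely the weights $2^{\max(0,\,2f-r)}$ and $2^{\max(0,\,r-2f)}$ in the statement. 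Finally, rewriting each resulting combination of $(P(2^r)_{2^k})$ in terms of the generators $D_f=C(2^r)_{2^f}$ via the identity $\sum_k a_k\,(P_k)=-\sum_{k\ge1}a_k\,C_k+\bigl(\sum_k a_k\varphi(\gcd(2^k,2^{r-k}))\bigr)(P_1)$ from Lemma~\ref{lemma: group of rational cuspidal divisors} and comparing coefficients yields exactly the two asserted relations.

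As a cross-check, and as a purely computational alternative, one can verify $X_1=X_2=0$ directly using Corollary~\ref{corollary: order 1 criterion} and Theorem~\ref{theorem: computation of order}: writing $X_i=\sum_f a_f\cdot C(2^r)_{2^f}$ with the coefficients above, one computes $V(X_i)=\Upsilon(2^r)\times\Phi_{2^r}(X_i)$ from the explicit shape of $V(C(2^r)_{2^f})$ recorded in Section~\ref{section: Example II: The order of Cd}, namely $2^{m(f)-1}(2,-1,\dots,2,-5,2,\dots)$ with $m(f)=\min(f,r-f)$, and checks that $2^{r-4}$ divides $\Gcd(X_i)\cdot\fh(X_i)^{-1}$; since $\kappa(2^r)=3\cdot 2^{r-1}$ this forces the order of $X_i$ to be $1$. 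The point of the weights is exactly that these nearly tridiagonal vectors telescope under the prescribed weighting.

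The step I expect to be the main obstacle is the bookkeeping: tracking the exact coefficients through the iterated pullbacks (equivalently, through the telescoping of the $V(C(2^r)_{2^f})$), and in particular handling the parity of $r$ together with the boundary contributions indexed by $f=1,2,r-1,r$, where the formulas for $\alpha_2^*$, $\beta_2^*$, $\pi_{12}^*$ and for $m(f)=\min(f,r-f)$ change shape. Once these coefficients are pinned down, both the conceptual and the computational verifications are routine.
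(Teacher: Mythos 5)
Your proposal is correct and follows essentially the same route as the paper: the paper also uses that the genus of $X_0(16)$ is zero (so $J_0(16)=0$ and pullbacks of degree-$0$ divisors become trivial), and obtains the two relations by computing $\pi_2(2^r,2^4)^*(C(16)_{16})$ and $\pi_1(2^r,2^4)^*(C(16)_{16})$ explicitly via Lemma \ref{lemma: degeneracy maps pullback on RCD}. The only difference is that the paper pins down the single divisor $C(16)_{16}$ and the two maps $\pi_1$, $\pi_2$ as sufficient, whereas you leave open which of several pullbacks produces each relation; your computational cross-check via Corollary \ref{corollary: order 1 criterion} is also a valid alternative.
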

\begin{proof}
Note that the genus of $X_0(16)$ is zero, and so $J_0(16)=0$. Thus, any pullbacks of the divisors on $X_0(16)$ by the degeneracy maps become trivial in $J_0(2^r)$. In particular, we have
\begin{equation*}
\ov{\pi_1(2^r, 2^4)^*(C(16)_{16})}=\ov{\pi_2(2^r, 2^4)^*(C(16)_{16})}=0 \in J_0(2^r).
\end{equation*}
By direct computation using Lemma \ref{lemma: degeneracy maps pullback on RCD}, we have
\begin{equation*}
\pi_1(2^r, 2^4)^*(C(16)_{16})=\textstyle\sum_{f=4}^{r} 2^{\max(0, \, r-2f)} \cdot D_f
\end{equation*}
and
\begin{equation*}
\pi_2(2^r, 2^4)^*(C(16)_{16})=2^{r-4}\cdot D_r-\textstyle\sum_{f=1}^{r-4} 2^{\max(0, \, 2f-r)} \cdot D_f.
\end{equation*}
This completes the proof.
\end{proof}

\ms
\subsection{First reduction}
From now on, we assume that $t\geq 2$. In this subsection, we prove the following.
\begin{theorem}\label{theorem: main theorem in 6.4}
We have
\begin{equation*}
\scC(N) \simeq \br{\ov{Z^1(d_i)} : 1\leq i < \fm+r}  \moplus\left(\moplus_{j=\fm+r}^\fn \br{\ov{Z(d_j)}}\right),
\end{equation*}
where $r=1$ if $u=0$ and $r=r_u$ if $u\geq 1$.
\end{theorem}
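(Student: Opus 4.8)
The plan is to follow the ``second attempt'' strategy of Section \ref{section: second attempt}, taking $D_i := Z^1(d_i)$ for all $1 \leq i \leq \fn$, and to verify the three requisite properties while keeping careful track of the problematic divisors in the range $\fm < i < \fm+r$. Write $N = Mp^r$ with $p = p_u$ (when $u \geq 1$) or $p = p_t$ and $r = 1$ (when $u = 0$). The first task is the generation statement: I would show $\cS_2(N)^0 = \br{\bZ^1(d_i) : 1 \leq i \leq \fn}$. Since $\cS_2(N)_\Q = \motimes_{i=1}^t \cS_2(p_i^{r_i})_\Q$ and each factor $\cS_2(p_i^{r_i})$ is integrally generated by the $\bA_{p_i}(r_i, f_i)$ by Proposition \ref{proposition: generation A and B}, the tensors $\motimes_i \bA_{p_i}(r_i, f_i)$ generate $\cS_2(N)$; passing to the degree-$0$ part and replacing one $\bA$-factor by the corresponding $\bB$-factor (exactly as in the prime-power case, Proposition \ref{proposition: generation A and B}) gives that the $\bZ^1(d_i)$ generate $\cS_2(N)^0$. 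This uses only the multiplicativity built into Remark \ref{remark: decomposition of RCD} and the identity $\bB_p(r,1) = p^{r-1}(p+1)\bA_p(r,0) - \bA_p(r,1)$.

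Next I would establish the ``block lower-unipotent'' shape of the matrix $\fM = (|\bbV(Z^1(d_i))_{\delta_j}|)$. Here I compute $\bbV(Z^1(d))$ via Theorem \ref{theorem: order defined by tensors}: for $d = \fp_I$ defined by tensors, $\bbV(Z^1(d)) = \motimes_i \bbV(\bullet_i)$ where each factor is $\bbA_{p_i}(r_i, f_i)$ or $\bbB_{p_i}(r_i, f_i)$, and then invoke Lemma \ref{lemma: bbA unipotent} (the unipotence of $\bbA_p(r,\cdot)$ with respect to $\vtl_r$ and the bijection $\iota_r$) together with the compatibility of the orderings $\prec$, $\vtl$ on $\Omega(t)$ with $\iota_\Omega$ (Remark \ref{remark: iota isomorphism}). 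The key point is that $\iota_\Omega(d_i) = \delta_i$, so the diagonal entry $\bbV(Z^1(d_i))_{\delta_i}$ is (up to sign) the product of the $\pm 1$ diagonal entries $\bbA_{p_i}(r_i, f_i)_{p_i^{\iota_{r_i}(f_i)}}$, hence $\pm 1$; and entries beyond the diagonal in each row vanish by Lemma \ref{lemma: bbA unipotent} applied factor-by-factor, using that a tensor entry indexed by $\delta_j \vtl \delta_i$ must have some coordinate strictly $\vtl$-beyond. This gives that $\fM$ is lower-triangular with unit diagonal on the full index set $\cD_N^0$ — in fact better than the ``block'' form needed.

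Then I would address property (3): $\fh(Z^1(d_i)) = 1$ for $i \geq \fm + r$, and more precisely that for the indices $\fm < i < \fm+r$ we either have $\fh = 1$ or can supply a prime $p'$ with $\pw_{p'}(Z^1(d_i)) \notin 2\Z$ while $\pw_{p'}(Z^1(d_j)) = 0$ for $j < i$ — this is where the exceptional behaviour at $p = 2$ enters. By Remark \ref{remark: 2power ordering}, the divisors $d_i$ with $\fm < i < \fm+r$ are exactly $\fp_{I_f}$ with $3 \leq f \leq r$ (together with $d_{\fm+1} = 2\rad(N)$), and for $i \geq \fm+r$ some prime other than $p$ appears to exponent $\geq 2$ in $d_i$, so the tensor decomposition of $Z^1(d_i)$ has a factor $\bA_{p_h}(r_h, f_h)$ with $p_h$ odd and $f_h \geq 2$; then Theorem \ref{theorem: order defined by tensors} combined with Lemma \ref{lemma: pw for B} forces $\pw_{p'}(Z^1(d_i)) \in 2\Z$ for all $p'$ (since every $\pw$-value is $\pm(p_h+1)$, $0$, or carries a factor from an odd prime), i.e. $\fh(Z^1(d_i)) = 1$. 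Combined with Lemma \ref{lemma: pw for B} for the $p$-factor in the remaining cases, I get exactly the hypotheses of Theorem \ref{thm: criterion 1}. Applying Theorem \ref{thm: criterion 1} successively down from $i = \fn$ to $i = \fm+r$ peels off $\br{\ov{Z(d_j)}}$ for each $j \geq \fm+r$ (noting $Z^1(d_j) = Z(d_j)$ there, and that we may run the criterion even when the first row's $\fh = 2$ since that row is never the one being split off), which yields the claimed decomposition.

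The main obstacle I anticipate is property (3) in the range $\fm < i < \fm + r$ when $p = 2$ and $r = r_u$ is large: there the vectors $\bZ^1(\fp_{I_f})$ for even $f$ have $\fh = 2$ and there is genuinely a nontrivial intersection (this is precisely the phenomenon analyzed in Section \ref{section: case of level 2r} and the reason $\bB^2(r,f)$ is introduced), so I must \emph{not} try to split those off here — the theorem deliberately bundles them into $\br{\ov{Z^1(d_i)} : 1 \leq i < \fm+r}$. Thus the delicate bookkeeping is to confirm that the successive application of Theorem \ref{thm: criterion 1} stops exactly at $i = \fm+r$ and that for every $j \geq \fm+r$ the needed $\delta$-coordinate and $\pw$-condition hold with all earlier vectors vanishing there; this follows from the triangularity established above (for the $\delta$-coordinate) and from the odd-prime-factor argument (for the $\pw$-condition), but verifying the orderings interact correctly — in particular that $\delta_j$ for $j \geq \fm+r$ is ``new'' relative to all $\delta_i$ with $i < j$, which is Lemma \ref{lemma: bbA unipotent} transported through $\iota_\Omega$ — is the step requiring the most care.
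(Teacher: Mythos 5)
Your proposal follows the paper's own proof essentially step for step: generation of $\cS_2(N)^0$ by the $\Z^1(d)$ (Proposition \ref{proposition: generation of Z}, which the paper establishes by induction on $t$), the vanishing pattern of $(\bbV(Z^1(d_i))_{\delta_j})$ via Theorem \ref{theorem: order defined by tensors}, Lemma \ref{lemma: bbA unipotent} and Remark \ref{remark: iota isomorphism} (Proposition \ref{proposition: unipotent of Z}), the parity argument giving $\fh(Z^1(d_j))=1$ for $j\geq \fm+r$ (Proposition \ref{proposition: gcd and h of Z}), and finally the successive application of Theorem \ref{thm: criterion 1} stopping at $i=\fm+r$.

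One assertion in your second paragraph is false, though it is not needed for this theorem: the matrix $\fM=(|\bbV(Z^1(d_i))_{\delta_j}|)$ is \emph{not} lower-triangular on the full index set $\cD_N^0$. For instance, for odd $N=p_1^{r_1}p_2^{r_2}$ one has $d_2=p_1$, $\delta_3=p_1p_2$, and
\begin{equation*}
\bbV(Z^1(p_1))_{p_1p_2}=\bbB_{p_1}(r_1,1)_{p_1}\cdot \bbA_{p_2}(r_2,0)_{p_2}=(-1)(-1)=1\neq 0,
\end{equation*}
so the squarefree block fails to be triangular; this failure is exactly why the paper later introduces the divisors $Y^0(d)$, $Y^1(d)$, $Y^2(d)$ and the $\ell$-adic criteria for the squarefree part. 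What your factor-by-factor argument does correctly yield is the block shape of Section \ref{section: second attempt} (zero upper-right block, lower-unipotent block indexed by $\cD_N^\nsqf$), and that is all the peeling-off argument down to $i=\fm+r$ requires.
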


First, we claim the following.
\begin{proposition}\label{proposition: generation of Z}
For any $t\geq 2$, we have
\begin{equation*}
\cS_2(N)^0=\br{ \Z^1(d) : d \in \cD_N^0}.
\end{equation*}
\end{proposition}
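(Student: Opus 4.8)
\textbf{Proof proposal for Proposition~\ref{proposition: generation of Z}.}
The plan is to reduce the statement to the prime-power case, which has already been handled in Proposition~\ref{proposition: generation A and B}, by exploiting the tensor decomposition $\cS_2(N)_\Q \simeq \motimes_{i=1}^t \cS_2(p_i^{r_i})_\Q$ (Remark~\ref{remark: decomposition of RCD}). By Proposition~\ref{proposition: generation A and B}, for each $i$ the lattice $\cS_2(p_i^{r_i})$ is integrally generated by the vectors $\bA_{p_i}(r_i, f_i)$ with $0\leq f_i\leq r_i$, and $\cS_2(p_i^{r_i})^0$ is integrally generated by the $\bB_{p_i}(r_i, f_i)$ with $1\leq f_i\leq r_i$. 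Since the tensor product of spanning sets spans the tensor product, the full lattice $\cS_2(N)=\motimes_{i=1}^t \cS_2(p_i^{r_i})$ is integrally generated by the vectors $\motimes_{i=1}^t \bA_{p_i}(r_i, f_i)$ for $(f_1,\dots,f_t)\in\Omega(t)$ together with the trivial index; that is, by $\{\Z'(\fp_I) : I\in\Omega(t)\}$ and ${\bf e}(N)_1$, where $\Z'(\fp_I):=\motimes_{i=1}^t \bA_{p_i}(r_i, f_i)$.

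Next I would pass from $\cS_2(N)$ to $\cS_2(N)^0$. By Lemma~\ref{lemma: group of rational cuspidal divisors}, the lattice $\cS_2(N)^0$ is generated by the vectors $\bC_d:=\varphi(\gcd(d,N/d))\cdot {\bf e}(N)_1 - {\bf e}(N)_d$ for $d\in\cD_N^0$. Writing each ${\bf e}(N)_d$ as an integral combination of the $\Z'(\fp_I)$ and ${\bf e}(N)_1$ using the previous step, and subtracting the appropriate multiple of ${\bf e}(N)_1$ to kill the degree, one sees that $\cS_2(N)^0$ is integrally generated by the degree-$0$ projections of the $\Z'(\fp_I)$. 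Concretely, for $I\in\square(t)$ the vector $\Z'(\fp_I)=\motimes \bA_{p_i}(r_i,f_i)$ already lies in $\cS_2(N)^0$ (at least one factor $\bA_{p_i}(r_i,f_i)$ with $f_i\geq 2$ has degree $0$, so the tensor has degree $0$), and $\Z(\fp_I)=\Z^1(\fp_I)=\Z'(\fp_I)$ in this case. For $I\in\Delta(t)$ with $m=m(I)$, the factor $\bB_{p_m}(r_m,1)=p_m^{r_m-1}(p_m+1)\cdot\bA_{p_m}(r_m,0)-\bA_{p_m}(r_m,1)$ is exactly the degree-$0$ correction: $\Z^1(\fp_I)=\motimes_{i\neq m}\bA_{p_i}(r_i,f_i)\motimes\bB_{p_m}(r_m,1)$ is the image of $\Z'(\fp_I)$ under subtracting a suitable integer multiple of $\Z'$ evaluated with the $m$-th coordinate lowered to $0$, which itself is of the form $\motimes\bA$, hence reducible further. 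Carrying out this bookkeeping — each ${\bf e}(N)_d$ is an explicit $\Z$-combination of the generating tensors, and the degree-$0$ combinations $\bC_d$ become explicit $\Z$-combinations of the $\Z^1(\fp_I)$ — yields $\cS_2(N)^0\subseteq\br{\Z^1(d):d\in\cD_N^0}$. The reverse inclusion is immediate since each $\Z^1(d)$ is visibly a degree-$0$ vector in $\cS_2(N)$.

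The main obstacle I anticipate is the inductive bookkeeping in the $\Delta(t)$ case: one must verify that after replacing the factor $\bA_{p_m}(r_m,1)$ by $\bB_{p_m}(r_m,1)$ (to achieve degree $0$ in the ``first'' prime where $f_i=1$), the remaining tensors $\motimes\bA$ with lowered coordinates can still be expressed in terms of the $\Z^1(\fp_J)$ with $J$ ranging over $\Omega(t)$, and in particular that no new generators outside $\{\Z^1(d):d\in\cD_N^0\}$ are needed. This is really a statement that the ``triangular'' change of basis from $\{{\bf e}(N)_d\}$ to $\{\Z'(\fp_I)\}\cup\{{\bf e}(N)_1\}$ restricts well to the degree-$0$ sublattice; it should follow cleanly by induction on $t$, splitting off one prime power at a time via $N=Mp^r$ and using $\cS_2(N)=\cS_2(M)\motimes\cS_2(p^r)$ together with the already-established identities $\cS_2(M)=\br{\bA}$, $\cS_2(p^r)=\br{\bA}$, $\cS_2(M)^0=\br{\bB}$, $\cS_2(p^r)^0=\br{\bB}$ and the elementary fact that $A\motimes B^0 + A^0\motimes B$ generates $(A\motimes B)^0$ when $A,B$ each contain a rank-one ``$\varphi$-like'' functional. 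I would present the induction explicitly for $t=2$ and then indicate the routine inductive step.
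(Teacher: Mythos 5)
Your proposal is correct and takes essentially the same route as the paper: an induction on the number of prime factors using the tensor decomposition $\cS_2(N)=\cS_2(M)\motimes\cS_2(p^r)$, the prime-power generation result of Proposition \ref{proposition: generation A and B}, and the splitting of the degree-zero sublattice as $\cS_2(M)^0\motimes\cS_2(p^r)+\cS_2(M)\motimes\cS_2(p^r)^0$ (the paper phrases this as the identity ${\bf f}_\delta=\varphi(z')\cdot\bX_1+\bX_2$ applied to the generators $C(N)_\delta$). The ``bookkeeping'' you flag in the $\Delta(t)$ case is exactly the content of the paper's three-case computation, and it does go through as you expect.
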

\begin{proof}
We prove this by induction on $t$. 

Since the claim holds for $t=1$ by Proposition \ref{proposition: generation A and B}, we suppose that 
\begin{equation}\label{equation: 6.4}
\cS_2(M)^0 =\br{ \Z^1(M, d') : d' \in \cD_M^0}, ~~~\text{ where } M=\textstyle\prod_{i=1}^{t-1} p_i^{r_i}=N/{p_t^{r_t}}.
\end{equation}
Here, we use the notation $\Z^1(M, d')$ to emphasize that they are of level $M$ (and to distinguish them from $\Z^1(d')$, which are of level $N$). In other words, for any $J=(f_1, \dots, f_{t-1}) \in \Omega(t-1)$, we have
\begin{equation*}
\Z^1(M, \fp_J):=\motimes_{i=1, \,i\neq m}^{t-1} \bA_{p_i}^{f_i} \motimes \bB_m, ~~\text{ where } m=m(J).
\end{equation*}

To prove the assertion, it suffices to show that $\cS_2(N)^0 \subset \br{\Z^1(d) : d \in \cD_N^0}$ since the other inclusion is obvious. By Lemma \ref{lemma: group of rational cuspidal divisors}, it suffices to show that 
for any non-trivial divisor $\delta$ of $N$, there are integers $a(d)$ such that
\begin{equation*}
{\bf f}_\delta:=\Phi_N(C(N)_\delta) = \sum_{d \in \cD_N^0} a(d) \cdot \Z^1(d).
\end{equation*}

For simplicity, let $p=p_t$ and $r=r_t$. Also, let $f=\tn{val}_p(\delta)$ and $\delta'=\gcd(\delta, M)$.

If $\delta'\neq 1$, then by (\ref{equation: 6.4}) there are integers $b(d')$ such that
\begin{equation*}
{\bf f}(M)_{\delta'}:=\varphi(\gcd(\delta', M/{\delta'})) \cdot {\bf e}(M)_1-{\bf e}(M)_{\delta'}=\sum_{d' \in \cD_M^0} b(d') \cdot \Z^1(M, d').
\end{equation*}
Also by Proposition \ref{proposition: generation A and B}, we have ${\bf e}(p^r)_{p^f}=\sum_{k=0}^r c(k) \cdot \bA_p(r, k)$ for some $c(k) \in \Z$.
Furthermore, $\varphi(p^{\min(f, \,r-f)})\cdot {\bf e}(p^r)_1-{\bf e}(p^r)_{p^f}=\sum_{k=1}^r -c(k) \cdot \bB_p(r, k)$.

Now, we prove that for any $\delta\in \cD_N^0$, we have ${\bf f}_\delta \in \br{ \Z^1(d) : d \in \cD_N^0}$.
Suppose first that $f=0$, and so $\delta'\neq 1$. 
Then since $\Z^1(d)=\Z^1(M, d) \motimes \bA_p(r, 0)$ for any $d \in \cD_M^0$, we have
\begin{equation*}
{\bf f}_\delta={\bf f}(M)_{\delta'} \motimes \bA_p(r, 0)=\sum_{d \in \cD_M^0} b(d) \cdot \Z^1(d).
\end{equation*}

Next, suppose that $f\geq 1$ and $\delta'=1$. Let
\small
\begin{equation*}
\bX_1:={\bf e}(M)_1 \motimes (\varphi(p^{\min(f, \,r-f)})\cdot {\bf e}(p^r)_1-{\bf e}(p^r)_{p^f})={\bf e}(M)_1 \motimes \left(\sum_{k=1}^r -c(k) \cdot \bB_p(r, k) \right).
\end{equation*}\normalsize
Since ${\bf e}(M)_1=\motimes_{i=1}^{t-1} \bA_{p_i}(r_i, 0)$, we have 
\begin{equation*}
{\bf f}_\delta=\bX_1=\sum_{k=1}^r -c(k) \cdot \Z^1(p^k).
\end{equation*}

Finally, suppose that $\delta' \neq 1$ and $f\geq 1$. Let $\bX_2:={\bf f}(M)_{\delta'} \motimes {\bf e}(p^r)_{p^f}$. Since $\Z^1(M, d') \motimes \bA_p(r, k)=\Z^1(d'p^k)$ for any $d' \in \cD_M^0$, we have
\begin{equation*}
\begin{split}
\bX_2&=\left( \sum_{d' \in \cD_M^0} b(d') \cdot \Z^1(M, d') \right) \motimes \left( \sum_{k=0}^r c(k) \cdot \bA_p(r, k) \right)\\
&=\sum_{d=d'p^k \in \cD_N^0, \, d'\neq 1} b(d') \cdot c(k) \cdot \Z^1(d).
\end{split}
\end{equation*}
Since ${\bf f}_\delta=\varphi(\gcd(\delta', M/{\delta'})) \cdot\bX_1+\bX_2$, the result follows by induction.
\end{proof}

Next, we show the following.
\begin{proposition}\label{proposition: unipotent of Z}
For any non-squarefree divisor $d_i$ of $N$, we have
\begin{equation*}
|\bbV(Z^1(d_i))_{\delta_i}|= 1 \qa \bbV(Z^1(d_j))_{\delta_i}=0 \text{ for all } j<i.
\end{equation*}
\end{proposition}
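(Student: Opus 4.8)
The statement is an analogue of Corollary \ref{corollary: 6.14}, now for the non-squarefree divisors of a general $N$ with $t \geq 2$. The underlying structure is that $\Z^1(\fp_I)$ for $I \in \square(t)$ is a pure tensor $\motimes_{i=1}^t \bA_{p_i}(r_i, f_i)$ (no $\bB$-factor appears because $I$ is non-squarefree — some $f_i \geq 2$ — and then the ``if $I \in \Delta(t)$'' clause in the definition of $\Z^1$ does not apply). So the first step is to reduce to the tensor structure: by Theorem \ref{theorem: order defined by tensors} applied repeatedly (or directly, since $\Upsilon(N)=\motimes \Upsilon(p_i^{r_i})$), we have
\begin{equation*}
\bbV(Z^1(\fp_I)) = \motimes_{i=1}^t \bbV(A_{p_i}(r_i, f_i)) = \motimes_{i=1}^t \bbA_{p_i}(r_i, f_i),
\end{equation*}
where the last equality is the remark after Lemma \ref{lemma: relation among bA and bbA} that $\bbV(A_p(r,f))=\bbA_p(r,f)$. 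One subtlety: Theorem \ref{theorem: order defined by tensors} as stated requires one tensor factor to have degree zero, whereas here the individual factors $\bA_{p_i}(r_i, f_i)$ need not; but the conclusion $V(C_1 \motimes C_2) = V(C_1)\motimes V(C_2)$ and $\Gcd(C)=\Gcd(C_1)\Gcd(C_2)$ holds purely formally from $\Upsilon(N)=\Upsilon(N_1)\motimes\Upsilon(N_2)$ and the multiplicativity of $\gcd$ of tensor entries — exactly the content of the first paragraph of the proof of that theorem — so I would simply invoke that computation, which does not use the degree-zero hypothesis.

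\textbf{Main step: the entrywise triangularity.} Write $d_i = \fp_I$ with $I = (f_1,\dots,f_t) \in \square(t)$ and $\delta_i = \fp_J$ with $J = \iota_{\Omega}(I) = \iota_{\square}(I)$, so $J = (\iota_{r_1}(f_1), \dots, \iota_{r_t}(f_t))$ by Remark \ref{remark: iota isomorphism}. Then the $\delta_i$-entry of $\bbV(Z^1(d_i))$ is $\prod_{i=1}^t \bbA_{p_i}(r_i, f_i)_{p_i^{\iota_{r_i}(f_i)}} = \prod_{i=1}^t (\pm 1) = \pm 1$ by Lemma \ref{lemma: bbA unipotent}. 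For the vanishing: suppose $j < i$ and $d_j = \fp_K$ with $K = (g_1, \dots, g_t) \in \square(t)$ (the case $d_j \in \cD_N^\sqf$ is immediate since then $\bbV(Z^1(d_j))$ is a tensor one of whose factors is $\bbB_{p_m}(r_m, 1) = (1,-1,\bbO_{\dots})$, supported only on $p_m^0$ and $p_m^1$, whereas $J = \iota_{\square}(I)$ has $\iota_{r_m}(f_m)$-th coordinate with $f_m \geq 2$ for the relevant index — one checks $\iota_{r_m}(f_m) \geq 2$ for $f_m \geq 2$ from Definition \ref{definition: prime power level orderings}, forcing the factor to be $0$). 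Now $d_j \prec d_i$ means, by the definition of $\prec$ on $\square(t)$, that there is an index $k$ with $g_k \prec_{r_k} f_k$ and $g_h = f_h$ for all $h > k$ (with $h \neq u$ if $u \geq 1$; and if $u$ is among the indices, handle the $u$-coordinate separately as the definition dictates). Equivalently, since $\iota_{r_h}$ is order-reversing in the sense that $a \prec_{r_h} b \iff \iota_{r_h}(a) \vtl_{r_h} \iota_{r_h}(b)$, the $J$-coordinates and $K' := \iota_{\square}(K)$-coordinates agree for $h > k$ and satisfy $\iota_{r_k}(f_k) \vtl_{r_k} \iota_{r_k}(g_k)$ at index $k$. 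Evaluating $\bbV(Z^1(d_j))_{\delta_i} = \prod_h \bbA_{p_h}(r_h, g_h)_{p_h^{\iota_{r_h}(f_h)}}$, the $k$-th factor is $\bbA_{p_k}(r_k, g_k)_{p_k^{\iota_{r_k}(f_k)}}$, and since $\iota_{r_k}(f_k) \vtl_{r_k} \iota_{r_k}(g_k)$ strictly, Lemma \ref{lemma: bbA unipotent} gives that this factor is $0$. Hence the whole product vanishes.

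\textbf{Anticipated obstacle and bookkeeping.} The real work is bookkeeping around the index $u$ (the position of the prime $2$), because the orderings $\prec$, $\vtl$ on $\square(t)$ and the bijection $\iota_{\square}$ treat the $u$-coordinate asymmetrically. I expect the main care to be needed in verifying that ``$d_j \prec d_i$ and both non-squarefree'' really does produce an index $k$ at which $\iota_{r_k}$ of the two $f$-values are strictly $\vtl_{r_k}$-comparable in the right direction, and that all coordinates beyond $k$ match so that those tensor factors contribute $\pm 1$ rather than $0$ — i.e. translating the definition of $\prec$ through $\iota_{\square}$ into a statement about $\vtl$ and then into the support statement of Lemma \ref{lemma: bbA unipotent}. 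A second, minor point is the mixed case where $d_j$ is squarefree and $d_i$ is not: one needs that whenever $I \in \square(t)$ has $f_{m} = \iota_{\square}^{-1}$ structure meeting the $p_m$-support of $\bbB_{p_m}(r_m,1)$, the corresponding coordinate of $\iota_{\square}(I)$ is $\geq 2$; this follows from $\iota_{r}(0)=1$, $\iota_{r}(1)=0$ and $\iota_r(f) \geq 2$ for $2 \leq f \leq r$, read off from Definition \ref{definition: prime power level orderings}. Once these translations are in place, everything reduces to Lemmas \ref{lemma: bbA unipotent} and \ref{lemma: relation among bA and bbA} and the tensor formula for $\bbV$, so no genuinely new idea is required beyond the prime-power case already handled in Corollary \ref{corollary: 6.14}.
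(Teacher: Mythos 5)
Your proposal is correct and follows essentially the same route as the paper's proof: reduce $\bbV(Z^1(\fp_I))$ to the tensor product of the vectors $\bbA_{p_i}(r_i,f_i)$ (resp.\ with one factor $\bbB_{p_m}(r_m,1)$ in the squarefree case), then read off the unipotent triangularity from Lemma \ref{lemma: bbA unipotent} together with the order-preserving bijection $\iota_{\square}$. Two small slips to repair in the write-up, neither of which affects the method: in the main case the inequality should be $\iota_{r_k}(g_k)\vtl_{r_k}\iota_{r_k}(f_k)$ (this is what $g_k\prec_{r_k}f_k$ translates to, and it is the hypothesis under which Lemma \ref{lemma: bbA unipotent} makes the factor $\bbA_{p_k}(r_k,g_k)_{p_k^{\iota_{r_k}(f_k)}}$ vanish), and in the mixed case the vanishing factor need not be the $\bbB_{p_m}(r_m,1)$ factor but rather the factor at the index $h$ where $\delta_i$ has exponent $\geq 2$ — possibly an $\bbA_{p_h}(r_h,g_h)$ with $g_h\in\{0,1\}$ — though since every factor of $\bbV(Z^1(d_j))$ for squarefree $d_j$ is supported on exponents $\leq 1$, the conclusion is unchanged.
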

\begin{proof}
It suffices to show that
\begin{enumerate}
\item
for any squarefree divisor $d$ of $N$ and any non-squarefree divisor $\delta$ of $N$, we have
$\bbV(Z^1(d))_\delta=0$, and
\item
for any non-squarefree divisors $d_i$ of $N$, we have
$|\bbV(Z^1(d_i))_{\delta_i}|=1$ and $\bbV(Z^1(d_i))_{\delta_j}=0$ for all $j>i$.
\end{enumerate}

Let $I=(f_1, \dots, f_t) \in \Omega(t)$ and $J=(a_1, \dots, a_t) \in \square(t)$. By definition, there is an index $h$ such that $a_h\geq 2$.

First, suppose that $I \in \Delta(t)$ with $m=m(I)$. Then by Theorem \ref{theorem: order defined by tensors}, we have
\begin{equation*}
\bbV(Z^1(\fp_I))=\textstyle\motimes_{i=1,\, i\neq m}^t \bbA_{p_i}(r_i, f_i) \motimes \bbB_{p_m}(r_m, 1).
\end{equation*}
Since $f_h \in \{0, 1\}$, $\bbA_{p_h}(r_h, f_h)_{p_h^{a_h}}=\bbB_{p_h}(r_h, 1)_{p_h^{a_h}}=0$ by Lemma \ref{lemma: bbA unipotent}, and so
\begin{equation*}
\bbV(Z^1(\fp_I))_{\fp_J}=\textstyle\prod_{i=1, \, i\neq m}^t \bbA_{p_i}(r_i, f_i)_{p_i^{a_i}} \times \bbB_{p_m}(r_m, 1)_{p_m^{a_m}}=0.
\end{equation*}

Next, suppose that $I \in \square(t)$. Let $\fp_I=d_i$ and $\delta_j=\fp_J$. If we write $\delta_i=\fp_K$ for some $K=(b_1, \dots, b_t)$, then we have $b_i=\iota_{r_i}(f_i)$ (cf. Remark \ref{remark: iota isomorphism}).
Since $|\bbA_{p_i}(r_i, f_i)_{p_i^{b_i}}|=1$ by Lemma \ref{lemma: bbA unipotent}, we have
\begin{equation*}
|\bbV(Z^1(d_i))_{\delta_i}|=\textstyle\prod_{i=1}^t |\bbA_{p_i}(r_i, f_i)_{p_i^{b_i}}|= 1.
\end{equation*}
Also, if $i<j$ then $K \vtl J$ by definition, and hence there is an index $h$ such that $b_h \vtl_{r_h} a_h$. Since $\bbA_{p_h}(r_h, f_h)_{p_h^{a_h}}=0$ by Lemma \ref{lemma: bbA unipotent}, we have
\begin{equation*}
\bbV(Z^1(d_i))_{\delta_j}=\bbV(Z^1(\fp_I))_{\fp_J}=\textstyle\prod_{i=1}^t \bbA_{p_i}(r_i, f_i)_{p_i^{a_i}}=0.
\end{equation*}
This completes the proof.
\end{proof}

Finally, we prove the following.
\begin{proposition}\label{proposition: gcd and h of Z}
Let $I=(f_1, \dots, f_t) \in \Omega(t)$. Then we have
\begin{equation*}
\Gcd(Z(\fp_I))=\begin{cases}
\prod_{i=1}^t g_{p_i}(r_i, f_i) & \text{ if }~~ I \in \square(t),\\
\prod_{i=1, \, i\neq m}^t g_{p_i}(r_i, f_i) \times p_m^{r_m-1}(p_m+1) & \text{ if }~~ I \in \Delta(t),
\end{cases}
\end{equation*}
where $m=m(I)$ and $g_{p_i}(r_i, f_i)$ is defined in Lemma \ref{lemma: relation among bA and bbA}. 
Furthermore, we have $\fh(Z(\fp_I))=2$ if and only if one of the following holds.
\begin{enumerate}
\item
$I=A(1)$.
\item
$u\geq 1$ and $I=E(u)$.
\item
$u\geq 1$, $3\leq r_u\leq 4$, $f_u=3$ and $f_i=1$ for all $i \neq u$.
\item
$u\geq 1$, $r_u\geq 5$, $f_u=r_u+1 -\gcd(2, r_u)$ and $f_i=1$ for all $i \neq u$.
\end{enumerate}
\end{proposition}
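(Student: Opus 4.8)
The approach is to exploit that $\Z(\fp_I)$ is defined by tensors and to push the computation down to the prime-power building blocks. Write $N=\prod_{i=1}^{t}p_i^{r_i}$, $I=(f_1,\dots,f_t)$, and $\Z(\fp_I)=\motimes_{i=1}^{t}\bV_i$ with $\bV_i\in\cS_2(p_i^{r_i})$, where $\bV_i=\bA_{p_i}(r_i,f_i)$ for all $i$, except that $\bV_m=\bB_{p_m}(r_m,1)$ when $I\in\Delta(t)$ with $m=m(I)$, and $\bV_u=\bB^2(r_u,f_u)$ when $I\in\cT_u$. Iterating the tensor identities in the proof of Theorem~\ref{theorem: order defined by tensors} — the ones giving $\bbV(\Z(\fp_I))=\motimes_i\bbV(\bV_i)$ and $\Gcd(\Z(\fp_I))=\prod_i\Gcd(\bV_i)$, which rest only on $\Upsilon(N)=\motimes_i\Upsilon(p_i^{r_i})$ and on gcd's of tensor products and need no degree hypothesis — reduces everything to the factors. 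For the $\Gcd$ formula, by Remark~\ref{remark: relation B2 and E} it suffices to compute $\Gcd(\Z^1(\fp_I))$, whose factors are only of type $\bA_{p_i}(r_i,f_i)$ or $\bB_{p_m}(r_m,1)$; Lemma~\ref{lemma: relation among bA and bbA} gives $V(A_p(r,f))=g_p(r,f)\cdot\bbA_p(r,f)$ and $V(B_p(r,1))=p^{r-1}(p+1)\cdot\bbB_p(r,1)$, and since the entries of $\bbA_p(r,f)$ and of $\bbB_p(r,1)$ have gcd $1$ (clear from Definition~\ref{defn: bA bB in prime level}), we get $\Gcd(A_p(r,f))=g_p(r,f)$ and $\Gcd(B_p(r,1))=p^{r-1}(p+1)$. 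Multiplying over $i$, and using in the $\cT_u$ case that $f_j=1$ for $j\neq u$, yields the two displayed expressions.

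For the statement on $\fh$, recall that $\fh(\Z(\fp_I))=2$ iff $\pw_p(\Z(\fp_I))\notin2\Z$ for some prime $p\mid N$, and from $\bbV(\Z(\fp_I))=\motimes_i\bbV(\bV_i)$ one reads off, for $p=p_i$, that $\pw_{p_i}(\Z(\fp_I))=\pw_{p_i}(\bV_i)\cdot\prod_{j\neq i}s_j$, where $s_j$ is the sum of the entries of $\bbV(\bV_j)$. The building-block data I would record are: $s_j=p_j-1$ if $\bV_j=\bA_{p_j}(r_j,0)$, $s_j=1$ if $\bV_j=\bA_{p_j}(r_j,1)$, and $s_j=0$ in all remaining cases (exponent $\geq2$, or $\bV_j$ of type $\bB_{p_j}(r_j,1)$ or $\bB^2(r_j,f_j)$); and $\pw_p(\bA_p(r,0))=-1$, $\pw_p(\bA_p(r,1))=\pw_p(\bA_p(r,2))=0$, $\pw_p(\bB_p(r,1))=-1$, $\pw_p(\bA_p(r,f))\in\{0,\pm(p+1)\}$ for $f\geq3$ — equal to $\pm(p+1)$ precisely when $r-[\tfrac{r-f+1}{2}]$ is odd — and $\pw_2(\bB^2(r,f))=-3$ if $f=r+1-\gcd(2,r)$ and $0$ otherwise. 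The $s_j$ and the $\bA$/$\bB$ values come from Definition~\ref{defn: bA bB in prime level} and Lemma~\ref{lemma: pw for B} (using $\bA_p(r,f)=\bB_p(r,f)$ for $f\neq1$), while the $\bB^2$ facts are transported via Remark~\ref{remark: relation B2 and E} from the tables of $\bbV(E_k)$ and of $\pw_2(E_k)$ in the proof of Theorem~\ref{theorem: case of t=u=1}.

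The verification then splits into three cases. If $I\in\Delta(t)$, then $s_m=0$, so $\pw_{p_i}(\Z(\fp_I))=0$ for $i\neq m$, while $\pw_{p_m}(\Z(\fp_I))=\pw_{p_m}(\bB_{p_m}(r_m,1))\cdot\prod_{j\neq m}s_j=-\prod_{j:\,f_j=0}(p_j-1)$ (using $f_m=1$); this is odd iff every $p_j$ with $f_j=0$ equals $2$, i.e.\ iff $\{j:f_j=0\}$ is empty — so $\fp_I=\rad(N)=\fp_{A(1)}$ — or equals $\{u\}$ with $u\geq1$ — so $\fp_I=\fp_{E(u)}$ — which are cases (1) and (2). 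If $I\in\square(t)\setminus\cT_u$, pick the indices $h$ with $f_h\geq2$ (there is at least one): if there are two or more, then $\prod_{j\neq i}s_j=0$ for every $i$ and $\fh=1$; if there is exactly one such $h$, then $\pw_{p_i}(\Z(\fp_I))=0$ for $i\neq h$ and $\pw_{p_h}(\Z(\fp_I))=\pw_{p_h}(\bA_{p_h}(r_h,f_h))\cdot\prod_{j\neq h}s_j$, whose oddness forces $p_h+1$ odd (hence $h=u$), $f_j=1$ for all $j\neq u$, and $\pw_2(\bA_2(r_u,f_u))\notin2\Z$; the last requires $f_u\geq3$ (as $\pw_2(\bA_2(r_u,2))=0$), and then, since $f_u\geq3$ with $f_j=1$ for $j\neq u$ would force $I\in\cT_u$ once $r_u\geq5$, we have $r_u\leq4$, and the parity condition on $r_u-[\tfrac{r_u-f_u+1}{2}]$ singles out $f_u=3$ — case (3). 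Finally, if $I\in\cT_u$, then $\pw_{p_j}(\bV_j)=\pw_{p_j}(\bA_{p_j}(r_j,1))=0$ for $j\neq u$, so $\pw_{p_j}(\Z(\fp_I))=0$ there, and $\pw_{p_u}(\Z(\fp_I))=\pw_2(\bB^2(r_u,f_u))\cdot\prod_{j\neq u}1=\pw_2(\bB^2(r_u,f_u))$, which is odd precisely when $f_u=r_u+1-\gcd(2,r_u)$ — case (4). In each branch the argument is reversible, so the four configurations listed are exactly those with $\fh(\Z(\fp_I))=2$.

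The step I expect to be the main obstacle is the bookkeeping in the $\square(t)\setminus\cT_u$ versus $\cT_u$ split: one must check that the borderline configurations — a single exponent $\geq2$, located at the index $u$, with every other exponent equal to $1$ — get correctly partitioned by $r_u\leq4$ and $r_u\geq5$ so that they surface as cases (3) and (4) with exactly the stated ranges for $f_u$; and, relatedly, the clean extraction of $\pw_2(\bB^2(r,f))$, which is not given by a closed formula in the definitions but must be read off from the $\pw_2(E_k)$ data established in Section~\ref{section: case of t=u=1} via the rearrangement of Remark~\ref{remark: relation B2 and E}.
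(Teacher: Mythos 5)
Your proposal is correct and follows essentially the same route as the paper: both reduce everything through the tensor-product identities of Theorem \ref{theorem: order defined by tensors} (noting that the $V$ and $\Gcd$ parts need no degree hypothesis), use Remark \ref{remark: relation B2 and E} to replace $\Gcd(Z(\fp_I))$ by $\Gcd(Z^1(\fp_I))$, read off the entry sums $(p_i-1)^{1-f_i}$ resp.\ $0$ and the $\pw_p$ values of the building blocks from Definition \ref{defn: bA bB in prime level}, Lemma \ref{lemma: pw for B} and the $\pw_2(E_k)$ data, and then run the same three-way case analysis on $\Delta(t)$, $\square(t)\sm\cT_u$ and $\cT_u$. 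The bookkeeping you flagged as the likely obstacle (the $r_u\leq 4$ versus $r_u\geq 5$ split and the parity condition isolating $f_u=3$) is handled correctly and matches the paper's argument.
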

\begin{proof}
First, let $I=(f_1, \dots, f_t) \in \square(t)$. By definition, there is an index $h$ such that $f_h\geq 2$.
Note that by Theorem \ref{theorem: order defined by tensors} and Lemma \ref{lemma: relation among bA and bbA}, we have
\begin{equation*}
\Gcd(Z^1(\fp_I))=\textstyle\prod_{i=1}^t \Gcd(A_{p_i}(r_i, f_i))=\prod_{i=1}^t g_{p_i}(r_i, f_i).
\end{equation*}
As already mentioned in Remark \ref{remark: relation B2 and E}, we have $\Gcd(Z^1(\fp_I))=\Gcd(Z(\fp_I))$ and so the first assertion follows in this case.
Note that the summation of all entries of $\bbA_{p_i}(r_i, f_i)$ is $(p_i-1)^{1-f_i}$ if $f_i \in \{0, 1\}$, and $0$ otherwise. 
Thus, by Theorem \ref{theorem: order defined by tensors} we have $\pw_{p_n}(Z^1(\fp_I))=0$ unless $n=h$ and $f_i\leq 1$ for all $i\neq h$. 
Moreover, by Lemma \ref{lemma: pw for B} we have $\pw_{p_h}(Z^1(\fp_I))\not\in 2\Z$ if and only if $h=u\geq 1$, $f_u\geq 3$, $r_u-[\frac{r_u-f_u+1}{2}]$ is odd, and $f_i=1$ for all  $i \neq u$.
Thus, we obtain the result for $I \not\in \cT_u$ as $Z^1(\fp_I)=Z(\fp_I)$.
Suppose further that $I \in \cT_u$, i.e., $h=u\geq 1$, $r_u\geq 5$, $f_u\geq 3$ and $f_i=1$ for all $i \neq u$. Note that for any $r\geq 5$ (and $3\leq f \leq r$), $\pw_2(B^2(r, f)) \not\in 2\Z$ if and only if $f=r+1-\gcd(2, r)$. Therefore the result follows in this case.

Next, let $I=(f_1, \dots, f_t) \in \Delta(t)$ with $m=m(I)$. Similarly as above, we have
\begin{equation*}
\begin{split}
\Gcd(Z(\fp_I))&=\textstyle\prod_{i=1, \,i\neq m}^t \Gcd(A_{p_i}(r_i, f_i)) \times \Gcd(B_{p_m}(r_m, 1))\\
&=\textstyle\prod_{i=1, \, i\neq m}^t g_{p_i}(r_i, f_i) \times p_m^{r_m-1}(p_m+1).
\end{split}
\end{equation*}
Also, since the summation of all entries of $\bbB_p(r, 1)$ is zero, we have $\pw_{p_n}(Z(\fp_I))=0$ unless $n=m$. Furthermore, we have 
$\pw_{p_m}(Z(\fp_I))=-\prod_{i=1, \,i\neq m}^t (p_i-1)^{1-f_i}$.
Thus, $\pw_{p_m}(Z(\fp_I)) \not\in 2\Z$ if and only if one of the following holds.
\begin{itemize}[--]
\item
$f_i=1$ for all $i$, i.e., $I=A(1)$.
\item
$f_u=0$ and $f_i=1$ otherwise if $u\geq 1$, i.e., $I=E(u)$.
\end{itemize}
This completes the proof.
\end{proof}

\begin{corollary}\label{corollary: gcd and h of Z}
For any $d \in \cD_N^0$, we have
\begin{equation*}
\cG(N, d)=\frac{\kappa(N)}{\Gcd(Z(d))} \qa \cH(N, d)=\fh(Z(d)).
\end{equation*}
\end{corollary}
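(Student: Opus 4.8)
The plan is to deduce both equalities directly from Proposition~\ref{proposition: gcd and h of Z} together with the factorization identity recorded in Remark~\ref{remark: definition of cG}, so that essentially no new computation is needed beyond matching definitions term by term over the prime factorization of $N$.

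First I would establish the identity $\cG(N,d) = \kappa(N)/\Gcd(Z(d))$. Write $d = \fp_I$ with $I = (f_1,\dots,f_t) \in \Omega(t)$. Since $\kappa$ is multiplicative, $\kappa(N) = \prod_{i=1}^t \kappa(p_i^{r_i})$, and by Remark~\ref{remark: definition of cG} each factor satisfies $\kappa(p_i^{r_i}) = g_{p_i}(r_i,f_i)\cdot \cG_{p_i}(r_i,f_i)$ for every admissible exponent, as well as $\kappa(p_i^{r_i}) = p_i^{r_i-1}(p_i+1)(p_i-1)$. If $I \in \square(t)$, Proposition~\ref{proposition: gcd and h of Z} gives $\Gcd(Z(d)) = \prod_i g_{p_i}(r_i,f_i)$, so $\kappa(N)/\Gcd(Z(d)) = \prod_i \cG_{p_i}(r_i,f_i)$, which is exactly $\cG(N,d)$ in this case. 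If $I \in \Delta(t)$ with $m = m(I)$, Proposition~\ref{proposition: gcd and h of Z} gives $\Gcd(Z(d)) = \prod_{i\neq m} g_{p_i}(r_i,f_i)\times p_m^{r_m-1}(p_m+1)$; dividing $\kappa(N)$ by this leaves $(p_m-1)\times\prod_{i\neq m}\cG_{p_i}(r_i,f_i)$, which again matches the definition of $\cG(N,d)$. The only point requiring care is that at the distinguished prime $p_m$ one must cancel using $\kappa(p_m^{r_m})/(p_m^{r_m-1}(p_m+1)) = p_m-1$ rather than the generic relation $\kappa = g_p\cdot\cG_p$.

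Next I would prove $\cH(N,d) = \fh(Z(d))$. This is immediate once one places the list of four conditions in Proposition~\ref{proposition: gcd and h of Z} under which $\fh(Z(\fp_I)) = 2$ beside the four conditions in the definition of $\cH(N,\fp_I)$: the two lists coincide verbatim. The only remark needed is consistency of the case split: conditions (1) and (2) describe elements of $\Delta(t)$ (all exponents $\le 1$), while conditions (3) and (4) describe elements of $\square(t)$ (with $f_u \ge 3$), so there is no overlap and the equivalence holds on all of $\Omega(t)$.

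I expect no genuine obstacle here: the corollary is a bookkeeping consequence of Proposition~\ref{proposition: gcd and h of Z} and the definitions of $\cG_p(r,f)$, $\cG(N,d)$ and $\cH(N,d)$. The one place deserving a careful line in the write-up is the $\Delta(t)$ case of the $\cG$-identity, where the special factorization $\kappa(p_m^{r_m}) = p_m^{r_m-1}(p_m+1)(p_m-1)$ must be used at the prime $p_m = p_{m(I)}$; once that is observed, both equalities follow by a term-by-term comparison.
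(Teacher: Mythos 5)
Your proposal is correct and follows the same route as the paper: the paper's proof likewise combines the multiplicativity $\kappa(N)=\prod_i\kappa(p_i^{r_i})$ with Remark \ref{remark: definition of cG} for the first equality and reads off the second from Proposition \ref{proposition: gcd and h of Z} and the definition of $\cH(N,d)$. Your extra observation about using $\kappa(p_m^{r_m})=p_m^{r_m-1}(p_m+1)(p_m-1)$ at the distinguished prime in the $\Delta(t)$ case is exactly the point the paper leaves implicit.
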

\begin{proof}
Note that $\kappa(N)=\prod_{i=1}^t \kappa(p_i^{r_i})$. So by Remark \ref{remark: definition of cG}, we have the first equality. Also, the second equality follows from  Proposition \ref{proposition: gcd and h of Z} (and the definition of $\cH(N, d)$).
\end{proof}

Combining all the results above, we now prove Theorem \ref{theorem: main theorem in 6.4}.

By Proposition \ref{proposition: generation of Z}, it suffices to show that
\begin{equation*}
\br{\ov{Z^1(d_i)} : 1 \leq i \leq \fn} \simeq \br{\ov{Z^1(d_i)} : 1\leq i < \fm+r} \moplus\left( \moplus_{j=\fm+r}^\fn \br{\ov{Z(d_j)}}\right).
\end{equation*}

First, suppose that $N$ is odd, i.e., $u=0$. Then $Z^1(d)=Z(d)$ for any $d \in \cD_N^0$ and so by Proposition \ref{proposition: gcd and h of Z}, we have $\fh(Z^1(d))=1$ for any $d \in \cD_N^\nsqf$. Hence by Proposition \ref{proposition: unipotent of Z}, we can successively apply Theorem \ref{thm: criterion 1} for $C_i=Z^1(d_i)$ and obtain
\begin{equation*}
\br{\ov{Z^1(d_i)} : 1 \leq i \leq \fn} \simeq \br{\ov{Z^1(d_i)} : 1\leq i < \fm+1} \moplus\left(\moplus_{j=\fm+1}^\fn \br{\ov{Z(d_j)}}\right).
\end{equation*}

Next, suppose that $N$ is even, i.e., $u\geq 1$. Let $r=r_u\geq 1$ and 
let $d_j=\fp_J$ for some $J=(f_1, \dots, f_t) \in \Omega(t)$. 
Suppose that $\fm+r \leq j \leq \fn$. Then there is an index $h \neq u$ such that $f_h\geq 2$ and $J \not\in \cT_u$ (cf. Remark \ref{remark: 2power ordering}). Thus, we have  $Z^1(d_j)=Z(d_j)$ and so $\fh(Z^1(d_j))=\fh(Z(d_j))=1$ by Proposition \ref{proposition: gcd and h of Z}.
Similarly as above, we obtain
\begin{equation*}
\begin{split}
\br{\ov{Z^1(d_i)} : 1\leq i \leq \fn} &\simeq \br{ \ov{Z^1(d_i)} : 1\leq i < \fm +r} \moplus\left(\moplus_{j=\fm+r}^\fn \br{\ov{Z^1(d_j)}}\right)\\
&= \br{ \ov{Z^1(d_i)} : 1\leq i < \fm +r} \moplus\left(\moplus_{j=\fm+r}^\fn \br{\ov{Z(d_j)}}\right).
\end{split}
\end{equation*}
This completes the proof. \qed

\begin{remark}
When $N$ is divisible by $32$, one may try to find a ``better'' replacement of $\bB_2(r, f)$ than $\bB^2(r, f)$, which can be used to directly prove
\begin{equation}\label{112}
\br{ \ov{Z^1(d_i)} : 1\leq i < \fm +r}\simeq \br{ \ov{Z^1(d_i)} : 1\leq i \leq \fm}\moplus\left(\moplus_{j=\fm+1}^{\fm+r-1} \br{ \ov{Z(d_j)}}\right).
\end{equation}
Then it seems necessary to find a replacement $D \in \Qdiv {2^r}$ of $B_2(r, r)$ with $\pw_2(D)\in 2\Z$. But then the order of $D$ is at most $2^{r-4}$ by Theorem \ref{theorem: computation of order}, whereas that of $B_2(r, r)$ is $2^{r-3}$ if $r\geq 5$ is odd. Thus, it seems impossible to find such a divisor, and unfortunately we only have an indirect proof of the first assertion of Theorem \ref{theorem: chapter 6 main}, which will be presented in Section \ref{section: final step}. 

On the other hand, if $N$ is not divisible by $32$, then (\ref{112}) is obviously true because the order of $Z^1(d_j)$ is $1$ for any $\fm<j < \fm+r$. 
\end{remark}

\ms
\subsection{Generation}\label{section: generation}
As above, we assume that $t\geq 2$. In this subsection, we prove the following.
\begin{theorem}\label{proposition: generation Y2}
For each $i=0$, $1$ or $2$, we have
\begin{equation*}
\br{ \Z(\fp_I) : I \in \Delta(t) } \motimes_\Z \Z_\ell = \br{ \bY^i(\fp_I) : I \in \Delta(t) } \motimes_\Z \Z_\ell.
\end{equation*}
\end{theorem}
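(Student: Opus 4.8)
The statement asserts that three families of vectors $\{\Z(\fp_I)\}$, $\{\bY^0(\fp_I)\}$, $\{\bY^1(\fp_I)\}$, $\{\bY^2(\fp_I)\}$ all generate the same $\Z_\ell$-lattice inside $\cS_2(N)^0 \motimes_\Z \Z_\ell$, when $I$ ranges over $\Delta(t)$ (the squarefree indices). My plan is to prove this by a sequence of elementary "base-change" lemmas: show that for each $I$, the vector $\bY^i(\fp_I)$ differs from $\bY^{i-1}(\fp_I)$ (or from $\Z(\fp_I)$) by an element of the span of the already-processed vectors together with an $\ell$-adic unit multiple of $\bY^{i-1}(\fp_I)$ itself. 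Concretely, I would compare the two lists entry by entry according to the ordering $\prec$ (or $\vtl$) on $\Delta(t)$, and exhibit an explicit unipotent (over $\Z_\ell$) change-of-basis matrix relating them.

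The first step is the comparison of $\{\Z(\fp_I) : I \in \Delta(t)\}$ with $\{\bY^0(\fp_I) : I \in \Delta(t)\}$. Here the only indices where they differ are $I \notin \cE$, where $\Z(\fp_I) = \motimes_{i \neq m} \bA_{p_i}(r_i, f_i) \motimes \bB_{p_m}(r_m, 1)$ and $\bY^0(\fp_I) = \motimes_{i \neq m, n} \bA_{p_i}(r_i, f_i) \motimes \bD(p_m^{r_m}, p_n^{r_n})$. Expanding $\bD(p_m^{r_m}, p_n^{r_n})$ by its definition gives
\begin{equation*}
\bD(p_m^{r_m}, p_n^{r_n}) = \tfrac{\gamma_n}{\gcd(\gamma_m, \gamma_n)} \bB_{p_m}(r_m, 1) \motimes \bA_{p_n}(r_n, 0) - \tfrac{\gamma_m}{\gcd(\gamma_m, \gamma_n)} \bA_{p_m}(r_m, 0) \motimes \bB_{p_n}(r_n, 1),
\end{equation*}
so $\bY^0(\fp_I)$ is a $\Z_\ell$-linear combination of $\Z(\fp_I)$ (the first term, since $\bA_{p_n}(r_n,0) = {\bf e}(p_n^{r_n})_1$ matches the $n$-th factor of $\Z(\fp_I)$ because $f_n = 0$) and a vector of the form $\motimes_{i\neq m,n}\bA_{p_i}(r_i,f_i) \motimes \bA_{p_m}(r_m,0) \motimes \bB_{p_n}(r_n,1)$, which equals $\Z(\fp_{I'})$ for the index $I'$ obtained from $I$ by moving the "$\bB$" from slot $m$ to slot $n$ and setting $f_m = 0$, $f_n = 1$; by the definition of $n(I)$ and the orderings, $I' \vtl I$, so this vector is already in the span. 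Crucially the coefficient of $\Z(\fp_I)$ is $\gamma_n / \gcd(\gamma_m,\gamma_n)$, which by Assumption \ref{assumption 1.14} (equation \eqref{equation: assumption 1}) is an $\ell$-adic unit since $\tn{val}_\ell(\gamma_m) \geq \tn{val}_\ell(\gamma_n)$. Hence the change of basis is unipotent over $\Z_\ell$.

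Next I would handle $\bY^0 \to \bY^1$ and $\bY^1 \to \bY^2$, which only introduce changes for $I \in \cE$, $I \in \cH_u$, and $I \in \cF_s \cup \cG_s$ respectively, and which occur only when $N$ is even. For $I \in \cE$ the replacement is $\Z(\fp_I) = \motimes_{i\neq m}\bA_{p_i}\motimes\bB_{p_m}(r_m,1) \leadsto \motimes_{i\neq x}\bA_{p_i}\motimes\bB_{p_x}(r_x,1)$ with $x = \max(m,u)$; using $\bA_{p_u}(r_u,1) = p_u^{r_u-1}(p_u+1)\bA_{p_u}(r_u,0) - \bB_{p_u}(r_u,1)$ and similarly at slot $m$, one checks the old and new vectors span the same $\Z_\ell$-line modulo lower vectors, with unit coefficients again governed by \eqref{equation: assumption 1}. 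The $\cH_u$ and $\cF_s \cup \cG_s$ cases are analogous: each replaces one tensor factor $\bA_{p_j}(r_j,*)$ by $\bB_{p_j}(r_j,1)$ (or vice versa), and the linear relation $\bA_p(r,1) = p^{r-1}(p+1)\bA_p(r,0) - \bB_p(r,1)$ together with $\bA_p(r,0) = $ the unit vector expresses the difference in terms of vectors $\Z(\fp_{I'})$ with $I' \vtl I$ (this is where the ordering facts in Lemmas \ref{lemma: 6.8}, \ref{lemma: 6.7} are used to verify $I' \vtl I$). I expect the main obstacle to be bookkeeping: carefully verifying in each of the finitely many cases of the definitions of $\bY^0, \bY^1, \bY^2$ that (a) the "leftover" vectors produced really are among the $\Z(\fp_{I'})$ with $I'$ strictly earlier in the relevant ordering, and (b) the pivot coefficient is an $\ell$-adic unit — both of which hinge on unwinding Assumption \ref{assumption 1.14} and the precise definitions of $m(I), n(I), k(I), x, y$. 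Once the unipotent triangular change-of-basis matrix is established in each step, composing them gives the claimed equality of lattices, and taking $i$ from $0$ through $2$ finishes all three assertions simultaneously.
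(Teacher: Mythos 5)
Your overall strategy --- relate consecutive families by a triangular change of basis with $\ell$-adic unit pivots, using the expansion of $\bD(p_m^{r_m},p_n^{r_n})$, the relation $\bB_{p_i}(r_i,1)=\gamma_i\cdot\bA_{p_i}(r_i,0)-\bA_{p_i}(r_i,1)$, and Assumption \ref{assumption 1.14} to certify the units --- is the same circle of ideas as the paper, which instead proves the cycle of inclusions $\fZ^0\subset\fY^0_\ell\subset\fY^1_\ell\subset\fY^2_\ell\subset\fZ^1=\fZ^0$. However, there is a concrete gap in your first step that is not mere bookkeeping. When you expand $\bY^0(\fp_I)$ for $I\notin\cE$, the leftover term is $\motimes_{i\neq m,n}\bA_{p_i}(r_i,f_i)\motimes\bA_{p_m}(r_m,0)\motimes\bB_{p_n}(r_n,1)$, and you claim this equals $\Z(\fp_{I'})$ for the index $I'$ obtained by moving the $\bB$-factor from slot $m$ to slot $n$. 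That identification fails whenever $n(I)>m(I)+1$: since $f_i=1$ for all $m<i<n$ by the definition of $n(I)$, the new index $I'$ has $m(I')=m+1\neq n$, so $\Z(\fp_{I'})$ carries its $\bB$-factor in slot $m+1$, not in slot $n$, and the leftover vector is \emph{not} among your generators. This is exactly why the paper introduces the auxiliary vectors $\bX_k(I)=\bA(I)^{\{k\}}\motimes\bB_k$ with the $\bB$-factor at an arbitrary admissible slot $k$, works in the a priori larger lattice $\fZ^1$ they generate, uses the identity $\bA_{m+1}^1\motimes\bB_n=\bB_{m+1}\motimes\bA_n^1-G^{m+1}_n\cdot\bD(m+1,n)$ inside a backward induction on $m(I)$ to re-express these terms, and then proves separately (part (E) of its argument) that $\fZ^1=\fZ^0$.

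The second underestimate is the passage from $\bY^0$ to $\bY^1$ for $I\in\cH_u$. This is not an $\bA\leftrightarrow\bB$ swap in a single tensor slot: it replaces $\bD(p_m^{r_m},p_u^{r_u})$ by $\bD(p_m^{r_m},p_k^{r_k})$ with $k=k(I)>u$, i.e., it moves the $\bD$-factor from the pair of slots $(m,u)$ to the pair $(m,k)$. Relating the two forces one to control four auxiliary families $\bU^{0,0}(J)$, $\bU^{0,1}(J)$, $\bU^{1,0}(J)$, $\bY^0(J)$ simultaneously in a nested backward induction on $m(J)$; this occupies the bulk of the paper's proof of the inclusion $\fY^0\subset\fY^1_\ell$ and is where the real difficulty lies. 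Your plan can be completed, but these two points are missing ideas rather than routine case checks, and as written your triangularity claim over the family $\{\Z(\fp_{I'})\}$ is false.
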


For simplicity, we use the following abbreviated notation.
\begin{notation}
For any $1\leq i<j \leq t$, let
\begin{itemize}[--]
\item
$\bA_i^{f_i}:=\bA_{p_i}(r_i, f_i), ~~\bB_i:=\bB_{p_i}(r_i, 1) \qa \bD(i, j):=\bD(p_i^{r_i}, p_j^{r_j})$.
\item
$\bY^i(I):=\bY^i(\fp_I) \qa \Z(I):=\Z(\fp_I)$.
\item
$\gamma_i:=p_i^{r_i-1}(p_i+1) \qa G^i_j:=\gcd(\gamma_i, \, \gamma_j)$.
\item
$U^i_j=\gamma_j \cdot (G^i_j)^{-1} \in \Z_\ell^\times \qa W^i_j=\gamma_i \cdot (G^i_j)^{-1} \in \Z$.
\end{itemize}

For any $I=(f_1, \dots, f_t) \in \Delta(t)$ and a subset $S$ of $\{1, \dots, t\}$, let
\begin{equation*}
I^S:=(a_1, \dots, a_t) ~~\text{ such that }~~ \begin{cases}
a_i=f_i & \text{ if } ~~i \not\in S, \\
a_i=1-f_i & \text{ if } ~~i \in S, 
\end{cases} 
\end{equation*}
and 
\begin{equation*}
\bA(I)^S:=\motimes_{i=1, \, i\not\in S}^t \bA_i^{f_i}.
\end{equation*}
For instance, for any $I=(f_1, \dots, f_t) \in \Delta(t)$ with $m(I)=m$ and $n(I)=n \leq t$, we can write
\begin{equation*}
\begin{split}
\bY^0(I)&=\bA(I)^{\{m, n\}} \motimes \bD(m, n)=\bA(I)^{\{m, n, k\}} \motimes \bD(m, n) \motimes \bA_k^{f_k},\\
\Z(I)&=\bA(I)^{\{m\}} \motimes \bB_m=\bA(I)^{\{m, n\}} \motimes \bB_m \motimes \bA_n^0.
\end{split}
\end{equation*}

For any $1\leq k \leq t$, let 
\begin{equation*}
\Delta(t)^k:=\{ I=(f_1, \dots, f_t) \in \Delta(t) : f_k=1\},
\end{equation*}
and for any $I=(f_1, \dots, f_t) \in \Delta(t)^k$, let 
\begin{equation*}
\bX_k(I) :=  \bA(I)^{\{k\}} \motimes \bB_k = \motimes_{i=1, \, i\neq k}^t \bA_i^{f_i} \motimes \bB_k.
\end{equation*}

Also, let $\fY^i := \br{ \bY^i(\fp_I) : I \in \Delta(t) }$, $\fZ^0:=\br{ \Z(\fp_I) : I \in \Delta(t) }$ and 
\begin{equation*}
\fZ^1:=\br{ \bX_k(I) : I \in \Delta(t)^k ~~\text{ for any } 1\leq k \leq t }.
\end{equation*}
Furthermore, let 
\begin{equation*}
\fY^i_\ell :=\fY^i \motimes_\Z \Z_\ell \qa \fZ^i_\ell:=\fZ^i \motimes_\Z \Z_\ell.
\end{equation*}
From now on, we regard $\fY^i$ and $\fZ^i$ as subsets of $\fY^i_\ell$ and $\fZ^i_\ell$, respectively. 
\end{notation}

\vspace{3mm}
We will prove the following by which one can easily deduce the theorem.
\begin{enumerate}[(A)]
\item \label{a}
$\fZ^0 \subset \fY^0_\ell$.
\item \label{b}
$\fY^0 \subset \fY^1_\ell$.
\item \label{c}
$\fY^1 \subset \fY^2_\ell$.
\item \label{d}
$\fY^2 \subset \fZ^1$.
\item \label{e}
$\fZ^1=\fZ^0$.
\end{enumerate}

\vspace{2mm}
Note that for each $i$, we have $\bB_i=\gamma_i \cdot \bA_i^0-\bA_i^1$. Also, we have 
\begin{equation*}
\begin{split}
\bD(i, j)&=U^i_j \cdot  \bB_i \motimes \bA_j^0-W^i_j \cdot \bA_i^0 \motimes \bB_j,\\
\bA_i^1 \motimes \bB_j&=\bB_i \motimes \bA_j^1-G^i_j \cdot \bD(i, j).
\end{split}
\end{equation*}
Furthermore, we have
\begin{equation*}
\bB_i \motimes \bB_j=\gamma_j \cdot \bB_i \motimes \bA_j^0-\bB_i \motimes \bA_j^1=\gamma_i \cdot \bA_i^0 \motimes \bB_j-\bA_i^1 \motimes \bB_j.
\end{equation*}
These equalities are frequently used below without further mention.

\vspace{3mm}
\begin{proof}[Proof of \ref{a}]
Let $I \in \Delta(t)$ with $m(I)=m$ and $n(I)=n$. It suffices to show that
$\Z(I) \in \fY^0_\ell$. We prove it by (backward) induction on $m$. 

First, if $m=t$ then $I \in \cE$ and hence $\Z(I)=\bY^0(I) \in \fY^0$. 

Next, assume that $m<t$ and $\Z(J) \in \fY^0_\ell$ for any $J \in \Delta(t)$ with $m(J)\geq m+1$. 
If $n=t+1$, then $I\in \cE$ and hence $\Z(I)=\bY^0(I) \in \fY^0$. Assume further that $n\leq t$. Let $J=I^{\{m, n\}}$ and let
\begin{equation*}
\bX:=\bX_n(J)=\bA(J)^{\{n\}} \motimes \bB_n=\bA(I)^{\{m, n\}} \motimes \bA_m^0 \motimes \bB_n.
\end{equation*}
Note that since $m(J) \geq m+1$, we have $\Z(J) \in \fY^0_\ell$ by  induction hypothesis. 
If $n=m+1$, then we have $\bX=\Z(J) \in \fY^0_\ell$. Suppose that $n>m+1$. 
Since $\bA_{m+1}^1 \motimes \bB_n = \bB_{m+1} \motimes \bA_n^1-G^{m+1}_n \cdot \bD(m+1, n)$, we then have
\begin{equation*}
\bX=\bA(I)^S \motimes \bA_m^0 \motimes \bA_{m+1}^1 \motimes \bB_n=\Z(J)-G^{m+1}_n \cdot \bY^0(I^{\{m\}}) \in \fY^0_\ell,
\end{equation*}
where $S=\{m, m+1, n\}$.
Therefore we always have $\bX \in \fY^0_\ell$, and so
\begin{equation*}
U^m_n \cdot \Z(I)=\bA(I)^{\{m, n\}} \motimes (\bD(m, n)+W^m_n \cdot \bA_m^0 \motimes \bB_n)=\bY^0(I)+W^m_n \cdot \bX \in \fY^0_\ell.\\ 
\end{equation*}
Since $U^m_n \in \Z_\ell^\times$,  we have $\Z(I) \in \fY^0_\ell$ and the result follows by induction. 
\end{proof} 

\begin{remark}\label{remark: Z with Y0}
By modifying the induction hypothesis, we can prove that $\bX$ is written as a $\Z_\ell$-linear combination of $\bY^0(J)$ for $J \in \Delta(t)$ satisfying $m(J)\geq m+1$.
Thus, for each $I \in \Delta(t)$ with $m=m(I)$, we can write 
\begin{equation*}
\Z(I)=\textstyle \sum_{J \in \Delta(t)} a(J) \cdot \bY^0(J) ~~~~\text{ with } a(J) \in \Z_\ell,
\end{equation*}
where $a(J)=0$ unless $J=I$ or $m(J)\geq m+1$. Also, we have $a(I) \in \Z_\ell^\times$. This is the reason behind the definition of $\bY^0(I)$.
\end{remark}

\begin{proof}[Proof of \ref{b}]
Let $I \in \Delta(t)$ with $m=m(I)$, $n=n(I)$ and $k=k(I)$. As above, it suffices to show that
$\bY^0(I) \in \fY^1_\ell$. 

First, let $I\not\in \cE \cup \cH_u$. Then $\bY^0(I) =\bY^1(I) \in \fY^1$.

Next, let $I \in \cE$, i.e., $I=A(m)$. If $m\geq u$ then $\bY^0(I)=\bY^1(I) \in \fY^1$. Suppose that $m<u$. By definition, we have $\bY^1(I)=\bA(I)^{\{m, u\}} \motimes \bA_m^1 \motimes \bB_u$,
\begin{equation*}
 \bY^1(I^{\{m\}})=\bA(I)^{\{m, u\}}\motimes \bA_m^0 \motimes \bB_u  ~~\text{ and }~~\bY^1(I^{\{u\}})=\bA(I)^{\{m, u\}} \motimes \bD(m, u).
 \end{equation*}
Thus, we have
\begin{equation*}
\begin{split}
\bY^0(I)&=\bA(I)^{\{m, u\}} \motimes \bB_m \motimes \bA_u^1=\bA(I)^{\{m, u\}}\motimes \bB_m \motimes (\gamma_u \cdot \bA_u^0-\bB_u)\\
&=\gamma_u \cdot \bA(I)^{\{m, u\}} \motimes \bB_m \motimes \bA_u^0-\gamma_m \cdot \bY^1(I^{\{m\}})+\bY^1(I).
\end{split}
\end{equation*}
Since $U^m_u \cdot \bB_m \motimes \bA_u^0=\bD(m, u)+W^m_u \cdot \bA_m^0\motimes \bB_u$, we have
\begin{equation*}
U^m_u \cdot \bA(I)^{\{m, u\}} \motimes \bB_m \motimes \bA_u^0 =  \bY^1(I^{\{u\}})+W^m_u \cdot \bY^1(I^{\{m\}}) \in \fY^1.
\end{equation*}
Since $U^m_u \in \Z_\ell^\times$, we have $\bY^0(I) \in \fY^1_\ell$, as desired.

Lastly, let $I=(f_1, \dots, f_t) \in \cH_u$, i.e., $m<n=u<k \leq t$. For simplicity, 
for any $J \in \cH_u$ with $m(J)=m'$ and $k(J)=k'$, let
\begin{equation*}
\bU^{\epsilon_1, \epsilon_2}(J):=\bA(J)^{\{m', u, k'\}} \motimes \bA_{m'}^{\epsilon_1} \motimes \bA_u^{\epsilon_2} \motimes \bB_{k'} ~\text{ for any } \epsilon_i \in \{0, 1\}.
\end{equation*}
We prove that $\bY^0(I) \in \fY^1_\ell$ by (backward) induction on $m$, or more generally 
\begin{equation*}
\{ \bY^0(I), \bU^{0, 0}(I), \bU^{0, 1}(I), \bU^{1, 0}(I)\} \subset \fY^1_\ell.
\end{equation*}

\noindent {\bf Step 1}. Assume that $m=u-1$. 

(1) Let $J=I^{\{m, u\}}$. Then we have $m(J)=u$ and $n(J)=k$. Thus, we have
\begin{equation*}
\bY^1(J)=U^u_k \cdot \bA(I)^{\{m, u\}} \motimes \bA_m^0 \motimes \bB_u-W^u_k \cdot \bA(I)^{\{m, k\}} \motimes \bA_m^0 \motimes \bB_k.
\end{equation*}
Since $I \in \cH_u$, we have
\begin{equation*}
\bY^1(I)=U^m_k \cdot \bA(I)^{\{m\}} \motimes \bB_m-W^m_k \cdot \bA(I)^{\{m, k\}} \motimes \bA_m^0 \motimes \bB_k.
\end{equation*}
Note that $(U_k^m)^{-1} \cdot U^m_u \cdot W^m_k  \cdot U^u_k=W^m_u \cdot W^u_k=\gamma_m \cdot \gamma_u \cdot (G^m_u \cdot G^u_k)^{-1}$, and therefore 
\begin{equation*}
\begin{split}
\bY^0(I)&=U^m_u \cdot \bA(I)^{\{m\}} \motimes \bB_m-W^m_u \cdot \bA(I)^{\{m, u\}} \motimes \bA_m^0 \motimes \bB_u\\
&=(U^m_k)^{-1} \cdot U^m_u \cdot \bY^1(I)-(U^u_k)^{-1} \cdot W^m_u \cdot \bY^1(J) \in \fY^1_\ell.
\end{split}
\end{equation*}

(2) Note that $\bY^0(J) = \bY^1(J)$ for any $J \in \Delta(t)$ with $m(J) \geq u$. Therefore by Remark \ref{remark: Z with Y0}, for any $J \in \Delta(t)$ with $m(J) \geq u$, we have
\begin{equation}\label{a1}
\textstyle\Z(J) \in \br{ \bY^1(K) : K \in \Delta(t) \text{ with } m(K)\geq u} \motimes_\Z \Z_\ell \subset \fY^1_\ell.
\end{equation}

Let $J=I^{\{m, k\}}$. Then we have $m(J)=u+1$ and so $\Z(J) \in \fY^1_\ell$ by (\ref{a1}). 

If $k=u+1$, then we have $\bU^{0, 0}(I)=\Z(J) \in \fY^1_\ell$. 

If $k>u+1$, then for $S=\{m, u, u+1, k\}$ we have
\begin{equation*}
\begin{split}
\bU^{0, 0}(I)&=\bA(I)^S \motimes \bA_m^0 \motimes \bA_u^0 \motimes (\bA_{u+1}^1 \motimes \bB_k) \\
&=\bA(I)^S \motimes \bA_m^0 \motimes \bA_u^0 \motimes (\bB_{u+1} \motimes \bA_k^1-G^{u+1}_k \cdot \bD(u+1, k))\\
&=\Z(J)-G^{u+1}_k \cdot \bY^1(I^{\{m\}}) \in \fY^1_\ell.
\end{split}
\end{equation*}

(3) Let $S=\{m, u, k\}$. Since $m=u-1$, we have $m(I^S)=u$. Hence by (\ref{a1}), we have 
\begin{equation*}
\Z(I^S)=\bA(I)^S \motimes \bA_m^0 \motimes \bB_u \motimes \bA_k^1 \in \fY^1_\ell,
\end{equation*}
and therefore
\begin{equation*}
\begin{split}
\bU^{0, 1}(I)&=\bA(I)^S \motimes \bA_m^0 \motimes (\bB_u \motimes \bA_k^1-G^u_k \cdot \bD(u, k))\\
&=\Z(I^S)-G^u_k \cdot \bY^1(I^{\{m, u\}}) \in \fY^1_\ell.
\end{split}
\end{equation*}

(4) Let $S=\{m, u, k\}$ and let $\bV:=\bA(I)^S \motimes \bB_m \motimes \bA_u^0 \motimes \bA_k^1$. Then we have
$U^m_u \cdot \bV=\bY^0(I^{\{k\}})+W^m_u \cdot \Z(I^S)$ because 
$U^m_u \cdot \bB_m \motimes \bA_u^0=\bD(m, u)+W^m_u \cdot \bA_m^0 \motimes \bB_u$.

If $I^{\{k\}}\in \cH_u$, then $\bY^0(I^{\{k\}}) \in \fY^1_\ell$ by (1) because $m(I^{\{k\}})=m=u-1$. 

If $I^{\{k\}} \not\in \cH_u$, then we have $I^{\{k\}} \in \cH_u^1$ since $n(I^{\{k\}})=u$.
Thus, $\bY^0(I^{\{k\}}) \in \fY^1_\ell$ by the previous cases. 
Also, by (3) we have $\Z(I^S) \in \fY^1_\ell$. Therefore we always have $\bV \in \fY^1_\ell$. Hence, we have
\begin{equation*}
\begin{split}
\bU^{1, 0}(I)&=\bA(I)^S \motimes \bA_m^1 \motimes \bA_u^0 \motimes \bB_k=\bA(I)^S \motimes  \bA_u^0  \motimes \bA_m^1  \motimes \bB_k \\
&=\bA(I)^S \motimes \bA_u^0 \motimes (\bB_m \motimes \bA_k^1-G^m_k \cdot \bD(m, k))=\bV-G^m_k \cdot \bY^1(I) \in \fY^1_\ell.
\end{split}
\end{equation*}
This completes the proof of the claim for $m=u-1$. 

\vspace{2mm}
\noindent {\bf Step 2}. Assume that $m\leq u-2$. For any $J \in \cH_u$ with $m'=m(J)\geq m+1$, assume  further that 
\begin{equation*}
\{ \bY^0(J), \bU^{0, 0}(J), \bU^{0, 1}(J), \bU^{1, 0}(J)\} \subset \fY^1_\ell.
\end{equation*}
Since $I^{\{m\}} \in \cH_u$ with $m(I^{\{m\}})=m+1$ and $k(I^{\{m\}})=k$, we have 
\begin{equation*}
\{ \bY^0(I^{\{m\}}), \bU^{0, 0}(I^{\{m\}}), \bU^{0, 1}(I^{\{m\}}), \bU^{1, 0}(I^{\{m\}})\} \subset \fY^1_\ell.
\end{equation*}

 (1) Let $S=\{ m, m+1, u \}$ and $T=\{m, m+1, u, k\}$, and let
\begin{equation*}
\begin{split}
\bV^1 &:=\bA(I)^S \motimes \bA_m^0 \motimes \bB_{m+1} \motimes \bA_u^1 = \bA(I^{\{m\}})^{\{m+1, u\}}\motimes \bB_{m+1} \motimes \bA_u^1,\\
\bV^2 &:=\bA(I)^S \motimes \bA_m^0 \motimes \bA_{m+1}^1 \motimes \bB_u = \bA(I^{\{m\}})^{\{m+1, u\}} \motimes \bA_{m+1}^1 \motimes \bB_u.
\end{split}
\end{equation*}
Let $J=I^{\{m, u\}}$. Since $m(J)=m+1$ and $n(J)=k$, we have
\begin{equation*}
\begin{split}
\bY^1(J)&=\bA(I)^T \motimes \bA_m^0 \motimes \bA_u^1 \motimes (U^{m+1}_k \cdot \bB_{m+1} \motimes \bA_k^0-W^{m+1}_k \cdot \bA_{m+1}^0 \motimes \bB_k)\\
&=U^{m+1}_k \cdot \bV^1-W^{m+1}_k \cdot \bU^{0, 1}(I^{\{m\}}).
\end{split}
\end{equation*}
Since $U^{m+1}_k\in \Z_\ell^\times$, we have $\bV^1 \in \fY^1_\ell$, and therefore
\begin{equation*}
\bV^2=\bV^1-G^{m+1}_u \cdot \bY^0(I^{\{m\}}) \in \fY^1_\ell.
\end{equation*}
Note that
\begin{equation*}
\begin{split}
\bV^3&:=\bA(I)^{\{m\}} \motimes \bB_m=\bA(I)^{\{m, k\}} \motimes (U^m_k)^{-1}(\bD(m, k)+W^m_k \cdot \bA_m^0 \motimes \bB_k)\\
&= (U^m_k)^{-1}(\bY^1(I)+W^m_k \cdot \bU^{\{1, 0\}}(I^{\{m\}})) \in \fY^1_\ell.
\end{split}
\end{equation*}
Thus, we have
\begin{equation*}
\begin{split}
\bY^0(I)&=\bA(I)^{\{m, u\}} \motimes (U^m_u \cdot \bB_m \motimes \bA_u^0-W^m_k \cdot \bA_m^0 \motimes \bB_u)\\
&=U^m_u \cdot \bV^3 -W^m_k \cdot \bV^2 \in \fY^1_\ell.
\end{split}
\end{equation*}

(2) Note that $\bU^{\{0, 0\}}(I)=\bU^{\{1, 0\}}(I^{\{m\}}) \in \fY^1$.

(3) Let $S=\{m, m+1, u, k\}$, and let
\begin{equation*}
\bV^4:=\bA(I)^S \motimes \bA_m^0 \motimes \bB_{m+1} \motimes \bA_u^1 \motimes \bA_k^1.
\end{equation*}

If $f_i=1$ for all $i>k$, then $\bV^4=\bY^0(A(m+1)) \in \fY^1_\ell$ by the result above. 

Suppose that $f_{k'}=0$ for some $k<k'\leq t$. We take $k'$ as small as possible, i.e., $f_j=1$ for all $k<j<k'$. Let $J=I^{\{m, k\}}$. Then we have $m(J)=m+1$, $n(J)=u$ and $k(J)=k'$, and so we have $J \in \cH_u$. By induction hypothesis we have 
$\bY^0(J)\in \fY^1_\ell$ and $\bU^{0, 1}(J) \in \fY^1_\ell$.  Note that
\begin{equation*}
\begin{split}
\bY^1(J^{\{u\}})&=\bA(J)^T \motimes \bA_m^0\motimes \bA_u^1 \motimes (U^{m+1}_{k'} \cdot \bB_{m+1} \motimes \bA_{k'}^0 -W^{m+1}_{k'} \cdot \bA_{m+1}^0 \motimes \bB_{k'})\\
&=U^{m+1}_{k'} \cdot \bV^4-W^{m+1}_{k'} \cdot \bU^{0, 1}(J), ~ \text{ where }~T=\{ m, m+1, u, k' \}.
\end{split}
\end{equation*}
Thus, we have $\bV^4 \in \fY^1_\ell$ because $U^{m+1}_{k'} \in \Z_\ell^\times$. Note also that
\begin{equation*}
\bA(I)^S \motimes \bA_m^0 \motimes \bA_{m+1}^1 \motimes \bB_u \motimes\bA_k^1=\bV^4-G^{m+1}_u \cdot \bY^0(J) \in \fY^1_\ell.
\end{equation*}
Therefore we have
\begin{equation*}
\begin{split}
\bU^{0, 1}(I)&=\bU^{1, 1}(I^{\{m\}})=\gamma_u \cdot \bU^{1, 0}(I^{\{m\}})-\bA(I)^S \motimes \bA_m^0 \motimes \bA_{m+1}^1 \motimes \bB_u \motimes \bB_k\\
&=\gamma_u \cdot \bU^{1, 0}(I^{\{m\}})-\gamma_k \cdot \bV^2+\bA(I)^S \motimes \bA_m^0 \motimes \bA_{m+1}^1 \motimes \bB_u \motimes\bA_k^1 \in \fY^1_\ell.
\end{split}
\end{equation*}

(4) For simplicity, let $J=I^{\{k\}}$ and $S=\{ m, m+1, u \}$. As for $\bV^2$ and $\bV^3$, let 
\begin{equation*}
\begin{split}
\bV^5&:=\bA(J)^S \motimes \bA_m^0 \motimes \bA_{m+1}^1 \motimes \bB_u,\\
\bV^6&:=\bA(J)^{\{m\}} \motimes \bB_m=\bA(I)^{\{m, k\}} \motimes \bB_m \motimes \bA_k^1.
\end{split}
\end{equation*}

Suppose that $J\in \cH_u$. Since $m(J)=m$, by the same argument as in (1), we can easily prove that
\begin{equation*}
\bY^0(J) \in \fY^1_\ell \qa \bV^5 \in \fY^1_\ell.
\end{equation*}
Thus, we have
\begin{equation*}
U^m_u \cdot \bV^6 =\bA(J)^{\{m, u\}} \motimes (\bD(m, u)+W^m_u \cdot \bA_m^0 \motimes \bB_u)=\bY^0(J)+W^m_u \cdot \bV^5 \in \fY^1_\ell.
\end{equation*}

Suppose that $J \not\in \cH_u$, i.e., $f_i=1$ for all $i>k$. Then we have $\bY^0(J)=\bY^1(J)$. Since $\bY^0(A(m+1)) \in \fY^1_\ell$, we have
\begin{equation*}
\begin{split}
U^m_u \cdot \bV^6 &=\bY^1(J)+W^m_u \cdot \bA(J)^S \motimes \bA_m^0 \motimes (\bB_{m+1} \motimes \bA_u^1-G^{m+1}_u \cdot \bD(m+1, u))\\
&=\bY^1(J)+W^m_u \cdot \bY^0(A(m+1))-W^m_u \cdot G^{m+1}_u \cdot \bY^1(J^{\{m\}}) \in \fY^1_\ell.
\end{split}
\end{equation*}

Thus, whether $J \in \cH_u$ or not, we always have $\bV^6 \in \fY^1_\ell$ as $U^m_u \in \Z_\ell^\times$.
Therefore we have
\begin{equation*}
\bU^{1, 0}(I)=\bA(I)^{\{m, k\}} \motimes (\bB_m \motimes \bA_k^1-G^m_k \cdot \bD(m, k))=\bV^6-G^m_k \cdot \bY^1(I) \in \fY^1_\ell.
\end{equation*}
By induction, the result follows. 
\end{proof}

\begin{proof}[Proof of \ref{c}] 
It suffices to show that $\bY^1(I) \in \fY^2_\ell$  for any $I \in \cF_s \cup \cG_s$.

If $s=0$ then $\cF_s =\cG_s = \emptyset$ and the claim vacuously holds.

Suppose that $s=1$. Then we have
\begin{equation*}
\cF_s \cup \cG_s =\{ E(n) : 2\leq n \leq t\}.
\end{equation*}
Let $I=E(2)$. Then we have
\begin{equation*}
\begin{split}
\bY^1(I)&=\bA(I)^{\{1, 2\}} \motimes \bD(1, 2)=\bA(I)^{\{1, 2\}} \motimes (U^1_2 \cdot \bB_1 \motimes \bA_2^0 - W^1_2 \cdot \bA_1^0 \motimes \bB_2)\\
&=U^1_2\cdot \bY^2(I)-W^1_2 \cdot \bY^2(A(2)) \in \fY^2.
\end{split}
\end{equation*}
Let $I=E(n)$ for some $3\leq n\leq t$. Also, let $J=I^{\{1\}}$, $K=I^{\{1, 2\}}$, $S=\{ 1, 2, n \}$, and let
\begin{equation*}
\begin{split}
\bV^1&:=\bA(I)^S \motimes \bA_1^0 \motimes \bA_2^1 \motimes \bB_n=\bA(J)^{\{n\}} \motimes \bB_n,\\
\bV^2&:=\bA(I)^S \motimes \bA_1^0 \motimes \bA_2^0 \motimes \bB_n=\bA(K)^{\{n\}} \motimes \bB_n,\\
\bV^3&:=\bA(I)^S \motimes \bA_1^0 \motimes \bB_2 \motimes \bA_n^0=\bA(J)^{\{2\}}\motimes \bB_2.\\
\end{split}
\end{equation*}
Finally, let
\begin{equation*}
\bW^{\epsilon_1, \epsilon_2}:=\bA(I)^S \motimes \bB_1 \motimes \bA_2^{\epsilon_1} \motimes \bA_n^{\epsilon_2}~~\text{ for any } \epsilon_i \in \{0, 1\}.
\end{equation*}

Note that 
\begin{equation*}
\bY^1(I)=\bA(I)^S \motimes \bA_2^1 \motimes \bD(1, n)= U^1_n \cdot \bW^{1, 0}-W^1_n \cdot \bV^1.
\end{equation*}
Thus, it suffices to show that $\bW^{1, 0} \in \fY^2_\ell$ and $\bV_1 \in \fY^2_\ell$. More generally, 
we claim that $\bV^i \in \fY^2_\ell$ and $\bW^{\epsilon_1, \epsilon_2} \in \fY^2_\ell$ for all $i$ and $\epsilon_j$.
Indeed, we have
\begin{equation*}
\bV^1=\bA(J)^{\{2, n\}} \motimes (\bB_2 \motimes \bA_n^1 -G^2_n \cdot \bD(2, n))=\bY^2(A(2))-G^2_n \cdot \bY^2(J) \in \fY^2.
\end{equation*}
If $n=3$ then $\bV^2=\bY^2(A(3))\in \fY^2$. If $n>3$ then we have
\begin{equation*}
\bV^2=\bA(K)^{\{3, n\}} \motimes (\bB_3 \motimes \bA_n^1-G^3_n \cdot \bD(3, n))=\bY^2(A(3))-G^3_n \cdot \bY^2(K) \in \fY^2.
\end{equation*}
Also, since $U^2_n \in \Z_\ell^\times$ and 
\begin{equation*}
U^2_n \cdot \bV^3 = \bA(J)^{\{2, n\}} \motimes (\bD(2, n)+W^2_n \cdot \bA_2^0 \motimes \bB_n)=\bY^2(J)+W^2_n \cdot \bV^2 \in \fY^2,
\end{equation*}
we have $\bV^3 \in \fY^2_\ell$.

Next, we have $\bW^{1, 1} =\bY^2(A(1)) \in \fY^2$. Also, since $U^1_2 \in \Z_\ell^\times$ and 
\begin{equation*}
\bY^2(I^{\{2\}})=\bA(I)^S \motimes \bD(1, 2) \motimes \bA_n^0=U^1_2 \cdot \bW^{0, 0}-W^1_2 \cdot \bV^3,
\end{equation*}
we have $\bW^{0, 0} \in \fY^2_\ell$. Furthermore, we have $\bW^{0, 1} \in \fY^2_\ell$ because $U^1_2 \in \Z_\ell^\times$ and 
\small
\begin{equation*}
U^1_2 \cdot \bW^{0, 1}=\bA(I)^S \motimes (\bD(1, 2)+W^1_2 \cdot \bA_1^0 \motimes \bB_2) \motimes \bA_n^1=\bY^1(E(2))+W^1_2 \cdot \bY^2(A(2)) \in \fY^2.
\end{equation*}
\normalsize 
Finally, let 
\begin{equation*}
\bV^4:=\bA(I)^S \motimes \bB_1 \motimes \bB_2 \motimes \bA_n^0.
\end{equation*}
Then since $U^2_n \in \Z_\ell^\times$ and 
\begin{equation*}
\bY^2(I)=\bA(I)^S \motimes \bB_1 \motimes \bD(2, n)=U^2_n \cdot \bV^4 -W^2_n \cdot (\gamma_n \cdot \bW^{0, 0}-\bW^{0, 1}),
\end{equation*}
we have $\bV^4 \in \fY^2_\ell$. Thus, we have
\begin{equation*}
\bW^{1, 0}=\bA(I)^S \motimes \bB_1 \motimes (\gamma_2 \cdot \bA_2^0-\bB_2) \motimes \bA_n^0 = \gamma_2 \cdot \bW^{0, 0}-\bV^4 \in \fY^2_\ell.
\end{equation*}
This completes the proof for $s=1$. 

Suppose that $s\geq 2$. Then $\cG_s=\emptyset$ and 
\begin{equation*}
\cF_s  =  \{ E(n) : n \in \cI_s \}=\{ E(n) : 2\leq n \leq t ~\text{ and }~ n\neq s\}.
\end{equation*}

For some $n\in \cI_s$, let $I=E(n)$, $J=E_s(n)$ and $S=\{1, n, s \}$. 
Note that if $n>s$ then $J \in \cH_u$. Thus, whether $n<s$ or not, we always have 
\begin{equation*}
\bY^2(J)=\bY^1(J)=\bA(I)^S \motimes \bD(1, n) \motimes \bA_s^0.
\end{equation*}
Therefore we have
\begin{equation*}
\begin{split}
\bY^1(I)&=\bA(I)^S \motimes \bD(1, n) \motimes \bA_s^1=\bA(I)^S \motimes \bD(1, n) \motimes (\gamma_s \cdot \bA_s^0-\bB_s)\\
&=\gamma_s \cdot \bY^2(J)-\bY^2(I) \in \fY^2.
\end{split}
\end{equation*} 
This completes the proof. 
\end{proof}

\begin{proof}[Proof of \ref{d}]
The claim is obvious by the equalities at the beginning of the section, so we leave the details to the readers.
\end{proof}

\begin{proof}[Proof of \ref{e}]
Since $\fZ^0 \subset \fZ^1$ is obvious, we only prove $\fZ^1 \subset \fZ^0$. 
Let $I = (f_1, \dots, f_t) \in \Delta(t)^k$ with $m=m(I)$. We prove that $\bX_k(I) \in \fZ^0$ by (backward) induction on $m$. By definition, we have $m\leq k$. 

If $m=k$ then $\bX_k(I)=\Z(I) \in \fZ^0$. 

Next, suppose that $m<k$ and $\bX_k(J) \in \fZ^0$ for any $J$ with $m(J)\geq m+1$. Since $m(I^{\{m\}})\geq m+1$ and $I^{\{m\}} \in \Delta(t)^k$, we have $\bX_k(I^{\{m\}}) \in \fZ^0$ by induction hypothesis. Thus, we have
\begin{equation*}
\begin{split}
\bX_k(I)&=\bA(I)^{\{m, k\}} \motimes \bA_m^1 \motimes \bB_k=\bA(I)^{\{m, k\}} \motimes (\gamma_m \cdot \bA_m^0-\bB_m) \motimes \bB_k\\
&=\gamma_m \cdot \bX_k(I^{\{m\}}) -\gamma_k \cdot \Z(I^{\{k\}})+\Z(I) \in \fZ^0.
\end{split}
\end{equation*}
By induction the result follows. 
\end{proof}

\begin{remark}\label{remark: independence of ell}
Let $N=\prod_{i=1}^t p_i^{r_i}=\prod_{j=1}^t q_j^{s_j}$ be two prime factorizations of $N$, namely, $p_i$ and $q_j$ are rearrangements of each other.
As above, let $u$ be the index such that $q_u=2$. (If $N$ is odd, then we set $u=0$.)
For any $J=(f_1, \dots, f_t) \in \Delta(t)$, we define
\begin{equation*}
\Z^0(\fq_J):=\motimes_{i=1, ~i\neq m}^t \bA_{q_i}(s_i, f_i) \motimes \bB_{q_m}(s_m, 1),
\end{equation*}
where $\fq_J=\prod_{j=1}^t q_j^{f_j}$. Then by the same argument as in the proof of \ref{e}, we have 
\begin{equation*}
\br{\Z^0(\fq_J) : J \in \Delta(t)}=\fZ^1=\br{\Z(\fp_I) : I \in \Delta(t)}. 
\end{equation*}
Thus, we have 
\begin{equation*}
\scC(N)^\sqf=\br{ \ov{Z^0(d)} : d \in \cD_N^\sqf}.
\end{equation*}
In other words, the definition of $\scC(N)^\sqf$ does not depend on the ordering of the prime divisors of $N$.
\end{remark}

\ms
\subsection{Linear independence}
As above, we assume that $t\geq 2$. In this subsection, we prove the following. 
\begin{theorem}\label{theorem: main 6.6}
We have
\begin{equation*}
\scC(N)^\sqf [\ell^\infty] \simeq \moplus_{d \in \cD_N^\sqf} \br{\ov{Y^2(d)}}[\ell^\infty].
\end{equation*}
\end{theorem}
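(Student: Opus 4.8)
The plan is to follow the ``second/third attempt'' strategy outlined in Sections \ref{section: second attempt} and \ref{section: third attempt}, taking $E_i := Y^2(d_i)$ where the $d_i$ are ordered by $\prec$. By Theorem \ref{proposition: generation Y2} (case $i=2$) we already know the $\ell$-adic span identity
\begin{equation*}
\br{\Z(\fp_I) : I \in \Delta(t)} \motimes_\Z \Z_\ell = \br{\bY^2(\fp_I) : I \in \Delta(t)} \motimes_\Z \Z_\ell,
\end{equation*}
and by Remark \ref{remark: independence of ell} together with Theorem \ref{theorem: main theorem in 6.4} the left-hand side is $\cC(N)^\sqf \motimes_\Z \Z_\ell$ (at least after tensoring with $\Z_\ell$, which is all we need since we only care about the $\ell$-primary part). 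So step (4) of the strategy is done, and the remaining work is steps (5) and (6): showing that the matrix $\fM_0 = (\bbV(Y^2(d_i))_{\delta_j})_{1\le i,j\le \fm}$ is ``lower $\ell$-unipotent up to the $\pw_p$-trick'' in the sense required to apply Theorems \ref{thm: criterion 2} and \ref{thm: criterion 3} successively.

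First I would compute $\bbV(Y^2(d))$ for each $d \in \cD_N^\sqf$ using Theorem \ref{theorem: order defined by tensors} (since every $\bY^2(\fp_I)$ is either a pure tensor of the $\bA_i^{f_i}$, $\bB_i$ or involves a single factor $\bD(p_y^{r_y},p_n^{r_n})$, which in turn is a $\Z$-combination of two tensors). For the tensor factors I would invoke Lemma \ref{lemma: bbA unipotent} (the unipotence of $\bbA_p(r,f)$ with respect to $\vtl_r$) and the analogous explicit description of $\bbV(D(p_i^{r_i},p_j^{r_j}))$; the key point, already flagged in Section \ref{section: third attempt}, is that $\bbV(D(p_i^{r_i},p_j^{r_j}))_{p_j} = \gamma_j/\gcd(\gamma_i,\gamma_j) \in \Z_\ell^\times$ exactly when $\tn{val}_\ell(p_i-1) \le \tn{val}_\ell(p_j-1)$, which Assumption \ref{assumption 1.14} (equation \eqref{equation: assumption 2}) guarantees whenever the relevant indices are different from $s$. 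Then I would verify, case by case along the five branches in the definition of $\bY^2(\fp_I)$ ($I\in\cE$, $I\in\cF_s$, $I\in\cG_s$, $I\in\cH_u$, otherwise), that $\bbV(Y^2(d_i))_{\delta_i} \in \Z_\ell^\times$ and $\bbV(Y^2(d_i))_{\delta_j}=0$ whenever $\delta_j$ comes strictly after $\delta_i$ in the $\vtl$-ordering; here I would use Lemmas \ref{lemma: 6.8}, \ref{lemma: 6.7} to pin down which $\delta_i$ is ``hit'' by each branch, and the $\iota$-bijection (Lemma \ref{lemma: iota}, Remark \ref{remark: iota isomorphism}) to match up the $\prec$- and $\vtl$-orderings. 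For odd $\ell$ this is literally ``$\fM_0$ is lower $\ell$-unipotent'' and Theorem \ref{thm: criterion 2} applied inductively on $i=\fm, \fm-1,\dots,1$ finishes the proof.

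For $\ell = 2$ the matrix $\fM_0$ is \emph{not} lower-triangular — the twisted colexicographic ordering $\vtl$ was introduced precisely because $p_u - 1 = 1$ spoils the diagonal-unit property at index $s=u$ — so instead I would verify the hypotheses of Theorem \ref{thm: criterion 3}: for each $i$ either $\fh(Y^2(d_i))=1$, or there is a prime $p$ with $\pw_p(Y^2(d_i))\notin 2\Z$ and $\pw_p(Y^2(d_j))=0$ for all $j<i$. This is where the replacements $\bY^0 \rightsquigarrow \bY^1 \rightsquigarrow \bY^2$ on the problematic classes $I\in\cF_s\cup\cG_s$ and $I\in\cH_u$ pay off: the new factor $\bB_{p_s}(r_s,1)$ (or the swap of $\bA(1,0)$ for $\bB(1,1)$) forces $\pw_{p_n}(Y^2(d_{i-1}))=0$ exactly as in the worked examples $\bX_2,\bX_3,\bX_4$ of Section \ref{section: example even squarefree level}. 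Using Lemma \ref{lemma: pw for B} and Theorem \ref{theorem: order defined by tensors} (the $\pw$-formula for tensors), together with the $\scH(N,d)$ bookkeeping of Proposition \ref{proposition: gcd and h of Z}'s analogue, I would check that the ``bad rows'' are exactly those in $(\cF_u^1\cup\cG_u^1\cup\{A(1)\})\sm(\cF_s\cup\cG_s)$ and that for each of them the requisite prime $p$ exists with vanishing $\pw_p$ on all earlier rows. The main obstacle will be this last bookkeeping: there are many $\vtl$-adjacent pairs where $\pw_{p_n}$ of two consecutive classes coincide and are odd (the ``$\pw_{p_3}(E_4)=\pw_{p_3}(E_5)$'' phenomenon), and one must check that the single replacement per block kills \emph{all} of them simultaneously and does not reintroduce a collision further up; I expect this to require a careful induction on the block structure of $\Delta(t)$ under $\vtl$, organized around $m(I)$, $n(I)$, $k(I)$ and the position of $u$, rather than any new idea. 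Once (5) and (6) are in place, successive application of Theorems \ref{thm: criterion 2} and \ref{thm: criterion 3} gives $\cC(N)^\sqf\motimes_\Z\Z_\ell \simeq \bigoplus_{d\in\cD_N^\sqf}\br{\ov{Y^2(d)}}\motimes_\Z\Z_\ell$, and taking $\ell$-primary parts yields the statement; the order computation $\fN(N,d)$ then follows from Theorem \ref{theorem: computation of order} applied to the explicit $\Gcd(Y^2(d))$ and $\fh(Y^2(d))$ computed along the way, which by construction match $\scG(N,d)$ and $\scH(N,d)$.
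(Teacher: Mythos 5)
Your plan is correct and follows essentially the same route as the paper: generation via Theorem \ref{proposition: generation Y2}, lower $\ell$-unipotence of $(\bbV(Y^1(d_i))_{\delta_j})$ under Assumption \ref{assumption 1.14} for the odd-$\ell$ case via Theorem \ref{thm: criterion 2}, and for $\ell=2$ the split into rows with $\fh=1$ versus the rows in $(\cF_u^1\cup\cG_u^1\cup\{A(1)\})\sm(\cF_s\cup\cG_s)$ handled by the $\pw_p$ alternative of Theorem \ref{thm: criterion 3}, exactly as in Propositions \ref{proposition: unipotent of Y1}--\ref{proposition: gcd and h of Y1} and Lemmas \ref{lemma: unipotent of Y2}--\ref{lemma: h=2 for Y2}. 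The case-by-case bookkeeping you defer is indeed where the paper spends its effort, but your outline matches its organization.
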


For simplicity, we use the following abbreviated notation.
\begin{notation}
For any $1\leq i<j \leq t$, let
\begin{itemize}[--]
\item
$\bbA_i^{f_i}:=\bbA_{p_i}(r_i, f_i) \qa \bbB_i:=\bbB_{p_i}(r_i, 1)$.
\item
$\gamma_i:=p_i^{r_i-1}(p_i+1) \qa G^i_j:=\gcd(\gamma_i, \gamma_j)$.
\item
$g^i_j:=\gamma_i \cdot \gamma_j \cdot \gcd(p_i-1, \, p_j-1) \cdot (G^i_j)^{-1}$.
\item
$u^i_j:=\gcd(p_i-1, \, p_j-1)^{-1}\cdot (p_i-1) ~~\text{ if } ~~ j \neq u \qa u^i_u:=-1$.
\item
$w^i_j:=\gcd(p_i-1, \, p_j-1)^{-1} \cdot (1-p_j) ~~\text{ if } ~~ j \neq u \qa w^i_u:=p_i-1$.
\item
$\bbD(i, j):=\bbV(D(p_i^{r_i}, p_j^{r_j})) \in \cS_1(p_i^{r_i}p_j^{r_j})$.
\item
For each $I \in \Delta(t)$, let $Y^i(I):=Y^i(\fp_I)$.
\item
For each $J \in \Delta(t)$ and $V \in \cS_1(N)$, let $V_J:=V_{\fp_J}$.
\end{itemize}
\end{notation}

Note that we have $u^i_j \in \Z_\ell^\times$ by Assumption \ref{assumption: chapter4}. Since
\begin{equation*}
G^i_j \cdot \bD(p_i^{r_i}, p_j^{r_j})=\gamma_j \cdot \bB_{p_i}(r_i, 1) \motimes \bA_{p_j}(r_j, 0)-\gamma_i \cdot \bA_{p_i}(r_i, 0) \motimes \bB_{p_j}(r_j, 1),
\end{equation*}
by Lemma \ref{lemma: relation among bA and bbA} we have
\begin{equation*}
\gamma_i^{-1} \cdot \gamma_j^{-1} \cdot G^i_j \cdot 
V(D(p_i^{r_i}, p_j^{r_j}))_\delta=\begin{cases}
p_j-p_i & \text{ if } ~~\delta=1,\\
1-p_j & \text{ if } ~~\delta=p_i,\\
p_i-1 & \text{ if } ~~\delta=p_j,\\
0 & \text{ otherwise}.
\end{cases}
\end{equation*}
Thus, we have 
\begin{equation*}
\Gcd(D(p_i^{r_i}, p_j^{r_j}))=g^i_j \qa \bbD(i, j)=(-u^i_j-w^i_j, w^i_j, u^i_j, \bbO) \in \cS_1(p_i^{r_i}p_j^{r_j}),
\end{equation*}
in particular we have $\bbD(i, j)_{p_ip_j}=0$. Also, if $j \neq u$ (resp. $j= u$), then $\bbD(i, j)_{p_j}=u^i_j \in \Z_\ell^\times$ (resp. $\bbD(i, j)_{p_i}=u^i_j \in \Z_\ell^\times$).
Note that $u^i_j$ is odd if $\ell=2$. However, $u^i_j$ might be even if $\ell$ is odd. In that case, $w^i_j$ is odd because $u^i_j$ and $w^i_j$ are relatively prime. Therefore either $u^i_j$ or $w^i_j$ is odd.

\vspace{3mm}
To begin with, we prove the matrix $\fM_0=(\bbV(Y^1(d_i))_{\delta_j})$ is lower $\ell$-unipotent.
\begin{proposition}
For any $I \in \Delta(t)$, we have
\begin{equation*}\label{proposition: unipotent of Y1}
\bbV(Y^1(I))_{\iota(I)} \in \Z_\ell^\times \qa \bbV(Y^1(J))_{\iota(I)}=0 ~~\text{ for all } J \prec I.
\end{equation*}
\end{proposition}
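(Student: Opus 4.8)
The plan is to prove the two assertions — namely $\bbV(Y^1(I))_{\iota(I)} \in \Z_\ell^\times$ (the diagonal is an $\ell$-adic unit) and $\bbV(Y^1(J))_{\iota(I)} = 0$ for all $J \prec I$ (lower-triangularity) — by a careful case analysis according to which of the five branches in the definition of $\bY^1(\fp_I)$ the index $I$ falls into, i.e. whether $I \in \cE$, $I \in \cH_u$, or $I$ is in the ``otherwise'' case (where $\bY^1 = \bY^0$, so $\bY^1(I)$ is a tensor involving a single $\bD(p_m^{r_m}, p_n^{r_n})$ factor). In each case $\bY^1(\fp_I)$ is defined by tensors, so by Theorem~\ref{theorem: order defined by tensors} its normalized vector $\bbV(Y^1(I))$ is the tensor product of the corresponding local normalized vectors $\bbA_i^{f_i}$, $\bbB_i$, and $\bbD(i,j)$, whose entries I have already recorded above (Lemmas~\ref{lemma: relation among bA and bbA}, \ref{lemma: bbA unipotent}, and the computation of $\bbD(i,j)$ just before this proposition). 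Since the ordering $\prec$ on $\Delta(t)$ is defined as $I \prec J \iff \iota(I) \vtl \iota(J)$, and $\vtl$ on $\Delta(t)$ is the twisted colexicographic order, the combinatorial bookkeeping reduces to: (i) reading off the $\iota(I)$-th coordinate of each local tensor factor, and (ii) showing that for $J$ with $\iota(J) \vtl \iota(I)$ (the condition $J \prec I$) at least one local factor of $\bbV(Y^1(J))$ vanishes at the corresponding coordinate.

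First I would handle the diagonal claim. For $I \not\in \cE \cup \cH_u$, we have $\iota(I) = I^{\{1,\dots,t\}}$ componentwise complemented except possibly the $n$-slot stays fixed — more precisely by Remark~\ref{remark: the map iota}(3), $b_i = 1 - a_i$ for all $i$, with $b_m = 0$ and $b_n = 1$. Writing $\fp_{\iota(I)}$ against the tensor $\bA(I)^{\{m,n\}} \motimes \bD(m,n)$, the $\bD(m,n)$-factor contributes its coordinate at $p_n^{1}$ (since $b_n = 1$, $b_m = 0$), which is exactly $\bbD(m,n)_{p_n} = u^m_n \in \Z_\ell^\times$ (using Assumption~\ref{assumption: chapter4} / Assumption~\ref{assumption 1.14}: $\tn{val}_\ell(p_m-1) \le \tn{val}_\ell(p_n-1)$ gives $u^m_n$ a unit); and each $\bbA_i^{f_i}$ contributes its coordinate at $p_i^{\iota_{r_i}(f_i)} = p_i^{1 - f_i}$, which is $\pm 1$ when $f_i \in \{0,1\}$ by Lemma~\ref{lemma: bbA unipotent}. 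Here one must be slightly careful: $\iota$ on $\Delta(t)$ is \emph{not} the componentwise $\iota_{r_i}$, but on a squarefree index the only relevant values $f_i \in \{0,1\}$ are precisely swapped by $\iota_1$, so the bookkeeping is consistent. For $I \in \cE$ and $I \in \cH_u$, the definition of $\bY^1$ is a tensor with a single $\bB_{p_x}$ or $\bD(p_m^{r_m}, p_k^{r_k})$ factor, and the analysis is parallel, invoking $\bbB_i$ having its distinguished $\pm 1$ entry and, for $\cH_u$, $\bbD(m,k)_{p_k} = u^m_k \in \Z_\ell^\times$; the twist in the ordering (putting $u$ first) is exactly what makes $\bbD$ have its unit in the $p_k$-slot rather than a potentially-even $p_u$-slot. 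I would organize this as a short lemma enumerating the cases of Remark~\ref{remark: the map iota} and in each case pointing to the specific local factor supplying the unit.

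Second, for the strictly-lower-triangular claim $\bbV(Y^1(J))_{\iota(I)} = 0$ whenever $J \prec I$: fix $I$ and let $J = (e_1,\dots,e_t) \prec I$, so $\iota(J) \vtl \iota(I)$ in $\Delta(t)$. Write $\iota(I) = (b_1,\dots,b_t)$. By the definition of the twisted colexicographic order $\vtl$, there is an index $h$ (either $h = u$, or $h \ne u$ with $\iota(J)_i = \iota(I)_i = b_i$ for all $i > h$ different from $u$) at which $\iota(J)_h = 0$ while $b_h = 1$, or the leading disagreement goes the ``wrong'' way; the upshot is that there is a slot $h$ with $b_h$ strictly $\vtl_{r_h}$-above the corresponding coordinate of the support of $\bbV(Y^1(J))$. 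Now $\bbV(Y^1(J))$ is a tensor whose $h$-th factor is one of $\bbA_h^{e_h}$, $\bbB_h$, or a $\bbD$-factor involving index $h$. In the first two cases, Lemma~\ref{lemma: bbA unipotent} (the property ``$\bbA_p(r,f)_{p^k} = 0$ for all $k$ with $\iota_r(f) \vtl_r k$'') forces the $h$-coordinate — which is $p_h^{b_h}$ with $b_h$ beyond $\iota_{r_h}(e_h)$ in the $\vtl_{r_h}$ order — to be zero. In the $\bbD$-case, one uses that $\bbD(i,h)$ (or $\bbD(h,j)$) is supported only on $\{1, p_i, p_h\}$ (resp. $\{1, p_h, p_j\}$) and in particular $\bbD(i,h)_{p_i p_h} = 0$; combined with the structure of which $\bbD$-factors can occur in the $\cH_u$ or $\cF_s$/$\cG_s$ branches, the $\iota(I)$-th coordinate again vanishes. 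The main obstacle — and where most of the work will go — is the $\cH_u$ case and its interaction with the twist in the ordering: here $\iota(I)$ is given by the special rule of Remark~\ref{remark: the map iota}(2) ($b_i = 1$ for $i \le m$, $b_i = 0$ for $i > m$, with $m < u$), the divisor $\fp_{\iota(I)}$ need not be a ``generic'' squarefree divisor, and I will need Lemma~\ref{lemma: iota} (injectivity of $\iota$) together with the case-by-case disagreement analysis in its proof to locate the vanishing slot $h$ cleanly; I expect to reuse that proof's case structure ($(1)$, $(1)'$, $(2)$, $(3)$) nearly verbatim. Once the combinatorics of ``where does $\iota(J)$ first fall below $\iota(I)$'' is pinned down, each vanishing is a one-line appeal to Lemma~\ref{lemma: bbA unipotent} or to the explicit support of $\bbD$, so the remaining steps are routine.
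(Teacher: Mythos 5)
Your proposal matches the paper's proof in all essentials: the paper likewise reduces via the order isomorphism $\iota$ to showing $\bbV(Y^1(I))_{\iota(I)}\in\Z_\ell^\times$ and $\bbV(Y^1(I))_J=0$ for $\iota(I)\vtl J$, computes $\bbV(Y^1(I))$ as a tensor of the local vectors $\bbA_i^{f_i}$, $\bbB_x$, $\bbD(m,n)$ via Theorem \ref{theorem: order defined by tensors}, and runs the same case analysis ($I\in\cE$, $I\in\cH_u$, otherwise split by $\cH_u^1$), extracting the unit from $u^m_n$ (Assumption \ref{assumption 1.14}) and the vanishing from $(\bbA_h^1)_{p_h}=0$ or $\bbD(m,n)_{p_mp_n}=0$. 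Only cosmetic slips: $\bY^1$ has three branches, not five, and the $\cF_s$, $\cG_s$ branches belong to $\bY^2$ and never arise here.
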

\begin{proof}
As above, it suffices to show that for any $I \in \Delta(t)$, we have
\begin{equation*}
\bbV(Y^1(I))_{\iota(I)} \in \Z_\ell^\times \qa \bbV(Y^1(I))_J=0 ~~\text{ for all } \iota(I) \vtl J.
\end{equation*}

Let $I=(f_1, \dots, f_t) \in \Delta(t)$ with $m=m(I)$, $n=n(I)$ and $k=k(I)$. Also, let 
$\iota(I)=(a_1, \dots, a_t)$ and $J=(b_1, \dots, b_t) \in \Delta(t)$.
Assume that $\iota(I) \vtl J$. Then by definition, there is an index $h$ such that $b_h=1$ and $a_h=0$. 
We further assume 
\begin{enumerate}
\item
$a_i=b_i$ for all $i>h$ different from $u$ if $h\neq u$, and
\item
$a_i=b_i$ for all $i$ different from $u$ if $h=u$.
\end{enumerate}

Then by Theorem \ref{theorem: order defined by tensors}, we can easily compute $\bbV(Y^1(I))$, and so we can prove the claim as in the proof of Proposition \ref{proposition: unipotent of Z}. More precisely, we proceed as follows: Let $K=(c_1, \dots, c_t) \in \Delta(t)$. 
\begin{enumerate}
\item
Assume that $I \in \cE$, and let $x=\max(m, u)$. Then we have
\begin{equation*}
\bbV(Y^1(I))_{K}=\textstyle\prod_{i=1, \, i\neq x}^t (\bbA_i^{f_i})_{p_i^{c_i}} \times (\bbB_{p_x})_{p_x^{c_x}}.
\end{equation*}
Note that 
\begin{equation*}
(\bbA_i^1)_{p_i}=0, ~~(\bbA_i^{f_i})_{p_i^{1-f_i}}=(-1)^{1-f_i} \qqa (\bbB_x)_{p_x}=-1.
\end{equation*}
Therefore we  have $|\bbV(Y^1(I))_{\iota(I)}|=1$. Also, since $I \in \cE$ we have $a_x=1$, and hence $h \neq x$.
By the definition of $\iota$, we have $f_h=1-a_h=1$. Since $f_h=b_h=1$ and $(\bbA_h^1)_{p_h}=0$, we have $\bbV(Y^1(I))_{J}=0$, as claimed.

\item
Assume that $I \in \cH_u$. Then we have
\begin{equation*}
\bbV(Y^1(I))_{K}=\textstyle\prod_{i=1, \, i\neq m, k}^t (\bbA_i^{f_i})_{p_i^{c_i}} \times \bbD(m, k)_{p_m^{c_m} p_k^{c_k}}.
\end{equation*}
Note that we have $n=u$ and so $k\neq u$. Thus, we have
\begin{equation*}
\bbD(m, k)_{p_k}=u^m_k \in \Z_\ell^\times \qa \bbD(m, k)_{p_m p_k}=0.
\end{equation*}
Since $I \not\in \cE \cup \cH_u^1$, we have $a_i=1-f_i$ for all $i$. 
Therefore we have 
\begin{equation*}
\bbV(Y^1(I))_{\iota(I)}=\textstyle\prod_{i=1, \, i\neq m, k}^t (\bbA_i^{f_i})_{p_i^{1-f_i}} \times \bbD(m, k)_{p_k} = \pm u^m_k \in \Z_\ell^\times.
\end{equation*}
Since $a_k=1-f_k=1$, we have $h\neq k$. 
If $h\neq m$, then $\bbV(Y^1(I))_J=0$ because $(\bbA_h^1)_{p_h}=0$ as above. If $h=m$, then $b_h=b_m=1$ and $a_i=b_i$ for all $i>m$ different from $u$, in particular $a_k=b_k=1$. Thus, we have $\bbV(Y^1(I))_J=0$ as $\bbD(m, k)_{p_m p_k}=0$ and $b_m=b_k=1$.

\item
Assume that $I \not\in \cE \cup \cH_u$. Then we have
\begin{equation*}
\bbV(Y^1(I))_{K}=\textstyle\prod_{i=1, \, i\neq m, n}^t (\bbA_i^{f_i})_{p_i^{c_i}} \times \bbD(m, n)_{p_m^{c_m} p_n^{c_n}}.
\end{equation*}
\begin{enumerate}
\item
Suppose that $I \in \cH_u^1$, i.e., $m<n=u$ and $f_i=1$ for all $i>u$. By definition, $a_m=1$, $a_u=0$ and $a_i=1-f_i$ for all $i \neq m, u$. 
Since $\bbD(m, u)_{p_m}=u^m_u=-1$, we have $|\bbV(Y^1(I))_{\iota(I)}|=1$. Also, since $a_m=1$ we have $h\neq m$. If $h\neq u$, then we have $\bbV(Y^1(I))_J=0$ as above. If $h=u$ then $b_h=b_u=1$ and $a_i=b_i$ for all $i$ different from $u$, and so $a_m=b_m=1$. 
Thus, we have $\bbV(Y^1(I))_J=0$ as $\bbD(m, u)_{p_m p_u}=0$ and $b_m=b_u=1$.

\item
Suppose that $I \not\in \cH_u^1$. Since $I \not\in \cH_u \cup \cH_u^1$, we have $n\neq u$. Also, since $I \not\in \cE \cup \cH_u^1$, we have $a_i=1-f_i$ for all $i$. 
Therefore we have  $\bbV(Y^1(I))_{\iota(I)}=\pm u^m_n \in \Z_\ell^\times$. Note that $f_h=1-a_h=1$, and so $h\neq n$.
As above, if $h \neq m$ then we have $f_h=b_h=1$, and so we have $\bbV(Y^1(I))_J=0$ as $(\bbA_h^1)_{p_h}=0$. Also, if $h=m$ then $b_h=b_m=1$ and $b_n=a_n=1-f_n=1$ because $n>m$ and $n\neq u$. 
Thus, we have $\bbV(Y^1(I))_J=0$ as $\bbD(m, n)_{p_m p_n}=0$ and $b_m=b_n=1$.
\end{enumerate}
\end{enumerate}
This completes the proof.
\end{proof}

Next, we compute $\Gcd(Y^1(I))$ and $\fh(Y^1(I))$.
\begin{proposition}\label{proposition: gcd and h of Y1}
Let $I=(f_1, \dots, f_t) \in \Delta(t)$ with $m=m(I)$, $n=n(I)$ and $k=k(I)$.
Also, let $x=\max(m, u)$.
Then we have 
\begin{equation*}
\Gcd(Y^1(I))=\begin{cases}
\prod_{i=1, \, i\neq x}^t g_{p_i}(r_i, f_i) \cdot \gamma_x & \text{ if } ~~ I \in \cE,\\
\prod_{i=1, \, i\neq m, k}^t g_{p_i}(r_i, f_i) \cdot g^m_k & \text{ if } ~~ I \in \cH_u,\\
\prod_{i=1, \, i\neq m, n}^t g_{p_i}(r_i, f_i) \cdot g^m_n & \text{ otherwise.}\\
\end{cases}
\end{equation*}
Also, $\fh(Y^1(I))=2$ if and only if $I \in \cF_u^1 \cup \cG_u^1 \cup \{A(1)\}$.
\end{proposition}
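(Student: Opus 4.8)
\textbf{Proof proposal for Proposition \ref{proposition: gcd and h of Y1}.}
The plan is to reduce both claims to the tensor-product formalism of Section \ref{section: Example I: The divisors defined by tensors} together with the explicit data on $\bbA_p(r,f)$, $\bbB_p(r,f)$ and $\bbD(i,j)$ assembled just above. For the computation of $\Gcd(Y^1(I))$, I would argue case by case according to the three branches in the definition of $\bY^1(\fp_I)$ (recalling that $\bY^1=\bY^0$ on $\cH_u$ is replaced by the $\cH_u$-branch written in Remark \ref{remark: Y0 Y1 Y2}). In each case $Y^1(I)$ is defined by tensors, so Theorem \ref{theorem: order defined by tensors} gives $\Gcd(Y^1(I))$ as the product of the $\Gcd$'s of the tensor factors. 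For a factor $\bA_{p_i}(r_i,f_i)$ we have $\Gcd(A_{p_i}(r_i,f_i))=g_{p_i}(r_i,f_i)$ by Lemma \ref{lemma: relation among bA and bbA}; for a factor $\bB_{p_x}(r_x,1)$ we have $\Gcd(B_{p_x}(r_x,1))=\gamma_x=p_x^{r_x-1}(p_x+1)$ by the same lemma (the $f=1$ line of the second formula, using $\kappa(p^r)=p^{r-1}(p+1)(p-1)$ and Remark \ref{remark: definition of cG}); and for a two-variable factor $\bD(p_m^{r_m},p_n^{r_n})$ the explicit computation preceding this proposition gives $\Gcd(D(p_m^{r_m},p_n^{r_n}))=g^m_n$. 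Multiplying the appropriate factors over $i\neq x$ (resp.\ $i\neq m,k$, resp.\ $i\neq m,n$) yields exactly the three displayed formulas. One small point to check in the $\cE$-branch with $x=u>m$ is that the factor indexed by $m$ is $\bA_{p_m}(r_m,1)$, whose $\Gcd$ is $g_{p_m}(r_m,1)=p_m^{r_m-1}(p_m^2-1)$, so it is correctly included in the product over $i\neq x$.

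For the statement $\fh(Y^1(I))=2\iff I\in\cF_u^1\cup\cG_u^1\cup\{A(1)\}$, I would compute $\pw_q(Y^1(I))$ for each prime $q\mid N$ via Theorem \ref{theorem: order defined by tensors}, which says that $\pw_q$ of a tensor product is supported on the tensor factor divisible by $q$ and is the product of $\pw_q$ of that factor with the sum of the entries of $\bbV$ of the remaining factors. The sum of the entries of $\bbA_p(r,f)$ is $(p-1)^{1-f}$ if $f\in\{0,1\}$ (i.e.\ $1$ if $f=1$ and $p-1$ if $f=0$) and is $0$ if $f\geq 2$; the sum of the entries of $\bbB_p(r,1)$ is $0$; and $\pw_p(B_p(r,1))=-1$, $\pw_p(A_p(r,f))=0$ for $f\in\{0,1\}$ while $\bbD(m,n)$ has zero entry-sum and $\pw_{p_m}(D(p_m^{r_m},p_n^{r_n}))$, $\pw_{p_n}(D(p_m^{r_m},p_n^{r_n}))$ equal $\mp$ the small factors $w^m_n$, $u^m_n$ respectively (read off the explicit $\bbD(i,j)=(-u^m_n-w^m_n,w^m_n,u^m_n,\bbO)$). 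Hence: in the $\cE$-branch the only factor contributing a nonzero $\pw$ is $\bB_{p_x}(r_x,1)$, and $\pw_{p_x}(Y^1(I))=-\prod_{i\neq x}(p_i-1)^{1-f_i}$, which is odd exactly when every $f_i=1$ for $i\neq x$, i.e.\ when $I=A(1)$ (forcing $x=u$ or, if $N$ is odd, $I=A(1)$ with $x=1$) — these are exactly $A(1)$ together with the members of $\cG_u^1$ realized in $\cE$; in the $\cH_u$-branch and the "otherwise" branch the contributions come from $\bbD(m,k)$ or $\bbD(m,n)$, and odd-ness forces all other $f_i=1$, pinning $I$ down to the appropriate element of $\cF_u^1$. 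I would then carefully match the resulting list of $I$'s against $\cF_u^1\cup\cG_u^1\cup\{A(1)\}$ using the combinatorial descriptions in Section \ref{section: intro notation} and Lemma \ref{lemma: 6.8}.

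The routine parts are the tensor bookkeeping and the entry-sum/$\pw$ table, all of which follow mechanically from Lemma \ref{lemma: relation among bA and bbA}, Lemma \ref{lemma: pw for B}, Theorem \ref{theorem: order defined by tensors}, and the explicit $\bbD(i,j)$. The main obstacle I anticipate is the second claim, specifically the precise identification of the index set: one must correctly track which branch of the definition of $\bY^1$ applies (this depends on whether $n(I)=u$, i.e.\ on membership in $\cH_u$ versus $\cH_u^1$ versus neither), and then confirm that the "all other exponents equal $1$" condition that makes some $\pw_q$ odd carves out exactly $\cF_u^1\cup\cG_u^1\cup\{A(1)\}$ and nothing more — in particular one must check there is no spurious contribution from a $\bbD$-factor whose own $\pw$ happens to be even (which occurs precisely when $u^m_n$ or $w^m_n$ is even, a possibility only for odd $\ell$, and which is exactly why both of $u^m_n,w^m_n$ being simultaneously odd is noted above). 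Handling the parity of $u^m_n$ and $w^m_n$ correctly, and reconciling it with the definition of $s$ in Assumption \ref{assumption 1.14} (note this proposition concerns $\bY^1$, so $s$ is irrelevant here and $u$ governs everything), is the delicate bookkeeping step.
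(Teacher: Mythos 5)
Your overall plan coincides with the paper's: the $\Gcd$ formula is obtained exactly as you describe, by multiplicativity of $\Gcd$ under tensor products (Theorem \ref{theorem: order defined by tensors}) together with $\Gcd(A_{p_i}(r_i,f_i))=g_{p_i}(r_i,f_i)$, $\Gcd(B_{p_x}(r_x,1))=\gamma_x$ and $\Gcd(D(p_m^{r_m},p_n^{r_n}))=g^m_n$ (the paper simply invokes the argument of Proposition \ref{proposition: gcd and h of Z}), and the $\fh$ statement is likewise reduced to the entry-sum/$\pw$ bookkeeping you set up, with the same observation that $\bbB_i$ and $\bbD(i,j)$ have zero entry-sum so that only the distinguished indices can contribute.

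There is, however, a concrete slip in your parity analysis which, taken literally, yields the wrong index set. You assert that $\pw_{p_x}(Y^1(I))=-\prod_{i\neq x}(p_i-1)^{1-f_i}$ is ``odd exactly when every $f_i=1$ for $i\neq x$,'' and similarly that in the branches involving $\bbD(m,n)$ ``odd-ness forces all other $f_i=1$.'' This ignores that $(p_u-1)^{1-f_u}=1$ is odd even when $f_u=0$; the correct criterion is that every non-distinguished index $i$ with $f_i=0$ must equal $u$. That one observation --- recorded in the paper as ``$p_i-1$ is even unless $i=u$'' --- is precisely what forces the superscript-$1$ sets $\cF_u^1,\cG_u^1$ into the answer: the elements $E_u(k)$, $E_u(n)$ and (for $u=1$) $E(1)=A(2)$ all have $f_u=0$ and would be excluded by your literal criterion, which in the $\cE$-branch would return only $\{A(1)\}$ and elsewhere only the $E(n)$'s. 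Your stated conclusion names the correct sets, but the reasoning given does not derive them; you must insert the $i=u$ exception before the case-matching against $\cF_u^1\cup\cG_u^1\cup\{A(1)\}$ can go through. The remaining delicate point you flag --- that at least one of $u^m_n, w^m_n$ is odd because they are coprime, so the $\bbD$-factor cannot kill the parity at both $p_m$ and $p_n$ simultaneously --- is handled correctly and matches the paper.
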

\begin{proof}
The first assertion easily follows by
the same argument as in the proof of Proposition \ref{proposition: gcd and h of Z}.  

To prove the second assertion, we note that the summation of all entries of $\bbB_i$ (resp. $\bbD(i, j)$) is zero. Thus, we have $\pw_{p_h}(Y^1(I))=0$ 
unless 
\begin{enumerate}
\item
$I \in \cE$ and $h=x$.
\item
$I \in \cH_u$ and either $h=m$ or $k$.
\item
$I \not\in \cE\cup \cH_u$ and either $h=m$ or $n$.
\end{enumerate}

Since either $u^i_j$ or $w^i_j$ is odd, and since $p_i-1$ is even unless $i=u$, we easily have the following.
\begin{enumerate}
\item
For $I \in \cE$, $\fh(Y^1(I))=2$ if and only if one of the following holds.
\begin{enumerate}
\item
$m=1$, i.e., $I=A(1)$.
\item
$u=1$ and $m=2$, i.e., $I=A(2)=E(1)$.
\end{enumerate}
\item
For $I \in \cH_u$ (and so $u\geq 2$), $\fh(Y^1(I))=2$ if and only if $m=1$ and $f_j=1$ for all $j>k$, i.e., $I=E_u(k)$ for some $k>u$. 

\item
For $I \not\in \cE \cup \cH_u$, $\fh(Y^1(I))=2$ if and only if one of the following holds.
\begin{enumerate}
\item
$m=1$ and $f_j=1$ for all $j>n$, i.e., $I=E(n)$ for some $n\geq 2$. (Here, $n$ might be equal to $u$.)
\item
$m=1$, $n<u$, $f_u=0$ and $f_j=1$ for all $j>n$ different from $u$, i.e., $I=E_u(n)$ for some $2\leq n < u$. 
\item
$u=1$, $m=2$ and $f_j=1$ for all $j>n$, i.e., $I=E_u(n)$ for some $3\leq n\leq t$.
\end{enumerate}
\end{enumerate}
Thus, the second assertion follows by the definition of $\cF_u^1$ and $\cG_u^1$.
\end{proof}

\vspace{3mm}
From now on, let
\begin{equation*}
H':=(\cF_u^1 \cup \cG_u^1) \sm (\cF_s \cup \cG_s) \qa H:=H' \cup \{A(1)\}.
\end{equation*}

\begin{lemma}\label{lemma: unipotent of Y2}
Let $\ell=2$. Then for any $I \in \Delta(t) \sm H$, we have
\begin{equation*}
\bbV(Y^2(I))_{\iota(I)} \not\in 2\Z \qa \bbV(Y^2(J))_{\iota(I)}=0 ~~\text{ for all } J \prec I. 
\end{equation*}
\end{lemma}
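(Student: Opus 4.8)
\textbf{Proof proposal for Lemma \ref{lemma: unipotent of Y2}.} The plan is to run essentially the same argument as in the proof of the previous proposition (the lower $\ell$-unipotence of $\fM_0$ for $Y^1$), but now keeping careful track of the $2$-adic valuations of the diagonal entries, and checking that the modification $Y^1 \rightsquigarrow Y^2$ (which only changes the vectors indexed by $I \in \cF_s \cup \cG_s$, cf. Remark \ref{remark: Y0 Y1 Y2}) does not destroy the diagonal property on the complement $\Delta(t) \sm H$. As before it suffices to fix $I=(f_1,\dots,f_t)\in\Delta(t)\sm H$ and prove the two statements in the equivalent ``column'' form: $\bbV(Y^2(I))_{\iota(I)}\not\in 2\Z$ and $\bbV(Y^2(I))_J=0$ for all $J$ with $\iota(I)\vtl J$. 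The vanishing part $\bbV(Y^2(I))_J=0$ is formally identical to the corresponding part of the proof of Proposition \ref{proposition: unipotent of Y1}, because all the vectors $\bbB_i$ and $\bbD(i,j)$ occurring in $Y^2(I)$ have the same zero-pattern ($(\bbA_i^1)_{p_i}=0$, $\bbD(m,n)_{p_mp_n}=0$, etc.) that was used there; so I would simply cite that argument, indicating the three cases $I\in\cE$, $I\in\cH_u$ (necessarily with the modified form when $I$ lies in the relevant $\cF_s$-type set), and $I\notin\cE\cup\cH_u$, exactly as in Remark \ref{remark: the map iota}.

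For the diagonal entry I would split by which of the five defining cases of $\bY^2(\fp_I)$ applies. First, if $I\in\Delta(t)\sm(\cF_s\cup\cG_s)$ and $I\notin\cH_u$ is not one of the sets where $Y^1$ and $Y^2$ differ, then $Y^2(I)=Y^1(I)$ and the statement is already contained in Proposition \ref{proposition: unipotent of Y1}: since $I\notin H\supset\cF_u^1\cup\cG_u^1\cup\{A(1)\}$, Proposition \ref{proposition: gcd and h of Y1} gives $\fh(Y^1(I))=1$, which is exactly the statement that $\pw_p(Y^1(I))\in 2\Z$ for all $p$; combined with $|\bbV(Y^1(I))_{\iota(I)}|\in\{1\}$ or $\bbV(Y^1(I))_{\iota(I)}=\pm u^m_n$ (an $\ell$-adic unit, odd when $\ell=2$), this gives $\bbV(Y^2(I))_{\iota(I)}$ odd. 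The remaining cases are $I\in\cE$ with $Y^2(I)=\bA(I)^{\{x\}}\motimes\bB_{p_x}$ — here the diagonal entry is $\pm 1$ as in Proposition \ref{proposition: unipotent of Y1}; $I\in\cF_s$, where $\bY^2(\fp_I)=\bA(I)^{\{y,s,n\}}\motimes\bB_{p_s}\motimes\bD(p_y^{r_y},p_n^{r_n})$ and the $\iota(I)$-entry picks up $(\bbB_s)_{p_s}=-1$ together with the unit $\bbD(y,n)_{p_n}=u^y_n$; and $I\in\cG_s$ (only when $s=1$), $\bY^2(\fp_I)=\bA(I)^{\{1,2\}}\motimes\bB_{p_1}\motimes\bA_{p_2}^0$, whose $\iota(I)$-entry is $(\bbB_1)_{p_1}=-1$. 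In each of these last cases the relevant entry is literally $\pm 1$ times a product of $u^{i}_{j}$'s, all of which are odd by Assumption \ref{assumption: chapter4} (when $\ell=2$, $u^i_j=\pm1$ or $\gcd(p_i-1,p_j-1)^{-1}(p_i-1)$, which is odd because $\val_2(p_i-1)\le\val_2(p_j-1)$ for $i<j$ different from $s=u$, while $u^i_u=-1$), so the product is odd.

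The main obstacle, and the reason the hypothesis $I\notin H$ is needed, is precisely the interface between $\fh$ and the diagonal entry: for $I\in H$ one genuinely has $\fh(Y^2(I))=2$, meaning $\pw_p(Y^2(I))$ is odd for some $p$, and then the diagonal entry $\bbV(Y^2(I))_{\iota(I)}$ can be even — these are exactly the rows for which Theorem \ref{thm: criterion 3}(1) fails and one must fall back on the $\pw_p$-condition of Theorem \ref{thm: criterion 3}(2) in the final assembly (Theorem \ref{theorem: main 6.6}). So the real content of the lemma is a bookkeeping verification: one must check that $H = (\cF_u^1\cup\cG_u^1\cup\{A(1)\})\sm(\cF_s\cup\cG_s)\cup\{A(1)\}$ is precisely the set of ``bad rows'' left over after the $Y^1\to Y^2$ replacement, i.e. that the replacement has cured exactly the indices in $(\cF_u^1\cup\cG_u^1)\cap(\cF_s\cup\cG_s)$ and introduced no new bad ones. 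I would do this by comparing, for $I\in\cF_s\cup\cG_s$, the $\pw_{p_n}$ (resp. $\pw_{p_2}$) of $\bY^2(\fp_I)=\cdots\motimes\bB_{p_s}\motimes\bD(\cdots)$ with that of $\bY^1(\fp_I)=\cdots\motimes\bD(\cdots)$: inserting the factor $\bB_{p_s}$, whose entries sum to zero, forces $\pw_{p_n}(Y^2(I))=0$, which is the key calculation (analogous to the $\bX_2,\bX_3,\bX_4$ constructions of Section \ref{section: example even  squarefree level}). Everything else is the zero-pattern/unit bookkeeping already carried out in Propositions \ref{proposition: unipotent of Y1} and \ref{proposition: gcd and h of Y1}, which I would invoke rather than repeat.
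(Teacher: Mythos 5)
Your proposal breaks down at the reduction step, and only in the case that actually matters (when $s=u\geq 1$; for $N$ odd one has $\cF_s=\cG_s=\emptyset$, $Y^2=Y^1$, and your reduction to Proposition \ref{proposition: unipotent of Y1} is fine). You claim the lemma is equivalent to the row form ``$\bbV(Y^2(I))_J=0$ for all $J$ with $\iota(I)\vtl J$'' for each $I\in\Delta(t)\sm H$, and that this vanishing is formally identical to the proof of Proposition \ref{proposition: unipotent of Y1} because the vectors occurring in $Y^2(I)$ have the same zero-pattern. Both claims are false. For $I\in\cF_s$ the passage from $Y^1$ to $Y^2$ replaces the tensor factor $\bA_{p_s}(r_s,1)$ by $\bB_{p_s}(r_s,1)$, and $\bbB_s=(1,-1,0,\dots)$ has a nonzero entry at $p_s$ exactly where $\bbA_s^1=(1,0,\dots)$ vanishes. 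Concretely, if $d_i=\fp_I$ with $I\in\cF_s\cup\cG_s$, then $\bbV(Y^2(I))_{\delta_{i+1}}=-\bbV(Y^2(I))_{\delta_i}\neq 0$ while $\iota(I)=\delta_i\vtl\delta_{i+1}$, so your row-form statement fails and the matrix $\fM_0$ for $Y^2$ is genuinely not lower-triangular. These rows are not excluded by the hypothesis either, since $\cF_s\cup\cG_s\subseteq\Delta(t)\sm H$ by the definition of $H$.

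The actual content of the lemma, which your proposal never supplies, is the bookkeeping that rescues this: for $d_i=\fp_J$ with $J\in\cF_s\cup\cG_s$ one shows that $\bbV(Y^2(J))_{\delta_j}=0$ for all $j>i+1$ and that the single offending entry sits in the column $\delta_{i+1}=\fp_K$ with $K\in H$ (this is where Lemmas \ref{lemma: 6.8} and \ref{lemma: 6.7} enter, identifying $d_{i+1}$ as $\fp_{E_s(n)}$, resp.\ $\fp_{E(1)}$). Then for $I\notin H$ and $J\prec I$ with $J\in\cF_s\cup\cG_s$ one gets $j+1\neq i$, hence $j+1<i$, hence $\bbV(Y^2(J))_{\iota(I)}=0$. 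This also corrects your diagnosis of the hypothesis $I\notin H$: it is not needed because the diagonal entry could be even for $I\in H$ --- for $I\in H'$ one has $Y^2(I)=Y^1(I)$ and the diagonal entry is still a $2$-adic unit by Proposition \ref{proposition: unipotent of Y1} --- but because the column $\iota(I)$ for $I\in H$ acquires a nonzero entry \emph{above} the diagonal from a replaced row. Your treatment of the diagonal entries themselves is essentially correct (up to small slips: the $\iota(I)$-entry for $I\in\cF_s$ involves $(\bbB_s)_{1}=1$, not $(\bbB_s)_{p_s}$), and the $\pw$-computation you sketch at the end is the content of Lemmas \ref{lemma: gcd and h of Y2} and \ref{lemma: h=2 for Y2}, not of this lemma.
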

\begin{proof}
Since $\ell=2$, we have $s=u$. Also, since $Y^2(I)=Y^1(I)$ unless $I \in \cF_s \cup \cG_s$, by Proposition \ref{proposition: unipotent of Y1} it suffices to prove the assertion for $I \in \cF_s \cup \cG_s$. If $s=0$, there is nothing to prove and so we assume that $s\geq 1$.

For simplicity, let $F=\{2^{n-\epsilon} : n \in \cI_s\}$, where $\epsilon=1$ if $s<n$ and $0$ otherwise. Also, let $G=\{2\}$ if $s=1$, and $G=\emptyset$ otherwise. Then by Lemma \ref{lemma: 6.8}, $I \in \cF_s$ if and only if $\fp_I=d_i$ for some $i\in F$. Also, by Lemma \ref{lemma: 6.7} $I \in \cG_s$ if and only if $\fp_I=d_i$ for some $i\in G$.

Now, we claim the following. For any $i \in F \cup G$, we have
\begin{enumerate}
\item
$d_{i+1}=\fp_K$ for some $K \in H$,
\item
$\bbV(Y^2(d_i))_{\delta_i}=-\bbV(Y^2(d_i))_{\delta_{i+1}} \not\in 2\Z$, and
\item
$\bbV(Y^2(d_i))_{\delta_j}=0$ for any $j>i+1$.
\end{enumerate}
Indeed, if $i \in F$, i.e., $i=2^{n-\epsilon}$ for some $n \in \cI_s$, then by Lemma \ref{lemma: 6.8} we have $K=E_s(n) \in H$, $\delta_i=\fp_{F(n)}$ and $\delta_{i+1}=\fp_{F_s(n)}$. Also by Theorem \ref{theorem: order defined by tensors}, we have
\begin{equation*}
\bbV(Y^2(d_i))=\motimes_{i=1, \, i\neq y, s, n}^t \bbA_i^1 \motimes \bbB_s \motimes \bbD(y, n).
\end{equation*}
Therefore we have $\bbV(Y^2(d_i))_{F(n)}=-\bbV(Y^2(d_i))_{F_s(n)}=u^y_n \not\in 2\Z$. Suppose that $j>i+1$ and $\delta_j=\fp_J$ for some $J=(f_1, \dots, f_t) \in \Delta(t)$. Since $\delta_{j+1}=\fp_{F_s(n)}$, we have $F_s(n) \vtl J$, and so $f_h=1$ for some $h>n$ different from $s$. Thus, we have $\bbV(Y^2(d_i))_J=0$ as $(\bbA_h^1)_{p_h}=0$.
If $i \in G$, then $s=1$ and $i=2$. By Lemma \ref{lemma: 6.7}, we have $K=A(2)=E(1) \in H$, $\delta_2=p_2$ and $\delta_3=p_1p_2$.
Similarly as above, we have 
\begin{equation*}
\bbV(Y^2(I))=\motimes_{i=3}^t \bbA_i^1 \motimes \bbB_{s} \motimes \bbA_2^0,
\end{equation*}
and so $\bbV(Y^2(d_2))_{p_2}=-\bbV(Y^2(d_2))_{p_1p_2}=-1$. Also, as above $\bbV(Y^2(d_2))_{\delta_j}=0$ for all $j>3$. Therefore the claim follows.

Now, the assertion for $I \in \cF_s \cup \cG_s$ easily follows by the claim. Indeed, if $I \in \cF_s \cup \cG_s$, then $d_i=\fp_I$ for some $i \in F\cup G$ and so $\bbV(Y^2(I))_{\iota(I)} \not \in 2\Z$. Suppose that $J \prec I$. By definition, we have $\iota(J) \vtl \iota(I)$. If $J \not\in \cF_s \cup \cG_s$, then $Y^2(J)=Y^1(J)$ and so $\bbV(Y^2(J))_{\iota(I)}=0$ by Proposition \ref{proposition: unipotent of Y1}. If $J \in \cF_s \cup \cG_s$ with $d_j=\fp_J$, then we have $j \in F \cup G$. 
By the claim, $d_{j+1}=\fp_K$ for some $K \in H$. Since $I \not\in H$, we have $j+1\neq i$. Also, since $J \prec I$, we have $j<i$ and so $j+1<i$. Thus, $\bbV(Y^2(J))_{\iota(I)}=0$ by the claim.
This completes the proof.
\end{proof}

\begin{lemma}\label{lemma: gcd and h of Y2}
Let $I \in \cF_s \cup \cG_s$. Then we have 
\begin{equation*}
\Gcd(Y^2(I))=\begin{cases}
\prod_{i=1, \, i\neq s, y, n}^t g_{p_i}(r_i, 1)\cdot \gamma_s \cdot g^y_n & \text{ if }~~ I \in \cF_s,\\
\prod_{i=3}^t g_{p_i}(r_i, 1)\cdot \gamma_s \cdot g_{p_2}(r_2, 0)  & \text{ if }~~ I \in \cG_s.
\end{cases}
\end{equation*}
Also, we have $\fh(Y^2(I))=1$.
\end{lemma}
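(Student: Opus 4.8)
The plan is to argue exactly as in the proofs of Propositions \ref{proposition: gcd and h of Z} and \ref{proposition: gcd and h of Y1}: by construction, for $I \in \cF_s \cup \cG_s$ the vector $Y^2(I)$ is a tensor product of the elementary vectors $\bA_{p_i}(r_i, \cdot)$, $\bB_{p_i}(r_i, 1)$ and $\bD(p_i^{r_i}, p_j^{r_j})$, so Theorem \ref{theorem: order defined by tensors} applies after grouping the factors in pairs. Note that $\cF_s \cup \cG_s \ne \emptyset$ forces $s \ge 1$, so throughout one may assume $\ell = 2$, $s = u \ge 1$ and $p_s = 2$; when $I \in \cG_s$ one moreover has $s = 1$, hence $p_1 = 2$.

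For $\Gcd(Y^2(I))$ I would use that the content (greatest common divisor of the entries) of a tensor product of integral vectors is the product of the contents of the factors; by Theorem \ref{theorem: order defined by tensors} this gives that $\Gcd(Y^2(I))$ equals the product of the $\Gcd$'s of the elementary factors occurring in its definition. By Lemma \ref{lemma: relation among bA and bbA}, together with the fact (recalled just after Definition \ref{defn: bA bB in prime level}) that $\bbV(A_p(r,f)) = \bbA_p(r,f)$ and $\bbV(B_p(r,f)) = \bbB_p(r,f)$ have content $1$, one has $\Gcd(A_{p_i}(r_i, f_i)) = g_{p_i}(r_i, f_i)$ and $\Gcd(B_{p_i}(r_i, 1)) = p_i^{r_i-1}(p_i+1) = \gamma_i$; and the computation of $\bbD(i,j)$ carried out just before the lemma gives $\Gcd(D(p_i^{r_i}, p_j^{r_j})) = g^i_j$. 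The factors occurring are $\bB_{p_s}(r_s,1)$, $\bD(p_y^{r_y}, p_n^{r_n})$ and the $\bA_{p_i}(r_i, f_i)$ for $i \ne s, y, n$ when $I \in \cF_s$, and $\bB_{p_1}(r_1,1)$, $\bA_{p_2}(r_2, 0)$ and the $\bA_{p_i}(r_i, 1)$ for $i \ge 3$ when $I \in \cG_s$; multiplying the corresponding $\Gcd$'s gives the two displayed formulas.

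For $\fh(Y^2(I)) = 1$ I would verify $\pw_p(Y^2(I)) \in 2\Z$ for every prime $p \mid N$. The relevant data are the $\bbV$-sums of the elementary factors: the sum of the entries of $\bbV$ is $1$ for $\bA_{p_i}(r_i, 1)$, equals $p_i - 1$ for $\bA_{p_i}(r_i, 0)$, and is $0$ for $\bB_{p_i}(r_i, 1)$ and for $\bD(p_i^{r_i}, p_j^{r_j})$ (all immediate from the explicit descriptions). Splitting $Y^2(I) = C_1 \otimes C_2$ with $C_1$ of degree $0$ and $p \mid N_1$, Theorem \ref{theorem: order defined by tensors} gives $\pw_p(Y^2(I)) = \pw_p(C_1) \cdot \bigl(\sum_{\delta_2} \bbV(C_2)_{\delta_2}\bigr)$, while $\pw_p(Y^2(I)) = 0$ when $p \mid N_2$ (since $C_1$ of degree $0$ has $\sum \bbV(C_1) = 0$); moreover $\sum \bbV(C_2)$ is the product of the $\bbV$-sums of the elementary factors making up $C_2$. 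Since $\bB_{p_i}(r_i,1)$ and $\bD(p_i^{r_i},p_j^{r_j})$ are of degree $0$ with $\bbV$-sum $0$, one can always arrange a factor of this kind to play the role of $C_1$ (or to sit inside $C_2$). For $I \in \cF_s$ there are two such factors ($\bB_{p_s}$ and $\bD(p_y,p_n)$), so for every prime $p \mid N$ the factor not containing $p$ includes one of them and hence $\pw_p(Y^2(I)) = 0$. For $I \in \cG_s$ the only such factor is $\bB_{p_1}(r_1,1)$: for $p \ne p_1$ one again gets $\pw_p(Y^2(I)) = 0$, while $\pw_{p_1}(Y^2(I)) = \pw_{p_1}(B_{p_1}(r_1,1)) \cdot (p_2 - 1) = -(p_2 - 1)$ by Lemma \ref{lemma: pw for B}, which is even because $p_1 = 2$ forces $p_2$ to be odd. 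In both cases every $\pw_p(Y^2(I))$ is even, so $\fh(Y^2(I)) = 1$.

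The only point requiring care is that at each pairing of the tensor factors the factor playing the role of $C_1$ be taken of degree $0$, so that the vanishing case of the $\pw_p$-formula of Theorem \ref{theorem: order defined by tensors} is genuinely available; this is automatic here since the degree-$0$ factors $\bB_{p_i}(r_i,1)$ and $\bD(p_i^{r_i},p_j^{r_j})$ always occur. The rest — checking that the indices $y$, $s$, $n$ are pairwise distinct with $y < n$, and identifying which elementary factors appear for each $I$ — is routine bookkeeping from the definitions of $\cF_s$, $\cG_s$, $\cI_s$ and $\bY^2$.
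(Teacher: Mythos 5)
Your proposal is correct and follows essentially the same route as the paper's own (much terser) proof: the $\Gcd$ formula comes from the multiplicativity in Theorem \ref{theorem: order defined by tensors} combined with $\Gcd(A_{p_i}(r_i,f_i))=g_{p_i}(r_i,f_i)$, $\Gcd(B_{p_i}(r_i,1))=\gamma_i$ and $\Gcd(D(p_i^{r_i},p_j^{r_j}))=g^i_j$, and the claim $\fh(Y^2(I))=1$ comes from the vanishing of the entry-sums of $\bbB_s$ and $\bbD(y,n)$, leaving only $\pw_{p_1}(Y^2(I))=1-p_2$ in the $\cG_s$ case, which is even since $p_2$ is odd. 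The extra bookkeeping you supply (which factor plays the role of the degree-$0$ component $C_1$ in each application of the $\pw_p$ formula) is exactly the detail the paper leaves implicit.
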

\begin{proof}
By Theorem \ref{theorem: order defined by tensors}, the first assertion easily follows. For the second assertion, we note that the summation of all entries of $\bbB_s$ (resp. $\bbD(y, n)$) is zero. 
Thus, if $I \in \cF_s$, then $\pw_{p_h}(Y^2(I))=0$ for any $h$. Also, if $I \in \cG_s$, then $\pw_{p_1}(Y^2(I))=1-p_2$ and $\pw_{p_h}(Y^2(I))=0$ for all $h\geq 2$. Since $p_2$ is odd, the second assertion follows.
\end{proof}

\begin{corollary}\label{corollary: gcd and h of Y2}
For any $d \in \cD_N^\sqf$, we have 
\begin{equation*}
\scG(N, d)=\frac{\kappa(N)}{\Gcd(Y^2(d))} \qa \scH(N, d)=\fh(Y^2(d)).
\end{equation*}
\end{corollary}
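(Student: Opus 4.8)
The plan is to deduce the corollary from the explicit formulas for $\Gcd(Y^2(d))$ and $\fh(Y^2(d))$ already at our disposal: for $d=\fp_I$ with $I\notin\cF_s\cup\cG_s$ one has $Y^2(I)=Y^1(I)$ by Remark \ref{remark: Y0 Y1 Y2}, so Proposition \ref{proposition: gcd and h of Y1} applies, and for $I\in\cF_s\cup\cG_s$ the two quantities are computed in Lemma \ref{lemma: gcd and h of Y2}; it then remains to match these against the defining case split of $\scG(N,d)$ and $\scH(N,d)$ in Section \ref{section: intro notation}. The matching will rest on a handful of arithmetic identities that I would record at the outset: the multiplicativity $\kappa(N)=\prod_{i=1}^t\kappa(p_i^{r_i})$; the factorization $\kappa(p^r)=g_p(r,f)\cdot\cG_p(r,f)$ valid for every $0\le f\le r$ (Remark \ref{remark: definition of cG}); the identity $\kappa(p^r)=(p-1)\gamma$ with $\gamma=p^{r-1}(p+1)$, hence $\kappa(p^r)/\gamma=p-1$ and $\cG_p(r,0)=\kappa(p^r)$; and, using $g^i_j=\gamma_i\gamma_j\gcd(p_i-1,p_j-1)(G^i_j)^{-1}$, the identity
\[
\frac{\kappa(p_i^{r_i})\,\kappa(p_j^{r_j})}{g^i_j}=\frac{(p_i-1)(p_j-1)\,G^i_j}{\gcd(p_i-1,\,p_j-1)}=\cG(p_i^{r_i},p_j^{r_j}).
\]

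For the first equality I would go case by case along the stratification of $\Delta(t)$ used to build $Y^2$. In each case Proposition \ref{proposition: gcd and h of Y1} or Lemma \ref{lemma: gcd and h of Y2} presents $\Gcd(Y^2(\fp_I))$ as a product of the factors $g_{p_i}(r_i,f_i)$ over the indices $i$ not belonging to the distinguished set attached to $I$ — namely $\{x\}$ for $I\in\cE$, $\{m,k\}$ for $I\in\cH_u$, $\{m,n\}$ in the remaining generic case, $\{s,y,n\}$ for $I\in\cF_s$, and $\{s,2\}$ for $I\in\cG_s$ — times a residual factor which is, respectively, $\gamma_x$, $g^m_k$, $g^m_n$, $\gamma_s\cdot g^y_n$, or $\gamma_s\cdot g_{p_2}(r_2,0)$. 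Dividing $\kappa(N)$ by this product and replacing each $\kappa(p_i^{r_i})/g_{p_i}(r_i,f_i)$ by $\cG_{p_i}(r_i,f_i)$, each $\kappa(p_x^{r_x})/\gamma_x$ by $p_x-1$, and each $\kappa(p_i^{r_i})\kappa(p_j^{r_j})/g^i_j$ by $\cG(p_i^{r_i},p_j^{r_j})$ should reproduce exactly the defining expression for $\scG(N,\fp_I)$; here one also uses $p_s-1=1$ when $s=u\ge1$, which is what lets the lone factor $\gamma_s$ play the role of $\kappa(p_s^{r_s})$ in the $\cF_s$- and $\cG_s$-cases, and $\cG_p(r,0)=\kappa(p^r)$ to identify the $\cG_s$-case. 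The routine content of this step is unwinding the notation $m(I),n(I),k(I),x=\max(m,u),y=\max(1,3-s)$ and checking that the index sets on the two sides of each case genuinely coincide, and that the conditions $\cE,\cF_s,\cG_s,\cH_u$ are pairwise disjoint in the relevant ordered sense.

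For the second equality I would combine two inputs: by Proposition \ref{proposition: gcd and h of Y1}, $\fh(Y^1(I))=2$ precisely when $I\in\cF_u^1\cup\cG_u^1\cup\{A(1)\}$, while by Lemma \ref{lemma: gcd and h of Y2}, $\fh(Y^2(I))=1$ for all $I\in\cF_s\cup\cG_s$, and $\fh(Y^2(I))=\fh(Y^1(I))$ for $I\notin\cF_s\cup\cG_s$. Together these give $\fh(Y^2(I))=2$ exactly on $(\cF_u^1\cup\cG_u^1\cup\{A(1)\})\setminus(\cF_s\cup\cG_s)$, which is the definition of $\scH(N,\fp_I)=2$; when $\ell$ is odd this difference set is all of $\cF_u^1\cup\cG_u^1\cup\{A(1)\}$ since then $s=0$ and $\cF_0=\cG_0=\emptyset$, and when $\ell=2$ (so $s=u$) the subtraction is genuine.

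I expect the main obstacle to be organizational rather than computational: the definitions of $\scG$ and $\scH$ are stratified over the seven sets $\cE,\cF_s,\cF_s^1,\cG_s,\cG_s^1,\cH_u,\cH_u^1$ together with the auxiliary indices $x,y$ and the $s$-versus-$u$ dichotomy, whereas $\Gcd(Y^2)$ and $\fh(Y^2)$ come stratified over the slightly different sets governing the construction of $Y^0,Y^1,Y^2$ in Remark \ref{remark: Y0 Y1 Y2}. Thus the real work is to set up a clean dictionary between these two stratifications and verify that every case lands on the correct formula, with care for the degenerate sub-cases ($p_s-1=1$ when $N$ is even and $\ell=2$; empty index sets when $t$ is small; $n(I)=t+1$ or $k(I)=t+1$; the difference $\cE\setminus\cH_u^1$, and so on). No inequality or algebraic manipulation beyond the identities displayed above should be required.
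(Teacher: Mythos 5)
Your proposal is correct and follows essentially the same route as the paper: the paper's proof is exactly the observation that $\kappa(p_i^{r_i}p_j^{r_j})=g^i_j\cdot\cG(p_i^{r_i},p_j^{r_j})$ together with the multiplicativity of $\kappa$, Remark \ref{remark: definition of cG}, Proposition \ref{proposition: gcd and h of Y1} and Lemma \ref{lemma: gcd and h of Y2}, with the case-by-case matching left implicit. Your write-up simply makes that bookkeeping (including the degenerate facts $p_s-1=1$ and $\cG_p(r,0)=\kappa(p^r)$) explicit.
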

\begin{proof}
Note that $\kappa(p_i^{r_i}p_j^{r_j})=g^i_j \times \cG(p_i^{r_i}, p_j^{r_j})$. 
Thus, by the same argument as Corollary \ref{corollary: gcd and h of Z}, the assertion follows from Proposition \ref{proposition: gcd and h of Y1} and Lemma \ref{lemma: gcd and h of Y2}. 
\end{proof}

Lastly, we prove the following.
\begin{lemma}\label{lemma: h=2 for Y2}
Let $\ell=2$. Then for any $I \in H'$, there is an index $1\leq h \leq t$ different from $u$ such that 
\begin{equation*}
\pw_{p_h}(Y^2(I)) \not\in 2\Z \qa \pw_{p_h}(Y^2(J))=0 ~~ \text{ for any } J \prec I.
\end{equation*}
\end{lemma}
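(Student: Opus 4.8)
Throughout $\ell=2$, so Assumption~\ref{assumption 1.14} forces $s=u$, whence $\cF_s=\cF_u$ and $\cG_s=\cG_u$. The first step is to make $H'=(\cF_u^1\cup\cG_u^1)\setminus(\cF_u\cup\cG_u)$ explicit: since each $E_u(n)$ has two vanishing exponents while each $E(n)$ has exactly one, the two set differences collapse, and one finds $H'=\{E_u(n):n\in\cI_u\}\cup\{E^\ast\}$, where $E^\ast=E(u)$ if $u\geq2$ and $E^\ast=A(2)=E(1)$ if $u=1$ (and for $u=0$ there is no exceptional term, $H'=\{E(n):2\leq n\leq t\}$, handled exactly as the family below). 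Thus $H'$ is one family indexed by an integer $n\neq u$ together with at most one exceptional element. The engine of the argument is the following consequence of Theorem~\ref{theorem: order defined by tensors}: every $\bY^2(\fp_J)$ is a tensor product of $\bA$-factors together with exactly one factor of vanishing coordinate-sum --- a $\bB_{p}(r,1)$ (cases $\cE$, $\cG_s$) or a $\bD(p_i^{r_i},p_j^{r_j})$ (cases $\cH_u$, the remaining case, and, alongside a $\bB_{p_s}(r_s,1)$, case $\cF_s$), the vanishing being Definition~\ref{defn: bA bB in prime level} and the explicit formula for $\bbD(i,j)$ --- while every $\bA$-factor has coordinate-sum $(p_i-1)^{1-f_i}\neq0$. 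Splitting off the degree-zero factor and applying Theorem~\ref{theorem: order defined by tensors} shows $\pw_{p_h}(Y^2(J))=0$ whenever $p_h$ is coprime to the base of that factor; hence $\pw_{p_h}(Y^2(J))$ can be nonzero only if $h=x(J)$ when $J\in\cE$, never when $J\in\cF_s$, $h=1$ when $J\in\cG_s$, $h\in\{m(J),k(J)\}$ when $J\in\cH_u$, and $h\in\{m(J),n(J)\}$ in the remaining case.

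Next, for each $I\in H'$ I would fix an index $h(I)\neq u$ and check $\pw_{p_{h(I)}}(Y^2(I))\notin 2\Z$. For $I=E_u(n)$ (read $E(n)$ if $u=0$), take $h(I)=n$: tracing through the definition of $\bY^2$ (the $\cH_u$ branch when $n>u\geq2$, the remaining branch otherwise), $\bY^2(\fp_I)$ is a tensor of $\bA$-factors with a single $\bD(\cdot,p_n^{r_n})$, so by the engine $\pw_{p_n}(Y^2(I))=\pw_{p_n}\bigl(D(\cdot,p_n^{r_n})\bigr)$, all other coordinate-sums being $1$ (the exponents lie in $\{0,1\}$, and for the index $u$, of exponent $0$, one uses $p_u-1=1$); since $n\neq u$ this equals $u^{m}_{n}$ for the appropriate $m$, which is a $2$-adic unit by Assumption~\ref{assumption 1.14}, hence odd. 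For $I=E^\ast=E(u)$ with $u\geq2$, $E(u)\in\cH_u^1$ lands in the remaining branch, $\bY^2(\fp_I)=\bigl(\motimes_{i\neq1,u}\bA_{p_i}(r_i,1)\bigr)\motimes\bD(p_1^{r_1},p_u^{r_u})$, and $\pw_{p_1}(Y^2(I))=u^1_u=-1$; take $h(I)=1$. For $I=E^\ast=A(2)$ with $u=1$, $A(2)\in\cE$ with $x=2$, $\bY^2(\fp_I)=\bA_{p_1}(r_1,0)\motimes\bB_{p_2}(r_2,1)\motimes\bigl(\motimes_{i\geq3}\bA_{p_i}(r_i,1)\bigr)$, and $\pw_{p_2}(Y^2(I))=-(p_1-1)=-1$ by Lemma~\ref{lemma: pw for B}; take $h(I)=2$. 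In every case $h(I)\neq u$.

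Finally I must show $\pw_{p_{h(I)}}(Y^2(J))=0$ for all $J\prec I$, which is the main obstacle. For the exceptional element this is immediate from Lemma~\ref{lemma: 6.7}: if $u\geq2$, then $E^\ast=d_2$ and the only $J\prec d_2$ is $d_1=\rad(N)=A(1)\in\cE$ with $x(A(1))=u\neq1$; if $u=1$, then $A(2)=d_3$ and the $J\prec d_3$ are $d_1=A(1)\in\cE$ ($x=1\neq2$) and $d_2=\fp_{E(2)}\in\cG_s$ (forcing $h=1\neq2$); the engine then gives the vanishing. For $I=E_u(n)$ with $h=n$, I would assume $J\prec E_u(n)$ has $\pw_{p_n}(Y^2(J))\neq0$ and eliminate it according to the branch of $\bY^2$ governing $J$: the engine forces $x(J)=n$ in case $\cE$, i.e.\ $J=A(n)$ with $n>u$, but a comparison of $\iota(E_u(n))=(1-f_i)$ with $\iota(A(n))$ in the twisted colex order (Remark~\ref{remark: the map iota}) gives $E_u(n)\prec A(n)$; case $\cF_s$ cannot arise; case $\cG_s$ would force $h=1\neq n$; and cases $\cH_u$ and the remaining one force $m(J)=n$ or $n(J)=n$ (resp.\ $k(J)=n$), and in each sub-case a colex comparison of $\iota(J)=(1-f_i)$ with $\iota(E_u(n))$ (using that $\iota$ is a bijection, Lemma~\ref{lemma: iota}) shows $E_u(n)\preceq J$. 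This step is a re-run of the case analysis in the proof of Lemma~\ref{lemma: unipotent of Y2}; all its ingredients --- the engine above, the explicit $\bbB$ and $\bbD$ data, Lemmas~\ref{lemma: 6.7}, \ref{lemma: 6.8}, \ref{lemma: iota}, \ref{lemma: pw for B}, and Assumption~\ref{assumption 1.14} --- are already in hand, so the difficulty is the bookkeeping with $m(\cdot)$, $n(\cdot)$, $k(\cdot)$ and $\iota$ rather than any new idea.
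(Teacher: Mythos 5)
Your setup is sound, and in the parts you actually carry out you match the paper: the explicit description of $H'$, the tensor-factorization ``engine'' (which is Theorem \ref{theorem: order defined by tensors} applied to the unique degree-zero factor), the choices $h(I)=n$ for $E_u(n)$ and $h(I)=\max(1,3-s)$ for the exceptional element, the oddness of $\pw_{p_{h(I)}}(Y^2(I))$ via Assumption \ref{assumption 1.14}, and the treatment of the exceptional element through Lemma \ref{lemma: 6.7} are all correct and are exactly what the paper does. Where you genuinely diverge is the vanishing step for the family $I=E_u(n)$: you propose to enumerate every shape of $J$ for which $\pw_{p_n}(Y^2(J))$ could be nonzero and eliminate each by a colexicographic comparison. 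This route can be pushed through, but it is the heaviest part of the proof and you leave it as a sketch; moreover, as written it is imprecise in two places: the formula $\iota(J)=(1-f_i)$ fails for $J\in\cH_u^1$, a case that really occurs in your enumeration (such $J$ lie in the ``otherwise'' branch of $\bY^2$ with $n(J)=u$, yet $\iota(J)_{m(J)}=1$ rather than $0$), and the attachment of $k(J)$ versus $n(J)$ to the branches $\cH_u$ and ``otherwise'' is swapped. The paper avoids the entire case analysis with a term-by-term argument already available to you: since every tensor factor of $\bbV(Y^2(J))$ is supported on exponents $\leq 1$, the sum $\pw_{p_n}(Y^2(J))$ runs only over squarefree $\delta=\fp_K$ with $K_n=1$; by the proof of Lemma \ref{lemma: 6.8} every such $K$ satisfies $F(n)\vtl K$ or $F(n)=K$, while $J\prec E_u(n)$ with $J\notin\cF_s\cup\cG_s$ forces $\iota(J)\vtl F(n)$ because $F(n)=\iota(E(n))$ is the immediate $\vtl$-predecessor of $F_s(n)=\iota(E_u(n))$ and $E(n)\in\cF_s$; hence $\iota(J)\vtl K$ and every term vanishes by Proposition \ref{proposition: unipotent of Y1}, while for $J\in\cF_s\cup\cG_s$ your engine already gives $\pw_{p_n}(Y^2(J))=0$ as $n\in\cI_s$. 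Substituting this uniform argument for your deferred case analysis closes the one real gap and is what the paper does.
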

\begin{proof}
Since $\ell=2$, we have $s=u$. For simplicity,
let $I=(f_1, \dots, f_n) \in \Delta(t)$ and $J=(a_1, \dots, a_t) \in \Delta(t)$. Assume that $I \in H'$ and $J \prec I$. 

First, suppose that $s=0$. Then $I=E(n)$ for some $2\leq n \leq t$. By definition, we have $\iota(I)=F(n)$. In this case, we can take $h=n$. (Since $u=0$, we have $h\neq u$.) Indeed, as above we have 
$\pw_{p_n}(Y^2(I))=\pw_{p_n}(Y^1(I))=u^1_n \not\in 2\Z$. Also, we claim that 
$\bbV(Y^2(J))_\delta=0$ for any $\delta \in \cD_N^\sqf$ divisible by $p_n$, or equivalently 
\begin{equation*}
\bbV(Y^2(J))_{K}=0 ~~\text{ for any } K=(c_1, \dots, c_t) \in \Delta(t) \text{ with } c_n=1,
\end{equation*}
which clearly implies that $\pw_{p_n}(Y^2(J))=0$. Since $c_n=1$, by definition $F(n) \vtl K$ or $F(n)=K$. Also, since $\iota(J) \vtl \iota(I)=F(n)$, we have $\iota(J) \vtl K$. Thus, the claim follows by Proposition \ref{proposition: unipotent of Y1} as $Y^2(J)=Y^1(J)$. (Note that $\cF_s \cup \cG_s= \emptyset$.)

Next, suppose that $s\geq 1$. Then we have $H'=\{ E(s), E_s(n) : n \in \cI_s \}$.
Suppose first that $I=E(s)$. Then $d_k=\fp_{E(s)}$ by Lemma \ref{lemma: 6.7}, where $k=\max(2, 4-s)$. Hence by direct computation, we can take $h=\max(1, 3-s)$. (Thus, we have $h\neq u$.) Suppose next that $I=E_s(n)$ for some $n\in \cI_s$. 
Then we can take $h=n$. (Since $n \in \cI_s$, we have $h\neq u$.) Indeed, whether $n<s$ or not\footnote{If $n>s$, then we have $n(I)=s$ and $k(I)=n$, i.e., $I \in \cH_s$.}, we have 
\begin{equation*}
\bbV(Y^2(I))=\motimes_{i=1, \, i\neq u, n, s} \bbA_i^1 \motimes \bbA_s^0 \motimes \bbD(y, n),
\end{equation*}
where $y=\max(1, 3-s)$. Thus, we have $\pw_{p_n}(Y^2(I))=u^y_n \not\in 2\Z$ as above. Also, if $J \in \cF_s \cup \cG_s$, then we have $\pw_{p_n}(Y^2(I))=0$ as $n\in \cI_s$. Furthermore, if $J \not\in \cF_s \cup \cG_s$, then we have $\iota(J) \vtl F(n)$. Thus, we have $\iota(J) \vtl K$ for any $K=(c_1, \dots, c_t) \in \Delta(t)$ with $c_n=1$. 
Since $Y^2(J)=Y^1(J)$, by Proposition \ref{proposition: unipotent of Y1} we have $\bbV(Y^2(J))_K=0$, and so $\pw_{p_n}(Y^2(J))=0$, as desired.
This completes the proof.
\end{proof}

Combining all the results above, we now prove Theorem \ref{theorem: main 6.6}.

By Theorem \ref{proposition: generation Y2}, it suffices to show that
\begin{equation*}
 \br{ \ov{Y^2(I)} : I \in \Delta(t) } [\ell^\infty] \simeq \moplus_{I \in \Delta(t)} \br{\ov{Y^2(I)}}[\ell^\infty].
\end{equation*}

Suppose first that $\ell$ is odd. Then we have $Y^1(I)=Y^2(I)$. Thus, the assertion follows by successively applying Theorem \ref{thm: criterion 2} thanks to Proposition \ref{proposition: unipotent of Y1}. 

Suppose next that $\ell=2$, and so $s=u$. By Theorem \ref{thm: criterion 3}, it suffices to show the following. For any $I \in \Delta(t) \sm \{A(1)\}$, one of the following holds\footnote{Note that $A(1)$ is the smallest element in $(\Delta(t), \prec)$ by Lemma \ref{lemma: 6.7}.}. 
\begin{enumerate}
\item
$\fh(Y^2(I))=1$, $\bbV(Y^2(I))_{\iota(I)} \not\in 2\Z$ and $\bbV(Y^2(J))_{\iota(I)}=0$ for all $J \prec I$.
\item
There is an index $1\leq h \leq t$ such that
\begin{equation*}
\pw_{p_h}(Y^2(I)) \not\in 2\Z \qa \pw_{p_h}(Y^2(J))=0 ~~ \text{ for any } J \prec I.
\end{equation*}
\end{enumerate}
Note that (1) follows by Lemma \ref{lemma: unipotent of Y2} because $\fh(Y^2(I))=1$ if and only if $I \in H$ (Corollary \ref{corollary: gcd and h of Y2}).
 And (2) follows by Lemma \ref{lemma: h=2 for Y2}. This completes the proof. \qed

\ms
\subsection{Proof of Theorem 6.1}\label{section: final step}
In this subsection, we finish the proof of Theorem \ref{theorem: chapter 6 main}.
\begin{proof}[Proof of Theorem \ref{theorem: chapter 6 main}]
By Theorems \ref{theorem: case of t=1, u=0} and \ref{theorem: case of t=u=1}, it suffices to prove the theorem for $t\geq 2$. 
First, by Theorem \ref{theorem: computation of order} and Corollary \ref{corollary: gcd and h of Z} (resp. \ref{corollary: gcd and h of Y2}), the order of $Z(d)$ (resp. $Y^2(d)$) is $\fn(N, d)$ (resp. $\fN(N, d)$). Thus, it suffices to show the first assertion.

For simplicity, we use the same notation as in the previous section. 
Also, let $r=1$ if $u=0$ and $r=r_u$ if $u\geq 1$.
Since $Z^1(d_i)=Z(d_i)$ for any $1\leq i \leq \fm$, by Theorems \ref{theorem: main theorem in 6.4} and \ref{theorem: main 6.6}, it suffices to show that
\begin{equation*}
\br{\ov{Z(d_i)}, \, \ov{Z^1(d_j)} : 1\leq i \leq \fm, \, \fm < j < \fm+r}\simeq \scC(N)^\sqf \moplus\left( \moplus_{j=\fm+1}^{\fm+r-1} \br{\ov{Z(d_j)}}\right).
\end{equation*}
Since the claim vacuously holds for $r=1$, we assume that $r\geq 2$. 
For any $2\leq f \leq r$, let $I_f:=(f_1, \dots, f_t)$ with $f_u=f$ and $f_i=1$ for all $i \neq u$.
Then by Remark \ref{remark: 2power ordering}, we have 
\begin{equation*}
\{d_j : \fm+1 \leq j \leq \fm+r-1 \} = \{ \fp_{I_f} : 2\leq f \leq r \}.
\end{equation*}
Thus, it is enough to show that
\begin{equation}\label{00jae}
\br{\ov{Z(d_i)}, \, \ov{Z^1(I_f)} : 1\leq i \leq \fm, \, 2\leq f \leq r}  \simeq \scC(N)^\sqf \moplus\left(\moplus_{f=2}^{r} \br{\ov{Z(I_f)}}\right).
\end{equation}

For simplicity, let 
\begin{equation*}
\pi_1^*:=\pi_1(N, 2^r)^* : J_0(2^r) \to J_0(N).
\end{equation*}
Then by Remark \ref{remark: 4.8}, we have $\ov{Z^1(I_f)}=\pi_1^*(\ov{B_2(r, f)})$.
So if $r\leq 4$, then we have $\ov{B_2(r, f)}=0$ because the genus of $X_0(2^r)$ is zero. Thus, we have $\ov{Z(I_f)}=\ov{Z^1(I_f)}=0$ for any $2\leq f \leq r$, and so (\ref{00jae}) obviously holds. 
Accordingly, we assume $r\geq 5$. By direct computation, the order of $Z(d_{\fm+1})=Z(I_2)$ is $\num(\frac{2^2-1}{24})=1$, i.e., $\ov{Z(d_{\fm+1})}=0$, and so (\ref{00jae}) is equivalent to
\begin{equation}\label{000jae}
\br{\ov{Z(d_i)}, \, \ov{Z^1(I_f)} : 1\leq i \leq \fm, \, 3\leq f \leq r} \simeq \scC(N)^\sqf \moplus\left( \moplus_{f=3}^{r} \br{\ov{Z(I_f)}}\right).
\end{equation}

We prove (\ref{000jae}) by the following two claims:
\begin{enumerate}
\item
$\scC(N)^\sqf \cap \br{\ov{Z^1(I_f)} : 3\leq f \leq r}=0$.
\item
$\br{\ov{Z^1(I_f)} : 3\leq f \leq r} \simeq \moplus_{f=3}^r \br{\ov{Z(I_f)}}$.
\end{enumerate}

For simplicity, let $G:=\br{\ov{Z^1(I_f)} : 3\leq f \leq r}$.
Then we have
\begin{equation*}
G = \br{\pi_1^*(\ov{B_2(r, f)}) : 3\leq f \leq r}=\pi_1^* \left(\br{\ov{B_2(r, f)} : 3 \leq f \leq r}\right)=\pi_1^*(\scC(2^r)).
\end{equation*}
Note that $G$ is a $2$-group as $\scC(2^r)$ is. Thus, we have
\begin{equation*}
\scC(N)^\sqf \cap G =\scC(N)^\sqf[2^\infty] \cap G.
\end{equation*}
By Theorem \ref{theorem: main 6.6}, we have
\begin{equation*}
\scC(N)^\sqf[2^\infty] \cap G \simeq \left(\moplus_{I \in \Delta(t)} \br{\ov{Y^2(I)}}[2^\infty] \right)\cap G \simeq \moplus_{I \in \Delta(t)} \left( \br{\ov{Y^2(I)}}[2^\infty] \cap G\right).
\end{equation*}
Hence it suffices to show that $\br{\ov{Y^2(I)}}[2^\infty] \cap G=0$ for any $I \in \Delta(t)$. If $I=A(1)$, then the claim follows because the order of $Y^2(A(1))$ is $1$.\footnote{The divisor $Y^2(A(1))$ comes from level $2$. Indeed, we have $Y^2(A(1))=\pi_1(N, 2)^*(0-\infty)$.} 
If $I \in \Delta(t) \sm \{A(1)\}$, then as in Proposition \ref{proposition: unipotent of Z}, we can prove that $\bbV(Z^1(I_f))_{\delta}=0$
for any $\delta$ divisible by $p_h$ for some $h \neq u$. Thus, we obtain $\br{\ov{Y^2(I)}}[2^\infty] \cap G=0$ by Theorem \ref{thm: criterion 3} because of the following:
\begin{enumerate}
\item
Suppose that $I \in \Delta(t) \sm H$.
Since $I \neq A(1)$, $\fp_{\iota(I)}$ is divisible by $p_h$ for some $h \neq u$, and thus we have $\bbV(Z^1(I_f))_{\iota(I)}=0$. As already discussed, we have $\bbV(Y^2(I))_{\iota(I)} \not\in 2\Z$.

\item
Suppose that $I \in H'$. Then by Lemma \ref{lemma: h=2 for Y2}, $\pw_{p_h}(Y^2(I))\not\in 2\Z$ for some $h\neq u$. Also, we have $\pw_{p_h}(Z^1(I_f))=0$ since $\bbV(Z^1(I_f))_\delta=0$ for any $\delta$ divisible by $p_h$.
\end{enumerate}
This completes the proof of the first claim.

Next, we prove the second claim.
Note that by Theorem \ref{theorem: case of t=u=1}, we have
\begin{equation*}
G = \pi_1^*(\scC(2^r))\simeq \pi_1^*\left(\moplus_{f=3}^r \br{\ov{B^2(r, f)}} \right).
\end{equation*}
Note also that since $\pi_1^*$ is injective (cf. \cite[Rem. 2.7]{Yoo6}), we have 
\begin{equation*}
 \pi_1^*\left(\moplus_{f=3}^r \br{\ov{B^2(r, f)}} \right) \simeq \moplus_{f=3}^r \br{\pi_1^*(\ov{B^2(r, f)})}.
\end{equation*}
Finally, since $\cT_u = \{ I_f : 3\leq f \leq r\}$, we have 
\begin{equation*}
\Z(I_f) = \motimes_{i=1, \, i\neq u} \bA_{p_i}(r_i, 1) \motimes \bB^2(r, f),
\end{equation*}
and so $\ov{Z(I_f)}=\pi_1^*(\ov{B^2(r, f)})$ by Remark \ref{remark: 4.8}. 
This completes the proof. 
\end{proof}

\begin{remark}
Unfortunately, we cannot use Theorem \ref{thm: criterion 3} to prove 
\begin{equation*}
\br{ \ov{Y^2(I)}}[2^\infty]  \cap G = 0, ~~ \text{ where } ~I=A(1).
 \end{equation*}
 In fact, if there were a rational cuspidal divisor $X$  satisfying $\bbV(X)=\bbV(Y^2(I))$ and the order of $X$ is even, then 
 we could prove that
 \begin{equation*}
 \br{\ov{X}}[2^\infty]  \cap G \simeq \zmod 2.
 \end{equation*}
Thus, it is crucial that the order of $Y^2(I)$ is $1$ (or at least odd). 
\end{remark}

\ms
\subsection{Acknowledgments}
I thank Ken Ribet for generously sharing his idea and encouragement. I also thank Myungjun Yu for comments and corrections to an earlier version of this manuscript.
This work was supported by National Research Foundation of Korea(NRF) grant funded by the Korea government(MSIT) (No. 2019R1C1C1007169 and 
No. 2020R1A5A1016126).

\bibliographystyle{amsalpha}


\end{document}